\newtheorem{theorem}{Theorem}
\newtheorem{proposition}[theorem]{Proposition}
\newtheorem{lemma}[theorem]{Lemma}
\newtheorem{corollary}[theorem]{Corollary}
\theoremstyle{definition}\newtheorem*{claim}{Claim}
\theoremstyle{definition}\newtheorem{definition}[theorem]{Definition}
\theoremstyle{definition}
\theoremstyle{definition}
\theoremstyle{definition}
\theoremstyle{definition}
\numberwithin{theorem}{section}
\newcommand{\cB}{\mathcal{B}}
\newcommand{\cN}{\mathcal{N}}
\newcommand{\cS}{\mathcal{S}}
\newcommand{\bA}{\mathbb{A}}
\newcommand{\bC}{\mathbb{C}}
\newcommand{\bG}{\mathbb{G}}
\newcommand{\bL}{\mathbb{L}}
\newcommand{\bR}{\mathbb{R}}
\newcommand{\bZ}{\mathbb{Z}}
\newcommand{\bQ}{\mathbb{Q}}
\newcommand{\bF}{\mathbb{F}}
\newcommand{\bN}{\mathbb{N}}
\newcommand{\bH}{\mathbb{H}}
\newcommand{\bT}{\mathbb{T}}
\newcommand{\bS}{\mathbb{S}}
\newcommand{\gog}{\mathfrak{g}}
\newcommand{\goh}{\mathfrak{h}}
\newcommand{\gol}{\mathfrak{l}}
\newcommand{\gor}{\mathfrak{r}}
\newcommand{\gos}{\mathfrak{s}}
\newcommand{\SL}{\operatorname{SL}}
\newcommand{\GL}{\operatorname{GL}}
\newcommand{\Mat}{\operatorname{Mat}}
\newcommand{\lie}{\operatorname{Lie}}
\newcommand\set[1]{\left\{#1\right\}} 
\newcommand\pa[1]{\left[#1\right]}
\newcommand\on[1]{\operatorname{#1}} 
\newcommand\diag[1]{\operatorname{diag}\left(#1\right)}
\newcommand\mat[1]{\pa{\begin{matrix}#1\end{matrix}}} 
\newcommand\smallmat[1]{\pa{\begin{smallmatrix}#1\end{smallmatrix}}}
\newcommand{\acts}{\hspace{-1pt}\mbox{\raisebox{1.3pt}{\text{\huge{.}}}}\hspace{-1pt}}
\newcommand{\la}{\langle}
\newcommand{\ra}{\rangle}
\newcommand{\onto}{\xymatrix{\ar@{>>}[r]&}}
\newcommand{\da}[4]{\xymatrix{#1 \ar@<.5ex>[r]^{#2} \ar@<-.5ex>[r]_{#3} & #4}}
\newif\ifdraft\drafttrue
\newcommand{\SLQ}[1][d]{\operatorname{SL}_{#1}(\bQ_p)}
\newcommand{\SLS}[1][d]{\operatorname{SL}_{#1}(\bR\times\bQ_p)}
\newcommand{\SLSS}[1][d]{\operatorname{SL}_{#1}(\bQ_S)}
\newcommand{\SLZ}[1][d]{\operatorname{SL}_{#1}(\bZ_p)}
\newcommand{\SLZP}[1][d]{\operatorname{SL}_{#1}(\bZ[\tfrac1p])}
\newcommand{\SO}[1][d]{\operatorname{SO}_{#1}}
\newcommand{\Hv}{\bH_{v}}
\newcommand{\HvS}[1][S]{H_{v,{#1}}}
\newcommand{\HV}{\bH_{\Lambda_{v}}}
\newcommand{\HVS}[1][S]{H_{\Lambda_{v},{#1}}}
\newcommand{\Lv}{\bL_{v}}
\newcommand{\LvS}[1][S]{L_{v,{#1}}}
\newcommand{\oLvS}[1][S]{\widetilde{L}_{v,{#1}}}
\newcommand{\oT}{\theta_v}
\newcommand{\lsl}{\mathfrak{sl}_{2}}
\newcommand{\stab}{\operatorname{stab}}
\newcommand{\vol}[1]{\operatorname{vol}\left(#1\right)}
\newcommand{\ad}[1][X]{\operatorname{ad}_{{#1}}}
\newcommand{\Ad}[1][g]{\operatorname{Ad}_{{#1}}}
\newcounter{consta}
\renewcommand{\theconsta}{{\kappa_{\arabic{consta}}}}
\newcounter{constb}[section]
\newcounter{constc}[section]
\renewcommand{\theconstc}{{c_{\arabic{constc}}}}
\newcommand{\consta}{\refstepcounter{consta}\theconsta} 
\newcommand{\constc}{\refstepcounter{constc}\theconstc}
\title{Distribution of Shapes of orthogonal Lattices}
\thanks{The authors acknowledge the support of the SNF Grant  200021-152819. R.\ R.\ also acknowledges the support of the 
ERC Starting Grant DLGAPS 279893}
\author{Manfred Einsiedler}
\address{M.E. \& P.W.\\ D-Math, ETH Z\"urich, R\"amistrasse 101, CH-8092 Z\"urich, Switzerland}
\author{Ren\'e R\"uhr}
\address{R.R.\\ Department of Mathematics, University of Tel Aviv, 69978 Tel-Aviv, Israel}
\author{Philipp Wirth}
\date{\today}
\begin{document}
\maketitle

\setcounter{tocdepth}{1}

\tableofcontents


\section{Introduction}

For an integer $d \geq 3$, let $\bS^{d-1} = \left\{x \in \bR^{d} : \left\|x\right\| = 1\right\} \subset \bR^{d}$ be the euclidean unit sphere of dimension~$d-1$
and denote by $\widehat{\bZ}^{d}$ the set of primitive vectors in $\bZ^{d}$. 
Let us start by recalling Linnik's problem concerning the equidistribution of the finite set
\begin{equation}\label{linnik-eq1}
 \frac{1}{\sqrt{D}}\left(\widehat{\bZ}^{d}\cap\sqrt{D}\bS^{d-1}\right)\subset\bS^{d-1}
\end{equation}
as~$D\to\infty$ (and assuming that this set is nonempty). The hardest case of this problem
concerns the case~$d=3$ and was resolved
by Duke~\cite{Duke88} (building on a breakthrough of Iwaniec~\cite{Iwaniec}). 

Following  Maass \cite{maass1956, maass1959} and W.~Schmidt \cite{WSchmidt-sublattices} (see
also \cite{Marklof-Frobenius} and \cite[Conjecture 1.5]{EMSS})
we are interested in the following refinement of Linnik's problem. 
Fix a positive integer $D$.  For any $v \in \widehat{\bZ}^{d}$ with $\left\|v\right\|^{2} = D$ we 
introduce the orthogonal lattice of $v$,
\[
\Lambda_{v} = v^{\perp} \cap \bZ^{d}
\]
and study the joint equidistribution of the vector~$\frac1{\sqrt{D}}v$ 
belonging to the set in~\eqref{linnik-eq1} and the `shape of the lattice'~$\Lambda_v$. 
More precisely we let $\on{SL}_{d}(\bR)$ act from the right on 
 $\bR^{d}$ (considered as the space of row vectors) and choose a rotation $k_{v} \in \SO(\bR)$ such 
that $\frac{1}{\sqrt{D}}vk_v$ equals the last standard basis vector of~$\bR^d$ 
and hence $\Lambda_{v}k_{v}\subset \bR^{d-1}\times \left\{0\right\}$. 
Moreover, we let
\[
 a_{v} = \on{diag}\Bigl(D^{-{1}/{2(d-1)}},\dots,D^{-{1}/{2(d-1)}},D^{1/2}\Bigr) \in \SL_{d}(\bR)
\]
and note that it rescales the lattice $\Lambda_{v}k_{v}\subset\bR^{d-1}$ of covolume $\sqrt{D}$
by a homothety to covolume $1$ without changing its shape (see Section \ref{geometry}). Denote by $[\Lambda_{v}]$ the right $\on{SO}_{d-1}(\mathbb{R})$-orbit of $\Lambda_{v}k_{v}a_{v}$. We identify the space of unimodular lattices in $\bR^{d-1}$ with $\SL_{d-1}(\bZ)\backslash \SL_{d-1}(\bR)$ so that $\left[\Lambda_{v}\right]$ is an element of
\[
\mathcal{X}_{d-1} = \SL_{d-1}(\bZ) \backslash \SL_{d-1}(\bR) / \SO[d-1](\bR).
\]
Finally we note $k_{v}\on{SO}_{d-1}(\bR)$ is uniquely defined by the above requirement on $k_v$
and hence $[\Lambda_{v}]$ is canonically attached to the vector $v$. We refer to $[\Lambda_{v}]$
as the \emph{shape} of the lattice associated to $v$.

We let $m_{\bS^{d-1}}$ denote the normalised rotation invariant Lebesgue probability measure on the sphere and let
 $m_{\mathcal{X}_{d-1}}$ denote the probability measure on the space of 
 shapes of unimodular lattices induced from Haar measure on $\on{SL}_{d-1}(\mathbb{R})$.

Aka, Shapira and the first name author proved in \cite[Thm.~1.2]{AES} and \cite[Thm.~1.2]{AES3}  the following equidistribution of projected integer points jointly with the shapes of their associated lattices. 

\begin{theorem}[AES]\label{mainresultAES}
Let $d \geq 3$ and for any positive integer $D$, define
\[
\mathcal{Q}_{D} = \left\{\left(\tfrac{v}{\left\|v\right\|},\left[\Lambda_{v}\right]\right)  :  v\in \widehat{\bZ}^{d},\left\|v\right\|^{2} = D\right\}\subset \bS^{d-1}\times\mathcal{X}_{d-1}.
\]
Then the normalized counting measure~$m_{\mathcal{Q}_D}$ on $\mathcal{Q}_{D}$ converges to $m_{\bS^{d-1}}\times m_{\mathcal{X}_{d-1}}$ in the weak$^*$ topology as $D \rightarrow \infty$, provided that the set $\mathcal{Q}_{D}$ is non-empty and that in addition one has the following condition on the number~$D$:
\begin{itemize}
\item If $d=3$, $D$ is square free and there are two distinct fixed odd primes $p, q$ such that $-D$ is a square in $(\mathbb{F}_{p})^{\times}$ and in $(\mathbb{F}_{q})^{\times}$.
\item If $d = 4,5$, there exists a fixed odd prime $p$ such that $p \nmid D$.
\end{itemize}
\end{theorem}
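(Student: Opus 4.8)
The plan is to realise the counting measure $m_{\cQ_D}$ as (the pushforward of) the invariant probability measure on a single periodic orbit of the spin cover of $\SO[d-1]$ inside an $S$-arithmetic homogeneous space of the spin group of the standard form, to prove that these periodic orbits equidistribute towards the homogeneous probability measure as $D\to\infty$, and finally to extract the limiting product $m_{\bS^{d-1}}\times m_{\cX_{d-1}}$ from the geometry of the embedding; this follows the scheme of \cite{AES,AES3}. Concretely: let $Q(x)=x_1^2+\dots+x_d^2$, $G=\mathrm{Spin}(Q)$, and let $H\hookrightarrow G$ be the spin cover of $\SO[d-1]$, realised as the stabiliser of $e_d$ (passing to the simply connected group so that strong approximation is available). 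For $v\in\widehat{\bZ}^d$ with $\|v\|^2=D$, the rotation $k_v$ and the rescaled lattice $\Lambda_v k_v a_v$ together produce the point $(v/\|v\|,[\Lambda_v])\in\bS^{d-1}\times\cX_{d-1}$, which lifts, using the integral data, to a point of an $S$-arithmetic quotient $Y=\Gamma\backslash G_S$ for a finite set $S$ of places of $\bQ$ containing the archimedean one and the corresponding $S$-arithmetic lattice $\Gamma$. The decisive algebraic fact is that, under the stated local hypotheses on $D$, the full set $\{v\in\widehat{\bZ}^d:\|v\|^2=D\}$ constitutes a single orbit of local integral points, so that the corresponding subset of $Y$ is exactly one periodic $H$-orbit and $m_{\cQ_D}$ is the image in $\bS^{d-1}\times\cX_{d-1}$ of its $H$-invariant probability measure $\mu_D$. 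Since $G(\bR)\cong\mathrm{Spin}_d(\bR)$ is compact, all of the dynamics takes place at the finite places of $S$.

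Granting the equidistribution $\mu_D\to m_Y$ in $Y$, where $m_Y$ is the homogeneous probability measure, the theorem follows quickly. The projection $Y\to\bS^{d-1}$ is equivariant for the transitive $\SO[d](\bR)$-action and hence carries $m_Y$ to the rotation-invariant measure $m_{\bS^{d-1}}$; the projection $Y\to\cX_{d-1}$ is equivariant for the transitive action on the space of shapes and carries $m_Y$ to $m_{\cX_{d-1}}$; and because $m_Y$ is the genuine Haar measure on $Y$ and not a measure concentrated on the graph of an algebraic correspondence between the two factors, the two coordinates are jointly equidistributed. Thus the pushforward of $m_Y$ equals $m_{\bS^{d-1}}\times m_{\cX_{d-1}}$, which is the assertion, and everything is reduced to showing $\mu_D\to m_Y$.

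To prove this, take any weak-$*$ limit $\mu$ of the $\mu_D$. One first rules out escape of mass, so that $\mu$ is an $H$-invariant probability measure; it then remains to upgrade $H$-invariance to $\mu=m_Y$, i.e.\ to eliminate all intermediate algebraic subgroups between $H$ and $G$ and all $H$-equivariant joinings of the sphere and shape coordinates. For $d\ge6$ the group $H=\mathrm{Spin}_{d-1}$ is semisimple, simply connected, without compact $\bQ$-factors and isotropic over every $\bQ_p$ (a quadratic form in at least five variables is isotropic over each $p$-adic field); the equidistribution theorem of Einsiedler, Margulis and Venkatesh for periodic orbits of semisimple groups then applies --- its volume hypothesis being guaranteed by the growth of the mass of the relevant genus with $D$ --- and shows $\mu$ to be homogeneous and invariant under a group containing $H$, whereupon maximality of $\SO[d-1]$ in $\SO[d]$ together with the non-existence of $H$-equivariant joinings forces $\mu=m_Y$. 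For $d=4,5$ the hypothesis $p\nmid D$ makes the pertinent quadratic form have good reduction at $p$, so the relevant stabiliser group is noncompact over $\bQ_p$ and contains nontrivial one-parameter unipotent subgroups; applying the measure classification of Ratner and Margulis--Tomanov in the $S$-arithmetic quotient with $S=\{\infty,p\}$ to the ergodic components of $\mu$, together with linearisation and the Dani--Margulis / Kleinbock--Margulis nondivergence estimates to preclude escape of mass and intermediate invariance, again yields $\mu=m_Y$. For $d=3$ the group $H=\SO[2]$ is an $\bR$-anisotropic torus carrying no unipotents; the two fixed odd primes $p,q$ at which $-D$ is a square split this torus over $\bQ_p$ and over $\bQ_q$, so $\mu$ has positive entropy for the corresponding diagonalisable subgroups at both places, and Linnik's ergodic method --- equivalently, Duke's theorem \cite{Duke88} supplemented by a disjointness argument linking the sphere point to the shape --- forces $\mu=m_Y$.

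The main obstacle is this last step. Excluding escape of mass is routine once unipotents are present, but still needs the quantitative nondivergence input. The genuine difficulty is the rigidity upgrade: a limit measure which is a priori only $H$-invariant must in fact be the full Haar measure $m_Y$. This demands ruling out every intermediate subgroup between $H$ and $G$ (essentially the maximality of $\SO[d-1]\subset\SO[d]$) and every ``diagonal'' coupling of the sphere point with the shape; in the higher-rank regime $d\ge6$ this comes packaged with the periodic-orbit machinery, whereas in the delicate low-rank cases $d=3,4,5$ it is exactly what the arithmetic hypotheses --- two split primes, respectively one prime not dividing $D$ --- are present to provide, by furnishing entropy or unipotent invariance at an auxiliary place.
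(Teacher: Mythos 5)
Your reduction collapses at its first step. You propose to encode the pair $\bigl(v/\|v\|,[\Lambda_v]\bigr)$ as a point of a single $S$-arithmetic quotient $Y=\Gamma\backslash G_S$ with $G=\on{Spin}(Q)$, and then to read off the joint limit from two ``equivariant projections'' $Y\to\bS^{d-1}$ and $Y\to\mathcal{X}_{d-1}$. The second projection does not exist: $\mathcal{X}_{d-1}$ is a double quotient of $\SL_{d-1}(\bR)$, on which neither $\on{Spin}_d(\bR)$ nor $\on{Spin}_d(\bQ_p)$ acts, and no map can carry $m_Y$ to $m_{\mathcal{X}_{d-1}}$ anyway, since $Q$ is definite, hence $\bQ$-anisotropic, so $Y$ is \emph{compact}, while $m_{\mathcal{X}_{d-1}}$ has full support on the noncompact space $\mathcal{X}_{d-1}$. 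What a point of the packet in $Y$ actually determines is only the class of $\Lambda_v$ inside its genus --- a finite, $D$-dependent set --- not its position in $\mathcal{X}_{d-1}$; equidistribution of packets in $\Gamma\backslash\on{Spin}(Q)_S$ therefore gives no information about where those finitely many shapes sit in the shape space, which is the entire content of the theorem. This is precisely why \cite{AES} (and the present paper) work in the product space attached to $\SO[d]\times\SL_{d-1}$ (resp.\ $\on{ASL}_{d-1}$ for grids), with the stabilizer $\bH_v$ embedded diagonally and conjugated by $g_v$ in the second factor, and why the dynamical input (Gorodnik--Oh/Mozes--Shah \cite{Gorodnik-Oh,ms} via Ratner's $S$-arithmetic theorem for $d\geq4$, and for $d=3$ Duke's theorem \cite{Duke88} together with the Einsiedler--Lindenstrauss joining rigidity for the rank-two torus coming from the two split primes) must be applied on that two-factor space. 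Your later appeal to ruling out ``$H$-equivariant joinings of the sphere and shape coordinates'' tacitly presupposes the product space and contradicts the single-space reduction you set up; in the $d=3$ case in particular, the ``disjointness argument'' you wave at is not a supplement to Duke's theorem but the main theorem being invoked.

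There is a second, more minor but still genuine, gap: your ``decisive algebraic fact'' that $\{v\in\widehat{\bZ}^d:\|v\|^2=D\}$ forms a single packet is false in general. The set $\mathcal{Q}_D$ splits into finitely many Hecke-friend classes (the sets $R_v$ of Section~\ref{principalgenus}, following \cite[Sect.~5--6]{AES}), each giving rise to its own closed orbit, and one must prove equidistribution class by class and then reassemble. Moreover $m_{\mathcal{Q}_D}$ is not simply the pushforward of the orbit measure: points with nontrivial $\bG_1(\bZ)$-stabilizer carry different weights, and reconciling the counting measure with the projected orbit measure requires an argument as in Lemmas~\ref{BoundedWeights} and~\ref{SmallE} (AES's Lemmas~5.3 and~6.4). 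Both issues are repairable within the correct two-factor framework, but as written your proposal neither sets up the space on which the measure classification must be performed nor accounts for the class decomposition and weights.
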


In fact, for~$d\geq 4$ a stronger claim is proven in \cite{AES}
(where instead of the shape of the lattice the `grid' $\on{ASL}_{d-1}(\mathbb{Z})g_{v}k_{v}a_{v}\on{SO}_{d-1}(\mathbb{R})$
associated to the vector~$v$ is considered).
The congruence condition for the lower dimensions is an artefact of the proof and should not
be necessary. The purpose of this paper is to remove this condition\footnote{We note that the case~$d=3$ is fundamentally different and cannot be handled
by the methods of this paper.} in the case 
where $d \in \{4,5\}$. Moreover, in these cases we will prove an effective version of Theorem \ref{mainresultAES}.

\begin{theorem}
\label{mainresult}
Let $d=4$ or $d=5$ and for any positive integer $D$, let $m_{\mathcal{Q}_{D}}$ denote the normalized counting measure on $\mathcal{Q}_{D}$. Then there exists an absolute constant $\consta\label{exp:mainresult} > 0$ such that for any $f\in C_c^\infty(\mathbb{S}^{d-1} \times \mathcal{X}_{d-1})$
\[
\left|m_{\mathcal{Q}_{D}}(f) - m_{\mathbb{S}^{d-1}} \times m_{\mathcal{X}_{d-1}}(f)\right| \ll D^{-\ref{exp:mainresult}}\mathcal{S}_{\infty}(f),
\]
provided that the set $\mathcal{Q}_{D}$ is non-empty.
\end{theorem}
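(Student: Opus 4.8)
\emph{Proof strategy.} The plan is to recast the statement as an effective equidistribution result for a single periodic orbit of a spin group, and then to run the argument of \cite{AES} quantitatively, allowing the auxiliary prime to grow slowly with $D$. Concretely, to a primitive $v$ with $\|v\|^2=D$ I would attach $\bH_v=\on{Spin}(v^\perp,q)$, a $\bQ$-form of $\on{Spin}_{d-1}$, where $q=x_1^2+\cdots+x_d^2$, and embed $\bH_v$ into a group $\bG$ that simultaneously records the vector $v$ (inside $\on{SO}_d$) and the induced action on the lattice $\Lambda_v$ (inside $\SL_{d-1}$). After a standard lift, $\cQ_D$ becomes the image in $\bS^{d-1}\times\cX_{d-1}$ of a single periodic $\bH_v$-orbit $\mathcal{O}_D$ in an $S$-arithmetic quotient $\Gamma_S\backslash G_S$, the latter chosen so that the Haar-induced measure pushes forward to $m_{\bS^{d-1}}\times m_{\cX_{d-1}}$; under this identification $m_{\cQ_D}(f)$ equals, up to a controlled lift, the integral of a fixed smooth function $F$ against the normalized orbit measure $\mu_{\mathcal{O}_D}$, so the theorem reduces to proving $|\mu_{\mathcal{O}_D}(F)-m(F)|\ll\operatorname{vol}(\mathcal{O}_D)^{-\kappa}\mathcal{S}_\infty(F)$ for some $\kappa>0$.

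The key new point, which removes the congruence hypothesis of Theorem~\ref{mainresultAES}, concerns the choice of the place at which the dynamics is generated. Since $q$ is definite, $\bH_v(\bR)$ is compact, so this place must be finite; but I would not \emph{fix} one. As $\prod_{p\le x}p=e^{(1+o(1))x}$, there is a prime $p\ll\log D$ with $p\nmid 2D$, and for such $p$ the group $\bH_v$ is unramified at $p$ and $q|_{v^\perp}$ is isotropic over $\bQ_p$ --- a form of rank $\ge 3$ over $\bZ_p$ with unit discriminant reduces mod $p$ to an isotropic form over $\bF_p$ by Chevalley--Warning and lifts by Hensel --- so $\bH_v(\bQ_p)$ is noncompact with hyperspecial $\bH_v(\bZ_p)$. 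I would take $S=\{p\}$ and carry the dependence of every error term on $p$ explicitly. These dependencies turn out to be polynomial in $p$, hence at most $(\log D)^{O(1)}$, and are therefore absorbed by a power of $D$ after slightly shrinking the exponent: this is exactly why an effective argument can afford a non-fixed prime where \cite{AES} could not.

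The effective equidistribution of $\mathcal{O}_D$ then rests on three inputs. First, a spectral gap for $\bH_v$ with constants uniform in $D$: for $d\in\{4,5\}$ the group $\bH_v$ is isogenous to a product of (possibly inner, possibly Weil-restricted) forms of $\SL_2$, so this follows from Jacquet--Langlands together with the bounds of Deligne and Kim--Sarnak, i.e.\ property $(\tau)$ with an explicit gap. Second, an effective lower bound $\operatorname{vol}(\mathcal{O}_D)\gg_\varepsilon D^{\kappa_0-\varepsilon}$ for some $\kappa_0>0$: using strong approximation for the simply connected $\bH_v$, the orbit volume is comparable to the adelic volume (mass) of $\bH_v$, which for $d=4,5$ is given by an \emph{exact} Eichler-type mass formula for orders in quaternion algebras (or products thereof) and is thus effectively bounded below --- this is precisely the step that fails for $d=3$, where the analogous quantity is a Siegel-type class number admitting no known effective lower bound. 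Third, an effective isolation statement: $\mu_{\mathcal{O}_D}$ must not concentrate near a periodic orbit of an intermediate $\bQ$-subgroup $\bH_v\subsetneq\bM\subsetneq\bG$; since $\on{Spin}(v^\perp)$ maps onto a maximal connected subgroup of each of the factors $\SL_{d-1}$ and $\on{SO}_d$, the possible $\bM$ form a short explicit list, and each is ruled out by comparing volumes. Feeding these into an effective version of Linnik's ergodic method --- near-invariance of $\mu_{\mathcal{O}_D}$ under a Hecke operator at $p$, a thickening step against effective mixing of $G_S$, and unique ergodicity of the limiting measure, the quantitative analogue of the dynamical argument in \cite{AES} --- yields $|\mu_{\mathcal{O}_D}(F)-m(F)|\ll p^{O(1)}\operatorname{vol}(\mathcal{O}_D)^{-\kappa}\mathcal{S}_\infty(F)$, and combining with the previous two paragraphs proves the theorem with some absolute $\kappa_1>0$.

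I expect the main obstacle to be the third input --- ruling out, with an effective rate, concentration on an intermediate homogeneous subspace --- together with propagating the spectral gap cleanly through the thickening and mixing estimates while keeping every constant polynomial in $p$. The effective volume bound (the second input) is what confines the method to $d\in\{4,5\}$, and is the single ingredient genuinely unavailable for $d=3$.
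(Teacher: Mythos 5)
Your overall architecture does track the paper's: a prime $p\ll(\log D)^{O(1)}$ with $p\nmid D$ chosen via the prime number theorem, isotropy of the stabilizer over $\bQ_p$ from the unit-discriminant reduction, property $(\tau)$ with constants polynomial in $p$, a volume--discriminant comparison, and an effective dynamical argument in an $S$-arithmetic quotient (note you need $S=\{\infty,p\}$, not $S=\{p\}$, since the sphere and the shape live at the real place, and the reduction of $m_{\mathcal{Q}_D}$ to orbit measures is not quite ``a single orbit'': $\mathcal{Q}_D$ splits into Hecke-equivalence classes, each giving one orbit, with the weight bookkeeping of Lemmas \ref{BoundedWeights}--\ref{muandnu}; these are routine). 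Your second input is a genuinely different route: you propose an Eichler-type mass formula (via strong approximation and the quaternionic description of $\on{Spin}$ in these dimensions) for the lower bound $\on{vol}\gg D^{\kappa_0}$, whereas the paper deliberately avoids all volume formulas and instead produces, via the tree structure of $H_p/K$, a few small lattice elements whose common fixed space pins down $v$ (resp.\ $A_v$), giving Propositions \ref{discvol1} and \ref{discvol2} by elementary linear algebra. Your route is plausible and arguably shorter for the first factor, though you would still need the analogous lower bound for the $\bH_{\Lambda_v}$-orbit in $\mathcal{X}$-direction, where the paper has to work around lattices deep in the cusp.

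The genuine gap is your third input. The joint orbit is an orbit of the \emph{diagonal} copy of $\bH_v$ inside $\on{SO}_d\times\SL_{d-1}$, which is far from maximal, and it is literally contained in the periodic orbit of the product group $\bH_v\times\bH_{\Lambda_v}$; that intermediate orbit has volume even larger than the diagonal orbit, so no comparison of volumes can exclude the scenario you must exclude, namely that every transverse displacement produced by the pigeonhole argument lies (up to negligible error) in $\gog_1\oplus\goh_2$ (or $\goh_1\oplus\gog_2$). In that scenario shearing only ever yields almost-invariance in one factor, the generated algebra never contains the $\gor_2$- (resp.\ $\gor_1$-) directions, and the argument stalls exactly at the statement ``direction and shape are each equidistributed but possibly correlated''. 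The paper's mechanism for this (Proposition \ref{genericpointsjoint} and the iteration of Section \ref{proofofequidistribution}) is a bootstrap that your outline does not contain: one first proves the effective single-factor statements (Theorem \ref{maindynamicalresultsingle}), and then uses the already-established effective equidistribution of $\pi_{2*}\mu_{v,S}$ to show that nearby generic points cannot all have displacement with $\|r_2\|_p\le p^{-10m}$ --- otherwise the projection to $\mathcal{Y}_2$ would be trapped in $\ll V^{\delta\dim\bG_{\mathrm{joint}}}$ tubes each of measure $\ll p^\star V^{-10\delta\dim\gor_2}$, contradicting the factor equidistribution --- and then enlarges the almost-invariant subalgebra $\mathfrak{f}$ step by step, with the explicit bracket and weight computations of Section \ref{proofofequidistribution} (the cases $d=4$, $d=5$ split, $d=5$ quasi-split). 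Without this (or some substitute playing the same role, e.g.\ an effective joining rigidity statement), your plan proves the two marginal equidistributions but not Theorem \ref{mainresult}.
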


 We note that the equidistribution result follows from this as smooth functions are dense in the space of continuous functions, so that together with Theorem~\ref{mainresultAES}
 we obtain the following

\begin{corollary}\label{maincorollary}
Theorem \ref{mainresultAES} holds without any congruence condition on~$D$ for all~$d\geq 4$.
\end{corollary}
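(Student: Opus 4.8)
The plan is to deduce Corollary~\ref{maincorollary} by combining Theorem~\ref{mainresultAES} with the effective Theorem~\ref{mainresult}, using a soft approximation argument to pass from smooth to continuous test functions and to eliminate the congruence hypothesis entirely. The key observation is that Theorem~\ref{mainresultAES} already establishes the desired equidistribution for all $d \geq 4$ \emph{under} the stated congruence condition, while Theorem~\ref{mainresult} establishes an effective (hence a fortiori qualitative) equidistribution statement for $d \in \{4,5\}$ with \emph{no} congruence condition. So the only content of the corollary beyond what is literally stated in Theorem~\ref{mainresult} is (i) the upgrade from $C_c^\infty$ to $C_c$ (indeed to $C_c$ suffices for weak$^*$ convergence since $\bS^{d-1} \times \cX_{d-1}$ is locally compact and we are testing against a probability measure), and (ii) the observation that for $d \geq 6$ there is no congruence condition in Theorem~\ref{mainresultAES} to remove, so the corollary is already known there; only $d \in \{4,5\}$ requires Theorem~\ref{mainresult}.

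Concretely, fix $d \in \{4,5\}$ and let $f \in C_c(\bS^{d-1} \times \cX_{d-1})$ together with $\veps > 0$. Since $\cX_{d-1} = \SL_{d-1}(\bZ)\backslash\SL_{d-1}(\bR)/\SO[d-1](\bR)$ is a second-countable locally compact Hausdorff space and smooth compactly supported functions are dense in $C_c$ with respect to the uniform norm (mollify on $\bS^{d-1}$ and on $\SL_{d-1}(\bR)$, then average over $\SO[d-1](\bR)$, and multiply by a smooth cutoff slightly enlarging the support of $f$), choose $g \in C_c^\infty(\bS^{d-1}\times\cX_{d-1})$ with $\|f - g\|_\infty < \veps$. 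Then for every $D$ with $\cQ_D \neq \emptyset$,
\[
\bigl|m_{\cQ_D}(f) - m_{\bS^{d-1}}\times m_{\cX_{d-1}}(f)\bigr|
\leq 2\veps + \bigl|m_{\cQ_D}(g) - m_{\bS^{d-1}}\times m_{\cX_{d-1}}(g)\bigr|,
\]
using that both $m_{\cQ_D}$ and $m_{\bS^{d-1}}\times m_{\cX_{d-1}}$ are probability measures. By Theorem~\ref{mainresult} the second term is $\ll D^{-\ref{exp:mainresult}}\cS_\infty(g)$, which tends to $0$ as $D \to \infty$ with $g$ now fixed. Hence $\limsup_{D\to\infty}\bigl|m_{\cQ_D}(f) - m_{\bS^{d-1}}\times m_{\cX_{d-1}}(f)\bigr| \leq 2\veps$, and letting $\veps \to 0$ gives $m_{\cQ_D}(f) \to m_{\bS^{d-1}}\times m_{\cX_{d-1}}(f)$. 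This is precisely weak$^*$ convergence of $m_{\cQ_D}$ to $m_{\bS^{d-1}}\times m_{\cX_{d-1}}$, i.e.\ the conclusion of Theorem~\ref{mainresultAES}, and it has been obtained with no arithmetic condition on $D$ beyond $\cQ_D \neq \emptyset$. For $d \geq 6$ the statement of Theorem~\ref{mainresultAES} carries no congruence condition to begin with, so there is nothing to prove; thus the corollary holds for all $d \geq 4$.

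There is essentially no obstacle here: the entire content of the corollary is carried by Theorem~\ref{mainresult}, whose proof occupies the remainder of the paper, and the density of $C_c^\infty$ in $C_c$ is standard. The one point that merits a word of care is the mollification on the quotient space $\cX_{d-1}$: one performs it upstairs on $\SL_{d-1}(\bR)$ (or on a fundamental domain, using a smooth partition of unity) and then descends, invoking that $\SO[d-1](\bR)$-averaging and left $\SL_{d-1}(\bZ)$-invariance are preserved; compact support of $f$ ensures everything stays within a compact set where these operations are uniform. The effective rate in Theorem~\ref{mainresult} is of course much stronger than needed for the corollary, which only uses the qualitative decay $D^{-\ref{exp:mainresult}}\cS_\infty(g) \to 0$ for fixed $g$.
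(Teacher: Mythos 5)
Your argument is correct and is essentially the paper's own: the authors deduce the corollary in one line by combining Theorem~\ref{mainresult} with the density of smooth compactly supported functions in $C_c$, exactly the $\veps$-approximation you spell out, with $d\geq 6$ already covered by Theorem~\ref{mainresultAES}. Your write-up just makes the standard mollification/approximation step explicit.
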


Let us add a few remarks. For two quantities~$A,B$ the notation ``$A \ll B$'' stands for $A\leq cB$, where~$c$ is some absolute constant independent of $D$. We will also write $A\asymp B$
for $A\ll B\ll A$ (where two different implicit absolute constants are allowed).
The space $C_c^\infty(\mathbb{S}^{d-1} \times \mathcal{X}_{d-1})$ denotes as usual smooth\footnote{More precisely
we identify functions on the orbifold~$\mathcal{X}_{d-1}$ with~$\on{SO}_{d-1}(\bR)$-invariant functions 
on~$\on{SL}_{d-1}(\bZ)\backslash\on{SL}_{d-1}(\bR)$ and say that a function
is smooth if it is smooth on the manifold~$\on{SL}_{d-1}(\bZ)\backslash\on{SL}_{d-1}(\bR)$.} functions of compact support.
Also $\mathcal{S}_{\infty}$ denotes the Sobolev norm 
\begin{equation}\label{manifold-sobolev}
 \mathcal{S}_{\infty}(f)=\sum_{\mathcal{D}}\|(1+\on{ht}(\cdot))^{d_2}\mathcal{D}f\|^2_2,
\end{equation}
where the sum is taken over partial derivatives of order less than $d_2\ll1$ with respect to a fixed basis of the tangent space, $\on{ht}(x)$ denotes a height function on
the non-compact space~$\mathcal{X}_{d-1}$ (see Definition \ref{height}) and $\|\cdot\|_2$ denotes the $L^2$ norm on $\mathbb{S}^{d-1} \times \mathcal{X}_{d-1}$ with respect to the natural measures.  For a further discussion on Sobolev norms, we refer to  
Section \ref{sobolev}. 

Finally, we note that by a theorem of Legendre, a positive integer can be written as a sum of three squares if and only if it is not of the form $4^{n}(8k+7)$ for some integers $k$ and $n$; and Lagrange proved that every positive integer can be written as a sum of four squares. This implies that for $d=4$, the set $\mathcal{Q}_{D}$ is non-empty if $D$ is not divisible by $8$; and that it is never empty for $d=5$.
Throughout the paper we will assume that $D$ is chosen such that $\mathcal{Q}_D$ is non-empty.


\subsection{Unipotent Dynamics and the Splitting Condition}

The proof of Theorem~\ref{mainresultAES} in the case of~$d\geq 4$ in \cite{AES} uses a $p$-adic analogue
of the Mozes-Shah theorem~\cite{ms} as provided by Gorodnik and Oh in \cite{Gorodnik-Oh}.
Hence it can be seen as a corollary of Ratner's measure classification theorem for 
unipotent flows on $S$-arithmetic
quotients, see \cite{ratner} and \cite{margulis}. However, for these theorems to be useful
one needs to find unipotent flows related to the equidistribution problem. While it is easy
to relate the problem at hand to the dynamics of a semi-simple subgroup, it is not immediate that this semi-simple subgroup is non-compact and so contains unipotent subgroups -- 
via this `splitting condition' the 
condition that $p$ does not divide $D$ enters into the proof of Theorem~\ref{mainresultAES}. 
We also refer to \cite{Ellenberg-Venkatesh} where the same method
has been applied before and the same splitting condition in low dimensions
appears.

In \cite{EMV} Margulis, Venkatesh,
and the first named author made certain cases of the Mozes-Shah 
theorem on real homogeneous spaces effective. In \cite{EMMV}
Margulis, Mohammadi, Venkatesh, and the first named author used 
similar arguments to prove an effective equidistribution on an adelic
quotient. Relying on Prasad's volume \cite{Prasad}
formula this effective theorem does not require the splitting condition.
However, the main result of \cite{EMMV} is restricted to maximal semi-simple subgroups. For our application
this means that the equidistribution on~$\bS^{d-1}$ or the equidistribution on~$\mathcal{X}_{d-1}$
can be obtained directly from \cite{EMMV} without the congruence condition on~$D$
but not the joint equidistribution
 on~$\bS^{d-1}\times\mathcal{X}_{d-1}$.  Our main argument applies the same technique but
 by limiting to the cases $d=4,5$ we can reduce the input from algebraic geometry and
Bruhat-Tits theory to its minimum and at the same time avoid the use of Prasad's volume formula. 
In fact we will not assume Bruhat-Tits theory and prove what is needed along the way. 
As one motivation of the current paper was also to provide an introduction to the 
adelic equidistribution theorem in \cite{EMMV} we hope that this helps some readers.


\section{Reformulation within Homogeneous Dynamics}
\label{ReformulationTowardsHomogeneousDynamics}

In this section we are going to recall the argument from \cite{AES}
that relates  Theorem~\ref{mainresultAES} to an 
equidistribution result on a $p$-adic cover of a homomgeneous space. By doing so, we introduce some notation that is used throughout the paper. We adopt most of the notation from \cite{AES} but for the fact that lattices appear to the right in loc cit.\ 
and we confirm in this point with \cite{EMMV} by having the lattices on the left. Another change to \cite{AES} concerns   that we only prove equidistribution on the space of shapes of lattices instead of the space of grids
(which leads to some changes in the notation). We will also prove in this section a few preliminary results
and deductions.

We fix throughout the paper some integer $D>1$
and some vector $v \in \widehat{\bZ}^{d}$ of norm $\|v\|=\sqrt{D}$. 
We will refer to $D$ as the discriminant as it equals the discriminant of the
integer quadratic form obtained by restricting $\|\cdot\|^2$ to $\Lambda_v$, see Section \ref{geometry} below.
Depending on~$D$ we will also fix some prime number $p$ as in 
Section \ref{choosing_prime}. 


\subsection{The covolume}\label{geometry}

We start by recalling from \cite{AES} that $\Lambda_v=v^{\perp} \cap \mathbb{Z}^{d}$ is a lattice of 
covolume $\sqrt{D}=\left\|v\right\|$ in $v^{\perp}$. In fact, since $v\in\bZ^d$ is primitive,
 there exists a vector $w\in\bZ^d$ with $\langle v,w\rangle=1$, which implies with $\langle v,\Lambda_v\rangle=0$ that $\bZ^d=\Lambda_v+\bZ w$ and that $w$ 
has distance $D^{-\frac12}$ from $v^\perp$. As the covolume of $\bZ^d$ is $1$ we see that $\Lambda_v$ has covolume $\sqrt{D}$
in the hyperplane~$v^\perp$. In particular, this shows that~$\Lambda_v k_v a_v$
has covolume 1 as claimed in the introduction.


\subsection{Choosing the prime}
\label{choosing_prime}
Throughout the paper $p$ always denotes a prime number, which we will now find
to satisfy  the congruence condition of Theorem~\ref{mainresultAES} in the case of $d=4,5$.
The crucial difference to Theorem~\ref{mainresultAES} in \cite{AES}  is that in Theorem \ref{mainresultAES} the prime is fixed
and $D$ is assumed to satisfy $p\nmid D$ while here we allow $D$ to vary freely (always 
assuming that $\mathcal{Q}_D$ is non-empty)
and choose\footnote{This means that to some extend the ambient space
or at least the dynamics considered varies with $D$, which is the reason why the ineffective measure classification results for unipotent flows and their corollaries are insufficient.} $p$ according to $D$ by using the prime number theorem as follows.

\begin{proposition}\label{prime_existence}
For any $M>0$ there exists $D_M$ such that  for all positive integers $D\geq D_M$ 
there exists a prime $M\leq p \ll \log(D)^{2}$ satisfying the following conditions:
\begin{itemize}
\item $p \equiv 1 \pmod 4$,
\item $p \nmid D$.
\end{itemize}
\end{proposition}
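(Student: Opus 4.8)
The plan is to combine the prime number theorem in arithmetic progressions (Dirichlet's theorem with quantitative control) with a trivial counting of prime divisors of $D$. Recall that for the modulus $4$ the residue class $1 \pmod 4$ is coprime to $4$, so by Dirichlet's theorem there are infinitely many primes $p \equiv 1 \pmod 4$; we need the quantitative statement that $\pi(x; 4, 1) \asymp x/\log x$, which follows from the prime number theorem in arithmetic progressions (indeed for a fixed small modulus no deep input such as Siegel–Walfisz is needed — classical estimates suffice, or one can even cite Chebyshev-type bounds refined by Dirichlet). Concretely, there is an absolute constant $c_0 > 0$ such that for all $x \geq 3$ one has $\#\{p \leq x : p \equiv 1 \pmod 4\} \geq c_0 x / \log x$.

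The key steps, in order, are as follows. First, fix $M > 0$ and set $x = x(D) = C (\log D)^2$ for a large absolute constant $C$ to be pinned down. Second, count the primes in the window: by the estimate above, the number of primes $p$ with $M \leq p \leq x$ and $p \equiv 1 \pmod 4$ is at least $c_0 x/\log x - \pi(M) \gg (\log D)^2 / \log\log D$ once $D$ (hence $x$) is large enough in terms of $M$. Third, bound the number of "bad" primes, i.e.\ those dividing $D$: since $D$ has at most $\log_2 D \ll \log D$ distinct prime factors, at most $O(\log D)$ of the primes in our window are excluded by the condition $p \nmid D$. Fourth, compare the two counts: $(\log D)^2/\log\log D$ eventually dominates $\log D$, so for $D \geq D_M$ there remains at least one prime $p$ with $M \leq p \ll (\log D)^2$ satisfying both $p \equiv 1 \pmod 4$ and $p \nmid D$. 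Choosing $D_M$ large enough to absorb the $\pi(M)$ term and to ensure the window count beats the divisor count completes the argument.

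The main obstacle — which is really a bookkeeping point rather than a genuine difficulty — is making sure the quantitative lower bound $\#\{p \leq x : p \equiv 1 \pmod 4\} \gg x/\log x$ is invoked with the correct uniformity: we only ever use a \emph{fixed} modulus $4$, so there is no issue with the implied constant depending on the modulus, and no exceptional-zero subtleties arise. One should also be slightly careful that the upper bound $p \ll (\log D)^2$ in the conclusion is compatible with the lower bound $p \geq M$: this forces $(\log D)^2 \gg M$, i.e.\ $D$ at least exponentially large in $\sqrt{M}$, which is exactly the kind of dependence hidden in "$D \geq D_M$". Everything else is elementary: the divisor count $\omega(D) \ll \log D$ is immediate from $\prod_{p \mid D} p \leq D$ together with $p \geq 2$.

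Finally, a remark on why the exponent $2$ in $p \ll (\log D)^2$ is a convenient choice and not essential: any power $(\log D)^{1+\delta}$ with $\delta > 0$ would work, since the prime count in a window of that length still outgrows $\omega(D) \asymp \log D$; the value $2$ is simply a safe, clean choice that leaves comfortable room, and having $p$ polylogarithmic in $D$ is all that is needed for the effective estimates later in the paper (where $p$ enters, e.g., volume and injectivity-radius bounds as a power of $p$, which then contributes only a $D^{o(1)}$ loss).
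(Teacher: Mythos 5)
Your proof is correct and follows essentially the same route as the paper: the quantitative prime number theorem in the fixed progression $1 \bmod 4$ applied to a window $[M, C(\log D)^2]$, followed by a pigeonhole against the prime divisors of $D$. The only difference is in how the pigeonhole is executed: you bound the number of excluded primes directly by $\omega(D)\le \log_2 D \ll \log D$, which the window count $\gg (\log D)^2/\log\log D$ beats, whereas the paper argues by contradiction, observing that if every admissible prime up to $x$ divided $D$ then $D\ge p_1\cdots p_k > k!$ would force $(\log D)^2 \gg x$; both versions are valid, and yours is marginally more direct (and, as you note, shows that any window of size $(\log D)^{1+\delta}$ would already suffice).
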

\begin{proof}
Define $\pi_M\left(x;4,1\right)$ to be the number of primes $M\leq p \leq x$ with $p \equiv 1 \pmod 4$. As removing the primes below $M$ is irrelevant for the asymptotics 
the prime number theorem for primes in arithmetic progressions gives
\[
\lim_{x \rightarrow \infty}\frac{\pi_M\left(x;4,1\right)\log(x)}{x} = \frac{1}{2}.
\]
In particular we have
\[
\pi_M(x;4,1) \geq  \frac{x}{3\log(x)}
\]
for all sufficiently large $x$ (depending only on $M$).

 Let $x$ satisfy this estimate and set $k= \pi_M\left(x;4,1\right)$. Suppose that $\left\{p_{ 1},\dots,p_{k}\right\}$ are all the primes satisfying $M\leq p \leq x$ and $p \equiv 1 \pmod 4$ and that all these primes divide $D$. Since these are pairwise different primes, $D$ is also divisible by the product $p_{ 1}\dots p_{k}$. But this implies that
\begin{equation*}\label{eq:to_contradict}
D \geq p_{ 1}\dots p_{k} > k ! =  \pi_M\left(x;4,1\right) !
\end{equation*}
Using the fact that $k! \geq e^{k}$ once $x$ (and hence $k$) is
sufficiently large and also that $k\geq \frac{x}{3\log(x)} \gg \sqrt{x}$
we have  that
$$(\log D)^2>\log{\bigl(\left(\pi_M\left(x;4,1\right) \right)!\bigr)}^2\gg x.$$
Thus if all primes congruent $1 \pmod 4$ between $M$ and $x$ were to divide $D$ and $x$ is sufficiently large, then
$(\log D)^{2} > \ref{const:smallprime2} x$ for some constant $\constc\label{const:smallprime2} > 0$.

We now set $x=\frac{1}{\ref{const:smallprime2}} \log(D)^{2}$, and assume that $D$ is sufficiently big so that $x$ satisfies all of the above estimates. It follows that there exists a prime $M\leq p < x$ which does not divide $D$ and such that $p \equiv 1 \pmod 4$.
\end{proof}


\subsection{Ambient spaces}

For an algebraic group $\mathbb{G}$, we write $G_{\infty} = \mathbb{G}(\mathbb{R})$ 
 and $G_{p} = \mathbb{G}(\mathbb{Q}_{p})$, and for $S = \{p,\infty\}$ we set $G_{S} = G_{\infty} \times G_{p}$. Moreover, let $G_{p}^{+}$ be the finite index subgroup of $G_{p}$ 
 which is generated by unipotent elements (we will see that $\mathbb{G}$ and $p$ have this property) and put $G_{S}^{+} = G_{\infty} \times G_{p}^{+}$. 
 As for the real place, we note that 
 for the algebraic groups we consider (namely $\on{SO}_{d}$ and $\on{SL}_d$) the group of all $\bR$-points
 will be connected in the Hausdorff topology. If a group $H$ embeds in both groups 
 $G_{1}$ and  $G_{2}$, we write $\Delta_{H} = \left\{(h,h):h \in H\right\}$
 for the diagonally embedded subgroup. 
 
 From now on we let either $d=4$ or $d=5$ and introduce the   groups
\[
\mathbb{G}_{1} = \on{SO}_{d},~ \mathbb{G}_{2} = \on{SL}_{d-1} ~ \mbox{and} ~ \mathbb{G}_\text{joint} = \mathbb{G}_{1}\times \mathbb{G}_{2}.
\]
We refer to these groups as the ambient groups as these define the homogeneous spaces on which
we study dynamical and equidistribution properties. Implicit constants are allowed to depend on 
these algebraic groups (but not on the chosen prime $p$).
Let $\Gamma = \mathbb{G}_\text{joint}(\mathbb{Z}[\tfrac{1}{p}])$ be diagonally embedded in $G_{\text{joint},S}$. Recall that by a theorem of Borel and Harish-Chandra (see e.g.~\cite[Sect.~I.3.2]{margulisbook}), $\Gamma$ is a lattice and define the ambient spaces
\[
\mathcal{Y}_{i} = \Gamma G_{i,S}  \mbox{ and } \mathcal{Y}_\text{joint} =
\Gamma G_{\text{joint},S}\cong \mathcal{Y}_{1} \times \mathcal{Y}_{2},
\]
for $i=1,2$ as well as
\[
\mathcal{Y}_{\infty} = \mathbb{G}_{\text{joint}}(\mathbb{Z})\backslash G_{\text{joint},\infty}, ~\mathcal{Y}^+_{i}=\Gamma G^+_{i,S} \mbox{ and } \mathcal{Y}_\text{joint}^+ = \mathcal{Y}^+_{1} \times \mathcal{Y}_{2}
\]
for $i=1,2$. 
Note that $G_{2,p}^+=G_{2,p}$ and so $\mathcal{Y}^+_{2}=\mathcal{Y}_{2}$. 
Since the orbit $\Gamma\bG(\bQ_S)$ is isomorphic to $\bG(\bZ[\tfrac{1}{p}])\backslash\bG(\bQ_S)$, we keep writing $\Gamma$ for $\bG(\bZ[\tfrac{1}{p}])$ for any~$\bG\in\{\bG_1,\bG_2,\bG_{\text{joint}}\}$. For $x\in \Gamma\backslash\bG(\bQ_S)$ and $g\in\bG(\bQ_S)$ we denote the natural action by $g\acts x=xg^{-1}$.


\subsection{The orbits of the stabilizing subgroups}\label{orbitsofsubgroups}

Recall from Section \ref{geometry} that $\Lambda_v= v^\perp\cap\bZ^d=\bZ w_1+\cdots+ \bZ w_{d-1}$
has covolume $\sqrt{D}$ and~$\bZ^d=\bZ w_1+\cdots+\bZ w_{d-1}+\bZ w$. 
We define the matrix $g_v$ with rows $w_1,\ldots,w_{d-1},w$  
such that $\bZ^{d-1}g_{v} = \Lambda_{v}$.
We may also suppose that $\det g_v>0$ so that $g_v\in \SL_d(\bZ)$ 
and note that $g_{v}k_v$ sends $\bR^{d-1}$ to itself. 
By the covolume calculation in Section \ref{geometry}
$g_vk_va_v$ belongs to $\on{ASL}_{d-1}(\bR)$ where
\[
 \on{ASL}_{d-1}=\left\{
 \begin{pmatrix}
	 \theta &0\\ w'&1
	 \end{pmatrix} :
	 \theta\in\on{SL}_{d-1}
 \right\}.
\]
If $\theta_{v}$ denotes the upper left~$d-1$ by~$d-1$ block matrix of $g_{v}k_{v}a_{v}$, then the shape of $\Lambda_{v}$ is given by 
\[
[\Lambda_{v}] = \on{SL}_{d-1}(\mathbb{Z})\theta_{v}\on{SO}_{d-1}(\mathbb{R}).
\]
We also define the stabilizer group
\[
\bH_{v} = \left\{g \in \operatorname{SO}_{d} : vg = v\right\} = \operatorname{Stab}_{\mathbb{G}_{1}}(v)
~ \mbox{and} ~ \bH_{\Lambda_v} = \overline{g_v\bH_{v}g_v^{-1}},
\]
where the latter is the projection of $g_v\bH_{v}g_v^{-1}<\operatorname{ASL}_{d-1}$ to $\operatorname{SL}_{d-1}$. We note that $\bH_v$ is semi-simple and so intersects
the unipotent kernel of this projection trivially, which shows that $\bH_v$ and $\bH_{\Lambda_v}$
are isomorphic algebraic groups.
Finally we define their diagonal embedding and its projection
\[
\widetilde{\mathbb{L}}_{v} = (e,g_{v})\Delta_{\mathbb{H}_{v}}(e,g_{v}^{-1}) \leq \mathbb{G}_{1} \times \operatorname{ASL}_{d-1}
~ \mbox{and} ~ {\mathbb{L}}_{v}<\mathbb{G}_\text{joint}.
\]
 Inside the space $\mathcal{Y}^+_\text{joint}$, we consider the joint orbit
 \[
 \Gamma {L}_{v,S}^{+}\left(k_{v},e,\theta_{v},e\right) = \Gamma \left(k_{v},e,\theta_{v},e\right)\Delta_{\on{SO}_{d-1}(\mathbb{R}) \times \mathbb{H}_{v}^+(\mathbb{Q}_{p})},
 \]
where $e$ is the identity element in the corresponding group. Let
\[
\mu_{v,S} = m_{\Gamma {L}_{v,S}^{+}\left(k_{v},e,\theta_{v},e\right)}
\]
be the Haar measure on this orbit and let $m_{\mathcal{Y}^+_\text{joint}}$ be the Haar measure on $\mathcal{Y}^+_\text{joint}$ both normalized to be probabilty measures. If $\pi_i:\mathcal{Y}_\text{joint}\to\mathcal{Y}_i$ denotes the natural projection 
for~$i=1,2$ then ${(\pi_i)}_*\mu_{v,S}$ is the probability orbit measure on $\Gamma H^{+}_{v,S} (k_v,e)$ if $i=1$ respectively $\Gamma \HVS^+ (\theta_v,e)$ if $i=2$.

As we will show in the bulk of the paper these orbits equidistribute in the corresponding ambient spaces with respect to the Haar measures $m_{\mathcal{Y}^+_{\operatorname{joint}}}$, resp.\ ${\pi_i}_*m_{\mathcal{Y}^+_\text{joint}}=m_{\mathcal{Y}^+_i}$ for $i=1,2$.
For the connection to Theorem~\ref{mainresult} the following version is most useful.

\begin{theorem}
\label{t:FullDynamics}
There exist some absolute constant
$\consta\label{exp:FullDynamicsJoint} > 0$ and $d_2 \geq1$ such that for any large enough~$D$
there exists a prime number $p$ such that for all $v \in \widehat{\mathbb{Z}}^{d}$ with $\left\|v\right\|^{2} = D$ and for any $f\in C_c^\infty(\mathcal{Y}_{\operatorname{joint}})$
\[
\left|\mu^{\on{Full}}_{v,S}(f) - m_{\mathcal{Y}_{\operatorname{joint}}}(f)\right| \ll D^{-\ref{exp:FullDynamicsJoint}}\mathcal{S}_{d_2}(f)
\]
where $ \mu^{\on{Full}}_{v,S} = m_{\Gamma {L}_{v,S}\left(k_{v},e,\theta_{v},e\right)}$.
The analogous statement holds for functions
on $\mathcal{Y}_i$ for $i=1,2$.
\end{theorem}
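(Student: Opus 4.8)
\section*{Proof proposal for Theorem \ref{t:FullDynamics}}

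The plan is to deduce Theorem \ref{t:FullDynamics} from the effective equidistribution theorem for periodic orbits of semisimple groups on adelic (here $S$-arithmetic) quotients, in the style of \cite{EMMV}. The orbit in question is the $\LvS$-orbit through the point $(k_v,e,\theta_v,e)$, where $\LvS$ is the $S$-points of the semisimple $\bQ$-group $\Lv\cong\Hv=\Stab_{\SO_d}(v)$. Since $\Hv$ is a special orthogonal group of a quadratic form in $d-1\in\{3,4\}$ variables (the restriction of $\|\cdot\|^2$ to $v^\perp$, which has discriminant $D$), it is semisimple, and for $d=4,5$ it is $\bQ$-almost simple of absolute type $A_1$ resp.\ $B_2\simeq C_2$. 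The first task is to verify the hypotheses needed to apply the effective equidistribution input: (i) $\Lv$ has no $\bQ$-anisotropic factors at the places of $S$ that would make the orbit fail to be ``expanding'' --- this is exactly where the prime $p$ chosen in Proposition \ref{prime_existence} enters, since $p\equiv 1\pmod 4$ and $p\nmid D$ guarantee that the quadratic form defining $\Hv$ is isotropic over $\bQ_p$, hence $\Hv(\bQ_p)$ is non-compact and contains $\bQ_p$-unipotents (this is the ``splitting condition''); (ii) the orbit is periodic, i.e.\ $\Gamma\cap (k_v,e,\theta_v,e)\LvS(k_v,e,\theta_v,e)^{-1}$ is a lattice in the conjugated copy of $\LvS$, which follows from Borel--Harish-Chandra applied to the arithmetic group $\bH_v(\bZ[\tfrac1p])$ (conjugated by $g_v\in\SL_d(\bZ)$, which preserves integrality up to bounded denominators).

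The second and main task is to control the two quantities that govern the error term in the effective equidistribution theorem: the \emph{volume} of the periodic orbit and its \emph{discriminant} (or injectivity radius / closest-return data), and to show both are polynomially controlled by $D$. The volume of $\Gamma\LvS(k_v,e,\theta_v,e)$ should be $\asymp D^{\kappa}$ for some explicit $\kappa$ depending only on $d$; in the approach of the present paper this is to be computed \emph{by hand} rather than via Prasad's formula, using the explicit description of $\bH_v(\bZ)$ as the orthogonal group of an integral quadratic form of discriminant $D$ together with the local volume computation at $p$ (where $p\ll(\log D)^2$ contributes only a negligible factor) and a mass-formula / counting argument for the real orthogonal group. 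The needed input from Bruhat--Tits theory is minimal here precisely because $d-1\le 4$: the relevant local groups are $\SO_3$ and $\SO_4$, for which one can write down the parahoric structure explicitly. One then feeds ``$\mathrm{vol}\ll D^{\kappa}$'' and a matching lower bound for the relevant discriminant-type quantity into the theorem to obtain $|\mu_{v,S}^{\mathrm{Full}}(f)-m_{\mathcal{Y}_{\mathrm{joint}}}(f)|\ll D^{-\kappa_1}\mathcal{S}_{d_2}(f)$.

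The hard part will be the implementation of the ``property ($\tau$) with uniform spectral gap'' ingredient and the linear-algebraic ``closing lemma'' that together drive the effective argument: one must produce, for the periodic $\LvS$-orbit, an element of $\LvS$ that moves the orbit point by a small but polynomially-bounded-from-below amount, apply it together with a spectral-gap-based decay-of-matrix-coefficients estimate on $L^2_0(\mathcal{Y}_{\mathrm{joint}})$, and then bootstrap (via the mixing/thickening argument of \cite{EMV,EMMV}) to full equidistribution. The uniform spectral gap on $\mathcal{Y}_{\mathrm{joint}}=\mathcal{Y}_1\times\mathcal{Y}_2$ must be uniform over the varying prime $p$; for the $\SL_{d-1}$-factor $\mathcal{Y}_2$ this is Selberg/Jacquet--Langlands-type input, and for the $\SO_d$-factor $\mathcal{Y}_1$ one uses the corresponding bound for orthogonal groups, in both cases in the $S$-arithmetic setting with $S=\{p,\infty\}$ --- the $p$-dependence of the gap is harmless because the gap is bounded below uniformly in $p$.

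Once Theorem \ref{t:FullDynamics} is established for $\mathcal{Y}_{\mathrm{joint}}$, the statements for $\mathcal{Y}_i$, $i=1,2$, follow by pushing forward under $\pi_i$, since $(\pi_i)_*\mu_{v,S}^{\mathrm{Full}}$ is the corresponding periodic orbit measure, $(\pi_i)_*m_{\mathcal{Y}_{\mathrm{joint}}}=m_{\mathcal{Y}_i}$, and $\mathcal{S}_{d_2}(f\circ\pi_i)\ll\mathcal{S}_{d_2}(f)$. The deduction of Theorem \ref{mainresult} from Theorem \ref{t:FullDynamics} is then the unfolding described in \cite{AES}, tracking Sobolev norms through the identification of $\mathcal{Q}_D$ with the projection of the periodic orbit.
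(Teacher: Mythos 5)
Your plan does not prove this statement as it stands, and it also misses what the paper's own proof of it consists of. In the paper, Theorem \ref{t:FullDynamics} is a short reduction to Theorem \ref{maindynamicalresult}: the groups $G^+_{\text{joint},S}$ and $L^+_{v,S}$ are normal of index $4$ in $G_{\text{joint},S}$ and $L_{v,S}$, the full orbit measure $\mu^{\on{Full}}_{v,S}$ is the average of four $L^+_{v,S}$-orbit measures supported on translated clopen pieces $\mathcal{Y}^j_{\text{joint}}$, and one applies the $+$-version to each piece after translating by explicit coset representatives $\ell^v_j$ with $\|h^v_j\|_p\leq p$, absorbing the Sobolev-norm loss via property (S3) and $p\ll(\log D)^2$. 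Your proposal instead runs the effective machinery directly on the full $L_{v,S}$-orbit in $\mathcal{Y}_{\text{joint}}$, but the spectral input is not available there: property $(\tau)$ (Theorem \ref{propertytau}), the decay of matrix coefficients (S5), and the Mautner/ergodicity statements are formulated for the simply connected cover and the $+$-groups, $\mathcal{Y}_{\text{joint}}$ splits into four clopen $G^+$-orbits, and the principal unipotent flow is not ergodic for $\mu^{\on{Full}}_{v,S}$. Some form of the index-$4$ decomposition, with quantitative control of the representatives so that translation costs only $p^{O(1)}\ll_\varepsilon D^\varepsilon$, is unavoidable -- and that bookkeeping is exactly the content of the paper's proof of this theorem.

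The more serious gap is that you invoke the effective equidistribution theorem of \cite{EMMV} as a black box for the joint orbit. The acting group $\Lv$ (a diagonally embedded copy of $\Hv$) is \emph{not} a maximal semisimple subgroup of $\bG_1\times\bG_2$, and the paper states explicitly that \cite{EMMV} yields the equidistribution on each factor but not the joint one; removing this restriction is the raison d'\^etre of the paper. The missing idea in your sketch is the iterative almost-invariance argument of Section \ref{proofofequidistribution}: use the already-established single-factor equidistribution to produce nearby generic points whose displacement is significantly transverse to the current invariance algebra $\mathfrak{f}$ (Proposition \ref{genericpointsjoint}), convert this by polynomial shearing into almost invariance in a new highest-weight direction, and then enlarge $\mathfrak{f}$ via weight splitting and effective generation (Lemmas \ref{weight_splitting}, \ref{generation}, \ref{hensel}), with a case analysis for $d=4$ and for $d=5$ split/quasi-split, until $\mathfrak{f}=\gog_1\oplus\gog_2$, before running the convolution step. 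Your ``closing lemma plus spectral gap plus bootstrap'' outline describes the maximal case and would give the statements on $\mathcal{Y}_1$ and $\mathcal{Y}_2$, but not on $\mathcal{Y}_{\text{joint}}$. A smaller but relevant slip: for $d=5$ the group $\Hv$ has type $D_2=A_1\times A_1$, not $B_2$, so it need not be absolutely almost simple (e.g.\ $\on{SO}(2,2)(\bQ_p)$ is isogenous to $\SL_2(\bQ_p)\times\SL_2(\bQ_p)$), which materially affects the choice of principal $\SL_2$, the volume estimates, and the Zariski-density arguments in the split case.
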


The notion of smoothness of a function on $\mathcal{Y}_\text{joint}$ and properties of the $S$-adic Sobolev norm $\mathcal{S}_{d_2}$ of degree $d_2$ will be discussed in Section \ref{sobolev}. 
We note however that using e.g.\ the homogeneous space~$\on{SO}_d(\bR)\times\on{SL}_{d-1}(\bZ)\backslash\on{SL}_{d-1}(\bR)$ and a fixed basis of the Lie algebra of $\on{SO}_d\times\on{SL}_{d-1}$
it is easy to define a Sobolev norm $\mathcal{S}_\infty$ on 
$C_c^\infty( \mathbb{S}^{d-1}\times\mathcal{X}_{d-1})$ 
by the formula \eqref{manifold-sobolev} 
(see also the discussion right 
after \eqref{abunchofarrows} below).
The remainder of the section is devoted to the proof
that Theorem \ref{t:FullDynamics} implies
Theorem~\ref{mainresult} (which will follow \cite{AES} closely).


\subsection{Principal genus and Hecke friends}
\label{principalgenus}
First, note that $\on{Stab}_{\mathbb{G}_{1}}(e_{d})(\mathbb{R}) = k_{v}^{-1}\mathbb{H}_{v}(\mathbb{R})k_{v}$ with $k_{v}$ defined as in the introduction. We refer to this group as the standard embedding of $\on{SO}_{d-1}(\mathbb{R})$ in $\on{SO}_{d}(\mathbb{R})$ and call it $H_{\infty}$. We will identify the sphere $\mathbb{S}^{d-1}$ with $G_{1,\infty} / H_{\infty}$ via the right action of $G_{1,\infty}$ on $\mathbb{S}^{d-1}$. We can also embed $H_\infty$ into $G_{2,\infty}$ in an obvious way. We define $\mathbf{S}^{d-1} = \mathbb{G}_{1}(\mathbb{Z}) \backslash \bS^{d-1}$ by identifying points on the sphere on the same $\mathbb{G}_1(\bZ)$-orbit and let $\rho$ be the projection from $\mathcal{Y}_{\infty}\to \mathbf{S}^{d-1}\times\mathcal{X}_{d-1}$ by dividing by $H_\infty\times H_\infty$ from the right.
Let $K = \bG_\text{joint}(\mathbb{Z}_{p})$ and define the clopen orbit $\mathcal{U} = \Gamma (G_{\text{joint},\infty} \times K)\cap \mathcal{Y}_{\text{joint}}$. The projection $\pi$ from $\Gamma (G_{\text{joint},\infty} \times K)$ to $\mathcal{Y}_{\infty}$ is defined by dividing from the right by $\left\{e\right\}\times K$. Finally, there is also a projection to the first factor $\pi_{1} : \mathcal{Y}_{\text{joint}} \rightarrow \mathcal{Y}_{1}$. Summarising some of the notation we have
\begin{equation}\label{abunchofarrows}
\mathcal{Y}_{1}\xleftarrow{\pi_1}
\mathcal{Y}_{\text{joint}}\xleftarrow{\imath} 
\Gamma (G_{\text{joint},\infty} \times K)\cap \mathcal{Y}_{\text{joint}} \xrightarrow{\pi} 
\mathcal{Y}_{\infty}\xrightarrow{\rho} \mathbf{S}^{d-1}\times\mathcal{X}_{d-1}.
\end{equation}
Here~$\imath$ is the inclusion map of the clopen orbit $\mathcal{U}\subset \mathcal{Y}_{\text{joint}}$.
Using this inclusion we may think of every function $f$ on
$\mathbf{S}^{d-1}\times\mathcal{X}_{d-1}$ as a function on~$\mathcal{Y}_{\text{joint}}$: indeed precomposing $f$
with $\rho\circ\pi$ we obtain a function on $\mathcal{U}$, 
which we
may extend to $\mathcal{Y}_{\text{joint}}$ by setting it equal to zero
on the complement $\mathcal{Y}_{\text{joint}}\setminus\mathcal{U}$. The restriction of the Sobolev norm
$\mathcal{S}_{d_2}$
to the functions in $C_c^\infty(\mathcal{Y}_{\text{joint}})$
agrees with the Sobolev 
norm $\mathcal{S}_\infty$.

By Theorem 5.1 in \cite{rapinchuk}, there exists a finite set $M \subset H_{v,p}$ such that
\begin{equation}\label{pr-finite-M}
\bigcup_{h \in \mathbb{H}_{v}^{+}(\mathbb{Q}_{p})}\Gamma h\mathbb{H}_{v}(\mathbb{Z}_{p}) = \bigsqcup_{h \in M}\Gamma h\mathbb{H}_{v}(\mathbb{Z}_{p}).
\end{equation}

 Note that for each $h \in M$, the double coset $\Gamma h\mathbb{H}_{v}(\mathbb{Z}_{p})$ is either contained in $\pi_{1}(\mathcal{U})$, or it is disjoint from $\pi_{1}(\mathcal{U})$. Set
\[
M_{0} = \left\{h \in M : \Gamma h\mathbb{H}_{v}(\mathbb{Z}_{p}) \subset \pi_{1}(\mathcal{U})\right\}.
\]
By definition, $M_{0} \subset \Gamma \mathbb{G}_{1}(\mathbb{Z}_{p})$ and for $h\in M_{0}$ we can write $h = \gamma_{1}(h)c_{1}(h)$ with $\gamma_{1}(h) \in \Gamma$ and $c_{1}(h) \in \mathbb{G}_{1}(\mathbb{Z}_{p})$. Since $\mathbb{G}_{2}$ is simply connected, a similar statement is true for every element of $
G_{2,p}$. In fact, for any $h \in M_{0}$ we can project $g_{v}hg_{v}^{-1}$
to~$\on{SL}_{d-1}(\bQ_p)$ and then write the image as $\gamma_{2}(h)c_{2}(h)$ with $\gamma_{2}(h) \in \Gamma$ and $c_{2}(h) \in \mathbb{G}_{2}(\mathbb{Z}_{p})$.

We define  the shorthand $\widehat{\mathbf{Z}}^{d} = \mathbb{G}_{1}(\mathbb{Z}) \backslash \widehat{\bZ}^{d}$ and write $\mathbf{v}$ for the $\mathbb{G}_{1}(\mathbb{Z})$-orbit of a vector $v \in \widehat{\mathbb{Z}}^{d}$. Notice that $[\Lambda_{v}]$ only depends on $\mathbf{v}$ and so we also denote it by $[\Lambda_{\mathbf{v}}]$. Clearly, the projection $\widehat{\bZ}^{d}\rightarrow \bS^{d-1},~ v \mapsto \tfrac{v}{\left\|v\right\|}$
descends to a projection $\widehat{\mathbf{Z}}^{d} \rightarrow \mathbf{S}^{d-1},~\mathbf{v} \mapsto \tfrac{\mathbf{v}}{\left\|\mathbf{v}\right\|}$. Therefore, the double coset
\[
\mathbb{G}_\text{joint}(\mathbb{Z})(k_{v},\theta_{v})H_{\infty} \times H_{\infty}
\]
represents the pair
\[
\left(\tfrac{\mathbf{v}}{\left\|\mathbf{v}\right\|},\left[\Lambda_{\mathbf{v}}\right]\right) \in \mathbf{S}^{d-1} \times\mathcal{X}_{d-1}.
\]

We define a relation $\sim$ on $\left\{v \in \widehat{\bZ}^{d} : \left\|v\right\|^{2} = D\right\}$ in the following way: $v \sim w$ (is a Hecke friend) if and only if there exist $\gamma \in \bG_1(\bZ[\frac{1}{p}])$ and $b \in \bG_1(\bZ_{p})$ such that $w = v\gamma = vb$ (and so $b\gamma^{-1} \in H_{v,p}$).
\begin{lemma}[{\cite[Section 5]{AES}}]
\label{equivalencerelation}
The relation $\sim$ is an equivalence relation and it descends to an equivalence relation on $\left\{\mathbf{v} \in \widehat{\mathbf{Z}}^{d} : \left\|\mathbf{v}\right\|^{2} = D\right\}$.
\end{lemma}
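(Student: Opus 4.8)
The plan is to verify the two claims separately, both by unwinding the definitions. First I would show that $\sim$ is an equivalence relation on $\{v\in\widehat{\bZ}^d:\|v\|^2=D\}$. Reflexivity is immediate by taking $\gamma=b=e$. For symmetry, suppose $w=v\gamma=vb$ with $\gamma\in\bG_1(\bZ[\tfrac1p])$ and $b\in\bG_1(\bZ_p)$; then $v=w\gamma^{-1}=wb^{-1}$, and since $\gamma^{-1}\in\bG_1(\bZ[\tfrac1p])$ and $b^{-1}\in\bG_1(\bZ_p)$ this exhibits $w\sim v$. For transitivity, if $w=v\gamma_1=vb_1$ and $u=w\gamma_2=wb_2$ then $u=v(\gamma_1\gamma_2)=v(b_1b_2)$, with $\gamma_1\gamma_2\in\bG_1(\bZ[\tfrac1p])$ and $b_1b_2\in\bG_1(\bZ_p)$, so $v\sim u$. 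The parenthetical condition $b\gamma^{-1}\in H_{v,p}$ is automatic: $vb\gamma^{-1}=w\gamma^{-1}=v$, so $b\gamma^{-1}$ fixes $v$ and lies in $\bG_1(\bQ_p)$, hence in $H_{v,p}=\on{Stab}_{\mathbb{G}_1}(v)(\bQ_p)$. (One should also note that the relation stays within the level set: $\|w\|=\|vb\|=\|v\|$ since $b\in\on{SO}_d(\bQ_p)$ preserves the form, and $w\in\widehat{\bZ}^d$ since it lies in $\bZ^d$ — as $w=v\gamma$ with $\gamma\in\SL_d(\bZ[\tfrac1p])$ and $w=vb$ shows no denominators actually occur at $p$ either, so $w\in\bZ^d$ is primitive.)

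Second I would show $\sim$ descends to $\{\mathbf v\in\widehat{\mathbf Z}^d:\|\mathbf v\|^2=D\}$. Recall $\mathbf v$ is the $\mathbb{G}_1(\bZ)$-orbit of $v$, and $\mathbb{G}_1(\bZ)\subset\bG_1(\bZ[\tfrac1p])\cap\bG_1(\bZ_p)$. The point to check is that $\sim$ is well-defined on orbits, i.e.\ if $v\sim w$ and $v'=v\delta$, $w'=w\eta$ with $\delta,\eta\in\mathbb{G}_1(\bZ)$, then $v'\sim w'$. Writing $w=v\gamma=vb$, we get $w'=w\eta=v\gamma\eta=(v'\delta^{-1})\gamma\eta=v'(\delta^{-1}\gamma\eta)$ and likewise $w'=v'(\delta^{-1}b\eta)$; since $\delta^{-1}\gamma\eta\in\bG_1(\bZ[\tfrac1p])$ and $\delta^{-1}b\eta\in\bG_1(\bZ_p)$, this gives $v'\sim w'$. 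Thus the relation $v\sim w$ on $\widehat{\bZ}^d$ only depends on $\mathbf v$ and $\mathbf w$, and since it is already an equivalence relation on vectors it remains one on orbits.

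This statement is essentially bookkeeping; there is no real obstacle, the only mild subtlety being to confirm that a Hecke friend of a \emph{primitive} integer vector of norm $\sqrt D$ is again a primitive integer vector of norm $\sqrt D$, so that $\sim$ genuinely lives on the set claimed — and this follows from the two presentations $w=v\gamma$ (integrality away from $p$, primitivity) and $w=vb$ (integrality at $p$, norm preservation) together with $v$ being primitive. Everything else is the standard verification that a relation generated by a pair of compatible subgroup actions is an equivalence relation, compatible with passing to a quotient by a subgroup of their intersection.
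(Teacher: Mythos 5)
Your verification is correct, and it is the straightforward unwinding of the definitions that this lemma calls for: reflexivity, symmetry and transitivity via the group structure of $\bG_1(\bZ[\tfrac1p])$ and $\bG_1(\bZ_p)$, and descent to $\widehat{\mathbf{Z}}^d$ using $\mathbb{G}_1(\bZ)\subset\bG_1(\bZ[\tfrac1p])\cap\bG_1(\bZ_p)$. The paper itself gives no proof, deferring to \cite[Section 5]{AES}, and your argument (including the side remarks that $b\gamma^{-1}\in H_{v,p}$ and that Hecke friends of primitive norm-$\sqrt{D}$ vectors remain such, using the $\gamma$-presentation away from $p$ and the $b$-presentation at $p$) matches the intended routine verification.
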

For $v \in \widehat{\mathbb{Z}}^{d}$ we define
\[
P_{v} = \left\{\mathbf{w} : \mathbf{w} \sim \mathbf{v}\right\}, \  R_{v} = 
\left\{\left(\frac{\mathbf{w}}{\left\|\mathbf{w}\right\|},\left[\Lambda_{\mathbf{w}}\right]\right) : \mathbf{w} \in P_{v}\right\}
\  \mbox{and}\  \mathbf{Q}_D=\bigcup_{\|v\|=D} R_v.
\]
\begin{proposition}[{\cite[Proposition 6.2]{AES}}]
\label{SupportOfNu}
For $h \in M_{0}$, define $\varphi(h) = \mathbb{G}_{1}(\mathbb{Z})\gamma_{1}(h)k_{v}H_{\infty}$. Then $\varphi$ is a bijection from $M_{0}$ to $\left\{\mathbb{G}_{1}(\mathbb{Z})k_{u}H_{\infty} : \mathbf{u} \in P_{v}\right\}$. For any $h\in M_0$
we have that $\varphi(h)$ corresponds to $\mathbf{u} = \mathbf{v}\gamma_{1}(h)^{-1}$
and we may identify the double coset 
$ \mathbb{G}_{2}(\mathbb{Z})\gamma_{2}(h)\theta_{v}H_{\infty}$ 
with the shape $[\Lambda_{\mathbf{u}}]$. 
\end{proposition}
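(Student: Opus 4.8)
This is \cite[Proposition~6.2]{AES}, and the plan is to reproduce that argument. It splits into four parts: (i) $\varphi$ is well defined; (ii) $\varphi(h)=\mathbb{G}_1(\mathbb{Z})k_uH_\infty$, where $\mathbf u=\mathbf v\gamma_1(h)^{-1}$ is the class of a genuine vector in $P_v$; (iii) $\varphi$ is a bijection; (iv) the shape identification $\mathbb{G}_2(\mathbb{Z})\gamma_2(h)\theta_vH_\infty=[\Lambda_{\mathbf u}]$.

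For (i) and (ii) I would fix $h\in M_0$ together with a decomposition $h=\gamma_1(h)c_1(h)$, $\gamma_1(h)\in\Gamma$, $c_1(h)\in\mathbb{G}_1(\mathbb{Z}_p)$. Since any two such decompositions differ by an element of $\Gamma\cap\mathbb{G}_1(\mathbb{Z}_p)=\mathbb{G}_1(\mathbb{Z})$, the coset $\mathbb{G}_1(\mathbb{Z})\gamma_1(h)$, and hence $\varphi(h)$, is independent of the choice. The relation $vh=v$ (valid since $h\in H_{v,p}$), rewritten through $h=\gamma_1(h)c_1(h)$, exhibits $\mathbf v\gamma_1(h)^{-1}$ as the class of a primitive vector in $\mathbb{Z}^d$ of norm $\sqrt D$: integrality away from $p$ is immediate since $\gamma_1(h)^{-1}\in\mathbb{G}_1(\mathbb{Z}[\tfrac{1}{p}])$, at $p$ it holds because this vector is simultaneously obtained from $v$ by an element of $\mathbb{G}_1(\mathbb{Z}_p)$, and the norm is preserved since $\mathbb{G}_1=\operatorname{SO}_d$. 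The same two descriptions show it is a Hecke friend of $v$, so $\mathbf u\in P_v$. Writing $u$ for a representative, $\tfrac{1}{\sqrt D}vk_v=e_d$ and $v=u\gamma_1(h)$ give $\tfrac{1}{\sqrt D}u(\gamma_1(h)k_v)=e_d$, whence $\gamma_1(h)k_v\in k_uH_\infty$ by the uniqueness of $k_uH_\infty$ recalled in the introduction; thus $\varphi(h)=\mathbb{G}_1(\mathbb{Z})k_uH_\infty$.

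For (iii), injectivity follows by reversing this computation: if $\varphi(h)=\varphi(h')$, then $\gamma_1(h)^{-1}\delta\gamma_1(h')\in k_vH_\infty k_v^{-1}=\mathbb{H}_v(\mathbb{R})$ for some $\delta\in\mathbb{G}_1(\mathbb{Z})$, hence lies in $\mathbb{H}_v(\mathbb{Q})\cap\mathbb{G}_1(\mathbb{Z}[\tfrac{1}{p}])=\mathbb{H}_v(\mathbb{Z}[\tfrac{1}{p}])$; together with the parallel relation at $p$ read off from the decompositions of $h$ and $h'$, this places $h$ and $h'$ in the same double coset of \eqref{pr-finite-M}, so $h=h'$ as $M$ is a transversal. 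For surjectivity: given $\mathbf u\in P_v$, pick $u$ with $\|u\|^2=D$ and $\gamma\in\mathbb{G}_1(\mathbb{Z}[\tfrac{1}{p}])$, $b\in\mathbb{G}_1(\mathbb{Z}_p)$ with $u=v\gamma=vb$; unwinding the Hecke relation, $h_0:=\gamma b^{-1}$ fixes $v$ and lies in $\Gamma\mathbb{G}_1(\mathbb{Z}_p)$, so (after correcting $b$ inside $\mathbb{H}_v(\mathbb{Z}_p)$ to place $h_0$ in $\mathbb{H}_v^+(\mathbb{Q}_p)$) the representative $h_1\in M$ of its double coset in \eqref{pr-finite-M} again lies in $\Gamma\mathbb{G}_1(\mathbb{Z}_p)$, hence in $M_0$; since the correction fixes $v$, part (ii) gives $\varphi(h_1)=\mathbb{G}_1(\mathbb{Z})k_uH_\infty$.

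I expect (iv) to be the main obstacle. The key point is that the block projection $\operatorname{ASL}_{d-1}\to\operatorname{SL}_{d-1}$ intertwines the action of $\mathbb{H}_v$ on $v^\perp$, written in the basis furnished by $g_v$, with the action defining $\mathbb{H}_{\Lambda_v}$: thus $g_vhg_v^{-1}$ records how $h$ moves the lattice $\Lambda_v$ inside $v^\perp$, and the decomposition of its $\operatorname{SL}_{d-1}$-projection as $\gamma_2(h)c_2(h)$ separates the genuine change of shape ($\gamma_2(h)$) from the factor $c_2(h)\in\mathbb{G}_2(\mathbb{Z}_p)$, which is unimodular at $p$ and hence leaves the shape unchanged. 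I would make this quantitative by running a single point of the joint orbit $\Gamma L_{v,S}^{+}(k_v,e,\theta_v,e)$ through the diagram \eqref{abunchofarrows}: applying $h=\gamma_1(h)c_1(h)$ on the $\mathbb{G}_1$-factor and the $\operatorname{SL}_{d-1}$-projection of $g_vhg_v^{-1}=\gamma_2(h)c_2(h)\cdot(\text{unipotent})$ on the $\mathbb{G}_2$-factor moves the $p$-adic coordinates into $K$ at the cost of $(\gamma_1(h),\gamma_2(h))\in\Gamma$, whose real part carries $(k_v,\theta_v)$ to a representative in $\mathcal{Y}_\infty$ whose image under $\rho$ is a point of $\mathbf S^{d-1}\times\mathcal X_{d-1}$; by (ii) its first coordinate is $\tfrac{\mathbf u}{\|\mathbf u\|}$, and its second coordinate is by construction $\mathbb{G}_2(\mathbb{Z})\gamma_2(h)\theta_v\operatorname{SO}_{d-1}(\mathbb{R})$, which must therefore be $[\Lambda_{\mathbf u}]$ by the way $(k_v,\theta_v)$ was built in Section~\ref{orbitsofsubgroups}. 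The delicate part is the $p$-adic bookkeeping: $\Lambda_u$ is only commensurable with $\Lambda_v\gamma_1(h)$, the two lattices differing at $p$ exactly by the restriction of $h$ to $v^\perp$, so one must check that $\Lambda_u$ still has its $a_u$-normalized rotated copy in the $\operatorname{SL}_{d-1}(\mathbb{Z})$-class of $\gamma_2(h)\theta_v$. Here I would invoke that $\mathbb{G}_2=\operatorname{SL}_{d-1}$ is simply connected, so the decomposition $\gamma_2(h)c_2(h)$ is compatible with strong approximation, and carefully match the $\operatorname{ASL}_{d-1}$ and $\operatorname{SL}_{d-1}$ normalizations entering $\theta_v$ and $[\Lambda_v]$.
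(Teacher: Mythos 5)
You should first note that the paper offers no argument for this statement at all: it is quoted verbatim (up to change of conventions) from \cite[Prop.\ 6.2]{AES}, so your sketch can only be measured against the statement itself, and there it has concrete gaps. The central one is in your step (ii). From $vh=v$ and $h=\gamma_1(h)c_1(h)$ the identity you actually get is $v\gamma_1(h)=vc_1(h)^{-1}$, which exhibits the vector $v\gamma_1(h)$ (not $v\gamma_1(h)^{-1}$) as lying in $v\,\mathbb{G}_1(\mathbb{Z}_p)$ and hence as a Hecke friend of $v$. Your claim that $v\gamma_1(h)^{-1}$ is ``simultaneously obtained from $v$ by an element of $\mathbb{G}_1(\mathbb{Z}_p)$'' does not follow from this; it would require, e.g., knowing that $h^{-1}$ again lies in $\Gamma\mathbb{G}_1(\mathbb{Z}_p)$ with a compatible decomposition, which you never address. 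So as written your (ii) does not show that $\varphi(h)$ lands in $\left\{\mathbb{G}_1(\mathbb{Z})k_uH_\infty:\mathbf u\in P_v\right\}$; what your computation does establish is the ``mirror'' assignment $h\mapsto \mathbb{G}_1(\mathbb{Z})\gamma_1(h)^{-1}k_vH_\infty\leftrightarrow \mathbf v\gamma_1(h)$ (which is also the coset one gets by pushing a point $\Gamma(h_\infty k_v,h)$, $h=\gamma_1(h)c_1(h)$, through $\pi$ and $\rho$ in \eqref{abunchofarrows}). You must either prove the stated form or trace the change of convention from \cite{AES} to locate the inverse correctly; the step as written does not close. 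The same slip infects your well-definedness argument (i): two decompositions of $h$ differ by \emph{right} multiplication of $\gamma_1(h)$ by an element of $\mathbb{G}_1(\mathbb{Z})$, so the \emph{left} coset $\mathbb{G}_1(\mathbb{Z})\gamma_1(h)$ is not independent of the choice, and with the stated formula the ambiguity does not visibly cancel (it does cancel for $\mathbb{G}_1(\mathbb{Z})\gamma_1(h)^{-1}k_vH_\infty$ and for the class of $v\gamma_1(h)$).

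Two further points. For injectivity you invoke ``the parallel relation at $p$'' without supplying it: from $\gamma_1(h')=\delta\gamma_1(h)\eta$ with $\eta\in\mathbb{H}_v(\mathbb{Z}[\tfrac1p])$ the natural witness $s=h^{-1}\delta^{-1}h'=c_1(h)^{-1}\eta\,c_1(h')$ for $h'\in\Gamma h\mathbb{H}_v(\mathbb{Z}_p)$ is neither obviously in $\mathbb{H}_v$ nor obviously $p$-integral, so the real and $p$-adic information must be combined more carefully than your one-line indication suggests. Finally, part (iv) --- the identification of $\mathbb{G}_2(\mathbb{Z})\gamma_2(h)\theta_vH_\infty$ with $[\Lambda_{\mathbf u}]$ --- is only announced: you state yourself that one ``must check'' that the normalized rotated copy of $\Lambda_u$ lies in the $\operatorname{SL}_{d-1}(\mathbb{Z})$-class of $\gamma_2(h)\theta_v$, and the appeal to simple connectedness of $\mathbb{G}_2$ and to ``carefully matching normalizations'' is not an argument. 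Since this shape identification is the substantive half of the proposition (it is exactly what makes the support of $\mu_v$ equal to $R_v$ in Section~\ref{proofofmainresult}), it cannot be left as an expectation; you need the explicit lattice computation relating $\Lambda_u$, the matrix $g_vhg_v^{-1}$, the unimodular $p$-adic factor $c_2(h)$, and the normalizations $k_u,a_u,\theta_u$ from Section~\ref{orbitsofsubgroups}.
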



\subsection{Proof of Theorem \ref{mainresult}}
\label{proofofmainresult}
We first introduce the relevant  probability measures and discuss their relation to each other. 
For a measure $\nu$ we denote by $\nu|_A$ the normalized restriction 
defined by $\nu|_A(B)=\frac{\nu(A\cap B)}{\nu(B)}$ for any measurable $B$.
Let $m_{\mathbf{S}^{d-1}\times \mathcal{X}_{d-1}}$ be the natural probability measure on $\mathbf{S}^{d-1}\times \mathcal{X}_{d-1}$. Further, let $m_{\mathbf{Q}_D}$ denote
the pushforward of the normalized counting measure $m_{\mathcal{Q}_D}$ on~$\mathcal{Q}_D$ obtained by taking the quotient by $\bG_1(\bZ)$, which makes $m_{\mathbf{Q}_D}$ a weighted counting measure on $\mathbf{Q}_D$. Moreover, note that $(\rho\circ\pi)_*m_{\mathcal{Y}_\text{joint}}|_\mathcal{U}=m_{\mathbf{S}^{d-1}\times \mathcal{X}_{d-1}}$ and define 
$$\mu_v=(\rho\circ\pi)_*\mu_{v,S}^{\on{Full}}|_\mathcal{U}\quad\mbox{and} \quad\nu_v=m_{\mathbf{Q}_D}|_{R_v}$$
which are both measures on $\mathbf{S}^{d-1}\times \mathcal{X}_{d-1}$. By Proposition~6.1 in \cite{AES}, $\pi_*\mu_{v,S}^{\on{Full}}|_\mathcal{U}$ is a probability measure on
$$\bigsqcup_{h \in M_{0}}\mathbb{G}_{\text{joint}}(\mathbb{Z})(\gamma_{1}(h)k_{v},\gamma_{2}(h)\theta_{v})H_{\infty} \times H_{\infty}$$
so that by Proposition~\ref{SupportOfNu} both $\mu_v$ and $\nu_v$ have support in $R_v$.

We say that two probability measures $\mu$ and $\nu$ on $\mathcal{Y}_\text{joint}$ are $D^{-\kappa}$ close if they satisfy $|\mu(f)-\nu(f)|\ll D^{-\kappa}\cS_{d_2}(f)$ for all $f\in C_c^\infty(\mathcal{Y}_\text{joint})$. This notion also has a natural extension to probability measures on $\mathbf{S}^{d-1}\times \mathcal{X}_{d-1}$ and other related spaces. 
We note that the Sobolev norm $\cS_{d_2}$ (as defined in Section~\ref{sobolevchapter}) restricted to function in $C_c^\infty(\mathbf{S}^{d-1}\times \mathcal{X}_{d-1})$ agrees
with $\mathcal{S}_{\infty}$, which is a  real Sobolev norm modified by the weight function $(1+\on{ht}(x))^{d_2}$ 
 to ensure that the Sobolev embedding theorem holds
 in a convenient form on the non-compact space $\mathcal{X}_{d-1}$.

Theorem~\ref{t:FullDynamics} implies that $\mu_v$ is $D^{-\ref{exp:FullDynamicsJoint}}$ close to $m_{\mathbf{S}^{d-1}\times \mathcal{X}_{d-1}}$ and we will show in Lemma \ref{muandnu} below that $\mu_v$ is $D^{-\ref{exp:muandnu}}$ close to $\nu_v$ for some absolute constant $\ref{exp:muandnu}>0$. Since this is true for any equivalence class $R_v$ it follows also that $m_{\mathbf{Q}_D}$ is $D^{-\min(\ref{exp:FullDynamicsJoint},\ref{exp:muandnu})}$ close to $m_{\mathbf{S}^{d-1}\times \mathcal{X}_{d-1}}$. Finally, consider the average $\widetilde{f}=\tfrac{1}{\bG_1(\bZ)}\sum_{\gamma\in\bG_1(\bZ)}\gamma\acts f$ for $f\in C_c^\infty(\mathbb{S}^{d-1}\times \mathcal{X}_{d-1})$ and note that both $m_{\bS^{d-1}}\times m_{\mathcal{X}_{d-1}}$ and $m_{\mathcal{Q}_D}$ are $\bG_1(\bZ)$-invariant. Theorem~\ref{mainresult} therefore follows from 
\[
 |m_{\mathcal{Q}_v}(f)-(m_{\bS^{d-1}}\times m_{\mathcal{X}_{d-1}})(f)|=|m_{\mathbf{Q}_v}(\widetilde{f})-m_{\mathbf{S}^{d-1}}\times m_{\mathcal{X}_{d-1}}(\widetilde{f})|
 \]
and the triangle inequality for $\cS_\infty$ by setting $\ref{exp:mainresult}=\min(\ref{exp:FullDynamicsJoint},\ref{exp:muandnu})$.


\subsection{Relating $\mu_v$ and $\nu_v$}
It remains to verify that $\mu_v$ and $\nu_v$ are $D^{-\ref{exp:muandnu}}$-close.
For $\mathbf{u} \in \mathbf{S}^{d-1}$ let $S(\mathbf{u}) = \left|\operatorname{Stab}_{\mathbb{G}_{1}(\mathbb{Z})}(u)\right|$ for some $u\in\mathbf{u}$ and define $E = \widetilde{E}\times \mathcal{X}_{d-1}$, where
\[
\widetilde{E} = \left\{\mathbf{u} \in \mathbf{S}^{d-1} : S(\mathbf{u}) > 1\right\}.
\]
The following lemma shows that the weights of the measures $\mu_{v}$ and $\nu_{v}$ are constant on the complement of $E$ and uniformly bounded on $E$.
\begin{lemma}[Lemma 5.3 \cite{AES}]
\label{BoundedWeights}
We define $M_{v} = \max_{x \in R_{v}}\mu_{v}(x)$, $N_{v} = \max_{x \in R_{v}}\nu_{v}(x)$ and $a = \left|\mathbb{G}_{1}(\mathbb{Z})\right|$. For every $x \in R_{v}$, we have
\[
\frac{M_{v}}{a} \leq \mu_{v}(x) \leq M_{v} \mbox{ and } \frac{N_{v}}{a} \leq \nu_{v}(x) \leq N_{v}.
\]
Furthermore, equality holds on the right hand side of both inequalities when $x \in R_{v} \setminus E$.
\end{lemma}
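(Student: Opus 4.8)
The plan is to reduce the statement to a single structural fact: both $\mu_v$ and $\nu_v$ assign to the point $x\in R_v$ represented by $\mathbf{u}\in P_v$ a weight of the form $c/S(\mathbf{u})$, where the constant $c$ depends only on $v$ (the same for all points of $R_v$). Granting this, the lemma becomes elementary. Indeed, writing $\mu_v(x)=c_\mu/S(\mathbf{u})$, note that $S(\mathbf{u})=|\operatorname{Stab}_{\mathbb{G}_1(\mathbb{Z})}(u)|$ is the order of a subgroup of the finite group $\mathbb{G}_1(\mathbb{Z})$, so Lagrange's theorem gives $1\le S(\mathbf{u})\le a$. Hence $M_v=c_\mu/S_{\min}$ with $S_{\min}=\min_{\mathbf{u}\in P_v}S(\mathbf{u})\ge 1$, and
\[
\frac{M_v}{a}\le\frac{c_\mu}{a}\le\frac{c_\mu}{S(\mathbf{u})}=\mu_v(x)=M_v\cdot\frac{S_{\min}}{S(\mathbf{u})}\le M_v,
\]
with equality on the right exactly when $S(\mathbf{u})=S_{\min}$; in particular this holds whenever $S(\mathbf{u})=1$, i.e.\ whenever $x\in R_v\setminus E$. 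The identical computation applies to $\nu_v$.

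For $\nu_v$ the proportionality is a direct count. First I would note that $\mathbb{G}_1(\mathbb{Z})$ acts on the finite set $\mathcal{Q}_D$ by $(v/\|v\|,[\Lambda_v])\cdot g=(vg/\|v\|,[\Lambda_v])$ — well defined since $\|vg\|=\|v\|$ and $[\Lambda_{vg}]=[\Lambda_v]$ — and that $m_{\mathbf{Q}_D}$ is the pushforward of the uniform measure $m_{\mathcal{Q}_D}$ along $\mathcal{Q}_D\to\mathbf{Q}_D$. The fibre over the point $x$ represented by $\mathbf{u}$ is then the $\mathbb{G}_1(\mathbb{Z})$-orbit of any representative, of cardinality $|\mathbb{G}_1(\mathbb{Z})|/|\operatorname{Stab}_{\mathbb{G}_1(\mathbb{Z})}(u)|=a/S(\mathbf{u})$. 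So $m_{\mathbf{Q}_D}(x)=\tfrac{a}{S(\mathbf{u})\,|\mathcal{Q}_D|}$, and normalising the restriction to $R_v$ gives $\nu_v(x)=c_\nu/S(\mathbf{u})$ with $c_\nu=\bigl(\sum_{\mathbf{u}'\in P_v}S(\mathbf{u}')^{-1}\bigr)^{-1}$.

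For $\mu_v$ I would invoke Proposition~6.1 of \cite{AES} together with Proposition~\ref{SupportOfNu}: the measure $\pi_*\mu_{v,S}^{\on{Full}}|_{\mathcal{U}}$ is supported on the disjoint union over $h\in M_0$ of the orbits $\mathbb{G}_{\text{joint}}(\mathbb{Z})(\gamma_1(h)k_v,\gamma_2(h)\theta_v)(H_\infty\times H_\infty)\subset\mathcal{Y}_\infty$, each of which $\rho$ collapses to the single point $\varphi(h)$ corresponding to $\mathbf{u}=\mathbf{v}\gamma_1(h)^{-1}$; since $\varphi$ is a bijection, $\mu_v(x)$ is precisely the mass of the orbit indexed by the corresponding $h$. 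Since $\mu_{v,S}^{\on{Full}}$ is homogeneous for $L_{v,S}$ and $\pi$ is the quotient by the compact group $\{e\}\times K$, this measure restricts on each such orbit to a fixed multiple of its homogeneous volume measure, so the masses are proportional to the orbit volumes. Finally I would compute the volume: from $k_v^{-1}\mathbb{H}_v(\mathbb{R})k_v=H_\infty$ one gets $\gamma_1(h)k_vH_\infty k_v^{-1}\gamma_1(h)^{-1}=\gamma_1(h)\mathbb{H}_v(\mathbb{R})\gamma_1(h)^{-1}=\mathbb{H}_u(\mathbb{R})$, so the stabiliser of the orbit in $\mathbb{G}_1(\mathbb{Z})$ is $\mathbb{G}_1(\mathbb{Z})\cap\mathbb{H}_u(\mathbb{R})=\operatorname{Stab}_{\mathbb{G}_1(\mathbb{Z})}(u)$, of order $S(\mathbf{u})$; and since an element of $\mathbb{G}_1(\mathbb{Z})$ fixing $u$ preserves $\Lambda_u$ and therefore corresponds under $\mathbb{H}_v\cong\mathbb{H}_{\Lambda_v}$ to an element of $\mathbb{G}_2(\mathbb{Z})$, this is also the full stabiliser in $\mathbb{G}_{\text{joint}}(\mathbb{Z})$. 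Hence each orbit has volume $\vol{H_\infty}/S(\mathbf{u})$ up to a factor independent of $h$, which gives $\mu_v(x)=c_\mu/S(\mathbf{u})$.

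The hard part will be the middle step of the last paragraph: showing that $\pi_*\mu_{v,S}^{\on{Full}}|_{\mathcal{U}}$ is distributed among the finitely many orbits in proportion to their volumes rather than with some additional $p$-adic weighting, and pinning down the arithmetic stabiliser of each orbit to be exactly $S(\mathbf{u})$ (not, say, $S(\mathbf{u})^2$, which would only yield the weaker bound $M_v/a^2$). Both points are the substance of Propositions~6.1–6.2 of \cite{AES}; once they are in hand the argument is formal, and — since $R_v$ is finite — no height or Sobolev considerations intervene.
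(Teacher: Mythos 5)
You should first note that the paper itself gives no proof of this lemma: it is quoted verbatim from Lemma 5.3 of \cite{AES}, so the only thing to assess is your self-contained argument. Your formal reduction (weights of the form $c/S(\mathbf{u})$ with $1\leq S(\mathbf{u})\leq a$ and $S(\mathbf{u})=1$ off $E$ imply the lemma) and your counting computation for $\nu_v$ are correct. The gap is in the derivation of the weight formula for $\mu_v$, and it sits exactly at the two points you yourself flag as ``the hard part'' --- but they are not merely unproven, they are false as stated. First, $\pi_*\mu^{\on{Full}}_{v,S}|_{\mathcal{U}}$ is \emph{not} a fixed multiple of the $H_\infty\times H_\infty$-homogeneous measure on each orbit $\bG_{\text{joint}}(\bZ)(\gamma_1(h)k_v,\gamma_2(h)\theta_v)H_\infty\times H_\infty$: the acting group $L_{v,S}$ meets the real place only in a diagonally embedded copy of $H_\infty$, so the pushed-forward measure is carried by (and invariant along) the diagonal sub-orbit; the product orbit is only the support set named in Proposition 6.1 of \cite{AES}. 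Second, the stabiliser of the product orbit in $\bG_{\text{joint}}(\bZ)=\bG_1(\bZ)\times\bG_2(\bZ)$ is itself a product, $\on{Stab}_{\bG_1(\bZ)}(u)\times A_{\mathbf{u}}$ with $A_{\mathbf{u}}=\bG_2(\bZ)\cap\gamma_2(h)\theta_vH_\infty\theta_v^{-1}\gamma_2(h)^{-1}$ the group of proper integral automorphisms of the shape; for $d=5$ it always contains $-I_4$, and its order varies with $\mathbf{u}$ in a way not controlled by $E$. Your observation that an integral rotation fixing $u$ induces an element of $\bG_2(\bZ)$ shows that the \emph{simultaneous} (diagonal) stabiliser is isomorphic to $\on{Stab}_{\bG_1(\bZ)}(u)$; it does not bound the product stabiliser. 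In particular, if one kept your proportionality-to-product-orbit-volume claim but computed the stabiliser correctly, the weights would come out as $c/\bigl(S(\mathbf{u})\,|A_{\mathbf{u}}|\bigr)$, and the clause ``equality on the right for $x\in R_v\setminus E$'' would fail.

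The correct mechanism --- and the actual content of Propositions 6.1--6.2 of \cite{AES} --- is the double-coset computation for the diagonal acting group: the mass assigned to the class of $h\in M_0$ is $\on{vol}$ of a set of the form $\Lambda\,\ell_h\,(H_\infty^{\on{diag}}\times \bH_v(\bZ_p)\text{-type compact})$ inside $\Lambda\backslash L_{v,S}$, hence proportional to $1/|\Lambda_h|$ where $\Lambda_h$ is the simultaneous stabiliser; your $\bG_1(\bZ)\Rightarrow\bG_2(\bZ)$ integrality observation is then precisely what identifies $|\Lambda_h|=S(\mathbf{u})$ (the second-factor condition is automatic). So your final weight formula is the right one, but the route you propose to it does not work, and as written the $\mu_v$ half of the proof has a genuine gap.
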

We need to replace \cite[Lemma 6.4]{AES} with the an effective version of the statement that $E$ is a null set.
\begin{lemma} 
\label{SmallE}
There exists $\consta\label{exp:SmallE}\label{exp:muandnu} > 0$ such that $$|E\cap R_v|\ll D^{-\ref{exp:SmallE}}|R_v|.$$
\end{lemma}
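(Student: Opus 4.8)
The set $\widetilde E$ consists of $\mathbb G_1(\mathbb Z)$-orbits of primitive vectors $u$ of norm $\sqrt D$ whose stabilizer in $\mathbb G_1(\mathbb Z)=\on{SO}_d(\mathbb Z)$ is nontrivial. Such a $u$ is fixed by some $g\in\on{SO}_d(\mathbb Z)\setminus\{e\}$, which (since $\on{SO}_d(\mathbb Z)$ is finite and $g$ has finite order) means $u$ lies in the fixed subspace of $g$, a \emph{proper} rational subspace $W_g\subsetneq\mathbb R^d$ determined by one of the finitely many nontrivial elements $g\in\on{SO}_d(\mathbb Z)$. Thus $\widetilde E$ injects into $\bigcup_{g\neq e}\{u\in\widehat{\mathbb Z}^d:\|u\|^2=D,\ ug=u\}$, and it suffices to bound, for each fixed such $g$, the number of integer points of norm $\sqrt D$ on the proper subspace $W_g$. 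The first step is therefore to reduce to counting $|\{u\in\mathbb Z^d\cap W_g:\|u\|^2=D\}|$ for the finitely many $g$, and then to compare this with $|R_v|\asymp|\mathbf Q_D|/(\text{number of Hecke classes})$; but in fact since all $R_v$ are Hecke-translates of one another and have comparable size, and since $\widetilde E$ is defined by the same rational subspace condition across all classes, it is cleanest to prove the global bound $|E\cap\mathbf Q_D|\ll D^{-\kappa}|\mathbf Q_D|$ and then note the splitting over Hecke classes is uniform (this is exactly how the ineffective version in \cite{AES} passes from a null-set statement to the per-class statement via the equidistribution already established, but here we want it effective and self-contained, so a direct global count is preferable).

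The second step is the counting. On one hand $|\mathbf Q_D|\asymp|\mathcal Q_D|=r_d(D)$ up to bounded factors (the $\mathbb G_1(\mathbb Z)$-orbits have bounded size), and it is classical that $r_d(D)\gg D^{d/2-1-\varepsilon}$ whenever $\mathcal Q_D\neq\emptyset$, for $d=4,5$: for $d=5$ this is a lower bound for the number of representations of $D$ as a sum of five squares, which by the circle method (or by the exact formula via theta series / Eisenstein series, since for $d\ge 5$ the cusp form contribution is lower order) is $\asymp D^{3/2}$ times a positive bounded-below singular series; for $d=4$ one has $r_4(D)=8\sum_{m\mid D,\,4\nmid m}m\gg D^{1-\varepsilon}$ by Jacobi's formula (here the assumption $8\nmid D$ guaranteeing $\mathcal Q_D\neq\emptyset$ is exactly what we already assume). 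On the other hand, the number of integer points of norm $\sqrt D$ on a fixed proper rational subspace $W_g$ of dimension $k\le d-1$ is at most the number of representations of $D$ by a fixed positive-definite quadratic form in $k\le d-1$ variables, which is $\ll D^{k/2-1+\varepsilon}\le D^{(d-1)/2-1+\varepsilon}=D^{d/2-3/2+\varepsilon}$ for $k\ge 3$, is $\ll D^\varepsilon$ for $k\le 2$, and in all cases is $\ll D^{(d-2)/2+\varepsilon}$ by the trivial bound on lattice points in a ball of radius $\sqrt D$ intersected with a hyperplane. Combining, $|E\cap\mathbf Q_D|\ll D^{(d-2)/2+\varepsilon}$ while $|\mathbf Q_D|\gg D^{d/2-1-\varepsilon}$, so the ratio is $\ll D^{-1/2+2\varepsilon}$, giving $\kappa_{\ref{exp:SmallE}}=1/2-2\varepsilon$ (any $\kappa<1/2$).

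The third step is the per-class passage. One way: the sets $R_v$ for varying Hecke class $v$ partition $\mathbf Q_D$, and $E\cap\mathbf Q_D=\bigsqcup_v(E\cap R_v)$; a priori $E$ could concentrate in a few small classes. To rule this out effectively, I would use that Hecke operators act transitively (up to the finite set $M$ of Proposition~\ref{SupportOfNu}) on the classes, and that the $\mathbb G_1(\mathbb Z)$-stabilizer condition is preserved under the $\mathbb G_1(\mathbb Z[\tfrac1p])$-action defining $\sim$ only up to $\mathbb Z[\tfrac1p]$-conjugacy — in fact being fixed by a nontrivial element of $\on{SO}_d(\mathbb Z)$ is \emph{not} Hecke-invariant, so $E\cap R_v$ can genuinely vary with $v$. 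The clean fix is: apply the bound $|E\cap\mathbf Q_D|\ll D^{-\kappa}|\mathbf Q_D|$ together with the already-proven $D^{-\kappa'}$-closeness of $m_{\mathbf Q_D}$ to $m_{\mathbf S^{d-1}\times\mathcal X_{d-1}}$ and of each $\mu_v$ (hence $\nu_v$, once we close the loop) — but this is circular since Lemma~\ref{SmallE} is used to prove that closeness. So instead I would prove directly the per-class bound by running the counting argument \emph{inside} $R_v$: $|R_v|$ is itself, up to bounded factors, a sum of representation numbers $r_d(D)$ weighted over the $p$-Hecke orbit, and the local-at-$p$ conditions defining membership in $R_v$ are of bounded index in $\mathbb G_1(\mathbb Z_p)$, so $|R_v|\gg p^{-O(1)}r_d(D)\gg (\log D)^{-O(1)}D^{d/2-1-\varepsilon}$ using $p\ll(\log D)^2$ from Proposition~\ref{prime_existence}; meanwhile $|E\cap R_v|\le|E\cap\mathbf Q_D|\ll D^{(d-2)/2+\varepsilon}$, and the $(\log D)^{O(1)}$ loss is absorbed into $D^{\varepsilon}$. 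This gives $|E\cap R_v|\ll D^{-\kappa}|R_v|$ with the same $\kappa<1/2$.

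\textbf{Main obstacle.} The genuine difficulty is the lower bound $|R_v|\gg D^{d/2-1-\varepsilon}$ (equivalently a good lower bound for $r_d(D)$, uniform over $D$ with $\mathcal Q_D\neq\emptyset$, together with the control that passing to a single Hecke class loses only $p^{O(1)}=(\log D)^{O(1)}$). For $d=4$ the representation number $r_4(D)=8\sum_{m\mid D,\,4\nmid m}m$ can be as small as $\asymp D$ (e.g.\ $D$ prime) but never smaller than $\gg D^{1-\varepsilon}$, which is still enough since $(d-2)/2=1$ — so for $d=4$ the margin is only $D^{\varepsilon}$ and one must be careful that the upper bound on $|E\cap R_v|$ is genuinely $D^{(d-2)/2-\delta}$ for some $\delta>0$; this requires noting that a nontrivial $g\in\on{SO}_4(\mathbb Z)$ has fixed space of dimension $\le 2$ (an element of the finite group $\on{SO}_4(\mathbb Z)$ fixing a $3$-dimensional subspace would be $\pm$ a reflection, not in $\on{SO}_4$, or would be trivial), so the count on $W_g$ is $\ll D^{1+\varepsilon}$ from dimension $\le 2$ is wrong — rather for $\dim W_g\le 2$ it is $\ll D^{\varepsilon}$, giving the needed $\delta=1-\varepsilon$. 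For $d=5$ a nontrivial element of $\on{SO}_5(\mathbb Z)$ fixes a subspace of dimension $\le 3$, the count on $W_g$ is $\ll D^{1/2+\varepsilon}<D^{3/2}\asymp r_5(D)$, and the margin is comfortable. So the real work is (i) the elementary but necessary classification of fixed-subspace dimensions of nontrivial elements of $\on{SO}_d(\mathbb Z)$ for $d=4,5$, and (ii) the uniform lower bound for $r_d(D)$ and its persistence under restriction to one Hecke class — for (ii) one can invoke the Siegel mass formula / the fact that for $d\ge 4$ the Eisenstein part of the theta series dominates, or cite directly the relevant classical estimates; this is where I would be most careful about what counts as ``self-contained'' in the spirit of the paper.
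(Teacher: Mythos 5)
There is a genuine gap, and it stems from a misreading of the paper's logical structure. Your circularity worry is unfounded: Lemma \ref{SmallE} feeds only into the comparison of $\mu_v$ with $\nu_v$ (Lemma \ref{muandnu}); it is not an ingredient in the proof of Theorem \ref{t:FullDynamics}, which is established by the dynamical argument of the later sections independently of these deductions. One is therefore free to use the effective closeness of $\mu_v$ to $m_{\mathbf{S}^{d-1}\times\mathcal{X}_{d-1}}$ here, and that is exactly the intended (and much shorter) proof: since $\mathbb{G}_1(\mathbb{Z})$ consists of signed permutations, $\widetilde{E}$ is contained in the finite union $F$ of the sets $\{w_i=\pm w_j\}\cap\mathbf{S}^{d-1}$; choose a smooth majorant $f\geq\mathbbm{1}_F$ with $m_{\mathbf{S}^{d-1}}(f)\ll\varepsilon$ and $\mathcal{S}_{\infty}(f)\ll\varepsilon^{-d_2}$, apply Theorem \ref{t:FullDynamics} to get $\mu_v(E)\ll D^{-\ref{exp:FullDynamicsJoint}}\varepsilon^{-d_2}+\varepsilon$, optimize $\varepsilon$, and convert the measure bound into the cardinality bound via Lemma \ref{BoundedWeights}. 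No representation-number input is needed at all.

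The counting route you propose instead breaks at the per-class passage, as you yourself half-suspected. The assertion $|R_v|\gg p^{-O(1)}r_d(D)$ because ``the local-at-$p$ conditions defining membership in $R_v$ are of bounded index in $\mathbb{G}_1(\mathbb{Z}_p)$'' is not correct: the decomposition into Hecke classes is the orbit decomposition under the global group $\mathbb{G}_1(\mathbb{Z}[\tfrac1p])$, and by \eqref{pr-finite-M} and Proposition \ref{SupportOfNu} the class size $|P_v|=|M_0|$ is an $S$-arithmetic class-number-type quantity attached to $\mathbb{H}_v$, not a congruence condition of bounded index; the number of classes can grow like a positive power of $D$. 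The only lower bound on the class size available (and it is one of the harder results of the paper, Propositions \ref{latticepoints} and \ref{discvol1}, proved via small lattice elements acting on the tree) is of the form $D^{\ref{exp:discvol1}}$ with a small exponent, far below the $D^{d/2-1-\varepsilon}$ you need, and in particular below the threshold $D^{1/2+\varepsilon}$ required to beat the ternary-form count on a $3$-dimensional fixed subspace when $d=5$. So your reduction trades an easy step for a problem essentially as deep as the equidistribution statement itself. The sound parts of your sketch — that nontrivial elements of $\on{SO}_d(\mathbb{Z})$ fix subspaces of dimension at most $d-2$, and the bounds $\ll D^{\varepsilon}$ (dimension $\leq2$) and $\ll D^{1/2+\varepsilon}$ (dimension $3$) for points of norm $\sqrt{D}$ on such subspaces — do give a global estimate for $|E\cap\mathbf{Q}_D|$, but without a strong per-class denominator they do not yield the lemma as stated.
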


\begin{proof}
As $\mathbb{G}_{1}(\mathbb{Z})$ consists up to signs of permutations, any fixed point lies in a hyperplane of the form $\{w\in\bR^d: w_i=\pm w_j\}$ for some $1\leq i\neq j\leq d$. Let $F$ denote the ($\mathbb{G}_{1}(\mathbb{Z})$-invariant) union of such planes intersected with $\mathbf{S}^{d-1}$ so that $\widetilde{E}\subset F$. For any $\varepsilon > 0$ there exists a function $f\in C^\infty(\mathbf{S}^{d-1})$ with $\mathbbm{1}_{F} \leq f$ and $m_{\mathbf{S}^{d-1}}(f) \ll\varepsilon$ such that $\mathcal{S}_{\infty}(f)\ll \varepsilon^{-d_2}$. 
Indeed, we may write
\[
F=\bigcup_{\ell=1}^{n} g_\ell\{\|x\|=1: x_d=0\}
\]
for some finite $\mathbb{G}_{1}(\mathbb{Z})$-invariant
list $\{g_{1},\dots ,g_{n}\} \subset \on{SO}_{d}(\mathbb{R})$. Fix some nonnegative function $\chi\in C_c^\infty((-1,1))$ with $\chi(0)=1$ and define
\[
J_\varepsilon(y)= \begin{cases}
\chi(\varepsilon^{-1}y) &\mbox{if } \left|y\right| < \varepsilon\\
0 &\mbox{otherwise}
\end{cases}
\]
and note that $J_{\varepsilon}(x_{d}) \geq \mathbbm{1}_{\left\{x_d=0\right\}}$. Then $f(x)=\sum_{\ell=1}^{n} g_\ell\acts J_{\varepsilon}(x)$ satisfies the requirements.

Precomposing $f$ with the projection from $\mathbf{S}^{d-1}\times\mathcal{X}_{d-1}$ to $\mathbf{S}^{d-1}$
and the projections $\rho\circ\pi$ from \eqref{abunchofarrows}
we may identify $f$ with a smooth function on $\mathcal{Y}_{\text{joint}}$. 
Applying Theorem \ref{t:FullDynamics} we obtain
$$
\mu_{v}( {E}) \leq \mu_{v}(f) \ll \left|\mu_{v}(f) - m_{\mathbf{S}^{d-1}}(f)\right|+\varepsilon \ll D^{-\ref{exp:FullDynamicsJoint}}\mathcal{S}_{\infty}(f) + \varepsilon \ll D^{-\ref{exp:FullDynamicsJoint}}\varepsilon^{-d_{2}} + \varepsilon.$$
Choosing $\varepsilon=D^{-\frac{\ref{exp:FullDynamicsJoint}}{2d_{2}}}$ we get $\mu_{v}({E}) \ll D^{-\ref{exp:SmallE}}$
for $\ref{exp:SmallE}=\frac{\ref{exp:FullDynamicsJoint}}{2d_{2}}$. Using Lemma \ref{BoundedWeights}, we see that
\[
\frac{1}{a}\frac{\left|E \cap R_{v}\right|}{\left|R_{v}\right|} = \frac{\frac{M_{v}}{a}\left|{E} \cap R_{v}\right|}{M_{v}\left|R_{v}\right|} \leq \frac{\mu_{v}({E}\cap R_{v})}{\mu_{v}(R_{v})} = \mu_{v}({E}) \ll D^{-\ref{exp:SmallE}}.
\]
\end{proof}
Combining both lemmata will give the remaining step.
\begin{lemma}
\label{muandnu}
For any $f\in C_c^\infty(\mathbf{S}^{d-1}\times \mathcal{X}_{d-1})$ we have
$$|\mu_v(f)-\nu_v(f)|\ll D^{-\ref{exp:muandnu}}\cS_{\infty}(f).$$
\end{lemma}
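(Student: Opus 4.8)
The plan is to compare $\mu_v$ and $\nu_v$ directly using the structure worked out in Section~\ref{principalgenus}. Recall that both measures are supported on the finite set $R_v$, and by Lemma~\ref{BoundedWeights} their weights at each point $x\in R_v$ differ from their respective maxima $M_v$, $N_v$ by a factor at most $a=|\bG_1(\bZ)|$, with equality on the right whenever $x\in R_v\setminus E$. Consequently, on $R_v\setminus E$ the two measures are \emph{proportional}: $\mu_v$ assigns weight $M_v$ to every such point and $\nu_v$ assigns weight $N_v$ to every such point. Since $\mu_v$ and $\nu_v$ are probability measures, writing $R_v=(R_v\setminus E)\sqcup(R_v\cap E)$ and using that $|R_v\setminus E|$ is large (by Lemma~\ref{SmallE}, $|R_v\cap E|\ll D^{-\ref{exp:SmallE}}|R_v|$), we get $M_v|R_v\setminus E|=\mu_v(R_v\setminus E)=1-\mu_v(E)$ and similarly for $N_v$; combined with the bound $\mu_v(E)\ll D^{-\ref{exp:SmallE}}$ already established in the proof of Lemma~\ref{SmallE} (and the analogous bound $\nu_v(E)\ll D^{-\ref{exp:SmallE}}$, which I would derive in the same way, or simply by transferring through the comparison), this pins down $M_v|R_v\setminus E|$ and $N_v|R_v\setminus E|$ to both be $1+O(D^{-\ref{exp:SmallE}})$, hence $M_v$ and $N_v$ agree up to a multiplicative error of size $1+O(D^{-\ref{exp:SmallE}})$.

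With this in hand I would split the test function evaluation. For $f\in C_c^\infty(\mathbf{S}^{d-1}\times\mathcal{X}_{d-1})$, write
\[
\mu_v(f)-\nu_v(f)=\sum_{x\in R_v\setminus E}\bigl(\mu_v(x)-\nu_v(x)\bigr)f(x)+\sum_{x\in R_v\cap E}\bigl(\mu_v(x)-\nu_v(x)\bigr)f(x).
\]
The second sum is bounded by $\bigl(\mu_v(E)+\nu_v(E)\bigr)\sup|f|\ll D^{-\ref{exp:SmallE}}\mathcal{S}_\infty(f)$, using the Sobolev embedding theorem to control $\sup|f|$ by $\mathcal{S}_\infty(f)$. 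For the first sum, $\mu_v(x)=M_v$ and $\nu_v(x)=N_v$ are constant in $x$, so it equals $(M_v-N_v)\sum_{x\in R_v\setminus E}f(x)$; since $|M_v-N_v|\ll D^{-\ref{exp:SmallE}}\max(M_v,N_v)$ and $\max(M_v,N_v)|R_v\setminus E|\ll 1$, this is $\ll D^{-\ref{exp:SmallE}}\sup|f|\ll D^{-\ref{exp:SmallE}}\mathcal{S}_\infty(f)$. Adding the two contributions and setting $\ref{exp:muandnu}=\ref{exp:SmallE}$ completes the proof.

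The only genuine subtlety, and the step I would be most careful about, is the claim that $M_v$ and $N_v$ are comparable, i.e.\ that neither measure is forced to be pathologically concentrated. This rests on two facts: (i) on $R_v\setminus E$ both measures have \emph{constant} weight (the equality clause of Lemma~\ref{BoundedWeights}), and (ii) $R_v\setminus E$ carries all but a $D^{-\ref{exp:SmallE}}$ fraction of each measure's mass. Point (ii) for $\mu_v$ is exactly what was shown inside the proof of Lemma~\ref{SmallE}; for $\nu_v$ one runs the identical argument — $\widetilde E$ is contained in the union $F$ of finitely many subspheres, $\nu_v=m_{\mathbf{Q}_D}|_{R_v}$ is (up to the harmless bounded weights) the normalized counting measure on $R_v$, and the count $|E\cap R_v|\ll D^{-\ref{exp:SmallE}}|R_v|$ of Lemma~\ref{SmallE} gives $\nu_v(E)\ll D^{-\ref{exp:SmallE}}$ directly after accounting for the factor $a$ from Lemma~\ref{BoundedWeights}. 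Everything else is bookkeeping with probability normalizations and one application of Sobolev embedding on $\mathbf{S}^{d-1}\times\mathcal{X}_{d-1}$ (valid because of the height weight built into $\mathcal{S}_\infty$).
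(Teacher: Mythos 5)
Your proof is correct and takes essentially the same route as the paper: both arguments rest on the equality clause of Lemma~\ref{BoundedWeights} off $E$, the cardinality bound of Lemma~\ref{SmallE}, the same splitting of the sum over $E$ and its complement, and Sobolev embedding to control $\|f\|_\infty$. The only cosmetic difference is that the paper compares $\mu_v$ and $\nu_v$ separately to the normalized counting measure $\lambda_v$ on $R_v$ (pinning $M_v$ and $N_v$ between $\tfrac1{|R_v|}$ and $\tfrac1{|R_v|}(1+c'D^{-\ref{exp:SmallE}})$), whereas you compare $M_v$ and $N_v$ directly through the mass identities $M_v|R_v\setminus E|=1-\mu_v(E)$ and $N_v|R_v\setminus E|=1-\nu_v(E)$ --- and your closing remark correctly handles the one point where care is needed, namely that $\nu_v(E)\ll D^{-\ref{exp:SmallE}}$ must come from the counting bound of Lemma~\ref{SmallE} together with Lemma~\ref{BoundedWeights}, not from the equidistribution theorem (which applies only to $\mu_v$).
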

\begin{proof}
We start by controlling $M=M_{v} = \max_{x \in R_{v}}\mu_{v}(x)$ with respect to $|R_v|$. 
Applying Lemma \ref{BoundedWeights} we see that
\[
 M|R_v\setminus E| \leq  \mu_v(1)=1,
\]
 which implies that $M\leq(|R_v|-|E\cap R_v|)^{-1}$.
 By Lemma \ref{SmallE} we have $|E\cap R_v|\leq c D^{-\ref{exp:SmallE}}|R_v|$
 for some $c>0$. 
 Note that there exists some constant $c'$ with $(1-cD^{-\ref{exp:SmallE}})^{-1}\leq1+c'D^{-\ref{exp:SmallE}}$
for all sufficiently large $D$. Therefore, we obtain the upper bound
in 
\[
\tfrac1{|R_v|}\leq M\leq \tfrac1{|R_v|}(1+c'D^{-\ref{exp:SmallE}}),
\] 
where the lower bound follows by using the definition of $M$ in Lemma \ref{BoundedWeights}. 

Let $\lambda_v$ denote the normalized counting measure on $R_v$. Then $$|\lambda_v(f)-\mu_v(f)|\leq \left|\sum_{x\not\in E} f(x)(\tfrac1{|R_v|}-M)+\sum_{x\in E}f(x) (\tfrac1{|R_v|}-\mu_n(x))\right|$$
$$\ll \|f\|_\infty\frac{|R_v\cap E^c|}{|R_v|}D^{-\ref{exp:SmallE}}+\|f\|_\infty\frac{|R_v\cap E|}{|R_v|}\ll\cS_{\infty}(f)D^{- \ref{exp:SmallE}}$$
having used Lemma \ref{SmallE} and Lemma \ref{BoundedWeights} once more and the Sobolev embedding theorem (that is, property (S1) of Section \ref{sobolev}) for $\cS_{\infty}$. 

Since Lemma \ref{BoundedWeights} holds for both measures, the same calculation holds with $\mu_v$ replaced by $\nu_v$ so that the lemma follows.
\end{proof}

As explained in Section \ref{proofofmainresult}
this finishes the proof of Theorem \ref{mainresult}
assuming Theorem \ref{t:FullDynamics}.


\section{Further Setup and Equidistribution on a Single Factor}
\label{single_proof}

We recall that we choose $p$ throughout the paper 
depending on $D$
as in Lemma~\ref{choosing_prime} and that $p$ is implicitly
appearing in the definition of our ambient space $\mathcal{Y}_{\text{joint}}$. 

We will start to discuss the dynamical argument in this section. 
For this argument it is far better to work with the orbits
of the subgroups $L_{v,S}^+$ and the corresponding measure $\mu_{v,S}$
on the ambient 
space $\mathcal{Y}^+_{\text{joint}}$ (or the corresponding orbits and measures on the factors $\mathcal{Y}_i^+$ for $i=1,2$). 
In other words we will give a dynamical proof of the following result.

\begin{theorem}
\label{maindynamicalresultsingle}\label{maindynamicalresult}
There exists absolute constants 
$\consta\label{exp:maindynamicalresultsingle}, \consta\label{exp:maindynamicalresult} > 0$ and $d_2,d_2'\geq1$ such that for any $v \in \widehat{\mathbb{Z}}^{d}$ with $\left\|v\right\|^{2} = D$  
and for any $f\in C_c^\infty(\mathcal{Y}^+_i)$
\[
\left|{\pi_i}_*\mu_{v,S}(f) - m_{\mathcal{Y}^+_i}(f)\right| \ll D^{-\ref{exp:maindynamicalresultsingle}}\mathcal{S}_{d_2}(f)
\]
for $i=1,2$ and for any $f\in C_c^\infty(\mathcal{Y}^+_{\operatorname{joint}})$
\[
\left|\mu_{v,S}(f) - m_{\mathcal{Y}^+_{\operatorname{joint}}}(f)\right| \ll D^{-\ref{exp:maindynamicalresult}}\mathcal{S}_{d_2'}(f).
\]
\end{theorem}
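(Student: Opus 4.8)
The plan is to prove Theorem~\ref{maindynamicalresult} in two stages following the strategy of \cite{EMMV}: first establish the single-factor equidistribution statements on $\mathcal{Y}_1^+$ and $\mathcal{Y}_2^+$, and then bootstrap from these (together with an extra input ruling out the ``diagonal'' intermediate possibility) to the joint equidistribution on $\mathcal{Y}_{\operatorname{joint}}^+$. For the single-factor statement on $\mathcal{Y}_i^+$, the orbit carrying $\mu_{v,S}$ is a periodic $H_{v,S}^+$-orbit (respectively $H_{\Lambda_v,S}^+$-orbit) attached to the semisimple subgroup $\mathbb{H}_v\cong \operatorname{SO}_{d-1}$ inside $\mathbb{G}_i$. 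The key point is that $\mathbb{H}_v$ is of the right type to apply the effective equidistribution machinery: it is a maximal semisimple subgroup of $\mathbb{G}_1=\operatorname{SO}_d$, and conjugate to a maximal semisimple subgroup of $\mathbb{G}_2=\operatorname{SL}_{d-1}$ for $d-1\in\{3,4\}$. Here the crucial advantage of choosing $p$ via Proposition~\ref{prime_existence} rather than fixing it is that $p\nmid D$ guarantees the splitting condition: $\mathbb{H}_v$ is isotropic over $\mathbb{Q}_p$, so $H_{v,p}^+$ is non-compact and generated by unipotents, which is exactly what is needed to run the argument.

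The engine for the single-factor bounds is the standard ``adelic mixing + thickening'' scheme: one writes the orbit measure as an average of a thickened bump function along the $H_{v,S}$-orbit, couples this with an effective decay-of-matrix-coefficients estimate (property $(\tau)$ / a spectral gap) for $\mathcal{Y}_i^+$ in the relevant automorphic representation, and controls the error by the volume of the periodic orbit. The volume of the periodic $H_{v,S}$-orbit grows like a positive power of $D$ (this is where one would normally invoke Prasad's volume formula, but in the restricted dimensions $d=4,5$ the group $\mathbb{H}_v$ is small enough — forms of $\operatorname{SO}_3$ or $\operatorname{SO}_4$, i.e.\ essentially $\operatorname{SL}_2$-type objects — that one can compute the local volumes by hand, as the authors announce they will do), and this power-of-$D$ lower bound on the volume, combined with the uniform spectral gap that is independent of $p$, yields the $D^{-\kappa}$ saving in the first displayed inequality. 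One must be careful that all implied constants are independent of the varying prime $p$; this is why the spectral gap used must be a uniform one (property $(\tau)$ for the congruence tower, or an explicit bound), and why the Sobolev norm is the $S$-adic one of bounded degree $d_2$.

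For the joint statement, the strategy is: the pushforwards to each factor equidistribute by the first part, so the joint limit measure — if it were not Haar — would have to be an intermediate homogeneous measure supported on an orbit of a proper subgroup $\mathbb{M}\le \mathbb{G}_{\operatorname{joint}}$ projecting onto a finite-index subgroup of each $\mathbb{G}_i$. Since $\mathbb{H}_v$ is simple for $d=4$ and an almost-direct product of two rank-one factors for $d=5$, the only such $\mathbb{M}$ arise from graph subgroups coming from an algebraic isomorphism $\mathbb{G}_1\to\mathbb{G}_2$ (or a common quotient). For $d=4$, $\mathbb{G}_1=\operatorname{SO}_4$ and $\mathbb{G}_2=\operatorname{SL}_3$ are not isogenous, so no such $\mathbb{M}$ exists and one is done once this is made effective; for $d=5$, $\mathbb{G}_1=\operatorname{SO}_5$ and $\mathbb{G}_2=\operatorname{SL}_4$ likewise have incompatible types. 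Making this ``no intermediate subgroup'' dichotomy effective is the main obstacle: one needs a quantitative statement that if the joint orbit fails to equidistribute with a power saving, then there is an algebraic subgroup of bounded complexity whose $S$-arithmetic orbit nearly carries the measure, and then one rules out such subgroups by the isogeny obstruction above — this is precisely the effective-generation/effective-closure argument of \cite{EMMV}, which in the joint setting requires combining the two single-factor spectral gaps with an avoidance estimate near the (finitely many, bounded-complexity) intermediate orbits. Once this is in place, the triangle inequality gives the second displayed bound with $d_2'$ depending on $d_2$ and the complexity bound.
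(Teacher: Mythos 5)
Your outline correctly isolates the two arithmetic inputs (the splitting condition via the choice of $p$, a uniform spectral gap, and a volume lower bound polynomial in $D$), but the mechanisms you propose do not prove the theorem, in either part. For the single-factor statement, the ``thickening $+$ mixing'' scheme you describe --- smooth the orbit measure transversally, pair with effective decay of matrix coefficients, and control the error ``by the volume of the periodic orbit'' --- has a genuine gap: an effective mixing bound requires an $L^2$ (or sup-norm) bound on the density of the thickened measure, and for a \emph{varying} periodic orbit that density is governed by how often the orbit nearly returns to itself, which is not controlled by its volume alone; ruling out such self-accumulation is essentially the equidistribution problem one is trying to solve. (The wavefront/mixing argument applies to translates of a fixed closed orbit, not to a family of periodic orbits of growing volume.) The paper's actual single-factor proof is the EMV/EMMV dynamical scheme: a volume lower bound $D^{\star}\ll V$ obtained by producing small lattice elements via the tree structure of $H_{v,p}/K$ (Proposition~\ref{latticepoints}, Proposition~\ref{discvol1}) --- a lattice-point argument, not a local volume computation --- then a pigeonhole step producing two nearby generic points with transverse displacement (Proposition~\ref{genericpoints}), polynomial shearing along the principal unipotent to gain almost invariance in a highest-weight direction (Proposition~\ref{addinv}), effective generation of $\mathfrak{g}_i$ (Lemma~\ref{generation}), and finally the convolution step with the Hecke-type operator $\bT_t$ using property $(\tau)$ and non-divergence (Section~\ref{convolution}). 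Citing \cite{EMMV} outright for the single factors would be legitimate, since $\mathbb{H}_v$ is maximal in each $\mathbb{G}_i$, but the mechanism you sketch is not that proof.

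For the joint statement the gap is more serious. Your Goursat/isogeny observation (no proper subgroup of $\mathbb{G}_1\times\mathbb{G}_2$ contains the diagonal acting group and surjects onto both factors) is correct and explains why Haar is the only candidate, but the effective tool you appeal to --- ``the effective-generation/effective-closure argument of \cite{EMMV}'' --- does not apply here: \cite{EMMV} is restricted to \emph{maximal} semisimple subgroups, and the diagonally embedded $\mathbb{L}_v$ is far from maximal; this restriction is precisely why the joint equidistribution is the new content of the paper. Likewise, ``an avoidance estimate near the finitely many intermediate orbits'' is vacuous, since by your own observation there are no intermediate orbits; the actual difficulty is to \emph{produce}, effectively, new invariance directions transverse to the current almost-invariance algebra. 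The paper does this by a bootstrap: assuming $\mu_{v,S}$ is almost invariant under a subalgebra $\mathfrak{f}\supseteq\Delta\mathfrak{h}$, it uses the already-proven effective equidistribution on the second factor to rule out that all nearby generic points have displacement confined to thin tubes along $\mathfrak{f}$ (Proposition~\ref{genericpointsjoint}), then applies the shearing argument together with the weight-splitting and commutator analysis (Lemmas~\ref{iteration}, \ref{almost_invariance_under_sums}, \ref{weight_splitting} and the case analysis for $d=4$ and for $d=5$ split/quasi-split) to enlarge $\mathfrak{f}$, iterating until $\mathfrak{f}=\mathfrak{g}_1\oplus\mathfrak{g}_2$, and only then concludes with the convolution step of Section~\ref{proofmaintheorem}. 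Without an argument of this kind, the second half of your proposal restates the goal rather than proving it.
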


Theorem~\ref{maindynamicalresultsingle} implies equidistribution of the full orbit as in Theorem~\ref{t:FullDynamics} after discussing
the properties of the Sobolev norm in Section \ref{sobolevchapter}. 
The remainder of the paper will then be devoted to proving Theorem \ref{maindynamicalresultsingle}.

To prove equidistribution of the orbit
$\Gamma\LvS^+(k_v,e,\oT,e)$ (whose normalized Haar measure
is given by $\mu_{v,S}$)
in the joint space $\mathcal{Y}^+_\text{joint}$ our first step will be to show equidistribution on the factor spaces. In this section, we reduce the first statement of Theorem \ref{maindynamicalresultsingle} to a purely dynamical result (Proposition \ref{almostinvariance}), whose proof
will be completed in Section \ref{dynamics}. As most of the steps will also be used for the joint equidistribution in Section \ref{proofofequidistribution} we will formulate these steps in the necessary generality.

\subsection{Height and invariant metric}
For a matrix (or product of matrices) $g=(g_\infty,g_p)$ in the space $\Mat_{n,m}(\bQ_S)=\Mat_{n,m}(\bR)\times\Mat_{n,m}(\bQ_p)$ we define the height by
$$\|g\|_S=\|g_\infty\|_\infty\|g_p\|_p,$$
where both norms are chosen to be the maximum over the real respectively $p$-adic absolute values among the coefficients of $g_\infty$ respectively $g_p$. Recall that $\|\cdot\|_p$ is bi-$\GL_d(\bZ_p)$-invariant when defined on $\Mat_{d,d}(\bQ_p)$. 

The Lie group $\on{SL}_d(\bR)$ carries a left-invariant metric $d_\infty$ induced from a left-invariant Riemannian metric. We let $B_{r}^{d_\infty}$ denote the ball\footnote{We note that in Section~\ref{classification} we are also going to use $B_{\ell}^{\on{SL}_d(\bQ_p)}$, the ball of radius $p^{\ell}$ with respect to the matrix norm $\|\cdot\|_p$.} of radius $r$ with respect to $d_\infty$.  

We may define a left-invariant metric $d_p$ on $\on{SL}_d(\bQ_p)$ by taking the metric induced by $\|\cdot\|_p$ on $K = \on{SL}_{d}(\mathbb{Z}_{p})$ and declare it to have distance $2$ between different $K$-cosets $g_1K\neq g_2K$. Let $d_S$ denote the resulting product metric on $\on{SL}_d(\bQ_S)$.

For a subset $L\subset \SLS$ we denote the conjugation with $g$ by $L^{g}=g^{-1}Lg$. If $g_s\in \on{SL}_d(\bQ_s)$ for $s\in S$ then $L^{g_s}$ will denote the conjugation with $g_s$ embedded in $\SLS=\SLSS$ and we agree on the analogous convention if translating an orbit in $\SLZP\backslash \SLSS$, taking intersections or doing similiar operations.

\subsection{Height}
Fix a group $\bG\in\{\bG_1, \bG_2, \bG_{\on{joint}}\}$
and denote its Lie algebra by $\gog$. We define $\gog_\bZ=\gog\cap\Mat_d(\bZ)$ and note that $\bZ[\tfrac{1}{p}]\otimes_\bZ\gog_\bZ=\gog_{\bZ[1/p]}=\gog\cap\Mat_{d,d}(\bZ[\tfrac1p])$ is a discrete subgroup of $\gog(\bQ_S)\cong\bQ_S^{\dim(\bG)}$ that is invariant under the adjoint action of $\bG(\bZ[\tfrac{1}{p}])$. This becomes important in the following definition of measuring the complexity of a point in $\Gamma\backslash \bG(\bQ_S)$.
\begin{definition}
\label{height} The \textit{height} of a point $x\in \Gamma\backslash \bG(\bQ_S)$ is
$$\on{ht}(x)=\sup{\left\{\|\on{Ad}(g^{-1})w\|_{S}^{-1}: x=\Gamma g, w\in \gog_{\bZ[1/p]}\right\}}.$$
\end{definition}
By the invariance of $\gog_{\bZ[1/p]}$, the height is independent of the chosen representative $x=\Gamma g$. This notion is only relevant for $\mathcal{Y}_2$, where $\gog=\on{Lie}(\bG_2)=\mathfrak{sl}_{d-1}$ and we may take $\gog_\bZ$ to consist of traceless matrices with integer coefficients. Indeed, $\mathcal{Y}_1$ is compact (because $\bG_{1}$ is anisotropic at $\infty$, \cite[Thm.~I.3.2.4]{margulisbook}). For the same reason, also the orbit $\Gamma {L}_{v,S}^{+}\left(k_{v},e,\theta_{v},e\right)$ (recall that~$\mathbb{L}$ is 
obtained from $\widetilde{\mathbb{L}}_{v} = (e,g_{v})\Delta_{\mathbb{H}_{v}}(e,g_{v}^{-1})$
by projection from~$\bG_1\times\on{ASL}_{d-1}$ to~$\bG_1\times\on{SL}_{d-1}$) is compact (which is the reason for \eqref{pr-finite-M}).

By the generalized Mahler's compactness criterion (\cite[Thm.~7.10]{kleinbocktomanov}) the set $$\Sigma_{\bG_{\on{joint}}}(R)=\{x\in\mathcal{Y}_{\on{joint}}^+: \on{ht}(x)\leq R\}$$ is compact and we wish to choose $R$ large enough in the sense that it covers a large part of the support of $\mu_{v,S}=m_{\Gamma {L}_{v,S}^{+}\left(k_{v},e,\theta_{v},e\right)}$.
This is the context of the following theorem, which relies on the non-divergence results of Margulis and Dani (\cite{Dani:1981ik}, Kleinbock-Margulis~\cite{kleinbockmargulis}) or rather its $S$-adic generalization due to Kleinbock-Tomanov~\cite{kleinbocktomanov}.
\begin{lemma}[Non-Divergence]
\label{nondivergence} There exists absolute constants
$\consta\label{exp:nondivp},\consta\label{exp:nondivR}>0$ such that for every $v \in \widehat{\mathbb{Z}}^{d}$ with $\left\|v\right\|^{2} = D$,
$$\mu_{v,S}(\mathcal{Y}_{\on{joint}}^+\setminus \Sigma_{\bG_{\on{joint}}}(R))\ll p^{\ref{exp:nondivp}}R^{-\ref{exp:nondivR}}.$$
\end{lemma}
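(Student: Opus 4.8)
The plan is to deduce the estimate from a quantitative non-divergence theorem for a $p$-adic unipotent flow, applied to the periodic orbit carrying ${\pi_2}_*\mu_{v,S}$ on the single factor $\mathcal{Y}_2$. Although the orbit $\Gamma\LvS^+(k_v,e,\theta_v,e)$ is compact, so that $\mu_{v,S}$ is already supported in some $\Sigma_{\bG_{\on{joint}}}(R_0)$, the radius $R_0$ a priori grows with $v$; the real content of the statement is the uniformity over all $v$ with $\|v\|^2=D$, together with the polynomial control in $p$. First I reduce to the second factor: since $\mathcal{Y}_1$ is compact, the $\mathfrak{so}_d$-summand of $\gog_{\bZ[1/p]}$ contributes only a bounded amount to the supremum defining $\on{ht}$, so that $\on{ht}(x)\asymp\on{ht}(\pi_2(x))$ on $\mathcal{Y}_{\on{joint}}^+$ with implied constants depending only on $d$. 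It therefore suffices to bound $\lambda_v\bigl(\{y\in\mathcal{Y}_2:\on{ht}(y)>R\}\bigr)$, where $\lambda_v:={\pi_2}_*\mu_{v,S}$ is the Haar probability measure on the periodic orbit $\Gamma\HVS^+(\theta_v,e)$.

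Next I produce a $p$-adic unipotent under which $\lambda_v$ is invariant. Because $p\nmid D$, the integral quadratic form $\|\cdot\|^2|_{\Lambda_v}$ is $p$-adically unimodular; since it has rank $d-1\in\{3,4\}$ its reduction modulo $p$ is isotropic by Chevalley--Warning, and as $p$ is odd a nonsingular isotropic point lifts by Hensel's lemma, so $\bH_v$ is $\bQ_p$-isotropic. Using in addition $p\equiv 1\pmod 4$ one fixes a one-parameter $\bQ_p$-unipotent subgroup $U=\{u(s):s\in\bQ_p\}\le\bH_v(\bQ_p)$, carried by conjugation with $g_v$ into $\bH_{\Lambda_v}(\bQ_p)\le\on{SL}_{d-1}(\bQ_p)$. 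Since $\bH_v$ has good reduction at $p$ and $d-2<p$, the map $s\mapsto u(s)$ is polynomial of degree at most $d-2$ with coefficients in $\bZ_p$, so the $(C,\alpha)$-good constants attached to its Plücker coordinates depend only on $d$. The measure $\lambda_v$ is $U$-invariant because $U\le\HVS^+$, and it is $U$-ergodic by the Mautner phenomenon, $\bH_v$ being semisimple and $U$ noncompact in the noncompact factor $\bH_v(\bQ_p)$.

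Now I apply quantitative non-divergence. To a point $x=\Gamma g\in\mathcal{Y}_2$ one attaches the lattice $\on{Ad}(g^{-1})\gog_{\bZ[1/p]}$ in $\gog(\bQ_S)$ with $\gog=\mathfrak{sl}_{d-1}$, whose shortest nonzero vector has $S$-norm $\on{ht}(x)^{-1}$. Feeding the polynomial family $s\mapsto u(s)$ and this lattice into the $S$-arithmetic non-divergence theorem of Kleinbock--Tomanov \cite{kleinbocktomanov} gives, for a $p$-adic ball $B_T$ of radius $p^T$, a bound
\[
\bigl|\{s\in B_T:\on{ht}(u(s)\acts x)>R\}\bigr|\ \ll\ p^{\gamma}\,\bigl(R\,\rho(x,T)\bigr)^{-\ref{exp:nondivR}}\,|B_T|,
\]
where $\gamma>0$ is absolute (the factor $p^{\gamma}$ absorbing the $p$-adic normalisations: the enlargement of the ball on which goodness is tested, and the $\bZ_p$-scale of $u(s)$), and where $\rho(x,T)$, the non-degeneracy parameter, is the infimum over the proper nonzero rational subspaces $W$ of the lattice at $x$ of $\sup_{s\in B_T}$ of the $S$-covolume of $\on{Ad}(u(s)g^{-1})W$. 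Integrating over $x$ against $\lambda_v$ and using $U$-invariance, one has for every $T$
\[
\lambda_v\bigl(\{\on{ht}>R\}\bigr)=\frac{1}{|B_T|}\int_{\mathcal{Y}_2}\bigl|\{s\in B_T:\on{ht}(u(s)\acts x)>R\}\bigr|\,d\lambda_v(x).
\]
By $U$-ergodicity and a standard limiting argument in the style of Dani--Margulis, for $\lambda_v$-almost every $x$ the non-degeneracy hypothesis holds for all large $T$ with a uniform threshold $\rho(x,T)\ge\rho_0$, where one can take $\rho_0\gg p^{-c}$ for an absolute $c>0$ (see below). Letting $T\to\infty$ then yields $\lambda_v\bigl(\{\on{ht}>R\}\bigr)\ll p^{\gamma}(R\rho_0)^{-\ref{exp:nondivR}}$, and, choosing $\ref{exp:nondivp}$ large enough, this is the desired bound; combined with the first reduction it proves the lemma.

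I expect the main obstacle to be exactly the uniform lower bound $\rho_0\gg p^{-c}$: one must show that along the periodic orbit $\Gamma\HVS^+(\theta_v,e)$ the Plücker coordinates of the relevant rational subspaces do not become uniformly small faster than a fixed power of $p^{-1}$, the power being independent of $v$. At the place $p$ this should follow from the good reduction of $\bH_v$ (a consequence of $p\nmid D$), which forces the $p$-adic parts of these coordinates to be $p$-adic units; at the real place one uses that $\bH_v(\bR)$ is compact (hence $\mathcal{Y}_1$ is compact) together with the homothety built into $a_v$, which keep the real part of $\theta_v$ from pushing the relevant subspaces arbitrarily far into the cusp. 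Everything else --- the $(C,\alpha)$-good property of bounded-degree polynomial maps into $\bQ_S$, and the $S$-arithmetic Mahler compactness criterion \cite[Thm.~7.10]{kleinbocktomanov} --- is standard, with constants absolute in $d$.
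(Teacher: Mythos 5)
Your reduction to the second factor is exactly the paper's first step (the orbit on $\mathcal{Y}_1$ is compact, so for $R$ large only the $\mathcal{Y}_2$-height matters), but at that point the paper simply cites \cite[Lemma 7.2]{EMMV}, which is the quantitative non-divergence statement for such homogeneous sets. You instead attempt to rederive that statement from Kleinbock--Tomanov, and the derivation has a genuine gap at precisely the point you flag yourself: the uniform lower bound $\rho(x,T)\ge\rho_0\gg p^{-c}$ on the non-degeneracy parameter, with $c$ absolute and the bound uniform over $\lambda_v$-a.e.\ $x$ in the orbit \emph{and} over all $v$ with $\|v\|^2=D$. This is not a ``standard limiting argument'': ergodicity only gives, for each fixed $x$ and $v$, an eventual lower bound on $\rho(x,T)$ depending on $x$ and $v$; without uniformity and an explicit power of $p$ you recover only qualitative non-divergence, not the claimed bound $\ll p^{\ref{exp:nondivp}}R^{-\ref{exp:nondivR}}$ with absolute constants. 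The entire content of the cited lemma of \cite{EMMV} is this step: one shows that a rational subspace whose covolume stays small along the whole unipotent orbit (hence, by Mautner/Zariski density, along the whole $\HVS[p]^+$-orbit) must be invariant under the acting semisimple group, and then the arithmetic of the homogeneous set (integrality of the invariant data attached to the quadratic form defining $\bH_{\Lambda_v}$, the condition $p\nmid D$, and the relation between volume and discriminant) bounds its covolume from below by a fixed power of $p^{-1}$.

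Your two heuristics do not close this gap. At the place $p$, ``good reduction forces the Pl\"ucker coordinates to be $p$-adic units'' is not meaningful for the subspaces entering the Kleinbock--Tomanov hypothesis: these are arbitrary rational subspaces of $\on{Ad}(g^{-1})\gog_{\bZ[1/p]}$, not $\bH_{\Lambda_v}$-invariant ones; the reduction to invariant subspaces is exactly the dynamical step you skipped, and even for invariant subspaces one needs to identify the invariant vectors and bound their size, not merely invoke $p\nmid D$. At the real place, the assertion that compactness of $\bH_v(\bR)$ together with the homothety $a_v$ keeps $\theta_v$ from pushing the relevant subspaces far into the cusp is false as stated: the shape $\theta_v$ can lie arbitrarily deep in the cusp as $D$ varies (the lattice $\Lambda_v$ may have very short vectors relative to its covolume), and the lemma is precisely about bounding the measure of the high part of the orbit, so no a priori confinement of the base point is available. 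To make your route work you would have to carry out the invariance-plus-arithmetic argument above, i.e.\ essentially reprove \cite[Lemma 7.2]{EMMV}; as written, the key estimate is assumed rather than proved.
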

As remarked before, $\Sigma_{\bG_{\on{joint}}}(R)=\mathcal{Y}_1^+\times\Sigma_{\bG_2}(R)$ for $R$ large enough, so that $\mu_{v,S}(\Sigma_{\bG_{\on{joint}}}(R))={\pi_2}_*\mu_{v,S}(\Sigma_{\bG_2}(R))$ which reduces it to the maximal case and we may cite \cite[Lemma 7.2]{EMMV} for the above formulation.
Making the same choice as in \cite{EMMV} we put $$X_\text{cpt}=\Sigma_{\bG_{\on{joint}}}\left(p^{(\ref{exp:nondivp}+20)/\ref{exp:nondivR}}\right).$$
This gives $\mu_{v,S}(X_\text{cpt})>1-2^{-20}$ if $p$ is sufficiently large (which we may assume by Proposition~\ref{prime_existence}) to take care of the implicit (and absolute) constant in Lemma~\ref{nondivergence}.

\subsection{$S$-adic Sobolev norms}
\label{sobolevchapter}
Let $\bG<\on{SL}_d$ be a semisimple $\bQ$-group. The space of smooth functions $C_c^\infty(X)$ on $X=\Gamma\backslash\bG(\bQ_S)^+$ consists of compactly supported functions that are invariant under $$K[m]=\{g\in\bG(\bZ_p):\|g-e\|_p\leq p^{-m}\}$$ for some $m$ and are smooth at the real place.  
The latter requirement means that for $f\in C_c^\infty(X)$, for any monomial $\mathcal{D}$ in $\on{dim}(\bG)$ variables, and for any basis $X_i$ of $\on{Lie}(\bG(\bR))$, $\mathcal{D}(X_1,\dots,X_{\on{dim}(\bG)})f$ exists. 
We will use the following $S$-adic Sobolev norms $\cS_{d}$ of degree $d$ on $C_c^\infty(X)$, a variant of this already having been introduced in~\cite{venkatesh}:
\begin{equation*}
\cS_{d}(f)^2=\sum_{m\geq0}\left(p^{md}\sum_\mathcal{D}\left\|\on{pr}[m](1+\on{ht}(x))^{d}\mathcal{D}f\right\|^2_{L^2_{m_X}}\right)
\end{equation*}
The inner sum runs over all monomials $\mathcal{D}$ in the elements of a fixed basis of $\on{Lie}(\bG(\bR))$ of degree less than $d$. The operator $\on{pr}[m]$ is defined to be  the difference $\on{Av}_{m}-\on{Av}_{(m-1)}$ where $\on{Av}_{m}$ denotes average operator over $K[m]$ for $m\geq0$ and $\on{Av}_{(-1)}=0$. We will think of $\on{pr}[m]$ as the projection operator to the "space of functions of pure level $m$". Let us summarize the properties given in~\cite[Section 7.4]{EMMV}.

\begin{proposition}[Properties of Sobolev Norms]
\label{sobolev}
The following properties hold:
\begin{itemize}
\item[(S1)]\textit{($\infty$-Norm)} There exists $d_0\geq 1$ depending on $\on{dim}(\bG)$ only such that for all $d' \geq d_0$ we have 
\[
\|f\|_\infty\ll_{d'}\cS_{d'}(f).
\]
\item[(S2)]\textit{(Trace)} For every $d' \geq d_0$ there exist integers $d_2>d_1>d'$ and an orthonormal basis $\{e_k\}$ of the completion $C_c^\infty(X)$ with respect to $\cS_{d_2}$ which is orthogonal with respect to $\cS_{d_1}$ so that $$\sum_k\cS_{{d_1}}(e_k)^2<\infty\text{ and }\sum_k\frac{\cS_{d'}(e_k)^2}{\cS_{{d_1}}(e_k)^2}<\infty.$$
\item[(S3)]\textit{(Translation)} For any $g\in\bG(\bQ_S)$ and~$d'\geq1$ we have $$\cS_{d'}(g\acts f)\ll_{d'} \|g\|_S^{4{d'}}\cS_{d'}(f).$$ If $g\in K[0]$ then $$\cS_{d'}(g\acts f)=\cS_{d'}(f).$$
\item[(S4)]\textit{(Lipschitz)} If $g\in K[m]$ and~$d'\geq d_0$ then $$\|g\acts f-f\|_\infty\ll p^{-m}\cS_{d'}(f).$$
\item[(S5)]\textit{(Product)} If $f_{1}, f_{2} \in C_{c}^{\infty}(X)$ and $d'' = d' + d_{0} + 1$, then
\[
\cS_{d'}(f_{1}f_{2}) \ll_{d'} \cS_{d''}(f_{1})\cS_{d''}(f_{2}).
\]
\end{itemize}
\end{proposition}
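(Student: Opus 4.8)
Proposition~\ref{sobolev} records properties that are already established in \cite[Section~7.4]{EMMV}, so the plan is to transcribe the arguments there, specialised to our three semisimple groups $\bG_1,\bG_2,\bG_{\on{joint}}<\SL_d$; since $S=\{p,\infty\}$ every assertion splits cleanly into a classical archimedean statement about the real Lie group and an elementary bookkeeping argument over the descending filtration $K[0]\supset K[1]\supset\cdots$. The point to keep track of, since later sections never revisit it, is that all implicit constants depend only on $\dim\bG$ (hence only on $d$) and in particular not on the prime $p$.

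For (S1) I would decompose $f=\sum_{m\ge 0}\on{pr}[m]f$ into functions of pure level, apply the classical Sobolev embedding $W^{k,2}\hookrightarrow L^\infty$ on a fundamental domain for $\Gamma\backslash\bG(\bR)$ to each $\on{pr}[m]f$ --- here the weight $(1+\on{ht}(x))^{d}$ is precisely what makes this embedding uniform in the cusps of the non-compact quotient --- and then sum over $m$ against the geometric factors $p^{-md'/2}$ implicit in the definition of $\cS_{d'}$, using Cauchy--Schwarz; the threshold $d_0$ then comes out to be the classical Sobolev exponent $\lfloor\dim\bG(\bR)/2\rfloor+1$ up to a bounded additive term. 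Properties (S3)--(S5) are comparatively short: (S3) follows from the identity $X_i(g\acts f)=g\acts\bigl((\on{Ad}(g^{-1})X_i)f\bigr)$ together with the polynomial distortion of the fixed basis of $\on{Lie}(\bG(\bR))$ under $\on{Ad}(g^{-1})$ and the comparable distortion of the height function, the bookkeeping over derivatives of order $\le d'$ producing the exponent $4d'$, while for $g\in K[0]$ the action is by $p$-adic isometries fixing every level; (S4) is a telescoping of the level decomposition, bounding each $\on{pr}[j]f$ with $j>m$ by its archimedean sup norm via (S1); and (S5) is the Leibniz rule followed by estimating one of the two factors in $L^\infty$ through (S1), which is what costs the additional $d_0+1$ derivatives.

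The genuine technical heart, and the step I expect to be the main obstacle, is (S2): exhibiting a basis $\{e_k\}$ that is orthonormal for $\cS_{d_2}$, orthogonal for $\cS_{d_1}$, and satisfies both $\sum_k\cS_{d_1}(e_k)^2<\infty$ and $\sum_k\cS_{d'}(e_k)^2/\cS_{d_1}(e_k)^2<\infty$ --- that is, making precise that the embeddings of the relevant Hilbert-space completions are trace class. I would build the $e_k$ from an eigenbasis of an elliptic, height-weighted, Laplace-type operator on the finitely many real arithmetic quotients making up the level-$m$ component of $L^2(X)$, tensored with the characters of the finite abelian layers $K[m-1]/K[m]$, so that all of the norms $\cS_d$ are simultaneously diagonalised up to bounded factors. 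The two series then reduce to expressions of the shape $\sum_m[\bG(\bZ_p):K[m]]\,p^{-m(d_1-d')}\sum_\lambda\lambda^{-(d_1-d')}$ and its $(d_2,d_1)$-analogue, which converge as soon as $d_1-d'$, respectively $d_2-d_1$, exceeds both $\dim\bG$ --- needed to beat the level count $[\bG(\bZ_p):K[m]]\asymp p^{m\dim\bG}$ --- and the Weyl-law exponent ensuring $\sum_\lambda\lambda^{-s}<\infty$; choosing $d_1$ and $d_2$ large enough in terms of $d'$ and $\dim\bG$ then yields the claim. This simultaneous counting and convergence estimate is exactly the content of \cite[Section~7.4]{EMMV}, whose details we follow.
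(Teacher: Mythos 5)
Your premise that all of these properties ``are already established in \cite[Section~7.4]{EMMV}'' is not quite accurate, and the discrepancy matters: the paper quotes (S1)--(S4) from \cite{EMMV} without proof (so your detailed plans for (S1) and in particular the trace-class/spectral construction for (S2) reprove material that is simply cited), whereas the product property (S5) is explicitly \emph{not} formulated in \cite{EMMV} and is the one item the paper actually proves. It is precisely on (S5) that your sketch has a gap. ``Leibniz rule followed by estimating one of the two factors in $L^\infty$ through (S1)'' does not interact with the weights $p^{md'}$ in the definition of
$\cS_{d'}(f)^2=\sum_{m\geq0}p^{md'}\sum_{\mathcal D}\|\on{pr}[m](1+\on{ht})^{d'}\mathcal Df\|_2^2$:
for each level $m$ this bound is essentially independent of $m$ (a single Lipschitz step via (S4) would only gain $p^{-m}$), so the sum over $m$ against $p^{md'}$ is not controlled by $\cS_{d''}(f_1)\cS_{d''}(f_2)$ with constants uniform in the levels of $f_1,f_2$, and the argument as written does not close.

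What the paper does instead is decompose \emph{both} factors into pure levels, $f_1=\sum_k\on{pr}[k]f_1$ and $f_2=\sum_\ell\on{pr}[\ell]f_2$, use the combinatorial fact that $(\on{pr}[k]f_1)(\on{pr}[\ell]f_2)$ has level at most $\max(k,\ell)$ and pure level exactly $\max(k,\ell)$ when $k\neq\ell$ (so that $\on{pr}[m](f_1f_2)$ only sees pieces with $\max(k,\ell)\geq m$), and then apply the refined pointwise bound extracted from the proof of (S1) in \cite[Section~A.5]{EMMV},
\[
\bigl|(1+\on{ht}(x))^{d'}\mathcal D\,\on{pr}[k]f(x)\bigr|^{2}\ll p^{-k(d'+1)}\,\cS_{d_0+d'+1}(f)^{2},
\]
whose $p^{-k(d'+1)}$ decay in the level is exactly what beats the weight $p^{md'}$ and makes the resulting double geometric series converge. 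This refined estimate applied to the pure-level projections of both factors --- not a plain application of (S1) to one factor --- is where $d''=d'+d_0+1$ enters. Your intuition about where the extra $d_0+1$ derivatives are spent is in the right neighbourhood, but without the pure-level decomposition of the product and the level-decay estimate, the key summation over $m$ is unjustified; this is the missing idea you would need to supply.
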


The last property is not formulated in \cite{EMMV} and is proven below.
\begin{proof}[Proof of (S5)]
Let $d' \geq 0$ and let $\mathcal{D}_{0}$ be a monomial of degree at most $d'$. Then by the proof of the Sobolev inequality (S1) in ~\cite[Section A.5]{EMMV},
\begin{equation}
\label{sobolevS5estimate}
\left|(1+\on{ht}(x))^{d'}\mathcal{D}_{0}\on{pr}[k]f(x)\right|^{2} \ll p^{-k(d'+1)}\mathcal{S}_{d_0 + d' + 1}(f)^{2}
\end{equation}
for all $k \geq 0$ and $x \in X$. We are going to use the decompositions
\[
f_{1} = \sum_{k \geq 0}\on{pr}[k]f_{1} \mbox{ and } f_{2} = \sum_{\ell \geq 0}\on{pr}[\ell]f_{2}
\]
for $f_{1}$ and $f_{2}$. Moreover, note that $(\on{pr}[k]f_{1})(\on{pr}[\ell]f_{2})$ is a function of level at most $\on{max}(k,\ell)$, and that $(\on{pr}[k]f_{1})(\on{pr}[\ell]f_{2})$ has pure level exactly $\on{max}(k,\ell)$ if $k \neq \ell$. More formally, $\on{pr}[m]((\on{pr}[k]f_{1})(\on{pr}[\ell]f_{2}))$ vanishes if $k,\ell < m$, or if $k \neq \ell$ and $\on{max}(k,l) \neq m$. This implies that
\begin{align*}
\on{pr}&[m](f_{1}f_{2}) = \on{pr}[m]\sum_{k,\ell \geq 0}(\on{pr}[k]f_{1})(\on{pr}[\ell]f_{2})\\
&= \on{pr}[m]\sum_{k \geq m}(\on{pr}[k]f_{1})(\on{pr}[k]f_{2}) + (\on{pr}[m]f_{2})\sum_{k=0}^{m-1}\on{pr}[k]f_{1} + (\on{pr}[m]f_{1})\sum_{\ell=0}^{m-1}\on{pr}[\ell]f_{2}.
\end{align*}
Together with the definition of the Sobolev norm, the Leibniz rule, and the estimate~$1+\on{ht}(x)\ll\on{ht}(x)$ for all~$x\in X$ this gives
\begin{align*}
\mathcal{S}_{d'}(f_{1}f_{2})^{2} &\ll \sum_{m \geq 0}p^{md'}\sum_{\mathcal{D}_{1}, \mathcal{D}_{2}}\left\|\on{pr}[m](1+\on{ht}(x))^{d'}\mathcal{D}_{1}f_{1}\mathcal{D}_{2}f_{2}\right\|_{L^{2}_{m_{X}}}^{2}\\
& \ll \sum_{m \geq 0}p^{md'}\sum_{\mathcal{D}_{1}, \mathcal{D}_{2}} \Big\| \on{pr}[m]\sum_{k \geq m}\big(\on{pr}[k]\on{ht}(x)^{d'}\mathcal{D}_{1}f_{1}\big)\big(\on{pr}[k]\on{ht}(x)^{d'}\mathcal{D}_{2}f_{2}\big)\\
& \hspace{24mm} + \big(\on{pr}[m]\on{ht}(x)^{d'}\mathcal{D}_{2}f_{2}\big) \sum_{0\leq k < m}\on{pr}[k]\on{ht}(x)^{d'}\mathcal{D}_{1}f_{1}\\
& \hspace{24mm} + \big(\on{pr}[m]\on{ht}(x)^{d'}\mathcal{D}_{1}f_{1}\big) \sum_{0\leq k < m}\on{pr}[k]\on{ht}(x)^{d'}\mathcal{D}_{2}f_{2}\Big\|_{L^{2}_{m_{X}}}^{2},
\end{align*}
where the inner sum runs over all monomials $\mathcal{D}_{1}, \mathcal{D}_{2}$ of degree at most $d'$. Let $d'' = d_{0} + d' + 1$ and use (\ref{sobolevS5estimate}) for each of the six projections to obtain
\begin{align*}
\mathcal{S}_{d'}&(f_{1}f_{2})^{2} \ll \sum_{m \geq 0} p^{md'} \sum_{\mathcal{D}_{1},\mathcal{D}_{2}}\Big(\sum_{k \geq m}p^{-\frac12k(d' + 1)}\cS_{d''}(f_{1})p^{-\frac12k(d' + 1)}\cS_{d''}(f_{2}) \\
& \hspace{40mm} + p^{-\frac12m(d' + 1)}\cS_{d''}(f_{2})\sum_{0\leq k < m} p^{-\frac12k(d' + 1)}\cS_{d''}(f_{1}) \\
& \hspace{40mm} + p^{-\frac12m(d' + 1)}\cS_{d''}(f_{1})\sum_{0\leq k < m} p^{-\frac12k(d' + 1)}\cS_{d''}(f_{2})\Big)^{2} \\
& \ll \sum_{m \geq 0}p^{md'} \Big(p^{-m(d' + 1)} + p^{-\frac12m(d'+1)}\Big)^{2}\mathcal{S}_{d''}(f_{1})^{2}\mathcal{S}_{d''}(f_{2})^{2}\\
& \ll \sum_{m \geq 0}p^{-m}\mathcal{S}_{d''}(f_{1})^{2}\mathcal{S}_{d''}(f_{2})^{2} \ll \mathcal{S}_{d''}(f_{1})^{2}\mathcal{S}_{d''}(f_{2})^{2}.
\end{align*}

\end{proof}

\subsection{Proof of Equidistribtion of Full Orbit}

\begin{proof}[Proof of Theorem \ref{t:FullDynamics}]
Let us start by recalling that the groups $G_{\text{joint},S}^+$ and ${L}_{v,S}^{+}$ are normal in $G_{\text{joint},S}$ resp.\ ${L}_{v,S}$ 
(see the argument of Lemma~\ref{plusisplus}) and are of index $4$ (see 
\cite[Lemma~3.6]{AES} for an argument using the spinor norm resp.\ 
our concrete discussions in Lemma~\ref{indexsplit} and Lemma~\ref{lemmagivingsl2sl2}). 
Let $\ell^v_j=(e,h^v_j,e,(h^v_j)^{g_v})$ and $(e,g_j,e,e)$ for $j=1,\ldots,4$ 
denote coset representatives in ${L}_{v,S}$ resp.\ $G_{\text{joint},S}$.
By the argument at the end of \cite[Section~3.5]{AES} 
we may even suppose that $h^v_j\in g_j G_{1,p}^+$ for $j=1,\ldots,4$.
Hence we may define the orbit measures  $\mu^j_{v,S}$ and $m_{\mathcal{Y}^j_\text{joint}}$ corresponding to
\[
 \Gamma {L}_{v,S}^{+}\left(k_{v},h^v_j,\theta_{v},(h^v_j)^{g_v}\right)\subset
\mathcal{Y}^j_\text{joint} =
\Gamma (e,g_j,e,e)G_{\text{joint},S}^+.
\]
Normality immediately implies that $\frac14\sum \mu^j_{v,S}=\mu^{\on{Full}}_{v,S}$ 
and the analoguous statement for the ambient Haar measure (\cite[Lemma~3.8]{AES}).

Theorem~\ref{t:FullDynamics} will therefore follow if we can get the statement of Theorem~\ref{maindynamicalresult} but with $\mu_{v,S}$ replaced by $\mu_{v,S}^j$ and $m_{\mathcal{Y}^+_\text{joint}}$ by $m_{\mathcal{Y}^j_\text{joint}}$ for every $j=1,\ldots,4$.
Let $f\in C_c^\infty(\mathcal{Y}_{\on{joint}})$, and decompose it into $f=\sum f_j$ where $f_j$ is the restriction of $f$ to $\mathcal{Y}_{\on{joint}}^j$. 
By translating $f_j$ with $\ell^v_j$, we get functions $\tilde{f}_j\in C_c^\infty(\mathcal{Y}^+_{\on{joint}})$.

Note also that (by normality) $\mu^j_{v,S}$ is the push forward measure of $\mu_{v,S}$ by $\ell^v_j$, and $m_{\mathcal{Y}^j_\text{joint}}$ is the push forward measure of $m_{\mathcal{Y}^+_\text{joint}}$ by $g_j$. 
In fact, we might as well push $m_{\mathcal{Y}^+_\text{joint}}$ by $\ell^v_j$ to get $m_{\mathcal{Y}^j_\text{joint}}$.
This gives
\[
\left|\mu^j_{v,S}(f_j) - m_{\mathcal{Y}^j_{\operatorname{joint}}}(f_j)\right| \ll D^{-\ref{exp:maindynamicalresult}}\mathcal{S}_{d_2}(f_j^{\ell^v_j}).
\]
Finally, the representatives can be choosen to satisfy $\|h^v_j\|_p\leq p$ (with two elements of norm $1$ corresponding to the square and non-square representatives of $\bF^\times_p$, and the other two of norm equal to $p$), see the proof of Lemma~\ref{indexsplit} for the rank one case and Lemma~\ref{lemmagivingsl2sl2} for the rank two case. Thus, Theorem~\ref{t:FullDynamics} follows by using (S3) of the Sobolev properties and the bound on $p$ in 
Lemma~\ref{choosing_prime} (and setting e.g.\ $\ref{exp:FullDynamicsJoint}=\ref{exp:maindynamicalresult}/2$ to absorb the $(\log D)^2$-term). 
\end{proof}

The remainder of the paper is devoted to proving Theorem~\ref{maindynamicalresult}. 

\subsection{The principal $\on{SL}_2$}
For the dynamical argument we will use a unipotent flow in $\on{SL}_2(\bQ_p)$ sitting inside each simple factor of $\oLvS[p]$, which we will define now. As will be shown in Corollary \ref{classificationofH} there exists $h_v\in\on{GL}_d(\bZ_p)$ such that conjugation by $h_v$ sends $\bH_v(\bQ_p)<\on{SL}_{d}(\bQ_p)$ to\footnote{Even though
the notion of signature of a quadratic form
is meaningless over~$\bQ_p$, we use the standard notation (slightly decorated)
as this makes the definitions easy to remember.} $\on{SO}(2,1)(\bQ_p)<\on{SL}_{d}(\bQ_p)$ if $d=4$ and to either $\on{SO}(2,2)(\bQ_p)<\on{SL}_{d}(\bQ_p)$ or $\on{SO}_\eta(3,1)(\bQ_p)<\on{SL}_{d}(\bQ_p)$ if $d=5$. 
 
 In the following we will say that a homomorphism $\phi:\bH \rightarrow\bG $
 between two algebraic groups over $\bQ_p$ (each endowed with a concrete
 realization as a matrix group) is defined over $\bZ_p$ if $\phi^{-1}(\bG(\bZ_p))=\bH(\bZ_p)$. 

There exists a surjective algebraic homomorphism defined over $\bZ_p$
$$\on{SL}_2 \to \on{SO}(2,1) $$
corresponding to the adjoint representation of $\on{SL}_2$ (which  
 naturally extends to one into $\on{SO}_\eta(3,1) $). 
In the split case $\on{SO}(2,2)$ for $d=5$ we consider the representation
$$\on{SL}_2 \times \on{SL}_2 \to \on{SO}(2,2) $$
as described in Section \ref{kak-split} that maps $\on{SL}_2(\bQ_p)\times \on{SL}_2(\bQ_p)$
onto $\on{SO}(2,2)(\bQ_p)^{+}$. In this case 
we consider the image of
$\on{SL}_2$ embedded diagonally in  $\on{SL}_2\times \on{SL}_2$. 
Again, this is defined over $\bZ_p$ and since also $g_v\in\on{GL}_d(\bZ)$ 
 we may summarise the above by the next lemma.

\begin{lemma}[Principal $\on{SL}_2$]\label{principalsl2}
For $\bH\in\{\Hv,\HV, \Lv\}$ there exists a homomorphism defined over $\bZ_p$
$$\on{SL}_2(\bQ_p)\to H_p^+$$
which projects non-trivially to the isotropic almost direct factors of $H_p^+$ over $\bQ_p$.
\end{lemma}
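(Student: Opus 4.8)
The plan is to assemble the lemma from the ingredients already laid out in the preceding paragraphs, checking that the ``defined over $\bZ_p$'' property is preserved under the two operations we perform, namely composition with an inner automorphism by an element of $\on{GL}_d(\bZ_p)$ and restriction to a diagonal subgroup. Recall that by Corollary~\ref{classificationofH} there is $h_v\in\on{GL}_d(\bZ_p)$ conjugating $\bH_v(\bQ_p)$ onto one of $\on{SO}(2,1)(\bQ_p)$, $\on{SO}(2,2)(\bQ_p)$ or $\on{SO}_\eta(3,1)(\bQ_p)$, and that in each of these three model cases we have an explicit surjective $\bZ_p$-morphism from (a product of copies of) $\on{SL}_2$ onto the $\bQ_p$-points (or the index-finite subgroup $H_p^+$) coming from the adjoint representation. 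So the first step is simply to record these model maps $\psi:\on{SL}_2\to\on{SO}(\ast)$ and the fact, to be proven in Section~\ref{kak-split}, that the image is exactly $H_p^+$ in the split case (with the analogous adjoint-representation statement in the other two cases).

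Next I would handle $\bH_v$ itself. Define $\phi=\on{Int}_{h_v^{-1}}\circ\psi$ (in the $d=5$ split case precompose $\psi$ with the diagonal embedding $\on{SL}_2\hookrightarrow\on{SL}_2\times\on{SL}_2$ first). Since $h_v\in\on{GL}_d(\bZ_p)$, conjugation by $h_v$ preserves $\on{SL}_d(\bZ_p)$, so $\phi^{-1}(\bH_v(\bZ_p))=\phi^{-1}(\bH_v(\bQ_p)\cap\on{SL}_d(\bZ_p))=\psi^{-1}(\on{SO}(\ast)(\bQ_p)\cap\on{SL}_d(\bZ_p))=\psi^{-1}(\on{SO}(\ast)(\bZ_p))=\on{SL}_2(\bZ_p)$, using that $\psi$ is defined over $\bZ_p$ and that $\bH_v(\bZ_p)=\bH_v(\bQ_p)\cap\on{SL}_d(\bZ_p)$ by our matrix realization of $\bH_v<\on{SO}_d<\on{SL}_d$. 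This gives the lemma for $\bH=\bH_v$, and the image lands in $\bH_v(\bQ_p)^+=\Hv_p^+$ because the model map is onto $H_p^+$. The ``non-trivial projection to isotropic almost-direct factors'' claim is then immediate: the map is surjective onto $H_p^+$, and by construction the $\on{SL}_2$-factor(s) map onto the isotropic simple factor(s) (the one factor of $\on{SO}(2,1)$, both of $\on{SO}(2,2)$ via the diagonal $\on{SL}_2$, and the unique isotropic factor of $\on{SO}_\eta(3,1)$), while any anisotropic almost-direct factor receives nothing but is irrelevant to the statement.

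For $\bH=\HV=\overline{g_v\bH_v g_v^{-1}}$ and $\bH=\Lv$ I would observe that these are obtained from $\bH_v$ by conjugating with $g_v$ (inside $\on{ASL}_{d-1}$, then projecting to $\on{SL}_{d-1}$) and by diagonally embedding, respectively — and crucially $g_v\in\on{GL}_d(\bZ)\subset\on{GL}_d(\bZ_p)$, so the same argument as above applies: $\on{Int}_{g_v}$ preserves $\on{SL}_d(\bZ_p)$, the projection $\on{ASL}_{d-1}\to\on{SL}_{d-1}$ restricted to the semisimple group $g_v\bH_v g_v^{-1}$ is an isomorphism defined over $\bZ_p$ (the unipotent kernel is met trivially, as noted in Section~\ref{orbitsofsubgroups}), and a diagonal embedding $\Delta:\bH\to\bH\times\bH$ plainly satisfies $\Delta^{-1}((\bH\times\bH)(\bZ_p))=\bH(\bZ_p)$. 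Composing the $\bZ_p$-morphism for $\bH_v$ with these $\bZ_p$-morphisms yields the required $\on{SL}_2(\bQ_p)\to H_p^+$ for $\HV$ and for $\Lv$, and surjectivity onto the isotropic factors is transported along isomorphisms and diagonal embeddings without change. The only genuine point requiring care — and the step I expect to be the main obstacle — is the identification of the image as all of $H_p^+$ rather than some smaller finite-index subgroup, i.e.\ the surjectivity statement ``$\on{SL}_2(\bQ_p)\times\on{SL}_2(\bQ_p)\twoheadrightarrow\on{SO}(2,2)(\bQ_p)^+$'' and its rank-one analogue; but this is exactly what is deferred to Section~\ref{kak-split} and Lemma~\ref{lemmagivingsl2sl2}, so here I would only need to cite it. Everything else is a bookkeeping check that ``defined over $\bZ_p$'' is closed under composition, which is formal.
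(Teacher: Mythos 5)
Your proposal follows essentially the same route as the paper, which proves this lemma exactly by citing the adjoint representation $\on{SL}_2\to\on{SO}(2,1)$ (extended into $\on{SO}_\eta(3,1)$), the map $\on{SL}_2\times\on{SL}_2\to\on{SO}(2,2)(\bQ_p)^+$ from Section~\ref{kak-split} restricted to the diagonal $\on{SL}_2$, and the observation that conjugation by $h_v\in\on{GL}_d(\bZ_p)$ and $g_v\in\on{GL}_d(\bZ)$ (plus the projection $\on{ASL}_{d-1}\to\on{SL}_{d-1}$ and the diagonal embedding) preserves the property $\phi^{-1}(\bG(\bZ_p))=\bH(\bZ_p)$. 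The only slip is your claim that the map is surjective onto $H_p^+$: this holds for $d=4$ but not for the $d=5$ principal $\on{SL}_2$ (its image is a proper subgroup of $H_p^+$, contained there since $\on{SL}_2(\bQ_p)$ is generated by unipotents), which is harmless because the lemma asks only for nontrivial projection to the isotropic almost direct factors, as your parenthetical correctly records.
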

Actually, we will give the $\lsl$-triples in $\goh$ associated to this principal $\on{SL}_2$ concretely in Section \ref{liealgebras}. We denote by $\{u_t\}$ the image of the upper unipotents in $\on{SL}_2(\bQ_p)$ under the homomorphism in Lemma \ref{principalsl2}.

\subsection{Spectral Gap}
We begin with the following which is a classical consequence from the strong approximation property of a semisimple $\bQ$-group that is isotropic over $\bQ_p$ (see \cite[Chapters II.6, II.7]{margulisbook}).
\begin{proposition}
\label{ergodic}
For $i=1,2$, $\bG^+_{i,p}$ acts ergodically on $L^2_{m_{\mathcal{Y}_i^+}}$. For $\bH\in\{\Hv,\HV,\Lv\}$ and 
the corresponding $\mu\in\{{\pi_1}_*\mu_{v,S},{\pi_2}_*\mu_{v,S},\mu_{v,S}\}$, $H_p^+$ acts ergodically on $L^2_{\mu}$.
\end{proposition}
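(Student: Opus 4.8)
The plan is to deduce both ergodicity statements from strong approximation together with the Howe--Moore property, which gives far more than mere ergodicity. First I would treat the ambient spaces $\mathcal{Y}_i^+$ for $i=1,2$. Recall that $\mathbb{G}_i$ is a semisimple $\bQ$-group which is isotropic over $\bQ_p$ (this is exactly where the choice of $p$ and the splitting condition enter: $\bH_v$, and hence the relevant $p$-adic factors, are non-compact over $\bQ_p$ by Section~\ref{choosing_prime}). For such a group the strong approximation theorem (\cite[Chapters II.6, II.7]{margulisbook}) says that $\mathbb{G}_i(\bQ)$ is dense in $\mathbb{G}_i(\mathbb{A}_f^{(p)})$, and a standard consequence is that $\Gamma G_{i,p}^+$ is all of $\mathcal{Y}_i^+$ up to measure zero, or more precisely that any $G_{i,p}^+$-invariant $L^2$ function is $\Gamma G_{i,S}^+$-invariant and hence a.e.\ constant. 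Concretely: an $L^2_{m_{\mathcal{Y}_i^+}}$ function fixed by $G_{i,p}^+$ descends to a function on a suitable finite-level arithmetic quotient, and strong approximation forces that quotient to be a single point (for the relevant connected component). I would cite this as the classical statement it is, indicating the reference, rather than reprove it.

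For the orbit-measure statements, the point is that $H_p^+$ acts transitively (up to null sets, on each component) on the homogeneous space carrying $\mu$, so ergodicity is essentially built in: the orbit $\Gamma L_{v,S}^+(k_v,e,\theta_v,e)$ is by definition the $\Delta_{\mathrm{SO}_{d-1}(\bR)\times \bH_v^+(\bQ_p)}$-orbit of a point, and $H_p^+$ is the $p$-adic part of the acting group. Here one invokes that $\bH_v$ (equivalently $\bH_{\Lambda_v}$, equivalently $\bL_v$, which are isomorphic as algebraic groups by Section~\ref{orbitsofsubgroups}) is semisimple and isotropic over $\bQ_p$ by Lemma~\ref{principalsl2} (it contains the principal $\on{SL}_2(\bQ_p)$, hence unipotents, hence is non-compact over $\bQ_p$), so again strong approximation applies to the $\bQ$-group $\bH_v$: any $H_p^+$-invariant function in $L^2_\mu$ is invariant under the full $S$-adic group generating the orbit, hence constant on the orbit, i.e.\ $\mu$-a.e.\ constant.

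The one genuine subtlety — which I expect to be the main obstacle — is bookkeeping with connected components and the $+$-superscripts: $G_{i,p}$ and $L_{v,S}$ are not $p$-adically connected, and strong approximation only gives irreducibility on each $G_{i,p}^+$- (resp.\ $H_p^+$-) component, which is exactly why the proposition is stated with the $+$ groups acting on $\mathcal{Y}_i^+$ rather than $G_{i,p}$ on $\mathcal{Y}_i$. So the careful step is to check that $G_{i,p}^+$ (resp.\ $H_p^+$) is precisely the image of the group of $\bQ_p$-points of the simply connected cover composed with the isogeny onto the component of interest, so that strong approximation in the form ``$\mathbb{G}^{\mathrm{sc}}(\bQ)$ dense in $\mathbb{G}^{\mathrm{sc}}(\mathbb{A}_f^{(p)})$'' is literally applicable. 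For $\bG_2=\on{SL}_{d-1}$ this is trivial ($G_{2,p}^+=G_{2,p}$, it is already simply connected), and for $\bG_1=\on{SO}_d$ and for $\bH_v$ one uses that the finite-index subgroup generated by unipotents is the image of the spin/simply-connected cover; the index-$4$ computations promised in Lemma~\ref{indexsplit} and Lemma~\ref{lemmagivingsl2sl2} will pin this down. Once that identification is in place, ergodicity (indeed mixing, via Howe--Moore on the isotropic factors, though we do not need it here) follows immediately, and I would write the proof as: ``Strong approximation for the isotropic semisimple $\bQ$-group in question implies that the relevant $+$-subgroup of $\bQ_p$-points acts irreducibly on the corresponding $L^2$ space, which is the assertion; see \cite[II.6--II.7]{margulisbook}.''
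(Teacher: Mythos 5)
Your proposal takes essentially the same route as the paper: the paper offers no argument beyond the sentence preceding the statement, namely that ergodicity is "a classical consequence of the strong approximation property of a semisimple $\bQ$-group that is isotropic over $\bQ_p$" with the citation to \cite[Chapters II.6, II.7]{margulisbook}, which is exactly the mechanism you invoke (including the correct caveat that one must pass to the simply connected cover and identify its image with the $+$-subgroups). One small caution: your phrase that $H_p^+$ "acts transitively up to null sets" on the orbit is not literally true, since a single $H_p^+$-orbit is a null set in the $(H_\infty\times H_p^+)$-orbit; but the argument you actually run (invariant $L^2$ functions are forced to be constant via strong approximation for $\bH_v$, which is $\bQ_p$-isotropic by Lemma~\ref{principalsl2}) is the correct and intended one.
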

The next property of the Sobolev norm is the deepest input of this paper, namely 
we need a form of property $(\tau)$. This is the following result about the spectral isolation of the regular representation of congruence quotients
and is also the reason for working with the subgroups $H_{v,S}^+, H_{\Lambda_v,S}^+,L_{v,S}^+$ and the homogeneous spaces $\mathcal{Y}_1^+,\mathcal{Y}_{\text{joint}}^+$. See \cite[Theorem 4.1 and Equation (4.1)]{EMMV} and the ambient section for the history of this theorem.
\begin{theorem}
\label{propertytau}
Let $\bH'$ be a simply connected algebraic semisimple $\bQ$-group that is isotropic over $\bQ_p$. Let $\bH$ be an algebraic $\bQ$-group such that there is an isogeny $\bH'\to\bH$ defined over $\bQ_p$. If $K$ is a good maximal subgroup of $\bH'(\bQ_p)$ then there exists some $\consta\label{exp:decay}>0$ which only depends on $\dim\bH$
such that for any $f_1,f_2\in L^2_{0}(\bH(\bZ[\tfrac1p])\backslash\bH(\bQ_S)^+)$ and $g_p\in \bH'(\bQ_p)$ we have
$$\left|\la g_p\acts f_1,f_2\right\ra|\leq\dim{(Kf_1)}^{\tfrac12}\dim{(Kf_2)}^{\tfrac12}\|f_1\|_2\|f_2\|_2\Xi_{\bH'(\bQ_p)}(g_p)^{\ref{exp:decay}},$$
where $\Xi_{\bH'(\bQ_p)}$ is the Harish-Chandra spherical function of $\bH'(\bQ_p)$.
\end{theorem}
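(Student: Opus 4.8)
The plan is to reduce the bilinear estimate to the classical uniform decay of matrix coefficients of tempered (or almost-tempered) representations of the semisimple $p$-adic group $\bH'(\bQ_p)$, and to control the ``local dimension'' of the $K$-type via the Howe--Moore / Cowling--Haagerup--Howe framework together with the congruence-subgroup property. First I would replace $\bH$ by $\bH'$ on the level of homogeneous spaces: since the isogeny $\bH'\to\bH$ is defined over $\bQ_p$, it induces a finite-to-one proper map of $S$-arithmetic quotients (after fixing compatible level structure), and pullback of functions is an isometry onto a closed $\bH'(\bQ_p)$-invariant subspace of $L^2$, so it suffices to prove the statement for $\bH'$ itself acting on $X'=\bH'(\bZ[\tfrac1p])\backslash\bH'(\bQ_S)^+$. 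Since $\bH'$ is simply connected and isotropic over $\bQ_p$, strong approximation gives that $X'$ is a congruence quotient and $L^2_0(X')$ decomposes as a direct integral of automorphic representations each of whose local component at $p$ is an infinite-dimensional unitary representation of $\bH'(\bQ_p)$.

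The heart of the argument is then the following: on $L^2_0(X')$ the representation of $\bH'(\bQ_p)$ is \emph{isolated from the trivial representation} uniformly over all congruence quotients -- this is exactly property $(\tau)$ for $\bH'$ -- and moreover, by the work of Oh and of Burger--Sarnak (or via Kazhdan-type arguments when $\mathrm{rank}_{\bQ_p}\bH'\geq 2$, and via the theta-correspondence / Deligne's bound for the rank-one factors arising here), every local component occurring in $L^2_0(X')$ satisfies a uniform pointwise bound of the form $|\langle g_p\acts\xi,\eta\rangle|\ll \Xi_{\bH'(\bQ_p)}(g_p)^{\ref{exp:decay}}\|\xi\|\|\eta\|$ on $K$-finite vectors $\xi,\eta$, with the implied constant absorbed into the dimension-of-$K$-type factors. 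I would invoke the standard fact that for a $p$-adic semisimple group, a unitary representation with no invariant vectors and bounded below in the Fell topology has matrix coefficients between $K$-fixed vectors dominated by a fixed power of $\Xi$; for general $K$-finite vectors one gets the extra combinatorial factor $\dim(K\xi)^{1/2}\dim(K\eta)^{1/2}$ by decomposing into $K$-isotypic components and using that each isotypic projection of $g_p\acts\xi$ pairs with $\eta$ through a finite sum controlled by the two $K$-dimensions (this is precisely the mechanism in \cite[\S4]{EMMV}). Writing $f_1,f_2$ in terms of their spectral decomposition and applying this bound fibrewise, then integrating, yields the claimed inequality with $\ref{exp:decay}$ depending only on $\dim\bH$.

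The main obstacle -- and the reason this is flagged as ``the deepest input of this paper'' -- is the uniformity of the spectral gap over the varying congruence quotients: the prime $p$ is not fixed but grows with $D$, so one genuinely needs a \emph{$p$-uniform} form of property $(\tau)$ and of the almost-temperedness estimate, rather than a gap that could a priori degenerate as $p\to\infty$. This is what forces the passage to the ``$+$''-spaces $\mathcal{Y}_i^+$ (so that $\bG^+_{i,p}$ acts ergodically, by Proposition \ref{ergodic}, and the relevant representation has no $\bH'(\bQ_p)$-invariant vectors other than constants), and it is the place where the hypothesis that $\bH'$ is simply connected and $\bQ_p$-isotropic is used in an essential way. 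I would handle it exactly as in \cite[Theorem 4.1 and (4.1)]{EMMV}: combine the congruence subgroup property for $\bH'$ (which makes every finite-index subgroup a congruence subgroup, so the automorphic spectrum is governed by a single adelic representation independent of $p$) with the uniform bound towards Ramanujan for the relevant classical groups, the exponent $\ref{exp:decay}$ being the uniform temperedness exponent, which depends only on $\dim\bH$ and not on $p$ or $D$. The remaining steps -- the reduction along the isogeny, the $K$-type bookkeeping, and assembling the direct-integral estimate -- are then formal, so I would present them briefly and cite \cite{EMMV} for the detailed spectral input.
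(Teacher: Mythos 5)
Your outline follows essentially the same route as the paper, which does not reprove this statement but imports it from \cite[Theorem 4.1 and Equation (4.1)]{EMMV}: the content behind that citation is exactly the combination you describe, namely reduction along the isogeny to the simply connected group, uniform property $(\tau)$ for the congruence quotients $\bH(\bZ[\tfrac1p])\backslash\bH(\bQ_S)^+$, and the Cowling--Haagerup--Howe/Oh mechanism converting a spectral gap into decay by a fixed positive power of $\Xi_{\bH'(\bQ_p)}$ with the $\dim(Kf_i)^{1/2}$ factors coming from the $K$-isotypic decomposition (this is where the good maximal compact $K$ enters). One minor caveat: the congruence subgroup property is neither needed nor invoked in \cite{EMMV} --- the quotient here is already a congruence quotient, so the $p$-uniformity comes solely from property $(\tau)$ together with the uniform temperedness exponent depending only on $\dim\bH$.
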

The prototypical example of a good maximal subgroup is $\on{SL}_2(\bZ_p)<\on{SL}_2(\bQ_p)$, see e.g.\ \cite[Chapter 2.1]{Oh} and Appendix \ref{Good_Subgroups}. The Harish-Chandra function $\Xi_{\bH'(\bQ_p)}(g_p)$ can be bounded from above in terms of $\|g_p\|_p^{-\kappa}$ for some  $\kappa>0$ which only depends on $\dim\bH$
(see for instance \cite[Thm.~1.11]{adelicmixing} and the references therein).

We want to apply this to $\bH\in\{\Hv,\HV,\Lv\}$
where $\bH'$ will be the simply connected cover
(studied in Section \ref{simplycoverOne} and \ref{simplycoverTwo}) of one of the model groups $\on{SO}(2,1)(\bQ_{p})$, $\on{SO}_{\eta}(3,1)(\bQ_{p})$ or $\on{SO}(2,2)(\bQ_{p})$ classified in Proposition \ref{SOclassification}.
Moreover,  the isomorphism is in fact defined over $\bZ_p$ and is given in Corollary \ref{classificationofH}. A proof that for these groups and the ambient groups $\bG_1$ and $\bG_2$, their $\bZ_p$-points define good maximal compact subgroups is provided in Appendix \ref{Good_Subgroups}.

Since $H_p^+$ acts ergodically on $\Gamma H_S^+g$, where $g\in\{k_v,\oT,(k_v,\oT)\}$
and since $\on{SL}_2(\bQ_p)$ projects non-trivially onto each simple factor of $H_p^+$, 
the one-parameter subgroup $\{u_t:t\in\bQ_p\}$ of the principal $\on{SL}_2$ also acts ergodically by the Mautner phenomenom (\cite[Proposition II.3.3]{margulisbook}). Property $(\tau)$ of $\bH$ implies that the $\on{SL}_2(\bQ_p)$-action is also $\frac1m$-tempered for some absolute $m\geq 1$. Using that one deduces the following (see \cite[Appendix A.8]{EMMV}):
\begin{itemize}
\item[(S5)]\textit{(Decay of Matrix Coefficients)} There exists $\consta\label{exp:decay2}>0$ such that for all $d'\geq d_0$
$$\left|\langle u_t\acts f_1,f_2\rangle_{L^2_\mu}-\mu(f_1)\mu(\overline{f}_2)\right|\ll(1+|t|_p)^{-\ref{exp:decay2}}\cS_{d'}(f_1)\cS_{d'}(f_2).$$
\end{itemize}

We consider now one of the factors, say $\mathcal{Y}_i^+$ for~$i\in\{1,2\}$. In the following we will use the Hecke operator $\bT_t=\on{Av}_L\star\;\delta_{u(t)}\star \on{Av}_L$ on $L^2_{m_{\mathcal{Y}^+_i}}$ introduced in \cite{EMMV}. As above the operator $\on{Av}_L$ denotes convolution with the characteristic function of $\bG_i^+(\bQ_p)\cap K[L]$ and $\delta_{u(t)}$ is the action of $u_t$ from the principal $\on{SL}_2$ (in $\HvS[p]$ respectively $\HVS[p]$).
Using the Mautner phenomenon once more, $u_t$ acts also ergodically on $L^2_{m_{\mathcal{Y}^+_i}}$ and this representation is $\frac1m$-tempered.
\begin{itemize}
\item[(S6a)]\textit{(Convolution on the ambient space)} $$|\bT_t(f)(x)-m_{\mathcal{Y}^+_i}(f)|\ll p^{d_2L}\on{ht}(x)^{d_2}\|\bT_t\|_2\cS_{d_2}(f).$$
\item[(S6b)] There exists $\consta\label{exp:heckeA11}>0$ such that $\|\bT_t\|_2\ll|t|_p^{-\ref{exp:heckeA11}}p^{2d_2L}$ where $\|\bT_t\|_2$ denotes the operator norm of $\bT_t$ on $L^2_{m_{\mathcal{Y}^+_i},0}$.
\end{itemize}

We refer to \cite[Appendix A]{EMMV} for a proof of (S1) to (S6).

\subsection{Almost invariance}
We recall some more terminology from \cite{EMV}.
Recall that $g\acts x=xg^{-1}$ and let $\mu^g$ denote the push forward with respect to the map $x\mapsto g\acts x$.
\begin{definition}[Almost invariant measures]
\label{invmeas1}
The measure $\mu$ on $\mathcal{Y}$ is called $\varepsilon$-almost invariant w.r.t.~$\mathcal{S}_{d'}$ under
\begin{itemize} 
\item $g\in\bG{(\bQ_p)}$ if $|\mu^{g}(f)-\mu(f)|\leq \varepsilon \cS_{d'}(f)$ for all $f\in C^\infty_c(\mathcal{Y})$,
\item  a subgroup $L<K$ if it is $\varepsilon$-almost invariant under all $g\in L$,
\end{itemize}
\end{definition}
The main ingredient for the proof of the first half of Theorem \ref{maindynamicalresult} is the following dynamical result:

\begin{proposition}
\label{almostinvariance}
There exists $\consta\label{exp:addinvD}>0$, $d_2 > 0$ such that ${\pi_i}_*\mu_{v,S}$ is $D^{-\ref{exp:addinvD}}$-almost invariant w.r.t.~$\mathcal{S}_{d_2}$ under $\bG_i{(\bQ_p)}^+\cap K[1]$ for $i=1,2$.
\end{proposition}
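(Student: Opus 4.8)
The plan is to exploit the spectral-gap input (property $(\tau)$, via the decay estimate (S5) for matrix coefficients, and the Hecke-operator bounds (S6a), (S6b)) to upgrade the ergodicity of the principal unipotent flow $\{u_t\}$ into a quantitative \emph{almost invariance} under the small compact subgroup $\bG_i(\bQ_p)^+\cap K[1]$. The key mechanism, following \cite{EMV} and \cite{EMMV}, is the following: if $g\in\bG_i(\bQ_p)^+\cap K[1]$, then for $t\in\bQ_p$ with $|t|_p$ suitably large the conjugate $u_{-t}\,g\,u_t$ remains close to the identity in $K[m]$ for a scale $m$ growing linearly in $\log_p|t|_p$, because $g$ lies in a compact group and the unipotent conjugation contracts (in the appropriate coordinates on the semisimple factor). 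Concretely, I would first record the commutation/contraction estimate: there is a constant so that $u_{-t}gu_t\in K[m]$ whenever $\log_p|t|_p\gtrsim m$, uniformly over $g\in\bG_i(\bQ_p)^+\cap K[1]$; this is a direct matrix computation in the principal $\on{SL}_2$ together with the fact that the relevant ball in $\SLQ$ is bi-$K$-invariant.

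Next I would write, for $f\in C_c^\infty(\mathcal{Y}_i^+)$ and $\mu={\pi_i}_*\mu_{v,S}$,
\[
\mu^{g}(f)-\mu(f)=\bigl(\mu^{g}(f)-\mu^{g}(u_t\acts f)\bigr)+\bigl(\mu(u_t\acts f)-\mu(f)\bigr)+\bigl(\mu^{g}(u_t\acts f)-\mu(u_t\acts f)\bigr),
\]
where $t$ will be chosen as a function of $D$ and $p$ at the end. Using the ergodic decay of matrix coefficients for $u_t$ on $L^2_\mu$ (property (S5)), the middle term is $\ll(1+|t|_p)^{-\ref{exp:decay2}}\cS_{d'}(f)$ after subtracting $\mu(f)$ (which cancels on both occurrences). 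For the first term one uses that $\mu$ is $u$-invariant as a genuine orbit measure — wait: here one must be more careful, since $\mu$ is \emph{not} $u_t$-invariant a priori. The correct route, exactly as in \cite[Sections 5--7]{EMMV}, is instead to use the Hecke operator $\bT_t=\on{Av}_L\star\delta_{u(t)}\star\on{Av}_L$ on the ambient space $L^2_{m_{\mathcal{Y}_i^+}}$: one integrates $\bT_t f$ against $\mu$, applies (S6a) to replace $\bT_t f$ by $m_{\mathcal{Y}_i^+}(f)$ up to an error controlled by $\on{ht}(x)^{d_2}\|\bT_t\|_2\cS_{d_2}(f)$ on the compact piece $X_{\text{cpt}}$ (where the non-divergence Lemma \ref{nondivergence} confines all but a $p$-polynomially small mass of $\mu$), and uses (S6b) to bound $\|\bT_t\|_2\ll|t|_p^{-\ref{exp:heckeA11}}p^{2d_2L}$. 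The point of inserting the two copies of $\on{Av}_L$ for $L=1$ is precisely that $g\in K[1]$ is absorbed by $\on{Av}_1$, so $\mu^{g}(\bT_t f)$ and $\mu(\bT_t f)$ differ only by the commutator error from the contraction estimate above, which is $\ll p^{-m}\cS_{d'}(f)\ll|t|_p^{-c}\cS_{d'}(f)$ by (S4).

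Putting the pieces together: $\mu^{g}(f)-\mu(f)$ is bounded by a sum of terms, each of the shape (a power of $|t|_p^{-1}$) times (a power of $p$) times $\cS_{d_2}(f)$, plus the non-divergence tail $p^{\ref{exp:nondivp}}R^{-\ref{exp:nondivR}}\|f\|_\infty$ coming from the complement of $X_{\text{cpt}}$. Recalling from Proposition \ref{prime_existence} that $p\ll(\log D)^2$, one chooses $|t|_p$ to be a fixed positive power of $D$ (so that, say, $\log_p|t|_p\asymp \log D/\log\log D$, which still dominates any fixed power of $\log p$ absorbed into the polynomial-in-$p$ factors), and all error terms become $\ll D^{-\ref{exp:addinvD}}\cS_{d_2}(f)$ for a suitable absolute $\ref{exp:addinvD}>0$ and $d_2$ depending only on $\dim\bG_i$. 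I expect the main obstacle to be the bookkeeping of the two non-commuting effects: the contraction estimate needs $|t|_p$ \emph{large}, while the decay estimates and the $\|\bT_t\|_2$ bound also improve with $|t|_p$ large, but the height weight $\on{ht}(x)^{d_2}$ forces the restriction to $X_{\text{cpt}}$, whose complement carries mass only polynomially small in $p$, not in $D$ — so one must verify that the chosen $|t|_p=D^{c}$ makes $R=p^{(\ref{exp:nondivp}+20)/\ref{exp:nondivR}}$ large enough that $p^{\ref{exp:nondivp}}R^{-\ref{exp:nondivR}}\ll D^{-\ref{exp:addinvD}}$, which is \emph{false} as stated and instead requires carrying $\|f\|_\infty\ll\cS_{d_0}(f)$ and accepting that this tail term is only $\ll 2^{-20}$; the resolution, as in \cite{EMMV}, is that almost-invariance is only needed up to this controlled-mass error and the genuinely small-in-$D$ gain comes entirely from the Hecke/matrix-coefficient terms on $X_{\text{cpt}}$, so the statement should be read with the understanding that the $D^{-\ref{exp:addinvD}}$ bound holds for the main term and the tail is handled separately in the subsequent sections. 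I would therefore state and prove the estimate with that structure made explicit.
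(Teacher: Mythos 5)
Your proposal attacks the proposition with the wrong machinery, and the argument as sketched is circular. The Hecke-operator/spectral-gap step you describe (inserting $\bT_t=\on{Av}_L\star\delta_{u(t)}\star\on{Av}_L$, using (S6a), (S6b) and non-divergence) is precisely the \emph{downstream} step of Section \ref{convolution}: in the paper it upgrades almost invariance under $\bG_i(\bQ_p)^+\cap K[1]$ to closeness to Haar measure, and it \emph{consumes} Proposition \ref{almostinvariance} as an input. If you try to run it to prove almost invariance itself, the decomposition collapses: the middle term $\mu^g(\bT_tf)-\mu(\bT_tf)$ indeed vanishes by $K[L]$-invariance of $\bT_tf$, but the outer terms are of the form $\mu'(f-\bT_tf)$ with $\mu'\in\{\mu,\mu^g\}$, and by (S6) this is $\mu'(f)-m_{\mathcal{Y}_i^+}(f)$ up to controllable errors --- i.e.\ exactly the equidistribution discrepancy, which at this stage is unknown (it is the conclusion of Theorem \ref{maindynamicalresultsingle}, deduced \emph{from} almost invariance). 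Alternatively, bounding $\mu(f)-\mu(\bT_tf)$ as in Section \ref{convolution} requires the $K[1]$-almost invariance you are trying to prove. There are also two factual errors: the contraction claim $u_{-t}gu_t\in K[m]$ with $m\gtrsim\log_p|t|_p$ uniformly over $g\in K[1]$ is false (conjugation by $u_t$ shears polynomially and \emph{expands} the negative-weight directions; this expansion is what the paper exploits, in the opposite direction), and $\mu$ \emph{is} genuinely $u_t$-invariant since $u_t$ lies in the acting group $H_p^+$, so the matrix-coefficient bound you invoke for $\mu(u_t\acts f)-\mu(f)$ is vacuous. Finally, your closing suggestion to weaken the statement because of the $2^{-20}$ tail conflates the fixed set $X_{\text{cpt}}$ (used only for the pigeonhole argument) with the truncation $\Sigma_{\bG}(R)$, $R=D^\alpha$, used in the convolution step, where the tail really is $\ll D^{-\kappa}$.

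The missing idea is that the power saving in $D$ must come from the \emph{volume lower bound} $D^{\ref{exp:discvol1}}\ll V$ (resp.\ $D^{\ref{exp:discvol2}}\ll V$) of Section \ref{sec:discvol}, which your argument never touches. The paper's proof chooses the largest $m$ with $p^m\leq D^{\kappa}$ so that $2m_{G_{i,S}}(\Omega[m])^{-1}<V$, applies the pigeonhole principle (Lemma \ref{transversalpoints}) together with the quantitative ergodic theorem and the adjustment lemma to produce two $\ell_0$-generic points $z_2=z_1g$ in $X_{\text{cpt}}$ whose displacement satisfies $g_p=\exp r$ with $r\in\gor[m]\setminus\{0\}$ and $\|r\|_p=\|r^{\on{lw}}\|_p$ (Proposition \ref{genericpoints}, using Corollary \ref{stabilizer} to rule out $g\in H$); then the polynomial shearing of $\Ad[u_t]r$ (Lemma \ref{adpol}, Proposition \ref{addinv}) yields $\ll p^{\star}\|r\|_p^{\ref{exp:addinv}}\ll p^{\star}D^{-\kappa\ref{exp:addinv}}$-almost invariance under a highest-weight vector $w\in\gor^{\on{hw}}$; finally the effective generation statements (Lemma \ref{generation} and the implicit function theorem Lemma \ref{hensel}), combined with the exact $H_p$-invariance of $\mu$ and Lemma \ref{invmeasconj}, propagate this to almost invariance under all of $\bG_i(\bQ_p)^+\cap K[1]$, with the $(\log D)^2$ bound on $p$ absorbing the $p^{\star}$ factors. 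Without the transverse generic points and the shearing step there is no mechanism in your proposal that produces any quantity decaying like a power of $D$.
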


As mentioned before since $\pi_i(\Gamma\LvS^+(k_v,e,\oT,e))$ are MASH sets in the sense of \cite{EMMV}, this and the results in Section \ref{dynamics} are already implied by \cite[Theorem 1.5]{EMMV}. However, we provide the argument as the case at  hand is quite a bit easier and as  the framework will also be needed in Section \ref{proofofequidistribution}.

\subsection{Proof of Theorem \ref{maindynamicalresultsingle} - Single Factor}
\label{convolution}
We now upgrade almost invariance of ${\pi_i}_*\mu_{v,S}$ under $\bG_i^+(\bQ_p)\cap K[1]$ produced by Proposition \ref{almostinvariance} to saying that $\mu$ must be close to the Haar measure. This is identically 
to \cite[Section 7.9]{EMMV} (and as such a variant of \cite[Proposition 15.1]{EMV}). We will be using the Hecke operator $\bT_t$ introduced below Theorem  \ref{propertytau}, satisfying the two properties (S6a) and (S6b) which we combine to say that
\begin{itemize}
\item[(S6)] $|\bT_t(f)(x)-m_{\mathcal{Y}^+_i}(f)|\ll \on{ht}(x)^{d_2}|t|_p^{-\ref{exp:heckeA11}}p^{3d_2L}\cS_{d_2}(f)$.
\end{itemize}
We will now prove the first half of Theorem \ref{maindynamicalresultsingle} assuming that there exists $\ref{exp:maindynamicalresultsingle}>0$ and $d_2>0$ such that  $\mu={\pi_i}_*\mu_{v,S}$  
\begin{itemize}
\item is $D^{-\ref{exp:addinvD}}$-almost invariance under $\bG_i^+(\bQ_p)\cap K[1]$ with respect to the Sobolev norm $\cS_{d_2}$ and
\item satisfies the non-divergence estimate $\mu(X\setminus\Sigma_{G_{i,S}}(R))\ll p^{\ref{exp:nondivp}}R^{-\ref{exp:nondivR}}$ (as in Lemma \ref{nondivergence}). 
\end{itemize}

\begin{proof}[Proof of Theorem \ref{maindynamicalresultsingle} - Single Factor]
Let $\pi^+$ denote integration with respect to $m_{\mathcal{Y}^+_i}$ and $\mu={\pi_i}_*\mu_{v,S}$. Then 
\begin{equation}
\label{splitting}
\int f\on{d}\!\mu-\pi^+(f)=\left(\int f\on{d}\!\mu-\int \bT_t(f)\on{d}\!\mu\right)+\int\left(\bT_t(f-\pi^+(f))\right)\on{d}\!\mu
\end{equation}
and we treat the two terms separately. As we will see the first term will be small because $\mu$ is almost invariant, 
and for the latter we deduce an estimate from the fact (S6) concerning the operator $\bT_t$. 

We split $\mathcal{Y}$ into $\Sigma_{\bG}(R)$ and its complement. 
For all $x\in \Sigma_{\bG}(R)$ we have
$$|\bT_t(f-\pi^+(f))(x)|\ll R^{d_2}|t|_p^{-\ref{exp:heckeA11}}p^{3d_2L}\cS_{d_2}(f)$$
by property (S6).  
Using the non-divergence estimate and that $\bT_t$ does not increase the supremums norm  we obtain
$$\Bigl|\int\left(\bT_t(f-\pi^+(f))\right)\on{d}\!\mu\Bigr|\ll \left(p^{3d_2L}R^{d_2}|t|_p^{-\ref{exp:heckeA11}}+p^{\ref{exp:nondivp}}R^{-\ref{exp:nondivR}}\right)\cS_{d_2}(f).$$

Note that the first term on the right hand side of \eqref{splitting}  equals 
$$\int f\on{d}\!\mu-\int \on{Av}_1\star f \on{d}\!\mu+\int\on{Av}_1\star f\on{d}\!\mu-
\int \on{Av}_1\star(\delta_{u(t)}\star \on{Av}_1\star f)\on{d}\!\mu.
$$
Since $\delta_{u(t)}\star \on{Av}_1$ \emph{does} change Sobolev norms (controlled by (S3) 
and $\|u_t\|_p\ll|t|_p^d$) we find by almost invariance of $\mu$ under $K[1]$ and invariance of $\mu$ under $u(t)$
that
\begin{eqnarray*}
	 \Bigl|\int f\on{d}\!\mu-\int \bT_t(f)\on{d}\!\mu\Bigr|&\ll& D^{-\ref{exp:addinvD}}(\cS_{d_2}(f)+\cS_{d_2}(\delta_{u(t)}\star \on{Av}_1\star f))\\
	&\ll& D^{-\ref{exp:addinvD}}(1+|t|_p^{4d_2d})\cS_{d_2}(f).
\end{eqnarray*}
The expression in \eqref{splitting} we can therefore estimate by
$$\Bigl|\int f\on{d}\!\mu-\pi^+(f)\Bigr|\ll_\varepsilon \left(D^{-\ref{exp:addinvD}+\beta4d_2d}+D^{(3d_2L+\ref{exp:heckeA11})\varepsilon+d_2\alpha-\ref{exp:heckeA11}\beta}+D^{\ref{exp:nondivp}\varepsilon-\alpha\ref{exp:nondivR}}\right)\cS_{d_2}(f),$$
where we have set $R=D^{\alpha}$, $|t|_p\in [p^{-1}D^{\beta}, D^\beta]$, and $\varepsilon>0$ is used to bound 
$p=O_\varepsilon(D^\varepsilon)$ in terms of $D$. We now choose in turn $\beta>0$ such
that the first exponent of $D$ is negative, $\alpha>0$ such that $ d_2\alpha-\ref{exp:heckeA11}\beta<0$, and finally $\varepsilon>0$ such that the second and
the third exponent of $D$ are negative. Fixing one such definition
of $\alpha,\beta,\varepsilon$
in terms of $\ref{exp:nondivp}, \ref{exp:nondivR}, \ref{exp:heckeA11},\ref{exp:addinvD},d,$ and $d_2$,
we obtain an upper bound of the form $D^{-\ref{exp:maindynamicalresultsingle}} \cS_{d_2}(f)$ as required.
\end{proof}


\section{Quadratic Forms, Discriminants, and Orthogonal Groups}
\label{discriminants}\label{classification}

As the main dynamical argument will happen on the $p$-adic factor using the principal $\on{SL}_2$, we need to show
the existence of the principal $\on{SL}_2$ and in particular
that the corresponding groups are non-compact. We will also analyze the volume growth within $\SO[Q](\bQ_{p})$ by studying the transitive action on a regular tree $H/K$ or on a product of such trees. As all of these facts are well known, we postpone some parts of the argument to Appendix \ref{Buildings}.

\subsection{Non-compactness of the orthogonal group}
For the convenience of the reader, we provide a proof of the following well known statement: Whenever a quadratic form in at least three variables is anisotropic over $\bQ_{p}$ for some prime number $p \neq 2$, then its discriminant is divisible by $p$.  Recall that the discriminant of a quadratic form is defined to be the determinant of the corresponding symmetric matrix.

We also note that the orthogonal group is never compact if the quadratic form has at least five variables, because every such quadratic form is isotropic (see for example \cite[Thm. IV.2.2.6]{serrebook}). This
is the reason why Theorem \ref{mainresultAES} has no congruence condition for $d\geq 6$
and we can restrict ourself to the cases $d=4,5$. 

\begin{proposition}\label{diagonalQF}
Let $p \neq 2$ be prime and let $Q$ be a quadratic form  over $\bQ_{p}$ in $n$ variables and let $A=\left(a_{ij}\right) \in \Mat_{n}\left(\bQ_{p}\right)$ be the symmetric matrix corresponding to $Q$. Then $Q$ is $\bZ_{p}$-equivalent to a diagonal form
\[
c_{1}x_{1}^{2} + c_{2}x_{2}^{2} + \ldots + c_{n}x_{n}^{2},
\]
i.e.\ after a $\GL_d(\bZ_p)$-coordinate change $Q$ has the above form.
Moreover, we have that $\max_{k}\left|c_{k}\right|_{p} = \max_{i,j}\left|a_{ij}\right|_{p}$.
\end{proposition}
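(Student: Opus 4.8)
The plan is to diagonalize $Q$ by induction on the number of variables $n$, carefully working over $\bZ_p$ rather than merely over $\bQ_p$. First I would normalize: let $m = \max_{i,j}|a_{ij}|_p = p^{-r}$ and replace $Q$ by $p^r Q$, so we may assume all $a_{ij}\in\bZ_p$ and at least one coefficient is a unit. The key dichotomy is whether some \emph{diagonal} entry $a_{ii}$ is a unit or not. If some $a_{ii}\in\bZ_p^\times$, I would use it as a pivot: after permuting coordinates so that $a_{11}\in\bZ_p^\times$, perform the substitution $x_1 \mapsto x_1 - a_{11}^{-1}\sum_{j\geq 2}a_{1j}x_j$, which is a $\GL_n(\bZ_p)$-change of variables precisely because $a_{11}$ is a unit (so $a_{11}^{-1}a_{1j}\in\bZ_p$), and this eliminates all cross terms involving $x_1$, leaving $a_{11}x_1^2 + Q'(x_2,\dots,x_n)$ with $Q'$ again having $\bZ_p$-coefficients. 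If instead \emph{every} diagonal entry lies in $p\bZ_p$ but some off-diagonal $a_{ij}$ ($i\neq j$) is a unit, I would first apply the substitution $x_i \mapsto x_i + x_j$ (an element of $\GL_n(\bZ_p)$), which changes the coefficient of $x_i^2$ from $a_{ii}$ to $a_{ii} + 2a_{ij} + a_{jj}$; since $p\neq 2$ and $a_{ij}\in\bZ_p^\times$ while $a_{ii},a_{jj}\in p\bZ_p$, this new diagonal coefficient is a unit, and we are back in the first case. This produces $c_1 = a_{11}'$ with $|c_1|_p = 1 = m$ after our normalization, and we recurse on $Q'$.

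For the final claim $\max_k|c_k|_p = \max_{i,j}|a_{ij}|_p$, I would track the maximum through the reduction. Since all substitutions are in $\GL_n(\bZ_p)$, the symmetric matrix transforms as $A \mapsto {}^tU A U$ with $U\in\GL_n(\bZ_p)$, and such a transformation cannot increase the maximum $p$-adic absolute value of the entries (as the entries of ${}^tUAU$ are $\bZ_p$-linear combinations of entries of $A$); applying the same argument to $U^{-1}\in\GL_n(\bZ_p)$ shows the maximum is in fact \emph{preserved}. Hence throughout the process $\max_{i,j}|a_{ij}|_p$ is invariant, and in the diagonal form it equals $\max_k|c_k|_p$. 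The normalization step scales everything by $|p^r|_p$ uniformly, so the identity descends to the original $Q$.

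The one point requiring a little care — and the main potential obstacle — is the base case of the induction when, after normalization, \emph{all} entries of $A$ lie in $p\bZ_p$, i.e.\ no unit entry exists at all; but this contradicts our normalization that $m = 1$, so it cannot occur, and the induction is well-founded. A secondary subtlety is confirming that the pivot substitution genuinely clears \emph{all} cross terms $a_{1j}x_1 x_j$ simultaneously and that the resulting $Q'$ has no $x_1$-dependence — this is the standard completion-of-the-square computation and is routine once one writes $Q = a_{11}(x_1 + a_{11}^{-1}\sum_{j\geq 2}a_{1j}x_j)^2 + (\text{terms in }x_2,\dots,x_n)$ and checks the remainder has $\bZ_p$-coefficients, which holds because $a_{11}^{-1}a_{1j}a_{1k}\in\bZ_p$.
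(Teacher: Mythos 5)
Your argument is correct and follows essentially the same route as the paper: the paper also produces a pivot of maximal $p$-adic norm by evaluating $Q$ at $e_i$ or $e_i+e_j$ (your substitution creating the diagonal entry $a_{ii}+2a_{ij}+a_{jj}=Q(e_i+e_j)$ is the same device, valid since $p\neq 2$), then completes the square over $\bZ_p$ and inducts, and it proves the equality of maxima exactly as you do, via invariance of the entrywise sup-norm under $\GL_n(\bZ_p)$-congruence. Only two harmless slips: with $m=p^{-r}$ the normalizing factor should be $p^{-r}$ rather than $p^{r}$ (so that the maximal entry becomes a unit), and the substitution $x_i\mapsto x_i+x_j$ turns the coefficient of $x_j^2$ (not of $x_i^2$) into $a_{ii}+2a_{ij}+a_{jj}$.
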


For an element $g \in \operatorname{Mat}_{n}(\mathbb{Q}_{p})$
recall that $\|g\|_p=\max_{i,j}|g_{i,j}|_{p}$, which satisfies
\[
\left\|gh\right\|_p\leq\left\|g\right\|_p \left\|h\right\|_p
\]
for any two $g,h\in\on{Mat}_{n.n}(\bQ_p)$. In particular $g \in\on{Mat}_d(\bQ_p)\mapsto \left\|g\right\|_p$ is bi-invariant under the compact subgroup $\on{GL}_n(\bZ_p)$. We prove Proposition \ref{diagonalQF} in Appendix~\ref{proofoffirstinB}. 

Notice that a coordinate change corresponding to a matrix in $\on{GL}_n(\bZ_p)$ (as in the proof of Proposition \ref{diagonalQF}) does not change the valuation of the discriminant of $Q$ with respect to $p$.
In fact by definition the discriminant changes by the square of the determinant of the coordinate
change matrix.

By construction of $p$ we have $p\equiv 1 ~ \operatorname{mod} ~ 4$ so that by Hensel's Lemma  $-1$ is a square in $\bZ_p$ and $2$ is invertible in $\bZ_{p}$. We write $Q_{1} \sim Q_{2}$ for two quadratic forms $Q_{1}$, $Q_{2}$ if they differ by a change of basis over $\bZ_{p}$, or equivalently if their corresponding matrices are in the same $\on{GL}_{n}(\bZ_{p})$-orbit. If
$\tilde Q_2'=aQ_2$ for some $a\in\bZ_p^\times$, then the special orthogonal
group for $Q_2$ and $Q_2'$ are identical and we also say that the special orthogonal groups for $Q_1$ and $Q_2$ are $\mathbb{Z}_{p}$-conjugate to each other.

We denote the special orthogonal group of a quadratic form $Q$ by $\SO[Q]$, and in the special case of $Q=\sum_{i=1}^{n} x_{i}^{2}$, write $\on{SO}_n$.
\begin{proposition}
\label{SOclassification}
Let $Q(x,y,z)$ be a ternary quadratic form over $\mathbb{Z}_{p}$ with $p \nmid \operatorname{disc}(Q)$. Then $\operatorname{SO}_{Q}(\mathbb{Q}_{p})$ is $\mathbb{Z}_{p}$-conjugate to $\SO[2xy+z^{2}](\bQ_{p})$ and so is isotropic.

If $Q(x,y,z,w)$ is a quaternary quadratic form over $\mathbb{Z}_{p}$ with $p \nmid \operatorname{disc}(Q)$, then either $\operatorname{SO}_{Q}(\mathbb{Q}_{p})$ is $\mathbb{Z}_{p}$-conjugate to $\SO[2xy+z^{2}+\eta w^{2}](\bQ_{p})$ for a non-square $\eta\in\bZ_{p}^{\times}$ or to $\SO[2xy+2zw](\bQ_{p})$. In both cases, $\operatorname{SO}_{Q}(\mathbb{Q}_{p})$ is once more isotropic.
\end{proposition}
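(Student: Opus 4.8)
The plan is to diagonalize $Q$ over $\bZ_p$, observe that all coefficients become $p$-adic units, and then run through the very short list of such diagonal forms modulo $\bZ_p$-coordinate changes and rescalings of the form by a unit (which, as recalled just before the proposition, leave $\SO[Q]$ unchanged).

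First I would apply Proposition~\ref{diagonalQF} to replace $Q$ by a $\bZ_p$-equivalent diagonal form $c_1x_1^2+\cdots+c_nx_n^2$. Since $Q$ has coefficients in $\bZ_p$, the proposition gives $\max_k|c_k|_p=\max_{i,j}|a_{ij}|_p\le 1$, so each $c_k\in\bZ_p$. A coordinate change in $\GL_n(\bZ_p)$ alters the discriminant only by the square of a $p$-adic unit, so the hypothesis $p\nmid\operatorname{disc}(Q)$ forces $c_1\cdots c_n\in\bZ_p^\times$, whence $c_k\in\bZ_p^\times$ for every $k$. Rescaling the $k$-th variable by a unit $u_k$ replaces $c_k$ by $c_ku_k^2$, so, fixing a non-square unit $\eta\in\bZ_p^\times$, I may assume $c_k\in\{1,\eta\}$ for all $k$; moreover, since $\eta^2\in(\bZ_p^\times)^2$, scaling the whole form by $\eta$ interchanges the labels $1$ and $\eta$ among the $c_k$.

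The other ingredient is that $p\equiv1\pmod4$ by our choice of $p$, so $-1$ is a square in $\bZ_p$ and $x^2+y^2$ factors; an explicit $\GL_2(\bZ_p)$-substitution then shows $x^2+y^2$ is $\bZ_p$-equivalent to $2xy$, and likewise $\eta(z^2+w^2)$ is $\bZ_p$-equivalent to $2zw$. For $n=3$ the list of diagonal patterns is, up to permutation and the $\eta$-scaling above, just $(1,1,1)$ and $(1,1,\eta)$; replacing the first two coordinates by a hyperbolic pair gives $2xy+z^2$ and $2xy+\eta z^2$, and applying $\operatorname{diag}(1,\eta,1)$ to the latter turns it into $\eta(2xy+z^2)$, so in both cases $\SO[Q](\bQ_p)$ is $\bZ_p$-conjugate to $\SO[2xy+z^2](\bQ_p)$. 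For $n=4$ the pattern is determined up to permutation by the number of $\eta$'s, and the $\eta$-scaling identifies $0$ with $4$ and $1$ with $3$ occurrences, leaving representatives $(1,1,1,1)$, $(1,1,\eta,\eta)$ and $(1,1,1,\eta)$; converting $(1,1)$- and $(\eta,\eta)$-blocks into hyperbolic pairs collapses the first two to $2xy+2zw$ and leaves the third as $2xy+z^2+\eta w^2$. In each of the normal forms $2xy+z^2$, $2xy+2zw$, $2xy+z^2+\eta w^2$ the vector $(1,0,\dots,0)$ is a nonzero isotropic vector, which gives the isotropy assertions.

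The work here is essentially bookkeeping: the only things to be careful about are distinguishing rescalings by units (which preserve $\SO[Q]$ on the nose) from honest $\GL_n(\bZ_p)$-coordinate changes, and organizing the casework so that $(1,1,\eta,\eta)$ visibly collapses to the split form. The single structural point worth stating separately is that $z^2+\eta w^2$ is anisotropic over $\bQ_p$ when $-1$ is a square and $\eta$ is not (since then $-\eta\notin(\bQ_p^\times)^2$); this shows that $2xy+z^2+\eta w^2$ is a genuinely distinct isotropic form, of Witt index one rather than two, so that the two alternatives in the quaternary case really cannot be merged. Beyond this, everything reuses only Proposition~\ref{diagonalQF}.
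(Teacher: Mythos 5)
Your proposal is correct and follows essentially the same route as the paper: diagonalize via Proposition~\ref{diagonalQF}, note all coefficients are units since $p\nmid\operatorname{disc}(Q)$, and then reduce the short list of diagonal patterns using that $-1\in(\bZ_p^\times)^2$ (so $x^2+y^2\sim 2xy$), that unit rescalings and multiplying the whole form by a unit do not change $\SO[Q]$, and that the ratio of two non-square units is a square. The only differences are cosmetic (you normalize coefficients to $\{1,\eta\}$ and also write out the ternary case and the anisotropy remark, which the paper leaves implicit), so no further comment is needed.
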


\begin{proof}
We only prove the quaternary case as the same calculation will work for a form in three variables. The proof relies on the following easy observations:
\begin{itemize}
\item If $a\in\left(\bZ_{p}^{\times}\right)^{2}$ then $ax^{2}\sim x^{2}$.
\item $x^{2}+y^{2}\sim x^{2}-y^{2}\sim 2xy$ (since $-1 \in (\mathbb{Z}_{p}^{\times})^{2}$).
\item $xy\sim cxy$ for any $c\in\bZ_p^\times$.
\item If $a,b$ are both non-squares in $\bZ_{p}^{\times}$ then $\frac a b$ is a square (by Hensel's Lemma and the structure of $\bF_p^\times$, see e.g. \cite[II.3.3]{serrebook}).
\end{itemize}
Applying Proposition \ref{diagonalQF}, we may assume that $Q$ is of the form $d_{1}x_{1}^{2} + d_{2}x_{2}^{2} + d_{3}x_{3}^{2} + d_{4}x_{4}^{2}$ with $d_{1},d_{2},d_{3},d_{4} \in \mathbb{Z}_{p}$. Since $p\nmid \operatorname{disc}(Q) = d_{1}d_{2}d_{3}d_{4}$, we even have $d_{1},d_{2},d_{3},d_{4} \in \mathbb{Z}_{p}^{\times}$. We prove the proposition by going through a case by case study of how many of the coefficients of $Q$ are non-squares.

If all coefficients are squares, then $d_1x_1^{2}+d_2x_2^{2}+{d_3}x_3^{2}+{d_4}x_4^{2}\sim x_1^{2}+x_2^{2}+x_3^{2}+x_4^{2}\sim 2x_1x_2+2x_3x_4$.

If only (say) ${d_4}$ is a non-square, then $d_1x_1^{2}+d_2x_2^{2}+{d_3}x_3^{2}+{d_4}x_4^{2}\sim x_1^{2}+x_2^{2}+x_3^{2}+{d_4}x_4^{2}\sim 2x_1x_2+x_3^{2}+{d_4}x_4^{2}$.

If ${d_3}$ and ${d_4}$ are non-squares, then $d_1x_1^{2}+d_2x_2^{2}+{d_3}x_3^{2}+{d_4}x_4^{2}\sim 2x_1x_2+ {d_3}(x_3^{2}+\frac {d_4} {d_3} x_4^{2})\sim 2x_1x_2+{d_3}2x_3x_4\sim 2x_1x_2+ 2x_3x_4.$

The remaining cases can be reduced to the above by multiplying the quadratic form with a non-square in~$\bZ^\times$.
\end{proof}

Let us denote the above three model groups by $H=\on{SO}(2,1)(\bQ_{p})$, $H=\on{SO}_{\eta}(3,1)(\bQ_{p})$ and $H=\on{SO}(2,2)(\bQ_{p})$. We will also think of $H$ as a subgroup of $\on{SL}_{4}(\mathbb{Q}_{p})$  respectively of $\on{SL}_{5}(\mathbb{Q}_{p})$ 
by identifying $H$ with 
\[
\left\{\begin{pmatrix}h&0\\0&1\end{pmatrix} ~:~ h \in H\right\} \leq \on{SL}_{d}(\mathbb{Q}_{p}),~ d=4 \mbox{ or } 5.
\]

The previous propositions culminate in the following uniform description of the stabilizer groups $H_{v,p}$ defined in Section \ref{ReformulationTowardsHomogeneousDynamics}. It will allow us to restrict ourselves to the study of the model groups since the isomorphism to $H_{v,p}$ is via $\on{GL}_d(\bZ_p)$ and so also isometric with respect to the $p$-adic maximum norm.
\begin{corollary}
\label{classificationofH}
Let $d=4,5$. For any vector $v \in \mathbb{Z}^{d}$ and 
our choice of $p$ (with $p\nmid D=\left\|v\right\|^{2}$) there exists an $h_{v}\in \on{GL}_{d}(\bZ_{p})$ such that
\[
H_{v,p}=h_{v}Hh_{v}^{-1},
\]
where $H$ is one of our three model groups seen as a subgroup of $\on{SL}_{d}(\mathbb{Q}_{p})$.
\end{corollary}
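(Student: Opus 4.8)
The plan is to derive Corollary~\ref{classificationofH} as a direct consequence of Proposition~\ref{SOclassification} together with the description of $\Lambda_v$ as a $\bZ^d$-lattice equipped with the restriction of the standard quadratic form. First I would recall that $H_{v,p}=\bH_v(\bQ_p)=\operatorname{Stab}_{\SO[d](\bQ_p)}(v)$. Since $v$ is primitive, one can extend $v$ to a $\bZ$-basis of $\bZ^d$ (the vectors $w_1,\dots,w_{d-1},w$ from Section~\ref{orbitsofsubgroups}), and in this basis the standard form $\|\cdot\|^2$ restricted to $v^\perp\cap\bZ^d=\Lambda_v$ becomes an integral quadratic form $Q_v$ in $d-1$ variables whose discriminant equals $D$ (as recalled after Theorem~\ref{mainresult} and in Section~\ref{geometry}). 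The stabilizer of $v$ in $\SO[d]$ is, after a $\GL_d(\bZ_p)$-change of basis, literally the block $\left\{\smallmat{h&0\\0&1}:h\in\SO[Q_v]\right\}$ acting on the $(d-1)$-dimensional complement, because an element fixing $v$ preserves both the line $\bQ_p v$ and (being orthogonal) the hyperplane $v^\perp$, hence restricts to an isometry of $(v^\perp\cap\bZ_p^d,Q_v)$ and conversely.

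The second step is to apply the hypothesis $p\nmid D=\operatorname{disc}(Q_v)$. For $d=4$, $Q_v$ is a ternary form over $\bZ_p$ with $p\nmid\operatorname{disc}(Q_v)$, so by Proposition~\ref{SOclassification} its special orthogonal group is $\bZ_p$-conjugate to $\SO[2xy+z^2](\bQ_p)$, i.e.\ to the model $\on{SO}(2,1)(\bQ_p)$. For $d=5$, $Q_v$ is quaternary with $p\nmid\operatorname{disc}(Q_v)$, so Proposition~\ref{SOclassification} gives that $\SO[Q_v](\bQ_p)$ is $\bZ_p$-conjugate either to $\SO[2xy+z^2+\eta w^2](\bQ_p)=\on{SO}_\eta(3,1)(\bQ_p)$ or to $\SO[2xy+2zw](\bQ_p)=\on{SO}(2,2)(\bQ_p)$. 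In each case the conjugating element lies in $\GL_{d-1}(\bZ_p)$; padding it by a $1$ in the last coordinate yields an element of $\GL_d(\bZ_p)$ conjugating the block embedding of $\SO[Q_v](\bQ_p)$ to the block embedding of the appropriate model group $H\leq\SL_d(\bQ_p)$.

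Finally I would compose the two $\GL_d(\bZ_p)$-changes of basis — the one from Section~\ref{geometry} putting $v$ and $v^\perp$ in standard position, and the one from Proposition~\ref{SOclassification} diagonalizing and normalizing $Q_v$ — into a single $h_v\in\GL_d(\bZ_p)$ with $H_{v,p}=h_v H h_v^{-1}$, where $H$ is one of the three model groups viewed inside $\SL_d(\bQ_p)$ via the block embedding introduced just before the corollary. I do not expect a genuine obstacle here: the only points requiring a little care are checking that the basis extension of the primitive vector $v$ can be performed over $\bZ_p$ (it can, since $v$ is primitive even over $\bZ$, hence over $\bZ_p$, by the elementary divisor theorem) and that multiplying $Q_v$ by a unit scalar — which occurs in the last case of the proof of Proposition~\ref{SOclassification} — does not change the special orthogonal group, which was already noted in the discussion preceding Proposition~\ref{SOclassification}. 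Keeping track of the fact that all these changes of variable are by elements of $\GL_d(\bZ_p)$ (so that the identification is isometric for $\|\cdot\|_p$, as needed later) is the one bookkeeping point to emphasize.
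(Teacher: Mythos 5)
Your overall route is the same as the paper's: restrict the standard form to $\Lambda_v$, observe that the resulting form $Q_v$ has discriminant $D$, conjugate $H_{v,p}$ into the block embedding of $\SO[Q_v](\bQ_p)$ by a $\bZ_p$-change of basis, and then invoke Proposition~\ref{SOclassification}. The gap is in the middle step. The basis you name, $w_1,\dots,w_{d-1},w$ from Section~\ref{orbitsofsubgroups}, does \emph{not} put the stabilizer of $v$ into block-diagonal form: an element $g$ with $vg=v$ preserves $v^\perp=\on{span}(w_1,\dots,w_{d-1})$ but does not fix $w$ (it moves $w$ by an element of $v^\perp$, since $w=\tfrac1D v+u$ with $u\in v^\perp$), so conjugating by the matrix $g_v$ with rows $w_1,\dots,w_{d-1},w$ only lands $\bH_v$ in the block-triangular group $\on{ASL}_{d-1}$ --- this is exactly why the paper defines $\bH_{\Lambda_v}$ as a \emph{projection} of $g_v\bH_v g_v^{-1}$ in Section~\ref{orbitsofsubgroups}. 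So the sentence ``after a $\GL_d(\bZ_p)$-change of basis, literally the block $\smallmat{h&0\\0&1}$'' is not justified by the basis you chose, and the care point you flag (extending the primitive $v$ to a $\bZ_p$-basis by elementary divisors) does not repair it: a basis containing $v$ whose remaining vectors are not orthogonal to $v$ also fails to give block form.

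What is needed --- and what the paper does --- is to conjugate by the matrix with rows $w_1,\dots,w_{d-1},v$, i.e.\ to use a $\bZ_p$-basis adapted to the orthogonal splitting $\bZ_p^d=(\Lambda_v\otimes\bZ_p)\oplus\bZ_p v$. That matrix has determinant $\pm D$, so it lies in $\GL_d(\bZ_p)$ precisely because $p\nmid D$; equivalently, the splitting exists because $Q(v)=D\in\bZ_p^\times$. Thus the hypothesis $p\nmid D$ is used twice: once to make the block-diagonalizing change of basis integral over $\bZ_p$, and once (as you do) to apply Proposition~\ref{SOclassification} to $Q_v$. With that correction the rest of your argument --- composing with the $\GL_{d-1}(\bZ_p)$-conjugation from Proposition~\ref{SOclassification} padded by a $1$, and the remark that scaling $Q_v$ by a unit does not change the special orthogonal group --- goes through and coincides with the paper's proof. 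A minor additional slip: elements of $H_{v,p}$ are $\bQ_p$-points, so they restrict to isometries of $v^\perp\otimes\bQ_p$, not of the lattice $v^\perp\cap\bZ_p^d$.
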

\begin{proof}
Denote by $Q_v$ the restriction of the quadratic form $\sum_{i=1}^{d}x_{i}^{2}$ to the lattice $\Lambda_v\subset v^{\perp}$ and let  be a $\bZ$-basis of $\Lambda_v$. 
Since the covolume of $\Lambda_v$ can be calculated as the square root of the determinant of the matrix consisting of all inner products $\langle w_i,w_j\rangle$
we see from Section \ref{geometry} that the discriminant of $Q_v$ is $D$. Also note that $H_{v,p}$ is the orthogonal group
of $Q_v$. Using the matrix consisting of the integer rows $w_1,\ldots,w_{d-1},v$ 
(and determinant $D\in \bZ_p^\times$) we may conjugate $H_{v,p}$ into a block matrix form (of
the same form as $H$ as a subgroup of $\on{SL}_d(\bQ_p)$). 
Applying the previous proposition and using $p\nmid D$ again, we see that $H_{v,p}$ is $\mathbb{Z}_{p}$-conjugate to one of the model groups.
\end{proof}

\subsection{The norm balls}
For any subgroup $H<\on{GL}_2(\bQ_p)$ we define the following balls and spheres
\[
B_\ell=B_{\ell}^H=\{h\in H: \|h\|_p\leq p^{\ell}\}\quad\text{ and }\quad \partial B_{\ell}=\{h\in H: \|h\|_p= p^{\ell}\}
\]
 for any $\ell \geq 0$.
For $H=\SO[Q](\bQ_{p})$ we define the `standard' compact subgroup of $H$
by
\[
 K=\SO[Q](\bZ_{p})=B_0^H.
 \]
For the three cases of our model groups we also define the
`standard' diagonal subgroup $A=A_+A_+^{-1} \leq H$, where
\begin{align*}
A_{+} &=  \left\{\operatorname{diag}(p^{-m},p^{m},1) : m \in \mathbb{Z}_{\geq 0}\right\} \leq \operatorname{SO}(2,1)(\mathbb{Q}_{p}),\\
 A_{+} &= \left\{\operatorname{diag}(p^{-m},p^{m},1,1) : m \in \mathbb{Z}_{\geq 0}\right\} \leq \operatorname{SO}_\eta(3,1)(\mathbb{Q}_{p}),\\
 A_{+} &= \left\{\operatorname{diag}(p^{-m},p^{m},p^{-n},p^{n}) : m\geq n \in \mathbb{Z}_{\geq 0}\right\} \leq \operatorname{SO}(2,2)(\mathbb{Q}_{p})
\end{align*}
is the positive Weyl chamber in $A$. 

In Appendix \ref{secB2-cartan} we will state and prove the Cartan decomposition $H=KA_{+}K$ for the three model groups. Let us note here a few immediate corollaries of this decomposition.
For instance since $\|a\|_p=\|a^{-1}\|_p$ for all $a\in A$ it follows that
\[
 \|h\|_p=\|h^{-1}\|_p\mbox{ for all }h\in H.
\]
The Cartan decomposition of $H$ and the bi-invariance of the norm under $K$
also gives immediately the shape of $\partial B_{\ell}$ as we now explain.

 For the rank one groups $\on{SO}(2,1)(\bQ_{p})$ and $\on{SO}_{\eta}(3,1)(\bQ_{p})$ we define $a_{p}=\diag{p^{-1},p,1}$ and $a_{p}=\diag{p^{-1},p,1,1}$ respectively. Then
\[
\partial B_{\ell}=Ka_{p}^{\ell}K
\]
for any $\ell\geq 0$.

For $\on{SO}(2,2)(\bQ_{p})$ we have a second parameter and set $a_{p}=\diag{p^{-1},p,1,1}$ and $b_{p}=\diag{1,1,p^{-1},p}$. Then,
\[
\partial B_{\ell}=\bigsqcup_{0\leq j \leq \ell}Ka_{p}^{\ell}b_{p}^{j}K\cup\bigsqcup_{0\leq i \leq \ell}Ka_{p}^{i}b_{p}^{\ell}K.
\]
Since $\omega = \left(\begin{smallmatrix}&&1&0\\&&0&1\\1&0&&\\0&1&&\end{smallmatrix}\right) \in K$ and $\omega^{-1} a_{p} \omega = b_{p}$, this is the same as
\[
\partial B_{\ell}=\bigsqcup_{0\leq j \leq \ell}Ka_{p}^{\ell}b_{p}^{j}K.
\]

\subsection{The geometric structure of the rank one groups}
\label{quasisplitgeometry}

With the observations above and by counting the number of left cosets of $K$ in the level sets $Ka_{p}^{\ell}K$ (which is done in Appendix \ref{Buildings}), we are now able to calculate the volume of the norm balls in the rank one cases. For this we always normalize the Haar measure $m$ of $H$ such that $m(K)=1$.

\begin{proposition}[Volume of norm balls]
\label{rankonevolume}
The Haar measure of $B^{H}_{\ell}$ is equal to $1+\frac{p+1}{p-1}(p^{\ell}-1)$ for $\on{SO}(2,1)(\bQ_{p})$ and equal to $1+\frac{p^{2}+1}{p^{2}-1}(p^{2\ell}-1)$ for $\on{SO}_{\eta}(3,1)(\bQ_{p})$.
\end{proposition}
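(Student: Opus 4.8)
The plan is to turn the volume computation into a count of left $K$-cosets, using the Cartan decomposition $H=KA_+K$ together with the shell description $\partial B_\ell=Ka_p^{\,\ell}K$ recorded in the previous subsection, and then to feed in the tree-theoretic coset count carried out in Appendix~\ref{Buildings}.

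First I would note that since $A_+=\{a_p^{\,m}:m\geq0\}$, the Cartan decomposition gives the disjoint union $H=\bigsqcup_{m\geq0}Ka_p^{\,m}K$, where bi-$K$-invariance of the norm and $\|a_p^{\,m}\|_p=p^m$ show that $Ka_p^{\,m}K$ is exactly the set of elements of norm $p^m$. Intersecting with the norm ball therefore yields
\[
B_\ell^H=\bigsqcup_{j=0}^{\ell}\partial B_j,\qquad \partial B_0=Ka_p^{\,0}K=K,\quad \partial B_j=Ka_p^{\,j}K\ \ (1\leq j\leq \ell).
\]
Because each $Ka_p^{\,j}K$ is a finite disjoint union of left cosets $gK$, each of Haar measure $m(gK)=m(K)=1$ by left-invariance of $m$ and the normalization $m(K)=1$, we get $m(Ka_p^{\,j}K)=\bigl|Ka_p^{\,j}K/K\bigr|$. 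Summing over the shells,
\[
m(B_\ell^H)=1+\sum_{j=1}^{\ell}\bigl|Ka_p^{\,j}K/K\bigr|.
\]

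The substantive step, which I would cite from Appendix~\ref{Buildings}, is the evaluation of $\bigl|Ka_p^{\,j}K/K\bigr|$. There one identifies $H/K$ with the vertex set of a $(q+1)$-regular tree on which $H$ acts by isometries, with $q=p$ for $\on{SO}(2,1)(\bQ_p)$ and $q=p^2$ for $\on{SO}_\eta(3,1)(\bQ_p)$ (the larger branching in the non-split case reflecting that the relevant residue field has size $p^2$), in such a way that the double coset $Ka_p^{\,j}K$ corresponds to the sphere of radius $j$ about the base vertex $K$. Since the radius-$j$ sphere in a $(q+1)$-regular tree has $(q+1)q^{\,j-1}$ vertices for $j\geq1$, this gives $\bigl|Ka_p^{\,j}K/K\bigr|=(p+1)p^{\,j-1}$ in the first case and $(p^2+1)p^{\,2(j-1)}$ in the second.

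Substituting into the displayed sum and evaluating the resulting geometric series then finishes the proof:
\[
m(B_\ell^H)=1+\sum_{j=1}^{\ell}(p+1)p^{\,j-1}=1+\frac{p+1}{p-1}(p^\ell-1)
\]
for $\on{SO}(2,1)(\bQ_p)$, and
\[
m(B_\ell^H)=1+\sum_{j=1}^{\ell}(p^2+1)p^{\,2(j-1)}=1+\frac{p^2+1}{p^2-1}(p^{2\ell}-1)
\]
for $\on{SO}_\eta(3,1)(\bQ_p)$. The only genuinely geometric ingredient is the tree identification and its branching number; everything else is bookkeeping. I therefore expect the coset count in Appendix~\ref{Buildings} to be the real content, with the reduction and summation above essentially automatic once it is in hand.
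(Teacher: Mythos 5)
Your proposal is correct and follows essentially the same route as the paper: decompose $B_\ell^H$ into the shells $Ka_p^{\,j}K$ via the Cartan decomposition, count left $K$-cosets in each shell (each of measure $1$ by the normalization), and sum the geometric series. The only cosmetic difference is that you package the shell count as the cardinality of a radius-$j$ sphere in the $(q+1)$-regular tree, whereas the paper obtains the same numbers directly from Lemma~\ref{KaKindex}, Lemma~\ref{cosetdecomposition} and Lemma~\ref{higherlevelcosets}; both rest on the same coset computations in Appendix~\ref{Buildings}.
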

\begin{proof}
For $\on{SO}(2,1)(\bQ_{p})$, we decompose
\[
m(B^{H}_{\ell})=\sum_{k=0}^\ell m(\partial B^{H}_{k})=\sum_{k=0}^\ell m(Ka_p^kK).
\]
By the normalization assumption $m(Ka_p^0K)=m(K)=1$ and therefore we have for the first sphere $m(Ka_pK)=p+1$ (see Lemma \ref{KaKindex} and Lemma \ref{cosetdecomposition}). On the other hand, by applying Lemma \ref{higherlevelcosets}, we also see that $m(Ka_p^kK)=pm(Ka_p^{k-1}K)$ and consequentially
\[
m(B^{H}_{\ell})=1+(p+1)\sum_{k=0}^{\ell-1}p^k=1+\tfrac{p+1}{p-1}(p^\ell-1).
\]
The case $\on{SO}_{\eta}(3,1)(\bQ_{p})$ follows upon replacing $p$ by its square.
\end{proof}

Let $H=\on{SO}_\eta(3,1)(\bQ_{p})$ ($H=\on{SO}(2,1)(\bQ_{p})$). 
We define a metric on $H / K$ as follows:
\[
d(gK,hK) = \log_{p}\left\|g^{-1}h\right\|_p~\mbox{ for } g,h \in H.
\]
If we define an incidence relation by $gK\sim hK$ if $d(gK,hK)=1$ then $H/K$ is a $p^{2}+1$-regular  ($p+1$-regular) tree on which $H$ acts transitively and neighbour preserving. A more detailled discussion of the tree structure of $H/K$ is given in Appendix \ref{app-tree}. We are going to use this geometric action later in the paper to find lattice points with a certain property. 

\begin{lemma}
\label{geodesics}
	The subset $AK=\{a_p^nK: n\in\bZ\}$
	describes a geodesic inside the tree through the point $K$.  Translating by an element $g\in H$ moves $AK$ to another geodesic $gAK$. More specifically, if $h=gamg^{-1}\in H$ for 
	$a\in A$, $m\in M=K\cap C_H(A)$, and $g\in H$, where $C_H(A)$ denotes the commutator group of $A$ inside $H$, then $h$ preserves the geodesic $F=gAK$. The element $g$ can be chosen to satisfy $d(gK,K)=d(F,K).$
\end{lemma}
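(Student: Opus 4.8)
The statement has several parts, and I would organize the proof by establishing them one at a time, relying on the Cartan decomposition $H = KA_+K$ (to be proven in the Appendix, but assumed here) as the main structural input.

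\emph{Step 1: $AK$ is a geodesic.} Recall the metric $d(gK,hK) = \log_p\|g^{-1}h\|_p$ on $H/K$. For the rank-one model groups, $a_p^n$ has $\|a_p^n\|_p = p^{|n|}$ (using $\|a\|_p = \|a^{-1}\|_p$, a consequence of the Cartan decomposition already noted in the excerpt). Hence $d(a_p^nK, a_p^mK) = \log_p\|a_p^{m-n}\|_p = |m-n|$, so $n \mapsto a_p^n K$ is an isometric embedding of $\bZ$ into the tree $H/K$, i.e.\ a geodesic line through $K = a_p^0 K$. (One should check that consecutive vertices $a_p^nK, a_p^{n+1}K$ are genuinely distinct and incident, which is immediate from $\|a_p\|_p = p$.)

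\emph{Step 2: $h$ preserves the geodesic $F = gAK$.} Write $h = gamg^{-1}$ with $a \in A$, $m \in M = K \cap C_H(A)$. Since $M$ centralizes $A$, for any $n$ we have $h \cdot g a_p^n K = g a m g^{-1} \cdot g a_p^n K = g a m a_p^n K = g a a_p^n m K = g a a_p^n K$ (using $mK = K$). As $a \in A$ commutes with $a_p^n$ and $aK$ lies on the geodesic $AK$ — say $aK = a_p^j K$ for some $j \in \bZ$ — we get $h \cdot g a_p^n K = g a_p^{n+j} K$. Thus $h$ acts on the vertex set of $F = gAK$ by the shift $n \mapsto n + j$, which is an isometry of $F$; in particular $h(F) = F$. (If $j = 0$, i.e.\ $a \in M$ as well, then $h$ fixes $F$ pointwise; otherwise it is a nontrivial translation along $F$.)

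\emph{Step 3: choosing $g$ with $d(gK, K) = d(F, K)$.} This is the part requiring the most care. Given $h = gamg^{-1}$, the conjugating element $g$ is only determined up to right multiplication by the centralizer-type subgroup $\{g' : g'a m g'^{-1} = g a m g^{-1}\}$; in particular replacing $g$ by $g a_p^k$ shifts the chosen basepoint $gK$ along the geodesic $F$ without changing $F$ itself (by the computation in Step 2, $g a_p^k$ also conjugates the standard data to $h$ when $a \in M$, and more generally one adjusts $a$ accordingly). So I would: (i) note $d(F,K) = \min_{n \in \bZ} d(g a_p^n K, K)$, the distance from $K$ to the geodesic $F$, realized at some $n_0$; (ii) replace $g$ by $g a_p^{n_0}$. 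Since $A$ is abelian and $a_p$ normalizes the data defining $h$ (as $a_p \in A$ commutes with $a$ and with $m$, and $A$-conjugation fixes $A$), the element $g a_p^{n_0}$ still satisfies $h = (g a_p^{n_0}) (a m) (g a_p^{n_0})^{-1}$ — here one uses that conjugation by $a_p^{n_0}$ fixes $a$ and $m$, so the middle term is unchanged. After this replacement, $d(gK, K) = d(g a_p^{n_0} K, K) = d(F,K)$ as required.

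\emph{Main obstacle.} The genuinely delicate point is Step 3: making precise in exactly which sense $g$ ``can be chosen'' — i.e.\ identifying the freedom in $g$ (right multiplication by elements fixing the conjugate structure, which includes $A_+$ and hence lets one slide along $F$) and verifying that sliding the basepoint to the nearest point of $F$ is compatible with still having $h = g(am)g^{-1}$ with the \emph{same} $a, m$. The tree-geometry facts (a geodesic in a tree has a unique nearest point to any given vertex, the minimum in (i) is attained) are standard and I would cite the tree discussion in Appendix~\ref{app-tree}. Steps 1 and 2 are routine computations with the norm and the Cartan decomposition.
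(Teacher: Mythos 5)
Your proof is correct and follows essentially the same route as the paper's: the norm computation $d(a_p^nK,a_p^mK)=|m-n|$ for the geodesic, the observation that $m\in K\cap C_H(A)$ commutes with $A$ and satisfies $mK=K$ so that $h$ translates $gAK$ along itself, and the replacement $g\mapsto ga_p^{n_0}$, justified exactly as in the paper by the fact that conjugation by $a_p^{n_0}$ fixes $am$, so the same $a,m$ still work. The only (trivial) point you leave implicit is that $gAK$ is again a geodesic because left multiplication by $g$ is an isometry of $d$.
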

\begin{proof}
 Since $d(a_p^mK,a_p^nK)=\vert m-n\vert$ for all $m,n\in\bZ$
 we see that $AK$ is indeed a geodesic in $H/K$. Since the action by $H$
 is isometric, the same holds for $gAK$ for any $g\in H$. 

	Let us suppose now $h=gamg^{-1}$ and $a=a_p^n$ for some $n$. Then $hgAK=gamAK=gAK$ because $m\in C_H(A)\cap K$. Finally, if $g$ conjugates $h$ to $a\in A$ then any other element of $gA$ does as well and we may replace $g$ with $ga_p^n$ where $n$ satisfies $d(ga_p^nK,K)=d(F,K)$.
\end{proof}

Now recall that the adjoint representation
of $\SL_2$ acts on $\mathfrak{sl}_2$ (consisting of matrices with zero trace), which we
may equip with the determinant quadratic form. This defines a map from $\SL_2(\bQ_p)$
into $\SO[](2,1)(\bQ_p)$, which can be used to prove the following result (we omit the details).  

\begin{proposition}
\label{pserretree}
$\SLQ[2]$ acts transitively on vertices of even distance on a $p+1$-regular tree and $\SLZ[2]$ stabilizes a vertex. For $\ell\geq1$ we have
\begin{align*}
	m_{\SLQ[2]}\left(\SLZ[2]a_p^\ell\SLZ[2]\right)&=(p+1)p^{2\ell-1}\\
	m_{\SLQ[2]}\left(B^{\SLQ[2]}_\ell\right)&=1+\tfrac{p}{p-1}(p^{2\ell}-1),
\end{align*}
 where $a_p=\on{diag}(p,p^{-1})$ and $m_{\SLQ[2]}$ denotes the Haar measure on ${\SLQ[2]}$ normalized such that $m_{\SLQ[2]}(\SLZ[2])=1$. \end{proposition}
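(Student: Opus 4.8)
The plan is to deduce Proposition~\ref{pserretree} from the well-known action of $\SLQ[2]$ on its Bruhat--Tits tree, which is the $(p+1)$-regular tree $T$ whose vertices are the homothety classes of $\bZ_p$-lattices in $\bQ_p^2$ and whose edges join classes of lattices $L\supset L'$ with $L/L'\cong\bZ/p\bZ$. First I would recall that $\SLQ[2]$ acts on $T$ with two orbits on vertices, distinguished by the parity of the determinant valuation of a representative matrix; the stabilizer of the standard vertex $[\bZ_p^2]$ is precisely $\SLZ[2]$, and more generally the stabilizer of any vertex is conjugate to $\SLZ[2]$. This gives the first sentence of the proposition, with "even distance" corresponding to the $\SLZ[2]$-orbit. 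The Cartan (elementary divisor) decomposition $\SLQ[2]=\SLZ[2]\,\{\diag(p^{-\ell},p^{\ell}):\ell\geq0\}\,\SLZ[2]$ shows that $\|g\|_p=p^{\ell}$ on the double coset $\SLZ[2]a_p^{\ell}\SLZ[2]$ (here $a_p=\diag(p,p^{-1})$ and we use that $\|\cdot\|_p$ is bi-$\SLZ[2]$-invariant, so $\|g\|_p=p^{d(gK,K)/1}$... more precisely $d([g\bZ_p^2],[\bZ_p^2])=2\ell$ while the matrix norm records $\log_p\|g\|_p=\ell$), so that $\SLZ[2]a_p^{\ell}\SLZ[2]=\partial B_\ell^{\SLQ[2]}$ is exactly the sphere of matrix-norm radius $p^{\ell}$, which also identifies $B_\ell^{\SLQ[2]}=\bigsqcup_{k=0}^{\ell}\SLZ[2]a_p^{k}\SLZ[2]$.

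For the volume count, normalize $m_{\SLQ[2]}(\SLZ[2])=1$, so that $m_{\SLQ[2]}(\SLZ[2]a_p^{\ell}\SLZ[2])$ equals the number of left $\SLZ[2]$-cosets inside that double coset, equivalently the cardinality of $\SLZ[2]\backslash\SLZ[2]a_p^{\ell}\SLZ[2]$. Geometrically this is the number of vertices of $T$ at tree-distance $2\ell$ from the base vertex that lie in the same $\SLQ[2]$-orbit -- but since $T$ is $(p+1)$-regular and $\SLZ[2]$ is the full stabilizer of the base vertex, the orbit-counting is just: the number of vertices at distance $2\ell$ in a $(p+1)$-regular tree. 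That number is $(p+1)p^{2\ell-1}$ for $\ell\geq1$ (one choice of first edge among $p+1$, then $p$ choices at each of the remaining $2\ell-1$ steps), which gives the first displayed formula. Summing the geometric series,
\[
m_{\SLQ[2]}\bigl(B_\ell^{\SLQ[2]}\bigr)=1+\sum_{k=1}^{\ell}(p+1)p^{2k-1}=1+\tfrac{p+1}{p}\cdot p\cdot\tfrac{p^{2\ell}-1}{p^2-1}\cdot p=1+\tfrac{p}{p-1}(p^{2\ell}-1),
\]
after simplifying $\tfrac{(p+1)p}{p^2-1}=\tfrac{p}{p-1}$; this is the second displayed formula. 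An alternative to the tree bookkeeping is to invoke Proposition~\ref{rankonevolume}: the adjoint map $\SL_2\to\SO(2,1)$ is a degree-$2$ isogeny defined over $\bZ_p$, it doubles distances (sending $a_p=\diag(p,p^{-1})$ to $\diag(p^{-2},p^2,1)$ up to the conventions there, hence $B_\ell^{\SLQ[2]}$ maps onto $B_\ell^{\SO(2,1)}$ in the normalization used there), and the preimage of $\SO(2,1)(\bZ_p)$ is $\SLZ[2]$ up to center, so the formulas are forced to agree with those of Proposition~\ref{rankonevolume} after the substitution tracking the doubling -- indeed $1+\tfrac{p+1}{p-1}(p^{\ell}-1)$ with the appropriate reindexing reproduces $1+\tfrac{p}{p-1}(p^{2\ell}-1)$ once the center (a factor accounting for $\pm I$) is accounted for.

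The only genuine subtlety -- and the step I would be most careful about -- is the precise matching of the matrix-norm filtration $B_\ell^{\SLQ[2]}$ with the tree-distance filtration, i.e.\ verifying that $\SLZ[2]a_p^{\ell}\SLZ[2]$ is exactly the set of $g$ with $\|g\|_p=p^\ell$ (not merely $\leq p^\ell$) and that tree-distance $2\ell$ corresponds to this double coset. This follows from the Cartan decomposition together with the elementary observation that for $g=k_1\diag(p^{-\ell},p^\ell)k_2$ with $k_i\in\SLZ[2]$ one has $\|g\|_p=p^\ell$ by bi-$\SLZ[2]$-invariance of $\|\cdot\|_p$, and that the exponents $\ell$ occurring in the Cartan decomposition are uniquely determined (elementary divisors); I would also note $\SLZ[2]=\{g\in\SLQ[2]:\|g\|_p\leq1\}$, so the base case $\ell=0$ is correct. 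Since all of this is classical and is in any case relegated to Appendix~\ref{Buildings} by the phrase "we omit the details" preceding the statement, the proof in the body can simply cite Lemma~\ref{higherlevelcosets} (or the tree count there) and Proposition~\ref{rankonevolume}, and assemble the two displayed identities as above.
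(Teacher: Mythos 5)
Your main argument is correct, but it takes a different route from the one the paper has in mind: the paper states the result with ``we omit the details'' immediately after introducing the adjoint map $\SL_2\to\SO(2,1)$, so the intended proof is to transfer the Cartan decomposition, the coset counts of Lemmas~\ref{KaKindex}--\ref{higherlevelcosets} and the volume computation of Proposition~\ref{rankonevolume} from the model group $\SO(2,1)(\bQ_p)$ through this isogeny (keeping track of the distance doubling and of which cosets lie in the image $\SO(2,1)(\bQ_p)^+$). You instead work directly on the Bruhat--Tits tree of $\SLQ[2]$: lattice classes, the two type-orbits, the stabilizer $\SLZ[2]$, the identification of $\SLZ[2]a_p^\ell\SLZ[2]$ with the sphere of matrix norm $p^\ell$ (equivalently tree distance $2\ell$), and the elementary count $(p+1)p^{2\ell-1}$ of vertices at distance $2\ell$, followed by summing the geometric series. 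This is clean and self-contained at the level of the statement, and your care about matching the norm filtration with the double cosets is exactly the right point to check; the trade-off is that you import the standard facts about the $\SL_2$ tree (transitivity on each type, stabilizers) as known, whereas the paper's appendix only develops the tree structure for the orthogonal model groups by explicit matrix manipulations, which is why the paper prefers to route the $\SL_2$ statement through $\SO(2,1)$.

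One caveat on your ``alternative'' sketch: the claim that Proposition~\ref{rankonevolume} reproduces $1+\tfrac{p}{p-1}(p^{2\ell}-1)$ after reindexing ``once the center is accounted for'' is not accurate. Plugging radius $2\ell$ into Proposition~\ref{rankonevolume} gives $1+\tfrac{p+1}{p-1}(p^{2\ell}-1)$, and the discrepancy
\[
\tfrac{p+1}{p-1}(p^{2\ell}-1)-\tfrac{p}{p-1}(p^{2\ell}-1)=\tfrac{p^{2\ell}-1}{p-1}
\]
is precisely the contribution of the odd-distance spheres, which lie outside the $\SLQ[2]$-orbit of the base vertex; it has nothing to do with the kernel $\{\pm I\}$ of the adjoint map. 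The sphere measures do agree ($m(Ka_p^{2k}K)=(p+1)p^{2k-1}$ on the orthogonal side matches your even-sphere count), but the ball volumes do not, so if you want to run the transfer argument you must restrict to the even spheres (equivalently to $\SO(2,1)(\bQ_p)^+$) rather than quote the full ball volume. Your primary tree-counting proof is unaffected by this.
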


\subsection{The simply connected cover in the rank one cases}
\label{simplycoverOne}
As defined in Section \ref{ReformulationTowardsHomogeneousDynamics}, $H^+$ is the subgroup of $H$ generated by its unipotent elements. An alternative description can be given for $\SO[](2,1)$ by considering the adjoint representation $\SL_2\to\on{SO}(2,1)$. Here and also in the other cases below we have that the index
of $H^+$ in $H$ equals $[\bQ_p^\times:(\bQ_p^\times)^2]=4$.

\begin{lemma}
\label{indexd4}
The group $\on{SO}(2,1)(\bQ_p)^+$  is a normal subgroup of index $4$ in $\on{SO}(2,1)(\bQ_p)$, namely the image of $\on{SL}_2(\bQ_p)$
under the adjoint representation.
\end{lemma}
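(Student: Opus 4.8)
The plan is to analyze the adjoint representation $\operatorname{Ad}\colon \SL_2 \to \SO(2,1)$ explicitly. First I would set up the quadratic space: let $\gog = \lsl$ with the determinant form (equivalently, up to scalar, the Killing form), which has signature $(2,1)$ over $\bR$ and, over $\bQ_p$, is $\bZ_p$-equivalent to $2xy+z^2$ after choosing the basis $\bigl(\begin{smallmatrix}0&1\\0&0\end{smallmatrix}\bigr)$, $\bigl(\begin{smallmatrix}0&0\\1&0\end{smallmatrix}\bigr)$, $\bigl(\begin{smallmatrix}1&0\\0&-1\end{smallmatrix}\bigr)$. The conjugation action of $g\in\SL_2(\bQ_p)$ on $\gog$ preserves the determinant, hence lands in $\SO(2,1)(\bQ_p)$, giving a homomorphism $\operatorname{Ad}$ with kernel the center $\{\pm e\}$. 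Thus $\operatorname{Ad}(\SL_2(\bQ_p)) \cong \PSL_2(\bQ_p)$ sits inside $\SO(2,1)(\bQ_p)$, and I must identify this image as precisely $\SO(2,1)(\bQ_p)^+$ and compute the index.

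The key steps would be: (i) show $\operatorname{Ad}(\SL_2(\bQ_p)) \subseteq \SO(2,1)(\bQ_p)^+$; this holds because $\SL_2(\bQ_p)$ is generated by unipotents (upper and lower triangular ones), and $\operatorname{Ad}$ of a unipotent is unipotent, so the image is generated by unipotents and hence lies in $H^+$. (ii) Show the reverse inclusion $\SO(2,1)(\bQ_p)^+ \subseteq \operatorname{Ad}(\SL_2(\bQ_p))$: since $H^+$ is by definition generated by the unipotent elements of $\SO(2,1)(\bQ_p)$, it suffices to check every unipotent one-parameter subgroup of $\SO(2,1)(\bQ_p)$ lies in the image. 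But $\SO(2,1)$ has (up to conjugacy over $\bQ_p$) a single unipotent root subgroup, and that is visibly $\operatorname{Ad}$ of the upper unipotents of $\SL_2$; conjugates by $\operatorname{Ad}(\SL_2(\bQ_p))$ exhaust all unipotents, and conjugates by the diagonal torus of $\SO(2,1)$ preserve the image since $\operatorname{Ad}$ surjects onto the $\bQ_p$-points of that torus as well. Hence $H^+ = \operatorname{Ad}(\SL_2(\bQ_p))$. (iii) Compute the index: the spinor norm (or the determinant-of-similitude / Clifford invariant) gives a homomorphism $\SO(2,1)(\bQ_p) \to \bQ_p^\times/(\bQ_p^\times)^2$ whose kernel is exactly $H^+$; since $p$ is odd, $[\bQ_p^\times:(\bQ_p^\times)^2]=4$, and surjectivity of the spinor norm onto this group for an isotropic ternary form is classical (it is realized already by products of two reflections in vectors of suitable norms). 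Thus $[\SO(2,1)(\bQ_p):H^+]=4$. (iv) Normality of $H^+$ is automatic: it is the kernel of a homomorphism, or more elementarily, conjugation preserves the property of being generated by unipotents.

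The main obstacle, I expect, is step (iii) — pinning down that the spinor norm map is surjective and that its kernel coincides with $H^+$ (rather than just containing it). The containment $H^+ \subseteq \ker(\text{spinor norm})$ is easy since unipotents are spinor-norm trivial; the equality requires knowing the image of $\operatorname{Ad}$ is not strictly smaller, which is where step (ii) is really needed, and the surjectivity of the spinor norm requires either citing the classical computation (e.g. via Cartan–Dieudonné and evaluating spinor norms of reflections against anisotropic vectors representing each square class) or deducing it from the explicit Cartan decomposition $H=KA_+K$ established in Appendix~\ref{secB2-cartan} together with knowledge of spinor norms of the diagonal elements $a_p^\ell$. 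I would likely route the argument through the latter: the four cosets of $H^+$ in $H$ are represented by elements of the compact part together with a spinor-norm-$p$ element, matching the coset representatives $h^v_j$ discussed in the proof of Theorem~\ref{t:FullDynamics}, and this is consistent with the general fact, cited there from \cite[Lemma~3.6]{AES}, that the index is $4$. Everything else — the explicit $3\times 3$ matrices for $\operatorname{Ad}(u_t)$ and $\operatorname{Ad}(\operatorname{diag}(t,t^{-1}))$ — is a routine computation I would not carry out in detail.
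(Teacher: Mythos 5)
Your proposal is correct, and its first half (kernel $\{\pm e\}$, image generated by unipotents, equality with $H^+$ via the Lie-algebra/one-parameter-subgroup argument, normality from conjugation-invariance of the generating set) is exactly the argument the paper uses — note the paper skips the proof of this lemma and refers to the quasi-split analogue, Lemma~\ref{plusisplus}. Where you diverge is the index computation: you route it through the spinor norm, i.e.\ the homomorphism $\on{SO}(2,1)(\bQ_p)\to\bQ_p^\times/(\bQ_p^\times)^2$ with kernel the image of $\on{Spin}\cong\SL_2$, surjective for an isotropic ternary form, giving index $[\bQ_p^\times:(\bQ_p^\times)^2]=4$. The paper instead does this by hand, as in Lemma~\ref{indexsplit}: use the explicit unipotents (all lying in the image of $\on{Ad}$) to reduce an arbitrary element of $\on{SO}(2,1)(\bQ_p)$ modulo the image to a diagonal element, and compare with $\on{Ad}$ of the diagonal torus of $\SL_2$, whose image is $\{\on{diag}(t^2,t^{-2},1)\}$, so the quotient is again $\bQ_p^\times/(\bQ_p^\times)^2$ of order $4$. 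Your route is shorter but imports the Clifford/cohomological facts (kernel of the spinor norm equals the image of the spin group, and its surjectivity for isotropic forms) that the paper deliberately avoids — the authors cite the spinor-norm argument only as an external alternative (\cite[Lemma~3.6]{AES}) and otherwise keep everything elementary; their explicit computation also yields concrete coset representatives with $\|h^v_j\|_p\leq p$, which are reused in the proof of Theorem~\ref{t:FullDynamics}. Your fallback suggestion of arguing through the Cartan decomposition of Appendix~\ref{secB2-cartan} and explicit elements is in substance the paper's proof.
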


We skip the proof (which is similar to the following).
In the case $\SO[\eta](3,1)$ we may use the sporadic isogeny $\psi:\SL_2(\bQ_p(\sqrt{\eta}))\to \on{SO}_\eta(3,1)(\bQ_{p})$ (both considered as $\bQ_p$-groups) given by
$$
\psi(g)x=gxg^* \mbox{ acting on } V=\bigl\{x\in\Mat_{2,2}(\bQ_p(\sqrt{\eta})):x=x^*\bigr\},
$$
where for $x=\left[\begin{smallmatrix}x_1&x_2\\ x_3&x_4\end{smallmatrix}\right]$ we denote $x^*=\left[\begin{smallmatrix}\overline{x}_1&\overline{x}_2\\ \overline{x}_3&\overline{x}_4\end{smallmatrix}\right]^T$ and $\overline{x}_i$ is the Galois conjugate of $x_i$ in $\bQ_p(\sqrt{\eta})$. We note that $\psi$ preserves  $Q(x)=\det(x)$.

We choose the basis of $V$ consisting of
\[
e_{1}=\smallmat{2&0\\0&0},\; e_{2}=\smallmat{0&0\\0&1},\; e_{3}=\smallmat{0&\varepsilon \\\varepsilon &0},\; e_{4}=\smallmat{0&\sqrt{\eta}\\-\sqrt{\eta}&0},
\]
 where $\varepsilon\in\bQ_p $ is a square root of $-1$. Then $Q(x,y,z,w)=2xy+z^2+\eta w^2$ agrees with the defining form of $\on{SO}_\eta(3,1)$. If we introduce the notation $\Re{(x)}=\frac{x+\overline{x}}{2}$, $\Im(x)=\frac{x-\overline{x}}{2\sqrt{\eta}}$ and $|x|^2=x\overline{x}$ for $x\in\bQ_p(\sqrt{\eta})$ then for $g=\smallmat{a&b\\c&d}\in \SL_2(\bQ_p(\sqrt{\eta}))$ we have for instance
 
\[
 \psi(g)e_1=ge_1g^*=\smallmat{2|a|^2&2a\overline{c}\\2\overline{a}c&2|c|^2}=|a|^2e_1+2|c|^2e_2+\tfrac{2}{\varepsilon}\Re{(a\overline{c})}e_3+2\Im(a\overline{c})e_4 
\]

 We will prove now that $\psi$ is $2$ to $1$ and the image of $\psi$ agrees with $\on{SO}_\eta(3,1)(\bQ_{p})^{+}$.

\begin{lemma}
\label{plusisplus}
The image of $\SL_2(\bQ_p(\sqrt{\eta}))$ under $\psi$ is a normal subgroup that agrees with $\on{SO}_\eta(3,1)(\bQ_{p})^{+}$.
\end{lemma}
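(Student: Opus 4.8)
The plan is to compute the kernel of $\psi$ and then to sandwich $\on{Im}(\psi)$ between $\on{SO}_\eta(3,1)(\bQ_p)^+$ and itself. Throughout I write $H=\on{SO}_\eta(3,1)(\bQ_p)$, $E=\bQ_p(\sqrt{\eta})$.

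\emph{Kernel.} Suppose $\psi(g)=\id$, i.e.\ $gxg^*=x$ for all $x\in V$. Since $\tfrac12 e_1+e_2$ is the identity matrix and lies in $V$, evaluating at it gives $gg^*=\id$, so $g^*=g^{-1}$; feeding this back into $gxg^*=x$ shows that $g$ commutes with every element of $V$. As $2,\varepsilon$ and $\sqrt{\eta}$ are all nonzero in $E$, the set $\{e_1,e_2,e_3,e_4\}$ spans $\Mat_{2,2}(E)$ over $E$, so $g$ is a scalar matrix, and $\det g=1$ then forces $g=\pm\id$. Hence $\ker\psi=\{\pm\id\}$, $\psi$ is $2$-to-$1$, and it is a $\bQ_p$-isogeny onto its image. (From the cohomology sequence of $1\to\mu_2\to\on{SL}_2\to\on{SO}_\eta(3,1)\to 1$ together with $H^1(E,\on{SL}_2)=0$ one even gets that $\on{Im}(\psi)$ is the kernel of the spinor-norm homomorphism $H\to H^1(\bQ_p,\mu_2)\cong\bQ_p^\times/(\bQ_p^\times)^2$, hence normal of index exactly $4$; but this will also follow from the argument below.)

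\emph{Both inclusions.} Over any field $\on{SL}_2$ is generated by the elementary unipotents $\smallmat{1&t\\0&1}$ and $\smallmat{1&0\\s&1}$, and a morphism of algebraic groups respects the Jordan decomposition and hence maps unipotents to unipotents; therefore $\on{Im}(\psi)$ is generated by unipotent elements of $H$, giving $\on{Im}(\psi)\subseteq H^+$. For the reverse inclusion I would use the displayed formula for $\psi(g)$ to compute $\psi$ on the unipotent subgroup $\{u_t=\smallmat{1&t\\0&1}:t\in E\}$: writing $t=t_0+t_1\sqrt{\eta}$ one finds that $\psi(u_t)$ fixes $e_1$, acts trivially on the $B$-orthogonal complement $e_1^\perp$ modulo $\bQ_p e_1$, and depends injectively on $(t_0,t_1)\in\bQ_p^2$. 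Thus $\psi$ maps $\{u_t\}$ onto a two-dimensional abelian unipotent subgroup sitting inside the unipotent radical $U$ of the minimal parabolic $P=\on{Stab}(\bQ_p e_1)$ of $H$; since $U$ is itself two-dimensional and abelian, this image is all of $U(\bQ_p)$. Conjugating by $\psi\smallmat{0&1\\-1&0}\in\on{Im}(\psi)$ produces $U^-(\bQ_p)$ for the opposite parabolic, and by the Borel--Tits structure theory of isotropic semisimple groups (see e.g.\ \cite[Ch.~I]{margulisbook}) $H^+$ is generated by $U(\bQ_p)$ and $U^-(\bQ_p)$. Hence $H^+\subseteq\on{Im}(\psi)$, so $\on{Im}(\psi)=H^+$, and this is normal in $H$ since the set of unipotent elements of $H$ is conjugation-invariant.

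\emph{Main obstacle.} The delicate point is the reverse inclusion: one must carry out the computation of $\psi$ on $\{u_t\}$ precisely enough to recognise its image as a full unipotent radical of $H$, and one must invoke (or, since $H$ has $\bQ_p$-rank one, reprove via the Bruhat decomposition) the fact that $H^+$ is generated by two opposite unipotent radicals. The kernel computation, the inclusion $\on{Im}(\psi)\subseteq H^+$, and normality are all routine. Alternatively, if one is willing to quote the index equality $[H:H^+]=4$ recorded above, the reverse inclusion is immediate from the first paragraph and from $\on{Im}(\psi)\subseteq H^+$, bypassing any explicit computation with $\psi$.
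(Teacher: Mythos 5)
Your argument is correct, but it reaches the reverse inclusion $H^+\subseteq\on{Im}(\psi)$ by a different route than the paper. The paper argues Lie-theoretically: since $\psi$ has finite kernel, $\on{d}\psi$ identifies the Lie algebras, and since every unipotent element of $H=\on{SO}_\eta(3,1)(\bQ_p)$ lies on a unique one-parameter unipotent subgroup $\exp(tX)$ with $X$ nilpotent in $\on{Lie}(H)=\on{Lie}(\on{Im}\psi)$, each such subgroup lifts through $\psi$ (apply $\psi$ to $\exp(t\,\on{d}\psi^{-1}X)$); hence the image contains \emph{all} unipotents of $H$, which together with the easy inclusion $\on{Im}(\psi)\subseteq H^+$ gives equality and normality. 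You instead compute $\psi$ on the root groups of $\SL_2(\bQ_p(\sqrt{\eta}))$, identify the image with the full unipotent radicals $U(\bQ_p)$ and $U^-(\bQ_p)$ of a pair of opposite minimal parabolics of $H$, and quote the Borel--Tits theorem that $H^+=\langle U(\bQ_p),U^-(\bQ_p)\rangle$; this is essentially the explicit computation the paper only carries out in the \emph{next} lemma (where $u_3,v_3,u_4,v_4$ are exhibited as $\psi$-images), and your kernel computation likewise duplicates the final step of that lemma. What the paper's route buys is independence from the Borel--Tits generation theorem and from any root-group bookkeeping; what yours buys is concreteness (explicit unipotent radicals) that is then reusable for the index computation. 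Two small points to tighten: the step ``the image of $\{u_t\}$ is all of $U(\bQ_p)$'' should note that an algebraic homomorphism of vector groups in characteristic zero is linear (so an injective image of $\bQ_p^2$ in the two-dimensional $U(\bQ_p)$ is onto), and identifying the paper's $H^+$ (generated by all unipotent elements) with $\langle U(\bQ_p),U^-(\bQ_p)\rangle$ uses the standard characteristic-zero fact that every unipotent element lies in the unipotent radical of some minimal parabolic. Your ``alternative'' shortcut via the spinor-norm/index-$4$ statement is mathematically sound (given $H^1(\bQ_p(\sqrt{\eta}),\SL_2)=1$ and the classical index formula for $[H:H^+]$), but it would be circular relative to the paper's organization, since the index-$4$ statement is exactly the subsequent lemma, proved there using the present one.
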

\begin{proof}
	We first note that~$\SL_2(\bQ_p(\sqrt{\eta}))$ and so also its image $\psi\left(\SL_2(\bQ_p(\sqrt{\eta}))\right)\subset \on{SO}_\eta(3,1)(\bQ_{p})$ is generated by unipotent matrices.
	Moreover, since $\psi$ has finite kernel, the image has the same Lie algebra as $\on{SO}_\eta(3,1)(\bQ_{p})$. 
	Now note that every unipotent element of $\on{SO}_\eta(3,1)(\bQ_{p})$ belongs to a unique one-parameter unipotent subgroup
	which is also uniquely 
	determined by a single nilpotent element of the Lie algebra. Together we see that~$\psi\left(\SL_2(\bQ_p(\sqrt{\eta}))\right)$
	is the subgroup generated by all unipotent matrices of $\on{SO}_\eta(3,1)(\bQ_{p})$, which is normal because the generating set is invariant under conjugation.
\end{proof}

\begin{lemma}
\label{indexsplit}
The kernel of $\psi$ is $\{\pm e\}$ and its image $\on{SO}_\eta(3,1)(\bQ_{p})^{+}$ has index $4$ in $\on{SO}_\eta(3,1)(\bQ_{p})$.
\end{lemma}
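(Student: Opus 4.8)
The plan is to compute the kernel of $\psi$ directly and then to identify the index of the image using a spinor-norm-type argument. For the kernel: if $g \in \SL_2(\bQ_p(\sqrt\eta))$ satisfies $gxg^* = x$ for all $x = x^*$, then in particular $g$ commutes with the identity-like elements $e_1, e_2$ in the chosen basis. Testing against $x = e$ (or against $e_1$ and $e_2$ separately) forces $g$ to be scalar, $g = \lambda e$ with $\lambda \in \bQ_p$, and then $gxg^* = |\lambda|^2 x$, so $|\lambda|^2 = \lambda^2 = 1$, giving $\lambda = \pm 1$. Hence $\ker\psi = \{\pm e\}$.

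For the index, by Lemma \ref{plusisplus} we already know $\on{SO}_\eta(3,1)(\bQ_p)^+ = \psi(\SL_2(\bQ_p(\sqrt\eta)))$ is a normal subgroup, so it suffices to compute $[\on{SO}_\eta(3,1)(\bQ_p) : \on{SO}_\eta(3,1)(\bQ_p)^+]$. The standard tool is the spinor norm map $\theta : \on{SO}_Q(\bQ_p) \to \bQ_p^\times / (\bQ_p^\times)^2$, whose kernel is precisely the image of the spin group, which for the groups at hand is the subgroup generated by unipotents, i.e.\ $\on{SO}_\eta(3,1)(\bQ_p)^+$ (this is the content of Lemma \ref{plusisplus} combined with the fact that $\SL_2$ over a $p$-adic field or quadratic extension is its own spin cover). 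Thus the index equals the size of the image of $\theta$. Since $Q = 2xy + z^2 + \eta w^2$ represents both squares and non-squares in $\bZ_p^\times$ (via the $z$-coordinate and $w$-coordinate respectively) as well as, after scaling by $p$, values of odd valuation, the spinor norm is surjective onto $\bQ_p^\times/(\bQ_p^\times)^2$. As $p \equiv 1 \pmod 4$, we have $|\bQ_p^\times/(\bQ_p^\times)^2| = 4$, giving index $4$ as claimed.

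Alternatively, and perhaps more in the self-contained spirit of the paper, one can avoid invoking the spinor norm entirely: the reflections $r_v$ in anisotropic vectors $v$ generate $\on{O}_Q(\bQ_p)$, and a product of an even number of reflections lies in $\on{SO}_\eta(3,1)(\bQ_p)^+$ precisely when the product of the norms $Q(v)$ of the reflecting vectors is a square. Choosing reflecting vectors with $Q(v)$ ranging over representatives of $\bQ_p^\times/(\bQ_p^\times)^2$ exhibits four distinct cosets, and one checks no further collapse occurs because $\psi(\SL_2)$ has the full Lie algebra and hence is open, so cosets are detected by this discrete invariant. Either way the count is $4$.

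The main obstacle is making the surjectivity of the spinor norm (equivalently, the non-collapse of the four cosets) fully rigorous with the minimal machinery the paper wants to assume; the kernel computation is routine. I would lean on the explicit coordinates already introduced — the formula for $\psi(g)e_1$ and the analogous formulas for $\psi(g)e_2, \psi(g)e_3$ — to directly produce elements of $\on{SO}_\eta(3,1)(\bQ_p)$ realizing each spinor-norm class as a diagonal element $\diag(t, t^{-1}, 1, 1)$-type conjugate, thereby exhibiting the four cosets concretely rather than quoting a structure theorem. This keeps the argument parallel to the split-case discussion in Lemma \ref{lemmagivingsl2sl2} referenced later.
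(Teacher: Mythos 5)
Your route is genuinely different from the paper's. The paper proves both claims by direct matrix computation: it writes out $\psi(g)$ explicitly, observes that the unipotents $u_3,v_3,u_4,v_4$ and a Weyl element are in the image of $\psi$, runs the reduction procedure of Proposition \ref{pRankOneKAK} to bring an arbitrary element of $\on{SO}_\eta(3,1)(\bQ_p)$ into block-diagonal form modulo the image, and then counts: the isotropic diagonal part contributes index $2$ (images of diagonal elements of $\SL_2(\bQ_p(\sqrt\eta))$ only produce eigenvalues of even valuation), and the anisotropic block contributes index $2$ (the image is the set of squares in the norm-one torus, using $[\bF_p^\times:(\bF_p^\times)^2]=2$); the kernel is read off from the same computation. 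Your spinor-norm argument is shorter and more general, and it does work if you are willing to quote that the image of the spin group equals the kernel of the spinor norm (the cohomology sequence for $1\to\mu_2\to\on{Spin}\to\on{SO}\to1$), that for an isotropic form this image is the subgroup generated by unipotents, that $\psi$ is indeed the spin covering of this quaternary form, and that the spinor norm of an isotropic (hence universal) form is onto $\bQ_p^\times/(\bQ_p^\times)^2$. But this is precisely the machinery the paper set out to avoid: it explicitly cites the spinor-norm proof in \cite{AES} as the alternative to the concrete discussion in this lemma, so your main route reproves the statement by the method the paper is replacing rather than by the paper's method.

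Two concrete weak points. First, your kernel computation is too quick: testing $gxg^*=x$ against $e_1$ and $e_2$ only forces $g=\diag{a,d}$ with $|a|^2=|d|^2=1$ and $ad=1$; it does not force $g$ to be scalar. Indeed $g=\diag{t,\overline{t}}$ with $t\overline{t}=1$ fixes $e_1$ and $e_2$ but acts on the span of $e_3,e_4$ by the rotation attached to $t^2$, as computed in the paper. You must also test against $e_3$ (or $e_4$), which yields $a\overline{d}=1$; combined with $ad=1$ this gives $d=\overline{d}\in\bQ_p$, hence $d^2=1$ and $g=\pm e$. Testing against $x=e$ alone gives only $gg^*=e$ and proves nothing. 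Second, in your ``elementary'' fallback, the assertion that an even product of reflections lies in $\on{SO}_\eta(3,1)(\bQ_p)^+$ exactly when the product of the values $Q(v)$ is a square \emph{is} the statement that the kernel of the spinor norm equals $\on{SO}^+$, i.e.\ the nontrivial input you were trying to avoid; and the remark that $\psi(\SL_2(\bQ_p(\sqrt\eta)))$ is open does not show the four reflection cosets remain distinct modulo the image, since open subgroups can have arbitrary finite index. To keep the argument within the paper's toolkit you would have to exhibit the cosets concretely, which is exactly what the paper's block-diagonal reduction and the two index-$2$ computations accomplish.
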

\begin{proof}
We will simplify an arbitrary element of $\on{SO}_\eta(3,1)(\bQ_{p})$ using the procedure of Proposition \ref{pRankOneKAK} as much as possible and will see that the isotropic part and the anisotropic part each have two cosets under the image  of diagonal matrices under $\psi$.
For $g=\smallmat{a&b\\c&d}\in \SL_2(\bQ_p(\sqrt{\eta}))$, $\psi(g)$ takes the form
\[
\begin{bmatrix}
|a|^2	&	\tfrac12|b|^2	&	\varepsilon  \Re(a\overline{b})	&	\eta\Im(a\overline{b})	\\

2|c|^2	&	|d|^2	&	2\varepsilon  \Re(c\overline{d})	&	2\eta\Im(c\overline{d})	\\

-2\varepsilon \Re(a\overline{c})	&	-\varepsilon \Re(b\overline{d}) &	\Re(a\overline{d}+b\overline{c})	&	-\varepsilon \eta\Im(a\overline{d}-b\overline{c})	\\

2\Im(a\overline{c})	&	\Im(b\overline{d})	&	\varepsilon \Im(a\overline{d}+b\overline{c}) &	\Re(a\overline{d}-b\overline{c})
\end{bmatrix}.
\]
With this one also checks that the unipotents introduced in the beginning of Proposition~\ref{pRankOneKAK} satisfy
\begin{align*}
u_{3}\left(t\right) &=\psi\left(\smallmat{1	&	0	\\	-\tfrac{\varepsilon t}{2}	&	1}\right),& 
&v_{3}\left(t\right) =\psi\left(\smallmat{1	&	-\varepsilon t	\\	0	&	1}\right),\\
u_{4}\left(t\right) &= \psi\left(\smallmat{1	&	0	\\	\tfrac{\varepsilon\sqrt{\eta}t}{2}	&	1}\right),&
&v_{4}\left(t\right) = \psi\left(\smallmat{1	&	-\varepsilon\sqrt{\eta}t	\\	0	&	1}\right).
\end{align*}
and the element $\psi\left(\smallmat{0	&	-1	\\	1	&	0}\right)=\smallmat{0&\tfrac12&0&0\\2&0&0&0\\0&0&1&0\\0&0&0&-1}$ may replace the role of $\omega$ in that proposition to deduce that after multiplying an arbitrary $g\in\on{SO}_\eta(3,1)(\bQ_{p})$ with elements of $\on{SO}_\eta(3,1)(\bQ_{p})^{+}$ on the left and right, we can assume it to be of the form
\[
\smallmat{A&0&0&0\\0&D&0&0\\0&0&X&Z\\0&0&Y&W}.
\]
We note that in the upper diagonal block we must have $D=A^{-1}$ and so the lower block matrix defines an element in $\on{SO}_{z^2+\eta w^2}(\bZ_{p})$. A calculation reveals that the lower block is of the form 
 $\smallmat{\Re(s) 	&	-\varepsilon \eta \Im(s)	\\	
 \varepsilon \Im(s)	&	\Re(s)}$ for some $s\in\bQ_p(\sqrt{\eta})$ with $|s|^2=1$.
We also see that this matrix is diagonal if and only if $s=\pm1$.

Let us study the image of $\psi$ now. 
If $\psi(g)$ is of the above block form, then in particular $b=c=0$ and $g$ is diagonal. 

In fact, we also see that $\psi(g)$ itself is diagonal if and only if $g$ is diagonal and either $g$ or $\sqrt{\eta}g$ has entries in $\bQ_p$. The image 
\[
	\left\{\psi(\on{diag}(a,a^{-1})): a\in\sqrt{\eta}^\mu\bQ_p^\times,\;\mu\in\{0,1\}\right\}
\]
of these matrices agrees with 
\[
	\left\{\on{diag}(a,a^{-1},(-1)^\mu,(-1)^\mu): a\in\bQ_p^\times, \log_p|a|_p\in2\bZ,\mu\in\{0,1\}\right\},
\]
which is of index two in the group of all diagonal matrices of $\on{SO}_\eta(3,1)(\bQ_{p})$.

On the other hand, for $t\overline{t}=1$, the image of $\on{diag}(t,\overline{t})$ is
\[
	\smallmat{ 1&0&0&0 \\ 0&1&0&0 \\
		0&0&\Re(t^2)	&	-\varepsilon \eta \Im(t^2)	\\	
		0&0&\varepsilon \Im(t^2)	&	\Re(t^2)
	}
\]
and thus forms (when restricted to the lower block) an index two subgroup of $\on{SO}_{z^2+\eta w^2}(\bZ_{p})$ because $[\bF_p^{\times}:(\bF_p^{\times})^2]=2$.
Together, this shows that $\on{SO}_\eta(3,1)(\bQ_{p})^+$ has index $4$ in $\on{SO}_\eta(3,1)(\bQ_{p})$.

Finally, we can deduce from the above also that $\psi(g)=e$ if and only if $g=\pm e$.
\end{proof}

It will also be necessary to understand subgroups of $\on{SO}_\eta(3,1)(\bQ_{p})^+$ that are locally isomorphic to $\on{SO}(2,1)(\bQ_{p})$.

\begin{lemma}
\label{oneconjugateclass} Any algebraic subgroup $H$ of $\on{SO}_\eta(3,1)(\bQ_{p})$ over~$\bQ_p$
that is locally isomorphic to 
$\on{SO}(2,1)(\bQ_{p})$ is conjugate to any other such group in $\on{SO}_\eta(3,1)(\bQ_{p})$.
\end{lemma}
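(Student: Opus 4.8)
The plan is to reduce the statement to a classification of the relevant $\bQ_p$-subgroups of $\on{SO}_\eta(3,1)(\bQ_p)$ via the isogeny $\psi:\SL_2(\bQ_p(\sqrt{\eta}))\to\on{SO}_\eta(3,1)(\bQ_p)$ established in Lemma~\ref{plusisplus} and Lemma~\ref{indexsplit}. First I would note that any algebraic subgroup $H$ over $\bQ_p$ that is locally isomorphic to $\on{SO}(2,1)(\bQ_p)$ is in particular semisimple of type $A_1$, generated by its unipotent elements, and hence (being connected in the Zariski topology and generated by unipotents) has $H(\bQ_p)\subset\on{SO}_\eta(3,1)(\bQ_p)^+=\psi(\SL_2(\bQ_p(\sqrt{\eta})))$. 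Since $\psi$ has central kernel $\{\pm e\}$, the preimage $\psi^{-1}(H)$ is an algebraic $\bQ_p$-subgroup of $\SL_2(\bQ_p(\sqrt\eta))$ (viewed as a $\bQ_p$-group, i.e.\ the Weil restriction $\on{Res}_{\bQ_p(\sqrt\eta)/\bQ_p}\SL_2$) whose identity component maps onto $H$; it is a connected semisimple $\bQ_p$-subgroup of dimension $3$. Since conjugacy of $H$ in $\on{SO}_\eta(3,1)(\bQ_p)$ corresponds to conjugacy of $\psi^{-1}(H)$ in $\SL_2(\bQ_p(\sqrt\eta))$ by an element of $\psi^{-1}(\on{SO}_\eta(3,1)(\bQ_p))$, it suffices to classify the $3$-dimensional connected semisimple $\bQ_p$-subgroups of $\on{Res}_{\bQ_p(\sqrt\eta)/\bQ_p}\SL_2$ up to conjugacy.

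The key step is the observation that $\on{Res}_{\bQ_p(\sqrt\eta)/\bQ_p}\SL_2$ has dimension $6$ over $\bQ_p$, and over the quadratic extension $E=\bQ_p(\sqrt\eta)$ it becomes $\SL_2\times\SL_2$ with the Galois action swapping the two factors. A $3$-dimensional connected semisimple $\bQ_p$-subgroup $\bM$ base-changes to a semisimple subgroup of $\SL_2\times\SL_2$ of dimension $3$; the only such are (up to $\on{PGL}_2\times\on{PGL}_2(E)$-conjugacy) the two coordinate factors $\SL_2\times\{e\}$, $\{e\}\times\SL_2$, and the graphs of automorphisms of $\SL_2$, i.e.\ the ``twisted diagonals'' $\{(g,\alpha(g))\}$ for $\alpha\in\on{PGL}_2(E)$ (all $\SL_2$-automorphisms being inner). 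Galois-stability rules out the coordinate factors (the Frobenius-type swap would not preserve them), so $\bM\otimes E$ is a twisted diagonal; descent data then forces $\bM$ to be a $\bQ_p$-form of $\SL_2$ embedded via $g\mapsto(g,{}^\sigma g^{\tau})$ for a suitable twist $\tau$, and all such embeddings are conjugate under $\on{Res}_{E/\bQ_p}\SL_2(\bQ_p)=\SL_2(E)$ by the standard argument that $H^1(\bQ_p,\on{PGL}_2)$ acting through $E$-points becomes trivial — more concretely, any two diagonal $\bQ_p$-embeddings of $\SL_2$ differ by conjugation by an element of $\SL_2(E)$ which one checks lies in the normalizer of the image, hence can be taken in $\psi^{-1}(\on{SO}_\eta(3,1)(\bQ_p))$. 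Pushing through $\psi$ gives conjugacy of the corresponding subgroups $H$.

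The main obstacle I anticipate is making the descent/conjugacy argument genuinely \emph{over $\bQ_p$} rather than over $E$: one must check that the $\SL_2(E)$-element conjugating one copy to another can be chosen to normalize the Galois action appropriately, so that it descends to an element of $\on{SO}_\eta(3,1)(\bQ_p)$ (equivalently, that the relevant $H^1$ obstruction vanishes). A cleaner route that avoids heavy Galois cohomology — and which I suspect the authors intend, given the elementary tone of this section — is to argue directly inside $\on{SO}_\eta(3,1)(\bQ_p)$: a subgroup locally isomorphic to $\on{SO}(2,1)$ acts on $V\cong\bQ_p^4$ preserving the form $Q=2xy+z^2+\eta w^2$, decomposing $V$ into a $3$-dimensional non-degenerate subspace $V_0$ (of signature type $(2,1)$, on which $H$ acts as its standard representation) and its orthogonal complement $V_0^\perp$ (a $1$-dimensional non-degenerate line, fixed by $H$). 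By Witt's theorem any two such decompositions of $(V,Q)$ with the restriction of $Q$ to $V_0$ in the same $\bZ_p^\times$-equivalence class are carried into one another by an element of $\on{SO}_Q(\bQ_p)=\on{SO}_\eta(3,1)(\bQ_p)$; and by Proposition~\ref{SOclassification} (applied to the ternary form $Q|_{V_0}$, whose discriminant is a unit times $\eta$ because $p\nmid\on{disc}(Q)$) all these ternary restrictions are $\bZ_p$-equivalent, hence give the same $\on{SO}(2,1)$. This conjugates the two copies of $V_0$, hence intertwines the two $H$'s up to the ambiguity of an automorphism of $\on{SO}(2,1)(\bQ_p)$ fixing the standard rep, which is inner; absorbing that inner automorphism into a further conjugation in $\on{SO}(2,1)(\bQ_p)\subset\on{SO}_\eta(3,1)(\bQ_p)$ completes the argument. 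I would write the proof along these Witt-theorem lines, invoking Proposition~\ref{SOclassification} for the uniqueness of the ternary subform, and relegating the identification of the embedding with the principal $\on{SL}_2$ to the explicit $\psi$-formulas already recorded above.
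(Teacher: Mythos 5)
Both of your routes break at the same decisive point, and the gap is not one that can be closed. In Route~1, the assertion that any two such subgroups are already conjugate under $\SL_2(E)$, $E=\bQ_p(\sqrt{\eta})$ (equivalently under $\on{SO}_\eta(3,1)(\bQ_{p})^{+}$), is false: normalizing the split torus and writing the unipotent radical of a Borel of the preimage as $N_\beta=\{\smallmat{1&\beta x\\0&1}:x\in\bQ_p\}$ --- which is exactly how the paper argues --- one finds that the $\SL_2(E)$-conjugacy class of the subgroup is the class of $\beta$ in $E^{\times}$ modulo squares, and both unit classes occur; the relevant $H^1(\bQ_p,\on{PGL}_2)$ is not trivial. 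In Route~2 the unsupported step is ``all these ternary restrictions are $\bZ_p$-equivalent'': Proposition~\ref{SOclassification} only identifies the special orthogonal groups up to conjugacy after allowing a rescaling of the form, i.e.\ it gives \emph{similarity} of the ternary forms, whereas Witt's theorem needs an \emph{isometry} between $Q|_{V_0}$ and $Q|_{V_0'}$. Since $V=V_0\perp L$ with $L$ the fixed line, $\on{disc}(Q|_{V_0})$ equals $\on{disc}(Q)\,Q(\ell)$ modulo squares, so what you must control is the square class of $Q(\ell)$, and that class is a conjugation invariant.

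Chasing this invariant shows the gap is unfixable, because the statement fails as printed: inside $\on{SO}_{2xy+z^2+\eta w^2}$ the block-embedded $\on{SO}(2xy+z^2)$ fixes the $w$-axis, on which $Q$ takes values in the square class of $\eta$, while the block-embedded copy of $\on{SO}(2xy+\eta w^2)$ --- isomorphic over $\bQ_p$ to $\on{SO}(2,1)$ because $2xy+\eta w^2$ is similar to $2xy+z^2$ --- fixes the $z$-axis, where $Q$ takes values in the class of $1$; an element of $\on{SO}_\eta(3,1)(\bQ_{p})$ conjugating one subgroup to the other would carry one fixed line isometrically onto the other and force $\eta$ to be a square. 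For comparison, the paper's own proof works in the cover with the parameter $\beta$ and then claims the two classes merge under conjugation by the extra rotations sending $\psi(N_\beta)$ to $\psi(N_{s\beta})$ with $s\bar s=1$; but every norm-one unit of the unramified extension reduces into the subgroup of order $p+1$ of $\bF_{p^2}^{\times}$ and is therefore a square in $E^{\times}$, so this conjugation never changes the square class of $\beta$ --- consistent with your fixed-line invariant. The correct statement is that there are exactly two conjugacy classes, distinguished by the square class of $Q$ on the fixed line (equivalently by $\beta$ modulo $(E^{\times})^{2}$), and the point where the lemma is invoked (the case $\lambda_4=1$ in Proposition~\ref{latticepoints}) must be adjusted to allow both classes; each is the stabilizer of an anisotropic vector whose orthogonal complement carries an isotropic ternary form of unit discriminant, so the subsequent tree argument can be run for either class.
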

\begin{proof}
The Lie algebra of $H$ is $\lsl$, and the preimage $L$ of $H$ in the simply-connected cover $\SL_2(\bQ_p(\sqrt{\eta}))$ must   be   isomorphic to $\SL_2(\bQ_p)$.
Fix some~$\bQ_p$-split torus in~$L$. After conjugation, we may assume that this torus coincides with the $\bQ_p
$-split part of the diagonal subgroup of $\SL_2(\bQ_p(\sqrt{\eta}))$ and so the Borel subgroup of~$L$ is a subgroup of the upper diagonal matrices in $\SL_2(\bQ_p(\sqrt{\eta}))$. The unipotent radical $N$ of the Borel subgroup of $L$ then forms a one-dimensional subgroup in the two-dimensional (over $\bQ_p$) subgroup of the upper unipotent subgroup $U$ in $\SL_2(\bQ_p(\sqrt{\eta}))$. Thus there exists $\beta$ such that $N_\beta=\{\smallmat{1&\beta x\\0&1}:\;x\in\bQ_p\}$. We may assume without
loss of generality that $\vert\beta\vert_p=1$. Also note that the torus and the subgroup $N_\beta$ uniquely determine the subgroup $L$ (e.g. by the Jacobson-Morozov theorem).

Let us now consider a second subgroup~$H'$ with cover~$L'$. As above we may conjugate~$L'$
and arrive at another subgroup $N_{\beta'}$ with~$\vert\beta'\vert_p=1$. Notice that conjugating by a diagonal element $\diag{\alpha,\alpha^{-1}}$ in $\SL_2(\bQ_p(\sqrt{\eta}))$ commutes with the~$\bQ_p$-split torus considered above and normalizes $U$. In particular, the conjugation class of the group associated to $\beta$ are the groups associated to $\alpha^2\beta$, for $\alpha$ of norm one. We see that there are two conjugation classes depending on whether $\beta$ is a square in $\bZ_p(\sqrt{\eta})^\times$ or not.

On the other hand, $N_\beta$ where $\beta=\beta_1+\sqrt{\eta}\beta_2$ is mapped under the isogeny $\psi$ to
\[
\psi(N_\beta)=\left\{\begin{bmatrix}
1&\frac{x^2}{2}& \varepsilon x\beta_1& -\eta x\beta_2\\
0&1&0&0 \\
0&-\varepsilon x \beta_1& 1&0 \\
0&\beta_2 x & 0 & 1
\end{bmatrix}\mid x\in \bQ_p\right\}.
\]
We now conjugate the elements of this subgroup by the block matrix consisting of the identity in the upper left block and the lower right block $\smallmat{\Re(s) 	&	-\varepsilon \eta \Im(s)	\\	
 \varepsilon \Im(s)	&	\Re(s)}$  where $s\overline{s}=1$
(which does not necessarily belong to $\on{SO}_\eta(3,1)(\bQ_{p})^+$). 
Conjugating $\psi(N_\beta)$ by this element gives, after a short calculation, $\psi(N_{s\beta})$. 

 We conclude that the two different conjugates classes merge when allowing conjugation by elements of $\on{SO}_\eta(3,1)(\bQ_{p})$ (instead of just
the elements of the index 4 subgroup $\on{SO}_\eta(3,1)(\bQ_{p})^+$).
\end{proof}

\subsection{The group $\operatorname{SO}(2,2)(\bQ_{p})$}
\label{kak-split}
\label{simplycoverTwo}
In this subsection we will consider $\on{SO}(2,2)$ but note that for some of the arguments it will be more convenient to use 
a different but $\bZ_p$-equivalent
quadratic form.
In fact let us define $\on{SO}(2,2)(\bQ_{p})$ by  
using the quadratic form 
\[
Q = 2 \det \mbox{ on } V=\Mat_{2,2}(\bQ_{p}).
\] 
We will also use the standard basis
\[
e_{1}=\smallmat{1&0\\0&0},\; e_{2}=\smallmat{0&1\\0&0},\; e_{3}=\smallmat{0&0\\1&0},\; e_{4}=\smallmat{0&0\\0&1},
\]
and note that this means that instead of looking at the quadratic form $2xy+2zw$ we consider now the 
quadratic form $2xw-2yz\sim 2xy+2zw$.  

We consider the action of $(g,h)\in\SLQ[2]\times\SLQ[2]$ on $v\in V$ given by $$\psi_{g,h}:v\mapsto gvh^{-1}.$$
Clearly, $\psi_{g,h}$ defines an element in $H=\SO[2\det](\bQ_{p}) \simeq \on{SO}(2,2)(\bQ_{p})$ and the kernel of the map $(g,h)\mapsto\psi_{g,h}$ is $\{\pm(e,e)\}$. Fix the unipotents
$$u_{1}(t)=\smallmat{1 &  &  &  \\t & 1 &  &  \\ &  & 1 &  \\ &  & t & 1},\; u_{2}(t)=\smallmat{1 &  &  &  \\ & 1 &  &  \\t &  & 1 &  \\ &  t&  & 1},\; v_{1}(t)=u_{1}(t)^{T},\; v_{2}(t)=u_{2}(t)^{T}$$ which are the images of $(e,\smallmat{1&-t\\&1})$, $(\smallmat{1&\\t&1},e)$ and their transposes. Denote by $\on{SO}(2,2)(\bQ_{p})^{+}$ the image of $\psi$ and similiarly the image
of $\on{SL}_2(\bZ_p)\times\on{SL}_2(\bZ_p)$ by $\on{SO}(2,2)(\bZ_{p})^{+}$ (it is easy
to see that the $\bZ_p$-points are mapped to $\bZ_p$-points, see also Lemma \ref{prevstructure}). Clearly, the argument of Lemma \ref{plusisplus} applies again and it agrees with our usual definition of $\on{SO}(2,2)(\bQ_{p})^{+}$.

\begin{lemma}
\label{lemmagivingsl2sl2}
The subgroup $\on{SO}(2,2)(\bQ_{p})^{+}$ has index $4$ in $\on{SO}(2,2)(\bQ_{p})$.
\end{lemma}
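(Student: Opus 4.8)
Write $H=\on{SO}(2,2)(\bQ_{p})$. By the argument of Lemma~\ref{plusisplus}, $H^{+}:=\on{SO}(2,2)(\bQ_{p})^{+}=\psi(\on{SL}_{2}(\bQ_{p})\times\on{SL}_{2}(\bQ_{p}))$ is the subgroup of $H$ generated by all unipotent elements, and in particular it is normal; so $[H:H^{+}]$ is the order of the group $H/H^{+}$. The plan is to compare $H^{+}$ with the standard maximal split torus
\[
T=\bigl\{\diag{s,t,t^{-1},s^{-1}}:s,t\in\bQ_{p}^{\times}\bigr\}\leq H
\]
taken in the basis $e_{1},e_{2},e_{3},e_{4}$ (this is a maximal $\bQ_{p}$-split torus because $Q=2\det$ is split, with hyperbolic pairs $\{e_{1},e_{4}\}$ and $\{e_{2},e_{3}\}$). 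I will show (i) $H=T\cdot H^{+}$ and (ii) $[T:T\cap H^{+}]=4$; together these give $H/H^{+}\cong T/(T\cap H^{+})$, which then has order $4$.

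For (i) I would use that the split group $H$ is generated by $T$ together with its unipotent root subgroups $U_{\alpha}$ — this is contained in the Bruhat decomposition of $\on{SO}(2,2)(\bQ_{p})$; alternatively, in the concrete spirit of Lemma~\ref{indexsplit}, one reduces an arbitrary element of $H$ to a diagonal one by left and right multiplication with the unipotents $u_{1}(\cdot),u_{2}(\cdot),v_{1}(\cdot),v_{2}(\cdot)$ and the two Weyl elements $\psi(\smallmat{0&-1\\1&0},e)$, $\psi(e,\smallmat{0&-1\\1&0})$, all of which lie in $H^{+}$. Since each $U_{\alpha}(\bQ_{p})$ consists of unipotent elements it lies in $H^{+}$, and $T$ normalizes $H^{+}$, so $H\subseteq T\cdot H^{+}\subseteq H$.

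For (ii) I first determine when $\psi_{g_{1},g_{2}}$ lies in $T$, i.e.\ preserves each coordinate line $\bQ_{p}e_{i}$. Viewing $e_{1}$ and $e_{4}$ as the rank-one matrices with column spaces $\bQ_{p}\smallmat{1\\0}$, $\bQ_{p}\smallmat{0\\1}$ and row spaces $\bQ_{p}(1,0)$, $\bQ_{p}(0,1)$, the condition that $g_{1}e_{1}g_{2}^{-1}$ and $g_{1}e_{4}g_{2}^{-1}$ be scalar multiples of $e_{1}$ resp.\ $e_{4}$ forces both columns of $g_{1}$ and both rows of $g_{2}^{-1}$ to be supported on a single coordinate, hence $g_{1},g_{2}$ diagonal. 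A direct computation then gives
\[
\psi_{\diag{a,a^{-1}},\,\diag{b,b^{-1}}}=\diag{ab^{-1},\,ab,\,a^{-1}b^{-1},\,a^{-1}b},
\]
so in the parametrization $(s,t)$ of $T$ one has $s=ab^{-1}$, $t=ab$, whence $st=a^{2}$ and $t/s=b^{2}$; conversely any $(s,t)$ with $st\in(\bQ_{p}^{\times})^{2}$ arises this way (then $t/s$ is automatically a square too). Thus $T\cap H^{+}=\bigl\{\diag{s,t,t^{-1},s^{-1}}:st\in(\bQ_{p}^{\times})^{2}\bigr\}$, and $\diag{s,t,t^{-1},s^{-1}}\mapsto st\bmod(\bQ_{p}^{\times})^{2}$ is a surjective homomorphism $T\to\bQ_{p}^{\times}/(\bQ_{p}^{\times})^{2}$ with kernel $T\cap H^{+}$. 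Since $p\neq2$ we have $|\bQ_{p}^{\times}/(\bQ_{p}^{\times})^{2}|=4$, so $[T:T\cap H^{+}]=4$ and hence $[H:H^{+}]=4$.

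The main obstacle is step (i): verifying that $\on{SO}(2,2)(\bQ_{p})$ is generated by its diagonal torus and unipotent subgroups. Citing the Bruhat decomposition for split reductive groups makes this immediate; to keep the section self-contained in the style of Lemma~\ref{indexsplit} one instead carries out the explicit (routine but slightly lengthy) elimination indicated above. Everything else reduces to short $2\times2$ matrix computations. One could alternatively deduce the statement from the spinor norm homomorphism $H\to\bQ_{p}^{\times}/(\bQ_{p}^{\times})^{2}$, as in \cite[Lemma~3.6]{AES}.
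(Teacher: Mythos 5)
Your proof is correct and follows essentially the same route as the paper: reduce an arbitrary element of $\on{SO}(2,2)(\bQ_{p})$ modulo the unipotent-generated subgroup to the diagonal torus, then observe that a diagonal element $\diag{s,t,t^{-1},s^{-1}}$ lies in the image of $\psi$ exactly when $st$ is a square, so the index equals $\left|\bQ_{p}^{\times}/(\bQ_{p}^{\times})^{2}\right|=4$. The differences are only in packaging: you phrase the reduction as $H=T\cdot H^{+}$ together with the homomorphism $\diag{s,t,t^{-1},s^{-1}}\mapsto st$ modulo squares (and may cite the Bruhat decomposition for the reduction step), whereas the paper carries out the unipotent elimination explicitly, using invariance of the quadratic form and a determinant check to exclude the anti-diagonal case.
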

\begin{proof}
We will use the above unipotent matrices as row (column) operations by multiplying with them on the left (right). Let $g\in H$. Multiplying with $v_{1}(1)$, $v_{2}(1)$ or $v_{2}(1)v_{1}(1)$ on the left as needed, we may assume that the upper left entry does not vanish. We now can multiply with suitable $u_{1}(t_{1})$, $u_{2}(t_{2})$ from the left and $v_{1}(t_{1})$, $v_{2}(t_{2})$ from the right to get a matrix whose first row  and first column is of the form $(*,0,0,*)$ respectively $(*,0,0,*)^{T}$. Since the first entry is nonzero, but $Q(e_{1})=0$, invariance of the quadratic form forces the last entry  of the first column to vanish. Since the symmetric matrix corresponding to $Q$ is its own inverse, $H=H^{T}$ and we may argue similarly for the first row vector.

Applying the matrix to $e_{2}$ and $e_{3}$ (satisfying $Q(e_2)=Q(e_3)=0$) we see that the matrix is now of the form $$\smallmat{* & 0 & 0 & 0 \\0 & * & 0 & * \\0 &  0& * & * \\0 & * & * & *}\text{ or }\smallmat{* & 0 & 0 & 0 \\0 & 0 & * & * \\0 & * & 0 & * \\0 & * & * & *}.$$ Continue by applying the matrix to $e_{1}+e_{2}$ and $e_{1}+e_{3}$ (once more with $Q(e_1+e_2)=Q(e_1+e_3)=0$) to see that the last column of either matrix must vanish but for the very last entry, doing the same for its transpose gives us the possible matrices
$$\smallmat{* & 0 & 0 & 0 \\ 0& * & 0 & 0 \\ 0& 0 & * & 0 \\ 0& 0 & 0 & *}\text{ or }\smallmat{* & 0 & 0 & 0 \\ 0& 0 & * & 0 \\ 0& * & 0 & 0 \\ 0&  0& 0 & *}.$$ 

Apply the matrix to $e_{1}+e_{4}$ and $e_{2}+e_{3}$ (with $Q(e_1+e_4)=-Q(e_2+e_3)=1$) to see that the $(1,1)$ and $(4,4)$ entries, respectively the other pair, are inverses to each other. The latter matrix then has determinant $-1$ and thus is not an element of $H$, so that the matrix must be of the form $\diag{s,t,t^{-1},s^{-1}}$.
The matrix $\psi_{h_{a},h_{b}}$ is diagonal if and only if $h_{a}$ and $h_{b}$ are diagonal, and if $h_{a}=\diag{a,a^{-1}}$ and $h_{b}=\diag{b,b^{-1}}$ then $\psi_{h_{a},h_{b}}=\diag{ab,ab^{-1},a^{-1}b,a^{-1}b^{-1}}$. There exists $a,b\in\bQ_{p}^{\times}$ such that $\psi_{h_{a},h_{b}}=\diag{s,t,t^{-1},s^{-1}}$ if and only if $st$ is a square. As $\left|\bQ_{p}^{\times}/(\bQ_{p}^{\times})^{2}\right|=4$ for $p>2$ we get that the index of the image subgroup is 4.
\end{proof}

In view of this it suffices to work with $\on{SO}(2,2)(\bQ_{p})^{+}$ from now on. Before we calculate the volume, let us make the following remark.

\begin{lemma}
\label{prevstructure}
$\psi$ preverses the Cartan decomposition in the following sense. Let $m,n\geq 0$, then the image of
$$\SLZ[2]\times \SLZ[2] \bigl\{(a_p^m, a_p^n),(a_p^n,a_p^m)\bigr\}\SLZ[2]\times\SLZ[2]$$ is $$Ka_p^{m+n} b_p^{|m-n|}K\cap \on{SO}(2,2)(\bQ_{p})^{+}.$$
\end{lemma}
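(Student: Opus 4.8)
The plan is to reduce everything to explicit computations with diagonal matrices, exploiting that $\psi$ is a group homomorphism onto $\on{SO}(2,2)(\bQ_p)^+$ and that it maps $\SLZ[2]\times\SLZ[2]$ onto $\on{SO}(2,2)(\bZ_p)^+\subseteq K$ --- the latter because $g,h\in\SLZ[2]$ forces $gvh^{-1}\in\Mat_{2,2}(\bZ_p)$ for every $v\in\Mat_{2,2}(\bZ_p)$, and symmetrically for the inverse. I would work throughout in the $\Mat_{2,2}$-model of Section~\ref{kak-split}, taking $a_p,b_p$ in that model; passing between the two $\bZ_p$-equivalent realizations of $\on{SO}(2,2)$ is effected by an element of $\on{GL}_4(\bZ_p)$, hence preserves $K$, the $p$-adic norm, and all the double cosets $Ka_p^\ell b_p^j K$. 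A one-line matrix computation then evaluates $\psi_{a_p^m,a_p^n}$ and $\psi_{a_p^n,a_p^m}$ on the basis $e_1,\dots,e_4$ and realizes each of them as a diagonal element of the standard split torus of $\on{SO}(2,2)$.

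For the inclusion ``$\subseteq$'': since $\psi$ is a homomorphism and $\psi(\SLZ[2]\times\SLZ[2])\subseteq K$, it is enough to check that $\psi_{a_p^m,a_p^n}$ and $\psi_{a_p^n,a_p^m}$ each lie in $Ka_p^{m+n}b_p^{|m-n|}K$. I would verify this by conjugating the two diagonal matrices into the positive-chamber representative $a_p^{m+n}b_p^{|m-n|}$ by explicit Weyl-type elements of $K=\on{SO}(2,2)(\bZ_p)$ --- conveniently produced as $\psi$-images of the order-$4$ elements $\smallmat{0&-1\\1&0}$ of $\SLZ[2]$, together with the block-swap isometry interchanging the two hyperbolic planes of the form, which also has determinant $1$ and so lies in $\on{SO}(2,2)(\bZ_p)$. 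Since $\psi$ is a homomorphism and $\psi(\SLZ[2]^2)=\on{SO}(2,2)(\bZ_p)^+$, this gives $\psi\bigl(\SLZ[2]^2\{(a_p^m,a_p^n),(a_p^n,a_p^m)\}\SLZ[2]^2\bigr)\subseteq Ka_p^{m+n}b_p^{|m-n|}K\cap\on{SO}(2,2)(\bQ_p)^+$.

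For the inclusion ``$\supseteq$'': let $x\in Ka_p^{m+n}b_p^{|m-n|}K\cap\on{SO}(2,2)(\bQ_p)^+$. By surjectivity of $\psi$ onto $\on{SO}(2,2)(\bQ_p)^+$ we may write $x=\psi_{g,h}$, and applying the Cartan decomposition $\SLQ[2]=\bigsqcup_{\ell\geq0}\SLZ[2]a_p^\ell\SLZ[2]$ (classical, and already used in Proposition~\ref{pserretree}) to each factor yields $g\in\SLZ[2]a_p^{m'}\SLZ[2]$ and $h\in\SLZ[2]a_p^{n'}\SLZ[2]$ for uniquely determined $m',n'\geq0$, so that $x\in\psi\bigl(\SLZ[2]^2(a_p^{m'},a_p^{n'})\SLZ[2]^2\bigr)$. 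By the case ``$\subseteq$'' applied to $(m',n')$ we also have $x\in Ka_p^{m'+n'}b_p^{|m'-n'|}K$; the uniqueness in the Cartan decomposition of $\on{SO}(2,2)(\bQ_p)$ --- i.e.\ the disjointness of the double cosets $Ka_p^\ell b_p^j K$ for $0\leq j\leq\ell$, established in Appendix~\ref{secB2-cartan} --- then forces $m'+n'=m+n$ and $|m'-n'|=|m-n|$, hence $\{m',n'\}=\{m,n\}$. This places $x$ in $\psi\bigl(\SLZ[2]^2\{(a_p^m,a_p^n),(a_p^n,a_p^m)\}\SLZ[2]^2\bigr)$, as desired.

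The routine matrix computations aside, the point that needs care is the bookkeeping between the two $\bZ_p$-equivalent coordinate systems for $\on{SO}(2,2)$ --- in particular checking that, once transported to the $\Mat_{2,2}$-model, the Weyl-chamber representatives $a_p^{m+n}b_p^{|m-n|}$ genuinely coincide (up to $K$ on both sides) with the $\psi$-images written down above, which is where having the explicit isogeny $\psi$ from $\SLQ[2]\times\SLQ[2]$ pays off. The only external input is the uniqueness built into the Cartan decomposition of $\on{SO}(2,2)(\bQ_p)$ proved in the appendix, which drives the ``$\supseteq$'' direction.
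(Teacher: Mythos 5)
Your proposal is correct and follows essentially the same route as the paper: show $\psi(\SLZ[2]\times\SLZ[2])\subseteq K$, compute $\psi_{a_p^m,a_p^n}$ (a diagonal element with eigenvalues $p^{\pm m\pm n}$, conjugate in $K$ to $a_p^{m+n}b_p^{|m-n|}$) to get the forward inclusion, and deduce the reverse inclusion from surjectivity of $\psi$, the Cartan decomposition of each $\SL_2$ factor, and the uniqueness of the representative $a\in A_+$ in Proposition~\ref{KAKrank2}. You merely spell out the ``$\supseteq$'' step that the paper compresses into one sentence, so the content is the same.
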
 
\begin{proof}
	It is easy to see that $\psi_{g,h}\in K$ if $g,h\in\SLZ[2]$. 
	Because of this we next take the image of $(a_p^m,a_p^n)$. 
Calculating the matrix representation of $\psi_{a_p^m,a_p^n}$ we see that it corresponds to the tensor product $a_p^m\otimes a_p^n$ with eigenvalues $p^{\pm m\pm n}$. This shows that the image of $\SLZ[2]a_p^m\SLZ[2]\times \SLZ[2]a_p^n\SLZ[2]$ is contained in $Ka_p^{m+n} b_p^{|m-n|}K $.
Since this holds for every $m,n\geq 0$
and since 
the element $a\in A_+$ in the Cartan decomposition in Proposition \ref{KAKrank2}
is unique, the lemma follows.
\end{proof}

\begin{lemma}\label{ranktwovolume}
If we normalise the Haar measure of $\on{SO}(2,2)(\bQ_{p})^{+}$ such that $m(\on{SO}(2,2)(\bZ_{p})^{+})=1$ then
\[
m\left(\partial B^{\on{SO}(2,2)(\bQ_{p})^{+}}_{\ell}\right)=
(2(p+1)p+(p+1)^2(\ell-1))p^{2\ell-2} \mbox{ for }\ell \geq 1.
\]
In particular, for any $\varepsilon > 0$ we have $p^{2\ell}\leq m\left(B^{\on{SO}(2,2)(\bQ_{p})^{+}}_{\ell}\right)\ll_{\varepsilon} p^{(2+\varepsilon)\ell}$.
\end{lemma}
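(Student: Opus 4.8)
The plan is to transport the computation to $\SLQ[2]\times\SLQ[2]$ through the isogeny $\psi$ of Section~\ref{kak-split} and then read off the answer from Proposition~\ref{pserretree}. First I would fix the normalizations. Since $\ker\psi=\{\pm(e,e)\}$ lies inside $\SLZ[2]\times\SLZ[2]$ and, by Lemma~\ref{prevstructure} applied with $m=n=0$, $\on{SO}(2,2)(\bQ_p)^+=\psi(\SLZ[2]\times\SLZ[2])$, we have $\psi^{-1}\bigl(\on{SO}(2,2)(\bZ_p)^+\bigr)=\SLZ[2]\times\SLZ[2]$. As $\psi$ is a surjective homomorphism with finite kernel, $\psi_*m_{\SLQ[2]\times\SLQ[2]}$ is a left-invariant measure on $\on{SO}(2,2)(\bQ_p)^+$ that is finite on compacta; comparing it with $m$ on the maximal compact subgroup (each assigned mass $1$) shows $\psi_*m_{\SLQ[2]\times\SLQ[2]}=m$. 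Hence $m(S)=m_{\SLQ[2]\times\SLQ[2]}\bigl(\psi^{-1}(S)\bigr)$ for every Borel set $S$, and the degree of $\psi$ does \emph{not} enter, precisely because the two reference compacts correspond to each other under $\psi^{-1}$.

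Next I would identify the preimage of the sphere, claiming
\[
\psi^{-1}\bigl(\partial B_\ell^{\on{SO}(2,2)(\bQ_p)^+}\bigr)=\bigsqcup_{\substack{m,n\geq0\\ m+n=\ell}}\bigl(\SLZ[2]a_p^m\SLZ[2]\bigr)\times\bigl(\SLZ[2]a_p^n\SLZ[2]\bigr).
\]
For the inclusion $\supseteq$, Lemma~\ref{prevstructure} places $\psi\bigl((\SLZ[2]a_p^m\SLZ[2])\times(\SLZ[2]a_p^n\SLZ[2])\bigr)$ inside $Ka_p^{m+n}b_p^{|m-n|}K$; since $m+n\geq|m-n|\geq0$, the element $a_p^{m+n}b_p^{|m-n|}$ lies in $A_+$ with $p$-adic norm $p^{m+n}=p^\ell$, so by bi-$K$-invariance of the norm the whole double coset sits in $\partial B_\ell$, and it lies in $\on{SO}(2,2)(\bQ_p)^+$ as it is in the image of $\psi$. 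For $\subseteq$, given $(g_1,g_2)$ in the left-hand set, write $g_i\in\SLZ[2]a_p^{m_i}\SLZ[2]$ via the Cartan decomposition of $\SLQ[2]$; then $\psi_{g_1,g_2}\in Ka_p^{m_1+m_2}b_p^{|m_1-m_2|}K$ by Lemma~\ref{prevstructure}, so $\|\psi_{g_1,g_2}\|_p=p^{m_1+m_2}$, which forces $m_1+m_2=\ell$. Disjointness of the union is the uniqueness in the $\SLQ[2]$-Cartan decomposition. Plugging this into the identity of the previous paragraph and using that $m_{\SLQ[2]\times\SLQ[2]}$ is a product measure gives
\[
m\bigl(\partial B_\ell^{\on{SO}(2,2)(\bQ_p)^+}\bigr)=\sum_{m=0}^{\ell}m_{\SLQ[2]}\!\bigl(\SLZ[2]a_p^m\SLZ[2]\bigr)\,m_{\SLQ[2]}\!\bigl(\SLZ[2]a_p^{\ell-m}\SLZ[2]\bigr).
\]

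Now I would substitute Proposition~\ref{pserretree}, which gives $m_{\SLQ[2]}\bigl(\SLZ[2]a_p^k\SLZ[2]\bigr)=(p+1)p^{2k-1}$ for $k\geq1$ and $=1$ for $k=0$. The two boundary terms $m\in\{0,\ell\}$ together contribute $2(p+1)p^{2\ell-1}=2(p+1)p\cdot p^{2\ell-2}$, while each of the $\ell-1$ interior terms $1\leq m\leq\ell-1$ contributes $(p+1)p^{2m-1}\cdot(p+1)p^{2(\ell-m)-1}=(p+1)^2p^{2\ell-2}$; summing yields $\bigl(2(p+1)p+(p+1)^2(\ell-1)\bigr)p^{2\ell-2}$, as claimed. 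For the two-sided bound I would use $B_\ell^{\on{SO}(2,2)(\bQ_p)^+}=\bigsqcup_{k=0}^{\ell}\partial B_k^{\on{SO}(2,2)(\bQ_p)^+}$ with $m(\partial B_0)=1$, so $m\bigl(B_\ell^{\on{SO}(2,2)(\bQ_p)^+}\bigr)=1+\sum_{k=1}^{\ell}\bigl(2(p+1)p+(p+1)^2(k-1)\bigr)p^{2k-2}$. Keeping only the $k=\ell$ summand bounds this below by $2(p+1)p^{2\ell-1}\geq p^{2\ell}$ (the case $\ell=0$ being trivial), while bounding each summand by $O\bigl(\ell\,p^{2k}\bigr)$ and summing the geometric series bounds it above by $O\bigl(\ell\,p^{2\ell}\bigr)$; since $\ell\ll_\varepsilon p^{\varepsilon\ell}$ uniformly for $p\geq2$ this is $\ll_\varepsilon p^{(2+\varepsilon)\ell}$. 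The only genuinely delicate point is in the middle paragraph: pinning down $\psi^{-1}(\partial B_\ell^{\on{SO}(2,2)(\bQ_p)^+})$ exactly (summing over all pairs $m+n=\ell$ in $\SLQ[2]\times\SLQ[2]$, rather than over double cosets in the plus group, is what makes each point be counted with the correct multiplicity), together with the bookkeeping ensuring that the degree of $\psi$ contributes no spurious factor of $2$ to the normalization.
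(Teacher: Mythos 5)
Your proof is correct and takes essentially the same route as the paper: transport the computation through $\psi$, use Lemma \ref{prevstructure} to identify $\partial B_{\ell}^{\on{SO}(2,2)(\bQ_p)^+}$ (equivalently its preimage) with the disjoint union over $m+n=\ell$ of products of $\SL_2$-double cosets, and sum the volumes from Proposition \ref{pserretree}; your normalization paragraph merely spells out the paper's remark that $\ker\psi\subset\SLZ[2]\times\SLZ[2]$, so no factor from the degree of $\psi$ appears. One slip of the pen: in your first paragraph ``$\on{SO}(2,2)(\bQ_{p})^{+}=\psi(\SLZ[2]\times\SLZ[2])$'' should read $\on{SO}(2,2)(\bZ_{p})^{+}$ (which is by definition the image of $\SLZ[2]\times\SLZ[2]$), and that is what you in fact use.
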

\begin{proof}
Let $m_{\on{SL}}$ be the Haar measure on $\on{SL}_{2}(\mathbb{Q}_{p})$, which we normalise so that $m_{\on{SL}}(\on{SL}_{2}(\mathbb{Z}_{p})) = 1$. Since the kernel of $\psi$ is contained in $\on{SL}_{2}(\mathbb{Z}_{p})\times \on{SL}_{2}(\mathbb{Z}_{p})$, a disjoint union of $\on{SL}_{2}(\mathbb{Z}_{p})\times \on{SL}_{2}(\mathbb{Z}_{p})$ cosets remain disjoint in the image. Thus by Lemma \ref{prevstructure}, $\partial B^{\on{SO}(2,2)(\bQ_{p})^{+}}_{\ell}$ is the image of
\begin{align*}
\bigcup_{m+n=\ell} & \SLZ[2]\times \SLZ[2] \bigl\{(a_p^m, a_p^n),(a_p^n,a_p^m)\bigr\}\SLZ[2]\times\SLZ[2]\\
= \bigsqcup_{m=0}^{\ell} & \SLZ[2]\times \SLZ[2] (a_p^m, a_p^{(\ell-m)})\SLZ[2]\times\SLZ[2].
\end{align*}
Using the coset decomposition for the summands, we see by Proposition \ref{pserretree} that for $\ell \geq 2$,
\begin{align*}
m\left(\partial B^{\on{SO}(2,2)(\bQ_{p})^{+}}_{\ell}\right) &=
(p+1)p^{2\ell-1}+\sum_{m=1}^{\ell-1}(p+1)^2p^{2m-1}p^{2(\ell-m)-1}+(p+1)p^{2\ell-1}\\
&=\bigl((p+1)p+(p+1)^2(\ell-1)+(p+1)p\bigr)p^{2\ell-2}\\
&\leq\bigl(4p^2+4p^2(\ell-1))p^{2\ell-2}=4\ell p^{2\ell}\ll p^{(2+\varepsilon)\ell}
\end{align*}
since~$p+1\leq 2p$ and~$\ell\ll_\varepsilon 2^{\ell\varepsilon}\leq p^{\ell\varepsilon}$.
The lemma follows from this easily.
\end{proof}


\section{Notation and Tools from Homogeneous Dynamics}
\label{sec-notation}

\subsection{Injectivity radius and small neighborhoods}
\label{smallneighborhood}

Depending on the point $x\in\Gamma\backslash\bG(\bQ_S)$ the map $g\mapsto g\acts x$ is injective on the ball $B_{r}^{d_\infty }\times \bG(\bZ_p)$ for sufficiently small $r$. The supremum over such $r$ is called \textit{injectivity radius} at $x$ and can be bounded from below in terms of the height, see e.g.\ \cite[Equation (7.3)]{EMMV})
\begin{lemma}[Relationship of injectivity radius and height]
\label{injrad}
There exists $\consta\label{expinht:injrad}>0$ such that for all $x \in\Gamma\backslash \bG(\bQ_S)$ the map $g\mapsto g\acts x$ is injective on $$\left\{g=(g_{\infty},g_{p})\in\bG(\bQ_S): d_\infty(g_\infty,e)\ll \on{ht}(x)^{-\ref{expinht:injrad}}, g_p\in\bG(\bZ_p)\right\}.$$
\end{lemma}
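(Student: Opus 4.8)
The plan is to prove the contrapositive in the following sharp form: there is an absolute constant $c>0$ such that for every $\gamma\in\Gamma\setminus\{e\}$ and every representative $g_0$ of $x$, the conjugate $\eta:=g_0^{-1}\gamma g_0=(\eta_\infty,\eta_p)$ cannot satisfy both $\eta_p\in\bG(\bZ_p)$ and $d_\infty(\eta_\infty,e)\le c\,\on{ht}(x)^{-\ref{expinht:injrad}}$. This suffices: if $g,h$ lie in the set of the statement and $g\acts x=h\acts x$, then $g_0(g^{-1}h)g_0^{-1}=\gamma\in\Gamma$, the $p$-component $g_p^{-1}h_p$ lies in $\bG(\bZ_p)$, and by left-invariance of $d_\infty$ the real component $g_\infty^{-1}h_\infty$ lies within $2c\,\on{ht}(x)^{-\ref{expinht:injrad}}$ of $e$; hence $\gamma=e$ and $g=h$.

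For the principal case $\on{Ad}(\gamma)\ne\on{Id}$ I would use a lattice estimate. Definition~\ref{height} says precisely that the lattice $\Lambda:=\on{Ad}(g_0^{-1})\gog_{\bZ[\tfrac1p]}\subset\gog(\bQ_S)$ has no nonzero vector of $\|\cdot\|_S$-norm below $\on{ht}(x)^{-1}$; moreover $\Lambda$ is unimodular since $\bG$ is semisimple. By the $S$-adic form of Minkowski's theorems (reduction theory for $\bQ_S$-lattices) the successive minima of $\Lambda$ multiply to an absolute constant, so $\Lambda$ has a $\bZ[\tfrac1p]$-basis $v_i=\on{Ad}(g_0^{-1})w_i$ with $\|v_i\|_S\ll\on{ht}(x)^{\,n-1}$, where $n=\dim\bG$ and $\{w_i\}$ is a $\bZ[\tfrac1p]$-basis of the $\on{Ad}(\Gamma)$-invariant lattice $\gog_{\bZ[\tfrac1p]}$. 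Since $\on{Ad}(\gamma)$ is not the identity it moves some $w_i$, and from $g_0^{-1}\gamma=\eta g_0^{-1}$ one obtains the identity
\[
\on{Ad}(g_0^{-1})\bigl(\on{Ad}(\gamma)w_i-w_i\bigr)=\bigl(\on{Ad}(\eta)-\on{Id}\bigr)v_i .
\]
The left-hand side is a nonzero element of $\Lambda$, hence has $\|\cdot\|_S$-norm $\ge\on{ht}(x)^{-1}$. On the right-hand side, at the place $p$ the operator $\on{Ad}(\eta_p)$ preserves $\gog_{\bZ_p}$ because $\eta_p\in\bG(\bZ_p)$, hence is a $\|\cdot\|_p$-isometry, so by the ultrametric inequality the $p$-component of $(\on{Ad}(\eta)-\on{Id})v_i$ has $p$-norm $\le\|v_{i,p}\|_p$; at the real place, Lipschitz continuity of $\on{Ad}$ near $e$ bounds the $\infty$-component by $\ll d_\infty(\eta_\infty,e)\,\|v_{i,\infty}\|_\infty$. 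Multiplying the two places gives $\|(\on{Ad}(\eta)-\on{Id})v_i\|_S\ll d_\infty(\eta_\infty,e)\,\|v_i\|_S\ll d_\infty(\eta_\infty,e)\,\on{ht}(x)^{\,n-1}$. Comparison with the lower bound yields $\on{ht}(x)^{-1}\ll d_\infty(\eta_\infty,e)\,\on{ht}(x)^{\,n-1}$, which, using $\on{ht}(x)\gg1$, is impossible once $\ref{expinht:injrad}$ is chosen (say) equal to $n+1$ and $c$ is taken small enough.

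In the remaining case $\on{Ad}(\gamma)=\on{Id}$ with $\gamma\ne e$, the element $\gamma$ lies in the finite group $Z(\bG)(\bZ[\tfrac1p])$ of $\bZ[\tfrac1p]$-points of the centre of $\bG$; since it commutes with $g_0$ we have $\eta=\gamma$, so $\eta_\infty=\gamma$ is a nontrivial rational matrix whose distance from $e$ is bounded below by $\min\{d_\infty(z,e):z\in Z(\bG)(\bZ[\tfrac1p])\setminus\{e\}\}>0$. Shrinking $c$ below this threshold rules this case out as well, and taking $c$ to be the minimum of the two thresholds completes the argument.

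The step I expect to be the main obstacle is the $S$-adic reduction-theory input: passing from ``the shortest vector of $\on{Ad}(g_0^{-1})\gog_{\bZ[\tfrac1p]}$ is $\ge\on{ht}(x)^{-1}$'' to ``there is a basis polynomially bounded in $\on{ht}(x)$ with respect to the mixed norm $\|\cdot\|_S$''. This is exactly the content underlying \cite[Eq.~(7.3)]{EMMV}, so one may alternatively just cite that reference; the commutator identity, the place-by-place estimate, and the central-element case are then routine.
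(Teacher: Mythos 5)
The paper itself gives no argument for this lemma: it is stated with the pointer ``see e.g.\ \cite[Eq.~(7.3)]{EMMV}'', so there is no internal proof to compare against. Your argument is, in substance, the standard one underlying that reference, and it is correct. The reduction of injectivity to the statement that no $\gamma\in\Gamma\setminus\{e\}$ has $\eta=g_0^{-1}\gamma g_0$ with $\eta_p\in\bG(\bZ_p)$ and $\eta_\infty$ very close to $e$ is right (left-invariance of $d_\infty$ and the group property of $\bG(\bZ_p)$ give exactly what you claim); the identity $\on{Ad}(g_0^{-1})\bigl(\on{Ad}(\gamma)w_i-w_i\bigr)=\bigl(\on{Ad}(\eta)-\on{Id}\bigr)v_i$ produces a nonzero element of the lattice $\on{Ad}(g_0^{-1})\gog_{\bZ[1/p]}$, whose $\|\cdot\|_S$-norm is at least $\on{ht}(x)^{-1}$ by Definition~\ref{height} (and a nonzero lattice vector has both components nonzero, so the product norm is genuinely positive); the place-by-place bound is fine, since $\|\eta_p\|_p=\|\eta_p^{-1}\|_p=1$ makes $\on{Ad}(\eta_p)$ an isometry for the matrix max-norm, while the archimedean factor is Lipschitz near $e$; and the kernel-of-$\on{Ad}$ case is correctly handled through the finite centre, whose nontrivial $\bZ[\tfrac1p]$-points are at a definite $d_\infty$-distance from $e$.

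The one place where the work actually lies is the step you flag yourself: producing a $\bZ[\tfrac1p]$-basis of $\on{Ad}(g_0^{-1})\gog_{\bZ[1/p]}$ with $\|v_i\|_S\ll\on{ht}(x)^{\dim\bG-1}$, \emph{with constants independent of $p$} (recall the paper allows implicit constants to depend on $\bG$ but not on the chosen prime, and the lattice $\gog_{\bZ[1/p]}$ itself varies with $p$). This does hold: the covolume of $\gog_{\bZ[1/p]}$ in $\gog(\bQ_S)$ is $p$-independent because $[0,1)\times\bZ_p$ has measure one, the $S$-adic (adelic) form of Minkowski's second theorem has constants depending only on the dimension, and passing from vectors attaining the minima to a basis over the PID $\bZ[\tfrac1p]$ costs only dimensional constants since any $c\in\bQ_S$ can be translated by $\bZ[\tfrac1p]$ into $[0,1)\times\bZ_p$. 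But spelling this out is precisely the content one otherwise imports from \cite{EMMV}, so either write it out or cite it, as you propose; you also use $\on{ht}(x)\gg1$ (needed to absorb your choice of exponent $n+1$, or simply take the exponent $n$), which follows from the first Minkowski theorem for the same lattice. These are matters of tracking constants, not gaps in the logic.
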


In particular it follows that every point~$x\in X_\text{cpt}$ has injectivity radius at least $p^{-\ref{exp:uniforminjrad}}$ for some constant $\consta\label{exp:uniforminjrad}>0$.
For the following we fix the neighborhood $$\Omega_S=\Omega_\infty\times \bG_{\on{joint}}(\bZ_p)^+$$ in $\bG_{\on{joint}}(\bQ_S)^+$ where $\Omega_\infty$ is an open set such that $\Omega_S\ni g\mapsto xg$ is injective for all $x\in X_\text{cpt}$. Furthermore, we also want to assume that
 $\Omega_\infty\Omega_\infty^{-1}\Omega_\infty\Omega_\infty^{-1}\times \bG_{\on{joint}}(\bZ_p)^+$ is injective in that sense.
By the above this holds if $$\Omega_\infty= \{g\in G_{{\on{joint}},\infty}: d_\infty(g,e)\leq p^{-\ref{exp:uniforminjrad}}/4\}.$$ 
Assuming that the Riemannian metric on~$\mathfrak{sl}_{d-1}(\bR)$ is invariant
under the adjoint action of~$\SO[d-1](\bR)$ we obtain that $\Omega_\infty$ 
is invariant under conjugation 
by all elements of the compact subgroup $\SO[d-1](\bR)$. 
We note that~$\Omega_\infty$ depends on~$p$ and so also on~$D$.

\subsection{Normalization of measure on ambient space}
\label{ambientspace}
The natural measures $m_{\mathcal{Y}^+_i}$ are taken to be probability measures and the Haar measures $m_{G_{i,S}^+}$ are normalized to be compatible with respect to the projections $G_{i,S}^+\to\mathcal{Y}^+_i$. 
The analoguous normalization of $m_{G_{{\on{joint}},S}^+}$ gives rise to $m_{G_{{\on{joint}},S}^+}=m_{G_{1,S}^+}\times m_{G_{2,S}}$. Since $\mathcal{Y}_2=\Gamma G_{2,\infty}\times\bG_2(\bZ_p)$, the set $F\times\bG_2(\bZ_p)$ is a fundamental domain for $\mathcal{Y}_2$ if $F$ is a fundamental domain for $\bG_2(\bZ)\backslash\bG_2(\bR)$. Indeed, any $(g_\infty,g_p)\in G_{2,S}$ decomposes into $$(g_\infty,g_p)=(g_\infty,\gamma h)=\gamma(\gamma^{-1}g_\infty,h)=\gamma({\gamma'}f,h)=\gamma{\gamma'}(f,{h'})$$ where $h,{h'}\in \bG_2(\bZ_p)$, $f\in F$, $\gamma\in\bG_2(\bZ[\tfrac1p])$ and ${\gamma'}\in\bG_2(\bZ)$. Thus $m_{G_{2,S}}$ is normalized such that $m_{G_{2,S}}(F\times\bG_2(\bZ_p))=1$ and $m_{G_{2,\infty}}(F)=m_{G_{2,p}}(\bG_2(\bZ_p))=1$ would give natural choices on how do that.

\subsection{Definition of Volume}
\label{orbitmeasures}
Next we deal with orbits of closed unimodular subgroups $H<G_{{\on{joint}},S}^+$ to which we attach a \textit{volume} as done in \cite[Section 2.3]{EMMV}. Consider a finite volume orbit $\Gamma g_1Hg_2$ in $\mathcal{Y}^+_{\on{joint}}=\Gamma\backslash G^+_{\on{joint},S}$.
The set $\Gamma g_1Hg_2$ is an orbit of the group $H^{g_2}$ and we may rewrite this orbit as $\Gamma g_1g_2H^{g_2}$. It suffices therefore consider the case $\Gamma g H$, where we call~$H$ the acting subgroup. For $g \in {G}^{+}_{\on{joint},S}$, the orbit $\Gamma gH$ is naturally identified with
$$X_g=\on{Stab}_{H}(\Gamma g)\backslash H=(H\cap \Gamma^{g})\backslash H$$ and is equipped with an $H$-invariant probability measure $m_{X_g}$. We may assume that the \textit{orbit measure} $m_{\Gamma gH}$ of the orbit $\Gamma gH$ is the push-forward of $m_{X_g}$ under the isomorphism $X_g\ni \on{Stab}_{H}(\Gamma g)h\mapsto \Gamma gh$. The Haar measure $m_{H}$ of $H$ is now normalized to be compatible with $m_{X_g}$ under the natural projection. We define the \textit{volume} of $\Gamma gH$ to be
$$
\on{vol}(\Gamma gH)=(m_{H}(\Theta))^{-1},
$$
where~$\Theta=\Theta_\infty\times \bG_{{\on{joint}}}(\bZ_p)$
and $\Theta_\infty\subset G_{{\on{joint}},\infty}^+$ is a fixed 
precompact open neighbourhood 
of the identity element. Clearly, the volume notion $V=V_\Theta$ depends on the 
choice of $\Theta_\infty$ but for any other fixed choice $\Theta'_\infty$ we 
have $V_\Theta\ll V_{\Theta'}\ll V_\Theta$ by precompactness 
(see \cite[Section 2.3]{EMMV}). We will assume that $\Theta_\infty$ 
is invariant under conjugation by $H_\infty$ and such that~$\Theta$
intersects trivially with $\Gamma$ and this is still true for $\Theta_{2}=\Theta\Theta^{-1}$. 
We note that the volume of the ambient space $\mathcal{Y}_2$
is independent of~$p$ (since~$\on{SL}_{d-1}$ is simply connected), 
and that the volume of $\mathcal{Y}_1^+$ (and of $\mathcal{Y}_{\on{joint}}^+$) are bounded from above and below by some constants independent of $p$ since the corresponding adelic orbit $\bG_1(\bQ)\backslash\bG_1(\bA)$ is compact and so is a finite union of $\bG_1(\bR\times\prod_{p'}\bZ_{p'})$-orbits. 

In the context of the orbit~$\Gamma\bH_v(\bQ_S)(k_v,e)$ and~$\Gamma {L}_{v,S}^{+}\left(k_{v},e,\theta_{v},e\right)$
the acting group is $\SO[d-1](\bR)\times\bH_v(\bQ_p)$ resp.\
a diagonally embedded copy of~$\SO[d-1](\bR)\times\bH_v(\bQ_p)$. 
Since $\Theta_\infty\cap\SO[d-1](\bR)$ can be covered by at most $\ll p^{\on{dim}(\SO[d-1])\ref{exp:uniforminjrad}}$ many translates of $\Omega_\infty\cap\SO[d-1](\bR)$ (and contains at least $\gg p^{\on{dim}(\SO[d-1])\ref{exp:uniforminjrad}}$ many disjoint translates), $V_{\Omega}\asymp p^{\ref{exp:massofomega}}V_{\Theta}$ where $p^{-\consta\label{exp:massofomega}}=p^{-\on{dim}(\SO[d-1])\ref{exp:uniforminjrad}}$ is (up to a scalar multiple) the Haar measure of $\Omega_\infty\cap\SO[d-1](\bR)$ with respect to the Haar measure on~$\SO[d-1](\bR)$.

Let us note the following lemma which will be helpful.

\begin{lemma}\label{comparingvolumes}
 Let~$\Gamma'<\Gamma$ be a finite index subgroup of a lattice $\Gamma<G_S$. Let $H<G_S$ be a subgroup 
that has a finite volume orbit~$\Gamma g H$ and suppose that $H'\lhd H$
has finite index. Then the ratio of the volume of $\Gamma gH$ and the volume of $\Gamma'gH'$
is bounded from above and below by some constants that depend only on the set $\Theta$ used in the definition of the volumes, the index $[\Gamma:\Gamma']$, and $[H:H']$.
If $\Theta$ is invariant under conjugation by a subgroup $K\subset G_S$, then the volume of $\Gamma g H$ equals the volume of $\Gamma g H k=\Gamma g k H^k$.
\end{lemma}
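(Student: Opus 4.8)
The plan is to strip $\on{vol}(\Gamma g H)$ down to the covolume of the stabiliser lattice times a factor measuring $\Theta$ inside $H$, and then to compare the two sides factor by factor. Write $\Lambda=\on{Stab}_H(\Gamma g)=H\cap g^{-1}\Gamma g$; since $\Gamma g H$ has finite volume, $\Lambda$ is a lattice in $H$. Fix any reference Haar measure $\mu_H$ on $H$ and let $\on{covol}(\Lambda,H)$ be the $\mu_H$-mass of a fundamental domain for $\Lambda$ in $H$. Unwinding the definition of the orbit measure, the Haar measure $m_H$ normalised to be compatible with the probability measure on $X_g=\Lambda\backslash H$ is exactly $m_H=\mu_H/\on{covol}(\Lambda,H)$, so that
\[
\on{vol}(\Gamma g H)=\frac{\on{covol}(\Lambda,H)}{\mu_H(\Theta\cap H)},
\]
an expression independent of the choice of $\mu_H$. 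Since $H'\lhd H$ has finite index it is open in $H$ (which is the case in all applications, where $H'$ is given by an explicit open subgroup), so $\mu_{H'}:=\mu_H|_{H'}$ is a Haar measure on $H'$; with $\Lambda'=\on{Stab}_{H'}(\Gamma' g)=H'\cap g^{-1}\Gamma' g$ the same computation gives $\on{vol}(\Gamma' g H')=\on{covol}(\Lambda',H')/\mu_H(\Theta\cap H')$, covolumes in $H'$ again taken with $\mu_H$. In particular $\Lambda'$ is a lattice in $H'$ (it has finite index in the lattice $\Lambda\cap H'<H'$), so $\Gamma' g H'$ has finite volume.

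Next I would estimate the two ratios $\on{covol}(\Lambda',H')/\on{covol}(\Lambda,H)$ and $\mu_H(\Theta\cap H')/\mu_H(\Theta\cap H)$. For the covolumes: $\Lambda\cap H'$ is a lattice in $H'$ with $[\Lambda:\Lambda\cap H']=[\Lambda H':H']\leq[H:H']$, and writing $H$ as a disjoint union of right $\Lambda H'$-cosets (here normality of $H'$ makes $\Lambda H'$ a subgroup) gives the covering identity $\on{covol}(\Lambda,H)=[H:\Lambda H']\cdot\on{covol}(\Lambda\cap H',H')$, hence $\on{covol}(\Lambda\cap H',H')=\tfrac{[\Lambda:\Lambda\cap H']}{[H:H']}\on{covol}(\Lambda,H)$. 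Since $\Lambda\cap H'\subseteq g^{-1}\Gamma g$, the group $\Lambda'=(\Lambda\cap H')\cap g^{-1}\Gamma' g$ is a subgroup of $\Lambda\cap H'$ of index at most $[g^{-1}\Gamma g:g^{-1}\Gamma' g]=[\Gamma:\Gamma']$, and therefore
\[
\frac{1}{[H:H']}\;\leq\;\frac{\on{covol}(\Lambda',H')}{\on{covol}(\Lambda,H)}\;\leq\;[\Gamma:\Gamma'].
\]

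For the $\Theta$-ratio, $\Theta\cap H'\subseteq\Theta\cap H$ gives an upper bound of $1$, while the lower bound uses the product structure $\Theta=\Theta_\infty\times\bG_{\on{joint}}(\bZ_p)$: in the situation at hand $H'$ and $H$ have the same archimedean factor, so $\mu_H(\Theta\cap H)/\mu_H(\Theta\cap H')$ equals the index of the compact open subgroup $\bG_{\on{joint}}(\bZ_p)\cap H'$ in $\bG_{\on{joint}}(\bZ_p)\cap H$, which is at most $[H:H']$. Combining the two displays proves the asserted two-sided bound with constants depending only on $\Theta$, $[\Gamma:\Gamma']$ and $[H:H']$. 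This $\Theta$-comparison is the one point that needs a little care: one must bound $\mu_H(\Theta\cap H')$ below by $\mu_H(\Theta\cap H)$ up to a loss controlled by $[H:H']$ (and $\Theta$), not by the group $H$ itself — and it is precisely the product shape of $\Theta$ together with $H^{+}_\infty=H_\infty$ that makes this possible; everything else is routine bookkeeping with Haar measures and subgroup indices.

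For the final assertion, note $\Gamma g H k=\Gamma(gk)H^{k}$, so the acting group is $H^{k}=k^{-1}Hk$ and the stabiliser is $\on{Stab}_{H^{k}}(\Gamma gk)=k^{-1}\Lambda k=\Lambda^{k}$. The conjugation map $c_k\colon H\to H^{k}$, $h\mapsto k^{-1}hk$, is a topological isomorphism carrying $\Lambda$ to $\Lambda^{k}$ and descending to a measure-preserving isomorphism $\Lambda\backslash H\to\Lambda^{k}\backslash H^{k}$ of the probability spaces, whence it carries the normalised Haar measure $m_H$ to the normalised Haar measure $m_{H^{k}}=(c_k)_*m_H$. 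Therefore
\[
\on{vol}(\Gamma gk H^{k})=m_{H^{k}}(\Theta\cap H^{k})^{-1}=m_H\!\left(k(\Theta\cap H^{k})k^{-1}\right)^{-1}=m_H\!\left((k\Theta k^{-1})\cap H\right)^{-1},
\]
and if $k\Theta k^{-1}=\Theta$ this equals $m_H(\Theta\cap H)^{-1}=\on{vol}(\Gamma g H)$, as claimed.
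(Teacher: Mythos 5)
Most of your bookkeeping is correct and is a legitimately different organisation from the paper's: you express $\on{vol}(\Gamma gH)=\on{covol}(\Lambda,H)/\mu_H(\Theta\cap H)$ with respect to a fixed reference Haar measure and compare the two volumes factor by factor, whereas the paper works directly with the normalised Haar measures, decomposes $\Gamma gH$ into $H'$-orbits, and compares $m_H(\Theta)$ with $m_{H'}(\Theta)$. Your covolume comparison (the covering identity $\on{covol}(\Lambda,H)=[H:\Lambda H']\on{covol}(\Lambda\cap H',H')$, using normality of $H'$ and unimodularity, together with $[\Lambda:\Lambda\cap H']\le[H:H']$ and $[\Lambda\cap H':\Lambda']\le[\Gamma:\Gamma']$) is fine, and your proof of the conjugation statement is the same as the paper's.

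The genuine gap is the lower bound on $\mu_H(\Theta\cap H')$ in terms of $\mu_H(\Theta\cap H)$. The lemma is stated for an arbitrary closed unimodular $H<G_S$ with a finite-volume orbit and an arbitrary finite-index normal $H'\lhd H$, with constants depending only on $\Theta$, $[\Gamma:\Gamma']$ and $[H:H']$; but at exactly this point you import extra hypotheses ("$H'$ and $H$ have the same archimedean factor", $\Theta\cap H$ splits as a product and the ratio equals an index of compact open subgroups) that are not in the statement. As written, this does not prove the lemma, only its instances in the paper's applications, and even there the product decomposition of $\Theta\cap H$ is asserted rather than checked. The general claim is true, but it needs the paper's translation trick, which is the idea your argument is missing: write $\Theta\cap H$ as the union of the at most $[H:H']$ slices $\Theta\cap h_iH'$; whenever such a slice is nonempty, pick $h_ih'$ in it and translate, so that $(h_ih')^{-1}(\Theta\cap h_iH')\subseteq(\Theta^{-1}\Theta)\cap H'$; this gives $\mu_H(\Theta\cap H)\le[H:H']\,\mu_H(\Theta^{-1}\Theta\cap H')$, and one then uses that the volume notions defined with $\Theta$ and with $\Theta^{-1}\Theta$ are comparable with constants depending only on $\Theta$ (precompactness/covering argument). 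Note that a direct covering of $\Theta$ by translates cannot replace this step, because the translating elements need to lie in $H'$, which is exactly what choosing them inside the slices $\Theta\cap h_iH'$ accomplishes after the quotient by one element of the slice. With that replacement your proof becomes a complete and correct proof of the lemma as stated.
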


\begin{proof}
	Let us first compare the volumes of $\Gamma g H$ and $\Gamma gH'$.
	For this notice that the Haar measure of $H'$ can be obtained by restricting the Haar measure of $H$ to $H'$.
	More precisely, we normalize the Haar measure $m_H$ on $H$ to be compatible with the orbit measure on $\Gamma gH$
	and the Haar measure $m_{H'}$ on $H'$ with the probability orbit measure on $\Gamma gH'$.
	If now $\Gamma gH=\Gamma gH'\sqcup\Gamma g h_2H'\sqcup\cdots\sqcup \Gamma g h_\ell H'$
	is the decomposition of the $H$-orbit into disjoint $H'$-orbits, then $\ell\leq[H:H']$
	and multiplying $\Gamma gH'$ on the right by $h_i$
	gives $\Gamma gh_iH'$ since $H'\lhd H$. This already gives $m_{H'}=\ell m_H|_{H'}$ and
	\[
	 m_{H'}(\Theta)=\ell m_H(\Theta\cap H')\leq [H:H']m_H(\Theta),
	\]
	which gives the first inequality between the volumes by taking inverses. 
	
	For the converse we let $h_1=e,h_2,\ldots,h_{[H:H']}\in H$
	be a complete set of representatives of the equivalence modulo $H'$ and obtain
	\[
	 \Theta\cap H=\Theta\cap (H'\sqcup h_2H'\sqcup\ldots\sqcup h_{[H:H']}H').
	\]
	For a given $h_i$ the intersection $\Theta\cap (h_iH')$ could of course be empty. However, 
	if it is not empty then there exists some $h'\in H'$ with $h_ih'\in\Theta\cap (h_iH')$
	and so also 
	\[
	 m_H(\Theta\cap(h_iH'))=m_H((h_ih')^{-1}(\Theta\cap(h_iH')))\leq m_H((\Theta^{-1}\Theta)\cap H').
	\]
	This now gives
	\[
	 m_H(\Theta)\leq [H:H']m_{H'}(\Theta^{-1}\Theta).
	\]
	However, as the ratio of two notions of volume defined using $\Theta$ resp. $\Theta^{-1}\Theta$
	can be bounded from above and below by constants the second inequality between the volumes follows.
	
	 Switching from $\Gamma$ to $\Gamma'$ we note that $[\Gamma^g\cap H:\Gamma'{}^g\cap H]\leq[\Gamma:\Gamma']$
	is the factor by which the normalization of the Haar measure on~$H$ changes if we study 
	the orbit~$\Gamma 'gH$ instead of $\Gamma gH$. Together with the above, this gives the first claim in the lemma.

	Assume now that $\Theta$ is invariant under $K$ conjugation then $\Theta\cap H^k=(\Theta\cap H)^k$. If $m_H$ is the compatible Haar measure for $\Gamma gH$ then $k_*m_H$ is the compatible Haar measure of $H^k$ for $\Gamma gkH^k$ where $k_*$ denotes the push forward under the conjugation map. Measuring the volume gives
	\[
	m_{H^k}(\Theta)=k_*m_{H}(\Theta)=m_{H}(k\Theta k^{-1})=m_H(\Theta).
	\]
\end{proof}

\subsection{Lie algebras - First encounter}
\label{liefacts}
We recall notation and facts presented in \cite[Section 6.5]{EMMV}. We let $K=\bG_{\on{joint}}(\bZ_p)^+$ and $$K[m]=\{k\in K: \|k-e\|_p\leq p^{-m}\} \mbox{ and }\Omega[m]=\Omega_\infty\times K[m].$$
Denote by $\goh_1=\lie{\HvS[p]}$ respectively $\goh_2=\lie{\HVS[p]}$ the Lie algebras obtained when projecting to the first respectively second factor. By semi-simplicity there exists invariant complements $\gor_1$ and $\gor_2$ of $\goh_1$ respectively $\goh_2$ such that $$\gog_1=\goh_1\oplus \gor_1\mbox{ and }\gog_2=\goh_2\oplus \gor_2.$$ We say that $\gor_i$ is \textit{undistorted} if $$\gog_i[m]=\goh_i[m]\oplus \gor_i[m]$$ for all $m$. Here, $V[m]=\{X\in V(\bQ_p):\|X\|_p\leq p^{-m}\}$ for any subspace $V\subset\gog$. We will be able to find undistorted complements in Section \ref{liealgebras} using that $\HvS[p]$ and $\HVS[p]$ are~$\bZ_p$-conjugates to the model groups.

The exponential map is an isometry on $\gog[1]$ if $p>2$, and in fact maps sub algebras of
the form~$\gog[m]$ to subgroups. In fact,
$$
K[m]=\exp{\gog[m]}\text{ for all } m\geq 1.
$$ 
This also implies that $\exp(\cdot)$ is measure preserving up to a scalar. From the implicit function theorem one also obtains the following decomposition lemma.
\begin{lemma}[Decomposition, {\cite[Lemma 6.5]{EMMV}}]
\label{decomposition} We have
$$(\HvS[p]^+\cap K[m])\exp{\gor_1[m]}\times (\HVS[p]^+\cap K[m])\exp{\gor_2[m]}=K[m]\mbox{ for all }m>0.$$
\end{lemma}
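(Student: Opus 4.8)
The plan is to reduce the identity to a single factor and then obtain it from the $p$-adic implicit function theorem, realised concretely by a Newton iteration on the multiplication map.

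\emph{Reduction to one factor.} Since $\bG_{\on{joint}}=\bG_1\times\bG_2$ we have $\gog[m]=\gog_1[m]\oplus\gog_2[m]$, and as $\exp$ respects the direct product, $K[m]=\exp(\gog_1[m])\times\exp(\gog_2[m])=:K_1[m]\times K_2[m]$. Hence it suffices to prove $\exp(\goh_i[m])\exp(\gor_i[m])=K_i[m]$ for $i=1,2$. First I would note $\exp(\goh_i[m])=\HvS[p]^{+}\cap K_i[m]$ when $i=1$, resp.\ $\HVS[p]^{+}\cap K_i[m]$ when $i=2$: by Section~\ref{liealgebras} the subalgebra $\goh_i$ is a $\bZ_p$-conjugate of a model Lie algebra, hence $\bZ_p$-integral, so for $m\geq1$ the set $\exp(\goh_i[m])$ is a subgroup equal to $\HvS[p]\cap K_i[m]$ (resp.\ $\HVS[p]\cap K_i[m]$); since the index of $\HvS[p]^{+}$ in $\HvS[p]$ (resp.\ of $\HVS[p]^{+}$ in $\HVS[p]$) is $4$, which is prime to $p$, while $K_i[m]$ is pro-$p$, the intersection already lies in the plus-subgroup. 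The inclusion $\exp(\goh_i[m])\exp(\gor_i[m])\subseteq K_i[m]$ is then clear, so only surjectivity remains: dropping the index, every $k=\exp(Z)$ with $Z\in\gog[m]$ must be written $\exp(X)\exp(Y)$ with $X\in\goh[m]$ and $Y\in\gor[m]$.

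\emph{The Newton iteration.} For $p>2$ the Baker--Campbell--Hausdorff series converges on $\gog[1]$ and gives, for $X,Y\in\gog[a]$, the identity $\log(\exp X\exp Y)=X+Y+B(X,Y)$ with $B(X,Y)\in\gog[2a]$ (every term beyond $X+Y$ is an iterated bracket containing both $X$ and $Y$, and $[\gog[a],\gog[b]]\subseteq\gog[a+b]$); moreover $B(X',Y')-B(X,Y)\in\gog[a+b]$ whenever $X',Y'$ differ from $X,Y$ by elements of $\gog[b]$ with $b\geq a$. Using undistortedness $\gog[j]=\goh[j]\oplus\gor[j]$ at every level, write $Z=X^{(0)}+Y^{(0)}$ with $X^{(0)}\in\goh[m]$, $Y^{(0)}\in\gor[m]$, so that $\log(\exp X^{(0)}\exp Y^{(0)})=Z+W_1$ with $W_1\in\gog[2m]$. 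Inductively, given $\log(\exp X^{(j)}\exp Y^{(j)})=Z+W_{j+1}$ with $W_{j+1}\in\gog[(j+2)m]$, split $W_{j+1}=\Delta X_{j+1}+\Delta Y_{j+1}$ in $\goh[(j+2)m]\oplus\gor[(j+2)m]$, set $X^{(j+1)}=X^{(j)}-\Delta X_{j+1}$, $Y^{(j+1)}=Y^{(j)}-\Delta Y_{j+1}$, and regroup the BCH expansion of $\log(\exp X^{(j+1)}\exp Y^{(j+1)})$ (using $X^{(j+1)}+Y^{(j+1)}=X^{(j)}+Y^{(j)}-W_{j+1}$ and the difference estimate for $B$) to find $W_{j+2}\in\gog[(j+3)m]$. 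The corrections lie in $\goh[(j+2)m]$ and $\gor[(j+2)m]$, so $X^{(j)}\to X\in\goh[m]$ and $Y^{(j)}\to Y\in\gor[m]$ with $\log(\exp X\exp Y)=Z$, i.e.\ $k=\exp(X)\exp(Y)$. This is precisely the contraction underlying the inverse function theorem for the analytic map $(X,Y)\mapsto\log(\exp X\exp Y)$ on $\goh[m]\times\gor[m]$, whose differential at the origin is the identity onto $\goh[m]\oplus\gor[m]=\gog[m]$.

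\emph{Main obstacle.} There is no deep point; the only care needed is the bookkeeping guaranteeing that each correction step gains the full factor $p^{-m}$, which rests solely on $[\gog[a],\gog[b]]\subseteq\gog[a+b]$ (submultiplicativity of $\|\cdot\|_p$ on $\Mat_d$) and on every BCH term past $X+Y$ being at least bilinear. Everything else is already available: the undistorted complements $\gor_i$ from Section~\ref{liealgebras}, the identity $K[m]=\exp(\gog[m])$ for $m\geq1$, the good behaviour of $\exp$ and of BCH for $p>2$, and the coprimality of the index $4$ with $p$.
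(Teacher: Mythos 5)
Your argument is correct and follows essentially the route the paper indicates: the paper does not prove this lemma itself but quotes it from \cite[Lemma 6.5]{EMMV} as an instance of the $p$-adic implicit function theorem, and your reduction to the two factors followed by a Newton/Hensel successive approximation, splitting the error term at each level via the undistorted complement $\gog_i[m]=\goh_i[m]\oplus\gor_i[m]$, is the same mechanism the paper spells out for the related Lemma \ref{hensel} (your identification of $\HvS[p]^{+}\cap K[m]$ with $\exp(\goh_1[m])$ via $\bZ_p$-conjugacy to a model group and the index-$4$-coprime-to-$p$ observation is also fine). The one place to tighten is the justification of the two BCH estimates ($B(X,Y)\in\gog[a+b]$ and the difference bound), since the Campbell--Hausdorff coefficients are rationals rather than $\bZ_p$-integers; both bounds are nonetheless valid for odd (and in particular for our large) $p$ and can be verified directly from the matrix series for $\log(\exp X\exp Y)$, so the iteration and hence your proof go through.
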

See Lemma \ref{hensel} for a related statement.

We let $H<G_S=G_{i,S}^+$ denote the acting group $\HvS^{+,k_v}$ resp.~ $\HVS^{\oT}$ for the orbit $\Gamma k_v\HvS^+$ respectively $\Gamma \oT\HVS$ in $\Gamma G_S$ and note that $H_\infty=\on{SO}_{d-1}$. We cite the following lemma, which can easily be deduced from the property that the exponential map is measure preserving.
\begin{lemma}[Adjustment Lemma {\cite[Lemma 6.6]{EMMV}}]
\label{adjustmentlemma} Given two subsets $A_1,A_2\subset K[1]\cap H_p$ of relative measure $>\tfrac12$ and $g\in K[m]$, there exist $\alpha_i\in A_i$ so that $\alpha_1^{-1}g\alpha_2=\exp{r}$ for some $r\in\gor[m]$. 
\end{lemma}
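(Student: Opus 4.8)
The plan is to strip the $H$-part off $g$ by the decomposition lemma and then reduce the whole statement to the standard fact that two subsets $A_1,A_2$ of a compact group whose normalized Haar measures sum to more than $1$ have $A_1A_2^{-1}$ equal to the whole group.

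Write $H_p^{1}:=K[1]\cap H_p$; since $p$ is odd this is a compact open subgroup of $H_p^{+}$, which we equip with its normalized Haar measure, and $A_1,A_2\subset H_p^{1}$ have measure $>\tfrac12$. Restricting Lemma~\ref{decomposition} to the factor containing $H$ yields $K[m]=(H_p^{+}\cap K[m])\exp(\gor[m])$, so I may write $g=h\exp(r)$ with $h\in H_p^{+}\cap K[m]\subseteq H_p^{1}$ and $r\in\gor[m]$. The only structural input beyond this is the observation that for every $\alpha\in H_p^{1}$ the automorphism $\on{Ad}(\alpha)$ of $\gog(\bQ_p)$ maps $\gor[m]$ onto itself: it preserves the subspace $\gor$ because $\gor$ is an $\on{Ad}(H_p^{+})$-invariant complement of $\goh$ (this is the invariance of the undistorted complement built in Section~\ref{liealgebras}, and $\alpha\in H_p^{1}\subseteq H_p^{+}$), and it is an isometry for $\|\cdot\|_p$ on $\gog(\bQ_p)$ because $\alpha\in\bG(\bZ_p)$, so it preserves $\gor(\bQ_p)\cap\{\|\cdot\|_p\le p^{-m}\}=\gor[m]$.

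Granting this, it suffices to produce $\alpha_1\in A_1$ and $\alpha_2\in A_2$ with $\alpha_1\alpha_2^{-1}=h$: then $\alpha_1^{-1}h=\alpha_2^{-1}$, so
\[
\alpha_1^{-1}g\alpha_2=\alpha_1^{-1}h\exp(r)\alpha_2=(\alpha_1^{-1}h\alpha_2)\bigl(\alpha_2^{-1}\exp(r)\alpha_2\bigr)=\exp\!\bigl(\on{Ad}(\alpha_2^{-1})r\bigr),
\]
and $\on{Ad}(\alpha_2^{-1})r\in\gor[m]$ by the previous paragraph, which is exactly the assertion of the lemma. Thus everything comes down to $h\in A_1A_2^{-1}:=\{\alpha_1\alpha_2^{-1}:\alpha_1\in A_1,\ \alpha_2\in A_2\}$, and this is the pigeonhole step: $h\in H_p^{1}$, while for any $\gamma\in H_p^{1}$ the sets $A_1$ and $\gamma A_2$ each have measure $>\tfrac12$ in the probability space $H_p^{1}$ and hence meet; writing a common point as $x=\gamma\alpha_2$ with $\alpha_2\in A_2$ gives $\gamma=x\alpha_2^{-1}\in A_1A_2^{-1}$, so $A_1A_2^{-1}=H_p^{1}\ni h$.

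The one part of this that is not purely formal is the invariance recorded in the second paragraph --- that conjugating the transverse slice $\exp(\gor[m])$ by an element of $K[1]\cap H_p$ permutes it within itself --- which is why it is convenient to take the complement $\gor$ to be $\on{Ad}(H_p)$-stable, as will be arranged by the explicit $\lsl$-triples of Section~\ref{liealgebras}. Note that this argument uses only the existence, not the uniqueness, of the factorization $K[m]=(H_p^{+}\cap K[m])\exp(\gor[m])$; an alternative route, closer to \cite{EMMV}, is to show instead that the map sending $\alpha\in H_p^{1}$ to the $H$-component of $g\alpha$ is a measure-preserving bijection of $H_p^{1}$ --- here using that the exponential map is measure preserving --- and to apply the same pigeonhole.
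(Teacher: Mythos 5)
Your argument is correct, and it is worth noting that it is not quite the argument the paper points to: the paper gives no proof at all, citing \cite[Lemma 6.6]{EMMV} and remarking that the statement ``can easily be deduced from the property that the exponential map is measure preserving''. That hinted route (which you describe at the end) decomposes $g\alpha$ for varying $\alpha$ and uses measure preservation of $\exp$ to see that the $H$-component sweeps out a set of large measure before pigeonholing. Your primary route instead decomposes $g=h\exp(r)$ once via Lemma~\ref{decomposition}, solves $h=\alpha_1\alpha_2^{-1}$ by the elementary fact $A_1A_2^{-1}=K[1]\cap H_p$ for sets of relative measure $>\tfrac12$, and then only needs that $\on{Ad}(\alpha_2^{-1})$ preserves $\gor[m]$; this is cleaner in that it uses no measure-theoretic input about $\exp$ whatsoever, only the existence (not even uniqueness) of the decomposition and the bi-$\GL_d(\bZ_p)$-invariance of $\|\cdot\|_p$. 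The one step you state without justification is the inclusion $K[1]\cap H_p\subseteq H_p^+$, which you invoke to get $\on{Ad}$-invariance of $\gor$; it does hold, since $K[1]$ is a pro-$p$ group with $p$ odd while $[H_p:H_p^+]=4$, so the image of $K[1]\cap H_p$ in $H_p/H_p^+$ is a $p$-group in a $2$-group and hence trivial -- alternatively one can bypass this entirely by observing, as you do parenthetically, that the concrete complements $\gos_1,\gos_2$ of Section~\ref{liealgebras} (and hence $\gor_1,\gor_2$, which are $\bZ_p$-conjugates of them) are visibly invariant under conjugation by the full group $H_p$, not merely $H_p^+$. With either of these one-line patches your proof is complete and, if anything, more self-contained than the reference the paper leans on.
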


\subsection{Pigeon Hole Principle}
The following will give points that do not lie on the same local orbit, i.e.\ we obtain two nearby points $x$, $y=xg$ on the same $H$-orbit with smallest displacement $g\not\in H$. For a set $\cN\subset G_S$, define the doubled sets $\cN_2=\cN\cN^{-1}$ and $\cN_4=\cN_2\cN_2$. We let $\mu$ denote $\mu_{v,S}$ on $\mathcal{Y}^+_{\on{joint}}$ or ${\pi_i}_*\mu_{v,S}$ on $\mathcal{Y}^+_i$
and let $V$ denote its associated volume defined using $\Theta$.

\begin{lemma}[Pigeon Hole Principle, {\cite[Lemma 7.6]{EMMV}}]
\label{transversalpoints}
Suppose that $E\subset \Gamma\backslash G_S$ is a measurable set with $\mu(E)>\frac34$.
Let $\cN\subset G_S$ be open and assume that $\cN_4\subset \Omega[1]$ and $m_{G_S}(\cN)>2V^{-1}$.
Then there exist $x,y\in E$, so that $y=xg\in x\cN_4$ and $g \in \cN_4\setminus H$.
\end{lemma}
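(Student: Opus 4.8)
The plan is to argue by contradiction: assume no pair $x,y\in E$ with $y=xg$ and $g\in\cN_4\setminus H$ exists, and deduce $m_{G_S}(\cN)\le 2V^{-1}$, contradicting the hypothesis. I would set up from the fact that $\mu$ is the homogeneous probability measure on a \emph{compact} orbit $\cO=\Gamma gH$ (compactness recorded in Sections~\ref{orbitsofsubgroups} and~\ref{classification}), so that up to a $\mu$-null set $E\subseteq\cO$; under the identification $\cO\cong(H\cap\Gamma^{g})\backslash H$ of Section~\ref{orbitmeasures}, $\mu$ corresponds to the Haar measure $m_H$ on $H$ normalized so that $V=m_H(\Theta\cap H)^{-1}$, and $\mu(x_0S)=m_H(S)$ whenever $x_0\in\cO$ and $S\subset H$ is small enough to inject into $\cO$ from $x_0$.

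First I would pass to $E^\ast=E\cap X_{\text{cpt}}$: since $\mu(X_{\text{cpt}})>1-2^{-20}$ we keep $\mu(E^\ast)>\tfrac34-2^{-20}>\tfrac12$, and a pair found inside $E^\ast$ lies in $E$. The gain is that $\cN_2$ is symmetric with $\cN_2\cN_2=\cN_4\subseteq\Omega[1]\subseteq\Omega_S$, so by the choice of $\Omega$ in Section~\ref{smallneighborhood} (no nontrivial element of $\Omega_S$ fixes a point of $X_{\text{cpt}}$) the map $g\mapsto xg$ is injective on $\cN$ for $x\in X_{\text{cpt}}$ and $g\mapsto x_0g$ is injective on $\cN_2\cap H$ for $x_0\in X_{\text{cpt}}$. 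Hence $m_{\Gamma\backslash G_S}(x\cN)=m_{G_S}(\cN)$ and $\mu\bigl(x_0(\cN_2\cap H)\bigr)=m_H(\cN_2\cap H)$ for $x,x_0\in E^\ast$.

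The heart of the argument is to bound the multiplicity function $z\mapsto\mu(E^\ast\cap z\cN^{-1})$ on $\Gamma\backslash G_S$. If $x,x'\in E^\ast\cap z\cN^{-1}$, say $x=zn_1^{-1}$ and $x'=zn_2^{-1}$ with $n_1,n_2\in\cN$, then $x'=x(n_1n_2^{-1})$ with $n_1n_2^{-1}\in\cN_2\subseteq\cN_4$; by injectivity $n_1n_2^{-1}$ is the unique small element carrying $x$ to $x'$, so the contradiction hypothesis forces $n_1n_2^{-1}\in H$. Thus all of $E^\ast\cap z\cN^{-1}$ lies in one plaque $x_0(\cN_2\cap H)$, giving $\mu(E^\ast\cap z\cN^{-1})\le m_H(\cN_2\cap H)$ for all $z$ (and $0$ unless $z\in E^\ast\cN$). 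Fubini together with the injectivity on $\cN$ then yields
\[
\mu(E^\ast)\,m_{G_S}(\cN)=\int_{E^\ast}m_{\Gamma\backslash G_S}(x\cN)\,d\mu(x)=\int_{\Gamma\backslash G_S}\mu(E^\ast\cap z\cN^{-1})\,dm(z)\le m_H(\cN_2\cap H),
\]
using $m_{\Gamma\backslash G_S}\le 1$. Finally $m_H(\cN_2\cap H)\le m_H(\Omega[1]\cap H)\le m_H(\Theta\cap H)=V^{-1}$, the second inequality because $\Omega[1]=\Omega_\infty\times K[1]\subseteq\Theta_\infty\times\bG_{\on{joint}}(\bZ_p)=\Theta$ once $p$ is large ($\Omega_\infty$ shrinks with $p$, $\Theta_\infty$ is fixed). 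So $\mu(E^\ast)\,m_{G_S}(\cN)\le V^{-1}$, i.e.\ $m_{G_S}(\cN)\le\bigl(\mu(E^\ast)V\bigr)^{-1}<2V^{-1}$, contradicting the hypothesis.

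The one soft input is the compactness of $\cO$, which lets me treat $E$ as living on the orbit and makes the multiplicity bound the right object; this is already in place. I expect the only real care needed is in the normalizations — ensuring the same reference set $\Theta\cap H$ controls both $V=m_H(\Theta\cap H)^{-1}$ and the bound $m_H(\cN_2\cap H)\le m_H(\Theta\cap H)$ — which I would handle by fixing all Haar normalizations against $m_{\Gamma\backslash G_S}(\Gamma\backslash G_S)=1$ and restricting all injectivity claims to $X_{\text{cpt}}$.
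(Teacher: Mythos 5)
Your argument is correct, and it reaches the contradiction by a slightly different mechanism than the paper's own proof. The paper picks a maximal collection $x_1,\dots,x_I\in X_{\text{cpt}}$ with the $x_i\cN$ disjoint (so $I\le m_{G_S}(\cN)^{-1}$ by injectivity of the orbit map on $\cN\subset\Omega[1]$), covers $X_{\text{cpt}}$ by the translates $x_i\cN_2$, and pigeonholes to find one translate with $\mu(x_{i_0}\cN_2\cap E)\ge\tfrac1{2I}$; the negation of the conclusion then traps $x_{i_0}\cN_2\cap E$ inside a single plaque $y_1(H\cap\cN_4)$ of $m_H$-measure at most $m_H(\Theta)=V^{-1}$, giving $m_{G_S}(\cN)\le 2V^{-1}$. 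You instead integrate the same trapping observation: under the negation the fiber bound $\mu(E^\ast\cap z\cN^{-1})\le m_H(H\cap\cN_2)$ holds for every $z$, and Fubini together with $m_{\Gamma\backslash G_S}(x\cN)=m_{G_S}(\cN)$ for $x\in X_{\text{cpt}}$ and total mass one gives $\mu(E^\ast)\,m_{G_S}(\cN)\le m_H(H\cap\cN_2)\le m_H(\Theta)=V^{-1}$, hence $m_{G_S}(\cN)<2V^{-1}$ since $\mu(E^\ast)>\tfrac12$. Both routes rest on the same two inputs (injectivity of the orbit map on $\Omega_S$ at points of $X_{\text{cpt}}$, and the normalization $V^{-1}=m_H(\Theta)$); your averaging version avoids the explicit packing and even yields the marginally sharper plaque $\cN_2\cap H$ in place of $\cN_4\cap H$. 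Two small remarks: for the plaque step you only need $\mu\bigl(x_0(\cN_2\cap H)\bigr)\le m_H(\cN_2\cap H)$, which is automatic from the compatible normalization of $m_H$ with the orbit measure, so neither the injectivity-based equality nor compactness of the orbit is essential — only that $\mu$ is supported on the closed orbit; and the containment $\Omega[1]\cap H\subset\Theta\cap H$ (i.e.\ $\Omega_\infty\subset\Theta_\infty$ once $p$ is large) is the same implicit step the paper's proof makes in the chain $m_H(H\cap\Omega[1])\le m_H(\Theta)$, so it is fine to invoke it as you do.
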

\begin{proof}
We follow \cite{EMMV} verbatim. Let $\{x_i:1\leq i\leq I\}$ be a maximal set of points in $X_\text{cpt}$ such that $x_i\cN$ are disjoint. Then $I\leq m_{G_S}(\cN)^{-1}$ and $X_\text{cpt}\subset \bigcup_ix_i\cN_2$ and therefore there is some $i_0$ so that 
$$\mu(x_{i_0}\cN_2\cap E)\geq\frac{1}{2I}.$$
Let $y_1\in x_{i_0}\cN_2\cap E$ then any $y_2\in x_{i_0}\cN_2\cap E$ is of the form $y_1g$ where $g\in\cN_4$. Suppose contrary to the lemma that this implies $g\in H$. That is, $y_2$ always is on the same local $H$-orbit of $y_1$ in the sense that $y_2\in x_{i_0}\cN_2\cap E\subset y_1(H\cap\cN_4)$. Thus
\begin{multline*}
m_{G_S}(\cN)\leq\frac{1}{I}\leq2\mu(x_{i_0}\cN_2\cap E)\leq2\mu(y_1(H\cap\cN_4))\\
\leq2m_H(H\cap \Omega[1])\leq2m_{H}(\Theta)=2V^{-1}
\end{multline*}
contradicting the assumption $m_{G_S}(\cN)>2V^{-1}$.
\end{proof}

\subsection{Stabilizer Lemma}
The following results incorporate \cite[Lemma 2.2]{EMMV}, which identifies the stabilizer group $\stab{\mu}$ of the orbit measure $\mu$ of $H$ as a subset of the normalizer $N_{G_S}(H)$ of $H$ in $G_S$. We will show
in our case that small elements of $N_{G_S}(H)$ necessarly lie in $H$ and consequentially we
will be able to bypass the need of \cite[Section 5.12]{EMMV} (and the work of Borel and Prasad \cite{Borel-Prasad})
which shows that the orbit associated to $\stab{\mu}$ also has large volume.
\begin{lemma}
\label{centralizer}
The normalizer $N_{\on{SO}_d(\bR)}(\on{SO}_{d-1}(\bR))$
consists of all~$g\in\on{SO}_d(\bR)$ that satisfy~$e_{d}g=\pm e_d$. 
Moreover, $N_{\on{SL}_{d-1}(\bR)}(\on{SO}_{d-1}(\bR))=\on{SO}_{d-1}(\bR)$.
\end{lemma}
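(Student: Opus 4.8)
The plan is to prove the two assertions separately by elementary means — the first using the fixed-vector space of the embedded $\on{SO}_{d-1}(\bR)$, the second using the Cartan decomposition — so I do not expect a serious difficulty.

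First I would record the relevant set-up: here $\on{SO}_{d-1}(\bR)$ is realized as $H_\infty=\on{Stab}_{\on{SO}_d(\bR)}(e_d)$, with $\on{SO}_d(\bR)$ acting on the right on row vectors, and the subspace of $\bR^d$ fixed by every element of $H_\infty$ is exactly the line $\bR e_d$: indeed $H_\infty$ preserves $e_d^{\perp}=\bR^{d-1}\times\{0\}$ and acts there as the standard copy of $\on{SO}_{d-1}(\bR)$, which has no nonzero fixed vector because $d-1\ge 2$. Then if $g\in\on{SO}_d(\bR)$ normalizes $H_\infty$, we have $ghg^{-1}\in H_\infty$ for every $h\in H_\infty$, i.e.\ $(e_dg)h=e_dg$; hence $e_dg$ lies in $\bR e_d$, and since $g$ preserves the Euclidean norm we conclude $e_dg=\pm e_d$. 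Conversely, if $e_dg=\varepsilon e_d$ with $\varepsilon=\pm1$, then also $e_dg^{-1}=\varepsilon e_d$, and for $h\in H_\infty$ one computes $e_d(ghg^{-1})=\varepsilon(e_dh)g^{-1}=\varepsilon e_dg^{-1}=e_d$, so $ghg^{-1}\in\on{SO}_d(\bR)$ fixes $e_d$ and thus lies in $H_\infty$; applying the same reasoning to $g^{-1}$ gives $gH_\infty g^{-1}=H_\infty$, i.e.\ $g\in N_{\on{SO}_d(\bR)}(H_\infty)$.

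For the second assertion, let $K:=\on{SO}_{d-1}(\bR)$ and let $g\in\on{SL}_{d-1}(\bR)$ normalize $K$. Writing $g=k_1ak_2$ in the Cartan decomposition with $k_1,k_2\in K$ and $a=\diag{a_1,\dots,a_{d-1}}$, $a_i>0$, $\prod_i a_i=1$, the element $a=k_1^{-1}gk_2^{-1}$ again normalizes $K$, so $\on{Ad}(a)$ preserves the Lie algebra $\mathfrak{so}_{d-1}$ of skew-symmetric matrices. Applying $\on{Ad}(a)$ to $E_{ij}-E_{ji}$ produces $a_ia_j^{-1}E_{ij}-a_ja_i^{-1}E_{ji}$, which is skew-symmetric only when $a_i^2=a_j^2$; since this holds for all $i\ne j$ and the $a_i$ are positive with product $1$, we get $a=I$ and $g=k_1k_2\in K$. (Alternatively, $aKa^{-1}=K$ forces $a^2$ to commute with every coordinate-plane rotation of $\on{SO}_{d-1}(\bR)$, which again gives $a^2=I$.) As the inclusion $K\subset N_{\on{SL}_{d-1}(\bR)}(K)$ is trivial, this yields the claimed equality.

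The only point requiring a little care is keeping track of the right action against the group-theoretic definition of the normalizer, together with the standing hypothesis $d-1\ge 2$ used twice above; since we are in the cases $d\in\{4,5\}$ we in fact have $d-1\in\{3,4\}$, so the low-dimensional degeneracies — such as the enlarged centralizer of $\on{SO}_2(\bR)$ — do not arise, and I expect no genuine obstacle.
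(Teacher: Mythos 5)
Your proof is correct and follows essentially the same route as the paper: the first normalizer computation via the fact that the only vectors fixed by the embedded $\on{SO}_{d-1}(\bR)$ are the multiples of $e_d$, and the second via the Cartan decomposition $K A K$ together with conjugation of plane-rotation generators forcing $a_i=a_j$. The only difference is cosmetic: you also verify the easy converse inclusion in the first statement, which the paper leaves implicit.
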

\begin{proof}
	Let~$n\in N_{\on{SO}_d(\bR)}(\on{SO}_{d-1}(\bR))$. If~$e_d n\neq \pm e_d$,
	then there exists some~$k\in \on{SO}_{d-1}(\bR)$ such that~$e_dnk\neq e_dn$.
	However, this implies~$e_dnkn^{-1}\neq e_d$ and equivalently~$nkn^{-1}\notin \on{SO}_{d-1}(\bR)$ which contradicts the definition of~$n$. 

For the second case, we want to use the real Cartan decomposition $H_\infty A_\infty H_\infty$ of $\SL_{d-1}(\bR)$. 
From it, we deduce immediately that if $g=k_1ak_2$ normalizes $H_\infty$ then $a=\diag{a_1,\dots,a_{d-1}}$ must
normalize~$H_\infty$ too. Using e.g.\ the Lie algebra elements corresponding
to rotations in planes we see that~$a$ must satisfy $a_i/a_j=1$ for all~$1\leq i<j\leq {d-1}$. 
This forces $a=e$ if we insist 
that $A$ only contains positive diagonal matrices as we may.
\end{proof}

\begin{corollary}[No purely real transversal displacement]
\label{stabilizer}
Assume that two $u_t$-generic points $x,y\in\Gamma g_0H$ (for $g_0\in\{k_v,\oT\}$ respectively) satisfy $xg=y$ with  $g=g_\infty\in B^{d_\infty}_{1/2}$, then we have $g\in H$.
\end{corollary}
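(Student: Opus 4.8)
The plan is to show first that the displacement $g$ stabilises the orbit measure $\mu=m_{\Gamma g_0H}$, then to deduce from \cite[Lemma~2.2]{EMMV} that $g$ normalises $H$, and finally to combine Lemma~\ref{centralizer} with the smallness of $g$ to conclude $g\in H$.

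The decisive observation is that $g=(g_\infty,e)$ is supported at the archimedean place, whereas the one-parameter subgroup $\{u_t:t\in\bQ_p\}$ coming from the principal $\on{SL}_2$ (Lemma~\ref{principalsl2}) is supported at the place $p$; hence $g$ and every $u_t$ commute in $G_S=G_\infty\times G_p$. Since $y=xg=g^{-1}\acts x$, the composition rule $g_1\acts(g_2\acts z)=(g_1g_2)\acts z$ (which follows from $h\acts z=zh^{-1}$) together with this commutation gives
\[
u_t\acts y=u_t\acts(g^{-1}\acts x)=(u_tg^{-1})\acts x=(g^{-1}u_t)\acts x=g^{-1}\acts(u_t\acts x)
\]
for all $t$, so the $\{u_t\}$-orbit of $y$ is the image of the $\{u_t\}$-orbit of $x$ under the fixed homeomorphism $z\mapsto g^{-1}\acts z$. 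Passing to the limit of the orbit-averaging measures, using that $x$ and $y$ are $u_t$-generic for $\mu$, that the orbit $\Gamma g_0H$ is compact (so that no mass escapes, $\bG_1$ and hence $\bH_v$ being $\bQ$-anisotropic since $\on{SO}_d(\bR)$ is compact), and that pushforward by a fixed homeomorphism is weak-$*$ continuous, I would obtain $(g^{-1}\acts)_*\mu=\mu$, i.e.\ $g\in\stab(\mu)$.

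By \cite[Lemma~2.2]{EMMV} this yields $g\in N_{G_S}(H)$, hence $g_\infty\in N_{G_{i,\infty}}(H_\infty)$ with $H_\infty=\on{SO}_{d-1}(\bR)$. In the case $i=2$, Lemma~\ref{centralizer} gives $N_{\on{SL}_{d-1}(\bR)}(\on{SO}_{d-1}(\bR))=\on{SO}_{d-1}(\bR)=H_\infty$, so $g_\infty\in H_\infty$ and therefore $g\in H$ (the $p$-component of $g$ being trivial). In the case $i=1$, Lemma~\ref{centralizer} gives $e_dg_\infty=\pm e_d$; the coset $\{k\in\on{SO}_d(\bR):e_dk=-e_d\}$ is a fixed compact set disjoint from $\on{SO}_{d-1}(\bR)\ni e$, hence a fixed positive $d_\infty$-distance away from $e$, so the hypothesis $d_\infty(g_\infty,e)\le\tfrac12$ excludes it and forces $e_dg_\infty=e_d$; thus $g_\infty\in\on{Stab}_{\on{SO}_d(\bR)}(e_d)=\on{SO}_{d-1}(\bR)=H_\infty$ and again $g\in H$.

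The only step requiring genuine care is the limiting argument of the second paragraph: one must justify that the F\o lner averages along the $u_t$-orbits of both $x$ and $y$ converge to the full measure $\mu$ with no loss of mass (handled by compactness of $\Gamma g_0H$) and that the weak-$*$ limit commutes with the pushforward by $z\mapsto g^{-1}\acts z$. Everything else is bookkeeping with the commutation $gu_t=u_tg$ --- which is precisely where the hypothesis that the displacement be \emph{purely real} enters --- and with the explicit normalisers of Lemma~\ref{centralizer}; the radius $\tfrac12$, or rather the normalisation of $d_\infty$, is used only to separate the nontrivial coset in the case $i=1$.
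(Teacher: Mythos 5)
Your argument is correct and follows essentially the same route as the paper: the purely real displacement commutes with the $p$-adic flow $u_t$, genericity of both points then forces $g\in\operatorname{stab}(\mu)$, and Lemma~\ref{centralizer} together with the smallness of $g_\infty$ (ruling out the coset with $e_dg_\infty=-e_d$ in the $\on{SO}_d$ case) finishes exactly as in the paper. The only divergence is that you quote \cite[Lemma 2.2]{EMMV} for $\operatorname{stab}(\mu)\subset N_{G_S}(H)$, whereas the paper rederives this containment in its special setting (from $\Gamma g_0Hg=\Gamma g_0H$ one gets $g=\gamma h_1$ with $\gamma\in\Gamma^{g_0}$, and a connected-component argument shows $\gamma$ normalizes $H_\infty$), but this is a presentational rather than a mathematical difference.
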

\begin{proof}
The assumptions imply that $g\in\stab{\mu}$. Indeed, since $g$ commutes with $u_t$ then for any continuous $f\in C_c(\Gamma\backslash G_S)$
\begin{multline*}
\left|\mu(f)-\mu^g(f)\right|=\left|\lim_{\ell\to\infty}\fint_{B_\ell} f(xu_t)dt-\lim_{\ell\to\infty}\fint_{B_\ell} f(xu_tg)dt\right| \\
=\left|\lim_{\ell\to\infty}\fint_{B_\ell} f(xu_t)dt-\lim_{\ell\to\infty}\fint_{B_\ell} f(yu_t)dt\right|=\left|\mu(f)-\mu(f)\right|=0,
\end{multline*}
where $\fint_{B_\ell} dt$ denotes the normalized integral over the ball of radius $p^\ell$ in $\bQ_p$ with respect to the Haar measure determined by $|\cdot|_p$.

However, $g\in \stab{\mu}$ implies $\Gamma g_0Hg=\Gamma g_0H$ or equivalently $\Gamma^{g_0}Hg= \Gamma^{g_0}H$ so that also $g\in \Gamma^{g_0}H$. Let the corresponding decomposition be $g=\gamma h_1$ with $h_1\in H$ and $\gamma\in \Gamma^{g_0}$ then $\gamma H\gamma^{-1}=g Hg^{-1}\subset\Gamma^{g_0}Hg^{-1}=\Gamma^{g_0}H$. The connected Hausdorff component of the identity in the set  $\Gamma^{g_0}H\subset G_S$  is $H_\infty$ and consequentially $\gamma H _\infty\gamma^{-1}\subset H _\infty$. By definition
we obtain $g\in \left(N(H_\infty)\cap\bG(\bZ[\tfrac1p])^{g_0}\right)H$. 
Therefore $g\in \left(N_{G_\infty}(H_\infty)\cap\bG(\bZ[\tfrac1p])^{g_0}\right)H_\infty$, and we can apply Lemma~\ref{centralizer}. This concludes the discussion already for $G_S=G_{2,S}$ because in that case $N(H_\infty)=H_\infty$. For the first factor, we need the assumption that $g$ is small: If $g$ belongs to the normaliser but not to~$H_\infty$,
then the $(d,d)$-coefficient of $g$ is $-1$ and thus $g$ cannot lie in $B^{d_\infty}_{1/2}$.
\end{proof}

\subsection{Torsion free lattice}
\label{lattices}
To avoid technical complications in the next section we show that there is a torsion free finite index subgroup $$\Gamma_4=\Gamma\cap(\Gamma'_4\times\Gamma'_4)\mbox{ with } \Gamma'_4=\on{ker}\left(\SLZP\to\on{SL}_d(\bZ[\tfrac{1}{p}]/4\bZ[\tfrac{1}{p}])\right).$$
\begin{lemma}\label{torsionfree}
$\Gamma'_4$ is a torsion-free subgroup of index bounded by $4^{d^2}$ in $\SLZP$.
\end{lemma}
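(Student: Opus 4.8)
The plan is to prove the two assertions — the index bound and torsion-freeness — separately, the only arithmetic input being that $p$ is odd (here even $p\equiv 1\pmod 4$, but oddness suffices). For the index bound I would observe that $\Gamma'_4$ is, by definition, the kernel of the reduction homomorphism $\SLZP \to \SL_d(\bZ[\tfrac1p]/4\bZ[\tfrac1p])$, so $\SLZP/\Gamma'_4$ embeds into the latter group. Since $p$ is a unit modulo $4$, the natural map $\bZ/4\bZ \to \bZ[\tfrac1p]/4\bZ[\tfrac1p]$ is an isomorphism, and hence $\SL_d(\bZ[\tfrac1p]/4\bZ[\tfrac1p]) \subset \Mat_d(\bZ/4\bZ)$ has at most $4^{d^2}$ elements. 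This gives $[\SLZP:\Gamma'_4] \le 4^{d^2}$ at once.

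For torsion-freeness I would run the standard Minkowski-type $2$-adic argument. Suppose $g \in \Gamma'_4$ has finite order $n > 1$; replacing $g$ by $g^{n/\ell}$ for a prime divisor $\ell$ of $n$ we may assume $g$ has prime order $\ell$ and $g \ne e$. Since $p$ is invertible in $\bZ_2$, the entries of $g$ lie in $\bZ_2$, and the congruence $g \equiv e \pmod{4\bZ[\tfrac1p]}$ gives $g - e \in 4\Mat_d(\bZ_2)$. Writing $v_2$ for the minimum over entries of the $2$-adic valuation, set $k = v_2(g-e) \ge 2$ and $C = 2^{-k}(g-e) \in \Mat_d(\bZ_2)$, so that $v_2(C) = 0$. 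Expanding $e = g^\ell = (e + 2^k C)^\ell$ by the binomial theorem and dividing by $2^k$ yields
\[
\ell\, C \;=\; -\sum_{j=2}^{\ell} \binom{\ell}{j}\, 2^{(j-1)k}\, C^j .
\]
The left-hand side has $v_2$ equal to $v_2(\ell)$, whereas every summand on the right has $v_2 \ge (j-1)k \ge k \ge 2$ (using $v_2(AB)\ge v_2(A)+v_2(B)$), so the right-hand side has $v_2 \ge 2$. Hence $v_2(\ell) \ge 2$, which is impossible for a prime $\ell$. Therefore $\Gamma'_4$ has no nontrivial torsion.

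Since this lemma is elementary there is no genuine obstacle; the only point meriting a moment's care is the bookkeeping of $2$-adic matrix valuations in the binomial expansion — defining $v_2$ of a matrix entrywise, peeling off the exact power of $2$ dividing $g-e$, and checking the submultiplicativity of $v_2$ under matrix products. I would also note in passing that the same facts immediately give that $\Gamma_4 = \Gamma \cap (\Gamma'_4 \times \Gamma'_4)$ is torsion-free (being a subgroup of a product of torsion-free groups) and of finite index in $\Gamma$ (intersecting with the two finite-index congruence subgroups), which is the use to which the lemma will be put.
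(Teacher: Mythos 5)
Your proof is correct. The index bound is the same observation as in the paper in a slightly different guise: the paper embeds $\SLZP$ diagonally into $\SL_d(\bQ_p)\times\SL_d(\bZ_2)$ and bounds the index of the principal congruence subgroup $U_4=\{g\in\SL_d(\bZ_2):g-e\in\Mat_d(4\bZ_2)\}$, whereas you reduce modulo $4$ directly over $\bZ[\tfrac1p]$, using that $\bZ[\tfrac1p]/4\bZ[\tfrac1p]\cong\bZ/4\bZ$ because $p$ is odd; both give the bound $4^{d^2}$ via the size of $\Mat_d(\bZ/4\bZ)$. The genuine difference is in the torsion-freeness step: the paper simply cites that the $2$-adic logarithm is well defined on $U_4$ and that the Lie algebra of $\SL_d(\bQ_2)$ has no torsion, while you give a self-contained Minkowski-type argument, peeling off the exact power $2^k$ ($k\geq 2$) dividing $g-e$, expanding $(e+2^kC)^\ell=e$ and comparing entrywise $2$-adic valuations to force $v_2(\ell)\geq 2$, a contradiction for prime $\ell$. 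Your valuation bookkeeping ($v_2(\ell C)=v_2(\ell)$ since $v_2(C)=0$, and each term on the right having valuation at least $(j-1)k\geq k\geq 2$) is correct, and the reduction to prime order is legitimate since $\Gamma'_4$ is a group. The trade-off is that your argument is more elementary and fully explicit, at the cost of a short computation, whereas the paper's log/exp argument is shorter but leans on the standard theory of $p$-adic analytic groups; both establish exactly the statement needed, and your closing remark about $\Gamma_4$ matches how the lemma is used.
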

\begin{proof}
We embed $\SLZP$ diagonally in $\on{SL}_d(\bQ_p)\times \on{SL}_d(\bZ_2)$. Then $\Gamma'_4=\Gamma\cap \SLQ\times U_4$ where $U_4=\{g\in\on{SL}_d(\bZ_2):g-e\in \on{Mat_d}(4\bZ_2)\}$ is a subgroup of $\on{SL}_d(\bZ_2)$ of index $\leq 4^{d^2}$ without torsion since $U_4$ is clopen and does not contain finite subgroups. The latter follows  since the logarithm map is well-defined on $U_4$ and the Lie algebra of~$\on{SL}_d(\bQ_2)$ has no torsion (see also Section~\ref{singlefactorgenericpoints}).
\end{proof}


\section{Volume and Discriminant}
\label{sec:discvol}
The aim of this section is to relate a geometric invariant (the volume) and an arithmetic  invariant (the discriminant $D$) of the orbit $\Gamma\LvS^+ (k_v,e,\theta_v,e)$. We will show that 
\begin{equation}
\label{eq:voldisc}
D^*\ll\on{vol}\left(\Gamma\LvS^+ (k_v,e,\theta_v,e)\right)\ll D^*,
\end{equation}
where $D^*$ denotes a power of $D$ with some absolute exponent (and we allow different powers on the left and the right).
We will deduce this from the corresponding statement of $\Gamma \HvS^+ k_v$. For $\Gamma \HVS^+ \theta_v$ we will be content with a mere lower bound since the upper bound will only be needed for the joint orbit to deduce equidistribution from the single orbit cases. The methods used here are outlined for real quotients in \cite[Sect.~17]{EMV}
and replaces for our special case 
the use of Prasad's volume formula \cite{Prasad} in \cite{EMMV}.

For the lower bound we begin by noting that there exists a lattice element bounded in norm in terms of the volume by simply comparing the growth of a ball to the size of the fundamental domain. We will take advantage of the concrete definition of the group $\Hv=\on{Stab}_{\on{SO}_d}(v)$, so that these lattice elements must satisfy the ($S$-adic) integer equation $\gamma v=v$. Finding sufficiently many different such $\gamma$, $v$ is the unique common eigenvector and gives restriction on the size of $v$ (and so on $D$) from above. To find such elements, we harvest our preparations from Section \ref{discriminants}: $\Hv(\bZ_p)$ is open and therefore $\Hv(\bQ_p)/\Hv(\bZ_p)$ is a discrete set endowed (in the rank one cases) with the structure of a regular tree (Section \ref{quasisplitgeometry}). The \textit{volume} of the quotient $\Hv(\bZ[\tfrac1p])\backslash \Hv(\bQ_p)/\Hv(\bZ_p)$ - a finite graph - agrees with the number of its vertices. 

\subsection{Existence of small lattice elements in $\HvS$}
\label{sec:latticepoints}
We will exploit the geometric structure we obtained for the model groups to which any $\HvS[p]$ is $\bZ_p$-conjugated to (say by an element $h_v$) by Corollary \ref{classificationofH}. The important property is that $\bZ_p$-points are mapped to $\bZ_p$ under this conjugation, so that we also obtain a decomposition of $\HvS[p]$ into $K=\Hv(\bZ_p)$-cosets. It is further mapping
the norm balls in the one group to the same norm balls in the other group, implying that 
also the tree structure introduced in Section \ref{quasisplitgeometry}
is invariant under the conjugation. Geodesics as described in Lemma \ref{geodesics}, will correspond to the image of the Cartan group $A$ (of the model group) conjugated by $h_v$.

To establish lower bounds for the volume of the orbit $\Gamma \HvS k_v$, we start by establishing the existence of sufficiently many small lattice elements. Recall that by Lemma \ref{torsionfree}, $\Gamma_{4}$ has finite index in $\Gamma$ and is torsion free, and we will formulate the following for $\Gamma_4$ until we eventually bound $V_4=\on{vol}\left(\Gamma_{4} H_{v,S}k_{v}\right) \asymp V=\on{vol}\left(\Gamma H_{v,S}k_{v}\right)$ (see Lemma \ref{comparingvolumes}).

\begin{proposition}
\label{latticepoints}
\begin{enumerate}
\item If $H_{v,p} = \on{SO}(2,1)(\mathbb{Q}_{p})^{h_{v}}$ then there exist $\gamma_{1},\gamma_{2}\in \Gamma_4\cap \HvS$ such that their common eigenspace of eigenvalue $1$ is spanned by $v$,
\[
\|\gamma_i\|_\infty\ll1 \mbox{ and } \|\gamma_{i}\|_p\leq p^4 V_4^2 \mbox{ for } i=1,2.
\]
\item If $\HvS[p]=\on{SO}_\eta(3,1)(\bQ_p)^{h_v}$ then there exist $\gamma_i\in \Gamma_4\cap \HvS$ for $i=1,2,3$ whose common eigenspace for the eigenvalue $1$ is spanned by $v$,
\[
\|\gamma_i\|_\infty\ll1 \mbox{ and } \|\gamma_{i}\|_p\leq p^4V_{4}^2 \mbox{ for } i=1,2,3.
\]
\item If $\HvS[p]^+=\on{SO}(2,2)(\bQ_p)^{+,h_v}$ then there exists a lattice element $\gamma\in\Gamma_4\cap\HvS$ that only fixes the span of $v$ with
\[
\|\gamma\|_\infty\ll1 \mbox{ and } \|\gamma\|_p\ll p^2V_{4}.
\]
\end{enumerate}
\end{proposition}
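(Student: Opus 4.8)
The plan is to find the lattice elements by a pigeon-hole argument inside a norm ball of $\HvS[p]$ and then use the arithmetic rigidity coming from the defining equation $\gamma v = v$. First I would recall from Section \ref{discriminants} and Corollary \ref{classificationofH} that $\HvS[p] = H^{h_v}$ where $H$ is one of the three model groups and $h_v \in \on{GL}_d(\bZ_p)$; since conjugation by $h_v$ preserves the $p$-adic norm, the volume growth estimates of Propositions \ref{rankonevolume} and \ref{ranktwovolume} transfer verbatim to $\HvS[p]$. The orbit $\Gamma_4 \HvS k_v$ has volume $V_4$; by the definition of volume in Section \ref{orbitmeasures}, the Haar measure of a fixed identity neighbourhood $\Theta$ on the acting group is $V_4^{-1}$. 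Since $\Gamma_4$ is torsion free (Lemma \ref{torsionfree}) and $\mathcal{Y}_1^+$ is compact, the stabilizer $\Gamma_4 \cap \HvS^{k_v}$ is a cocompact lattice in $\HvS$, and the real factor $\SO[d-1](\bR)$ being compact forces the projection of this lattice to $\HvS[p]$ to again be a lattice (of covolume $\asymp V_4$, up to the bounded real contribution). So one reduces to a purely $p$-adic counting statement: inside the lattice $\Delta = \on{pr}_p(\Gamma_4 \cap \HvS)$ of covolume $\asymp V_4$ in $\HvS[p]$, find non-trivial elements of controlled norm.

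Next I would run the pigeon-hole / ball-covering argument. In the rank-one case ($\on{SO}(2,1)$ or $\on{SO}_\eta(3,1)$) the norm ball $B_\ell^{\HvS[p]}$ has Haar volume $\asymp p^\ell$ resp.\ $\asymp p^{2\ell}$ by Proposition \ref{rankonevolume}; choosing $\ell$ so that this volume exceeds $C \cdot V_4$ for a suitable constant $C$ (which forces $p^\ell \ll p \cdot V_4$ resp.\ $p^{2\ell} \ll p \cdot V_4$, i.e.\ $\|\cdot\|_p \leq p^\ell \ll p^2 V_4^2$ after squaring in the second case — this is where the exponent $p^4 V_4^2$ will come from, allowing also the factor absorbing the real part and the discrepancy between $B_\ell$ and $\partial B_\ell$), a covering argument shows $B_\ell^{\HvS[p]}$ cannot inject into a fundamental domain, producing a non-trivial $\gamma \in \Delta$ with $\|\gamma\|_p \leq p^\ell$. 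The real coordinate is automatically $O(1)$ because it lies in the compact $\SO[d-1](\bR)$. To get \emph{several} elements whose common $1$-eigenspace is exactly $\bR v$, I would argue that the $\gamma$'s produced all fix $v$ (they lie in $\HvS \subset \on{Stab}(v)$), so their common eigenspace contains $\bR v$; if it were strictly larger, the $\gamma_i$ would all lie in a proper algebraic subgroup of $H$ fixing a line in $v^\perp$, i.e.\ (using the tree/geodesic picture of Lemma \ref{geodesics}) they would all stabilize a common proper sub-tree or sub-geodesic. One then produces the $\gamma_i$ from balls around \emph{different} base points (translate by an element moving off the fixed geodesic, as in Lemma \ref{geodesics}), forcing at least two (resp.\ three, for $\on{SO}_\eta(3,1)$, since that group has ``larger'' fixed-point loci) of them not to share a larger eigenspace; a semisimplicity/irreducibility argument then pins the common $1$-eigenspace down to $\bR v$.

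For the split case $\on{SO}(2,2)(\bQ_p)^+$ the volume of $B_\ell$ is only $\asymp \ell p^{2\ell}$ (Lemma \ref{ranktwovolume}), hence $p^{2\ell} \leq V_4 \ll p^{(2+\veps)\ell}$, giving a single element of norm $\|\gamma\|_p \ll p^2 V_4$; here one element suffices for the conclusion because a generic element of $\on{SO}(2,2)$ already fixes only the span of $v$ (its centralizer in $\on{SO}_d$ being a torus intersecting $v^\perp$ trivially), so no multiplicity argument is needed, just the observation that the produced $\gamma$ can be taken non-central, e.g.\ regular semisimple, by choosing it in $\partial B_\ell$ rather than in a small ball.

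The main obstacle I expect is the rigidity step: guaranteeing that the common $1$-eigenspace of the produced lattice elements is \emph{exactly} $\bR v$ rather than something larger. The counting/pigeon-hole part is standard once the volume growth is in hand, but controlling \emph{which} elements the pigeon-hole produces — so that they do not all land in a common proper reductive subgroup of $H$ — requires care: I would handle it by working with balls centred at several well-separated points on the building (using Lemma \ref{geodesics} to move base points by group elements of bounded norm, so the extra norm cost is absorbed into the stated bounds) and invoking that a proper algebraic subgroup of the model group fixing a line in $v^\perp$ has strictly smaller dimension, hence the corresponding sub-object in the tree/building has strictly smaller growth, contradicting the pigeon-hole count for an appropriate choice of radius.
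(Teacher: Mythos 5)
Your first half — pigeon-holing a norm ball of measure exceeding the covolume against the orbit $\Gamma_4\HvS k_v$, using the volume growth of Propositions \ref{rankonevolume} and \ref{ranktwovolume} transported by the $\bZ_p$-conjugation $h_v$ — is exactly the paper's opening move, and the norm bookkeeping is fine. The gaps are in the two places you yourself flag as delicate. For the rank-one cases, your mechanism for forcing the common $1$-eigenspace down to $\bQ v$ does not work as stated: a single application of the pigeon-hole produces \emph{one} nontrivial lattice element, so there is no ``count'' for the smaller growth of a proper subgroup to contradict, and recentering the ball at well-separated points of the tree gives you no control over \emph{which} element you find — it may perfectly well commute with $\gamma_1$ again. (One could try to salvage your idea by producing $\gg\ell$ distinct lattice elements from a ball of volume $\gg \ell V_4$ and bounding the number of norm-$\leq p^{2\ell}$ elements in the stabilizer of a plane, but that costs extra $\log$-factors against the stated bound and is not what you wrote.) The paper instead first shows, using that $\Gamma_4$ is torsion free and discrete while $\bH_v(\bR)$ is compact, that $\gamma_1$ has eigenvalues $\lambda,\lambda^{-1},1$ with $|\lambda|_p\neq 1$ and $\lambda\in\bQ_p$, so $\gamma_1$ acts on the tree by translation along a geodesic (Lemma \ref{geodesics}); it then runs the \emph{second} pigeon-hole inside a Dirichlet fundamental domain $F_1'$ for $\langle\gamma_1\rangle$ (which still contains $K$ and $p-1$ of the $p+1$ branches, hence has enough measure). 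Any $\gamma_2$ sharing an extra fixed vector $w\in v^\perp$ would preserve $\{v,w\}^\perp$, hence commute with $\gamma_1$ and translate along the same geodesic, hence move $F_1'$ off itself — contradicting that $\gamma_2$ was found mapping a point of $F_1'$ back into $F_1'$. In the quasi-split case the same scheme is iterated with an embedded $p+1$-regular subtree coming from an $\on{SO}(2,1)$-subgroup, which is why three elements and a genuinely two-step argument are needed, not just ``larger fixed-point loci''.

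For the split case your argument has a genuine hole: you cannot ``choose'' the element produced by the pigeon-hole to be regular semisimple, and lying in $\partial B_\ell$ does not exclude eigenvalue $1$ — the image of a pair $(g_1,g_2)$ with $\lambda_1=\lambda_2$ has eigenvalue $\lambda_1\lambda_2^{-1}=1$ and can have arbitrarily large norm, so a single element found in a plain norm ball of $\on{SO}(2,2)(\bQ_p)^+$ may well fix a line in $v^\perp$. The paper's device is to pigeon-hole in the image of $B_\ell^{\SLQ[2]}\times\SLZ[2]$ (large in the first $\on{SL}_2$-factor, compact in the second), so that the produced $\gamma=\psi(g_1,g_2)$ has $|\lambda_2|_p=1$; torsion-freeness and discreteness of $\Gamma_4$ then force some eigenvalue of $\gamma$ to have $p$-adic norm $\neq 1$, whence $|\lambda_1|_p>1$ and all four eigenvalues $\lambda_1^{\pm1}\lambda_2^{\pm1}$ have norm $\neq1$; in particular none equals $1$, which is why one element suffices there. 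Without this (or an equivalent eigenvalue-norm argument) your part (3) does not establish that $\gamma$ fixes only the span of $v$.
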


Notice that in the split case it is more convenient to work with the subgroup $\HvS[p]^+$ and we will see that it suffices to find a single element  $\gamma=\gamma_1=\gamma_2=\gamma_3$.

\subsubsection{Proof of Proposition~\ref{latticepoints}, part (1)}
We give a detailed proof of the first part of Proposition~\ref{latticepoints}, and shall regard the method of finding lattice points in tree-like graphs in a quantitatively fashion to be at our disposal thereafter. The Haar measure associated to the orbit $$\Gamma_4 \HvS k_v=\Gamma_4 k_v\HvS^{k_v}=\Gamma_4 k_v (H_\infty\times H_{v,p})$$ is $m_{\HvS^{k_v}}=m_{H_\infty}\times m_{\HvS[p]}$ satisfying $m_{H_\infty}(\Theta_\infty)m_{\HvS[p]}(K)=V_4^{-1}$. By Corollary~\ref{rankonevolume},
$$m_{\HvS^{k_v}}(\Theta_\infty\times B_{\ell}^{H_{p}})=\left(1+(p^{\ell}-1)\frac{p+1}{p-1}\right)V_4^{-1}>p^{\ell}V_4^{-1}.$$ We  choose 
the integer $\ell\geq 1$
minimal such that $p^\ell V_4^{-1}>1$. This implies that $\Theta_\infty\times B_{\ell}^{H_{p}} $ cannot be an injective set
when projected to $\Gamma_4   k_v(H_\infty \times H_{v,p})$, that is, there exists $\gamma\in\Gamma_4\setminus\{e\}$, $h_1,h_2\in \Theta_\infty\times B_{\ell}^{H_{p}} $ such that $\gamma k_vh_1=k_vh_2$. 
 Thus we found $e\neq\gamma\in (\Theta_\infty)_2\times B_{2\ell}^{\HvS[p]}\cap (\HvS\cap\Gamma_4)$ for $\ell\leq \log_{p}(V_4)+1$,
and the latter gives
\[
\Vert\gamma\Vert_p\leq p^2 V_4^2.
\]

Since $\Hv(\bR)\cong\SO[3](\bR)$ is compact, we also see that $\gamma$ must be diagonalizable
over the algebraic closure of $\bQ$. Moreover, recall that any special rotation in $\bR^3$ has an axis of rotation, i.e. an eigenvector for eigenvalue one. Hence, it follows that the eigenvalues of $\gamma$ are $\lambda,\lambda^{-1},1$ for some algebraic number $\lambda$. Since $\gamma\in\Gamma_4$ and $\Gamma_4$ is torsion-free and discrete in $G_S$, it also follows that $\vert\lambda\vert_p\neq 1$. However, this implies that $\lambda$ and $\lambda^{-1}$ cannot be Galois conjugated over $\bQ_p$, which shows that $\lambda\in\bQ_p$.
Since a maximal $\bQ_p$-split torus in a simple algebraic group over $\bQ_p$ is unique up to conjugation, there exists some $g\in H_{v,p}$ that conjugates $\gamma$ to the element $h_v^{-1}\on{diag}(\lambda,\lambda^{-1},1)h_v$. 
(This shows unfortunately also that $\gamma$ by itself does not yet satisfy the statement of the proposition
since we consider $\bH_v\cong\on{SO}(2,1)$ as a subgroup of $\on{SL}_{4}$ and so $\gamma$ has a two-dimensional eigenspace for
the eigenvalue $1$.)

We set $\gamma_1=\gamma$ and wish to apply the same argument to find a different lattice element satisfying
almost the same estimate. For this we define the Dirichlet set
\[
F_{1}=\set{h\in \HvS^{+,k_v}\cap \Theta: d(hK,K)<d(hK,\gamma_{1}^{\ell}K)\text{ for all }\ell\in\bZ\setminus\set{0}}
\]
for the group $\langle\gamma_{1}\rangle $ (and the origin $K$).
Recall that $\gamma_1$ is acting on the $p+1$-regular tree by a translation along a certain geodesic within the tree.
There are two cases to consider. 

It could be that the geodesic  goes through $K$, in which case $F_1$ might be as small as $K$ and $p-1$ rooted trees
branching out at $K$ (with the remaining two branches leading to $\gamma_1 K$ and $\gamma_1^{-1}K$) but it has to contain
at least these $p-1$ rooted trees.

It could be that the geodesic does not go through $K$, in which case one of the branches starting at $K$ leads to that geodesic (and some 
of the points of this branch may belong to $F_1$)
and the remaining $p$ branches give $p$ rooted trees that completely belong to $F_1$. 

Hence in the worst case $F_1$ contains the set $F_1'$ consisting of $K$ and of $p-1$ out of the $p+1$ branches
out of $K$. This shows that
we have
\[
m_{\HvS^{k_v}}\left(\Bigl(\Theta_\infty\times B_{\ell+1}^{H_{p}}\Bigr)\cap F_1'\right)=\left(1+(p-1)p^{\ell}\right)V_4^{-1}>p^{\ell}V_4^{-1}>1
\]
and so that $\Bigl(\Theta_\infty\times B_{\ell+1}^{H_{p}}\Bigr)\cap F_1'$ cannot be an injective set
for the projection  to $\Gamma_4   k_v(H_\infty \times H_p)$. Hence
there exists $\gamma_2\in\Gamma_4\setminus\{e\}$, $h_1'\neq h_2'\in (\Theta_\infty\times B_{\ell+1}^{H_{p}})\cap F_1'$ 
	such that $\gamma_2 k_vh_1'=k_vh_2'$. It follows just as before that $\Vert\gamma_2\Vert_2\leq p^4V_4^2$ and 
	that $\gamma_2$ is diagonalizable over $\bQ_p$. 
	
Suppose that $\gamma_1$ and $\gamma_2$ have the same common eigenvector $w\in v^\perp$ for the eigenvalue $1$. 
This shows that $\gamma_2$ maps $\{v,w\}^\perp$ into itself. This forces $\gamma_1$ and $\gamma_2$ to commute
which in turn implies that they act by translation along the same geodesic on the $p+1$-regular tree.
However, this is impossible as any translation along this geodesic maps $F_1'$ to a disjoint set
and $\gamma_2$ maps an element of $F_1'$ back to $F_1'$. This concludes the proof of the first part of Proposition~\ref{latticepoints}. \qed

\subsubsection{Proof of Proposition~\ref{latticepoints}, part (2)}

Let $K$ denote the standard compact subgroup of $\HvS[p]$. Let $\mathbb{L}<\bH_v$ be a subgroup 
	that is $\bZ_p$-conjugated to $\on{SO}(2,1)<\on{SO}_\eta(3,1)$. In particular we may
	consider the $p+1$-regular tree $L_p/(L_p\cap K)$ inside the $p^2+1$-regular tree $\HvS[p]/K$. 
	Applying the same argument as in the proof of the first part of the proposition
	for the set $(B_\ell^{\HvS[p]}\cap L_p)K$
	we find an element $\gamma_1\in\Gamma\setminus\{e\}$ with $\Vert\gamma_1\Vert_p\leq p^2 V_4^2$. 
	As in that proof $\gamma_1$ must be diagonalizable over the algebraic closure of $\bQ_p$
	with at least one eigenvalues of absolute value bigger than one and one with absolute value
	smaller than one. 	We may assume that $v_1,v_2,v_3,v_4$ are the eigenvectors
	with the eigenvalues $\lambda_1,\lambda_2,\lambda_3,\lambda_4$ respectively. If all eigenvalues are different from $1$,
	then we simply set $\gamma_1=\gamma_2=\gamma_3$. So assume now that $\lambda_4=1$,
	which implies that we may choose $v_4\in\bQ_p^4$ and that $\gamma_1$ lies in a subgroup that is conjugated to
	$L_p$ (see Lemma \ref{oneconjugateclass}). 
	
	Hence we may apply the second part of the argument above
	to find $\gamma_2$ with the estimate $\Vert\gamma_2\Vert_p\leq p^4 V_4^2$. If $\gamma_1$ and $\gamma_2$ do not
	have a common eigenvector for eigenvalue $1$ 
	(other than $v$) we set $\gamma_3=\gamma_2$. Hence we are reduced to the
	case where $\gamma_1,\gamma_2$ belong to a subgroup conjugated to $L_p$. In this case
	there exists an embedded $p+1$-regular subtree inside our $p^2+1$-regular tree so that both $\gamma_1$
	and $\gamma_2$ preserve that subtree, act via certain isometries on the subtree, and all remaining vertices
	move isometrically along. The following argument for finding $\gamma_3$ is quite similar to the	second step finding $\gamma_2$.  In fact, we consider two cases and define immediately a subset $F_2'$
	of an appropriate Dirichlet set. 

The subtree could contain $K$ and in that case we define $F_2'$
to consist of $K$ and of the $(p^2+1)-(p+1)=p^2-p$ branches at $K$ that do not belong to the subtree.
Note that  any $\gamma\in\langle\gamma_1,\gamma_2\rangle$ preserves the subtree. Moreover any
nontrivial element of $\Gamma$  that preserves the subtree must be moving $K$
to a different vertex in that subtree, which implies $\gamma F_2'\cap F_2'=\emptyset$. 

Or the subtree could be disjoint to $K$ and in that case we define $F_2'$ to consist of $K$ and the $p^2$-many branches
at $K$ that point away from the subtree. As before  $\gamma F_2'\cap F_2'=\emptyset$
for any nontrivial $\gamma\in\Gamma$ that preserves the subtree. 

We now calculate
\begin{multline*}
 m_{\HvS^{k_v}}\left(\Bigl(\Theta_\infty\times B_{\ell+1}^{H_{p}}\Bigr)\cap F_2'\right)\geq
 \left(1+ (p^2-p) \frac{p^{\ell+1} -1}{p^2-1}\right)V_4^{-1}\\
=\left(1+ p\frac{p^{\ell+1} -1}{p+1}\right)V_4^{-1}>\frac12p^{\ell+1} V_4^{-1}>p^{\ell} V_4^{-1} >1.
\end{multline*}
This implies the existence of a $\gamma_3$ that does not preserves the subtree and
satisfies the estimate
\[
\Vert\gamma_3\Vert_p\leq p^4 V_4^2. 
\]
The three elements $\gamma_1,\gamma_2,\gamma_3$ cannot have any  common eigenvector
for eigenvalue $1$ other than $v$ since the common eigenspace for $\gamma_1,\gamma_2$ was $2$-dimensional
and if $\gamma_3$ has the same eigenspace it would belong to the same conjugate of $L_p$
and so preserve the same subtree. \qed

\subsubsection{Proof of Proposition \ref{latticepoints}, part (3)}
In Section \ref{kak-split} we constructed a surjective group homomorphism $\psi$ from $\on{SL}_2(\bQ_p)\times\on{SL}_2(\bQ_p)$ to $\on{SO}(2,2)(\bQ_p)^{+}$ preserving the Cartan decomposition by Lemma \ref{prevstructure}. In particular, we can identify the quotient $$\SLQ[2]/\SLZ[2]\times\SLQ[2]/\SLZ[2]$$ with $\on{SO}(2,2)(\bQ_p)^+/\on{SO}(2,2)(\bZ_p)^+$. 
This  gives $\HvS[p]^+/K$ the structure of a product of two graphs as in Proposition \ref{pserretree}. Also by Lemma \ref{comparingvolumes} we may switch from studying $\Gamma_4H_{v,S}k_v$ to studying $\Gamma_4 H_{v,S}^+k_v$ and know that the volume
changes at most by an absolutely bounded multiplicative factor. Below we let $V_+$
denote the volume of $\Gamma_4 H_{v,S}^+k_v$.

We now construct a single lattice element belonging
to the image of $\SLQ[2] \times \SLZ[2]$. In fact we consider
the image $B$ of $B_\ell^{\SLQ[2]}\times\SLZ[2]$ which satisfies
\[
 m_{\HvS^{+,k_v}}(\Theta_\infty\times B)=\left(1+\tfrac{p}{p-1}(p^{2\ell}-1)\right)V_+^{-1}
\geq p^{2\ell}V_+^{-1}.
\]
We now choose $\ell$ such that the latter is $>1$ which implies the existence of
some  $\gamma\in\Gamma_4\cap (B^2)$
that is in the image of $(g_1,g_2)\in \SLQ[2] \times \SLZ[2]$ with $\Vert g_1\Vert_p\leq p^{2\ell}$. 
Recall that the Jordan-Chevalley decomposition of
elements in algebraic groups is uniquely determined
and well behaved under algebraic homomorphisms. 
Hence, if we consider the Jordan-Chevalley decomposition of $g_1$ and $g_2$
and recall that the kernel of $\psi$ is finite, then we see that the image of $(g_1,g_2)$ cannot be diagonalizable unless $g_1$ and $g_2$ itself are diagonalizable.
By construction, the eigenvalues of $g_1$ are $\lambda_1,\lambda_1^{-1}$ 
with $1\leq|\lambda_1|_p\leq p^{2\ell}$ and the eigenvalues of $g_2$ are $\lambda_2,\lambda_2^{-1}$
with $|\lambda_2|_p=1$. 
However, by the properties of the homomorphism $\psi$ in Section \ref{kak-split} the eigenvalues of $\gamma$ are $\lambda_1^{\pm1}\lambda_2^{\pm1}$. By the same argument as in the proof of the first part of the proposition
one of the eigenvalues must be of norm $\neq 1$, which implies $|\lambda_1|_p>1$ (and a fortiori that $g_1$ is diagonalizable over $\bQ_p$)
and so that none of the eigenvalues of $\gamma$ is equal to $1$.
Finally $\Vert\gamma\Vert_p\leq\Vert g_1\Vert_p\leq p^{2\ell}\leq p^2V_+\ll p^2V_4$. \qed

\subsection{Lower bound for $\vol{\Gamma \HvS^+ k_v}$}
\label{volVSdisc}
We can now establish the first relation between $\|v\|^2=D$ and the volume of the orbit $\Gamma \HvS^+ k_v$. By Lemma \ref{comparingvolumes}, the volume $V$ of $\Gamma \HvS^+ k_v$ is bounded by above and below in terms of the volume of $\Gamma_4 \HvS k_v$ and we therefore may bound the lattice elements $\gamma_i$ from the previous section with $\|\gamma_i\|_S\ll p^4V^2$.
\begin{proposition}
\label{discvol1}
There exists $\consta\label{exp:discvol1} > 0$ such that
\[
D^{\ref{exp:discvol1}} \ll \vol{\Gamma \HvS^+ k_v}.
\]
\end{proposition}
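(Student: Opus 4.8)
The plan is to use the lattice elements produced in Proposition~\ref{latticepoints} to pin down the primitive vector~$v$ as the unique common $1$-eigenvector of a handful of integral matrices of controlled size, and then to read off an upper bound for~$\|v\|=\sqrt D$ via Cramer's rule. Write $V=\vol{\Gamma\HvS^+k_v}$; by Lemma~\ref{comparingvolumes} we have $V\asymp V_4$, so Proposition~\ref{latticepoints} supplies elements $\gamma_1,\dots,\gamma_r\in\Gamma_4\cap\HvS$ — with $r=2$ in the $\on{SO}(2,1)$ case, $r=3$ in the $\on{SO}_\eta(3,1)$ case, and $r=1$ in the split $\on{SO}(2,2)$ case — which fix~$v$ (so $v(\gamma_i-\id)=0$), satisfy $\|\gamma_i\|_\infty\ll1$ and $\|\gamma_i\|_p\ll p^4V^2$ (with $\|\gamma\|_p\ll p^2V$ when $r=1$), and whose common eigenspace for the eigenvalue~$1$ is exactly~$\bR v$.

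First I would clear $p$-denominators. Since $\gamma_i\in\on{SL}_d(\bZ[\tfrac1p])$ and $\bZ[\tfrac1p]\cap\bZ_p=\bZ$, there is an integer~$N$ with $p^N\ll p^4V^2$ such that $A_i:=p^N(\gamma_i-\id)^T\in\Mat_d(\bZ)$ for every~$i$ (the transpose being taken because the group acts on row vectors from the right), and from $\|\gamma_i-\id\|_\infty\ll1$ the entries of~$A_i$ are bounded in absolute value by $\ll p^4V^2$. Stacking the~$A_i$ vertically gives an integer matrix~$A$ with entries $\ll p^4V^2$ whose kernel over~$\bQ$ is exactly $\bQ v^{T}$; in particular $\on{rank}(A)=d-1$. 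Choosing $d-1$ linearly independent rows of~$A$ yields a matrix $A'\in\Mat_{d-1,d}(\bZ)$ of rank $d-1$ with the same kernel, and by Cramer's rule the vector of signed maximal minors of~$A'$ is a nonzero integer vector proportional to~$v$. Since~$v$ is primitive, $\|v\|_\infty$ is at most the largest such $(d-1)\times(d-1)$ minor, hence $\ll (p^4V^2)^{d-1}$, and therefore
\[
\sqrt D=\|v\|\ll (p^4V^2)^{d-1}=p^{4(d-1)}V^{2(d-1)}.
\]

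Finally I would absorb the prime. By Proposition~\ref{prime_existence} we have $p\ll(\log D)^2$, so for every $\varepsilon>0$ one has $p^{8(d-1)}\ll_\varepsilon D^{\varepsilon}$; squaring the displayed inequality thus gives $D\ll_\varepsilon D^{\varepsilon}V^{4(d-1)}$, and taking $\varepsilon=\tfrac12$ yields $V\gg D^{1/(8(d-1))}$, which is the claim with $\ref{exp:discvol1}=\tfrac1{8(d-1)}$ (the finitely many small~$D$ being absorbed into the implied constant). I do not expect a genuine obstacle beyond Proposition~\ref{latticepoints} itself; the one point to watch is that the common $1$-eigenspace of the~$\gamma_i$ is exactly one-dimensional, which is precisely what that proposition was engineered to guarantee and which makes the Cramer's rule step applicable — in the split case this is immediate since the single element~$\gamma$ of Proposition~\ref{latticepoints}(3) already has $\ker_{\bQ}(\gamma-\id)=\bQ v$ (equivalently, only fixes the span of~$v$).
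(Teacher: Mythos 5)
Your proposal is correct and follows essentially the same route as the paper: small lattice elements from Proposition~\ref{latticepoints} with common $1$-eigenspace $\bQ v$, the linear system $(\gamma_i-e)$, recovery of $v$ up to scale via maximal minors (Cramer), the bound $\|v\|\ll(p^4V^2)^{d-1}$ using primitivity, and absorbing $p\ll(\log D)^2$ via Proposition~\ref{prime_existence}. The only cosmetic difference is that you clear the $p$-denominators before taking minors, whereas the paper bounds the archimedean and $p$-adic norms of the minor vector separately and then multiplies by $\|v'\|_p$; both give the same estimate.
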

\begin{proof}
Using Proposition~\ref{latticepoints} we find $d-2$ elements $\gamma_{1}, \dots, \gamma_{d-2} \in \Gamma\cap\SLQ$ satisfying $\left\|\gamma_{i}\right\|_{S} \ll p^4 V^2$ for $i=1,\dots ,d-2$
such that the common eigenspace for eigenvalue $1$ is spanned by $v$ only.  Consider now the following system of $d(d-2)$ linear equations
\(
(\gamma_{i} - e)v'=0
\)
for an undetermined vector $v'$. 
As $v$ is (up to scalar multiplication) the only solution by construction, the system has rank $d-1$. Pick $d-1$ linear independent rows $r_j$ of this system. 
Now form the square matrix $R$ consisting of these rows and another row with indeterminant entries $t_1,\ldots,t_d$. Taking the determinant of $R$ and expanding it in terms of the coefficients of the last row we get a nontrivial linear expression $a_1t_2+\cdots a_d t_d$, where
the coefficients $a_j$ are minors of $R$. Note that the determinant of $R$ vanishes if we set the last row equal to one of the vectors $r_j$. Hence $v'=(a_1,\ldots,a_d)^T$ is a solution to the above linear equations and hence must be equal to $v$ up to a scalar.

As $\Vert\gamma_i\Vert_\infty\leq 1$ for all $i$,
it is easy to see that $\Vert v'\Vert\ll 1$. For the $p$-adic norm we have $\Vert\gamma_i\Vert_p\ll p^4 V_4^2$ and so we obtain from the above $\Vert v'\Vert_p \ll (p^4 V_4^2)^{d-1}$. 
For all other primes $v'$ is integral. Hence 
the primitive vector $v$ is up to sign the vector $\Vert v'\Vert_p v'$
and has norm $\ll (p^4 V_4^2)^{d-1}$.  
Using $\Vert v\Vert^2=D$ and $|p|\ll_\varepsilon D^\varepsilon$ by Proposition \ref{prime_existence}
the proposition follows.
\end{proof}

\subsection{Upper bound for $\vol{\Gamma \HvS^+ k_v}$}
The following uses that integers are one apart. 
\begin{proposition}
\label{integersareoneapart}
There exists $\consta\label{exp:integersareoneapart}>0$ such that
$$\vol{\Gamma \HvS k_v}\ll D^{\ref{exp:integersareoneapart}}.$$
\end{proposition}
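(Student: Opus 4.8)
The plan is to identify $\vol{\Gamma \HvS k_v}$, up to absolute multiplicative constants, with the number of vertices of the finite graph $\bH_v(\bZ[\tfrac1p])\backslash\bH_v(\bQ_p)/\bH_v(\bZ_p)$, to recognise this number as the number $|P_v|$ of Hecke friends of $v$, and finally to bound $|P_v|$ by an elementary count of integer points on the sphere of radius $\sqrt D$.

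First I would unwind the normalisations of Section~\ref{orbitmeasures}. The set $\Theta$ used there to define the volume is \emph{fixed} and precompact, so $m_{H_\infty}(\Theta_\infty\cap H_\infty)$ lies between two absolute constants, where $H_\infty=\SO[d-1](\bR)$ and $m_{H_\infty}$ is a fixed Haar measure. Writing the orbit as $\Gamma k_v\,\HvS^{k_v}$, the stabiliser of the base point $\Gamma k_v$ inside the acting group $\HvS^{k_v}$ is canonically isomorphic to $\bH_v(\bZ[\tfrac1p])$ -- because $\bH_v$ is cut out over $\bQ$ by the integral equations $xg=x$ at $x=v\in\bZ^d$ -- and is a cocompact lattice in $\bH_v(\bR)\times\bH_v(\bQ_p)$ (recall that $\bH_v(\bR)$ is compact). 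Decomposing $\bH_v(\bQ_p)$ into $\bH_v(\bZ[\tfrac1p])$--$\bH_v(\bZ_p)$ double cosets and using unimodularity of $\bH_v(\bQ_p)$, compactness of $\bH_v(\bR)$, and the fact that the point stabilisers of $\bH_v(\bZ[\tfrac1p])$ on $\bH_v(\bQ_p)/\bH_v(\bZ_p)$ are finite of order bounded only in terms of $d$, a direct covolume computation gives
\[
\vol{\Gamma \HvS k_v}\ \asymp\ \bigl|\,\bH_v(\bZ[\tfrac1p])\backslash \bH_v(\bQ_p)/\bH_v(\bZ_p)\,\bigr|.
\]

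Next I would recognise the right--hand count. Passing to $\HvS^+$ (which by Lemma~\ref{comparingvolumes} changes the volume by at most an absolute factor, since $[\HvS:\HvS^+]=4$), the decomposition \eqref{pr-finite-M} identifies the corresponding double--coset count with $|M|$, up to the harmless index--$2$ discrepancy between the full and the ``spin'' coset spaces. The Hecke--friend formalism of Section~\ref{principalgenus} yields a bijection $M_0\to P_v$ (Proposition~\ref{SupportOfNu}), so $|M_0|=|P_v|$; and since $\bG_1$ has class number one for $d\in\{4,5\}$ -- equivalently $\bG_1(\bQ_p)=\bG_1(\bZ[\tfrac1p])\,\bG_1(\bZ_p)$, so that $\pi_1(\mathcal{U})=\mathcal{Y}_1$ -- every double coset already lies in $\pi_1(\mathcal{U})$, whence $M_0=M$. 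Combining the three identifications, $\vol{\Gamma \HvS k_v}\ll|P_v|$.

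Finally comes the point where distinctness of integers is used. The set $P_v$ consists of $\bG_1(\bZ)$--orbits of primitive integer vectors $w$ with $\|w\|^2=D$, so $|P_v|$ is at most the number $r_d^*(D)$ of primitive integer points on the sphere of radius $\sqrt D$. The closed unit cubes $w+[0,1]^d$ for $w\in\bZ^d$ with $\|w\|^2=D$ have pairwise disjoint interiors, and since $\bigl|\,\|w+t\|^2-D\,\bigr|\ll\sqrt D$ for $t\in[0,1]^d$ they all lie in a spherical shell of volume $\ll D^{(d-1)/2}$; hence $r_d^*(D)\ll D^{(d-1)/2}$ and $\vol{\Gamma \HvS k_v}\ll D^{\ref{exp:integersareoneapart}}$ for any $\ref{exp:integersareoneapart}>\tfrac{d-1}{2}$. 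I expect the only delicate point to be the first step: turning the measure--theoretic normalisation of the orbit volume into the combinatorial vertex count, which requires careful bookkeeping of the finite point stabilisers, of the Haar normalisations at the real and the $p$--adic place, and of the bounded--index comparisons between $\HvS$ and $\HvS^+$ and between the full and the spin coset spaces. Once this dictionary is established, the rest is classical.
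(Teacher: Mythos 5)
Your proposal is correct and follows the same skeleton as the paper's own proof: identify $\vol{\Gamma \HvS k_v}$ up to absolute multiplicative constants with the number of $H_\infty\times\bH_v(\bZ_p)$-orbits in the orbit (equivalently, with the double cosets appearing in \eqref{pr-finite-M}), translate this count via Section~\ref{principalgenus} and Proposition~\ref{SupportOfNu} into a count of integer vectors of norm $\sqrt{D}$, and bound the latter by $\ll D^{(d-1)/2}$. The one genuine divergence is how the double cosets \emph{not} contained in $\pi_1(\mathcal{U})$ are treated. You dispose of them by invoking class number one for $\bG_1=\on{SO}_d$, $d\in\{4,5\}$, i.e.\ $\bG_1(\bQ_p)=\bG_1(\bZ[\tfrac1p])\bG_1(\bZ_p)$, so that $M_0=M$; this is a true classical fact (the genus of the sum of $d\leq 8$ squares has a single proper class), but it is an external arithmetic input that the paper deliberately does not use. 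The paper instead exploits only the compactness of $\bG_1(\bQ)\backslash\bG_1(\bA)$: this quotient is a finite union of $\bG_1(\bR\times\prod_{p'}\bZ_{p'})$-orbits, so the remaining double cosets correspond to integer points on finitely many fixed ellipsoids, each again contributing $\ll D^{(d-1)/2}$. That argument is softer, needs no class number computation, and would survive even if the class number were larger; your variant buys a cleaner identification with the Hecke-friend set $P_v$ at the cost of the imported classical result. Two bookkeeping remarks: the finite point stabilizers you propose to control via Minkowski's bound are handled in the paper by passing to the torsion-free finite index subgroup $\Gamma_4$ and Lemma~\ref{comparingvolumes}, which is an equivalent device; and the discrepancy between the full and the $+$-coset spaces has index $4$ rather than $2$ (Lemma~\ref{indexsplit}, Lemma~\ref{lemmagivingsl2sl2}), which is of course still harmless for the bound.
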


\begin{proof}
	Since the notion of volume changes at most a bounded amount if we change the precompact open neighborhood $\Theta$ that is used to define $\vol{\Gamma \HvS k_v}$, we may as well assume that $\Theta=G_{1,\infty}\times \bG_1(\bZ_p)$. Using in addition that $\Gamma_4$ has finite index in $\Gamma$ (as in Lemma \ref{comparingvolumes}) we see that $\vol{\Gamma \HvS k_v}\asymp N$, where $N$ is the number of disjoint $H_\infty\times\bH_v(\bZ_p)$-orbits in $\Gamma \HvS k_v$.
	
	By the discussion in Section \ref{principalgenus} (especially \eqref{pr-finite-M} and Proposition \ref{SupportOfNu}) the number of disjoint $H_\infty\times\bH_v(\bZ_p)$-orbits in $\Gamma G_{1,\infty}\times \bG_1(\bZ_p)$ is bounded by 
	the number of integer points on the sphere
	of radius $\sqrt{D}$, i.e. by $\ll D^{\frac{d-1}2}$. 
	
	The number of disjoint $H_\infty\times\bH_v(\bZ_p)$-orbits in $\Gamma g_p G_{1,\infty}\times \bG_1(\bZ_p)$
	for different $g_p\in\bG_1(\bQ_p)$ can be handled in the same way as these correspond to integer points on certain ellipsoids. Since the adelic quotient $\bG_1(\bQ)\backslash \bG_1(\mathbb{A})$ is compact,
	it is a finite union of orbits of $\bG_1(\bR\times\prod_{p'}\bZ_{p'})$. Hence there are only finitely
	many ellipsoids to consider, which gives the lemma with $\ref{exp:integersareoneapart}=\frac{d-1}2$.
\end{proof}

\subsection{Upgrading to $\Gamma L_{v,S}^+ (k_v,e,\theta_v,e)$}

\begin{proposition}
\label{discvoljoint}
\label{integersareoneapart2}
The joint orbit measure satisfies
$$D^{\ref{exp:discvol1}} \ll \on{vol}\left(\Gamma L_{v,S}^+ (k_v,e,\oT,e)\right)\ll D^{\ref{exp:integersareoneapart}}.$$
\end{proposition}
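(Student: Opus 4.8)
The plan is to derive the claimed two-sided bound for $\on{vol}\bigl(\Gamma L_{v,S}^+(k_v,e,\oT,e)\bigr)$ from the corresponding bounds already established for the single orbit $\Gamma\HvS^+ k_v$ in Propositions~\ref{discvol1} and~\ref{integersareoneapart}, together with a comparison between the joint orbit and its projection to the first factor. The key point is that the acting group for the joint orbit is a diagonally embedded copy of $\SO[d-1](\bR)\times\bH_v(\bQ_p)$ (via the isomorphism coming from $\widetilde{\bL}_v=(e,g_v)\Delta_{\bH_v}(e,g_v^{-1})$ projected to $\bG_1\times\on{SL}_{d-1}$), and this group is abstractly isomorphic, as a topological group with its $\bZ_p$-structure, to the acting group $H_\infty\times\bH_v(\bQ_p)$ of the single orbit $\Gamma\HvS^+ k_v$. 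Since $g_v\in\on{GL}_d(\bZ)\subset\on{GL}_d(\bZ_p)$, conjugation by $g_v$ at the $p$-adic place is an isometry for $\|\cdot\|_p$ and preserves $\bZ_p$-points, so the Haar-measure normalizations and the reference neighbourhood $\Theta$ match up to absolutely bounded factors.

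Concretely, I would proceed as follows. First, recall from Lemma~\ref{comparingvolumes} that replacing $\Gamma$ by the finite-index torsion-free $\Gamma_4$ and the acting group $L_{v,S}^+$ by any finite-index normal subgroup changes the volume only by an absolutely bounded multiplicative factor; the same is true for the single orbit. Second, I would observe that the natural projection $\pi_1:\mathcal{Y}_{\on{joint}}^+\to\mathcal{Y}_1^+$ maps the joint orbit $\Gamma L_{v,S}^+(k_v,e,\oT,e)$ onto $\Gamma H_{v,S}^+ k_v$ with finite fibres, and more precisely the stabilizer $\on{Stab}_{L_{v,S}^+}(\Gamma(k_v,e,\oT,e))$ maps with finite-index image into $\on{Stab}_{H_{v,S}^+}(\Gamma k_v)$ — indeed, a point in the stabilizer of the joint orbit is an element of $L_{v,S}^+$ lying in $\Gamma^{(k_v,e,\oT,e)}$, and its first component lies in the stabilizer of the single orbit, while the fibre of the first-component map is controlled by the structure of $\Gamma=\bG_{\on{joint}}(\bZ[\tfrac1p])$ sitting inside $\bG_1(\bZ[\tfrac1p])\times\bG_2(\bZ[\tfrac1p])$. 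Since $\on{vol}$ is (up to the bounded factor from $\Theta$) the reciprocal of the Haar measure of the stabilizer-neighbourhood, and the acting groups for the two orbits are $\bZ_p$-isometrically isomorphic, one concludes $\on{vol}\bigl(\Gamma L_{v,S}^+(k_v,e,\oT,e)\bigr)\asymp \on{vol}\bigl(\Gamma H_{v,S}^+ k_v\bigr)$, with the implicit constants absolute (depending only on the index bounds, which are bounded in terms of $d$). Combining this with Proposition~\ref{discvol1} for the lower bound and Proposition~\ref{integersareoneapart} for the upper bound gives the statement with the same exponents $\ref{exp:discvol1}$ and $\ref{exp:integersareoneapart}$.

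The main obstacle I expect is making the comparison $\on{vol}(\text{joint})\asymp\on{vol}(\text{single on }\mathcal{Y}_1^+)$ fully rigorous: one must check that the index $[\on{Stab}_{H_{v,S}^+}(\Gamma k_v):\text{image of }\on{Stab}_{L_{v,S}^+}]$ is bounded independently of $D$ and $p$. This reduces to understanding how $\Gamma=\bG_{\on{joint}}(\bZ[\tfrac1p])$ projects into each factor and to bounding the fibre of this projection intersected with the (arithmetic) stabilizer; here one uses that the diagonal embedding is defined by the fixed matrix $g_v\in\on{GL}_d(\bZ)$ together with the strong-approximation/finiteness input already present in the paper (cf.\ \eqref{pr-finite-M} and the surrounding discussion). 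An alternative, perhaps cleaner, route is to bypass the projection entirely and run the volume computation directly for the joint orbit exactly as in Sections~\ref{sec:latticepoints}--\ref{volVSdisc}: the lower bound then comes verbatim from Proposition~\ref{latticepoints} (the lattice elements $\gamma_i$ found there, embedded diagonally, constrain $v$ and hence $D$), and the upper bound from the same counting of integer points on spheres and ellipsoids as in Proposition~\ref{integersareoneapart}, now applied in $\mathcal{Y}_{\on{joint}}^+$ using that $\mathcal{Y}_1^+$ is compact so the extra $\mathcal{Y}_2$-direction contributes only a $p$-independent factor to the volume.
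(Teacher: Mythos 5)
Your overall strategy coincides with the paper's: reduce the joint statement to Propositions~\ref{discvol1} and~\ref{integersareoneapart} by showing $\vol{\Gamma \LvS^+ (k_v,e,\oT,e)}\asymp\vol{\Gamma \HvS^+ k_v}$. The easy half is indeed as you say: the stabilizer of the joint orbit maps injectively into that of $\Gamma \HvS^+ k_v$ under $\pi_1$, so volume can only decrease under this projection and the lower bound $D^{\ref{exp:discvol1}}\ll\vol{\Gamma \LvS^+ (k_v,e,\oT,e)}$ follows. The gap is in the other direction, and it sits exactly at the point you yourself flag as the main obstacle: you must show that (up to absolutely bounded index) every element of the arithmetic stabilizer of $\Gamma\HvS^+ k_v$, i.e.\ essentially every $\gamma\in\bH_v(\bZ[\tfrac1p])$, already satisfies the second arithmetic condition, namely that the image of $g_v\gamma g_v^{-1}$ under the projection $\on{ASL}_{d-1}\to\SL_{d-1}$ lies in $\SL_{d-1}(\bZ[\tfrac1p])$. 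The tools you propose for this, \eqref{pr-finite-M} and strong approximation, are not the right ones: \eqref{pr-finite-M} is a class-number statement whose cardinality may well grow with $D$ and it says nothing about this stabilizer index. The correct mechanism is elementary and is the very point the paper exploits: $g_v,g_v^{-1}\in\SL_d(\bZ)$, so conjugation by $g_v$ and the projection from $\on{ASL}_{d-1}$ to $\SL_{d-1}$ preserve $\bZ[\tfrac1p]$-points, hence the extra condition is automatic and the two stabilizers correspond exactly (the remaining finite-index and $\Theta$-normalization issues are absorbed by Lemma~\ref{comparingvolumes} and the conventions of Section~\ref{orbitmeasures}).

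The paper packages this as follows: it first proves $\vol{\Gamma\oLvS^+(k_v,e,g_vk_va_v,e)}=\vol{\Gamma\HvS^+(k_v,e)}$ for the lifted orbit inside the quotient of $\bG_1\times\on{ASL}_{d-1}$, using the explicit isomorphism $(h_\infty,h_p)\mapsto(h_\infty,h_p,h_\infty,h_p^{g_v^{-1}})$ of acting groups together with a product-form $\Theta$ satisfying $\Theta_{1,\infty}\cap H_\infty=\Theta_{2,\infty}\cap H_\infty$, and then sandwiches $\vol{\Gamma \LvS^+ (k_v,e,\oT,e)}$ between this lifted volume and $\vol{\Gamma\HvS^+(k_v,e)}$ by applying twice the principle that volume only decreases under projections (to $\mathcal{Y}_{\on{joint}}$ resp.\ to $\mathcal{Y}_1$). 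If you replace your appeal to \eqref{pr-finite-M} by the integrality argument above (or by this $\on{ASL}_{d-1}$-lift sandwich), your first route closes. Your alternative route, redoing Sections~\ref{sec:latticepoints}--\ref{volVSdisc} directly in $\mathcal{Y}^+_{\on{joint}}$, suffers from the same issue for the upper bound: compactness of $\mathcal{Y}_1^+$ is not what controls the number of $H_\infty\times\bH_v(\bZ_p)$-components of the joint orbit, and one again needs that the second-factor integrality condition is automatic.
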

\begin{proof}
We claim that $\vol{\Gamma\widetilde{L}_{v,S}^+(k_v,e,g_vk_va_v,e)}=\vol{\Gamma\HvS^+(k_v,e)}$.  
For this first notice that the image of the Haar measure $m_{\HvS^{+,k_v}}$ on $\HvS^{+,k_v}=H_\infty\times H_{v,p}^+$ under the push forward of $(h_\infty,h_p)\mapsto (h_\infty, h_p, h_\infty,h_p^{g_v^{-1}})$ defines a Haar measure on 
the acting group for the orbit $\Gamma\widetilde{L}_{v,S}^+(k_v,e,g_vk_va_v,e)$ (see Section \ref{orbitsofsubgroups}
for the notation $\widetilde{L}_{v}$). Moreover,
this map is consistent with the map 
\begin{multline*}
	 \Gamma h(k_v,e)\in\Gamma \HvS^+(k_v,e)\mapsto\\\Gamma(h(k_v,e),(g_v,g_v) h (k_va_v,g_v^{-1}))\in \Gamma\widetilde{L}_{v,S}^+(k_v,e,g_vk_va_v,e),
\end{multline*}
which induces the normalized Haar measure on the second orbit from the normalized Haar measure of the first. 
Finally we may assume that the set $\Theta\subset G_{1,S}\times\on{SL}_d(\bR\times\bQ_p)$ has the form $\Theta_{1,\infty}\times\bG_1(\bZ_p)\times\Theta_{2,\infty}\times\on{SL}_d(\bZ_p)$ and satisfies $\Theta_{1,\infty}\cap H_\infty=\Theta_{2,\infty}\cap H_\infty$.  Together
with the definition of the volume, this gives the claim.

On the other hand it is clear that projecting
the orbit $\Gamma\widetilde{L}_{v,S}^+(k_v,e,g_vk_va_v,e)$ within the quotient corresponding to $\on{ASL}_{d-1}$ to the orbit $\Gamma {L}_{v,S}^+(k_v,e,\theta_v,e)\subset\mathcal{Y}_2$, volume can only decrease. Hence,
we obtain
\[
 \vol{\Gamma\widetilde{L}_{v,S}^+(k_v,e,\oT',e)}\geq \on{vol}\left(\Gamma L_{v,S}^+ (k_v,e,\oT,e)\right) \geq \vol{\Gamma\HvS^+(k_v,e)}, 
\]
where the last inequality
follows by the same argument using the projection to~$\mathcal{Y}_1$.
Hence the result follows from Propositions \ref{discvol1}--\ref{integersareoneapart}.
\end{proof}

\subsection{A variant calculation for $\Gamma \HVS \oT$}
\label{volumeshape}
We now modify the previous arguments of Section~\ref{sec:latticepoints} and Section~\ref{volVSdisc} to calculate a lower bound for the volume of the orbit $\Gamma \HVS \oT$. We have shown that certain small lattice elements $\{\gamma_i\}$ of $\Hv=\on{Stab}_{\bG_1}(v)$ have $\langle v\rangle$ as their common eigenspace for eigenvalue $1$. 
The group $\HV=\overline{g_v\Hv g_v^{-1}}$ on the other hand is the orthogonal group of the quadratic form associated to $A_v=(g_v^Tg_v)_{i,j<d}$ (\cite[Equation (3.3)]{AES}), that is, it is the stabilizer subgroup of $\bG_2$ acting on the space $\on{Sym}_{d-1}$ 
of symmetric matrices by $\gamma\mapsto\Psi_{\gamma}$ where $\Psi_{\gamma}A=\gamma A \gamma^T$ for $A\in\on{Sym}_{d-1}$. We show that again there exist small lattice elements (in terms of the volume) $\{\gamma_i\}$ whose unique fixed point is $A_v$. 
This is implied if the group generated by $\{\gamma_i\}$ is Zariski dense since the special orthogonal group determines the orthogonal form uniquely up to scalar (see e.g.~\cite[Lemma 3.3]{AES}).
This is the formulation of the effective Borel-Wang density theorem as discussed in \cite[Section 17.3]{EMV}.

Let us begin by noting that the proof of Corollary \ref{classificationofH} also gives the same result for $\HVS$, i.e.
the group $\HVS[p]$ is conjugate over $\bZ_p$ to one of the model groups of Section~\ref{classification}.
In particular, we can again use the same geometric structures.

\begin{proposition}
\label{latticepoints_secondfactor}
In all cases $H_{\Lambda_v,p}$ (isomorphic to $\on{SO}(2,1)(\mathbb{Q}_{p})$, $\on{SO}_\eta(3,1)(\mathbb{Q}_{p})$, 
or $\on{SO}(2,2)(\mathbb{Q}_{p})$) there exists elements
$\gamma_1,\ldots,\gamma_4\in \Gamma_4\cap H_{\Lambda_v,S}$
with
\[
 \|\gamma_{i}\|_p\ll p^*\on{vol}(\Gamma \HVS \oT)^* \mbox{ for } i=1,\ldots,4.
\]
such that $\langle\gamma_1,\ldots,\gamma_4\rangle$ is Zariski-dense in $\bH_{\Lambda_v}$. 
\end{proposition}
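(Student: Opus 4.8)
The plan is to mimic the proof of Proposition \ref{latticepoints} and its ``second step'' argument almost verbatim, but now working inside the orbit $\Gamma_4 \HVS \oT$ and translating the conclusion about common eigenvectors into a statement about Zariski density. First I would observe, exactly as at the start of Section~\ref{volumeshape}, that by Corollary~\ref{classificationofH} (applied to $\HV$ rather than $\Hv$, as noted in the excerpt) the group $\HVS[p]$ is $\bZ_p$-conjugate to one of the three model groups, so that $\HVS[p]/K$ carries the tree (or product-of-trees) structure from Section~\ref{quasisplitgeometry}. The volume of the orbit $\Gamma_4 \HVS \oT$ differs from $V' = \vol{\Gamma \HVS \oT}$ by an absolutely bounded factor by Lemma~\ref{comparingvolumes}, so any norm bound of the shape $p^\ast (V')^\ast$ is insensitive to this switch. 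As before I would normalise the Haar measure compatibly with $m_{H_\infty}(\Theta_\infty)m_{\HVS[p]}(K)=(V')^{-1}$ (up to the bounded factor).

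The core is a repeated pigeonhole/Dirichlet-domain argument identical in structure to the three parts of Proposition~\ref{latticepoints}: using Proposition~\ref{rankonevolume}, Lemma~\ref{ranktwovolume}, and Proposition~\ref{pserretree} to estimate the volume of norm balls $B_\ell^{\HVS[p]}$, one chooses $\ell\asymp\log_p(V')$ minimal so that $\Theta_\infty\times B_\ell^{\HVS[p]}$ fails to be injective when projected to $\Gamma_4 \oT \HVS^{\oT}$, producing a nontrivial $\gamma_1\in\Gamma_4\cap\HVS$ with $\|\gamma_1\|_p\ll p^2(V')^2$ and $\|\gamma_1\|_\infty\ll1$ (the latter since $\HV(\bR)\cong\SO[d-1]$ is compact). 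Then, exactly as in the second step of part~(1) (and the analogous steps of parts~(2),~(3)), one passes to a Dirichlet subset $F_1'$ for $\langle\gamma_1\rangle$ — which because $\gamma_1$ translates along a geodesic in the tree contains at least ``most'' of the branches at $K$ — and repeats the pigeonhole argument inside $F_1'$ to extract $\gamma_2$, then $\gamma_3$, then $\gamma_4$, each with $\|\gamma_i\|_p\ll p^\ast (V')^\ast$. At each stage the point is that if the newly found $\gamma_{i}$ preserved the same geodesic (equivalently the same maximal $\bQ_p$-split torus, equivalently the same Zariski-closed proper subgroup) as the previously found ones, it would map $F_{i-1}'$ off itself, a contradiction; one also uses Lemma~\ref{oneconjugateclass} in the $\SO[\eta](3,1)$ case to identify the subtree-preserving subgroups with conjugates of $\SO(2,1)$, and the tensor-product description from Lemma~\ref{prevstructure} in the $\SO(2,2)$ case.

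The last ingredient is to convert ``no common proper invariant structure'' into genuine Zariski density of $\langle\gamma_1,\ldots,\gamma_4\rangle$ in $\bH_{\Lambda_v}$. Here I would argue as in \cite[Section~17.3]{EMV}: the Zariski closure $\mathbb{M}$ of $\langle\gamma_1,\ldots,\gamma_4\rangle$ is a $\bQ_p$-subgroup of $\bH_{\Lambda_v}$; since each $\gamma_i$ has an eigenvalue of $p$-adic absolute value $\neq1$ (by torsion-freeness of $\Gamma_4$ and discreteness, as in Proposition~\ref{latticepoints}), $\mathbb{M}$ is noncompact, hence contains a nontrivial $\bQ_p$-split torus, hence — given the low rank of $\bH_{\Lambda_v}$ — is either all of $\bH_{\Lambda_v}^+$ (in which case we are done, as the special orthogonal group determines the form up to scalar by \cite[Lemma~3.3]{AES}) or one of the rank-one subgroups classified via Lemma~\ref{oneconjugateclass}/Lemma~\ref{prevstructure}. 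In the latter case $\mathbb{M}$ would preserve a proper geodesic or subtree fixed by all four $\gamma_i$, contradicting the construction. I expect the main obstacle to be the bookkeeping in the $\SO(2,2)$ case: there the ``tree'' is a product of two $(p+1)$-regular trees and the diagonally embedded principal $\SL_2$ makes the possible intermediate subgroups (and hence the Dirichlet sets one must avoid) more delicate, so one has to be careful that the successive pigeonhole steps genuinely rule out every proper reductive $\bQ_p$-subgroup rather than just the ``obvious'' split torus, and that the four elements suffice to pin down $\bH_{\Lambda_v}$ rather than merely a larger-than-torus proper subgroup.
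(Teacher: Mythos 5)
Your treatment of the two rank-one cases is essentially the paper's: run the tree pigeonhole/Dirichlet-domain argument of Proposition~\ref{latticepoints} for the orbit $\Gamma\HVS\oT$ (using that $\HVS[p]$ is $\bZ_p$-conjugate to a model group), note the Zariski closure is reductive because $\HV$ is $\bQ$-anisotropic, has $\bQ_p$-rank one because of the eigenvalues off the unit circle, and then the classification of reductive subgroups of $\on{SO}(2,1)(\bQ_p)$ resp.\ $\on{SO}_\eta(3,1)(\bQ_p)$ (torus, conjugate of $\on{SO}(2,1)$, or everything) together with the ``no common preserved geodesic/subtree'' property forces density. One side remark: your claim $\|\gamma_1\|_\infty\ll1$ ``since $\HV(\bR)$ is compact'' is false with an absolute implied constant --- elements of $\HV(\bZ[\tfrac1p])$ are conjugate into $\on{SO}_{d-1}(\bR)$ only via $\theta_v$, which lives in the noncompact group $\SL_{d-1}(\bR)$; this is exactly the ``unexpected complication'' the paper handles in Proposition~\ref{discvol2}. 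It is harmless here because the statement only bounds $\|\gamma_i\|_p$, but it should not be asserted.

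The genuine gap is the split case $\HVS[p]\cong\on{SO}(2,2)(\bQ_p)$, and it is precisely the point you flag but do not resolve. Your dichotomy ``either all of $\HV$ or one of the rank-one subgroups classified via Lemma~\ref{oneconjugateclass}/Lemma~\ref{prevstructure}'' is wrong here: $\HV$ has $\bQ_p$-rank two, its proper noncompact reductive subgroups include rank-two groups such as $\SL_2\times T$ for a maximal torus $T$, as well as (twisted) diagonal copies of $\SL_2$, and neither cited lemma classifies them (Lemma~\ref{oneconjugateclass} concerns $\on{SO}_\eta(3,1)$ and Lemma~\ref{prevstructure} only records compatibility of $\psi$ with the Cartan decomposition). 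A ``no common invariant geodesic or subtree in the product of trees'' criterion is not set up to exclude all of these, and your pigeonhole steps as described (balls of the form $B_\ell^{\SL_2(\bQ_p)}\times\SL_2(\bZ_p)$, as in part (3) of Proposition~\ref{latticepoints}) would by themselves not even guarantee anything about the second factor. The paper's proof supplies exactly the missing mechanism: it constructs \emph{two} elements that are images of pairs $(g_1,g_2)\in\SL_2(\bQ_p)\times\SL_2(\bZ_p)$ with $g_1$ diagonalizable over $\bQ_p$, eigenvalues of absolute value $\neq1$, and with the two first components non-commuting, and then \emph{two more} elements obtained symmetrically from $\SL_2(\bZ_p)\times\SL_2(\bQ_p)$; Zariski density is then deduced not geometrically but algebraically, via Chevalley's theorem: writing $\mathbb{L}$ as the stabilizer of a line in a representation $\rho$, the fact that $|\lambda_1|_p>1$ while $|\lambda_2|_p=1$ forces any eigenvector of $\rho(g_1,g_2)$ to be a simultaneous eigenvector of $\rho(g_1,e)$ and $\rho(e,g_2)$, so $(g_1,e)$ and $(e,g_2)$ lie in $\mathbb{L}$ separately; the non-commuting noncompact components in each factor then generate that factor, giving $\mathbb{L}=\SL_2\times\SL_2$. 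Incidentally, the eigenvalue separation ($g_2\in\SL_2(\bZ_p)$ has unit eigenvalues) is also what rules out the twisted-diagonal subgroups, which your geodesic/subtree criterion does not obviously see. Without this factor-separation step, or an equally explicit substitute, the split case of your argument does not close.
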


\begin{proof}
Since the acting group at $\infty$ is $H_\infty$ (and in particular not changing) the volume of $\Gamma \HVS \oT$
is $\asymp N$, where $N$ is the number of disjoint $H_\infty\times H_{\Lambda_v,p}$-orbits needed to cover $\Gamma \HVS \oT$. In other words, the situation again reduces to a purely $p$-adic problem.

	Next we claim that in the cases of $\on{SO}(2,1)(\mathbb{Q}_{p})$ and $\on{SO}_\eta(3,1)(\mathbb{Q}_{p})$ the elements that we found by the argument in the proof of Proposition \ref{latticepoints} (conjugated by $g_v$) already satisfy the Zariski density stated in the proposition. 
	
	For this let $\mathbb{L}$ denote the Zariski-closure of the group generated by $\gamma_1,\gamma_2$, (and $\gamma_3$).
	Since $\mathbb{L}<\bH_v$ is defined over~$\bQ$ and $\bH_v$
	is anisotropic over $\bQ$, it follows that $\mathbb{L}$ cannot have a unipotent radical.
	
	In the case of $\on{SO}(2,1)(\mathbb{Q}_p)$ the proof of Proposition \ref{latticepoints} gives two diagonalizable non-commuting elements $\gamma_1,\gamma_2$ with eigenvalues that do not have $p$-adic absolute value one and belong to $\bQ_p$. However, this implies that the reductive non-abelian subgroup $\mathbb{L}$ has $\bQ_p$-rank one and forces $\mathbb{L}=\bH_v$.
	
	In the case of $\on{SO}_\eta(3,1)(\mathbb{Q}_{p})$
	we found 3 matrices $\gamma_1,\gamma_2,\gamma_3$ that each have at least one eigenvalue in $\bQ_p$ of absolute value not equal to $1$ and that together do not
	have a common eigenvector for eigenvalue 1 other than $v$. 
	We again see that the reductive group $\mathbb{L}$ has $\bQ_p$-rank one. By the structure of reductive subgroups of $\on{SO}_\eta(3,1)(\mathbb{Q}_{p})$
	(which is similar to the more well known statement for $\SL_2(\bC)$ and follows also from the finite-dimensional representation theory of $\mathfrak{sl}_2$) it follows that $\mathbb{L}$ is either a torus, 
	conjugated to $\on{SO}(2,1)(\bQ_p)$, or all of  $\on{SO}_\eta(3,1)(\mathbb{Q}_{p})$. By the eigenvalue statement we see that we must be in the last case. 
	
	As the last case is similar to the cases considered before, we will only sketch it. So suppose  $H_{\Lambda_v,p}$ is isomorphic to $\on{SO}(2,2)(\bQ_p)$. We again consider instead $\on{SL}_2(\bQ_p)\times\on{SL}_2(\bQ_p)$ and argue as in the third part of the proof of Proposition \ref{latticepoints} to find a nontrivial element $\gamma_1\in\Gamma_4$ that is the image of some $(g_1,g_2)\in\on{SL}_2(\bQ_p)\times\on{SL}_2(\bZ_p)$ and with $g_1$ being diagonalizable over $\bQ_p$ with eigenvalues not of $p$-adic absolute value one. As the geometric structure of $\on{SL}_2(\bQ_p)\times\on{SL}_2(\bZ_p)/\SL_2(\bZ_p)\times\SL_2(\bZ_p)$ is very similar to that of a tree (see Proposition \ref{pserretree} for the precise structure) we can argue as in the first part of the proof of Proposition \ref{latticepoints} to find
	a second element $\gamma_2$ that is the image of $(g_1',g_2')\in\on{SL}_2(\bQ_p)\times\on{SL}_2(\bZ_p)$ of the same type so that $g_1$ and $g_1'$ do not commute. Repeating the argument using the subgroup $\on{SL}_2(\bZ_p)\times\on{SL}_2(\bQ_p)$
	we find $\gamma_3,\gamma_4$ in the same way. All of these elements satisfy the same type of estimate on their $p$-adic norm. 
	
	Let $\mathbb{L}<\SL_2\times\SL_2$ be the Zariski-closure
	of the preimage of the group generated by $\gamma_1,\ldots,\gamma_4$. As above $\mathbb{L}$ is defined over $\bQ$ and cannot have a unipotent radical, i.e. $\mathbb{L}$ is a reductive $\mathbb{Q}$-group. 
	By Chevalley's theorem there exists a representation $\rho$ of $\SL_2\times\SL_2$ and a vector $v$ in the associated representation space $V_\rho$ such that $\mathbb{L}$ is the stabilizer of the line spanned by $v$. 
	Let~$(g_1,g_2)\in\on{SL}_2(\bQ_p)\times\on{SL}_2(\bZ_p)$ be as in the construction
	of~$\gamma_1$. 
	As $(g_1,e)$ and $(e,g_2)$ commute, $\rho(g_1,e)$ and $\rho(e,g_2)$ commute and are simultaneously diagonalizable by the Jordan decomposition within algebraic groups. 
	If $\lambda_i,\lambda_i^{-1}$ are the eigenvalues of $g_i$ for $i=1,2$, 
	then the eigenvalues of $\rho(g_1,e)$ are also powers of $\lambda_1$ 
	and similarly for $\rho(e,g_2)$. 
	Since $\lambda_1^{m_1}\lambda_2^{m_2}=\lambda_1^{n_1}\lambda_2^{n_2}$ 
	implies $m_1=n_1$ and $\lambda_2^{m_2}=\lambda_2^{n_2}$ for any $m_1,m_2,n_1,n_2\in\bZ$, 
	it follows that any sum of joint eigenvectors that is an eigenvector for $\rho(g_1,g_2)$ 
	must also be an eigenvector of $\rho(g_1,e)$ and of $\rho(e,g_2)$.  
	For the vector $v$ this shows that $(g_1,e),(e,g_2)\in\mathbb{L}$ and similarly 
	for $\gamma_2,\gamma_3,\gamma_4$. This implies $\mathbb{L}=\SL_2\times\SL_2$ as in the case of $\on{SO}(2,1)(\bQ_p)$ considered above.  
\end{proof}

\subsection{Lower bound for $\vol{\Gamma \HVS^+ \theta_v}$}

\begin{proposition}
\label{discvol2}
There exists $\consta\label{exp:discvol2} > 0$ such that
\[
D^{\ref{exp:discvol2}} \ll \on{vol}(\Gamma \HVS^+ \oT).
\]
\end{proposition}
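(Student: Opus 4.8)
The plan is to run the argument of Proposition~\ref{discvol1}, with the vector $v$ replaced by the Gram-type matrix $A_v$ introduced in Section~\ref{volumeshape} and with the linear action of $\bH_v$ on $\bR^d$ replaced by the congruence action $\Psi_\gamma(B)=\gamma B\gamma^{\mathrm T}$ of $\bH_{\Lambda_v}=\SO[A_v]$ on the space $\on{Sym}_{d-1}$ of symmetric $(d-1)\times(d-1)$ matrices. By Lemma~\ref{comparingvolumes} we have $\vol{\Gamma_4\HVS\oT}\asymp\vol{\Gamma\HVS^+\oT}=:V$, so it suffices to produce the bound in terms of $V_4:=\vol{\Gamma_4\HVS\oT}$.

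First I would invoke Proposition~\ref{latticepoints_secondfactor} to obtain $\gamma_1,\dots,\gamma_4\in\Gamma_4\cap\HVS$ with $\|\gamma_i\|_\infty\ll1$ and $\|\gamma_i\|_p\ll p^*V_4^*$ which generate a Zariski-dense subgroup of $\bH_{\Lambda_v}$. Since a nondegenerate quadratic form in $d-1\geq3$ variables is determined up to scalar by its special orthogonal group (see \cite[Lemma~3.3]{AES}, as already used in the proof of Proposition~\ref{latticepoints_secondfactor}), the fixed-point set on $\on{Sym}_{d-1}$ of $\bH_{\Lambda_v}$ — hence of its Zariski-dense subgroup $\langle\gamma_1,\dots,\gamma_4\rangle$, the fixed-point condition being Zariski closed — is exactly the line $\bQ A_v$. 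Consequently the system of linear equations $\gamma_iB\gamma_i^{\mathrm T}=B$ $(i=1,\dots,4)$ in the $N:=\dim\on{Sym}_{d-1}$ entries of an unknown symmetric matrix $B$ has solution space $\bQ A_v$, i.e.\ rank $N-1$.

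Now I would repeat the elimination step from Proposition~\ref{discvol1} verbatim: pick $N-1$ linearly independent rows of this system, adjoin one further row with indeterminate entries indexed by the matrix positions, take the determinant of the resulting $N\times N$ matrix, and expand it along the indeterminate row. This produces a nontrivial linear form whose coefficients are signed $(N-1)\times(N-1)$ minors, and the symmetric matrix $A'$ assembled from these minors solves the system, so $A'=\lambda A_v$ for some $\lambda\in\bQ^\times$. The coefficients of the system are quadratic in the entries of the $\gamma_i$, so these minors are polynomials of bounded degree in those entries; hence $\|A'\|_\infty\ll1$, $\|A'\|_p\ll(p^*V_4^*)^*$, and $A'$ is integral at every prime different from $p$. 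As in Proposition~\ref{discvol1}, $\|A'\|_pA'$ is then a nonzero \emph{integral} symmetric matrix proportional to $A_v$ with $\infty$-norm $\ll(p^*V_4^*)^*$, and — again exactly as there — the bound $\|A'\|_\infty\ll1$ forces the proportionality constant relating $A_v$ to $\|A'\|_pA'$ to be $\gg1$ in absolute value, so that $\|A_v\|_\infty\ll(p^*V_4^*)^*$ as well. Since $\det A_v=D$ this yields $D\ll(p^*V_4^*)^*$; inserting $p\ll_\varepsilon D^\varepsilon$ from Proposition~\ref{prime_existence} together with $V_4\asymp V$ and absorbing the $D^\varepsilon$-factor gives $D^{\ref{exp:discvol2}}\ll V$ for a suitable absolute exponent $\ref{exp:discvol2}>0$.

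The substantive content is entirely carried by Proposition~\ref{latticepoints_secondfactor} — the production, via the tree (respectively product-of-trees) geometry of the model $p$-adic groups, of enough small lattice elements in $\Gamma_4\cap\HVS$ to be Zariski-dense in $\bH_{\Lambda_v}$ — which the excerpt already establishes; everything above is bookkeeping. The only point requiring genuine care is the same one that is glossed in Proposition~\ref{discvol1}: the passage from the minor matrix $A'$, known only up to a rational scalar and only after clearing $p$-denominators, to an honest upper bound on $\|A_v\|_\infty$. That step, together with the rank count $N-1$ (which rests precisely on a special orthogonal group pinning down its defining form up to scaling), is where I expect the main, though modest, obstacle to lie.
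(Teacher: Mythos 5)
Your outline reproduces the paper's strategy (Zariski density of the $\gamma_i$ via Proposition~\ref{latticepoints_secondfactor}, the rank-$(\dim\on{Sym}_{d-1}-1)$ linear system, the minor trick, clearing $p$-denominators, primitivity of $A_v$, $\det A_v=D$), but it has a genuine gap exactly at the point you flag as ``bookkeeping'': you assert that Proposition~\ref{latticepoints_secondfactor} supplies $\|\gamma_i\|_\infty\ll1$. It does not, and it cannot. Unlike in Proposition~\ref{discvol1}, where the real points of $\bH_v$ sit inside the compact $\on{SO}_d(\bR)$ and the pigeonhole takes place inside $\Theta_\infty\times B_\ell$, here $\gamma_i\in\bH_{\Lambda_v}(\bZ[\tfrac1p])$ and $\bH_{\Lambda_v}(\bR)$ is only a conjugate of $\on{SO}_{d-1}(\bR)$ by $\theta_v$ inside the \emph{noncompact} group $\SL_{d-1}(\bR)$; its elements have real norm up to roughly $\|\theta_v\|_\infty\|\theta_v^{-1}\|_\infty$, which is a power of $D$ when the shape $[\Lambda_v]$ lies deep in the cusp of $\mathcal{X}_{d-1}$. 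This is why the statement of Proposition~\ref{latticepoints_secondfactor} bounds only the $p$-adic norms. Without a bound on $s=\max_i\|\gamma_i\|_\infty$, your elimination only gives $\|A'\|_\infty\ll s^{*}$ and hence $D=\det A_v\ll s^{*}p^{*}V^{*}$, which is vacuous since $s$ may itself be a power of $D$.

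The paper's proof spends most of its effort precisely on this point: it first reduces to the case where the shape point lies in $X_{\text{cpt}}$, in which case choosing a Minkowski-reduced basis of $\Lambda_v$ in the definition of $g_v$ gives $\max(\|\theta_v\|_\infty,\|\theta_v^{-1}\|_\infty)\leq p^{*}$ and hence $s\ll p^{*}$; and when the given shape is \emph{not} in $X_{\text{cpt}}$, it uses the non-divergence estimate (Lemma~\ref{nondivergence} and the choice of $X_{\text{cpt}}$) to find $\Gamma(\theta_v,h_p)\in X_{\text{cpt}}$ on the orbit, writes $h_p=\gamma^{-1}h_p'$, and replaces $A_v$ by the integral, primitive matrix $\tilde A=\gamma A_v\gamma^{T}=h_p'A_vh_p'^{T}$, which has the same determinant and the same orbit volume, so the compact-part argument applies to $\tilde A$ instead. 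Your proposal contains no substitute for this step, so as written the proof does not go through; the rest of your argument (rank count, minors, primitivity forcing the proportionality constant to be a nonzero integer --- note this also needs primitivity of $A_v$, i.e.\ \cite[Lemma~3.3]{AES}, not just the determination of the form up to scalar) is consistent with the paper once the real-norm issue is repaired.
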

\begin{proof}
Let $\gamma_1,\ldots,\gamma_4$ be as in Proposition \ref{latticepoints_secondfactor}. 
We consider the system of linear equations on the space of symmetric matrices $\on{Sym}_{d-1}$ 
(where each element $A$ induces a quadratic form) given by $\Psi_{\gamma_i}A=A$ for $i=1,2,3,4$ 
where $\Psi_{\gamma_i}$ denotes the natural action of $\gamma_i\in\SL_{d-1}$ on the space of quadratic forms, i.e. composing the quadratic forms with $\gamma_i$.

If $A$ is a $\bQ$-solution then the group $\HV$ 
stabilizes $A$ since the group generated by the four lattice elements is Zariski dense by Proposition \ref{latticepoints_secondfactor}. 
Since the special orthogonal group determines its quadratic form uniquely up to scalar multiples (\cite[Lemma 3.4]{AES}), 
the matrix $A$ must be a $\bQ$-multiple of $A_v=(g_v^Tg_v)_{i,j<d}$ that defines $\HV=\overline{g_v\Hv g_v^{-1}}$ (\cite[Equation 3.3]{AES}). 
We see from this that the system of linear equations $(\Psi_{\gamma_i}-e)A=0$, $i=1,2,3,4$, has rank $r=\dim(\on{Sym}_{d-1})-1$.

We continue just as before 
(except for a somewhat 
 unexpected complication). We may take $r$ linear independent rows and argue as in the proof of Proposition~\ref{discvol1} 
to obtain a solution $A$ with coefficients in $\bZ[\tfrac1p]$
satisfying the estimate $\|A\|_p\ll p^*V^*$ by the properties of $\gamma_1,\ldots,\gamma_4$.
However, here we have the additional difficulty that we do not yet know a bound for 
\[
 s=\max_{i=1,\ldots,4}\|\gamma_i\|_\infty
\]
(since here $\gamma_i\in\bH_{\Lambda_v}(\bZ[\frac1p])$ 
does not belong to $H_\infty=\on{SO}_{d-1}(\bR)$ and the conjugation takes place within the noncompact group $\SL_{d-1}(\bR)$). Ignoring the value of $s$ for now, we obtain $\|A\|_\infty\leq s^*$ by using again 
the argument that defines $A$ as the vector of certain minors of the above linear equations. 
Multiplying $A$ by its denominator $\|A\|_p$
we obtain an integer solution $A'$ with $\|A'\|_\infty\ll s^*p^*V^*$. We may assume that $A'$ is primitive (as otherwise we simply divide $A'$ by its common divisor and obtain the same estimate for the new matrix).
By \cite[Lemma 3.3]{AES} the quadratic form $\phi_v$ attains the value $1$ on the integers, 
which implies that $A_v$ is primitive as a $(d-1)\times(d-1)$ matrix. 
Therefore, $A'=\pm A_v$ and taking the determinant 
(see Section \ref{geometry}, \cite[Lemma 3.3]{AES}) we obtain
\[
 D=\det A_v\ll s^*p^*V^*,
\]
which becomes utterly useless we have a reasonable estimate for $s$ (e.g. $s\ll D^\varepsilon$ for some small $\varepsilon$ would suffice). 

The relation between $\gamma_i$ and $H_\infty$ is quite simple: the lattice elements $\gamma_i$ are conjugated via $\theta_v$ to elements of $H_\infty$. Also recall that $\theta_v$ is the matrix that describes the shape of the lattice $\Lambda_v$. Hence in an attempt to optimize $s$ we therefore do not choose any basis $w_1,\ldots,w_{d-1}$ of $\Lambda_v$ in the definition of $g_v$ as in Section \ref{orbitsofsubgroups}, but instead use the Minkowski basis of $\Lambda_v$. However, as the covolume of $\Lambda_v$ is $\sqrt{D}$ this may still result after normalizing the covolume e.g.\ in one vector of length $D^{-1/2(d-1)}$ and one vector of length $D^{1/2(d-1)}$ (which would not give a sufficiently strong estimate for $s$).  
To overcome this problem we note that this problematic case corresponds to a lattice rather deep in the cusp.

So assume for the moment that the lattice corresponding to $v$ belongs to $X_\text{cpt}$. This shows that (after normalizing the covolume) we have the upper estimate $\ll p^*$ for the length of the Minkowski basis vectors and so $\max(\Vert\theta_v\Vert_\infty,\Vert\theta_v^{-1}\Vert_\infty)\leq p^*$. Using this estimate we obtain $s\ll p^*$
and hence
\[
 D=\det A_v\ll p^*V^*
\]
Using now that $p\ll D^\varepsilon$ for a sufficiently small $\varepsilon>0$ by our choice of $p$ in Proposition \ref{prime_existence} we can rephrase the last estimate to give us $D^*\ll V$.

Of course, for the given vector $v$ it may not be true that the
lattice corresponding to $v$ belongs to $X_\text{cpt}$. However, by the estimate after Lemma \ref{nondivergence} we know that there are many points in the orbit of $\Gamma H_{\Lambda_v,S}^+(\theta_v,e)$ that belong to $X_\text{cpt}$. 
Suppose therefore that $\Gamma (\theta_v,h_p)\in X_\text{cpt}$ and $h_p=\gamma^{-1}h_p'$ for some $h_p\in H_{\Lambda_v,p}^+$, $\gamma\in\SL_{d-1}(\bZ[\frac1p])$ and $h_p'\in\SL_{d-1}(\bZ_p)$. 
This shows that $\tilde{A}=\gamma A_v\gamma ^T =h_p'A_vh_p'{}^T$ is both rational with only $p$ in the denominator and a $p$-adic integer, i.e.\ $\tilde{A}$ is integral. 
Reversing the equation we also show that if $\tilde{A}$ is divisible by some $k\geq 2$, then the original matrix $A_v$ is also divisible by $k$, hence $\tilde{A}$ is a primitive integral matrix. 
Clearly the determinant of $A_v$ and the determinant of $\tilde{A}$ agree, so it sufficies to estimate $\det\tilde{A}$. It is also clear that the orthogonal group of $\tilde{A}$ equals $\gamma \HV \gamma^{-1}$ and that
\[
 \Gamma H_{\Lambda_v,S}^+(\theta_v,e)=\Gamma H_{\Lambda_v,S}^+(\theta_v,h_p)=\Gamma \gamma\HV(\bR\times\bQ_p)^+\gamma^{-1}(\gamma\theta_v,h_p'). 
\]
The last expression is quite similar to our original description of the orbit,  except that we consider the orthogonal group of $\tilde{A}$ and use the initial point $\Gamma(\gamma\theta_v,h_p')\in X_{\text{cpt}}$. Moreover, we have obtained the same assumptions for the new data, we have the same determinant and the same volume (as we did not change the orbit but only expressed it in a different form). Now we can apply the above arguments and the proposition follows.
\end{proof}


\section{Lie Algebras and Invariant Complements}
\label{liealgebras}
We continue our discussion of invariant complements from Section \ref{liefacts}. Recall that $\goh_1=\lie{\HvS[p]}\subseteq\gog_1$ and $\goh_2=\lie{\HVS[p]}\subseteq\gog_2$
and that we say that $\gor_i$ is an \textit{undistorted invariant complement} of $\goh_i$ 
within $\gog_i$ (for $i=1,2$) 
if $$\gog_i[m]=\goh_i[m]\oplus \gor_i[m]$$ for all $m\geq0$ and $\gor_i$ is invariant under the adjoint action of $\goh_i$. Having fixed the principal $\on{PGL}_{2}$ over $\bQ_p$, 
whose Lie algebra we denote by $\lsl<\goh_i$, we will
define in this section $\gor_i$ and decompose it into  irreducible subspaces for the adjoint represenation
of the principal $\lsl$. We note that this will also give
us the invariant complement  $\gog_1\oplus\gor_2$ of the Lie algebra  $\gol_v=\lie{\mathbb{L}_{v,p}}$.
 
The knowledge which irreducible components appear in the two invariant complements
will be used for the joint equidistribution. It will be sufficient to understand the Lie algebras of the model groups for this purpose. 

 For $r\in\gor_i$ we denote by $r^{\on{hw}}$ and $r^{{\on{lw}}}$ the projection of $r$ onto the heighest respectively lowest weight space of the \emph{largest} dimensional irreducible  representation (there will be exactly one). 
We will finish by finding a finite set of group elements $\mathcal{F}$ in $\HvS[p]$ respectively $\HVS[p]$ of norm one, such that for every $r\in\gor_i$ there exists $g\in\mathcal{F}$ with $\|(\Ad[g]r)^{{\on{lw}}}\|_p=\|\Ad[g]r\|_p=\|r\|_p$. The significance of this is that the highest degree term of the polynomial $\Ad[u_t](\Ad[g]r)$ 
in the variable $t$ (appearing prominently 
in the dynamical argument of the following sections) will then also have one of the largest coefficient.

\subsection{Decomposition of $\goh_i$ into  irreducible subrepresentations}\label{irreduciblesection}
For convenience of the calculation we take the model groups associated to the quadratic forms $-2xz+y^2$, $-2xw+y^2+\eta z^2$ and $2xw-2yz$ which are all equivalent over $\bZ_p$ to those used to define $\on{SO}(2,1)$, $\on{SO}_\eta(3,1)$ and $\on{SO}(2,2)$ in Section \ref{discriminants}. We begin by giving a basis of each of
$\mathfrak{so}(2,1)$, $\mathfrak{so}_\eta(3,1)$, and of $\mathfrak{so}(2,2)$, and by picking
a principal $\lsl$-triple. 
After this we decompose the Lie algebras with respect to the adjoint action of $\lsl$. 

Recall that for each natural number $n$ there exists a unique  representation $V^{(n)}$ 
of $\on{SL}_2$ of dimension $n+1$ and highest weight $n$ under the torus in $\SL_{2}$.

\medskip 
\paragraph{\textbf{Lie algebra $\mathfrak{so}(2,1)$}}
 
The Lie algebra of $\on{SO}(2,1)$ (defined using $-2xz+y^2$) is determined by solving $XJ+JX^T=0$ with $J=\left[\begin{smallmatrix}0&0&-1\\0&1&0\\-1&0&0\end{smallmatrix}\right]$.
Hence
$$\mathfrak{so}(2,1)=\lsl=\on{span}\left\{H=\left[\begin{smallmatrix}2&0&0\\0&0&0\\0&0&-2\end{smallmatrix}\right],\ 
 X=\left[\begin{smallmatrix}0&1&0\\0&0&1\\0&0&0\end{smallmatrix}\right],\  
 Y=\left[\begin{smallmatrix}0&0&0\\2&0&0\\0&2&0\end{smallmatrix}\right]
\right\}.$$

\medskip
\paragraph{\textbf{Lie algebra $\mathfrak{so}_\eta(3,1)$}} 
The following diagram shows the representation $\mathfrak{so}_\eta(3,1)$ with one $V^{(2)}$ at the top and the other on the bottom. The action by $M$ flips the highest and lowest weight vectors respectively. Another $\lsl$-triple $(X_2, H, Y_2)$ has been added.
\begin{center}
\begin{tikzpicture}

  \draw[black, dashed, thin, ->] (2,0) -- (0.1,0);
  \draw[] (1.5,2) node[above] {{\tiny\( \ad[Y]\)}};
  \draw[black, dashed, thin, ->] (2,2) -- (0.1,2);
  \draw[] (1.5,0) node[below] {{\tiny\( \ad[Y]\)}};

  \draw[green, dashed, <->] (0,1.9) -- (0,0.1) ;

  \draw[green, dashed, <->] (2,1.9) -- (2,0.1);
  \draw[green] (2,1) node[right] {{\tiny\( \ad[M]\)}};

  \draw[gray, dashed, thin, ->] (1,2) -- (0.1,0.1);
  \draw[gray, dashed, thin, ->] (2,0) -- (1.1,1.90);
  \draw[gray] (1.1,1.3) node[below] {{\tiny\( \ad[Y_2]\)}};

\filldraw (2,2) circle (2pt) node[align=left,   above] {\(X\)};
\filldraw (1,2) circle (2pt) node[align=left,   above] {\(H\)};
\filldraw (0,2) circle (2pt) node[align=left,   above] {\(Y\)};

\filldraw (2,0) circle (2pt) node[align=left,   below] {\(X_2\)};
\filldraw (1,0) circle (2pt) node[align=left,   below] {\(M\)};
\filldraw (0,0) circle (2pt) node[align=left,   below] {\(Y_2\)};

\end{tikzpicture}
\end{center}

The Lie algebra $\mathfrak{so}_\eta(3,1)$ of $\SO[\eta](3,1)$ (defined using 
the quadratic form $-2xw+y^2+\eta z^2$) 
is determined by solving the equation $XJ_\eta+J_\eta X^T=0$ where $J_\eta=\left[\begin{smallmatrix}0&0&0&-1\\0&1&0&0\\0&0&\eta&0\\-1&0&0&0\end{smallmatrix}\right]$.
This leads to the following basis of $\mathfrak{so}_\eta(3,1)$, starting with the principal $\lsl$-triple
$$ H=\left[\begin{smallmatrix}2&0&0&0\\0&0&0&0\\0&0&0&0\\0&0&0&-2\end{smallmatrix}\right],\  X=\left[\begin{smallmatrix}0&1&0&0\\0&0&0&1\\0&0&0&0\\0&0&0&0\end{smallmatrix}\right],\  Y=\left[\begin{smallmatrix}0&0&0&0\\2&0&0&0\\0&0&0&0\\0&2&0&0\end{smallmatrix}\right] $$
and the additional matrices
$$ M=\left[\begin{smallmatrix}0&0&0&0\\0&0&-2&0\\0&2\eta&0&0\\0&0&0&0\end{smallmatrix}\right],\  X_2=\left[\begin{smallmatrix}0&0&1 &0\\0&0&0&0\\0&0&0&\eta\\0&0&0&0\end{smallmatrix}\right],\  Y_2=\left[\begin{smallmatrix}0&0&0&0\\0&0&0&0\\ -2\eta&0&0&0\\0&0&-2 &0\end{smallmatrix}\right] .$$
We note that $M,X_2,Y_2$ are eigenvectors for $\ad[H]$ and eigenvalues $0,2,-2$ respectively.
The relations 
\[
	\ad[Y]X_2=-M,\ \ad[Y]M=2Y_2,\ 
	\ad[X]Y_2=M,\   \ad[X]M=-2X_2 
\]
show that $X_2$ generates another copy of $V^{(2)}$. The relation $\ad[Y_2]X_2=\eta  H$
shows that $H,X_2,Y_2$ would (after rescaling $Y_2$ say) also be a choice of an $\lsl$-triple. Finally we note
the action of $M$ on these elements:
\[
	\ad[M]X=2X_2,\ \ad[M]X_2=-2\eta X, 
	\ad[M]Y=-2Y_2,\  \ad[M]Y_2=2\eta  Y.
\]

\medskip
\paragraph{\textbf{Lie algebra $\mathfrak{so}(2,2)$}} 
This diagram shows the two $V^{(2)}$-representations of the diagonal $\lsl$. We also draw the action of the two $\lsl$-factors on the element $H$ of the principal $\lsl$.
\begin{center}
\begin{tikzpicture}

  \draw[black, dashed, thin, ->] (2,0) -- (0.1,0);
  \draw[] (1.5,2) node[above] {{\tiny\( \ad[Y]\)}};
  \draw[black, dashed, thin, ->] (2,2) -- (0.2,2);
  \draw[] (1.5,0) node[below] {{\tiny\( \ad[Y]\)}};

  \draw[red, dashed, thin, ->] (1,1) -- (0.1,1.90);
  \draw[red, dashed, thin, ->] (1,1) -- (1.9,1.90);

  \draw[blue, dashed, thin, ->] (1,1) -- (0.1,0.10);
  \draw[blue, dashed, thin, ->] (1,1) -- (1.9,0.10);

\draw[] (1.7,0.5) node[align=left,   right,blue,thin] {{\tiny\(\ad[X_2]\)}};
\draw[] (0.3,0.5) node[align=left,   left,blue,thin] {{\tiny\(\ad[Y_2]\)}};

\draw[] (1.7,1.5) node[align=left,   right,red,thin] {{\tiny\(\ad[X_1]\)}};
\draw[] (0.3,1.5) node[align=left,   left,red,thin] {{\tiny\(\ad[Y_1]\)}};

\filldraw (2,2) circle (2pt) node[align=left,   above] {\(X_1\)};
\filldraw (1,2) circle (2pt) node[align=left,   above] {\(H_1\)};
\filldraw (0,2) circle (2pt) node[align=left,   above] {\(Y_1\)};

\filldraw (2,0) circle (2pt) node[align=left,   below] {\(X_2\)};
\filldraw (1,0) circle (2pt) node[align=left,   below] {\(H_2\)};
\filldraw (0,0) circle (2pt) node[align=left,   below] {\(Y_2\)};

\filldraw[gray] (1,1) circle (2pt) node[align=left,gray,   above] {\(H\)};

\end{tikzpicture}
\end{center}
The quadratic form for $\mathfrak{so}(2,2)$ corresponds to (twice) the determinant form used in Proposition \ref{KAKrank2} with the associated matrix $\left[\begin{smallmatrix}0&0&0&1\\0&0&-1&0\\0&-1&0&0\\1&0&0&0\end{smallmatrix}\right]$.  We identify the two $\mathfrak{sl}_2$ in $\mathfrak{so}(2,2)\cong\lsl\times\lsl$ as
$$\mathfrak{sl}_2\otimes\on{Id}=\on{span}\left\{H_1=\left[\begin{smallmatrix}1&0&0&0\\0&-1&0&0\\0&0&1&0\\0&0&0&-1\end{smallmatrix}\right],\  X_1=\left[\begin{smallmatrix}0&1&0&0\\0&0&0&0\\0&0&0&1\\0&0&0&0\end{smallmatrix}\right],\  Y_1=\left[\begin{smallmatrix}0&0&0&0\\1&0&0&0\\0&0&0&0\\0&0&1&0\end{smallmatrix}\right]\right\}$$
and
$$\on{Id}\otimes\mathfrak{sl}_2=\on{span}\left\{H_2=\left[\begin{smallmatrix}1&0&0&0\\0&1&0&0\\0&0&-1&0\\0&0&0&-1\end{smallmatrix}\right],\  X_2=\left[\begin{smallmatrix}0&0&1&0\\0&0&0&1\\0&0&0&0\\0&0&0&0\end{smallmatrix}\right],\  Y_2=\left[\begin{smallmatrix}0&0&0&0\\0&0&0&0\\1&0&0&0\\0&1&0&0\end{smallmatrix}\right]\right\}.
$$
We make the choice of the principal $\mathfrak{sl}_2$ by setting
$$ H=\left[\begin{smallmatrix}2&0&0&0\\0&0&0&0\\0&0&0&0\\0&0&0&-2\end{smallmatrix}\right],\  X=\left[\begin{smallmatrix}0&1&1&0\\0&0&0&1\\0&0&0&1\\0&0&0&0\end{smallmatrix}\right],\  Y=\left[\begin{smallmatrix}0&0&0&0\\1&0&0&0\\1&0&0&0\\0&1&1&0\end{smallmatrix}\right] .$$

\begin{lemma}
\label{weightsofgoh}
With respect to the principal $\lsl$   in each of the above Lie algebras, we have that
$$
\mathfrak{so}(2,1)=V^{(2)},\  \mathfrak{so}_\eta(3,1)=V^{(2)}\oplus V^{(2)},\ 
 \mathfrak{so}(2,2)=V^{(2)}\oplus V^{(2)}.
$$
In each case the first $V^{(2)}$ corresponds to the principal $\lsl$ itself. 
 For $\mathfrak{so}_\eta(3,1)$,  the other is 
the vector space spanned by $\{X_2,M,Y_2\}$. 
 For  $\mathfrak{so}(2,2)$ we may choose the second $V^{(2)}$ to be
one of the direct factors of $\mathfrak{so}(2,2)\cong\lsl\times\lsl$.
\end{lemma}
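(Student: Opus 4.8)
The plan is to read all three decompositions directly off the $\lsl$-triples and bracket relations already recorded above, using only two elementary facts: the adjoint action of a subalgebra preserves that subalgebra, and a vector annihilated by $\ad[X]$ and of $\ad[H]$-weight $n$ generates a copy of $V^{(n)}$. For $\mathfrak{so}(2,1)$ there is nothing to prove, since by construction $\mathfrak{so}(2,1)=\lsl$ is the principal $\lsl$ and the adjoint representation of $\on{SL}_2$ on its own $3$-dimensional Lie algebra is the irreducible $V^{(2)}$.

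For $\mathfrak{so}_\eta(3,1)$ I would first note that $\on{span}\{H,X,Y\}=\lsl$ is a subalgebra, hence an $\ad[\lsl]$-invariant subspace, and as the adjoint representation of the principal $\lsl$ it is $V^{(2)}$; this is the first summand. For the complement $W=\on{span}\{X_2,M,Y_2\}$ one reads from the recorded eigenvalues that the $\ad[H]$-weights occurring in $\mathfrak{so}_\eta(3,1)$ are only $0$ and $\pm2$, so $\ad[X]X_2$, which would have weight $4$, must vanish; thus $X_2$ is a highest-weight vector of weight $2$. The relations $\ad[Y]X_2=-M$ and $\ad[Y]M=2Y_2$ give $\ad[Y]^2X_2=-2Y_2$, and $\ad[Y]Y_2$ (weight $-4$) vanishes, so $W$ is precisely the copy of $V^{(2)}$ generated by $X_2$. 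Since $\{H,X,Y,M,X_2,Y_2\}$ is a basis, $\lsl\oplus W=\mathfrak{so}_\eta(3,1)$, giving $\mathfrak{so}_\eta(3,1)=V^{(2)}\oplus V^{(2)}$ with the asserted summands.

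For $\mathfrak{so}(2,2)$ the one thing to observe is that, comparing the explicit matrices, $H=H_1+H_2$, $X=X_1+X_2$ and $Y=Y_1+Y_2$, so the principal $\lsl$ is the \emph{diagonal} copy of $\mathfrak{sl}_2$ inside $\mathfrak{so}(2,2)\cong\mathfrak{sl}_2\times\mathfrak{sl}_2$. The two factors $\mathfrak{a}_1=\on{span}\{H_1,X_1,Y_1\}$ and $\mathfrak{a}_2=\on{span}\{H_2,X_2,Y_2\}$ are ideals of $\mathfrak{so}(2,2)$, hence each is invariant under the adjoint action of the diagonal $\lsl$; and since the two factors commute, the diagonal $\lsl$ acts on $\mathfrak{a}_i$ exactly as the $i$-th factor acts on its own Lie algebra, namely by $V^{(2)}$. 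This already yields $\mathfrak{so}(2,2)=V^{(2)}\oplus V^{(2)}$. To match the statement, note that the principal $\lsl=\on{span}\{H,X,Y\}$ is itself an $\ad[\lsl]$-invariant subspace isomorphic to $V^{(2)}$ and meets $\mathfrak{a}_1$ trivially, because the $\mathfrak{a}_2$-components $H_2,X_2,Y_2$ of $H,X,Y$ are linearly independent; hence $\mathfrak{so}(2,2)=\lsl\oplus\mathfrak{a}_1$ as representations of the principal $\lsl$, with $\mathfrak{a}_1$ one of the direct factors. No step is hard; the only point requiring care is this last case, where one must notice that the principal $\lsl$ sits diagonally and so is not one of the two ideals, which is why the second $V^{(2)}$ is taken to be an ideal rather than a naive linear complement.
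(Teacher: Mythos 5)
Your proof is correct and follows essentially the same route as the paper, which establishes the lemma by inspection of the explicit bases, $\ad[H]$-weights, and bracket relations recorded just before it (e.g.\ $\ad[Y]X_2=-M$, $\ad[Y]M=2Y_2$, and $H=H_1+H_2$, $X=X_1+X_2$, $Y=Y_1+Y_2$ exhibiting the principal $\lsl$ as the diagonal in $\lsl\times\lsl$). Your only variation — deducing $\ad[X]X_2=0$ from the absence of weight $4$ rather than computing the bracket — is a harmless shortcut, and your final observation that $\lsl\cap\mathfrak{a}_1=0$ correctly justifies taking the second $V^{(2)}$ to be a direct factor.
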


\subsection{Invariant complement} 
We choose the quadratic form in the ambient space defining 
$\on{SO}(d)$ such that the original group forms the stabilizer group of $e_d$,
i.e.\ we add $\lambda u^2$ to the above quadratic forms for some $\lambda\in\bZ_p^\times$. 
The complement of $\mathfrak{so}(4)$ in $\mathfrak{so}(5)$ can then 
be given by the subspace of matrices of the form
$$\left[\begin{smallmatrix}0&0&0&0&a\\0&0&0&0&b\\0&0&0&0&c\\0&0&0&0&d\\**&*&*&*&0\end{smallmatrix}\right],$$
where the last row is uniquely determined by the vector $(a,b,c,d)$ (and the underlying quadratic form). 
The same holds similarly for $\mathfrak{so}(3)$ within $\mathfrak{so}(4)$. 
In particular, the adjoint action by $\mathfrak{so}(d-1)$ 
on the complement within $\mathfrak{so}(d)$ corresponds to the 
standard representation. We denote the complement of the 
model Lie algebras  in $\mathfrak{so}(d)$ by $\gos_1$ and in $\mathfrak{sl}_{d-1}$ by $\gos_2$. 
The calculations that follow will also define $\gos_2$ concretely
and will give the weight classification of $\gos_i$ for $i=1,2$.

\begin{lemma}
\label{weightsofgos4}
For $d=4$ we have $\gos_1=V^{(2)}$ and $\gos_2=V^{(4)}$. 
\end{lemma}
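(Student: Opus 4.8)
The statement concerns the case $d=4$, so the ambient groups are $\mathbb{G}_1=\on{SO}_4$ and $\mathbb{G}_2=\on{SL}_3$, and the model group $H$ inside $\on{SL}_4(\bQ_p)$ is $\on{SO}(2,1)(\bQ_p)$ (since a ternary form with $p\nmid\operatorname{disc}$ is $\bZ_p$-equivalent to $-2xz+y^2$ by Proposition~\ref{SOclassification}). The principal $\lsl$ is then \emph{all} of $\goh=\mathfrak{so}(2,1)$ by Lemma~\ref{weightsofgoh}. I must determine the two invariant complements: $\gos_1$, the complement of $\mathfrak{so}(3)\subset\mathfrak{so}(4)$ (this is already described in the paragraph before the lemma), and $\gos_2$, the complement of $\mathfrak{so}(2,1)\hookrightarrow\mathfrak{sl}_3$ via the adjoint/principal embedding. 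The first step is to pin down the two embeddings $\mathfrak{so}(2,1)\hookrightarrow\mathfrak{so}(4)$ and $\mathfrak{so}(2,1)\hookrightarrow\mathfrak{sl}_3$ concretely, so that the adjoint action of the principal $\lsl$-triple $(H,X,Y)$ on the ambient Lie algebras can be computed.

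\emph{For $\gos_1$:} By the paragraph preceding the lemma, $\gos_1$ consists of the matrices $\left[\begin{smallmatrix}0&0&0&a\\0&0&0&b\\0&0&0&c\\ *&*&*&0\end{smallmatrix}\right]$, which as an $\mathfrak{so}(3)$-module (hence as an $\mathfrak{so}(2,1)(\bQ_p)\cong\mathfrak{so}(3)$-module after the $\bZ_p$-conjugation) is the standard $3$-dimensional representation. Under the principal $\lsl=\mathfrak{so}(2,1)$, the standard representation of $\on{SO}(2,1)$ is exactly the adjoint representation of $\SL_2$, which is $V^{(2)}$. So $\gos_1=V^{(2)}$; alternatively one reads off the $\ad[H]$-eigenvalues $2,0,-2$ on the column vector $(a,b,c)$ directly from the explicit form of $H\in\mathfrak{so}(2,1)$ given in Section~\ref{irreduciblesection} and checks that $X$ (resp.\ $Y$) raises (resp.\ lowers) weight and does not annihilate, so the module is irreducible of dimension $3$.

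\emph{For $\gos_2$:} Here $\dim\mathfrak{sl}_3=8$ and $\dim\mathfrak{so}(2,1)=3$, so $\dim\gos_2=5$; the only $5$-dimensional irreducible $\SL_2$-module is $V^{(4)}$, and the only other option compatible with the dimension count is $V^{(2)}\oplus V^{(0)}$ (or $V^{(0)\oplus 5}$, etc.). So it suffices to (i) identify a generator of $\gos_2$ of $\ad[H]$-weight $4$, and (ii) check that $\ad[X]$ does not annihilate it, forcing a full $V^{(4)}$-ladder $4,2,0,-2,-4$ which already fills up all of $\gos_2$. Concretely: the principal (adjoint) $\SL_2\to\on{SL}_3$ realizes $\mathfrak{sl}_3$ as $\on{Sym}^2(V^{(2)})\ominus(\text{trace})$ acting on, or rather as $\operatorname{End}_0(V^{(2)})=V^{(2)}\otimes V^{(2)*}\ominus V^{(0)}$. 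Since $V^{(2)}$ is self-dual, $\mathfrak{sl}_3\cong V^{(2)}\otimes V^{(2)}\ominus V^{(0)}=(V^{(4)}\oplus V^{(2)}\oplus V^{(0)})\ominus V^{(0)}=V^{(4)}\oplus V^{(2)}$ as $\SL_2$-modules under the principal embedding, and the $V^{(2)}$ summand is the image of $\mathfrak{so}(2,1)$ itself (the principal $\lsl$), so the invariant complement $\gos_2$ must be the $V^{(4)}$. This is the cleanest route and I would present it this way, supplementing with the explicit highest-weight vector — e.g.\ $E_{13}$ (the matrix with a single $1$ in the upper-right corner) has $\ad[H]$-weight $4$ when $H=\operatorname{diag}(2,0,-2)$ is the principal coweight in $\mathfrak{sl}_3$, it lies in $\gos_2$ because $\mathfrak{so}(2,1)$ spanned by $H,X,Y$ has $X=E_{12}+E_{23}$ (suitably normalized) so its weight-$4$ space is $0$ — to make the decomposition self-contained.

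\emph{Main obstacle.} The only real work is bookkeeping: fixing compatible normalizations of the $\lsl$-triple in $\mathfrak{sl}_3$ so that the adjoint embedding is indeed the "principal" one used elsewhere in the paper (i.e.\ $H=\operatorname{diag}(2,0,-2)$, $X$ the regular nilpotent $E_{12}+E_{23}$ up to scaling), and then verifying the module decomposition $\mathfrak{sl}_3=V^{(4)}\oplus V^{(2)}$ either by the clean tensor-product argument above or, if one wants the complement written out, by exhibiting an explicit $5$-dimensional $\ad(H)$-graded subspace transverse to $\mathfrak{so}(2,1)$ and stable under $\ad X,\ad Y$. Since $\gos_2$ is determined by being the \emph{canonical} $\SL_2$-invariant complement and $\mathfrak{sl}_3$ decomposes multiplicity-freely as $V^{(4)}\oplus V^{(2)}$ with the $V^{(2)}$ used up by $\mathfrak{so}(2,1)$, there is no ambiguity and the proof is short. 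I expect no genuine difficulty, only the need to be careful that "the complement of the model Lie algebra in $\mathfrak{sl}_{d-1}$" refers to the adjoint-image copy, which is exactly the $V^{(2)}$ factor.
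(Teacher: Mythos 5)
Your proof is correct, but for $\gos_2$ it takes a genuinely different route from the paper. The paper proceeds by explicit computation: it takes $z=E_{13}$, computes $\ad[Y]z,\ad[Y]^2z,\ad[Y]^3z,\ad[Y]^4z$ as concrete matrices, observes that $z$ is an $\ad[H]$-eigenvector killed by $\ad[X]$, and concludes from the structure theory of finite-dimensional $\lsl$-modules that these five matrices span an invariant complement isomorphic to $V^{(4)}$; similarly for $\gos_1$ it writes the complement in the shorthand $(a,b,c)$ and computes $\ad[X],\ad[X]^2,\ad[Y],\ad[Y]^2$ directly. You instead argue conceptually: since the model $\mathfrak{so}(2,1)$ acts irreducibly on the standard $3$-dimensional space (so the embedding into $\mathfrak{sl}_3$ is the principal one), Clebsch--Gordan gives $\mathfrak{sl}_3\cong V^{(2)}\otimes V^{(2)}\ominus V^{(0)}=V^{(4)}\oplus V^{(2)}$, the $V^{(2)}$ summand is $\mathfrak{so}(2,1)$ itself, and multiplicity-freeness pins down the invariant complement as the $V^{(4)}$; your highest-weight check via $E_{13}$ is a correct supplement. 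For the lemma as stated (the isomorphism types of $\gos_1$ and $\gos_2$) your argument is complete and arguably cleaner. What the paper's computational route buys, and what your version omits, is the explicit list of basis vectors $z,\ad[Y]z,\dots,\ad[Y]^4z$ with unit (for $p\geq 5$) coefficients: these concrete matrices are reused later to verify that the complements are \emph{undistorted} over $\bZ_p$ and to build the finite sets in Lemmas \ref{movingalongthecomplement} and \ref{generation}, so if you adopt your proof you would still need to exhibit such a $\bZ_p$-basis at that later stage.
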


\smallskip
\paragraph{\textbf{Complement $\gos_1<\mathfrak{so}(4)$}}
We introduce the shorthand
$$(a,b,c)=\left[\begin{smallmatrix}0&0&0&a\\0&0&0&b\\0&0&0&c\\
\lambda c&-\lambda b&\lambda a&0\end{smallmatrix}\right]\in\mathfrak{so}(4),$$
then it is easy to verify that
\begin{center}
\begin{tabular}{ll}
$\ad[X](a,b,c)=(b,c,0),$	&	$\ad[X]^2(a,b,c)=(c,0,0),$   \\
$\ad[Y](a,b,c)=(0,2a,2b),$	&	$\ad[Y]^2(a,b,c)=(0,0,4a).$    \\
\end{tabular}
\end{center}

\medskip\paragraph{\textbf{Complement $\gos_2<\mathfrak{sl}_3$}}
If $z=\left[\begin{smallmatrix}0&0&1\\0&0&0\\0&0&0\end{smallmatrix}\right]\in \mathfrak{sl}_3$ then
\begin{align*}
	 \ad[Y]z&=2\left[\begin{smallmatrix}0&-1&0\\0&0&1\\0&0&0\end{smallmatrix}\right],\   \ad[Y]^2z=4\left[\begin{smallmatrix}1&0&0\\0&-2&0\\0&0&1\end{smallmatrix}\right],\\ \ad[Y]^3z&=24\left[\begin{smallmatrix}0&0&0\\1&0&0\\0&-1&0\end{smallmatrix}\right],\  \ad[Y]^4z=96\left[\begin{smallmatrix}0&0&0\\0&0&0\\1&0&0\end{smallmatrix}\right].
	\end{align*}
Hence these vectors span a $5$-dimensional subspace $\gos_2$. It is easy to check that $z$ is an eigenmatrix 
for the adjoint action of the diagonal element $H\in \lsl$ and in the kernel of
$\ad[X]$, which implies by the structure of finite dimensional representations
of $\lsl$ that the other vectors are also eigenmatrices for $\ad[H]$
and that $\gos_2\cong V^{(4)}$ is indeed 
an invariant complement to the principal $\lsl$.

\begin{lemma}
\label{weightsofgos5}
For $d=5$ we have $\gos_1=V^{(0)}\oplus V^{(2)}$ and $\gos_2\cong V^{(0)}\oplus V^{(2)}\oplus V^{(4)}$
for the adjoint representation of the principal $\lsl$. 
Moreover, $\gos_i$ is irreducible under $\mathfrak{so}_\eta(3,1)$ respectively $\mathfrak{so}(2,2)$
for $i=1,2$.
\end{lemma}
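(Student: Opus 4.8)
The plan is to identify the $\goh$-module structure of $\gos_1$ and $\gos_2$ for the model Lie algebra $\goh\in\{\mathfrak{so}_\eta(3,1),\mathfrak{so}(2,2)\}$ and only then to restrict to the principal $\lsl$. For $\gos_1$ recall, from the paragraph preceding Lemma~\ref{weightsofgos4}, that the adjoint action of $\goh=\mathfrak{so}(d-1)$ (seen as the stabiliser of $e_d$ inside $\mathfrak{so}(d)$) on $\gos_1$ is the standard $4$-dimensional representation $W$ of $\goh$. For $\gos_2$, let $Q$ denote the quaternary form defining $\goh\subset\mathfrak{sl}_4$ and split $\mathfrak{gl}_4=\goh\oplus\bQ_p I\oplus S_0$, where $S_0=\{X\in\mathfrak{sl}_4: X^TQ=QX\}$ is the space of trace-free $Q$-self-adjoint matrices; as a $\goh$-module $S_0\cong\on{Sym}^2_0(W)$ (the trace-free symmetric square, via $X\mapsto Q(X\cdot,\cdot)$ and $W\cong W^*$), and since $\bQ_p I$ is the line complementary to $\mathfrak{sl}_4$ in $\mathfrak{gl}_4$, this exhibits $S_0$ as an $\on{ad}(\goh)$-invariant complement of $\goh$ in $\mathfrak{sl}_4$.

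I would next prove the ``moreover'' part and, simultaneously, that these complements are unique, by base change to $\overline{\bQ}_p$. Over $\overline{\bQ}_p$ the form $Q$ (and the defining form of $\gos_1$) becomes split, so $\goh\otimes\overline{\bQ}_p\cong\mathfrak{sl}_2\oplus\mathfrak{sl}_2$ and $W\otimes\overline{\bQ}_p\cong V^{(1)}\boxtimes V^{(1)}$, which is absolutely irreducible. Moreover $\on{Sym}^2(V^{(1)}\boxtimes V^{(1)})=(V^{(2)}\boxtimes V^{(2)})\oplus(V^{(0)}\boxtimes V^{(0)})$, the trivial summand being spanned by the trace form, so $\on{Sym}^2_0(W)\otimes\overline{\bQ}_p\cong V^{(2)}\boxtimes V^{(2)}$ is absolutely irreducible too. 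Absolute irreducibility implies irreducibility over $\bQ_p$, which yields the asserted irreducibility of $\gos_1$ and $\gos_2$ under $\goh$ for both model groups at once. It also gives multiplicity one: since $\mathfrak{so}(4)\cong\Lambda^2 W\cong(V^{(2)}\boxtimes V^{(0)})\oplus(V^{(0)}\boxtimes V^{(2)})$ contains neither $W$ nor $\on{Sym}^2_0(W)$, the $W$-isotypic part of $\mathfrak{so}(d)$ and the $\on{Sym}^2_0(W)$-isotypic part of $\mathfrak{sl}_4$ each consist of a single copy, so any invariant complement is forced, and hence $\gos_1\cong W$ and $\gos_2\cong\on{Sym}^2_0(W)$ as $\goh$-modules.

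It remains to restrict to the principal $\lsl\subset\goh$. By Lemma~\ref{weightsofgoh} the principal $\lsl$ is the $\mathfrak{sl}_2$ under which $\goh$ is of type $V^{(2)}\oplus V^{(2)}$; over $\overline{\bQ}_p$ the only such subalgebra of $\mathfrak{sl}_2\oplus\mathfrak{sl}_2$ is the diagonal one (equivalently, from the matrices of Section~\ref{liealgebras} one reads off $H=H_1+H_2$ and $X=X_1+X_2$ in the $\mathfrak{so}(2,2)$ model). Restricting $W$ along the diagonal $\mathfrak{sl}_2$ and using Clebsch--Gordan gives $V^{(1)}\otimes V^{(1)}=V^{(2)}\oplus V^{(0)}$, hence $\gos_1\cong V^{(0)}\oplus V^{(2)}$; restricting $\on{Sym}^2_0(W)=V^{(2)}\boxtimes V^{(2)}$ gives $V^{(2)}\otimes V^{(2)}=V^{(4)}\oplus V^{(2)}\oplus V^{(0)}$, hence $\gos_2\cong V^{(0)}\oplus V^{(2)}\oplus V^{(4)}$. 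Since each $V^{(n)}$ is defined over $\bQ_p$ and the isomorphism type of an $\mathfrak{sl}_2(\bQ_p)$-module is unchanged by base change, these decompositions already hold over $\bQ_p$. As an alternative matching the computations following Lemma~\ref{weightsofgos4}, one may instead exhibit explicit $\on{ad}(X)$-highest-weight vectors of weights $2,0$ in $\gos_1$ and $4,2,0$ in $\gos_2$ with the bases of Section~\ref{liealgebras} and check that their $\on{ad}(Y)$-strings span the complements. The main obstacle is the concrete description and bookkeeping for $\gos_2$ inside the $15$-dimensional $\mathfrak{sl}_4$ together with the non-split form $\mathfrak{so}_\eta(3,1)$; the base-change argument above is precisely what lets one sidestep this.
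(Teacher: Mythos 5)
Your proposal is correct, but it takes a genuinely different route from the paper. The paper proves the lemma by four explicit matrix computations (quasi-split and split, for each of $\gos_1$ and $\gos_2$): it writes down shorthand coordinates and concrete bases, computes the $\operatorname{ad}_X$- and $\operatorname{ad}_Y$-strings to exhibit the $V^{(0)},V^{(2)},V^{(4)}$ pieces for the principal $\lsl$, and deduces irreducibility under $\goh$ by checking that the individual $\lsl$-isotypic summands (and sums of two of them) are not preserved by the extra generators $M,X_2,Y_2$ (respectively $X_i,Y_i$); only for the split $\gos_2$ does it invoke base change to identify the outcome with the quasi-split computation. You instead identify the complements abstractly as $\goh$-modules, $\gos_1\cong W$ (the standard representation, as the paper also notes before Lemma~\ref{weightsofgos4}) and $\gos_2\cong\operatorname{Sym}^2_0(W)$ realized by trace-free $Q$-self-adjoint matrices, then pass to $\overline{\bQ}_p$ where $\goh$ becomes $\lsl\oplus\lsl$, use $W\cong V^{(1)}\boxtimes V^{(1)}$, $\Lambda^2 W\cong\goh$, $\operatorname{Sym}^2_0W\cong V^{(2)}\boxtimes V^{(2)}$ to get absolute irreducibility (hence irreducibility over $\bQ_p$) and multiplicity one, and finish with Clebsch--Gordan for the (twisted) diagonal $\lsl$, which is legitimately identified with the principal $\lsl$ via Lemma~\ref{weightsofgoh}. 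Your argument is shorter, treats both model groups uniformly (the parameter $\eta$ never enters), and yields as a bonus the uniqueness of the invariant complement, so your $\gos_2$ automatically agrees with the paper's. What the paper's computational route buys, and what your proof does not deliver, is the explicit $\bZ_p$-data: the concrete weight vectors, the bracket relations such as \eqref{Y2stattM}, \eqref{gettingaround}, \eqref{gettingminusX}, \eqref{gettingX}, and the integrality needed to see the complements are undistorted — all of which are used later in Lemma~\ref{movingalongthecomplement} and Lemma~\ref{generation}. So your proof is a clean substitute for the lemma as stated, but the explicit computations would still have to be done elsewhere for the subsequent effective arguments.
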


\smallskip\paragraph{\textbf{Complement $\gos_1<\mathfrak{so}(5)$ - Quasi split}}
We again introduce the shorthand
$$(a,b,c,d)=\left[\begin{smallmatrix}0&0&0&0&a\\0&0&0&0&b\\0&0&0&0&c\\0&0&0&0&d\\
\lambda d&-\lambda b&-\lambda c/\eta&\lambda a&0\end{smallmatrix}\right]\in\mathfrak{so}(5),$$
and calculate
\begin{center}
\begin{tabular}{ll}
$\ad[X](a,b,c,d)=(b,d,0,0),$	&	$\ad[X]^2(a,b,c,d)=(d,0,0,0),$   \\
$\ad[Y](a,b,c,d)=(0,2a,0,2b),$	&	$\ad[Y]^2(a,b,c,d)=(0,0,0,4a).$    \\
\end{tabular}
\end{center}
In particular, $\lsl$ acts trivially on the subspace $\{(0,0,c,0)\}\cong V^{(0)}$
and we have $\{(a,b,0,d)\}\cong V^{(2)}$. 
Furthermore,
\begin{center}
\begin{tabular}{ll}
$\ad[X_2](a,b,c,d)=(c,0,\eta d,0),$	&	$\ad[X_2]^2(a,b,c,d)=(\eta d,0,0,0),$   \\
$\ad[Y_2](a,b,c,d)=(0,0,-\eta a,-2 c),$	&	$\ad[Y_2]^2(a,b,c,d)=(0,0,0,4 \eta a),$  
\end{tabular}
\end{center}
which shows invariance of $\gos_1=\{(a,b,c,d)\}$ under $\mathfrak{so}_\eta(3,1)$. 
In particular these relations also show that the two non-isomorphic but irreducible subrepresentations 
of $\gos_1$ with respect to $\lsl$ are not invariant under $\mathfrak{so}_\eta(3,1)$
and so $\gos_1$ must be irreducible with respect to $\mathfrak{so}_\eta(3,1)$. 
Finally, we note that
\begin{equation}\label{gettingminusX}
 [(1,0,0,0),(0,1,0,0)]=-\lambda X,	
\end{equation}
which we will use to generate~$\gog_1$ starting with~$\gos_1$.

\medskip\paragraph{\textbf{Complement $\gos_2<\mathfrak{sl}_4$ - Quasi split}}
The following diagram depicts the representation $V^{(4)}$, $V^{(2)}$ and $V^{(0)}$ and their relations. Note that it includes a $V^{(4)}$-representation of the second $\lsl$-triple from Section \ref{irreduciblesection}.
\begin{center}
\begin{tikzpicture}

  \draw[red, dashed, thin, ->] (4,2) -- (0.1,2);
  \draw[] (1.5,2) node[above,red] {{\tiny\( \ad[Y]\)}};
  \draw[red, dashed, thin, ->] (3,1) -- (1.1,1);
  \draw[] (1.5,1) node[above,red] {{\tiny\( \ad[Y]\)}};

  \draw[green, dashed, <->] (1,1.9) -- (1,1.1) ;
  \draw[green, dashed, ->] (2,1.9) -- (2,1.1) ;
  \draw[green, dashed, <->] (3,1.9) -- (3,1.1) ;
  \draw[green] (2,1.5) node[right] {{\tiny\( \ad[M]\)}};

  \draw[blue, dashed, thin, ->] (4,2) -- (3.1,1.1);
  \draw[blue, dashed, thin, ->] (1,1) -- (0.1,1.90);
  \draw[blue] (0.4,1.5) node[below] {{\tiny\( \ad[Y_2]\)}};
  \draw[blue] (3.7,1.5) node[below] {{\tiny\( \ad[Y_2]\)}};

  \draw[blue, dashed, thin, ->] (3,1) -- (1.1,0.0);
  \draw[blue, dashed, thin, ->] (1,0.1) -- (1.0,0.9);

  \draw[blue, dashed, thin, ->] (2,0) -- (1.1,0.9);

  \draw[green, dashed, thin, ->] (2,0) -- (2,0.9);

  \draw[green, dashed, thin, ->] (2,1) -- (2.9,0.1);

\filldraw (0,2) circle (2pt) node[align=left,   above] {\(z^T\)};
\filldraw (1,2) circle (2pt) node[align=left,   above] {\(Y_1'\)};
\filldraw (2,2) circle (2pt) node[align=left,   above] {\(H_4\)};
\filldraw (3,2) circle (2pt) node[align=left,   above] {\(X_1'\)};
\filldraw (4,2) circle (2pt) node[align=left,   above] {\(z\)};

\filldraw (1,1) circle (2pt) node[align=left,   below left] {\(Y_2'\)};
\filldraw (2,1) circle (2pt) node[align=left,   below left] {\(S\)};
\filldraw (3,1) circle (2pt) node[align=left,   below] {\(X_2'\)};

\filldraw (2,0) circle (2pt) node[align=left,   below] {\(H_t\)};
\filldraw[gray] (1,0) circle (2pt) node[align=left,   below] {{\tiny\(-H_4+2H_t\)}};
\filldraw[gray] (3,0) circle (2pt) node[align=left,   below] {{\tiny\(H_t+H_4\)}};

\end{tikzpicture}
\end{center}

Since $\mathfrak{sl}_4$ consists of traceless matrices, we see
that  a linear complement to $\mathfrak{so}_\eta(3,1)$ can be obtained by taking the linear hull of
\begin{align*}
z&=\left[\begin{smallmatrix}0&0&0&1\\0&0&0&0\\0&0&0&0\\0&0&0&0\end{smallmatrix}\right],\ 
 {X}_1'=\left[\begin{smallmatrix}0&1&0&0\\0&0&0&-1\\0&0&0&0\\0&0&0&0\end{smallmatrix}\right], \  
 {H}_4=\left[\begin{smallmatrix}-1&0&0&0\\0&2&0&0\\0&0&0&0\\0&0&0&-1\end{smallmatrix}\right],\  
 {Y}_1'=\left[\begin{smallmatrix}0&0&0&0\\1&0&0&0\\0&0&0&0\\0&-1&0&0\end{smallmatrix}\right],\\
{X}_2'&=\left[\begin{smallmatrix}0&0&1 &0\\0&0&0&0\\0&0&0&-\eta\\0&0&0&0\end{smallmatrix}\right],\ 
S=\left[\begin{smallmatrix}0&0&0&0\\0&0&1&0\\0&\eta&0&0\\0&0&0&0\end{smallmatrix}\right],\ 
{Y}_2'=\left[\begin{smallmatrix}0&0&0&0\\0&0&0&0\\\eta &0&0&0\\0&0&-1 &0\end{smallmatrix}\right],\ 
{H}_{\text{t}}=\left[\begin{smallmatrix}1&0&0&0\\0&1&0&0\\0&0&-3&0\\0&0&0&1\end{smallmatrix}\right],
\end{align*}
and the vector $z^T$. 
As before, $z$ is an eigenvector for $\ad[H]$ and satisfies $\ad[X]z=0$, i.e.
it is a heighest weight vector. In fact $\ad[H]z=4z$ and $z$ together with
$$ 
\ad[{Y}]z=-2{X}_1',\  \ad[{Y}]X_1'=2{H}_4,\  \ad[{Y}] H_4=-6{Y}_1',\   \ad[{Y}]Y_1'=4z^T
$$
span an irreducible subrepresentation for $\lsl$ isomorphic to $V^{(4)}$.
Moreover, from $\ad[H]{X}_2'=2{X}_2'$ and $\ad[X](X_2')=0$ we see that 
$X_2'$, $\ad[Y]X_2'=2S$, and $\ad[Y]S=-2Y_2'$ together 
span an irreducible $\lsl$-subrepresentation isomorphic to $V^{(2)}$. 
Finally
we also find a trivial representation isomorphic to $V^{(0)}$ since 
$\ad[X]({H}_{\text{t}})=\ad[Y]({H}_{\text{t}})=0$. 

For the action of the additional vectors $X_2$, $Y_2$ and $M$ we first note
that
\begin{align*}
	&\ad[M]z=\ad[M]z^T=0,\  \ad[M]X_1'=2X_2',\ \ad[M]X_2'=-2\eta X_1'\\
	&\ad[M]Y_1'=2Y_2',\ \ad[M]Y_2'=-2nY_1', \ \ad[M]H_4=4S\\
	&\ad[M]S=-\frac{4\eta}3(H_4+H_{\text{t}}), \ \ad[M]H_{\text{t}}=8S.
\end{align*} 
Since $\mathfrak{so}_\eta(3,1)$ is generated by $\lsl$ and $M$ these
show that the span of the above vectors gives an invariant complement $\gos_2<\mathfrak{sl}_4$.
Moreover, we again see (e.g. from the last three equations) 
that the three nonisomorphic subrepresentations for
$\lsl$ are not invariant under $\mathfrak{so}_\eta(3,1)$ and neither is the sum
of any two of the three subrepresentations. Hence it follows that the invariant complement
$\gos_2$ is irreducible under $\mathfrak{so}_\eta(3,1)$.
Let us also note the identities
\begin{equation}\label{Y2stattM}
 \begin{gathered}
	\ad[{Y_2}]z=2{X}_2', \ 
	 \ad[{Y_2}]X_2'=\frac{2\eta}3(-{H}_4+2 H_\text{t}),\  \ad[{Y_2}](-H_4+2H_\text{t})=-18 {Y}_2',\\
	 \ad[{Y_2}]Y_2'=-4\eta z^T,\ \ad[{Y_2}]{H}_{\text{t}}=-8{Y}_2',\  \ad[{X_2}](H_{\text{t}})=-4{X}_2'.
 \end{gathered}
\end{equation}

\medskip
\paragraph{\textbf{Complement $\gos_1<\mathfrak{so}(5)$ - Split}}
We introduce the shorthand
$$(a,b,c,d)=\left[\begin{smallmatrix}0&0&0&0&a\\0&0&0&0&b\\0&0&0&0&c\\0&0&0&0&d\\
-\lambda d&\lambda c&\lambda b&-a\lambda &0\end{smallmatrix}\right]
\in\mathfrak{so}(5).$$
With this notation we have
\begin{equation*}
\begin{aligned}
&\ad[X](a,b,c,d)=(b+c,d,d,0),	&	\ad[X]^2(a,b,c,d)=(2d,0,0,0),   \\
&\ad[Y](a,b,c,d)=(0,a,a,b+c),	&	\ad[Y]^2(a,b,c,d)=(0,0,0,2a). 
\end{aligned}
\end{equation*}
In particular, $\lsl$ acts trivially on $\{(0,b,-b,0)\}\cong V^{(0)}$,
and the invariant complement to this subspace is isomorphic to $V^{(2)}$. 
Moreover,
\begin{equation}\label{gettingaround}
\begin{aligned}
&\ad[Y_1](a,b,c,d)=(0,a,0,c),	&	\ad[X_1](a,b,c,d)=(b,0,d,0),   \\
&\ad[Y_2](a,b,c,d)=(0,0,a,b),	&	\ad[X_2](a,b,c,d)=(c,d,0,0), 
\end{aligned}
\end{equation}
together show that $\gos_1$ is invariant under $\mathfrak{so}(2,2)$
and irreducible. Finally we note that
\begin{equation}\label{gettingX}
\begin{aligned}
 &[(1,0,0,0),(0,0,1,0)]=\lambda X_1,\\
 &[(1,0,0,0],(0,1,0,0)]=\lambda X_2,
\end{aligned}
\end{equation}
which we will use to generate~$\mathfrak{g}_1$ out of~$\gos_1$.

\medskip
\paragraph{\textbf{Complement $\gos_2<\mathfrak{sl}_4$ - Split}}
We define $$z=\left[
\begin{smallmatrix}0&0&0&1\\0&0&0&0\\0&0&0&0\\0&0&0&0
	\end{smallmatrix}\right]\in \mathfrak{sl}_4\mbox{ and } 
\gos_2=\left\{\left[
\begin{smallmatrix}w&a&b&x\\e&-w&c&-b\\f&d&-w&-a\\y&-f&-e&w
	\end{smallmatrix}\right]\right\}.$$ 

The following diagram shows the action on $z$ (and $\ad[Y_2]z$, $\ad[Y_2]^2z$) by $Y_1$ moving horizontally
and the corresponding vertical action of $Y_2$.
\begin{center}
\begin{tikzpicture}

  \draw[red, dashed, ->] (2,0) -- (0.1,0);
  \draw[red, dashed, ->] (2,1) -- (0.1,1);
  \draw[red, dashed, ->] (2,2) -- (0.1,2);

  \draw[blue, dashed, ->] (0,2) -- (0,0.1);
  \draw[blue, dashed, ->] (1,2) -- (1,0.1);
  \draw[blue, dashed, ->] (2,2) -- (2,0.1);

\filldraw (2,2) circle (2pt) node[align=left,   right] {\(z\)};
\filldraw (2,1) circle (2pt) node[align=left,   right] {};
\filldraw (2,0) circle (2pt) node[align=left,   right] {};

\draw[] (2,1.5) node[align=left,   right,blue,thin] {{\tiny\(\ad[Y_2]\)}};
\draw[] (1.5,2) node[align=left,   above,red,thin] {{\tiny\(\ad[Y_1]\)}};

\filldraw (1,2) circle (2pt) node[align=left,   above] {};
\filldraw (1,1) circle (2pt) node[align=left,   right] {};
\filldraw (1,0) circle (2pt) node[align=left,   right] {};

\filldraw (0,2) circle (2pt) node[align=left,   right] {};
\filldraw (0,1) circle (2pt) node[align=left,   right] {};
\filldraw (0,0) circle (2pt) node[align=left,   left] {\(z^T\)};

\end{tikzpicture}
\end{center}

Similarly to the previous case $z$ generates an irreducible representation for the principal $\lsl$. 
It is easy to check that $\gos_2=\{v\in\mathfrak{sl}_4: v=Jv^TJ^{-1}\}$
is invariant under $\mathfrak{so}(2,2)$ (which is defined by the quadratic form
associated to the symmetric matrix $J$). 
Moreover, we could also repeat the detailed analysis to show that 
there are two more irreducible subrepresentations contained in $\gos_2$ that are isomorphic to $V^{(2)}$
and $V^{(0)}$ respectively. However, this follows in fact by noticing that the case at hand
and the previous case become isomorphic over the algebraic closure of $\bQ_p$
(and since the principal $\lsl$ correspond
to each other as subgroups of the isomorphic subgroups
$\mathfrak{so}_\eta(3,1)$ resp. $\mathfrak{so}(2,2)$ over the algebraic closure). 
As we have seen above the three irreducible subrepresentations resp. also sums of two out of the three
subrepresentations are not invariant under $\mathfrak{so}_\eta(3,1)$, which implies also that $\gos_2$ is 
irreducible for the adjoint representation restricted to $\mathfrak{so}(2,2)$. 

Irreducibility now implies that starting with $z$ one can obtain a basis of $\gos_2$ by taking finitely many commutators
with elements of $\mathfrak{so}(2,2)(\bZ)$. Even without doing this concretely\footnote{We did this concretely
for $\mathfrak{so}(2,1)$ because it was easy and for $\mathfrak{so}_\eta(3,1)$ because
it does not seem completely impossible that the parameter $\eta$ (that depends on $p$) 
might affect the outcome.} we find finitely many integer
matrices (independent of $p$) that give also a $\bZ_p$-basis of $\gos_2[0]$ for all but possibly a finite list
of primes. By Proposition \ref{prime_existence} we may assume that this holds for our chosen prime $p$.

\subsection{Undistorted complement of $\goh_1$ and $\goh_2$}

The study of undistorted complements of the model groups also gives complements for our acting groups.

In fact, the acting group for the first factor $\bG_1$ is precisely the stabilizer subgroup $\bH_v$. 
By our choice of $p$ and Corollary \ref{classificationofH}
we know that it is $\bZ_p$-conjugated to one of our model groups.  Now set $\lambda=D$ and extend
the quadratic form $Q_0$ (in the variables $x,y,z$ and possibly $w$) that defines the model group (and is conjugated
to the sum of $d$ squares restricted to $v^\perp$) by the variable $u$  to obtain $Q_0+D u^2$. It follows that 
the group $\SO[d]$ with the subgroup $\bH_v$ is $\bZ_p$-conjugated to another orthogonal group in $d$ variables
with the model group as its subgroup acting on the first $d-1$ variables. The latter case we studied above
and hence we may conjugate the above invariant complement $\gos_1$ to obtain an invariant complement $\gor_1$
within the standard $\mathfrak{so}(d)$. 

Similarly, the acting group $\bH_{\Lambda_v}$ for the second factor $\bG_2$ is 
the group $\bH_v^{g_v^{-1}}$ projected to $\SL_{d-1}$ (with $\bH_v$ and $g_v$
as defined in Section \ref{ReformulationTowardsHomogeneousDynamics}). 
Alternatively we may also conjugate $\bH_v$ by the matrix $g_v'$ consisting of a $\bZ$-basis of $\Lambda_v$
and the vector $v$ (and determinant $D\in\bZ_p^\times$) to obtain $\bH_{\Lambda_v}$ (without the need of the projection operation). 
By Corollary \ref{classificationofH} 
this group is now $\bZ_p$-conjugated (via some $h_v\in\SL_{d-1}(\bZ_p)$) to one of our model groups. 
Hence we only have to apply the inverse conjugation to the invariant complement $\gos_2$
to obtain the invariant complement $\gor_2$ of $\goh_2$ within $\mathfrak{sl}_{d-1}$.

It is easy to see from the concrete description of $\gos_i$ for $i=1,2$ (resp. in the split rank two case from our choice of $p$) that these complements are undistorted complements
of the model Lie algebras. As the above conjugation is taking place over $\bZ_p$ the same follows for $\gor_i$
and $i=1,2$.

\subsection{Definition of the set $\mathcal{F}$}
For $\gor\in\{\gor_1,\gor_2\}$ let $\gor^{\on{lw}}$ denote the  lowest weight space and let $\gor^\text{hw}$ be the highest weight space where weights are defined using the diagonal subgroup of the principal $\lsl$. 
We also write $(\cdot)^{\text{hw}}:\gor\rightarrow\gor^{\on{hw}}$ and
 $(\cdot)^{\text{lw}}:\gor\rightarrow\gor^{\on{lw}}$ for the projection maps whose kernels consist
of the other weight spaces. 
For any unipotent flow $n_t=\exp{(Zt)}$ of the acting group $H_p$ with $Z\in\goh[0]$  and any $r\in\gor[0]$ we have that
\begin{equation}
\label{eq:expofad}
\Ad[n_t]r=r+t\ad[Z]r+\tfrac{t^2}{2}\ad[Z]^2r+\tfrac{t^3}{3!}\ad[Z]^3r+\tfrac{t^4}{4!}\ad[Z]^4r
\end{equation}
is a polynomial in $t$ with $\bZ_p$ coefficients (because $p\geq5$) of degree at most $d_\gor$ ($d_\gor=2$ if $\gor=\gor_1$ and $d_\gor=4$ if $\gor=\gor_2$).

Given a vector $r$ in the complement, we can apply $\bZ_p$-unipotents to 
maximize the norm of its lowest weight  component corresponding to $(0,\dots,0,1)\in\gos_1$ and $z^T\in\gos_2$ in the above notation.
\begin{lemma} 
\label{movingalongthecomplement}
There exists a finite set $\mathcal{F}(v)$ respectively $\mathcal{F}(\Lambda_v)$ of
uniformly bounded cardinality consisting of unipotent elements in $\HvS[p]\cap K[0]$ respectively $\HVS[p]\cap K[0]$ such that for any $r_1\in\gor_1$ and $r_2\in\gor_2$ there exists $h_1\in \mathcal{F}(v)$ and $h_2\in \mathcal{F}(\Lambda_v)$ such that $\|\Ad[h_1]r_1\|_p=\|(\Ad[h_1]r_1)^{\on{lw}}\|_p$ and $\|\Ad[h_2]r_2\|_p=\|(\Ad[h_2]r_2)^{\on{lw}}\|_p$
\end{lemma}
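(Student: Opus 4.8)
The two cases --- the set $\mathcal{F}(v)$ attached to $\gor_1$ and the acting group $\HvS[p]$, and the set $\mathcal{F}(\Lambda_v)$ attached to $\gor_2$ and $\HVS[p]$ --- are handled in exactly the same way, so I fix $i\in\{1,2\}$ and exploit that, after conjugating by the element of $\on{GL}_d(\bZ_p)$ supplied by Corollary \ref{classificationofH} (respectively its analogue for $\HVS[p]$), the pair $(\goh_i,\gor_i)$ may be replaced by $(\goh^{\mathrm{mod}},\gos_i)$ for one of the model Lie algebras $\mathfrak{so}(2,1)$, $\mathfrak{so}_\eta(3,1)$, $\mathfrak{so}(2,2)$ and the explicit complement $\gos_i$ computed above. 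This conjugation is a $p$-adic isometry, it sends unipotent elements of $H_p\cap K[0]$ to unipotent elements of the model group inside $\on{SL}_d(\bZ_p)$, and it sends $\gos_i^{\on{lw}}$ to $\gor_i^{\on{lw}}$, so it suffices to produce the set $\mathcal{F}$ for the model data. Since the claim is unchanged under rescaling $r$ by a unit of $\bQ_p$ and every nonzero $r$ scales to $\|r\|_p=1$, I will assume $r\in\gos_i[0]\setminus p\,\gos_i[0]$; then each $h\in K[0]$ acts isometrically on $\gos_i[0]$, so $\|\Ad[h]r\|_p=1$, while the coordinate projection $(\cdot)^{\on{lw}}$ always has norm $\le1$ on a norm-$1$ vector. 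Thus everything reduces to finding, inside an absolutely bounded finite set of unipotents of $H_p\cap K[0]$, one $h$ with $(\Ad[h]r)^{\on{lw}}$ a $p$-adic unit.

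The set I would use is $\mathcal{F}=\{\exp(Z):Z\in\gon^-,\ \text{the coordinates of }Z\text{ lie in }\{0,1,\ldots,C\}\}$, where $\gon^-\subset\goh^{\mathrm{mod}}$ is the sum of the strictly negative weight spaces for the diagonal torus of the principal $\lsl$ --- that is, $\langle Y\rangle$ for $\mathfrak{so}(2,1)$, $\langle Y,Y_2\rangle$ for $\mathfrak{so}_\eta(3,1)$, and $\langle Y_1,Y_2\rangle$ for $\mathfrak{so}(2,2)$, in the notation of Section \ref{irreduciblesection} --- and $C$ is an absolute constant fixed below. Here $\gon^-$ is an \emph{abelian} subalgebra (a bracket of two weight-$(-2)$ vectors would have weight $-4$, which occurs in none of the model algebras) consisting of nilpotent matrices, so for $Z$ with $\bZ_p$-integral coordinates the truncated series $\exp(Z)$ has at most five terms with unit denominators $k!$ ($k\le4$, $p\ge5$), giving a unipotent element of $H_p\cap\on{SL}_d(\bZ_p)$. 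Hence every element of $\mathcal{F}$ is a unipotent element of $H_p\cap K[0]$, and $|\mathcal{F}|\le(C+1)^2$, independent of $p$ and $D$.

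The crux is that for each $r$ with $\|r\|_p=1$ one of these $h$ works. Consider $\Phi:\gon^-\to\gos_i^{\on{lw}}\cong\bQ_p$, $Z\mapsto(\Ad[\exp(Z)]r)^{\on{lw}}$; by \eqref{eq:expofad} it is a polynomial in the coordinates of $Z$ with $\bZ_p$-coefficients of degree $\le d_\gor\le4$ in each variable, with reduction $\overline\Phi$ modulo $p$ the corresponding polynomial attached to $0\ne\overline r\in\gos_i\otimes\bF_p$. Suppose $\overline\Phi\equiv0$: restricting to the scalar line through any $Z_0\in\gon^-$ and using $p>d_\gor$, all coefficients vanish, so $\ad[Z_0]^k\overline r\in\ker((\cdot)^{\on{lw}})$ for every $k$ and $Z_0$; by polarization the whole $\gon^-$-submodule $W=U(\gon^-)\overline r$ lies in $\ker((\cdot)^{\on{lw}})$. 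But $\gon^-$ acts by nilpotent operators on the nonzero module $W$, so Engel's theorem gives a nonzero $\gon^-$-invariant vector in $W$. Since $\gos_i\otimes\bF_p$ is irreducible under $\goh^{\mathrm{mod}}\otimes\bF_p$ for our prime (this fails for at most finitely many primes, which are excluded by Proposition \ref{prime_existence}; for $\mathfrak{so}(2,1)$ it is immediate from the relations of Section \ref{irreduciblesection}) and $\gon^-$ is the whole negative nilradical, the $\gon^-$-invariants of $\gos_i\otimes\bF_p$ form exactly the one-dimensional space $\gos_i^{\on{lw}}\otimes\bF_p$ --- contradicting $W\subseteq\ker((\cdot)^{\on{lw}})$. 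Hence $\overline\Phi\ne0$; being a nonzero polynomial over $\bF_p$ of degree $\le4$ in at most two variables, and with $0,1,\ldots,4$ distinct mod $p$ (as $p\ge5$), it cannot vanish on all of the grid $\{0,\ldots,4\}^2$. So $C=4$ works: some $Z$ with coordinates in $\{0,\ldots,4\}$ satisfies $|\Phi(Z)|_p=1$, i.e.\ $(\Ad[\exp(Z)]r)^{\on{lw}}$ is a unit; combined with the isometry remark, $\|(\Ad[\exp(Z)]r)^{\on{lw}}\|_p=\|\Ad[\exp(Z)]r\|_p=\|r\|_p$, as required. Doing this for $i=1$ and $i=2$ and conjugating $\mathcal{F}$ back produces $\mathcal{F}(v)$ and $\mathcal{F}(\Lambda_v)$.

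I expect the genuinely non-formal point to be the mod-$p$ irreducibility of $\gos_i$ under $\goh^{\mathrm{mod}}$ for the chosen prime --- equivalently, that the space of $\gon^-$-invariants of $\gos_i$ stays one-dimensional after reduction; this is exactly where the flexibility in Proposition \ref{prime_existence} (to take $p$ avoiding a fixed finite bad set) and the explicit $\bZ_p$-bases of Section \ref{irreduciblesection} enter. The remaining ingredients --- $\bZ_p$-integrality of the exponential and of the structure constants for $p\ge5$, the isometry property of $K[0]$, and the elementary fact that a nonzero bounded-degree polynomial over $\bF_p$ does not vanish on a fixed bounded grid once $p$ is large --- are routine.
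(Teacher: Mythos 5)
Your construction ends up with essentially the same set $\mathcal{F}$ as the paper -- since $Y$ and $Y_2$ (respectively $Y_1$ and $Y_2$) commute, your $\exp(t_1Y+t_2Y_2)$ with $t_i\in\{0,\dots,4\}$ are exactly the products $\exp(t_1Y)\exp(t_2Y_2)$ the paper uses -- but your verification is genuinely different. The paper argues by hand: it splits into cases according to which irreducible $\lsl$-component of $r$ carries the maximal coefficient, writes out the lowest-weight coefficient of $\Ad[\exp(tY)]r$ (resp.\ $\Ad[\exp(tY_2)]r$, or a two-step composition when the coefficient sits in $S$) as an explicit polynomial in $t$ whose top coefficient is the dominant one, and picks $t$ in a small grid to avoid cancellation. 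You replace this casework by one uniform argument: if every grid point failed, the mod-$p$ reduction of the polynomial $Z\mapsto(\Ad[\exp Z]r)^{\on{lw}}$ on the abelian algebra $\gon^-=\langle Y\rangle,\ \langle Y,Y_2\rangle,\ \langle Y_1,Y_2\rangle$ would vanish identically, forcing $U(\gon^-)\overline r\subseteq\ker(\cdot)^{\on{lw}}$, which contradicts Engel's theorem once one knows the $\gon^-$-invariants of $\gos_i\otimes\bF_p$ are exactly the one-dimensional lowest-weight line. This is correct and arguably cleaner (it also quietly repairs the slight asymmetry in the paper's split $\gos_1$ case, where one must be free to use $Y_1$ as well as $Y_2$); what it costs is precisely the mod-$p$ statement about invariants. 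One caution there: your justification "irreducibility fails for at most finitely many primes, excluded by Proposition~\ref{prime_existence}" does not literally apply in the quasi-split case, because the model algebra itself varies with $p$ through $\eta$ (the paper flags exactly this issue in a footnote). The fix is to check the one-dimensionality of the $\gon^-$-invariants directly from the explicit relations of Sections~\ref{irreduciblesection} and the complement computations (e.g.\ \eqref{Y2stattM}, \eqref{gettingaround}): the structure constants occurring are $2,4,6,8,18,4\eta,\tfrac{2\eta}{3}$ and the like, all $p$-adic units uniformly in the non-square unit $\eta$ once $p\geq5$, so the claim holds for the chosen prime without any ``bad prime'' exclusion -- this direct check is exactly the content the paper's case analysis carries, and with it your argument is complete.
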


\begin{proof}
It suffices to work with the invariant complements $\gos_1$ respectively $\gos_2$
for our model groups. In the case of $s\in \gos_1$ (and so either $s=(a,b,c)$ or $s=(a,b,c,d)$), 
assume first that its maximal coefficient does not lie in $V^{(0)}$ which always holds for $d=4$. 
For $d=5$ this assumption says $\max{(|a|_p,|b|_p,|d|_p)}=\|s\|_p$ if $\HvS[p]$ is quasi-split 
respectively $\max{(|a|_p,|b+c|_p,|d|_p)}=\|s\|_p$ if $\HvS[p]$ is split. Then $\Ad[\exp(tY)]=(*,\dots,P(t))$ where $P(t)=c+2bt+2at^2$, $d+2bt+2at^2$ and $d+(b+c)t+at^2$ in the three cases. It follows that there exists $t\in\{0,1,2\}$ such that $\Vert P(t)\Vert_p$ is as large as $\|s\|_p$ 
(so that there is no accidental cancellation in the lowest weight term). 
If on the other hand $d=5$ and the projection of $s$ to $V^{(0)}$ dominates, then we can use $\Ad[\exp{(tY_2)}]$ whose adjoint action on $s$ has lowest weight $d-2 ct+2\eta at^2$ (in the quasi-split case) respectively $d+bt$ (split) and evaluation at $t=1$ suffices.

For $s\in\gos_2$, we may use again $\exp{(tY)}$ if the largest coefficient of $s$ belongs
to a basis vector in $V^{(4)}$. Since $\Ad[\exp{(tY)}]s$ is now of degree four, we can ensure that for  $p\geq16$ 
there exists some $t\in\{0,1,2,3,4\}$ with  $\Vert P(t)\Vert_p=\Vert s\Vert$.
For $d=4$ this finishes the argument.

 If on the other hand $d=5$ and the maximal coefficient is in either $V^{(0)}$ or $V^{(2)}$ and $\HVS[p]$ is quasi-split 
we use the elements $Y$ and $Y_2$ together: If the largest coefficient of $s$ corresponds to $X_2'$ or $Y_2'$ (belonging to 
the representation isomorphic to $V^{(2)}$) or $H_\text{t}$ (spanning $V^{(0)}$)
we use the same argument for the irreducible representation generated by $z$ with respect to the $\lsl$-triple 
$H, X_2, Y_2$, see \eqref{Y2stattM}. If the largest coefficient appears in $S$ (belonging to 
the representation isomorphic to $V^{(2)}$), we first apply $\Ad[\exp(Y)]$
to achieve that afterwards the norm of the coefficient of $Y_2'$ is just as big and 
afterwards apply $\Ad[\exp(tY_2)]$ as before.
 Consequentially, a finite set of products of $\exp{(tY)}$ and $\exp{(tY_2)}$ 
for $t\in\{0,1,2,3,4\}$ might be used to define $\mathcal{F}(\Lambda_v)$.

If $\HVS[p]$ is split then we may apply first $\Ad[\exp(tY_1)]$ for $t\in\{0,1,2\}$
and afterwards $\Ad[\exp(t Y_2)]$ for $t\in\{0,1,2\}$ which leads to the conclusion (e.g.
by studying the weight diagram of $\gos_2$).  
\end{proof}

\subsection{Effective Generation}
\label{boundedgeneration}
Starting with a vector in $\gor_i^{\on{ht}}$ one can obtain
in a sense all other vectors using the lower unipotents of $\goh_i$. Since the Lie algebra $\goh_2$ is maximal in $\gog_2$, we can upgrade this to generate $\gog_2$ from $\gor_2$.
\begin{lemma}
\label{generation}
There exist a finite sets $\mathcal{M}$ and $\mathcal{N}$ of uniformly bounded cardinality satisfying\footnote{We will need the second part only for the proof of the joint equidistribution in Section \ref{proofofequidistribution}.}
\begin{enumerate}
\item \label{generationgor}
 $\mathcal{M}$ is a subset of $\HvS[p]^+\cap K[0]$ respectively $\HVS[p]^+\cap K[0]$
such that for $r\in\gor_i^{\on{ht}}[0]$ with $\|r\|_p=1$ the set 
$\{\Ad[m]r\}_{m\in \mathcal{M}}$ forms a $\bZ_p$-basis of $\gor_i[0]$.
\item \label{generationgog} $\mathcal{N}$ is a subset of $\HVS[p]^+\cap K[0]$ such that 
for $r\in\gor_2^{\on{ht}}[0]$ with $\|r\|_p=1$ and 
	$s\in\gor_2^{\on{lw}}[0]$ with $\|s\|_p=1$
the set 
$\{\Ad[n]\Ad[\exp{s}]r\}_{n\in \mathcal{N}}\cup \{\Ad[m]r\}_{m\in \mathcal{M}}$ forms a $\bZ_p$-basis of $\gog_2[0]$.
\end{enumerate}

In particular, in both bases we only need to take the exponential of nilpotent elements, where the exponential is simply a polynomial with coefficients in $\bZ_p$ (since $p\geq 10$).
\end{lemma}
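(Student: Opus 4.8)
The plan is to prove Lemma \ref{generation} by exploiting the representation-theoretic decompositions of $\gor_i$ (and of $\gog_2$) under the principal $\lsl$ that were established in Section \ref{liealgebras}, together with the observation that the relevant conjugations all take place over $\bZ_p$, so it suffices to argue for the model Lie algebras $\gos_1\subset\mathfrak{so}(d)$ and $\gos_2\subset\mathfrak{sl}_{d-1}$. First I would treat part \eqref{generationgor}. The complement $\gor_i$ is a direct sum of irreducible $\lsl$-subrepresentations (Lemmas \ref{weightsofgos4} and \ref{weightsofgos5}), each of the form $V^{(n)}$ with $n\leq 4$. Fix a highest weight vector $r\in\gor_i^{\on{ht}}[0]$ with $\|r\|_p=1$. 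Within a single $V^{(n)}$-summand the elements $\ad[Y]^k r$ for $k=0,\dots,n$ span that summand, and the explicit formulas in Section \ref{liealgebras} (e.g.\ the tables for $\ad[X]^j$, $\ad[Y]^j$ acting on $(a,b,c)$ or $(a,b,c,d)$, and the relations $\ad[Y]z=-2X_1'$, etc.) show that the coefficients produced are integers that are $p$-adic units as long as $p$ avoids a fixed finite set of small primes — which we may assume by Proposition \ref{prime_existence}. Since $\ad[Y]^kr=\frac{d^k}{dt^k}\big|_{t=0}\Ad[\exp(tY)]r$ is an integer combination of the vectors $\Ad[\exp(tY)]r$ for finitely many values $t\in\{0,1,\dots,n\}$ (a Vandermonde argument with unit denominators, again valid for $p\geq 5$), each $\ad[Y]^kr$ lies in the $\bZ_p$-span of $\{\Ad[\exp(tY)]r\}$, and conversely. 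For the summands not generated directly by $r$, I would use the additional generators of $\goh_i$ beyond the principal $\lsl$ — namely $M,X_2,Y_2$ in the quasi-split case and $X_1,X_2,Y_1,Y_2$ in the split case — whose action moves the highest weight vector of the principal copy into the other irreducible components (the relations $\ad[M]X=2X_2$, $\ad[M]z=0$, $\ad[M]H_4=4S$, $\ad[Y_1]z$, $\ad[Y_2]z$, etc., are recorded explicitly). Composing a bounded number of such $\exp$'s of nilpotent elements and again invoking the unit-coefficient Vandermonde trick produces the finite set $\mathcal{M}\subset H_p^+\cap K[0]$ whose translates of $r$ span $\gor_i[0]$ over $\bZ_p$. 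Irreducibility of $\gos_i$ under $\goh_i$ (shown in Lemmas \ref{weightsofgos5}, and for the split $\gos_2$ by the algebraic-closure argument there) guarantees that no component is missed.

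For part \eqref{generationgog} the extra input is that $\goh_2=\mathfrak{so}(2,2)$ or $\mathfrak{so}_\eta(3,1)$ is a \emph{maximal} subalgebra of $\gog_2=\mathfrak{sl}_{d-1}$, so $\gor_2$ generates all of $\gog_2$ as a Lie algebra. Concretely, the identities \eqref{gettingminusX}, \eqref{gettingX} (and \eqref{gettingaround}) exhibit a single bracket of two explicit elements of $\gor_2[0]$ — of the form $[(1,0,0,0),(0,1,0,0)]$ — equal to a unit multiple of $X$ (resp.\ $X_1$, $X_2$), a nilpotent element of $\goh_2$ not already available. Having produced such an element of $\goh_2$, I would realize it (and hence the one-parameter unipotent it generates) by noting that $[s,r]=\frac{d}{dt}\big|_{t=0}\Ad[\exp(ts)]r$, so it lies in the $\bZ_p$-span of $\{r,\Ad[\exp(s)]r,\Ad[\exp(2s)]r\}$ — this is why the lemma allows the generator $\Ad[\exp s]r$ with $s\in\gor_2^{\on{lw}}[0]$. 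Once one obtains a nilpotent generator of $\goh_2$ outside $\lsl$ together with the principal $\lsl$, one recovers a $\bZ_p$-basis of $\goh_2[0]$ by the argument of part \eqref{generationgor} applied inside $\goh_2$ (using its own representation structure from Lemma \ref{weightsofgoh}); combined with the basis of $\gor_2[0]$ already produced, and using $\gog_2[0]=\goh_2[0]\oplus\gor_2[0]$ (the undistortedness established at the end of Section \ref{liealgebras}), this gives a $\bZ_p$-basis of $\gog_2[0]$. Collecting the finitely many $\exp$'s of nilpotents used — all with $\bZ_p$-coefficients since $p\geq 10$ — defines the uniformly bounded set $\mathcal{N}\subset H_{\Lambda_v,p}^+\cap K[0]$, and the adjoint images $\{\Ad[n]\Ad[\exp s]r\}_{n\in\mathcal{N}}\cup\{\Ad[m]r\}_{m\in\mathcal{M}}$ span $\gog_2[0]$.

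The one point requiring care — and the main obstacle — is the uniformity in $p$ of the unit-denominator claim. All the structure constants appearing in Section \ref{liealgebras} are fixed rational numbers (independent of $p$, except for the occurrence of the parameter $\eta$, which however appears only through unit factors since $\eta\in\bZ_p^\times$), so the finitely many Vandermonde-type inversions and the determinant of the candidate basis are each a fixed nonzero rational; hence they are $p$-adic units for all but a fixed finite set of primes, and Proposition \ref{prime_existence} lets us assume $p$ avoids that set. I would phrase this once, at the start of the proof, and then carry out the two parts as above without re-checking it. The construction of $\mathcal{M}$ and $\mathcal{N}$ is then completely explicit (or, in the split $\gos_2$ case, existential via irreducibility as already flagged in the footnote of Section \ref{liealgebras}), and the final sentence of the lemma — that only exponentials of nilpotent elements are used, with polynomial $\bZ_p$-valued exponential — is immediate from the fact that every element of $\goh_i$ we exponentiate is one of the listed nilpotent matrices and $p\geq 10>d_{\gor}+1$.
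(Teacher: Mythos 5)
Your part (1) is, in substance, the paper's own argument: apply $\Ad[\exp(tY)]$ to $r$ for $t\in\{0,\dots,d_\gor\}$ and invert a Vandermonde matrix whose determinant is a fixed integer (hence a unit for all large $p$, which Proposition \ref{prime_existence} permits), and for $d=5$ supplement this with $\Ad[\exp(tY_2)]r$ (quasi-split) resp.\ $\Ad[\exp(t_1Y_1)\exp(t_2Y_2)]r$ (split) to reach the remaining $\lsl$-isotypic pieces. One caution: you list $M$ among the generators you would exponentiate; $M$ is semisimple, so $\exp(M)$ is not a polynomial and is not covered by your closing ``only nilpotents'' remark -- the paper deliberately uses only $\exp(tY)$, $\exp(tY_2)$, $\exp X_2$, $\exp(tY_i)$, and you should too.

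Part (2) contains a concrete misstep. The identities \eqref{gettingminusX}, \eqref{gettingX} and the relations \eqref{gettingaround} that you invoke are computations inside $\gos_1$, the invariant complement of the model group in $\mathfrak{so}(d)$ -- the shorthand $(a,b,c,d)$ parameterizes $\gos_1$, not $\gos_2$ -- and in the paper they are used only in Section \ref{proofofequidistribution} to generate $\gog_1$ out of $\gor_1$. They say nothing about brackets inside $\gor_2\subset\mathfrak{sl}_{d-1}$, which is what part (2) concerns. There, the bracket of the lowest-weight vector against the highest-weight vector is $\ad[z^T]z=-\tfrac12H$: a \emph{semisimple} element of the principal $\lsl$, not ``a unit multiple of $X$ (resp.\ $X_1$, $X_2$)'', and in particular not ``a nilpotent element of $\goh_2$ not already available'' -- for $d=4$ there is no ``outside $\lsl$'' at all, since $\goh_2=\mathfrak{so}(2,1)$ \emph{is} the principal $\lsl$. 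Hence the pivot of your argument (``a nilpotent generator of $\goh_2$ outside $\lsl$'', ``the one-parameter unipotent it generates'') does not describe what one actually obtains, and the plan built on it does not go through as written. The correct continuation, which is the paper's, is: $\Ad[\exp{z^T}]z=z-\tfrac12H-z^T$, so by invariance of the splitting $\pi_{\goh_2}\bigl(\Ad[n]\Ad[\exp{z^T}]z\bigr)=-\tfrac12\Ad[n]H$ for any $n$ in the acting group; conjugating by $\exp X$ and $\exp Y$ (and, for $d=5$, additionally by $\exp X_2$ resp.\ $\exp X_i$ composed with powers of $\exp Y$, to reach the second $V^{(2)}$ resp.\ the second $\lsl$-factor of $\goh_2$, cf.\ Lemma \ref{weightsofgoh}) yields elements whose $\goh_2$-components form a $\bZ_p$-basis of $\goh_2[0]$, which together with $\{\Ad[m]r\}_{m\in\mathcal{M}}$ and undistortedness gives the basis of $\gog_2[0]$. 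So your overall architecture (bracket lowest against highest weight to enter $\goh_2$, then boundedly many conjugations in $H_p^+\cap K[0]$) matches the paper, but the step identifying \emph{which} element of $\goh_2$ is produced -- and therefore which conjugations to use -- must be replaced by the computation above.
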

\begin{proof}
For (1) let $n_t=\exp{(tY)}$ be the unipotent defined by the element $Y$ of the principal $\lsl$ and $t\in\bQ_p$. Then we deduce immediately from equation (\ref{eq:expofad}) and
the Vandermonde determinant that $\{\Ad[n_t]r:t\in\{0,\dots,d_\gor\}\}$ are linearly independent
and therefore span $V^{(2)}$ for $i=1$ respectively $V^{(4)}$ for $i=2$.
In fact, the Vandermonde determinant is independent of $p$ so that the linear independence
also holds for the vectors modulo $p$ over $\bF_p$ (as we may choose $p$ large enough). 
This proves the claim for $d=4$. 

For $d=5$ and $i=1$ we apply the above argument and see that by adding $\Ad[\exp{Y_2}]r$ to the list we can span all of $\gor_1[0]$. 

Assume now $d=5$ and $i=2$ so that $r$ is $\bZ_p^\times$-multiple of $z$. 
If $\HVS[p]$ is quasi-split we apply the above argument and add $\Ad[\exp{Y_2t}]r$ for $t=1,2,3$ and $(\Ad[\exp{Y}]\circ\Ad[\exp{Y_2}])r$. Given our 
concrete formulas for the image of $z$ under $\ad[Y]$ resp. $\ad[Y_2]$ this proves
the lemma in this case.
If $\HVS[p]$ is split, we use $\Ad[\exp{Y_1t_1}\exp{Y_2t_2}]r$ for $t_1,t_2=0,1,2$
together with the same Vandermonde argument.

For the proof of (2) we begin with $d=4$. From (1), we can produce a $\bZ_p$-basis of $\gor_2[0]$. On the other hand, if we take $z^T\in\gor_2^{\on{lw}}[0]$ (or any other $\bZ_p^{\times}$-multiple of $z^T$) then we have $\ad[z^T]z=-\tfrac12H$ which implies
\[
\Ad[\exp{z^T}]z=z-\tfrac12H-z^T.
\]
Acting by an element $h\in \HvS[p]$, we have by invariance that 
\[
\pi_{\goh_2}\Ad[h]\Ad[\exp{z^T}]z=-\tfrac12\Ad[h]H.
\]
Now act by the principle unipotents $\exp{X}$ and $\exp{Y}$, to produce three elements 
whose projection to $\goh_2$ are (up to scalar multiple) $H$, $\Ad[\exp{X}]H=e+H-X$ and 
$\Ad[\exp{Y}]H=e+H+Y$. In particular, complementing these three elements with the $\bZ_p$-basis of 
$\gor_2[0]$ we conclude by setting $\mathcal{N}=\{\exp{X}, \exp{Y}\}$.
If $d=5$ and $\HvS[p]$ is quasi-split then we may apply $\Ad[\exp X_2]$ to $\Ad[\exp{z^T}]z$ to get an element whose projection is a sum of a~$\bZ_p$-multiple of~$H$ and 
a $\bZ_p^\times$-multiple of $X_2$ (see the diagram, going in the reverse direction of $Y_2$). Then applying $\Ad[\exp{Y}]$ again we get the second $V^{(2)}$ in $\goh_2$ modulo $\gor_2[0]$. Thus we set
$\mathcal{N}=\{\exp{X}, \exp{Y}, \exp{Y}\exp{X_2},\exp{Y}\exp{Y}\exp{X_2}\}$.

The analogous study of the diagram for the split case shows that we may take
$\mathcal{N}=\{n^{j}u_i: i=1,2\; j=0,1,2\}$ where $n=\exp{Y}$ and $u_i=\exp{X_i}$.
\end{proof}

We end with the following implicit function theorem.

\begin{lemma}[Implicit function theorem]
\label{hensel}
Assume that $\gog$ has a $\bZ_p$-basis $\{v_i\}_{i\leq k}\subset\gog[0]$ consisting of nilpotent elements. Define $u_i(t)=\exp{tv_i}$ for all $i\leq k$, $t\in\bQ_p$, and define $\underline{u}(t)=u_1(t_1)\dots u_k(t_k)$ for all $\underline{t}\in p\bZ_p^k$. Then $\exp{\gog[m]}=\underline{u}(p^m\bZ_p^k)$ for all $m\geq 1$.
\end{lemma}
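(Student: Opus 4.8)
\textbf{Proof proposal for Lemma~\ref{hensel} (implicit function theorem).}

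The plan is to prove the two inclusions $\underline{u}(p^m\bZ_p^k)\subseteq\exp\gog[m]$ and $\exp\gog[m]\subseteq\underline{u}(p^m\bZ_p^k)$ separately, using that for $p\geq 3$ (in fact $p\geq 5$ throughout the paper) the exponential map restricts to a bijection $\gog[1]\to K[1]$ which is an isometry, sends the subalgebra $\gog[m]$ onto the subgroup $K[m]=\exp\gog[m]$ for each $m\geq 1$, and is measure preserving up to a scalar (all recalled in Section~\ref{liefacts}). The first inclusion is the easy direction: each $v_i\in\gog[0]$ is nilpotent, so $u_i(t)=\exp(tv_i)$ is a \emph{polynomial} map in $t$ with $\bZ_p$-coefficients (the exponential series terminates, and the relevant binomial-type denominators are units since $p$ is large), hence $u_i(p^m\bZ_p)\subset\exp\gog[m]=K[m]$; since $K[m]$ is a group, the product $\underline{u}(p^m\bZ_p^k)\subset K[m]=\exp\gog[m]$ as well.

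For the reverse inclusion I would consider the map $F\colon p\bZ_p^k\to K[1]$, $\underline t\mapsto \underline u(\underline t)$, and compute its derivative at $\underline t=0$. Composing with $\exp^{-1}\colon K[1]\to\gog[1]$, the map $\widetilde F=\exp^{-1}\circ F$ satisfies $\widetilde F(0)=0$ and, by the product rule together with $\exp^{-1}(u_i(t_i))=t_iv_i$, its Jacobian at the origin is the linear map $\underline t\mapsto \sum_i t_i v_i$, i.e.\ exactly the change-of-basis matrix from the standard coordinates of $\gog$ to the basis $\{v_i\}$. By hypothesis $\{v_i\}$ is a $\bZ_p$-\emph{basis} of $\gog[0]$, so this Jacobian lies in $\GL_k(\bZ_p)$; in particular it is invertible over $\bZ_p$ and reduces to an invertible matrix over $\bF_p$. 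The $p$-adic implicit/inverse function theorem (Hensel's lemma in several variables, e.g.\ applied to $\widetilde F(\underline t)-\underline x=0$) then shows that $\widetilde F$ restricts to a bijection $p^m\bZ_p^k\to \gog[m]$ for every $m\geq 1$: indeed $\widetilde F$ maps $p^m\bZ_p^k$ into $\gog[m]$ by the first paragraph, it is injective there because its Jacobian is everywhere a unit matrix (the higher-order terms are divisible by $p$ and do not spoil invertibility), and it is surjective onto $\gog[m]$ by Hensel lifting: given $X\in\gog[m]$ one solves $\widetilde F(\underline t)=X$ by Newton iteration starting from $\underline t=0$, with convergence guaranteed by the unit Jacobian. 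Applying $\exp$ gives $\exp\gog[m]=F(p^m\bZ_p^k)=\underline u(p^m\bZ_p^k)$, as claimed.

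The only point needing a little care — and the place I would expect to spend the most effort — is making the appeal to Hensel/Newton fully rigorous in the non-commutative setting: $F$ is valued in the group $K[1]$, not in $\gog[1]$, so one must either transport everything to $\gog[1]$ via $\exp^{-1}$ (as above) and track that the Baker–Campbell–Hausdorff corrections to the naive formula $\exp^{-1}(\prod u_i(t_i))\approx\sum t_iv_i$ are $O(p)$ on $p\bZ_p^k$ — which is automatic since the BCH brackets land in $[\gog[0],\gog[0]]\subseteq\gog[0]$ multiplied by the products $t_it_j\in p^2\bZ_p$ — or argue directly with the group-valued Newton scheme. Either way the estimates are uniform and elementary once $p\geq 5$, so no genuinely new difficulty arises; it is essentially bookkeeping with valuations. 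A clean write-up would state the BCH remark, note that consequently $\widetilde F$ is an analytic bijection $p\bZ_p^k\to\gog[1]$ with unit Jacobian, and then read off the graded statement $\widetilde F(p^m\bZ_p^k)=\gog[m]$ by rescaling.
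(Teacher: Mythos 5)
Your proposal is correct and follows essentially the same route as the paper: the easy inclusion via polynomiality of $\exp$ on nilpotents, and the surjectivity onto $\exp\gog[m]$ via a multivariate Hensel/Newton lifting whose linearization $\underline{u}(\underline{a}+\underline{t})\equiv\underline{u}(\underline{a})+\sum_j t_jv_j\pmod{p^{n+1}}$ is invertible precisely because $\{v_i\}$ is a $\bZ_p$-basis of $\gog[0]$. The paper simply carries out the iteration directly on the group using $\log$ and congruences one power of $p$ at a time, which is exactly the ``bookkeeping with valuations'' you flag as the remaining work, so no genuine difference or gap remains.
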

This is of course well known, but for the convenience of the reader we outline the proof.

\begin{proof}
In the following we let $\underline{a}\in p\bZ_p^k$ 
and  $\underline{t}\in p^n\bZ_p^k$ for some $n\geq 1$. 
We define $\underline{u}(\underline{a})=u_1(a_1)\cdots u_k(a_k)$ and notice that 
 $u_j(a_j+t_j)\equiv u_j(a_j)+t_jv_j\pmod{p^{n+1}}$
 and $u_j(a_j+t_j)\equiv e\pmod p$
for $j=1,\ldots,k$. Taking the product we obtain from this  
 \[
  \underline{u}(\underline{a}+\underline{t})\equiv \underline{u}(\underline{a})+\sum t_jv_j\pmod{p^{n+1}}.
\]
Also fix some arbitrary $g\in K[m]$ with $m\geq 1$.
Using \(g^{-1}\equiv e \pmod p\) we obtain
$$
 g^{-1}\underline{u}(\underline{a}+\underline{t})\equiv 
 g^{-1}\underline{u}(\underline{a})+ \sum t_jv_j \pmod {p^{n+1}}.
$$

The lemma now follows inductively using Hensel's lifting procedure. 
Recall that $\exp\mathfrak{g}[n]=K[n]$ for all $n\geq 1$ (since $p\geq 2$) and that $\exp(\cdot)$ has the inverse $\log(\cdot)$ which is defined on $K[1]$ and takes values in $\mathfrak{g}[1]$.
For $n=m$ we can solve $g^{-1}\underline{u}(\underline{a})\in K[n]$ by putting $a=0$. Assume therefore that $n\geq m$ and we have already found some $\underline{a}\in\mathfrak{g}[m]$ that solves $g^{-1}\underline{u}(\underline{a})\in K[n]$, or equivalently $\log{g^{-1}\underline{u}(\underline{a})}=w\in\gog[n]$. Using the power series of $\log(\cdot)$ we obtain now for any $\underline{t}\in p^{n}\bZ_p^k$ that
$$\log{g^{-1}\underline{u}(\underline{a}+\underline{t})}\equiv w+\sum t_iv_i \mod p^{n+1}$$
and we may solve for $w+\sum t_iv_i=0$ by using the assumption of the lemma. This concludes the induction step and taking the limit $n\to\infty$ proves the lemma.
\end{proof}


\section{The Dynamical Argument}\label{dynamics}

We let $$(\bG,H, \mathcal{Y}, \mu, \gor, \mathcal{F})$$ to mean one of the data sets $$(\bG_1,\HvS^{+,k_v}, \mathcal{Y}_1^+,(\pi_1)_*\mu_{v,S}, \gor_1, \mathcal{F}(v)),$$ $$(\bG_2,\HVS^{+,\theta_v}, \mathcal{Y}_2, (\pi_2)_*\mu_{v,S}, \gor_2, \mathcal{F}(\Lambda_v)),	$$
or $$(\bG_\text{joint},\LvS^{+,(k_v,e,\theta_v,e)},\mathcal{Y}_\text{joint}^+, \mu_{v,S}, \gog_1\times\gor_2,  \mathcal{F}(\Lambda_v)),$$
where in the last case the set $\mathcal{F}(\Lambda_v)$ is diagonally embedded so that it belongs to the acting group.
In the case of $\bG=\bG_1$ we also define $X_{\on{cpt}}=\mathcal{Y}^+_1$,
and in the case of $\bG=\bG_2$ or $\bG_{\text{joint}}$ we define $X_{\on{cpt}}$ as after Lemma \ref{nondivergence}. Denote by $V$ the volume of $\mathcal{Y}$ as defined in Section \ref{orbitmeasures}. Recall that in Section \ref{liealgebras} we defined heighest and lowest weight spaces of $\gor_i$ for the principal $\lsl$ and discussed the shearing behaviour of the  unipotent 
one-parameter subgroup $\{u(t):t\in\bQ_p\}$
in the principal $\SL_2$. Recall in particular that $\mathcal{F}$ is a finite set that will allow us to arrange elements in the invariant complement to have maximal lowest weight vectors (see Lemma \ref{movingalongthecomplement}).

\subsection{Quantitative Ergodic Theorem}We start by recalling and extending the definition of almost invariance given in Section \ref{single_proof}:
\begin{definition}[Almost invariant measures]
\label{invmeas}
The measure $\mu$ on $\mathcal{Y}$ is called $\varepsilon$-almost invariant w.r.t.~a Sobolev norm $\mathcal{S}_{d'}$ under
\begin{itemize} 
\item $g\in\bG{(\bQ_p)}$ if $|\mu^{g}(f)-\mu(f)|\leq \varepsilon \cS_{d'}(f)$ for all $f\in C^\infty_c(\mathcal{Y})$,
\item  a subgroup $L<K$ if it is $\varepsilon$-almost invariant under all $g\in L$,
\item $v\in\gog[1]$ (or a nilpotent $v\in\gog[0]$) if it is $\varepsilon$-almost invariant under $\exp{(\bZ_p v)}$. 
\end{itemize}
\end{definition} 
There are some easy tools concerning the notion of almost invariance. 
Indeed, if $\mu$ is invariant under $h$ and $\varepsilon$-almost invariant under $g$ w.r.t.~$\mathcal{S}_{d'}$
then
$$\left|\mu^{hg_0h^{-1}}(f)-\mu(f)\right|=\left|\mu^{g_0h^{-1}}(f)-\mu^{h^{-1}}(f)\right|\ll\varepsilon\cS_{d'}(h^{-1}\acts f)\ll\varepsilon\|h\|_S^{4d'}\cS_{d'}(f).$$ Similarly, if $\mu$ is $\varepsilon$-almost invariant under two elements $g_0$ and $g_1$ then
$$\left|\mu^{g_0g_1}(f)-\mu(f)\right|=\left|\mu^{g_0g_1}(f)-\mu^{g_1}(f)+\mu^{g_1}(f)-\mu(f)\right|$$
$$\leq\varepsilon\cS_{d'}(g_1\acts f)+\varepsilon\cS_{d'}(f)\ll\varepsilon\|g_1\|_S^{4d'}\cS_{d'}(f).$$ Finally, by the Lipschitz property (S4) of the Sobolev norm, any measure is $p^{-m}$-invariant under
all elements of $K[m]$.
We collect those facts in the next lemma.
\begin{lemma}
\label{invmeasconj}
Let $d'>0$. If $\mu$ is an $\varepsilon$-almost invariant measure under $g_0$ and $g_1$ w.r.t.~$\mathcal{S}_{d'}$, and invariant under $h$.
Then $\mu$ is 
\begin{itemize}
\item $\ll\varepsilon \|h\|_S^{4d'}$-almost invariant under $hg_0h^{-1}$ w.r.t.~$\mathcal{S}_{d'}$,
\item $\ll\varepsilon \|g_1\|_S^{4d'}$-almost invariant under $g_0g_1$ w.r.t.~$\mathcal{S}_{d'}$,
\item $\ll p^{-m}$-almost invariant under $K[m]$ w.r.t.~$\mathcal{S}_{d'}$
for all~$m\geq 0$ and~$d'\geq d_0$.
\end{itemize}
\end{lemma}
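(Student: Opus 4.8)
The plan is to verify the three bullet points one by one; each is a one-line manipulation of the defining inequality for almost invariance combined with the translation property (S3) and the Lipschitz property (S4) of Proposition~\ref{sobolev}, which is why the statement is phrased as ``collecting facts''. Throughout I will use that $\mu$ is a probability measure, that the convention $g\acts x=xg^{-1}$ makes $\mu^{g}(f)$ equal to $\mu$ evaluated at a translate of $f$ (so in particular $\mu^{g}(f)-\mu^{g}(f')=\mu^{g}(f-f')$), and that concatenating translations corresponds to multiplying group elements.

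For the first bullet I would start from $\mu^{hg_0h^{-1}}(f)-\mu(f)$ and use the $h$-invariance $\mu^{h}=\mu=\mu^{h^{-1}}$ to move the outer $h$'s past $\mu$, rewriting the difference as $\mu^{g_0}(\psi)-\mu(\psi)$ with $\psi=h^{-1}\acts f$. The hypothesis that $\mu$ is $\varepsilon$-almost invariant under $g_0$, applied to $\psi$, bounds this by $\varepsilon\,\cS_{d'}(\psi)$, and (S3) gives $\cS_{d'}(h^{-1}\acts f)\ll\|h^{-1}\|_S^{4d'}\cS_{d'}(f)$, which we absorb into $\|h\|_S^{4d'}$.

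For the second bullet I would telescope
\[
\mu^{g_0g_1}(f)-\mu(f)=\bigl(\mu^{g_0g_1}(f)-\mu^{g_1}(f)\bigr)+\bigl(\mu^{g_1}(f)-\mu(f)\bigr).
\]
The second summand is at most $\varepsilon\,\cS_{d'}(f)$ by almost invariance under $g_1$. The first summand I would rewrite, using that $\mu^{g_0g_1}(f)$ and $\mu^{g_1}(f)$ arise from $\mu^{g_0}$ resp.\ $\mu$ applied to the same $g_1$-translate of $f$, as $\mu^{g_0}(\psi)-\mu(\psi)$ with $\psi$ a translate of $f$ by $g_1^{\pm1}$; almost invariance under $g_0$ together with (S3) then bound it by $\varepsilon\,\cS_{d'}(\psi)\ll\varepsilon\,\|g_1\|_S^{4d'}\cS_{d'}(f)$. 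Summing the two estimates gives the asserted $\ll\varepsilon\,\|g_1\|_S^{4d'}\cS_{d'}(f)$.

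For the third bullet, for $g\in K[m]$ I would write $|\mu^{g}(f)-\mu(f)|=|\mu(g^{-1}\acts f-f)|\le\|g^{-1}\acts f-f\|_\infty$, valid since $\mu$ is a probability measure, and then invoke (S4) (with $g^{-1}\in K[m]$ and $d'\ge d_0$) to get $\|g^{-1}\acts f-f\|_\infty\ll p^{-m}\cS_{d'}(f)$; since this holds for every $g\in K[m]$, $\mu$ is $\ll p^{-m}$-almost invariant under $K[m]$. The only thing requiring any care — the nearest thing to an ``obstacle'' here — is keeping the $g\acts x=xg^{-1}$ bookkeeping straight, so that the exponent in (S3) is attached to the group element that actually translates $f$ (giving $\|h^{-1}\|_S^{4d'}$ resp.\ $\|g_1\|_S^{4d'}$ rather than a factor involving the other element); beyond that the proof is entirely routine.
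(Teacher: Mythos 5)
Your proposal is correct and follows essentially the same route as the paper: the first bullet uses $h$-invariance to reduce to almost invariance under $g_0$ applied to $h^{-1}\acts f$ plus (S3), the second uses the identical telescoping $\mu^{g_0g_1}(f)-\mu^{g_1}(f)+\mu^{g_1}(f)-\mu(f)$ with (S3), and the third is exactly the paper's appeal to the Lipschitz property (S4), which you merely spell out via $|\mu(g^{-1}\acts f-f)|\leq\|g^{-1}\acts f-f\|_\infty$. No gaps.
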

In what follows, we denote the Haar measure on $\bQ_p$ by $\on{d}\!t$ or $|B|$ for
any measurable $B\subset \bQ_p$ and normalize it such that $|\bZ_p|=1$.

\begin{definition}[Discrepancy and generic points]
\label{discrepancy} Fix some integer $M\geq\frac{5}{{\ref{exp:decay2}}}+6$ (where $\ref{exp:decay2}$ is the rate of the decay of matrix coefficients of $u_t$ specified in (S5) after Theorem \ref{propertytau}).
We denote the $p$-adic ball in $\bQ_{p}$ at $ap^{-M\ell}$ with $a\in\bZ_{p}^{\times}$ and radius $p^{(M-1)\ell}$ by 
\[
\cB_{\ell}({a})=\Bigl\{t\in\bQ_p\mid | t-ap^{-M\ell}|_p\leq p^{(M-1)\ell}\Bigr\}
\]
and note that $|\cB_{\ell}({a})|=p^{(M-1)\ell}$. 
Using these balls we define the discrepancy of the average of $u_t$ over $\cB_\ell(a)$ by
$$D_{a,\ell}(f)(x)=\frac1{p^{(M-1)\ell}}\int_{\cB_{\ell}({a})}f(xu_{t})\on{d}\!t-
 \mu(f),$$
where we used the abbreviation~$\mu(f)=\int_\mathcal{Y} f\on{d}\!\mu$.
A point $x\in X$ is called $\ell_{0}$-\textit{generic} w.r.t.~
a Sobolev norm $\mathcal{S}_{d'}$ for some $\ell_0\geq 1$
if for any integer $\ell \geq \ell_{0}$, any $a\in\bZ_{p}^{\times}$ and any smooth $f\in C_c^\infty(\mathcal{Y})$ we have 
\[
 |D_{a,\ell}(f)(x)|\leq p^{-\ell}\cS_{d'}(f).
\]
We say that a point $x \in X$ is $\left[\ell_{0},\ell_{1}\right]$-\textit{generic} w.r.t.~$\mathcal{S}_{d'}$ for some $1\leq \ell_0\leq \ell_{1}$ if the above condition holds for all integers $\ell_{0} \leq \ell \leq \ell_{1}$ (where~$\ell_1=\infty$ corresponds to~$\ell_0$-generic).
A point $x \in X$ is called $(\ell_0,\ell_1,\mathcal{F})$-\textit{generic} if $xg$ is $[\ell_0,\ell_1]$-generic
for all $g\in\mathcal{F}$. 
\end{definition}

We note that it suffices to consider real-valued functions in the above definitions.
The following is an effective version of a pointwise ergodic theorem and is an adaptation of \cite[Sect.\ 9]{EMV} or \cite[Sect.\ 7.5]{EMMV} to our setting.
\begin{proposition}[Quantitative Ergodic Theorem]
\label{qet}
Let $\mathfrak{s}$ be a Lie algebra containing~$\mathfrak{h}$
and suppose that $\mu$ is $p^{-L}$-almost invariant under $\exp(\mathfrak{s}[1])$ w.r.t.~$\mathcal{S}_{d'}$ for some $d' > d_{0}$ and $L > 0$. Then there exists $\beta \in (0,1/2)$ and $d_2 = d_2(d') > d'$, so that the measure of the fraction of points $(x,s) \in X \times \exp(\mathfrak{s}[1])$ (w.r.t.~the product measure of~$\mu$ 
and the Haar measure on $\exp(\mathfrak{s}[1])$) for which $x.s$ is not $\left(\left[\ell_0,\beta L\right],\mathcal{F}\right)$-generic 
with respect to $\mathcal{S}_{d_2}$ is $\ll p^{-\ell_{0}}$.
\end{proposition}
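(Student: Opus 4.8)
\textbf{Proof plan for Proposition \ref{qet}.}

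The plan is to adapt the argument of \cite[Sect.~7.5]{EMMV} (itself following \cite[Sect.~9]{EMV}) to the present $S$-adic setting. The heart of the matter is a second-moment (variance) bound for the discrepancy functional $D_{a,\ell}(f)$, which we then feed into a Chebyshev/union-bound argument over the (finitely many relevant) scales $\ell$, test functions coming from the Sobolev trace, and the finite set $\mathcal{F}$.

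\emph{Step 1: reduce to a countable family of test functions via the trace property.} By property (S2) of the Sobolev norms there exist $d_2 > d_1 > d'$ and a basis $\{e_k\}$ of the completion of $C_c^\infty(\mathcal{Y})$ which is orthonormal for $\cS_{d_2}$ and orthogonal for $\cS_{d_1}$, with $\sum_k \cS_{d_1}(e_k)^2 < \infty$ and $\sum_k \cS_{d'}(e_k)^2/\cS_{d_1}(e_k)^2 < \infty$. It therefore suffices to bound, for each fixed $k$, the measure of the set of pairs $(x,s)$ for which $|D_{a,\ell}(f)(x.s)| > p^{-\ell}\cS_{d_2}(f)$ fails for $f = e_k$, some scale $\ell_0 \le \ell \le \beta L$, and some $g \in \mathcal{F}$; summing the resulting bounds against the convergent series above gives the claim. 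So from now we fix a real-valued $f = e_k$ and abbreviate $\cS = \cS_{d_2}(f)$.

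\emph{Step 2: the variance estimate.} This is the main obstacle and the only genuinely analytic input. One expands
\[
\int_{X\times \exp(\mathfrak{s}[1])} \bigl| D_{a,\ell}(f)(x.s) \bigr|^2 \, \mathrm{d}\mu(x)\,\mathrm{d}s
\]
and, after writing $D_{a,\ell}$ as an average of $f\circ u_t$ minus $\mu(f)$, this becomes a double integral over $t,t' \in \cB_\ell(a)$ of matrix coefficients $\langle u_{t-t'}\acts f, f\rangle_{L^2_\mu} - \mu(f)\mu(\bar f)$, \emph{after} using that $\mu$ is $p^{-L}$-almost invariant under $\exp(\mathfrak{s}[1])$ to replace integration over the $s$-fibre by $\mu$ itself up to an error of size $\ll p^{-L}\cS$ (this is where the hypothesis $\ell \le \beta L$ enters, so that all accumulated errors of the form $p^{-L}$ times polynomial-in-$p^\ell$ factors remain negligible — one chooses $\beta$ small enough to absorb the polynomial loss). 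Now invoke (S5) (Decay of Matrix Coefficients): $|\langle u_r\acts f, f\rangle_{L^2_\mu} - |\mu(f)|^2| \ll (1+|r|_p)^{-\ref{exp:decay2}} \cS^2$. Since $t,t' \in \cB_\ell(a)$ with $a \in \bZ_p^\times$, the difference $r = t - t'$ ranges over a ball of radius $p^{(M-1)\ell}$, so $|r|_p \le p^{(M-1)\ell}$; integrating the decay bound over this ball and normalising by $|\cB_\ell(a)|^2 = p^{2(M-1)\ell}$ yields a bound of order $p^{-\ref{exp:decay2}(M-1)\ell}$ plus lower-order terms, using standard $p$-adic integration ($\int_{|r|_p \le p^N}(1+|r|_p)^{-\kappa}\mathrm{d}r$ is geometric in $N$ for $\kappa>0$). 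The choice $M \ge 5/\ref{exp:decay2} + 6$ from Definition \ref{discrepancy} guarantees the exponent $\ref{exp:decay2}(M-1)$ is at least, say, $5$, so we get
\[
\int_{X\times \exp(\mathfrak{s}[1])} \bigl| D_{a,\ell}(f)(x.s)\bigr|^2 \mathrm{d}\mu\,\mathrm{d}s \ll \bigl(p^{-5\ell} + p^{-L}\cdot p^{O(\ell)}\bigr)\cS^2 \ll p^{-4\ell}\cS^2
\]
for $\ell \le \beta L$ with $\beta$ chosen small. (One also needs here that $u_t$ is genuinely acting — i.e. ergodicity and temperedness — which we have from Proposition \ref{ergodic} and Theorem \ref{propertytau} via the Mautner phenomenon, and these inputs are exactly what (S5) packages.)

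\emph{Step 3: Chebyshev and union bound over $\ell$, $a$, and $\mathcal{F}$.} By Chebyshev's inequality the measure of $(x,s)$ with $|D_{a,\ell}(f)(x.s)| > p^{-\ell}\cS$ is $\ll p^{-4\ell}\cS^2 / (p^{-\ell}\cS)^2 = p^{-2\ell}$. The dependence on $a \in \bZ_p^\times$ only enters through which ball $\cB_\ell(a)$ is chosen, and balls at the same scale $\ell$ partition a fixed region, so there are $\ll p^{\ell}$ relevant values of $a$ (or one can note $D_{a,\ell}$ depends on $a$ only mod $p^{(M-1)\ell}$ within $p^{-M\ell}\bZ_p^\times$, giving $\ll p^{\ell}$ choices); hence the bad set at scale $\ell$ for all $a$ has measure $\ll p^{\ell}\cdot p^{-2\ell} = p^{-\ell}$. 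Summing the geometric series over $\ell \ge \ell_0$ gives $\ll p^{-\ell_0}$. Finally, replacing $x.s$ by $(x.s)g$ for $g$ in the \emph{finite} set $\mathcal{F}$ of norm-$1$ elements of the acting group: each such $g$ lies in $K[0]$, so by (S3) it does not change Sobolev norms, and it is a measure-preserving transformation of $X$, so the bad set for $(x.s)g$ has the same measure bound; a union over the uniformly bounded $|\mathcal{F}|$ costs only an absolute constant. This produces the set of non-$([\ell_0,\beta L],\mathcal{F})$-generic points of measure $\ll p^{-\ell_0}$ with respect to $\cS_{d_2}$, completing the proof. The only point requiring care throughout is the bookkeeping in Step 2 that all the error terms (from almost-invariance in $s$, from the $p$-adic integration, and from summing over $k$ against the trace) combine to something dominated by $p^{-\ell}$ times the relevant Sobolev norm; the choice of $\beta$ and the lower bound on $M$ are precisely calibrated for this.
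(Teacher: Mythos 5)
Your Steps 2 and the $\ell$-, $a$- and $\mathcal{F}$-parts of Step 3 follow the paper's proof in essentially the same way (almost invariance to pass from the $s$-fibres back to $\mu$ at the cost of $p^{-L}$ times a Sobolev norm of the \emph{squared} discrepancy, decay of matrix coefficients for the $L^2$-bound of $D_{a,\ell}$, the polynomial-in-$p^{\ell}$ loss absorbed by choosing $\beta$, then Chebyshev and a union bound over the $\ll p^{\ell}$ midpoints $a$, the scales $\ell_0\leq\ell\leq\beta L$, and the finite set $\mathcal{F}$ acting measure-preservingly). The small imprecision in the $p$-adic integral (for $\ref{exp:decay2}>1$ the averaged decay saturates at $p^{-(M-1)\ell}$ rather than $p^{-\ref{exp:decay2}(M-1)\ell}$) is harmless, since $M-1\geq 5$ still yields the needed $p^{-5\ell}$-type bound.

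The genuine gap is in the interplay of Steps 1 and 3, i.e.\ in the passage between the basis $\{e_k\}$ from (S2) and general $f$. With your threshold $p^{-\ell}\cS_{d_2}(e_k)$ the Chebyshev bound for the bad set of $e_k$ is $\ll p^{-2\ell}$ \emph{uniformly in $k$}, so the union over the infinitely many $k$ diverges and ``summing against the convergent series'' has nothing to act on. Worse, even granting that every $e_k$ is $[\ell_0,\beta L]$-generic with its own norm, a general $f=\sum_k f_k e_k$ only inherits $|D_{a,\ell}(f)|\leq p^{-\ell}\sum_k|f_k|\,\cS_{d_2}(e_k)=p^{-\ell}\sum_k|f_k|$, and the $\ell^1$-sum $\sum_k|f_k|$ is not controlled by $\cS_{d_2}(f)=\bigl(\sum_k f_k^2\bigr)^{1/2}$; so genericity for all $f$ does not follow. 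The fix, which is what the paper does, is to run Chebyshev for $e_k$ with the $k$-dependent threshold $c\,p^{-\ell}\cS_{d_1}(e_k)$ and with the variance bounded in terms of $\cS_{d''}(e_k)^2$ for $d''<d_1$: the per-$k$ bad-set bound becomes $\ll p^{-2\ell}\,\cS_{d''}(e_k)^2/\cS_{d_1}(e_k)^2$, summable over $k$ by the second series in (S2), and on the complement of the resulting bad set Cauchy--Schwarz together with $\sum_k\cS_{d_1}(e_k)^2<\infty$ and orthonormality of $\{e_k\}$ with respect to $\cS_{d_2}$ gives $|D_{a,\ell}(f)|\leq p^{-\ell}\cS_{d_2}(f)$ for \emph{all} $f$, upon choosing $c=\bigl(\sum_k\cS_{d_1}(e_k)^2\bigr)^{-1/2}$. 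This weighting is exactly why (S2) provides two convergent series; without it your argument does not close.
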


Notice that this Proposition implies in particular that the $\mu$-measure of the set of points that are not $(\ell_{0},\mathcal{F})$-generic for $\mu$ w.r.t.~$\mathcal{S}_{d_2}$ is $\ll p^{-\ell_{0}}$. In the following proof we will use the integers  $d_0\leq d' < d''<d_1<d_2$ with $d'' = d' + d_{0} + 1$ and the orthonormal basis $\{e_k\}$ of the completion of $C_c^\infty(\mathcal{Y})$ with respect to $\cS_{d_2}$ as in property (S2) of Proposition \ref{sobolev} (applied to~$d''$).

\begin{proof}
We defined $S[1] = \exp(\mathfrak{s}[1])$.
The proposition will follow from Chebychev's inequality after estimating
\begin{align}
A&=\frac{1}{m_S(S[1])}\int_{X \times S[1]}D_{a,\ell}(f)(xs)^{2}\on{d}\!\mu(x) \on{d}\!m_S(s),\nonumber\\
&= \frac{1}{m_S(S[1])}\int_{S[1]}\int_{X}D_{a,\ell}(f)^{2}\on{d}\!\mu^{s}(x) \on{d}\!m_S(s)\nonumber\\
& = \frac{1}{m_S(S[1])}\int_{S[1]}\int_{X}F \on{d}\!\mu^{s}(x) \on{d}\!m_S(s) +  \mu(f)^{2} \label{startingtofinish},
\end{align}
where $\mu^{s}$ denotes the push-forward measure obtained from $\mu$
with respect to the map $x\mapsto xs$  and
\begin{align*}
F(x) & = D_{a,\ell}(f)(x)^{2} - \mu(f)^{2}\\
& = \Big(\frac{1}{p^{(M-1)\ell})}\int_{\cB_{\ell}({a})}f(xu_{t})\on{d}\!t\Big)^{2} - \frac{2\mu(f)}{p^{(M-1)\ell}}\int_{\cB_{\ell}({a})}f(xu_{t})\on{d}\!t
\end{align*}
is a compactly supported smooth function satisfying
\[
 \int F\on{d}\!\mu=\int D_{a,\ell}(f)^{2}\on{d}\!\mu- \mu(f)^{2} .
\]

 By the assumed almost invariance of~$\mu$ (defined via smooth functions of compact support)
  the first summand in \eqref{startingtofinish}
 equals~$\int F\on{d}\!\mu+O(p^{-L}\mathcal{S}_{d'}(F))$, and hence we now obtain  
\begin{equation}\label{eq:qet}
A \ll  \int_{X}\left|D_{a,\ell}(f)\right|^{2}\on{d}\!\mu+p^{-L} \mathcal{S}_{d'}(F).
\end{equation} 
We start bounding the $L^{2}$-norm of $D_{a,\ell}(f)$. Using invariance of $\mu$ under $u_t$ and Fubini's theorem we see that
$$
\|D_{a,\ell}(f)\|^{2}_{L^{2}(\mu)}=\sqint\left(\langle u_{s-t}\acts f,f\rangle-\mu(f)^{2}\right) \on{d}(s,t),
$$
where $\sqint$ denotes the normalized integral over the box $\cB_{\ell}({a})\times \cB_{\ell}({a})$
with respect to the product measure for the Haar measure on $\bQ_p$. We want to apply Theorem~\ref{propertytau} to those $(s,t)$ for which $|s-t|_p\geq p^{\alpha \ell}$ with $\alpha>0$ as below. The set of points $(s,t)$ in the box for which $|s-t|_p< p^{\alpha \ell}$  has measure at most $p^{(M-1)\ell}p^{\alpha \ell}$. Splitting the 
above normalized integral accordingly we see therefore that 
$$\|D_{a,\ell}(f)\|^{2}_{L^{2}(\mu)}\ll p^{-{\ref{exp:decay2}} \ell\alpha}\cS_{d_0}(f)^{2}+\frac{p^{\ell\alpha}}{p^{(M-1)\ell}}\cS_{d_0}(f)^{2}$$
by Proposition~\ref{sobolev}~(S1) and (S4) after Theorem~\ref{propertytau}. If we choose $\alpha=\frac5{\ref{exp:decay2}}$ and use our choice of $M\geq\alpha+6$ we arrive at the bound $\ll p^{-5\ell}\cS_{d_0}(f)^{2}$.
 
We now estimate the second expression on the right hand side of (\ref{eq:qet}). Using Proposition~\ref{sobolev}~(S1), (S3), and (S5), there exists a constant $\kappa > 0$ such that
\begin{align*}
\mathcal{S}_{d'}(F) & \ll \mathcal{S}_{d''}\Big(\frac{1}{p^{(M-1)\ell}}\int_{\mathcal{B}_{\ell}(a)}f(xu_{t})\on{d}\!t\Big)^{2} + \mu(f)\mathcal{S}_{d''}\Big(\frac{1}{p^{(M-1)\ell}}\int_{\mathcal{B}_{\ell}(a)}f(xu_{t})\on{d}\!t\Big) \\
& \ll \Big(\frac{1}{p^{(M-1)\ell}} \int_{\mathcal{B}_{\ell}(a)}\mathcal{S}_{d''}(u_{t}\acts f)\on{d}\!t\Big)^{2} + \left\|f\right\|_{\infty} \Big(\frac{1}{p^{(M-1)\ell}} \int_{\mathcal{B}_{\ell}(a)}\mathcal{S}_{d''}(u_{t}\acts f)\on{d}\!t\Big)\\
& \ll p^{\ell\kappa d''}\mathcal{S}_{d''}(f)^{2}.
\end{align*}
This implies that the second expression on the right hand side of (\ref{eq:qet}) is
\[
\ll p^{-L} p^{\ell \kappa d''}\mathcal{S}_{d''}(f)^{2}.
\]

Now choose $\beta \in (0,1/2)$ so that $p^{-L}p^{\ell \kappa d''} \leq p^{-5\ell}$ whenever $p^{\ell} \leq p^{\beta L}$. Therefore, with $p^{\ell} \leq p^{\beta L}$,
\[
A=\tfrac{1}{m_S(S[1])}\int_{S[1]}\int_{X}\left|D_{a,\ell}(f)\right|^{2}\on{d}\!\mu^{s}(x) \on{d}\!m_S(s) \ll p^{-5 \ell}\mathcal{S}_{d''}(f)^{2}.
\]
Chebychev's inequality now gives
$$\tfrac{1}{m_S(S[1])}\mu\times m_S(\set{(x,s):|D_{a,\ell}(f)(xs)|\geq \lambda})\ll \lambda^{-2}p^{-5\ell}\cS_{d''}(f)^{2}$$
for any~$\lambda>0$.
Note that given $\ell$, there are $(p-1)p^{\ell-1}$ mutually disjoint balls 
of the form $\cB_{\ell}({a})$. 
Let $A_{\ell}$ be a set of representatives of these $<p^{\ell}$ many different midpoints $a\in\bZ_p$. 
We apply the above inequality to the set
$$
B=\bigcup_{\ell,a\in A_{\ell}, k\geq1}\set{(x,s):|D_{a,\ell}(e_{k})(xs)|\geq c\cS_{d_1}(e_{k})p^{-\ell}}, 
$$
where the union runs over all~$\ell$ with~$\ell_0\leq \ell\leq \beta L$ and we define
the absolute constant $c$ below. This gives
$$\tfrac{1}{m_S(S[1])}\mu\times m_S(B)\ll c^{-2}\sum_{\ell=\ell_{0}}^{\lfloor\beta L\rfloor}\sum_{k}p^{\ell}p^{2\ell} p^{-5\ell}\frac{\cS_{d''}({e_{k}})^2}{\cS_{d_1}(e_{k})^{2}}.$$
By Proposition~\ref{sobolev}~(S2) the sum over $k$ is finite and thus $\tfrac{1}{m_S(S[1])}\mu\times m_S(B)\ll c^{-2}p^{-\ell_{0}}$. This implies the claim of the proposition as follows: Recall that $e_{k}$ is an orthonormal basis with respect to $\cS_{d_2}$, let $(x,s)\not\in B$ and $f=\sum f_{k} e_{k}\in C_c^{\infty}(\mathcal{Y})$, and apply Cauchy-Schwarz to obtain
$$|D_{a,\ell}(f)(xs)|=\Bigl|\sum f_{k}D_{a,\ell}(e_{k})(xs)\Bigr|\leq cp^{-\ell}\left(\sum f_{k}^{2}\right)^{\frac12}\left(\sum \cS_{d_1}(e_{k})^{2}\right)^{\frac12}.
$$
Putting $c =\left(\sum \cS_{d_1}(e''_{k})^{2}\right)^{-\frac12}$ implies therefore that for all $(x,s)\not\in B$
$$|D_{a,\ell}(f)(xs)|\leq p^{-\ell}\cS_{d_2}(f).$$

It is now easy to obtain the conclusion of the proposition using the measure preserving
action of the elements~$g\in\mathcal{F}$ on~$X\times S[1]$ defined by~$g.(x,s)\mapsto(xg^{-1},gsg^{-1})$. 
It follows that $B'=\bigcup_{g\in\mathcal{F}}g.B$
satisfies essentially
the same estimate as~$B$ and that $(x,s)\in\mathcal{Y}\times S[1]\setminus B'$
implies that~$xs$ is $\left(\left[\ell_0,\beta L\right],\mathcal{F}\right)$-generic.
\end{proof}

\subsection{Tuples of generic points in a single factor}
\label{singlefactorgenericpoints}
Recall that $V$ denotes the volume of $\Gamma\HvS^+ k_v$ respectively $\Gamma \HVS^+ \oT$ which we defined as $V=m_{H}(\Theta)^{-1}$. Combining the adjustment claim in 
Lemma \ref{adjustmentlemma} and the existence of generic points in 
Proposition \ref{qet} gives rise to nearby generic points (see also \cite[Lemma~7.7]{EMMV}).

\begin{proposition}[Nearby generic points]
\label{genericpoints}
There exists $\ell_0>0$ and $d_2>0$ such that for any $m>0$ with $2m_{G_S}(\Omega[m])^{-1}<V$ there exist $z_1, z_2\in X_{\on{cpt}}$ and $g\in \Omega[m]$ satisfying  
\begin{itemize}
	\item $z_2=z_1g$,
\item $z_1$, $z_2$ are both $\ell_0$-generic for $\mu$ w.r.t.~$\cS_{d_2}$,
\item $g_p=\exp{r}$ where $r\in\gor$ satisfies $\|r\|_p=\|r^{\on{lw}}\|_p>0$.
\end{itemize}
\end{proposition}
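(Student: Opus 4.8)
\textbf{Proof plan for Proposition~\ref{genericpoints}.} The strategy is the standard one from \cite{EMMV,EMV}: produce two nearby points on the orbit that lie on the same local $H$-orbit but whose displacement escapes $H$, make both of them generic, and then use the group $\mathcal{F}$ to arrange that the $p$-adic displacement has its lowest weight component of maximal size. I would proceed in four steps.

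\emph{Step 1: Pigeonhole to get nearby transversal points.} Applying Proposition~\ref{qet} with $\mathfrak{s}=\mathfrak{h}$ (so that $\mu$ is $p^{-L}$-almost invariant under $\exp(\mathfrak{h}[1])\subset K[1]$ for any $L$ by the Lipschitz property (S4); more precisely we only need that $\mu$ is $p^{-m}$-almost invariant under $K[m]$), there exists $\ell_0$ and $d_2$ such that the set $E$ of points that are $\ell_0$-generic for $\mu$ w.r.t.\ $\mathcal{S}_{d_2}$ and that also lie in $X_{\on{cpt}}$ has $\mu(E)>\tfrac34$ (using $\mu(X_{\on{cpt}})>1-2^{-20}$ from the discussion after Lemma~\ref{nondivergence} and the $\ll p^{-\ell_0}$ bound on the non-generic set, enlarging $\ell_0$ if necessary). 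Now take $\mathcal{N}=\Omega[m+2]$ or a slightly smaller neighborhood so that $\mathcal{N}_4\subset\Omega[m]$ (this uses that $\Omega_\infty\Omega_\infty^{-1}\Omega_\infty\Omega_\infty^{-1}\times\bG_{\on{joint}}(\bZ_p)^+$ is injective and the control of the doubled $p$-adic sets). Since $m_{G_S}(\mathcal{N})\gg m_{G_S}(\Omega[m])\gg m_{G_S}(\Omega[m+2])$ and by hypothesis $2m_{G_S}(\Omega[m])^{-1}<V$ (so $m_{G_S}(\mathcal{N})>2V^{-1}$ after adjusting constants), Lemma~\ref{transversalpoints} (the pigeonhole principle) yields $z_1,z_2\in E$ with $z_2=z_1g$, $g\in\mathcal{N}_4\setminus H\subset\Omega[m]\setminus H$.

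\emph{Step 2: Push the displacement into the invariant complement.} Write $g=(g_\infty,g_p)$ with $g_\infty\in\Omega_\infty$ and $g_p\in K[m]$. First I would use Corollary~\ref{stabilizer} (no purely real transversal displacement): since $z_1,z_2$ are $u_t$-generic and $g_\infty$ is small, if $g_p\in H_p$ then $g_\infty\in H_\infty$, contradicting $g\notin H$; hence $g_p\notin H_p$, so $g_p\in K[m]\setminus H_p$. Next, applying the Adjustment Lemma~\ref{adjustmentlemma} with $A_1,A_2\subset K[1]\cap H_p$ the sets of $\alpha$ such that $z_i\alpha$ is still $\ell_0$-generic (these have relative measure $>\tfrac12$ by Step~1, shrinking constants), I obtain $\alpha_i\in A_i$ with $\alpha_1^{-1}g_p\alpha_2=\exp r$ for some $r\in\mathfrak{r}[m]$. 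Replacing $z_i$ by $z_i\alpha_i$ (still generic, still in $X_{\on{cpt}}$ up to enlarging the compact set) and absorbing the resulting change at the real place into $\Omega_\infty$ via the injectivity hypotheses, I get new nearby generic points with $p$-adic displacement $\exp r$, $r\in\mathfrak{r}[m]$, $r\neq 0$ (since $g_p\notin H_p$ forces $r\neq 0$).

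\emph{Step 3: Maximize the lowest weight component.} This is where $\mathcal{F}$ enters. By Lemma~\ref{movingalongthecomplement} there is $h\in\mathcal{F}$ (a unipotent element of $H_p\cap K[0]$, hence of $p$-adic norm one) with $\|\Ad_h r\|_p=\|(\Ad_h r)^{\on{lw}}\|_p=\|r\|_p$. Conjugating the relation $z_2=z_1\exp r$ by $h$: with $z_i'=z_ih^{-1}$ we have $z_2'=z_1'h\exp(r)h^{-1}=z_1'\exp(\Ad_h r)$, and since the $z_i$ were $\ell_0$-generic and $\mathcal{F}$-genericity is built in (or: $h\in K[0]$ preserves the Sobolev norm by (S3) and $h\in H_p$ so $z_i'$ are on the same orbit and $u_t$-generic after conjugating the flow, which is handled exactly as at the end of the proof of Proposition~\ref{qet}), the new points $z_1',z_2'$ are again $\ell_0$-generic for $\mu$. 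Rename $r\leftarrow\Ad_h r$, $z_i\leftarrow z_i'$; now $\|r\|_p=\|r^{\on{lw}}\|_p>0$ and $r\in\mathfrak{r}$ (the norm is unchanged so still $\|r\|_p\le p^{-m}$, i.e.\ $g=(g_\infty,\exp r)\in\Omega[m]$ after the earlier real-place bookkeeping). Finally $\|r\|_p>0$ gives $g\notin H$ automatically.

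\emph{Main obstacle.} The genuinely delicate point is the bookkeeping at the real place across Steps~2 and~3: the Adjustment Lemma and the conjugation by $\mathcal{F}$ both alter $g_\infty$, and one must ensure the composite still lies in $\Omega[m]$ and that $z_1,z_2$ stay in $X_{\on{cpt}}$. This is why $\Omega_\infty$ was chosen with the fourfold injectivity property $\Omega_\infty\Omega_\infty^{-1}\Omega_\infty\Omega_\infty^{-1}$ and invariant under $\on{SO}_{d-1}(\bR)$-conjugation in Section~\ref{smallneighborhood}, and why $\mathcal{F}\subset K[0]$ acts trivially at the real place — so in fact the $\mathcal{F}$-conjugation does not touch $g_\infty$ at all, and only the $\alpha_i$ from the Adjustment Lemma need to be controlled, which they are since $\alpha_i\in K[1]\cap H_p$ live purely at the $p$-adic place (the real components of the $z_i$ are unchanged). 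With this observation the real-place part is routine, and the proposition follows with $\ell_0$ and $d_2$ determined by Proposition~\ref{qet}.
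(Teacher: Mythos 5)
Your outline follows the paper's strategy (genericity from Proposition~\ref{qet}, pigeonhole via Lemma~\ref{transversalpoints}, the Adjustment Lemma~\ref{adjustmentlemma}, Corollary~\ref{stabilizer} to rule out trivial $p$-adic displacement, and conjugation by $\mathcal{F}$ via Lemma~\ref{movingalongthecomplement}), but there is a genuine gap in the measure-theoretic bookkeeping that makes Steps 2--3 break as written. You apply the pigeonhole directly to the set $E$ of $\ell_0$-generic points and then assert that, for the specific points $z_1,z_2$ so obtained, the sets $A_i\subset K[1]\cap H_p$ of $\alpha$ with $z_i\alpha$ still generic have relative measure $>\tfrac12$ ``by Step 1''. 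Membership of $z_i$ in $E$ gives no information whatsoever about the genericity of its translates; genericity is a pointwise property of orbit averages and is not inherited under small $H_p$-translations. The paper fixes this \emph{before} the pigeonhole: by Fubini and Chebychev one passes from $E$ (of measure $>0.99$) to the set $E'$ of points $x\in X_{\on{cpt}}$ for which $m_{H_p}\{h_p\in K[1]\cap H_p: xh_p\in E\}>\tfrac34 m_{H_p}(K[1])$, shows $\mu(E')>\tfrac34$, and applies Lemma~\ref{transversalpoints} to $E'$; only then do the hypotheses of the Adjustment Lemma hold for the points produced.

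The same issue undermines your Step 3 and your pre-adjustment use of Corollary~\ref{stabilizer}. Your fallback justification that the $\mathcal{F}$-conjugated points stay generic (``$h\in K[0]$ preserves the Sobolev norm \dots\ $u_t$-generic after conjugating the flow'') does not work: the elements of $\mathcal{F}$ are opposite unipotents such as $\exp(tY)$, $\exp(tY_2)$ and do not commute with $u_t$, so the $u_t$-averages based at $z_ih^{-1}$ are not controlled by those based at $z_i$; preservation of Sobolev norms is beside the point. The correct mechanism is the one you mention only parenthetically: Proposition~\ref{qet} already yields $(\ell_0,\mathcal{F})$-genericity (via the union $\bigcup_{g\in\mathcal{F}}g.B$ of bad sets at the end of its proof), and one must take $E$ to consist of $(\ell_0,\mathcal{F})$-generic points from the outset, so that after adjustment the translates $x_in$, $n\in\mathcal{F}$, are generic for free. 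Likewise, your Step 2 argument that $g_p\notin H_p$ applies Corollary~\ref{stabilizer} to the pair $z_1$ and $z_2g_p^{-1}$, but the latter point is not known to be generic; the paper instead keeps only $g\notin H$ from the pigeonhole, notes $g'=\alpha_1^{-1}g\alpha_2\notin H$ since $\alpha_i\in H_p$, and applies Corollary~\ref{stabilizer} \emph{after} adjustment (where both $x_1,x_2\in E$ are generic) to exclude $r=0$. With the Fubini step inserted and $\mathcal{F}$-genericity built into $E$, your argument becomes the paper's; as written, these two points are unjustified. Your ``main obstacle'' paragraph about the real place is, by contrast, correct but concerns the easy part: the $\mathcal{F}$-conjugation is purely $p$-adic and the $\alpha_i$ live in $K[1]\cap H_p$, so $g_\infty$ is untouched.
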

\begin{proof}
Let $E\subset\mathcal{Y}$ be the set of $(\ell_0,\mathcal{F})$-generic points from Proposition \ref{qet} (applied with~$\mathfrak{s}=\mathfrak{h}$)
so that $\mu(E^c)\ll p^{-\ell_{0}}.$ 
Hence we may choose $\ell_0$ such that $\mu(E)$ exceeds measure $0.99$ (independent of $p$). Then 
the $\mu$-measure of the set
$$
E'=\Bigl\{x\in X_{\on{cpt}}: m_{H_p}\bigl(\bigl\{h_p\in K[1]\cap H_p:xh_p\in E\bigr\}\bigr)>\tfrac34m_{H_p}(K[1])\Bigr\}
$$
exceeds $\frac34$ by applying Chebychev's inequality and Fubini's theorem to the function $\mathbbm{1}_{X_{\on{cpt}}\setminus E}(xh_p)$ in $(x,h_p)\in X\times K[1]\cap H_p$.

We are now in the position to use the pigeon hole principle in Lemma \ref{transversalpoints} to $E'$ and $\Omega[m]$ and deduce that there are
$y_1,y_2\in E$ such that $y_2=y_1g_0$ where $g_0\in \Omega[m]$ and $g_0\not\in H$. By definition of $E'$, there are sets $A_i\subset K[1]$ for $y_i$ such that $y_iA_i\subset E$ and are of relative measure $>\frac34$. By the adjustment statement in Lemma~\ref{adjustmentlemma} we deduce that there  are $x_i=y_i\alpha_i\in E$ where $\alpha_i\in A_i$ is such that the new displacement $g'=\alpha_1^{-1}g_0\alpha_2$ between $x_1$ and $x_2$ satisfies $g_p'\in\exp\gor[m]$. Since $\alpha_1,\alpha_2\in H_p$ but $g_0\not\in H$,
we also have $g'\not\in H$ and can exclude the possibility that $g_p'=e$ by Lemma \ref{stabilizer}. 
We now use the additional property for genericity concerning $\mathcal{F}$. By definition
of $\mathcal{F}$ in Lemma \ref{movingalongthecomplement} and the set $E$ there
exists some  $n\in \mathcal{F}$ such that
 $z_i=x_in$ are $\ell_0$-generic and have a displacement $g=n^{-1}g'n$ satisfying $\|r\|_p=\|r^{\on{lw}}\|_p$ where $g_p=\exp{r}$. 
\end{proof}

In the following we will always work with the~$\ell_0$ as in Proposition~\ref{genericpoints}.

\subsection{Additional Invariance}

The next lemma shows the existence of an admissible polynomial in the sense of \cite[Section 6.8]{EMMV}. 
Here we have the additional assumption that the projection to the 
lowest weight space  is large, 
which ensures optimal behaviour with respect to the `time lapse'
appearing in the next lemma.

\begin{lemma}
\label{adpol}
Let $r\in\gor[0]$ and assume that $\|r\|_p=\|r^{\on{lw}}\|_p$. Then there exists a constant 
$\consta\label{exp:error}>0$, some $T\in \bQ_{p}$ with  $\|r\|_p^{-1/d_\gor}\geq|T|_p\geq p^{-1}\|r\|_p^{-1/d_\gor}$,
 and an $\gor^{{\on{hw}}}$-valued monomial $q$ of homogeneous degree $d_\gor\leq 4$ satisfying $\max_{t\in \bZ_{p}}\|q(t)\|_p\in [p^{-{d_\gor}},1]$ and
\begin{equation*}
\on{Ad}_{u_{t}}(r)=q(t/T)+O(\|r\|_p^{1/d_\gor}) \text{ for all } t\in T\bZ_{p}.
\end{equation*}
\end{lemma}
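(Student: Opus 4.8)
The goal is to Taylor-expand $\Ad[u_t]r$ in $t$, identify the dominant monomial, and choose the scaling parameter $T$ so that on the ball $T\bZ_p$ this monomial has all other terms of strictly smaller $p$-adic size. Recall from \eqref{eq:expofad} that $\Ad[u_t]r=\sum_{j=0}^{d_\gor}\tfrac{t^j}{j!}\ad[Z]^jr$ is a polynomial of degree at most $d_\gor$ with $\bZ_p$-coefficients, where $\{u_t\}=\{\exp(tZ)\}$ is the unipotent of the principal $\lsl$. Since $\gor$ decomposes under the principal $\lsl$ into irreducible representations and the top one is $V^{(d_\gor)}$ (this is $V^{(2)}$ when $\gor=\gor_1$ and $V^{(4)}$ when $\gor=\gor_2$, as recorded in Lemmas \ref{weightsofgos4} and \ref{weightsofgos5}), applying $\ad[Z]^{d_\gor}$ to $r$ lands in $\gor^{\on{hw}}$, and the coefficient of $t^{d_\gor}$ is (up to the harmless unit $1/d_\gor!$, which is a $p$-adic unit because $p\geq5>d_\gor$) the image of the lowest weight component $r^{\on{lw}}$ under the iterated raising operator. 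By standard $\mathfrak{sl}_2$-representation theory this image is a $\bZ_p^\times$-multiple of the highest weight vector, so its $p$-adic norm equals $\|r^{\on{lw}}\|_p=\|r\|_p$ by hypothesis. Thus the top-degree coefficient of the polynomial $\Ad[u_t]r$ has norm exactly $\|r\|_p$.

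First I would make the substitution $t=Ts$ and write $\Ad[u_{Ts}]r=\sum_{j=0}^{d_\gor}c_j T^j s^j$ with $c_j\in\gog[0]$, $\|c_j\|_p\leq 1$, and $\|c_{d_\gor}\|_p=\|r\|_p$. Define $q(s)=c_{d_\gor}T^{d_\gor}s^{d_\gor}$, the $\gor^{\on{hw}}$-valued monomial of homogeneous degree $d_\gor$. The choice of $T$ is forced by two competing requirements: we want $\|q\|_{\max}=|T|_p^{d_\gor}\|r\|_p$ to be of size $\asymp 1$ (so that $q$ is genuinely the leading behaviour and not itself an error term), which asks $|T|_p\asymp\|r\|_p^{-1/d_\gor}$; and we want every lower-order term $c_jT^js^j$ for $j<d_\gor$ to be $O(\|r\|_p^{1/d_\gor})$ on $s\in\bZ_p$, i.e. $|T|_p^j\leq\|r\|_p^{1/d_\gor}$ for all $j<d_\gor$, equivalently $|T|_p^{d_\gor-1}\leq\|r\|_p^{1/d_\gor}$, i.e. $|T|_p\leq\|r\|_p^{1/(d_\gor(d_\gor-1))}$. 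The latter is a weaker constraint than $|T|_p\asymp\|r\|_p^{-1/d_\gor}$ when $\|r\|_p\leq1$, which holds since $r\in\gor[0]$, so both can be met simultaneously. Concretely I would pick $T\in\bQ_p$ with $\|r\|_p^{-1/d_\gor}\geq|T|_p\geq p^{-1}\|r\|_p^{-1/d_\gor}$ — such $T$ exists because consecutive powers of $p$ differ by a factor $p$ — and then check the error bound: for $j<d_\gor$ and $s\in\bZ_p$, $\|c_jT^js^j\|_p\leq|T|_p^j\leq|T|_p^{d_\gor-1}\leq(\|r\|_p^{-1/d_\gor})^{d_\gor-1}$; since $\|r\|_p\leq p^{-1}$ would make this bounded by $\|r\|_p^{(d_\gor-1)/d_\gor}\cdot\|r\|_p^{-(d_\gor-1)/d_\gor+?}$... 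I would be a little careful here and instead bound $|T|_p^j\leq\|r\|_p^{-j/d_\gor}$ and multiply by the gain $\|r\|_p^{1/d_\gor}\cdot\|r\|_p^{-1/d_\gor}$, arriving at $|T|_p^j\leq\|r\|_p^{1/d_\gor}\cdot\|r\|_p^{-(j+1)/d_\gor}\cdot\|r\|_p^{?}$, using $\|r\|_p\leq p^{-1}<1$ to absorb the negative exponent into a constant — the point being that the constant $\consta\label{exp:error}$ will depend only on $d_\gor$.

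The main obstacle — and the only genuinely delicate point — is the claim that the top-degree coefficient has norm exactly $\|r\|_p$, not merely $\leq\|r\|_p$, so that $q$ really does dominate. This relies on: (i) $\ad[Z]^{d_\gor}$ maps $\gor^{\on{lw}}$ isomorphically onto $\gor^{\on{hw}}$ with a $\bZ_p^\times$ determinant, which I would verify directly from the explicit $\ad[Y]$-action formulas computed in Section \ref{liealgebras} (e.g. $\ad[Y]^2(a,b,c)=(0,0,4a)$ for $\gos_1$ with $d=4$, and the degree-four chain $\ad[Y]^4z=96[\cdot]$ for $\gos_2$ with $d=4$; the $d=5$ cases are handled by the analogous explicit chains there) — crucially these leading constants ($4$, $96$, etc.) are units in $\bZ_p$ since $p\geq 5$ (indeed the argument of Lemma \ref{movingalongthecomplement} already uses $p\geq16$ for the degree-four case, which we may assume by Proposition \ref{prime_existence}); and (ii) the components of $r$ in the lower-dimensional irreducible summands of $\gor$ contribute to the coefficient of $t^{d_\gor}$ nothing, since raising past the top weight of a smaller representation gives zero, so the $t^{d_\gor}$-coefficient sees only $r^{\on{lw}}$. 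Combining (i), (ii), and the hypothesis $\|r\|_p=\|r^{\on{lw}}\|_p$ gives $\|c_{d_\gor}\|_p=\|r\|_p$, hence $\max_{s\in\bZ_p}\|q(s)\|_p=|T|_p^{d_\gor}\|r\|_p\in[p^{-d_\gor},1]$ by the choice of $T$, which is the remaining assertion. Finally renaming $s=t/T$ and collecting the lower-order terms into the $O(\|r\|_p^{1/d_\gor})$ error yields the stated identity $\Ad[u_t]r=q(t/T)+O(\|r\|_p^{1/d_\gor})$ for $t\in T\bZ_p$.
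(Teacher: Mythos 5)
Your overall strategy is the same as the paper's: expand $\Ad[u_t]r$ as a polynomial of degree $d_\gor$ in $t$, note that the top coefficient is $\tfrac1{d_\gor!}\ad[Z]^{d_\gor}r=\tfrac1{d_\gor!}\ad[Z]^{d_\gor}r^{\on{lw}}$, a vector in $\gor^{\on{hw}}$ of norm exactly $\|r^{\on{lw}}\|_p=\|r\|_p$ (the factorials and the explicit raising constants such as $4$ and $96$ being $p$-adic units for $p\geq5$), and rescale by a $T$ with $p^{-1}\|r\|_p^{-1/d_\gor}\leq|T|_p\leq\|r\|_p^{-1/d_\gor}$. However, your error estimate has a genuine gap. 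You record only $\|c_j\|_p\leq1$ for the lower-order coefficients and then ask for $|T|_p^j\leq\|r\|_p^{1/d_\gor}$ for all $j<d_\gor$; since $|T|_p\asymp\|r\|_p^{-1/d_\gor}\geq1$ while $\|r\|_p^{1/d_\gor}\leq1$, these requirements are incompatible (your claim that $|T|_p\leq\|r\|_p^{1/(d_\gor(d_\gor-1))}$ is the \emph{weaker} constraint is backwards), and the subsequent attempt to ``absorb the negative exponent into a constant'' cannot succeed: the bound you reach, $\|r\|_p^{-(d_\gor-1)/d_\gor}$, is unbounded as $\|r\|_p\to0$, so as written the error term blows up instead of being $O(\|r\|_p^{1/d_\gor})$.

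The missing observation, which is exactly what the paper uses, is that the lower-order coefficients are bounded by $\|r\|_p$ and not merely by $1$: since $Z\in\gog[0]$ and $\gog[0]$ is a $\bZ_p$-Lie subalgebra, $\|\ad[Z]^jr\|_p\leq\|r\|_p$, and $|1/j!|_p=1$ for $j\leq4<p$, so $\|c_j\|_p\leq\|r\|_p$ for every $j$. Then for $t=Ts$ with $s\in\bZ_p$ each term with $j<d_\gor$ satisfies $\|c_jT^js^j\|_p\leq\|r\|_p|T|_p^{d_\gor-1}\leq\|r\|_p\cdot\|r\|_p^{-(d_\gor-1)/d_\gor}=\|r\|_p^{1/d_\gor}$, which is precisely the claimed error; this is the paper's estimate $\|P(t)-c_{d_\gor}t^{d_\gor}\|_p=O(\|r\,t^{d_\gor-1}\|_p)$. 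With this one-line correction the remainder of your argument --- the existence of $T$ in the prescribed range, the fact that only $r^{\on{lw}}$ contributes to the $t^{d_\gor}$-coefficient because raising past the top of the smaller $\lsl$-subrepresentations gives zero, and the bound $\max_{s\in\bZ_p}\|q(s)\|_p=|T|_p^{d_\gor}\|r\|_p\in[p^{-d_\gor},1]$ --- is correct and coincides with the paper's proof.
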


\begin{proof}
Write $P(t)=\on{Ad}_{u_{t}}(r)$ then $P(t)=\sum \frac{t^{\ell}}{\ell!}\on{ad}^{\ell}_{v}r$ where $v\in\mathfrak{g}[0]\setminus\mathfrak{g}[1]$ 
is chosen such that $\exp{tv}=u_{t}$. The coefficient for the highest degree term of $P$ is 
\[
 c_{d_\gor}=\frac{1}{{d_\gor}!}\on{ad}^{d_\gor}_{v}(r)=\frac{1}{{d_\gor}!}\on{ad}^{d_\gor}_v(r^{\on{lw}}).
\]
 Since $d_\gor\leq 4$ and we may assume $p\geq 5$,
$|{d_\gor}!|_p=1$ and 
\[
 \|c_{d_\gor}\|_p=\|\on{ad}^{d_\gor}_v(r^{\on{lw}})\|_p=\|r^{\on{lw}}\|_p=\|r\|_p.
\] 
We note that $P(0)=r$ and that for $t$ with $|t|_p>1$ we have $\|P(t)\|_p=\|c_{d_\gor}t^{d_\gor}\|_p=\|r\|_p|t|_p^{d_\gor}$. 
Moreover, $\Vert P(t)-c_{d_\gor}t^{d_\gor}\Vert_p=O(\Vert r t^{d_\gor-1}\Vert_p)$.

We define $T$ by taking $j$ with $0\leq j<d_\gor$ such that $T=\left(\frac{p^{j}}{\|c_{d_\gor}\|_p}\right)^{1/d_\gor}$
exists in $\bQ_p$. For $t\in\bZ_p$ this gives that
$$P(tT)=p^jt^{d_\gor}w^{{\on{hw}}}+O\left(\|r\|_p^{1/d_\gor}\right)$$
where $w^{{\on{hw}}}$ is a vector in $\gor^\text{hw}$ of norm one 
and we set $q(t)=p^jt^{d_\gor}w^{{\on{hw}}}$.
\end{proof}

The following step may be viewed as an effective version
of the shearing properties appearing in Ratner's measure classification theorem (see \cite{ratner,margulis}).

\begin{proposition}
\label{addinv}
There exist absolute constants 
$\consta\label{exp:addinvinp},\consta\label{exp:addinv}>0$
with the following property. Let $d_2>0$ and assume that  $x_{1}, 
x_{2}=x_{1}g\in \mathcal{Y}$ with $g\in \Omega[1]$ are $[\ell_0,\ell_1]$-generic for $\mu$ w.r.t.\ the Sobolev norm $\mathcal{S}_{d_2}$,  
that $g_p=\exp{r}$ with $r\in\gor[0]$ and $\|r^{{\on{lw}}}\|_p=\|r\|_p$ and $\ell_1\geq\frac{1}{d_\gor M}\log_p(\|r\|_p^{-1})$.
Then there exists $w\in\gor^{\on{hw}}$ of norm $\|w\|_p=1$ under which $\mu$ is $\ll p^{\ref{exp:addinvinp}}\|r\|_p^{\ref{exp:addinv}}$-almost invariant, i.e.\
$$\left|\mu(f)-\exp{(tw)}_{*}\mu(f)\right|\ll p^{\ref{exp:addinvinp}}\|r\|_p^{\ref{exp:addinv}}\cS_{d_2}(f)$$
for all $t\in\bZ_p$ and $f\in C^\infty_c(\mathcal{Y})$.
\end{proposition}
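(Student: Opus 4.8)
\textbf{Proof plan for Proposition \ref{addinv}.} The strategy is the standard ``shearing/time-lapse'' argument from \cite[Section 6.8]{EMMV} (a variant of \cite[Proposition 15.1]{EMV}), specialized to the concrete complements and principal $\lsl$ from Section \ref{liealgebras}. The starting data are two nearby $[\ell_0,\ell_1]$-generic points $x_1$ and $x_2=x_1g$ with $g_p=\exp r$, $r\in\gor[0]$, and crucially $\|r^{\on{lw}}\|_p=\|r\|_p$. First I would apply Lemma \ref{adpol} to obtain the time-lapse parameter $T\in\bQ_p$ with $|T|_p\asymp\|r\|_p^{-1/d_\gor}$ and the $\gor^{\on{hw}}$-valued monomial $q$ of degree $d_\gor$ with $\max_{t\in\bZ_p}\|q(t)\|_p\in[p^{-d_\gor},1]$, so that along the $u_t$-flow the transverse displacement $\Ad_{u_t}(r)=q(t/T)+O(\|r\|_p^{1/d_\gor})$ for $t\in T\bZ_p$. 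In other words, after flowing for $p$-adic time of order $|T|_p$, the displacement between the two orbit pieces has moved, up to a genuinely small error, onto a fixed vector $w\in\gor^{\on{hw}}$ of norm one (times a unit-controlled scalar that can be absorbed by reparametrizing $t$).

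The main computation is then to compare the $u_t$-averages of a test function $f$ along the two orbits over a ball $\cB_\ell(a)$ of the appropriate scale, namely with $p^{M\ell}\asymp|T|_p^{-1}=\|r\|_p^{1/d_\gor}$; the constraint $\ell_1\ge\frac1{d_\gor M}\log_p(\|r\|_p^{-1})$ is exactly what guarantees that this scale $\ell$ lies in the generic range $[\ell_0,\ell_1]$ (and $\ell\ge\ell_0$ holds since $\|r\|_p\le p^{-1}$ because $r\in\gor[0]$; strictly one should handle the degenerate case $\ell<\ell_0$ separately, where $\|r\|_p$ is bounded below and the whole estimate is trivial after enlarging the implicit constant). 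For $x_1$: genericity gives $\frac1{|\cB_\ell(a)|}\int_{\cB_\ell(a)}f(x_1u_t)\,dt=\mu(f)+O(p^{-\ell}\cS_{d_2}(f))$. For $x_2=x_1g$: write $x_2u_t=x_1g\,u_t=x_1u_t\,(u_t^{-1}gu_t)$ and split $g=g_\infty g_p$. The archimedean part $g_\infty\in\Omega_\infty$ is of size $\ll p^{-\ref{exp:uniforminjrad}}/4$ and commutes with $u_t\in G_p$, so it contributes an $\ll\|g_\infty\|$-Lipschitz error controlled via (S4)-type estimates; more honestly one absorbs it using that $g\in\Omega[1]$ and the Lipschitz property of the Sobolev norm. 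The $p$-adic part gives $u_t^{-1}g_pu_t=\exp(\Ad_{u_{-t}}r)$, and by the adjusted form of Lemma \ref{adpol} on the ball of radius matching $T$ this equals $\exp(w')$ with $w'=q(\pm t/T\text{-reparametrized})+O(\|r\|_p^{1/d_\gor})$ ranging over (a unit multiple of) $\{sw:s\in\bZ_p\}$ plus a small error. Hence
\[
\frac1{|\cB_\ell(a)|}\int_{\cB_\ell(a)}f(x_2u_t)\,dt
=\frac1{|\cB_\ell(a)|}\int_{\cB_\ell(a)}f\bigl(x_1u_t\exp(sw)\bigr)\,dt+O\bigl(p^{O(1)}\|r\|_p^{\kappa}\cS_{d_2}(f)\bigr),
\]
where $s=s(t)$ sweeps out $\bZ_p$ as $t$ ranges over $\cB_\ell(a)$ (by the monomial structure of $q$, the change of variables $t\mapsto s$ is, up to subdividing $\cB_\ell(a)$ into $O(1)$ sub-balls, a measure-scaling bijection onto $\bZ_p$ or onto the relevant domain), and the error comes from Lipschitz-controlling $f$ against the displacement error of size $O(\|r\|_p^{1/d_\gor})$ together with (S3)/(S4) and the bound $\|u_t\|_p\ll|t|_p^d\ll|T|_p^d=\|r\|_p^{-d/d_\gor}$ on the ball. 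Applying genericity of $x_1$ once more (in the variable $t$, after integrating out $s$, or directly: the inner function $y\mapsto\fint_{\bZ_p}f(y\exp(sw))\,ds$ is again smooth with comparable Sobolev norm by (S3)) yields that this equals $\fint_{\bZ_p}\mu^{\exp(sw)}(f)\,ds+O(p^{-\ell}\cS_{d_2}(f))$. Hmm — but more directly: equating the two expressions for the average along $x_2$ and using that both equal $\mu(f)$ up to $O(p^{-\ell}\cS_{d_2}(f))$ gives $\bigl|\mu(f)-\fint_{\bZ_p}\mu^{\exp(sw)}(f)\,ds\bigr|\ll(p^{-\ell}+p^{O(1)}\|r\|_p^{\kappa})\cS_{d_2}(f)$. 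Since $p^{-\ell}\asymp\|r\|_p^{1/(d_\gor)}$ up to a bounded power of $p$ by our choice of $\ell$, the right side is $\ll p^{\ref{exp:addinvinp}}\|r\|_p^{\ref{exp:addinv}}\cS_{d_2}(f)$ for suitable absolute exponents.

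Finally, to upgrade ``almost invariance under the average over $\exp(\bZ_p w)$'' to ``almost invariance under each $\exp(tw)$, $t\in\bZ_p$'' one runs the same argument at the family of intermediate scales, i.e.\ replaces $\bZ_p$ by the sub-balls $t'+p^k\bZ_p$; equivalently, one observes that the translate of the above identity by $\exp(t_0 w)$ gives almost invariance of $\mu^{\exp(t_0 w)}$ under the average, and a standard telescoping over the tower of balls $p^k\bZ_p$ (each step costing the same bound, with $O(\ell)\ll\log_p\|r\|_p^{-1}$ many steps, absorbed into the exponent) produces the pointwise statement for all $t\in\bZ_p$; alternatively one notes $\exp(\bZ_p w)$ is a compact group and almost-invariance under the normalized average over a subgroup that topologically generates it, iterated, gives almost-invariance under the whole group at the cost of a bounded loss. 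The cleanest route, which I would take, is to carry the $s$-parameter all the way: the change of variables coming from $q$ (a monomial $p^j t^{d_\gor}$) maps a ball $\cB_\ell(a)$ with $a\in\bZ_p^\times$ onto a ball in $\gor^{\on{hw}}$ not containing $0$, so by varying $a\in\bZ_p^\times$ and the scale we directly realize translations $\exp(tw)$ for all $t$ with $|t|_p$ in a range covering $\bZ_p$, giving the conclusion uniformly.

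\textbf{Main obstacle.} The delicate point is bookkeeping the two competing error terms so that a \emph{single} positive power of $\|r\|_p$ survives: the genericity error is $p^{-\ell}$ which forces $\ell$ (hence the flow time $|T|_p$) as large as possible, while the Lipschitz error from replacing $\Ad_{u_t}r$ by its monomial approximation is $O(\|u_t\|_p^{O(1)}\cdot\|r\|_p^{1/d_\gor}\cdot\cS_{d_2}(f))$ which \emph{grows} with $|t|_p$ — so one must choose the scale $\ell$ (equivalently $T$) to balance $p^{-\ell}$ against $\|r\|_p^{1/d_\gor}\|r\|_p^{-O(1)/d_\gor}$, and verify this optimal $\ell$ genuinely lies in $[\ell_0,\ell_1]$ using the hypothesis on $\ell_1$ and $\|r\|_p\le p^{-1}$. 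The role of the hypothesis $\|r^{\on{lw}}\|_p=\|r\|_p$ is precisely to make the leading coefficient $c_{d_\gor}=\frac1{d_\gor!}\ad_v^{d_\gor}(r^{\on{lw}})$ of full size $\|r\|_p$ (no accidental cancellation), which is what makes $|T|_p$ as small as $\|r\|_p^{-1/d_\gor}$ rather than something larger and uncontrolled — without it the argument would not close. Tracking the $p$-powers (from $\|u_t\|_p$, from (S3), from the $O(1)$ sub-balls, and from the number of telescoping steps) to obtain the clean form $p^{\ref{exp:addinvinp}}\|r\|_p^{\ref{exp:addinv}}$ is routine but is where all the care goes.
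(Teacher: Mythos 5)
Your overall skeleton (Lemma \ref{adpol}, comparison of the $u_t$-averages at the two generic points over balls $\cB_\ell(a)$ at the scale dictated by $T$, and the balancing of the genericity error against the polynomial-approximation error) matches the paper's Step 1, but two essential steps of the actual argument are missing, and the substitutes you propose for them do not work.

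First, the archimedean displacement $g_\infty$ cannot be ``absorbed using the Lipschitz property of the Sobolev norm'': its size is only bounded by the injectivity-radius scale $\asymp p^{-\ref{exp:uniforminjrad}}$, i.e.\ by a fixed negative power of $p\asymp(\log D)^2$, which is enormous compared with the target error $p^{\ref{exp:addinvinp}}\|r\|_p^{\ref{exp:addinv}}$ in the regime where $\|r\|_p$ is a negative power of $D$ (exactly the regime needed in Proposition \ref{almostinvariance}); also (S4) is a $p$-adic statement and no real Lipschitz bound of the required quality is available. What the argument actually yields is almost invariance under the \emph{combined} element $g_\infty\exp(q(t_0/T))$, and the real part must then be cancelled exactly: the paper does this by running the estimate at $t_0$ and at $2t_0$, noting both displacements share the same $g_\infty$ while the monomial values differ by the factor $2^{d_\gor}$, and quotienting via Lemma \ref{invmeasconj}. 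Second, your description of the sweep is incorrect, and this breaks the upgrade to all $t\in\bZ_p$. Over a single ball $\cB_\ell(a)$ (midpoint $ap^{-M\ell}$, radius $p^{(M-1)\ell}$) the transverse displacement $q(t/T)$ is essentially \emph{constant}, not sweeping; this is precisely what Definition \ref{discrepancy} is tailored to, and it is why genericity of $x_2$ gives $\mu(f)$ while genericity of $x_1$ gives $\mu(f^{g_\infty\exp q(t_0/T)})$. Varying the midpoints and scales one only obtains displacements of the form $\exp(p^j s^{d_\gor}w)$, i.e.\ unit multiples of $d_\gor$-th powers in the highest-weight direction; the map $t\mapsto t^{d_\gor}$ is not a ``measure-scaling bijection onto $\bZ_p$'' (for $p\equiv1\bmod4$ and $d_\gor\in\{2,4\}$ its image in $\bZ_p^\times$ has index $\geq2$), so you never produce the Haar average over $\exp(\bZ_pw)$, and neither the compact-group-average trick nor telescoping over sub-balls can then deliver invariance under every $\exp(tw)$. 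The missing ingredient is additive filling along the one-parameter direction: since $\exp(aw)\exp(bw)=\exp((a+b)w)$, one composes the almost-invariances and invokes the Hilbert--Waring theorem (every integer is a sum of boundedly many $d_\gor$-th powers), then uses density of $\bZ$ in $\bZ_p$ together with the $K[m]$-level filling of Lemma \ref{invmeasconj}; the paper's Step 3 (conjugation by the diagonal element of the principal $\on{SL}_2$ and taking $p^n$-th powers to renormalize the possibly small coefficient of $q$, which cannot be fixed by ``reparametrizing $t$'' inside $\bZ_p$) is also needed before this. Without these two steps your argument only gives an error of size $p^{-O(1)}$ and only for a thin, non-additive set of parameters $t$, which is strictly weaker than the proposition.
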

\begin{proof}
We divide the proof into several steps following \cite[Section 6.9 and 7.8]{EMMV}.

\textit{Step 1, Applying Lemma~\ref{adpol}.}
If $g=g_\infty\exp{r}$ then we may write 
\(
	x_2u_t=x_1u_tg_\infty\exp{\Ad[u_t](r)}.
\)
If $t_0=ap^{-M\ell}$ is a midpoint of $\cB_{\ell}({a})$ (introduced in Definition~\ref{discrepancy}) then for any $t\in \cB_{\ell}({a})$ we have
\[
\left|t^{d_\gor}-t_0^{d_\gor}\right|_p=\left|\left(t^{d_\gor}-t_0^{d_\gor}+t_0^{d_\gor}\right)-t_0^{d_\gor}\right|_p\ll|t-t_0|_p|t_0|_p^{d_\gor-1}\leq |t_0|_p^{d_\gor-1/M}.
\]
Let now $T$ and $q$ be as in Lemma~\ref{adpol} applied to $r$. Thus for any $t_0$ with $|t_0|_p\leq |T|_p$ we obtain
\begin{multline*}
\Ad[u_t](r)=q(t/T)+O(\|r\|_p^{1/d_\gor})=q(t_0/T)+O(|T|_p^{-1/M}+\|r\|_p^{1/d_\gor})\\
=q(t_0/T)+O(p\|r\|_p^{1/Md_\gor})
\end{multline*}
since $|T|_p\geq p^{-1}\|r\|_p^{-1/d_\gor}$. This gives $\exp{\Ad[u_t](r)}=\exp{q(t_0/T)}\widetilde{g}$ with $\widetilde{g}\in K[c-\log_p(p\|r\|_p^{1/Md_\gor})]$
for some constant $c>0$ coming from the $O$-notation.
We will see below that we can set
$\ref{exp:addinv}=(2Md_{\mathfrak{r}})^{-1}$ 
and note that if $\|r\|_p^{\ref{exp:addinv}}\geq c'$
for some absolute constant $c'$, then the conclusion of the proposition holds trivially by the Sobolev embedding claim in Proposition~\ref{sobolev}(S1) and adjusting the implicit constant in the conclusion.
 Hence we may assume that $\|r\|_p$ is sufficiently small such that $\widetilde{g}\in K[1]$, and so we can apply the Lipschitz property of Proposition~\ref{sobolev} (S4) to see
 that
\begin{equation}\label{almost-parallel}
	f(x_2u_t)=f(x_1u_tg_\infty \exp{q(t_0/T)})+O(p\|r\|_p^{1/Md_\gor}\cS_{d_2}(f)).
\end{equation}

We distinguish between the two cases $|t_0|_p\geq |T|_p^{1/2}$ and $|t_0|_p\leq|T|_p^{1/2}$.
If $t_0$ is small in the sense that $|t_0|_p\leq|T|_p^{1/2}$ then also $\|q(t_0/T)\|_p\leq |T|_p^{-d_\gor/2}\leq\|r\|_p^{1/2}$, which by the third property of Lemma~\ref{invmeasconj} implies that $\mu$ is $\|r\|_p^{1/2}$-almost invariant under $\exp{q(t_0/T)}$. 
 
In the former (and more interesting) case, we have 
\[
 p^{M\ell}=|t_0|_p\geq |T|_p^{1/2}\geq p^{-1}\|r\|_p^{-1/2d_\gor}.
\]
As before, we may assume that $\|r\|_p\leq p^{-2d_\gor M(\ell_0+1)}$, for otherwise we may increase $\ref{exp:addinvinp}$ to ensure that $p^{\ref{exp:addinvinp}}\|r\|_p^{\ref{exp:addinv}}\geq 1$ and apply the Sobolev embedding again. Therefore, we are reduced to the case $\ell\geq\ell_0$. 
Recall now that $D_{a,\ell}(f)(x)$ from Definition~\ref{discrepancy} is the discrepancy between $\int f\on{d}\!\mu$ and the normalized integral over $xu(\cB_{\ell}({a}))$ and by assumption on $[\ell_0,\ell_1]$-genericity, $|D_{a,\ell}(f)(x_i)|\leq p^{-\ell}\cS_{d_2}(f)$ for $i=1,2$ and $\ell_0\leq \ell \leq \ell_1$.
Using this for both points together with \eqref{almost-parallel}
we obtain
\[
	\mu(f)=\mu(f^{g_\infty \exp{q(t_0/T)}})+O(p\|r\|_p^{1/2 Md_\gor}\cS_{d_2}(f)+p\|r\|_p^{1/Md_\gor}\cS_{d_2}(f)).
\]
as long as $|t_0|_p\leq p^{M\ell_1}$ and thus for all $|t_0|_p\leq |T|_p$ if
$p^{M\ell_1}\geq\|r\|_p^{-1/d_\gor}\geq |T|_p$.

\textit{Step 2, Removing the real displacement for $|t_0|_p\geq|T|_p^{1/2}$.}

The above shows that $\mu$ is $\ll\varepsilon=p\|r\|_p^{\ref{exp:addinv}}$-almost invariant for $\ref{exp:addinv}=(2Md_{\mathfrak{r}})^{-1}$ under the element $g_\infty\exp(q(t_0/T))$ whenever $|t_0|_p\geq|T|_p^{1/2}$. 
Applying this also to $2t_0$ we obtain
that $\mu$ is $\ll\varepsilon$-almost invariant
under $g_\infty\exp(q(2t_0/T))=g_\infty\exp(2^{d_\mathfrak{r}}q(t_0/T))$. Applying Proposition \ref{sobolev}(S3) and Lemma \ref{invmeasconj} we can take the quotient and obtain that $\mu$ is $\ll\varepsilon$-almost invariant under $\exp((2^{d_\mathfrak{r}}-1)q(t_0/T))$. 
In the trivial first case $|t_0|_p\leq|T|_p^{1/2}$
we can repeat the argument for $(2^{d_\mathfrak{r}}-1)q(t_0/T)$ instead of $q(t_0/T)$.
As we only will need the almost invariance and not how we
came to the polynomial we will simply write again $q$ for the polynomial $(2^{d_\mathfrak{r}}-1)q$. 

\textit{Step 3, Rescaling $q$.}

The coefficient of the monomial $q$ from Lemma~\ref{adpol} might have $p$-adic norm
that is only  as large as $p^{-d_\gor}$. 
However, since $q$ is $r^{\on{hw}}$-valued, we may conjugate by the diagonal element $a$ of the principal $\on{SL}_2(\bQ_p)$ satisfying $\|a\|_p=p$ to obtain elements of norm bigger than $1$. 
Let $m\leq 1$ be such that $\on{Ad}_a^mq(1)\not\in\gor[1]$ 
and let $n\in\{0,\ldots,d_\gor\}$ such that $p^{n}=\|\on{Ad}_a^mq(1)\|_p$, 
or equivalently $p^{n}\on{Ad}_a^mq(1)\in \gor[0]\setminus\gor[1]$. 
Apply the first point of Lemma~\ref{invmeasconj} to $h=a$ 
and $g_0=\exp{q(t)}$ to see $\ll p^{4d_\gor d_2}\varepsilon$-almost invariance under $\exp{\Ad[a]^mq(t)}$
and the second point (applied $p^n$-times to $g_0=g_1=\exp{\Ad[a]^mq(t)}$) to obtain $\ll p^np^{4d_2}p^{4d_\gor d_2}\varepsilon$-almost invariance under $\left(\exp{\on{Ad}_a^mq(t)}\right)^{p^{n}}=\exp{{p^{n}}\on{Ad}_a^mq(t)}$ for all $t\in\bZ_p$.
We may replace therefore $q$ with ${p^{n}}\on{Ad}_a^mq(t)$ which is a scalar multiple of $q$.

\textit{Step 4, From $q$ to linear displacement.}

Write $q(t)=t^{d_\gor} q(1)$ and using the Hilbert-Waring theorem (\cite{hilbert}) that says that any integer can be written as sum of $d_\gor$-powers with at most $g(d_\gor)<\infty$ terms, we see that 
\[
\bZ q(1)\subset\left\{\sum_{i=1}^{g(d_\gor)}q(t_i):t_i\in\bZ_p \mbox{ for } i=1,\dots,d_\gor\right\}
\] 
has dense image in $\bZ_pq(1)$. By the second property of Lemma~\ref{invmeasconj} we get $\ll g(d_\gor) p^np^{4d_2}p^{4d_\gor d_2}\varepsilon$-almost-invariance under $\exp{\bZ q(1)}$. Using density and the last property of Lemma~\ref{invmeasconj} we may fill the gaps to deduce $\ll p^np^{4d_2}p^{4d_\gor d_2}\varepsilon$-almost invariance under $\mu$. Collecting the $p$ terms we deduce the promised $p^{\ref{exp:addinvinp}}\|r\|_p^{\ref{exp:addinv}}$-almost invariance of $\mu$ under $w=q(1)$. 
\end{proof}

With that we are now ready to finish the proof of the
equidistribution on the single factors (i.e.\ the
first statement of Theorem \ref{maindynamicalresultsingle})
in Section \ref{single_proof}.

\begin{proof}[Proof of Proposition \ref{almostinvariance}]
We want to maximize $m$ in Proposition \ref{genericpoints}, which is supposed to satisfy 
$2m_{G_{i,S}}(\Omega[m])^{-1}<V$. 
Recall from Section \ref{smallneighborhood} that $\Omega[0]=\Omega_S=\Omega_\infty\times K[0]$ was chosen to be injective for the orbit map for all points in $X_{\text{cpt}}$, and we remarked that for $\Omega_\infty$, a ball of radius $\gg p^{-\ref{exp:uniforminjrad}}$ suffices which implies that
$m_{G_{i,S}}(\Omega[0])\gg p^{-\ref{exp:uniforminjrad}\dim(\bG_i)}$.
In view of Section \ref{sec:discvol}, we also have $D^{\ref{exp:discvol1}}\ll V$ (resp.\ $D^{\ref{exp:discvol2}}\ll V$ for $i=2$). Combining these two, we want the following inequality to hold:
\[
m_{G_{i,S}}(\Omega[m])^{-1}=p^{m\dim{\bG_i}} m_{G_{i,S}}(\Omega[0])^{-1}\leq c_0 p^{(m+\ref{exp:uniforminjrad})\dim(\bG_i)}\overset{!}{\leq} c_1D^{\ref{exp:discvol1}}\leq V
\]
resp.\ with $\ref{exp:discvol1}$ replaced by $\ref{exp:discvol2}$. But we can find $\kappa>0$ such any $m$ for which $p^m\leq D^\kappa$ will satisfy the above (assuming, as we may, that $D$ is suffiently big).

We apply Proposition~\ref{genericpoints} for the maximal $m$ such that $p^m\leq D^\kappa$ from which we get a tuple of generic points with $p$-adic displacement $r$. 
Applying Proposition~\ref{addinv} produces $z\in \gor_i^{\on{hw}}[0]$ for which $\mu$ is $\ll p^{\ref{exp:addinvinp}+1}D^{-\ref{exp:addinv}\kappa}$-almost invariant, and we use the bound on $p$ one more time to deduce that $\mu$ is $\ll D^{-\ref{exp:addinvD}}$-almost invariant. 
By Lemma~\ref{generation}, we can conjugate $z$ by elements of $\HvS[p]\cap K[0]$ (resp.\ $\HVS[p]\cap K[0]$) to form a $\bZ_p$-basis $\{z_j\}$ of $\gor_i[0]$. Note that by the first bullet point of Lemma~\ref{invmeasconj}, $\mu$ is $D^{-\ref{exp:addinvD}}$-almost invariant under each $z_j$. As $\mu$ is already invariant under $\HvS[p]$ (resp.\ $\HVS[p]$), the second bullet point of Lemma~\ref{invmeasconj} combined with the implicit function theorem  in Lemma~\ref{hensel}(1) we conclude that $\mu$ is $D^{-\ref{exp:addinvD}}$-almost invariant under $\bG_i{(\bQ_p)}^+\cap K[1]$ w.r.t.~$\mathcal{S}_{d_2}$.
\end{proof}


\section{Proof of Joint Equidistribution}
\label{proofofequidistribution}
\subsection{Almost Invariance for Joint equidistribution}

The aim of this subsection is to 
prove almost invariance of $\Gamma\LvS^+(k_v,e,\oT,e)\subset\mathcal{Y}_{\on{joint}}^+$
under $G_{p}^{+}$. To achieve this, we use equidistribution of $(\pi_1)_*\mu_{v,S}$  and  $(\pi_2)_*\mu_{v,S}$ to produce closeby generic points that differ ``significantly'' along the invariant complement $\goh_1 \oplus \gor_{1} \oplus \gor_{2}$ which arguing as before leads to  almost invariance under a Lie algebra~$\mathfrak{f}<\gog_{1} \oplus \gog_{2}$.  We then iterate the argument relying on the almost invariance under $\mathfrak{f}$ in order to create a new direction under which we are almost invariant. 
For this we have to work with the more general notion of generic points (using a lower bound~$\ell_0$ as before but also
an upper bounds~$\ell_1$ as in Definition~\ref{discrepancy}), which allows us to
work with points outside the orbit (see Proposition~\ref{qet}). 
This is the reason why we have to produce a ``significant''
displacement as we are not any longer allowed to use the shearing argument arbitrarily 
far from the original points.
Eventually we obtain almost invariance under the full Lie algebra~$\gog_{1} \oplus \gog_{2}$. 
This will allow us  to repeat the convolution step from Section~\ref{convolution}.

To allow for the above mentioned iteration of the argument 
we suppose that the measure $\mu_{v,S}$ is 
$\ll p^{\ref{exp:involdinp}}D^{-\ref{exp:involdinD}}$
almost invariant under a Lie algebra~$\mathfrak{f}<\gog_{1} \oplus \gog_{2}$
for some $\consta\label{exp:involdinp}>0$, $\consta\label{exp:involdinD}>0$
and some choice of Sobolev norm~$\cS_{d'}$. 
We may assume that $\mathfrak{f}$ contains the $p$-adic 
Lie algebra  $\Delta_\goh$ of the acting group $L_{v,p}$
and let~$\mathfrak{f}'<\gog_{1} \oplus \gog_{2}$ be an undistorted complement
such that~$\gog_{1} \oplus \gog_{2}=\mathfrak{f}\oplus\mathfrak{f}'$. 
We will start the iteration with the complement
\[
 V=\goh_{1} \oplus \gor_{1} \oplus \gor_{2}
\] 
to the Lie algebra $\Delta_\goh$ and will see in the inductive step that whenever
we can increase~$\mathfrak{f}$ the new algebra still has an undistorted complement.

\begin{proposition}
\label{genericpointsjoint}
Suppose~$\mathfrak{f}\neq\gog_1\oplus\gog_2$. 
There exists $\consta\label{exp:addinvinpJointFinal}>0$, $\consta\label{exp:addinvJointFinal}>0$ (which depend on $\ref{exp:involdinp},\ref{exp:involdinD}$)  
and $w\in(\mathfrak{f}')^{\on{hw}}$ of norm $\|w\|_p=1$ under which $\mu_{v,S}$ is $\ll p^{\ref{exp:addinvinpJointFinal}}D^{-\ref{exp:addinvJointFinal}}$-almost invariant, i.e.\
$$\left|\mu_{v,S}(f)-\exp{(tw)}_{*}\mu_{v,S}(f)\right|\ll p^{\ref{exp:addinvinpJointFinal}}D^{-\ref{exp:addinvJointFinal}}\cS_{d_2}(f)$$
for all $t\in\bZ_p$ and $f\in C_c^\infty(\mathcal{Y}_{\text{joint}}^+)$ and some fixed $d_2=d_2(d')>d'$.
\end{proposition}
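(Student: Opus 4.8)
The strategy is the joint analogue of the argument that established almost invariance on the single factors (Proposition~\ref{almostinvariance}), combined with the iteration scheme sketched at the start of this section. The main tools are the Quantitative Ergodic Theorem (Proposition~\ref{qet}), the Pigeon Hole Principle (Lemma~\ref{transversalpoints}), the adjustment Lemma~\ref{adjustmentlemma}, the stabilizer Corollary~\ref{stabilizer}, and the shearing Proposition~\ref{addinv}; what is new is that genericity must now be used in the two-sided $[\ell_0,\ell_1]$ form so as to accommodate points that lie only on an \emph{almost} invariant perturbation of the orbit rather than on the orbit itself.

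First I would record the relevant volume input. By Lemma~\ref{comparingvolumes} and Proposition~\ref{discvoljoint} the volume $V$ of $\Gamma\LvS^+(k_v,e,\oT,e)$ satisfies $D^{*}\ll V\ll D^{*}$, and by the non-divergence estimate (Lemma~\ref{nondivergence}, and the choice of $X_{\text{cpt}}$ after it) the measure $\mu_{v,S}$ gives mass $>1-2^{-20}$ to $X_{\text{cpt}}$. Next, apply Proposition~\ref{qet} with $\mathfrak{s}=\mathfrak{f}$: since $\mu_{v,S}$ is $\ll p^{\ref{exp:involdinp}}D^{-\ref{exp:involdinD}}$-almost invariant under $\exp(\mathfrak{f}[1])$ w.r.t.\ $\cS_{d'}$, we get $L\gg \log_p(D^{\ref{exp:involdinD}})-O(\log_p p)$ so that $\beta L\gg \log_p D$, and a set $E\subset X\times\exp(\mathfrak{f}[1])$ of nearly full measure consisting of pairs $(x,s)$ for which $x.s$ is $([\ell_0,\beta L],\mathcal{F})$-generic w.r.t.\ some $\cS_{d_2}$. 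As in the proof of Proposition~\ref{genericpoints}, I would then pass to a subset $E'\subset X_{\text{cpt}}$ of measure $>\tfrac34$ of points most of whose small $K[1]\cap L_p$-translates land in $E$, apply Lemma~\ref{transversalpoints} with $\cN=\Omega[m]$ for the maximal $m$ with $p^m\le D^{\kappa}$ (legitimate because $2m_{G_S}(\Omega[m])^{-1}\ll p^{m\dim}\ll D^{\kappa\dim}\ll V$ by the volume bound), and obtain two points $x_1,x_2=x_1g_0\in E$ with $g_0\in\Omega[m]\setminus L$. The key point is that the transversal displacement now lives in the undistorted complement $\mathfrak{f}'$: by Lemma~\ref{adjustmentlemma} (applied with the decomposition $\gog_1\oplus\gog_2=\mathfrak{f}\oplus\mathfrak{f}'$, whose undistortedness is part of the hypothesis/inductive bookkeeping) one adjusts $x_1,x_2$ within $L_p$ so that the new $p$-adic displacement is $\exp r$ with $r\in\mathfrak{f}'[m]$, and Corollary~\ref{stabilizer} rules out $r=0$ (a purely real transversal displacement would already force $g\in L$). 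Finally, conjugating by a suitable element of the finite set $\mathcal{F}$ provided by Lemma~\ref{movingalongthecomplement} arranges $\|r\|_p=\|r^{\on{lw}}\|_p$ while keeping the points $[\ell_0,\ell_1]$-generic, where $\ell_1=\lfloor\beta L\rfloor\gg\log_p D$ — and this is exactly large enough (i.e.\ $p^{M\ell_1}\ge\|r\|_p^{-1/d_{\mathfrak{r}}}$, since $\|r\|_p^{-1}\ll p^{2m}\ll D^{2\kappa}$) that Proposition~\ref{addinv} applies in its two-sided form.

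Then I would feed this displacement into Proposition~\ref{addinv}: it produces $w\in(\mathfrak{f}')^{\on{hw}}$ with $\|w\|_p=1$ under which $\mu_{v,S}$ is $\ll p^{\ref{exp:addinvinp}}\|r\|_p^{\ref{exp:addinv}}$-almost invariant w.r.t.\ $\cS_{d_2}$. Since $\|r\|_p\gg p^{-2m}\gg D^{-2\kappa}$ forces, rather, $\|r\|_p\ll p^{-m}\ll D^{-\kappa'}$ for the appropriate constant (the displacement is genuinely small because $g_0\in\Omega[m]$), we get $\ll p^{\ref{exp:addinvinp}}D^{-\ref{exp:addinv}\kappa'}$-almost invariance, and absorbing $p\ll D^{\varepsilon}$ by Proposition~\ref{prime_existence} yields the claimed bound $\ll p^{\ref{exp:addinvinpJointFinal}}D^{-\ref{exp:addinvJointFinal}}$ with $\ref{exp:addinvJointFinal}$ depending only on $\ref{exp:addinv},\kappa',\varepsilon$ (hence ultimately on $\ref{exp:involdinD}$) and $\ref{exp:addinvinpJointFinal}$ similarly. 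This is the content of the proposition. I expect the main obstacle to be the careful tracking of the undistortedness hypothesis through the adjustment lemma — one must check that $\mathfrak{f}'$ can be chosen undistorted and that the $K[m]$-refinement in Lemma~\ref{adjustmentlemma} indeed lands the displacement in $\mathfrak{f}'[m]$ rather than merely in $\gog[m]$ — together with verifying that the time range $\ell_1\sim\beta L\sim\log_p D$ genuinely dominates the critical scale $\tfrac1{d_{\mathfrak{r}}M}\log_p(\|r\|_p^{-1})$ coming from Lemma~\ref{adpol}; both are bookkeeping-heavy but not conceptually deep, as the hard analytic inputs (property $(\tau)$, non-divergence, the ergodic theorem) are already in place.
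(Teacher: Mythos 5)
There is a genuine gap, and it sits exactly at the point your outline waves through: the lower bound on the transversal displacement. You assert that $\|r\|_p^{-1}\ll p^{2m}\ll D^{2\kappa}$, i.e.\ a lower bound $\|r\|_p\gg p^{-2m}$, but nothing in the pigeonhole principle (Lemma~\ref{transversalpoints}), the adjustment Lemma~\ref{adjustmentlemma}, or Corollary~\ref{stabilizer} yields any lower bound on $\|r\|_p$ — they only give $r\neq 0$ and the upper bound $\|r\|_p\leq p^{-m}$. In the single-factor case this does not matter because the generic points lie on the orbit and are $\ell_0$-generic with no upper cutoff, so the shearing can run as long as it needs. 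Here, by contrast, Proposition~\ref{qet} is applied with $\mathfrak{s}=\mathfrak{f}$ to a measure that is only \emph{almost} invariant, so the points are merely $[\ell_0,\beta L]$-generic with $p^{-L}\asymp p^{\ref{exp:involdinp}}D^{-\ref{exp:involdinD}}$, and Proposition~\ref{addinv} requires $\ell_1\geq \tfrac{1}{d_\gor M}\log_p(\|r\|_p^{-1})$; if $\|r\|_p$ is much smaller than a fixed power of $p^{-m}$ the genericity window does not cover the shearing time and the argument collapses. Even granting your unproved bound, your choice of $m$ (maximal with $p^m\leq D^\kappa$, $\kappa$ coming from the volume bounds) is incompatible with the window: $\ell_1\asymp\beta\,\ref{exp:involdinD}\log_p D$, and the inductive exponent $\ref{exp:involdinD}$ may be far smaller than $\kappa$. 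The paper instead takes $p^m\asymp V^{\delta}$ with $\delta$ chosen small in terms of the inductive exponents precisely so that $\ell_1$ dominates the scale $10m$ (this is the ``third bullet'' in its proof). Two secondary inaccuracies feed into the same problem: the adjustment must be performed inside $F[1]=\exp(\mathfrak{f}[1])$ (not inside $L_p$, and not via $K[1]\cap L_p$-translates) if the residual displacement is to land in $\mathfrak{f}'[m]$ rather than merely transversal to $\Delta\goh$; and after such an adjustment the points no longer lie on the orbit $\Gamma g_0 L^+_{v,S}$, so Corollary~\ref{stabilizer} does not apply as stated.

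The missing idea — and the actual new content of the paper's proof — is how the significant displacement is produced: it is forced by the \emph{already established effective equidistribution on a single factor}, which your proposal never uses even though the section announces it. One first notes that $\gor_2$ is $\goh$-irreducible and occurs once in $\gog_1\oplus\gog_2$, so either $\gor_2<\mathfrak{f}'$ or $\gor_2<\mathfrak{f}$ (in the latter case $\gog_2<\mathfrak{f}$, $\gor_1<\mathfrak{f}'$, and the roles of the factors are exchanged). In the main case one argues by contradiction: if every pair of nearby generic points had $\|r_2\|_p\leq p^{-10m}$, then $E'$ would be covered by $\ll V^{\delta\dim\bG_{\text{joint}}}$ tubes of the form $y_{1,i}\bigl((\Omega_\infty)_4\times\exp(\gog_1[1]\oplus\goh_2[1]\oplus\gor_2[10m])\bigr)$; dominating the indicator of the $\pi_2$-projection of each tube by a smooth bump function with controlled Sobolev norm and applying Theorem~\ref{maindynamicalresultsingle} for ${\pi_2}_*\mu_{v,S}$ shows each tube carries $\mu_{v,S}$-mass $\ll p^{\star}V^{-10\delta\dim\gor_2}$ up to a negligible error, and since $10\dim\gor_2>\dim\bG_{\text{joint}}$ the total mass of $E'$ is $\ll p^{\star}V^{-c\delta}$, contradicting $\mu_{v,S}(E')>\tfrac34$. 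Only then does one apply $\mathcal{F}$ and Proposition~\ref{addinv}, now legitimately, because $\log_p(\|r\|_p^{-1})\leq 10m$ is below the genericity cutoff by the choice of $\delta$. The remaining scaffolding of your outline (ergodic theorem with $\mathfrak{s}=\mathfrak{f}$, Fubini/adjustment, $\mathcal{F}$, shearing) does match the paper, but without the tube-counting contradiction and the $\delta$-tuned choice of $m$ the proposition is not reached.
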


\begin{proof}
	Recall that~$\mathfrak{r}_2$ is irreducible w.r.t.~the adjoint action of~$\mathfrak{h}$
	and that~$\mathfrak{g}$ contains only one subspace isomorphic to~$\mathfrak{r}_2$. Hence we
	have~$\mathfrak{r}_2<\mathfrak{f}'$ or~$\mathfrak{r}_2<\mathfrak{f}$. 
	
We first assume that~$\mathfrak{f}'=\mathfrak{f}'_1\oplus\mathfrak{r}_2$ for some~$\mathfrak{f}_1'<\mathfrak{g}_1$.  
Below we will choose a small~$\delta>0$ (only depending 
on the parameters in the assumed almost invariance, 
the effective equidistribution on~$\mathcal{Y}_2$, and derived parameters).
Using this number $\delta$, the volume $V$ 
of $\Gamma\LvS^+(k_v,e,\oT,e)\subset\mathcal{Y}_{\on{joint}}^+$
and the sets $\Omega[\cdot]$ as in Section \ref{liefacts},
we define the maximal integer $m$ such that  
$$
m_{G_{\textrm{joint},S}^+}(\Omega[m])\geq V^{-\on{dim}(\bG_{\textrm{joint}})\delta}.
$$
We note that $m_{G_{\textrm{joint}S}^+}(\Omega[m])\asymp p^{-m\dim\bG_{\textrm{joint}}}$. 
By maximality of $m$ this gives
\begin{equation}\label{delta1eq}
m_{G_{\textrm{joint},S}^+}(\Omega[m])\ll p^{\dim\bG_{\textrm{joint}}} V^{-\on{dim}(\bG_{\textrm{joint}})\delta} \mbox{ and }
p^{-1}V^{\delta}\ll p^{m}\ll V^{\delta}. 
\end{equation}

We start by showing that there exists $\ell_0>0$, 
$z_1, z_2=z_1g\in X_{\on{cpt}}$, $g\in\Omega[m]$
such that 
\begin{itemize}
\item $z_1$, $z_2$ are both $[\ell_0,\beta(\on{log}_{p}(D)\ref{exp:involdinp}-\ref{exp:involdinD})]$-generic for $\mu_{v,S}$ with respect to $\cS_{d_2}$, where $\beta$ is chosen as in Proposition \ref{qet}.
\item $g_p=\exp{r}$ where $r=(r_1,r_2)\in\mathfrak{f}_{1}'\oplus\gor_2$ satisfies $\|r_2^{\on{lw}}\|_p > p^{-10m}.$
\item we further have
$D^{\beta \ref{exp:involdinp}}p^{-\beta\ref{exp:involdinD}}\geq p^{10m/6\dim{\mathfrak{f}}}$ (which will allow us to combine
the first two bullets).
\end{itemize}
We closely follow Proposition \ref{genericpoints} to deduce the existence of closeby generic points. 

We first note that the third bullet is always satisfied (for $D$ sufficiently large) as long as $\delta$ is chosen small enough (which will force $m$ to be small) as the other parameters are fixed throughout, and using Proposition~\ref{discvoljoint} to relate volume and discriminant, $D^{\ref{exp:discvol1}} \ll V \ll D^{\ref{exp:integersareoneapart}}$.

The reader is invited to go over Proposition \ref{genericpoints} once more to recall that we defined $E$, the set of $(\ell_0,\mathcal{F})$-generic points where $\ell_0$ is chosen such that $\mu_{v,S}(E)>0.99$ and a set $E'$ for which also \emph{most} translates along $H_p\cap K[1]$ are in $E$. Here we use
the subgroup~$F[1]=\exp(\mathfrak{f}[1])$
to define the set $E$ consisting of all~$(x,f)\in X_{\on{cpt}}\times F[1]$ such that~$xf$
is $(\ell_0,\beta(\on{log}_{p}(D)\ref{exp:involdinp}-\ref{exp:involdinD}),\mathcal{F
)}$-generic w.r.t.~the Sobolev norm~$\cS_{d_2}$. 
Next we 
define the set 
\[
	 E'=\Bigl\{x\in X_{\on{cpt}}: m_{F[1]}\bigl(\bigl\{f\in F[1]
	   :(x,f)\in E\bigr\}\bigr)
	 >\tfrac34m_{F[1]}(F[1])\Bigr\}.
\]
We note that the Fubini argument 
concerning the set $E'$
and the adjustment claim in Lemma~\ref{adjustmentlemma}
work equally well
after replacing $K[1]\cap H_p$ with the subgroup $F[1]$. 
Therefore, $\mu_{v,S} (E')>\frac34 $ 
and by choice of $m$ (assuming $\delta<\frac1{\dim\bG_{\textrm{joint}}}$) we are able to apply the pigeonhole principle Lemma~\ref{transversalpoints} as before. 
More specifically, recall from the proof of Lemma~\ref{transversalpoints}, that we cover $X_{\on{cpt}}$ 
with $I\leq m_{G^+_{{\on{joint}},S}}(\mathcal{N})^{-1}$ many translates $P_i=z_i\mathcal{N}_2$ 
for $i=1\dots I$ to find $y_1$ and $y_2$ in a common set $P_i\cap E'$
satisfying $y_2\in y_1(\Omega[m])_4$.  

By Lemma~\ref{adjustmentlemma} for $y_1,y_2\in E'$ with $y_2\in y_1(\Omega[m])_4$ there exists $\alpha_1,\alpha_2\in F[1]$ such that the translates $x_1=y_1\alpha_1$ and $x_2=y_2\alpha_2$ have their displacement 
of the form
\begin{equation}\label{displacement}
	 (g_\infty,(\exp{r_1},\exp{r_2}))\in 
(\Omega_\infty)_4\times\exp(\mathfrak{f}_{1}'[m]\oplus\gor_2[m]).
\end{equation}
If we find a pair of points such that the displacement satisfies 
$$\|r_2\|_p> p^{-10m},$$
then we argue just as in the proof of Proposition \ref{genericpoints} using the set $\mathcal{F}\subset L_{v,p}$
to find new points for which the new replacement satisfies 
$$\|r_2^{\on{lw}} \|_p> p^{-10m},
$$ 
which gives the claim from the beginning of the proof.

In the following we will assume indirectly that the displacement in \eqref{displacement}
of the points $x_1,x_2$ never satisfies the desired inequality, i.e.
 that we have $\|r_2\|_p\leq p^{-10m}$ or equivalently
\[
 x_2\in x_1\bigl((\Omega_\infty)_4\times\exp(\mathfrak{f}_{1}'[m]\oplus\gor_{2}[10m])\bigr),
 \]
which we will use to derive a  contradiction. 
We now describe what this means 
 for the original points $y_1$ and $y_2$. 
For this recall first
that $\Ad[\alpha_2]$ is an isometry on $\gog_1\oplus\gog_2$
and that
\[
 \alpha_2,\alpha_1\alpha_2^{-1}\in F[1]\subset H_{p}[1]\times g_vH_{p}g_v^{-1}[1].
\]
Together with the indirect assumption this gives
\begin{eqnarray}
y_2&=&y_1\alpha_1(g_\infty,(\exp{r_1},\exp{r_2}))\alpha_2^{-1}\nonumber\\
&=&y_1(g_\infty,\alpha_1\alpha_2^{-1}( \exp{\Ad[\alpha_2]r_1},\exp{\Ad[\alpha_2]r_2})) \in y_1 \widetilde{\Omega}[m],\label{Omega-claim}
\end{eqnarray}
where we use the shorthand
\[
\widetilde{\Omega}[m]= (\Omega_\infty)_4\times\exp(\gog_1[1]\oplus\goh_2[1]\oplus\gor_{2}[10m]).
\]
Also note 
that $\widetilde{K}[m]=\exp(\gog_1[1]\oplus\goh_2[1]\oplus\gor_{2}[10m])$ 
is actually a subgroup of~$G_{\textrm{joint},p}^+$.  
Hence our indirect assumption gives that for 
every $P_i$ either $P_i\cap E'$ is empty or there exists some $y_{1,i}$
with 
\[
 P_i\cap E'\subset Q_i=y_{1,i}\widetilde{\Omega}[m].
\]

For each such $Q_i$ we choose a smooth ``upper bound'' $f_i\in C^\infty_c(\mathcal{Y}_2)$ of the characteristic
function $\mathbbm{1}_{\pi_2(Q_i)}$ in the following way. In fact 
let us shrink $\Omega$ slightly so that we may assume that $g\in\Omega_8\mapsto xg$ 
is injective for all $x\in X_{\on{cpt}}$, which allows us to construct one function on $G_2$
that will be used to define $f_i$ for all $i$. 
 We fix some $f_\infty\in C^\infty_c(G_{2,\infty})$
with $\mathbbm{1}_{(\Omega_{2,\infty})_4}\leq f_\infty\leq \mathbbm{1}_{(\Omega_{2,\infty})_8}$ 
such that the derivatives $\mathcal{D}f_\infty$ are bounded by $\ll p^{\star}$ for all 
monomials $\mathcal{D}$ of order $\leq d_2$ as in the definition of the Sobolev norm. 
We also set $f_p=\mathbbm{1}_{\widetilde{K}[m]}$
so that $f((g_\infty,g_p))=f_\infty(g_\infty)f_p(g_p)$ satisfies $f\geq \mathbbm{1}_{\widetilde{\Omega}[m]}$.
If we now set $f_i(\pi_2(y_{1,i})g)=f(g)$ for all $g\in G_2\cap \Omega_8$  
and define $f_i$ to be zero outside of $\pi_2(y_{1,i}) (G_2\cap \Omega_8)$ we have
\[
	\cS_{d_2}^{\bG_2}(f_i)\ll p^{\star}p^{10d_2m}.
\]
By construction we also have 
\begin{align*}
	m_{\mathcal{Y}_2}(\pi_2(Q_i))\leq \int f_i \on{d}\!m_{\mathcal{Y}_2} &\ll
	m_{G_{2,S}}(\widetilde{\Omega}[m]\cap G_2) \ll p^{-10m\dim(\gor_2)}.
\end{align*}
By Theorem~\ref{maindynamicalresultsingle} on the second factor (proven in Section \ref{single_proof}--\ref{dynamics}) 
we obtain from this
\begin{align*}
	\mu_{v,S}(P_i\cap E')&\leq 
	{\pi_2}_*\mu_{v,S}(\pi_2(Q_i))\\
	& \ll m_{\mathcal{Y}_2}(\pi_2(Q_i))+\cS_{d_2}^{\bG_2}(f_i)D^{-\ref{exp:maindynamicalresultsingle}}  \\
	&\ll p^\star V^{-10\on{dim}(\gor_2)\delta}+
	p^\star V^{10d_2\delta}D^{-\ref{exp:maindynamicalresultsingle}},
\end{align*}
where we also used \eqref{delta1eq}.
By Proposition~\ref{discvoljoint} we also have $D^{\ref{exp:discvol1}} \ll V \ll D^{\ref{exp:integersareoneapart}}$
so that $D^{-\ref{exp:maindynamicalresultsingle}}\ll V^{-\ref{exp:maindynamicalresultsingle}/\ref{exp:integersareoneapart}}$.
Choosing $\delta$ very small  makes the sets $Q_i$ ``almost macroscopic'' 
in the sense that the error term in the above estimate becomes less than the first term. For that reason we may and will drop the error term in the further discussion.

This implies, with $I\ll V^{\on{dim}(\bG_{\textrm{joint}})\delta}$, that
\begin{align*}
3/4<\mu_{v,S}(E')&= \sum_{i\leq I} \mu_{v,S}\left( P_i\cap E' \right)\\
& \ll I p^\star V^{-10\on{dim}(\gor_2)\delta}
 \ll p^\star V^{\delta\left(\on{dim}(\bG_{\textrm{joint}})-10\on{dim}(\gor_2)\right)}.
\end{align*}
We note that the exponent of $V$ is now equal to $-14\delta$ for $d=4$ resp.\ $-65\delta$ for $d=5$.
As the implicit constants are absolute and $p^\star=O_\varepsilon(D^\varepsilon)$ for all $\varepsilon>0$
we may now choose $\delta$ small enough to fulfil the above requirements,
choose $\varepsilon$ even smaller,
and obtain a contradiction for sufficiently large $D$.

Having found the $[\ell_0,\ell_1]$-generic points as claimed in the beginning of the proof, we verify that
$\ell_1=\beta(\on{log}_{p}(D)\ref{exp:involdinp}-\ref{exp:involdinD})\geq 10 m/6\dim{\mathfrak{f}}\geq \log_p(\|r\|_p^{-1/6\dim{\mathfrak{f}}})$ so that
we can use Proposition~\ref{addinv} to deduce $p^{\ref{exp:addinvinpJointFinal}}D^{-\ref{exp:addinvJointFinal}}$-almost invariance under a vector $w \in (\mathfrak{f}')^{\on{hw}}$ with $\left\|w\right\|_{p} = 1$.

In the case~$\mathfrak{r}_2<\mathfrak{f}$ we have~$\gog_2<\mathfrak{f}$ as~$\mathfrak{r}_2$ generates~$\gog_2$.
As also~$\gor_1$ generates~$\gog_1$ and we have $\mathfrak{f}\neq\gog_1\oplus\gog_2$, the invariant
complement~$\gor_1$ does not belong to~$\mathfrak{f}$. As~$\gor_1$ is irreducible and its isomorphism
type appears only once in $\gog_1\oplus\gog_2$ we see that~$\gor_1<\mathfrak{f}'$. Switching the roles
of the first and the second factor makes no difference in the above argument (except for the precise
exponents in the final estimates, which now are~$-16\delta$ for $d=4$ and~$-15\delta$ for~$d=5$). 
\end{proof}

In order to apply Proposition \ref{genericpointsjoint} iteratively, we need to show that a vector under which $\mu_{v,S}$ is almost invariant generates a Lie algebra in an effective way that still leaves the 
measure almost invariant. In particular, we need the following notion of almost invariance 
under an element of a Lie algebra.

\begin{definition}
\label{almost_invariance_lie_algebra}
Let $w \in \mathfrak{g}[0]$ and\footnote{The case~$\ell=0$ is only allowed if~$w\in\mathfrak{g}[1]$ or if
$w$ is nilpotent, 
as in these case~$\exp(tw)$ exists for all~$t\in\bZ_p$).} $\ell \geq 0$. The measure 
$\mu_{v,S}$ is called $\varepsilon$-almost invariant of level $\ell$ under $w$ if $\mu_{v,S}$ is $\varepsilon$-almost invariant under $\exp(tw)$ for all $t \in p^{\ell}\mathbb{Z}_{p}$. 
Moreover, $\mu_{v,S}$ is called $\varepsilon$-almost invariant under $w$ if it is $\varepsilon$-almost invariant of level $1$ under $w$.
\end{definition}

We start by collecting some useful facts concerning almost invariant Lie algebras.

\begin{lemma}
\label{iteration}
Let $w \in \mathfrak{g}[0]$ and assume that $\exp(tw)$ exists for all $t\in \mathbb{Z}_{p}$. 
If $\mu_{v,S}$ is $\varepsilon$-almost invariant under $\exp(w)$ w.r.t.\ some Sobolev 
norm $\mathcal{S}_{d'}$, then $\mu_{v,S}$ is $\sqrt{\varepsilon}$-almost invariant 
under $\exp(nw)$ w.r.t.\ $\mathcal{S}_{d'}$ for all integers 
$n \leq \varepsilon^{-\frac12}$. In particular, $\mu_{v,S}$ is 
$\ll p\sqrt{\varepsilon}$-almost invariant of level $0$ under $w$ in the sense of
 Definition~\ref{almost_invariance_lie_algebra}.
\end{lemma}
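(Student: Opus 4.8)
The statement is a purely ``soft'' iteration lemma about almost invariance, and the proof should mirror the second part of Lemma~\ref{invmeasconj} together with the $p$-adic power-series behaviour of $\exp$. First I would establish the doubling-type estimate: if $\mu_{v,S}$ is $\varepsilon$-almost invariant under $\exp(w)$ w.r.t.\ $\mathcal{S}_{d'}$, then for any integer $n$ one writes $\exp(nw)=\exp(w)\exp(w)\cdots\exp(w)$ ($n$ factors) and applies the telescoping identity
\[
 \mu_{v,S}^{\exp(nw)}(f)-\mu_{v,S}(f)=\sum_{j=0}^{n-1}\Bigl(\mu_{v,S}^{\exp((j+1)w)}(f)-\mu_{v,S}^{\exp(jw)}(f)\Bigr),
\]
each summand of which equals $\mu_{v,S}^{\exp(w)}(\exp(jw)\acts f)-\mu_{v,S}(\exp(jw)\acts f)$ up to using invariance/almost-invariance under $\exp(jw)$ built up inductively. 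Controlling the Sobolev norm of $\exp(jw)\acts f$ via property (S3) of Proposition~\ref{sobolev} (here $\|\exp(jw)\|_S\ll 1$ since $w\in\mathfrak g[0]$ and $j\le n$ with $\exp(tw)$ a polynomial with $\bZ_p$-coefficients when restricted to $t\in\bZ_p$, so $\exp(jw)\in K[0]$ and the norm is exactly $1$), each of the $n$ summands is $\ll\varepsilon\,\mathcal{S}_{d'}(f)$. Hence $\mu_{v,S}$ is $\ll n\varepsilon$-almost invariant under $\exp(nw)$, and choosing $n\le\varepsilon^{-1/2}$ gives the $\sqrt\varepsilon$ bound claimed (absorbing the implicit constant, or being slightly more careful to get the clean $\sqrt\varepsilon$).

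Next I would deduce the ``level $0$'' statement. By definition $\mu_{v,S}$ is $\sqrt\varepsilon$-almost invariant under $\exp(tw)$ for every \emph{integer} $t$ with $|t|\le\varepsilon^{-1/2}$, hence in particular for every $t$ in the finite set $\{0,1,\dots,\lfloor\varepsilon^{-1/2}\rfloor\}$; since $\exp(tw)$ for $t$ ranging over this set is $p$-adically dense in $\exp(\bZ_p w)$ once $\varepsilon^{-1/2}\ge p$ (because $\bZ\cap[0,\varepsilon^{-1/2}]$ surjects onto $\bZ/p^k\bZ$ for $p^k\le\varepsilon^{-1/2}$, so its image is $p^k\bZ_p$-dense), the last bullet point of Lemma~\ref{invmeasconj}, i.e.\ the Lipschitz property (S4), lets me fill the gaps at an error of size $\ll p^{-k}\le\ll p\sqrt\varepsilon$. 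Combining this $\ll p^{-k}$-error with the $\sqrt\varepsilon$-almost invariance at the dense set of points gives $\ll p\sqrt\varepsilon$-almost invariance under $\exp(tw)$ for \emph{all} $t\in\bZ_p$, which is exactly $\ll p\sqrt\varepsilon$-almost invariance of level $0$ under $w$ in the sense of Definition~\ref{almost_invariance_lie_algebra}. (If $\varepsilon^{-1/2}<p$ the statement is vacuous up to adjusting the implicit constant so that $p\sqrt\varepsilon\gg1$ and invoking Sobolev embedding (S1).)

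The only mild subtlety — and the step I would be most careful about — is the bookkeeping in the telescoping sum: one does \emph{not} a priori have invariance under $\exp(jw)$, only almost invariance, so the naive estimate would produce a geometric blow-up. The fix is the standard one used already in Lemma~\ref{invmeasconj}: estimate $|\mu_{v,S}^{\exp((j+1)w)}(f)-\mu_{v,S}^{\exp(jw)}(f)|=|\mu_{v,S}^{\exp(w)}(\exp(jw)\acts f)-\mu_{v,S}(\exp(jw)\acts f)|\le\varepsilon\,\mathcal{S}_{d'}(\exp(jw)\acts f)\ll\varepsilon\,\mathcal{S}_{d'}(f)$, where the first equality uses only that $\mu_{v,S}^{\exp(jw)}=(\mu_{v,S}^{\exp(jw)})$ and pushforward functoriality, \emph{not} any invariance — so there is no accumulation, and summing $n$ such terms is genuinely $O(n\varepsilon\,\mathcal{S}_{d'}(f))$. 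Everything else is a direct application of the already-listed Sobolev properties (S1), (S3), (S4) and poses no difficulty.
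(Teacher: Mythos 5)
Your proof is correct and follows essentially the same route as the paper: the same telescoping sum bounded term by term using almost invariance under $\exp(w)$ applied to the translated test function, with $\mathcal{S}_{d'}(\exp(jw)\acts f)=\mathcal{S}_{d'}(f)$ by property (S3), giving the $n\varepsilon$ bound and hence $\sqrt{\varepsilon}$ for $n\leq\varepsilon^{-1/2}$. The final step is also the paper's: the integers $\{0,1,\ldots,\lfloor\varepsilon^{-1/2}\rfloor\}$ are $p\sqrt{\varepsilon}$-dense in $\bZ_p$, and the gaps are filled using the last properties of Lemma~\ref{invmeasconj} (the Lipschitz estimate), yielding $\ll p\sqrt{\varepsilon}$-almost invariance of level $0$.
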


\begin{proof}
Using the almost invariance under $\exp(w)$, we get that for any $f \in C_{c}(\mathcal{Y})$,
\begin{align*}
&\left|\mu_{v,S}(\exp(nw)f) - \mu_{v,S}(\exp((n-1)w)f)\right|\\
&\quad= \left|\mu_{v,S}(\exp(w)\exp((n-1)w)f) - \mu_{v,S}(\exp((n-1)w)f)\right|\\
&\quad\leq \varepsilon \mathcal{S}_{d'}(\exp((n-1)w)f).
\end{align*}
Moreover, by property (S3) of the Sobolev norm in Proposition~\ref{sobolev}
we have that the latter Sobolev norm equals~$\mathcal{S}_{d'}(f)$. 
Therefore, we may use the triangle inequality $n$-times and obtain
\[
\left|\mu_{v,S}(\exp(nw)f) - \mu_{v,S}(f)\right| \leq n\varepsilon \mathcal{S}_{d'}(f) 
\]
as desired. Since~$\{0,1,\ldots,\lfloor\varepsilon^{-\frac12}\rfloor\}$
is~$p\varepsilon^{\frac12}$ dense in~$\bZ_p$ the proposition follows from 
the last two properties in Lemma~\ref{invmeasconj}. 
\end{proof}

\begin{lemma}[Removing small portions]
\label{almost_invariance_under_sums} 
Let $w_{1}, w_{2} \in \mathfrak{g}[0]$ and assume that $\mu_{v,S}$ is $\varepsilon$-almost invariant 
(of level 1) under $w_{1} + p^{\ell}w_{2}$, where $p^{\ell} \geq \varepsilon^{-\kappa}$ for some $\kappa \in (0,1]$. Then, $\mu_{v,S}$ is $\ll\varepsilon^{\kappa}$-almost invariant under $w_{1}$.
\end{lemma}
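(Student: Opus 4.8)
\textbf{Proof plan for Lemma \ref{almost_invariance_under_sums}.}

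The idea is to write $w_1=(w_1+p^\ell w_2)-p^\ell w_2$ and to control each of the two pieces in the exponent. For the first piece we already have $\varepsilon$-almost invariance of $\mu_{v,S}$ under $\exp(t(w_1+p^\ell w_2))$ for all $t\in p\bZ_p$ by hypothesis. For the second piece, note that $p^\ell w_2\in\mathfrak{g}[\ell]$ (shifting by powers of $p$ moves us deeper into the congruence filtration), and since $\exp$ maps $\mathfrak{g}[\ell]$ into $K[\ell]$ for $\ell\geq 1$, the element $\exp(tp^\ell w_2)$ lies in $K[\ell]$ for all $t\in\bZ_p$. By the Lipschitz property of the Sobolev norm, Proposition~\ref{sobolev}(S4) (equivalently the third bullet of Lemma~\ref{invmeasconj}), any measure is automatically $\ll p^{-\ell}$-almost invariant under $K[\ell]$; in particular $\mu_{v,S}$ is $\ll p^{-\ell}\leq\varepsilon^{\kappa}$-almost invariant under $\exp(tp^\ell w_2)$ for all $t\in\bZ_p$.

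Now I would combine these: using the product/composition rule for almost invariance in the second bullet of Lemma~\ref{invmeasconj}, together with the fact that conjugation between these nearby group elements only distorts Sobolev norms by a factor $\|g\|_S^{4d'}$ which is absolutely bounded here (all elements lie in $K[1]$ or in $(\Omega_\infty)$-type neighborhoods of bounded norm), $\mu_{v,S}$ is $\ll(\varepsilon+\varepsilon^{\kappa})\ll\varepsilon^{\kappa}$-almost invariant under $\exp(tw_1)=\exp(t(w_1+p^\ell w_2))\exp(-tp^\ell w_2)\cdot(\text{commutator correction})$ for $t\in p\bZ_p$. The one genuine subtlety is that $\exp$ is not a homomorphism, so $\exp(t(w_1+p^\ell w_2))\exp(-tp^\ell w_2)$ is not exactly $\exp(tw_1)$; by the Baker--Campbell--Hausdorff formula the discrepancy is $\exp(tw_1+O(p^\ell))$, i.e. it differs from $\exp(tw_1)$ by an element of $K[\ell']$ for some $\ell'$ comparable to $\ell$ (here one uses $p\geq 5$ so that BCH converges and the denominators are units). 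Absorbing that error again via Proposition~\ref{sobolev}(S4) costs another $\ll p^{-\ell'}\ll\varepsilon^{\kappa}$, which is harmless.

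This gives $\ll\varepsilon^{\kappa}$-almost invariance of $\mu_{v,S}$ under $\exp(tw_1)$ for all $t\in p\bZ_p$, which is exactly $\varepsilon^{\kappa}$-almost invariance of level $1$ under $w_1$ in the sense of Definition~\ref{almost_invariance_lie_algebra}, i.e.\ $\ll\varepsilon^{\kappa}$-almost invariance under $w_1$. The main obstacle, such as it is, is purely bookkeeping: keeping track of how BCH corrections and conjugation factors inflate the Sobolev degree $d'$ (one passes to a slightly larger $d_2=d_2(d')$) and verifying that every error term introduced is dominated by $\varepsilon^{\kappa}$ under the standing assumption $p^\ell\geq\varepsilon^{-\kappa}$; no new dynamical input is needed beyond the Sobolev properties in Proposition~\ref{sobolev} and the elementary manipulations in Lemma~\ref{invmeasconj}.
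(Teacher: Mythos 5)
Your argument is correct and is essentially the paper's proof: both rest on the automatic $\ll p^{-\ell}\leq\varepsilon^{\kappa}$-almost invariance under $\exp(\mathfrak{g}[\ell])$ (third bullet of Lemma~\ref{invmeasconj}), the composition rule (second bullet), and Campbell--Baker--Hausdorff to control the failure of $\exp$ to be a homomorphism. The only cosmetic difference is that the paper applies BCH once, writing $\exp(tw_1)\exp(-t(w_1+p^{\ell}w_2))=\exp(w')$ with $w'\in\mathfrak{g}[\ell]$, whereas you split off $\exp(-tp^{\ell}w_2)$ first and absorb a second BCH correction, which is harmless bookkeeping.
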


\begin{proof}
  By Lemma \ref{invmeasconj} the measure~$\mu$ is also $\ll\varepsilon^\kappa$-almost invariant under any
  element of~$\mathfrak{g}[\ell]$.
  Now note that the Campbell-Baker-Hausdorff formula shows for every~$t\in p\bZ_p$ that
  \[
   \exp(tw_1) \exp(-(tw_{1} + p^{\ell}w_{2}))=\exp(w')
  \] 
  for some~$w'\in\mathfrak{g}[\ell]$, and apply Lemma \ref{invmeasconj} to obtain
  that~$\mu$ is~$\ll\varepsilon^\kappa$-almost invariant under~$\exp(w')\exp(t(w_1+p^\ell w_2))=\exp(tw_1)$.
\end{proof}

\begin{lemma}[Weight Splitting]
\label{weight_splitting}
Suppse that $\mu_{v,S}$ is $\varepsilon$-almost invariant under $w \in \mathfrak{g}$
and let $w = \sum_{j=1}^Jw_{j}$ so that $w_{1},\ldots,w_J$ are weight vectors of different weights with respect to
some~$\lsl$ contained in the Lie algebra~$\mathfrak{h}$ of the acting group. Assume furthermore
that~$[w_{j_1},w_{j_2}]=0$ for all~$j_1,j_2=1,\ldots, J$. Then 
there exists an absolute constant~$q$ such that $\mu_{v,S}$
is $\ll p^\star\varepsilon$-almost invariant of level $q$ 
under $w_j$ for~$j=1,\ldots,J$.
If the weight of~$w_j$ is zero, we also have almost invariance of level $0$.
Moreover, there exists some~$\kappa>0$ such that
if the weight of~$w_j$ is nonzero and~$\|w_j\|\leq p^{-r}$
for some~$r\geq 0$, then 
$\mu_{v,S}$
is $\ll p^{\kappa r}p^\star\varepsilon$-almost invariant of level $0$ 
under $p^{-r}w_j$. 
\end{lemma}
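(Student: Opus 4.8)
The idea is to use the diagonal torus element $a$ of the principal $\lsl\subset\mathfrak{h}$ with $\|a\|_p = p$ to redistribute the weight components, exactly as in Step 3 of Proposition~\ref{addinv}. Write $w=\sum_{j=1}^J w_j$ with $\ad[H]w_j = 2k_j w_j$ for distinct integers $k_j$, where $H$ spans the diagonal of the chosen $\lsl$. The plan is: first treat the zero-weight component separately, then isolate the nonzero-weight components by a Vandermonde-type argument using the conjugation action of powers of $a$, and finally deal with the rescaling assertion by one more application of $\ad_a$.

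First I would handle the zero-weight summand. Conjugating by $a^n$ fixes $w_{j_0}$ (the zero-weight vector, if present) and scales $w_j$ by $p^{\mp n k_j}$ for the others. Since all the $w_j$ commute, $\exp(t\sum_j w_j)=\prod_j \exp(t w_j)$ for all $t\in p\bZ_p$, and by Lemma~\ref{invmeasconj} together with Proposition~\ref{sobolev}~(S3) we see that conjugating the almost-invariance under $\exp(tw)$ by $a^n$ costs only a factor $\|a^n\|_S^{4d'}=p^{\star}$, so $\mu_{v,S}$ is $\ll p^{\star}\varepsilon$-almost invariant under $\Ad[a^n](\exp(tw))=\exp\bigl(t\sum_j p^{-nk_j}w_j\bigr)$ for any fixed $n$. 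Forming a suitable $\bZ$-linear combination of these (the coefficients obtained from inverting a Vandermonde matrix in the variables $p^{-k_j}$ — which is invertible over $\bZ_p$ once $p$ is large since the $k_j$ are bounded) and using the second bullet of Lemma~\ref{invmeasconj} repeatedly, I would isolate each $\exp(t\, p^{m_j}w_j)$ for some bounded exponents $m_j\geq 0$ depending only on the dimension; taking $q$ to be the maximum of these exponents plus one gives almost invariance of level $q$ under each $w_j$, still at cost $p^\star\varepsilon$ (absorbing the polynomially-many applications of the triangle inequality and the $p^\star$ from each conjugation). For the zero-weight vector, $\exp(tw_{j_0})$ exists for all $t\in\bZ_p$ by the level-$0$ convention (or nilpotency), so Lemma~\ref{invmeasconj}'s last two bullets upgrade level $q$ to level $0$.

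For the final rescaling statement I would argue as in Step 3 of Proposition~\ref{addinv}: if $w_j$ has nonzero weight $2k_j\neq 0$ and $\|w_j\|_p\leq p^{-r}$, then since $w_j$ lies in a single weight space we may conjugate by $a^{\pm 1}$ (the sign chosen according to the sign of $k_j$) to increase its norm by a factor $p^{|k_j|}$ each time; after $\ll r$ such steps we reach an element $p^{-r}w_j \cdot u$ with $u\in\bZ_p^\times$ and norm in $(p^{-d_\gor},1]$, lying in $\mathfrak{g}[0]\setminus\mathfrak{g}[1]$, for which $\exp(t\,\cdot)$ exists for all $t\in\bZ_p$. Each conjugation step costs a factor $\|a\|_S^{4d'}=p^\star$ by (S3), and the $p^n$-fold composition needed to pass from $\exp$ of a $\mathfrak{g}[0]$-element back to $\exp$ of its scaled version costs a further bounded power of $p$ via the second bullet of Lemma~\ref{invmeasconj}; collecting these gives a total factor $p^{\kappa r}p^\star$ for an absolute $\kappa>0$, and then one cleans up to level $0$ exactly as before.

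\textbf{Main obstacle.} The delicate point is bookkeeping the accumulated error: each conjugation by $a$ multiplies the Sobolev norm by $p^{4d'}$ via (S3), and the Vandermonde-inversion and Hilbert--Waring-style filling-in steps introduce boundedly many further such factors, so one must check that the total is still of the form $p^\star\varepsilon$ (resp.\ $p^{\kappa r}p^\star\varepsilon$) with $\star$ and $\kappa$ absolute — i.e.\ depending only on $\dim\bG$, $d'$, and the (bounded) set of weights $k_j$, not on $p$, $D$, or $r$. The commutativity hypothesis $[w_{j_1},w_{j_2}]=0$ is what makes $\exp(tw)=\prod_j\exp(tw_j)$ exact (no Campbell--Baker--Hausdorff corrections), so the only real work is the linear-algebra extraction of each $\exp(tw_j)$ and tracking the exponents; I expect no conceptual difficulty beyond this careful accounting, which is why the proof in the paper is likely short.
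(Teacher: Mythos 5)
Your proposal follows the paper's argument in all essentials: conjugate the given almost invariance by powers of the diagonal element $a$ of the relevant $\lsl$, use the commutativity hypothesis to turn products of the conjugated exponentials into exponentials of linear combinations, invert the resulting Vandermonde system to isolate each $w_j$ at some bounded level $q$, and then use further conjugation by $a^{\pm1}$ to rescale and lower the level for the nonzero-weight components, exactly as in Step 3 of Proposition~\ref{addinv}. Two points deserve correction, one cosmetic and one a genuine (local) gap.

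First, the parenthetical claim that the Vandermonde matrix in the variables $p^{m_j}$ ``is invertible over $\bZ_p$ once $p$ is large'' is false: its determinant is a product of differences of distinct powers of $p$, so it has positive $p$-adic valuation no matter how large $p$ is; its valuation is merely bounded in terms of the (bounded) weights. This is not fatal -- your fallback formulation (isolation of $\exp(t\,p^{m_j}w_j)$ with bounded exponents, hence level $q$ with $q$ absolute) is precisely what the determinant bound gives, and is how the paper phrases it (``$q$ only depends on the Vandermonde determinant'') -- but the non-invertibility over $\bZ_p$ is exactly the reason the conclusion is level $q$ rather than level $1$, so the justification should be stated that way. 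Relatedly, no Hilbert--Waring step is needed here: since the almost invariance holds for \emph{all} $t\in p\bZ_p$, the achievable exponents already form a $\bZ_p$-module, and one simply solves the linear system with independent parameters $t_k\in p\bZ_p$ as the paper does.

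Second, your justification of the level-$0$ claim for the zero-weight component does not work: the last two bullets of Lemma~\ref{invmeasconj} cannot upgrade level $q$ to level $0$. The third bullet only yields $O(p^{-m})$-almost invariance under $K[m]$, which is nowhere near the required $O(\varepsilon)$, and no product of elements $\exp(sw_{j_0})$ with $s\in p^{q}\bZ_p$ produces $\exp(tw_{j_0})$ with $|t|_p>p^{-q}$. Moreover the conjugation trick that lowers the level for the other components is unavailable here, since $\Ad[a]$ fixes a weight-zero vector. A correct route is different: first lower the nonzero-weight components to level $0$ (or rescale them to be small) by conjugation, and then divide them off from $\exp(tw)$ using commutativity (in the spirit of Lemma~\ref{almost_invariance_under_sums}) to isolate $\exp(tw_{j_0})$ directly from the hypothesis, or exploit, as in the paper's applications, that the weight-zero vector for the chosen $\lsl$ has nonzero weight for another copy of $\lsl$ inside $\goh$ and apply the conjugation argument there. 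To be fair, the paper's own proof only carries out the level-lowering for nonzero weights and is silent on the weight-zero case, so you are not alone in glossing over it -- but the mechanism you name is not the one that closes it.
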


\begin{proof}
  	Using conjugation by the element~$a$ of the acting subgroup
	corresponding to the diagonal matrix with eigenvalues~$p,p^{-1}$
	and the first part of Lemma~\ref{invmeasconj} we see that~$\mu_{v,S}$ is 
	$\ll p^\star\varepsilon$-almost invariant under~$\exp(t\sum_jp^{km_j}w_j)$ for all~$t\in p\bZ_p$
	and~$k=0,\ldots,J-1$. 
	Here~$m_j$ are the 
	weights of~$w_j$. Let
	\[
	L=-\min\{0,m_j:j=1,\ldots,J\}\geq 0.
	\] 
	Using the second part of Lemma~\ref{invmeasconj} we now see that~$\mu_{v,S}$
	is~$\ll p^\star\varepsilon$-almost invariant under
	\begin{multline*}
	\exp\bigl(t_0\sum_j w_j\bigr)
	\exp\bigl(t_1p^L\sum_j p^{m_j}w_j\bigr)\cdots
	\exp\bigl(t_{J-1}p^{(J-1)L}\sum_jp^{(J-1)m_j}w_j\bigr)=\\
	  \exp\biggl(\sum_{j=1}^J\sum_{k=0}^{J-1}(t_k p^{(L+m_j)k})w_j\biggr)
	\end{multline*}
	for all~$t_1,\ldots,t_J\in p\bZ_p$. 
	By assumption the weights~$m_j$ are different which makes the Vandermonde matrix 
	implicitly appearing
	in the above exponential invertible. 
	Hence the sum~$\sum_{j=1}^J\sum_{k=0}^{J-1}(t_k p^{(L+m_j)k})w_j$ can be made to agree
	with any linear combination of the vectors~$w_1,\ldots,w_J$
	if we were to use coefficients~$t_1,\ldots,t_J\in\bQ_p$. 
	In particular this applies to any multiple of~$w_{j_0}$
		for some fixed index~$j_0\in\{1,\ldots,J\}$.
	The restriction to coefficients~$t_1,\ldots,t_J\in p\bZ_p$
	amounts to the restriction that all vectors in~$p^q \bZ_p w_{j_0}$
	can be obtained, where~$q$ only depends on the Vandermonde determinant.
	
	Suppose now~$w_1$ has nonzero weight, then we can apply
	the first part of Lemma~\ref{invmeasconj} with~$g_0=\exp(tw_1)$ for~$t\in p^{q}\bZ_p$ and $h=a^r$
	for some~$r\in\bZ$. This gives us that
    $\mu_{v,S}$ is $\ll p^{\star}\|a\|^{4d'r}\varepsilon$-almost invariant under~$\exp(tp^{rm}w_1)$.
	I.e.\ we can use this to lower the level to $0$ (if $rm<0$) and divide~$w_1$ by~$p^r$
	at the cost of increasing the error term
	in the almost invariance by a fixed power of~$p$ resp.~of~$p^r$.
\end{proof}

\subsection{The case $d=4$}
We are going to prove that the almost invariance of $\mu_{v,S}$ under a highest weight vector $w$ implies that $\mu_{v,S}$ is also almost invariant under a Lie algebra containing $w$. 
This then allows us to iteratively apply Proposition~\ref{genericpointsjoint} until $\mu_{v,S}$ is finally almost invariant under all of $\mathfrak{g}_{1} \oplus \mathfrak{g}_{2}$. 
In what follows, $\varepsilon^{\star}$ always denotes a positive power of $\varepsilon$, 
where the exponent only depends on the dimension and on the Sobolev norm. We will keep 
writing $\varepsilon^{\star}$ even though the exponent will change in the course of the 
proof. As discussed in Section \ref{liealgebras}, we 
have $\mathfrak{g}_{1} \cong \mathfrak{sl}_{2} \times \mathfrak{sl}_{2}$ over~$\bQ_p$.
By Proposition \ref{genericpointsjoint}, there exists a 
vector $w = (w_{1},w_{2},0,w_{4}) \in V \subseteq \mathfrak{sl}_{2} \times \mathfrak{sl}_{2} \times \{0\} \times \mathfrak{r}_{2}$ 
of highest weight and with $\left\|w\right\| = 1$, under which 
$\mu_{v,S}$ is $\ll\varepsilon$-almost invariant, where $\varepsilon = p^{\consta\label{exp:effgen1}}D^{-\consta\label{exp:effgen2}}$
 for some positive constants $\ref{exp:effgen1},\ref{exp:effgen2}>0$. If $w_{4}$ 
 satisfies $\|w_4\|\geq\varepsilon^{1/(2\kappa)}$, we may apply Lemma~\ref{weight_splitting} to get 
 $\ll p^\star \varepsilon^{1/2}$-almost invariance 
under the element~$p^{-r}w_4$ of norm one. Using both parts of
 Lemma~\ref{boundedgeneration} and Lemma \ref{hensel} it follows that~$\mu_{v,S}$
is $\ll p^\star\varepsilon^\star$-almost invariant under $\mathfrak{g}_{2}$. 
We then choose $\mathfrak{f}_{\on{new}} = \mathfrak{f} \oplus \mathfrak{g}_{2}$ and if necessary 
apply Proposition \ref{genericpointsjoint} again with $\mathfrak{f}_{\on{new}}$ and~$\mathfrak{f}_{\on{new}}'=\mathfrak{f}'\cap(\mathfrak{r}_1\oplus\{0\})$.

We now assume that $w_{4}$ is of size less than $\ll\varepsilon^{1/(2\kappa)}$. Using Lemma \ref{almost_invariance_under_sums}, this implies that $\mu_{v,S}$ is also $\ll\varepsilon^{\star}$-almost invariant under the vector $w_{1}X_{1} + w_{2}X_{2} \in \mathfrak{h}_{1} \times \mathfrak{r}_{1}$. Also recall that the principal $\mathfrak{sl}_{2}$ was defined in Section \ref{liealgebras} such that $X_{1} + X_{2} \in \mathfrak{h}_{1}$. Consider the element (which will bring~$[Y,w]$ into the argument)
\[
w' = \log\left(\exp\left(p^{\ell}Y\right)\exp\left(p^{\ell}w\right)\exp\left(-p^{\ell}Y\right)\exp\left(-p^{\ell}w\right)\right),
\]
where the integer $\ell \geq 1$ will be chosen later. Since
\[
\left\|\exp(p^{\ell}Y)\right\|_{p} = \left\|\exp(p^{\ell}w)\right\|_{p} = 1,
\]
 we get by Lemma \ref{invmeasconj} that $\mu_{v,S}$ is $\ll  \varepsilon^{\star}$-almost invariant under $w'$. Notice however that by the Campbell-Baker-Hausdorff formula (applied twice), we also have
\begin{align*}
w' &\in \log\left(\exp\left(p^{\ell}Y + p^{\ell}w + \tfrac{1}{2}p^{2\ell}\left[Y,w\right] + \mathfrak{g}[3\ell]\right)\right. \\
& \hspace{1.3cm} \left. \exp\left(-p^{\ell}Y - p^{\ell}w + \tfrac{1}{2}p^{2\ell}\left[Y,w\right] + \mathfrak{g}[3\ell]\right)\right) \\
&\subseteq p^{2\ell}\left[Y,w\right] + \mathfrak{g}[3\ell].
\end{align*}
Therefore, there exists an element $v' \in \mathfrak{g}[3\ell]$ so that $\mu_{v,S}$ is $\ll \varepsilon^{\star}$-almost invariant under $\exp(w')$ with $w'=p^{2\ell}\left[Y,w\right] + v'$.

Now, consider the element
\[
w'' = \log\left(\exp\left(p^{2\ell}w\right)\exp\left(w'\right)\exp\left(-p^{2\ell}w\right)\exp\left(-w'\right)\right).
\]
Applying Lemma \ref{invmeasconj} as before, $\mu_{v,S}$ is $\ll \varepsilon^{\star}$-almost invariant under $\exp(w'')$. Notice however that by the Campbell-Baker-Hausdorff formula,
\[
w'' = p^{4\ell}\left[w,\left[Y,w\right]\right] + v''
\]
for some $v'' \in \mathfrak{g}[5\ell]$. Consider the element $a$ in the principal~$\SL_2$
corresponding to the diagonal matrix with eigenvalues~$p,p^{-1}$, which has norm $\left\|a\right\|_{p} = p^4$ (since~$4$ is the largest weight appearing in~$\mathfrak{g}=\mathfrak{g}_1\times\mathfrak{g}_2$). 
Lemma \ref{invmeasconj} implies that $\mu_{v,S}$ is $\ll p^{16d'\ell}\varepsilon^{\star}$-almost 
invariant under the conjugated element
\[
a^{-2\ell}\exp(w'')a^{2\ell} = \exp(\left[w\left[Y,w\right]\right] + v''') = \exp(-2(w_{1}^{2}X_{1} + w_{2}^{2}X_{2}) + v''')
\]
for some $v''' \in \mathfrak{g}[\ell]$. If we now choose $\ell \geq 1$  maximal so that 
\[
p^{16d'\ell}\varepsilon^{\star} \leq p^{-\ell},
\]
Lemma \ref{almost_invariance_under_sums} and Lemma~\ref{iteration} imply
 that $\mu_{v,S}$ is $\ll p^\star\varepsilon^{\star}$-almost invariant under $\left[w,\left[X,w\right]\right] = -2(w_{1}^{2}X_{1} + w_{2}^{2}X_{2})$. In particular, there exists an absolute constant $\consta\label{exp:effgen3}> 0$, so that $\mu_{v,S}$ is $p^\star\varepsilon^{\ref{exp:effgen3}}$-almost invariant under $w_{1}X_{1} + w_{2}X_{2}$ \emph{and} under $w_{1}^{2}X_{1} + w_{2}^{2}X_{2}$.

We now consider the matrix $A= \left(\begin{smallmatrix}w_{1} & w_{1}^{2}\\w_{2} & w_{2}^{2}\end{smallmatrix}\right)$ and its determinant 
\[
T = w_{1}w_{2}(w_{2}-w_{1}).
\] 
We distinguish the following four cases, using $\kappa>0$ as in Lemma~\ref{weight_splitting}:

\textit{Case 1: $|w_{1}| \leq \varepsilon^{\ref{exp:effgen3}/6\kappa}$.} This means that $|w_{2}| = 1$ and Lemma \ref{almost_invariance_under_sums} applied to the vector $w_{1}X_{1} + w_{2}X_{2}$ implies that $\mu_{v,S}$ is $p^\star\varepsilon^{\ref{exp:effgen3}/6\kappa}$-almost invariant under $X_{2}$. 
We now apply conjugation by~$\exp(Y)$, where~$Y$ is the opposite nilpotent element in the principal~$\lsl$.
This shows that~$\mu_{v,S}$ is also~$\ll p^\star \varepsilon^{\ref{exp:effgen3}/6\kappa}$-almost invariant
under~$\Ad[\exp(Y)] X_2$ and~$\Ad[\exp(Y)]^2 X_2$. However, these three give a basis of~$\{0\}\times\mathfrak{sl}_2$
and we may apply Lemma \ref{hensel} to see that~$\mu_{v,S}$ is almost invariant under~$\{0\}\times\mathfrak{sl}_2$.
We define $\mathfrak{f}_{\on{new}}=\mathfrak{f}\oplus(0 \times \mathfrak{sl}_{2})$ and~$\mathfrak{f}'_{\on{new}}=\mathfrak{f}'\cap(\mathfrak{sl}_2\times\{0\}\oplus\mathfrak{r}_2)$ 
and go back to Proposition \ref{genericpointsjoint} if necessary.

\textit{Case 2: $|w_{2}| \leq \varepsilon^{\ref{exp:effgen3}/6\kappa}$.} As in the previous case, we see that $\mu_{v,S}$ is $\ll p^\star\varepsilon^{\star}$-almost invariant under the Lie algebra $\mathfrak{sl}_{2} \times 0$ and we set $\mathfrak{f}_{\on{new}} = \mathfrak{f} \oplus (\mathfrak{sl}_{2} \times 0)$ in Proposition~\ref{genericpointsjoint}.

\textit{Case 3: $|w_{2} - w_{1}| \leq \varepsilon^{\ref{exp:effgen3}/6\kappa}$.} This means that there exists
$w'\in \mathfrak{g}$ with~$\|w\|\leq\varepsilon^{\ref{exp:effgen3}/6\kappa}$  
so that $\mu_{v,S}$ is $\varepsilon^{\ref{exp:effgen3}/6\kappa}$-almost invariant under $w_{1}(X_{1} + X_{2}) + w'$. Moreover, $\|w_{1}\| = 1$ since~$\|w\|=1$. Lemma \ref{almost_invariance_under_sums} then implies that $\mu_{v,S}$ is $p^{\star}\varepsilon^{\ref{exp:effgen3}/6\kappa}$-almost invariant under $(X_{1} + X_{2})$ and thus, arguing as in the first case, it is also $\varepsilon^{\star}$-almost invariant under the Lie algebra $\mathfrak{h}_{1}$. We may therefore set $\mathfrak{f}_{\on{new}} = \mathfrak{f} \oplus \mathfrak{h}_{1}$,
$\mathfrak{f}_{\on{new}}' = \mathfrak{f}' \cap ((\mathfrak{sl}_2\times\{0\})\oplus\mathfrak{r}_2)$,
and apply Proposition \ref{genericpointsjoint} again with $\mathfrak{f}_{\on{new}}$ if necessary.

\textit{Case 4: $|T| > \varepsilon^{\ref{exp:effgen3}/2\kappa}$.} 
Since~$X_1$ and~$X_2$ commute, we see that~$\mu_{v,S}$ is $p^\star\varepsilon^{\ref{exp:effgen3}}$-almost invariant under
\begin{multline*}
\exp\bigl(t_1(w_{1}X_{1} + w_{2}X_{2}) +t_2(w_{1}^{2}X_{1} + w_{2}^{2}X_{2})\bigr)\\
=\exp\bigl((t_1w_1+t_2w_1^2)X_1+(t_1w_2+t_2w_2^2)X_2)
\end{multline*}
for all~$t_1,t_2\in p\bZ_p$. As~$T$ is the determinant of~$A$ we see that using~$t_1,t_2\in p\bZ_p$
we obtain almost invariance under all elements in~$pT\bZ_p X_1+pT\bZ_p X_2$ -- which amounts
to a level restriction depending on~$T$.
Using the last claim in Lemma~\ref{weight_splitting} 
 we can lift that restriction at the cost
of increasing the error term. We have set up the cases in a way so that this now gives that~$\mu_{v,S}$
is~$\ll p^\star\varepsilon^{\ref{exp:effgen3}/2}$-almost invariant of level 0
under all vectors in~$\bZ_p X_1+\bZ_p X_2$.
Arguing as in the previous cases, we see that $\mu_{v,S}$ is $\varepsilon^{\star}$-almost invariant under all of $\mathfrak{sl}_{2} \times \mathfrak{sl}_{2} = \mathfrak{g}_{1}$ and we set $\mathfrak{f}_{\on{new}} = \mathfrak{f} \oplus \mathfrak{g}_{1}$.

As before, we go back to Proposition \ref{genericpointsjoint} if necessary, i.e.\ if~$\mathfrak{f}_{\on{new}}\neq\mathfrak{g}_1\times\mathfrak{g}_2$.

\subsection{The case $d=5$, split}
As in the case for $d=4$, we may assume that $\mu_{v,S}$ is $\varepsilon$-almost invariant under a highest weight vector $w = (w_{1},w_{2},0,0)\in V=\mathfrak{h}_{1}\times \mathfrak{r}_{1} \times 0 \times \mathfrak{r}_{2}$ with $\left\|w\right\|_{p} = 1$. Indeed, if $w$ had a significant component in the highest weight direction of $w_{4}$, we can apply Lemma \ref{weight_splitting} to get $\varepsilon^{\star}$-almost invariance under $\mathfrak{g}_{2}$ as argued before.

Recall from Section \ref{liealgebras} that $\mathfrak{h}_{1} = V^{(2)} \oplus V^{(2)}$ and 
write $w_{1} = (w_{1,1}, w_{1,2})$ for the corresponding decomposition. In this notation~$w=((w_{1,1}, w_{1,2}), w_2,0,0)$.
As we are in the split case, we may also consider one of the direct factor~$\lsl$ of the Lie algebra of the acting group.
Using this~$\lsl$ instead of the principal~$\lsl$ we see that the three remaining components of~$w$ all 
have different weights. Hence we can apply Lemma \ref{weight_splitting} again to see that~$\mu_{v,S}$ is
almost invariant under all~$w_{1,1},w_{2,2},w_2$ seperately and of level $0$.
One of the three vectors has norm $1$. If~$\|w_{1,1}\|=1$ or~$\|w_{1,2}\|=1$, then we obtain that~$\mu_{v,S}$
is almost invariant under one of the direct factors of~$\mathfrak{h}_1$ and define~$\mathfrak{f}_{\on{new}}$
accordingly.

So suppose~$w_2$ has norm~$1$. In the notation of Section~\ref{liealgebras} this means that~$w_2$
is a~$\bZ_p^\times$-multiple of~$(1,0,0,0)\in\mathfrak{r}_1$. Using the nilpotent element~$Y_1$
of the acting group and the first relation in \eqref{gettingaround}
we see as before that~$\mu_{v,S}$ is also almost invariant under
\[
 \on{Ad}_{\exp Y_1}(1,0,0,0)=\exp(\ad[Y_1])(1,0,0,0)=(1,0,0,0)+(0,1,0,0).
\]
However, these are of different weights for the $\lsl$-factor of the acting group
that corresponds to~$Y_1$, which shows by Lemma~\ref{weight_splitting}
that we also have almost invariance of level $0$ under $(0,1,0,0)$. Using the third relation in \eqref{gettingaround}
in the same way, we obtain almost invariance under~$(0,0,1,0)$ and $(0,0,0,1)$. 
We can now use the first part of \eqref{gettingX} to see almost invariance
under
\[
\on{Ad}_{\exp(1,0,0,0)}(0,0,1,0)=\exp(\ad[(1,0,0,0)])(0,0,1,0)=(0,0,1,0)+\lambda X_1
\]
Once more the two vectors on the right have different weights for the~$\lsl$-factor of the acting group
corresponding to~$X_1$
and we obtain almost invariance under~$X_1$. Using the second part of \eqref{gettingX} we also
obtain almost invariance under~$X_2$. Applying~$\on{Ad}_Y$ to these two twice we obtain a basis
of~$\gog_1$ and can apply Lemma~\ref{hensel} to obtain that~$\mu_{v,S}$ is~$\ll p^\star\epsilon^\star$-almost invariant
under~$\gog_1$. We define~$\mathfrak{f}_{\on{new}}=\mathfrak{f}+\gog_1$ and apply Proposition~\ref{genericpointsjoint}
if necessary.

\subsection{The case $d=5$, quasi-split}
As in the previous cases, we may assume that $\mu_{v,S}$ is $\varepsilon$-almost invariant under a highest weight vector $w = (w_{1},w_{2},0,0)\in V=\mathfrak{h}_{1}\times \mathfrak{r}_{1} \times 0 \times \mathfrak{r}_{2}$ with $\left\|w\right\|_{p} = 1$.  

Applying conjugation by the element~$\exp(pM)$ of the acting group we have also almost invariance under
\[
 \on{Ad}_{\exp(pM)}(w_1,w_2,0,0)=\exp(\ad[pM])(w_1,0,0,0)+(0,w_2,0,0).
\]
Using that this vector together with the original vector~$(w_1,w_2,0,0)$ span an abelian Lie algebra,
we can take the difference and obtain almost invariance under the vector
\[
 p\ad[M](w_1,0,0,0)+p^2\frac12\ad[M]^2(w_1,0,0,0)+\cdots
\]
which belongs to the linear span of~$X$ and~$X_2$ and has norm equal to~$\|pw_1\|$. 
If~$\|w_1\|\geq \varepsilon^{\ref{exp:effgen3}/2\kappa}$ we use the last claim in Lemma \ref{weight_splitting}
to obtain~$\ll p^\star\varepsilon^{\ref{exp:effgen3}/2}$-almost invariance of level $0$ under some element of norm $1$
in the linear hull of $X$ and~$X_2$. We can use this, the element~$M$ as above, and also
the element~$Y$ to generate~$\goh_1$ effectively,
 define~$\mathfrak{f}_{\on{new}}=\mathfrak{f}+\goh_1$, and go back to Proposition~\ref{genericpointsjoint}
if necessary.                          

So suppose~$\|w_1\|< \varepsilon^{\ref{exp:effgen3}/2\kappa}$. Applying Lemma~\ref{almost_invariance_under_sums} 
we obtain almost invariance under the element~$(0,w_2,0,0)$ of norm one, or using the 
abbreviation used in Section~\ref{liealgebras} equivalently under~$(1,0,0,0)$.
As in the split case we can use~$Y$ and $Y_2$ to obtain almost invariance under~$\gor_1$ from this.
Moreover, using \eqref{gettingminusX} (instead of \eqref{gettingX}) as in the split case we obtain almost invariance
under~$X$.   From this we can again generate~$\goh_1$
and hence even~$\gog_1$. We define $\mathfrak{f}_{\on{new}}=\mathfrak{f}+\gog_1$, and go back to Proposition~\ref{genericpointsjoint} if necessary.

\subsection{Summary}
Iteratively applying this procedure finitely many times and arguing as in the proof of Proposition \ref{almostinvariance}, we have proved the following. 
\begin{proposition}
\label{producingH}
There exist $\consta\label{exp:addinvjoint1}>0$ and $\consta\label{exp:addinvjoint2}>0$ such that $\mu_{v,S}$ is $p^{\ref{exp:addinvjoint1}}D^{-\ref{exp:addinvjoint2}}$-almost invariant under $\mathbb{G}_{\on{joint}}(\mathbb{Q}_{p}) \cap K[1]$
\end{proposition}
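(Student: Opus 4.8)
\textbf{Proof proposal for Proposition \ref{producingH}.}

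The plan is to bootstrap from the single-factor equidistribution results (Theorem \ref{maindynamicalresultsingle}, already proven in Sections \ref{single_proof}--\ref{dynamics}) to almost invariance under the full $p$-adic group by the inductive scheme set up in the lemmas above. We start the induction with $\mathfrak{f} = \Delta_\goh$, the $p$-adic Lie algebra of the acting group $\Lv$, and complement $V = \goh_1 \oplus \gor_1 \oplus \gor_2$; here the required almost invariance of $\mu_{v,S}$ under $\mathfrak{f}$ is trivially exact (invariance under the acting group). At each stage we have a Lie algebra $\mathfrak{f}$ with $\Delta_\goh \leq \mathfrak{f} < \gog_1 \oplus \gog_2$ under which $\mu_{v,S}$ is $\ll p^{\ref{exp:involdinp}}D^{-\ref{exp:involdinD}}$-almost invariant with respect to some fixed Sobolev norm $\cS_{d'}$, and an undistorted complement $\mathfrak{f}'$; Proposition \ref{genericpointsjoint} then produces a highest weight vector $w \in (\mathfrak{f}')^{\on{hw}}$ of norm one under which $\mu_{v,S}$ is $\ll p^{\star}D^{-\star}$-almost invariant, where the new exponents depend only on the previous ones and derived parameters (dimension, Sobolev degree). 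The case analysis in the four subsections above (namely $d=4$, $d=5$ split, and $d=5$ quasi-split) shows in each instance that this vector $w$ — together with conjugation by elements of the acting group and the commutator/weight-splitting arguments encoded in Lemmas \ref{iteration}, \ref{almost_invariance_under_sums} and \ref{weight_splitting} — forces $\mu_{v,S}$ to be $\ll p^\star D^{-\star}$-almost invariant under a strictly larger Lie algebra $\mathfrak{f}_{\on{new}}$, which is again one of finitely many algebraic subalgebras containing $\Delta_\goh$ and admits an undistorted complement $\mathfrak{f}_{\on{new}}'$ (obtained by intersecting the old complement with the irreducible $\goh$-submodules not yet absorbed; here we use that $\gor_1$ and $\gor_2$ are irreducible under $\goh$ and their isomorphism types appear with multiplicity one in $\gog_1 \oplus \gog_2$, so each complement is a sum of those irreducibles and is automatically undistorted after the $\bZ_p$-conjugation).

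The key point is that the number of subalgebras appearing in the chain $\Delta_\goh = \mathfrak{f}_0 < \mathfrak{f}_1 < \dots < \mathfrak{f}_N = \gog_1 \oplus \gog_2$ is bounded by an absolute constant (since $\dim \bG_{\on{joint}}$ is fixed and each step strictly increases dimension), so the induction terminates after boundedly many steps. Each step multiplies the implicit $p$-power by a bounded power and raises $D^{-\star}$ to a fixed positive power bounded below; after $N$ steps we therefore still have $\mu_{v,S}$ being $\ll p^{\ref{exp:addinvjoint1}}D^{-\ref{exp:addinvjoint2}}$-almost invariant under $\gog_1 \oplus \gog_2 = \lie(\bG_{\on{joint}})$ for suitable absolute constants $\ref{exp:addinvjoint1},\ref{exp:addinvjoint2}>0$. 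To pass from almost invariance under the Lie algebra to almost invariance under $\bG_{\on{joint}}(\bQ_p)^+ \cap K[1]$, we proceed exactly as at the end of the proof of Proposition \ref{almostinvariance}: once $\mu_{v,S}$ is almost invariant under a $\bZ_p$-basis of $\gog_1 \oplus \gog_2$ consisting of elements to which we may exponentiate, the implicit function theorem (Lemma \ref{hensel}) together with the multiplicativity properties collected in Lemma \ref{invmeasconj} yields almost invariance under the subgroup $K[1] \cap \bG_{\on{joint}}(\bQ_p)^+ = \exp((\gog_1 \oplus \gog_2)[1]) \cap \bG_{\on{joint}}(\bQ_p)^+$. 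Finally $\bG_{\on{joint}}(\bQ_p) \cap K[1] = \bG_{\on{joint}}(\bQ_p)^+ \cap K[1]$ since $K[1]$ lies in the identity component, so we obtain the statement as written.

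The main obstacle — and the reason this is genuinely a theorem rather than a bookkeeping exercise — is verifying that in \emph{every} branch of the case analysis the highest weight vector $w$ produced by Proposition \ref{genericpointsjoint} actually generates a \emph{strictly larger} subalgebra effectively, i.e.\ with only a polynomially bounded loss in the quality of the almost invariance. This is where the delicate structure of the invariant complements from Section \ref{liealgebras} enters: one has to track which weight components of $w$ can be isolated by the weight-splitting lemma, and when a component is too small one must use Lemma \ref{almost_invariance_under_sums} to discard it; the surviving component then has to be shown to lie in a position from which conjugation by explicitly chosen nilpotents of the acting group (the relations \eqref{gettingaround}, \eqref{gettingX}, \eqref{gettingminusX} in the split and quasi-split cases, and the commutator $[w,[Y,w]] = -2(w_1^2 X_1 + w_2^2 X_2)$ in the $d=4$ case) fills out at least one new direction. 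The $d=4$ case is the most intricate because there the two $\lsl$-factors of $\gog_1$ are only distinguished by the size of the determinant $T = w_1 w_2(w_2 - w_1)$, forcing the four-way subcase split; but since $\|w\|_p = 1$ one of the four quantities $\|w_1\|_p$, $\|w_2\|_p$, $\|w_2 - w_1\|_p$, $\|T\|_p$ is bounded below by a fixed power of the current error, and each alternative produces a new summand of $\mathfrak{f}$. Once this is granted, the termination and the final exponentiation step are routine, relying only on results already established above.
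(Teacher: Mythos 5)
Your proposal is correct and takes essentially the same approach as the paper: the paper's own proof of Proposition \ref{producingH} is exactly the iteration you describe, namely repeatedly invoking Proposition \ref{genericpointsjoint} together with the case analyses for $d=4$ and $d=5$ (split and quasi-split) to enlarge $\mathfrak{f}$, terminating after boundedly many steps since the dimension strictly increases, and then passing from almost invariance under a $\bZ_p$-basis of $\gog_1\oplus\gog_2$ to almost invariance under $\mathbb{G}_{\on{joint}}(\mathbb{Q}_{p})\cap K[1]$ exactly as at the end of the proof of Proposition \ref{almostinvariance}, via Lemma \ref{invmeasconj} and Lemma \ref{hensel}. Your bookkeeping of the polynomial losses in $p$ and $D$ at each of the boundedly many steps likewise matches the paper's argument.
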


\subsection{Proof of Theorem~\ref{maindynamicalresult}}
\label{proofmaintheorem}
In order to upgrade almost invariance of $\mu_{v,S}$ under $\mathbb{G}_{\on{joint}}(\mathbb{Q}_{p}) \cap K[1]$ to saying that $\mu_{v,S}$ is close to the Haar measure on $\mathcal{Y}^{+}_{\on{joint}}$, we use the same convolution step as in Section~\ref{convolution}. Note that the property (S6) of the Sobolev norm also holds in this setting, i.e.
\[
\left|\mathbb{T}_{t}(f)(x) - m_{\mathcal{Y}^{+}_{\on{joint}}}(f)\right| \ll \on{ht}(x)^{d_{2}}\left|t\right|^{\ref{exp:heckeA11}}p^{3d{2}L}\mathcal{S}_{d_{2}}(f),
\]
where $\bT_t=\on{Av}_L\star\;\delta_{u(t)}\star \on{Av}_L$ is the Hecke operator on $L^2_{m_{\mathcal{Y}^+_{\on{joint}}}}$ and $\on{Av}_{L}$ denotes convolution with the characteristic function on $\mathbb{G}^{+}(\mathbb{Q}_{p}) \cap K[L]$. We may now follow the proof in Section \ref{convolution} line by line to obtain an upper bound $\left| \int f\on{d}\!\mu - \int f\on{d}\!m_{\mathcal{Y}_{\on{joint}}^{+}}\right| \ll D^{-\consta\label{exp:final}}\mathcal{S}_{d_{2}}(f)$ for some $\ref{exp:final} > 0$ as required.\qed

\appendix

\section{Good Subgroups}\label{Good_Subgroups}

In order to apply Theorem \ref{propertytau}, we have to verify that for these groups and the ambient groups $\bG_1$ and $\bG_2$, their $\bZ_p$-points indeed define good maximal compact subgroups. 
\begin{lemma}
The groups $\on{SL}_2(\bZ_p)$, $\on{SL}_2(\bZ_p(\sqrt{\eta}))$ and $\on{SL}_2(\bZ_p)\times \on{SL}_2(\bZ_p)$ are good maximal subgroups in $\on{SL}_2(\bQ_p)$, $\on{SL}_2(\bQ_p(\sqrt{\eta}))$ and $\on{SL}_2(\bQ_p)\times \on{SL}_2(\bQ_p)$ respectively.
\end{lemma}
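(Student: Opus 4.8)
The statement asks us to verify that the standard groups of $\bZ_p$-points are \emph{good maximal compact subgroups} in the sense needed for Theorem~\ref{propertytau}. Recall that ``good'' means (i) the subgroup is a maximal compact subgroup, and (ii) it admits the structure making the Cartan/Bruhat--Tits decomposition well-behaved, equivalently it is the stabilizer of a special vertex in the building (see \cite[Ch.~2.1]{Oh}). For $\on{SL}_2$ over a $p$-adic field this is classical, and the plan is to reduce everything to that single case. First I would recall that $\on{SL}_2(\bQ_p)$ acts on its Bruhat--Tits tree (the $(p+1)$-regular tree) with $\on{SL}_2(\bZ_p)$ the stabilizer of the standard vertex, which is a special vertex; hence $\on{SL}_2(\bZ_p)$ is a good maximal compact subgroup. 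The key structural fact to invoke is the Cartan decomposition $\on{SL}_2(\bQ_p)=\on{SL}_2(\bZ_p)\{\diag{p^n,p^{-n}}:n\ge0\}\on{SL}_2(\bZ_p)$, which is exactly Proposition~\ref{pserretree} in the excerpt; together with maximality of $\on{SL}_2(\bZ_p)$ among compact subgroups (any larger compact subgroup would have to fix two adjacent vertices or introduce a $p$-adically unbounded entry, a contradiction) this gives the ``good'' property.

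For $\on{SL}_2(\bQ_p(\sqrt\eta))$ the same argument applies verbatim after replacing $\bQ_p$ by the unramified quadratic extension $E=\bQ_p(\sqrt\eta)$ (recall $\eta\in\bZ_p^\times$ is a non-square, so $E/\bQ_p$ is unramified with ring of integers $\bZ_p[\sqrt\eta]=\cO_E$ and uniformizer $p$). The building of $\on{SL}_2(E)$ is the $(p^2+1)$-regular tree, $\on{SL}_2(\cO_E)$ stabilizes the standard special vertex, and the Cartan decomposition over $E$ (again with torus elements $\diag{p^n,p^{-n}}$ since $p$ is still a uniformizer of $E$) holds by the identical reasoning; this matches the structure recorded in Section~\ref{simplycoverOne} where $\on{SO}_\eta(3,1)(\bQ_p)$ was shown to have its $\bZ_p$-points as $B_0^H$ with Cartan decomposition $H=KA_+K$. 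So $\on{SL}_2(\cO_E)$ is a good maximal compact subgroup of $\on{SL}_2(E)$.

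Finally, for a direct product $\on{SL}_2(\bQ_p)\times\on{SL}_2(\bQ_p)$ the good maximal compact subgroups are precisely products of good maximal compact subgroups of the factors: maximal compact subgroups of a product are products of maximal compacts, the building of the product is the product of the two trees, and a special vertex in the product is a pair of special vertices. The Cartan decomposition for the product is the product of the two Cartan decompositions, which is exactly the form used in Lemma~\ref{prevstructure} (where the Weyl chamber element is $(a_p^m,a_p^n)$). Hence $\on{SL}_2(\bZ_p)\times\on{SL}_2(\bZ_p)$ is a good maximal compact subgroup of $\on{SL}_2(\bQ_p)\times\on{SL}_2(\bQ_p)$.

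\textbf{Main obstacle.} There is no deep obstacle here: all three assertions are standard facts about the Bruhat--Tits tree of $\on{SL}_2$ over a local field and its behavior under unramified base change and direct products. The only point requiring a little care is being precise about what ``good'' means in the formulation of Theorem~\ref{propertytau} — essentially one only needs the Cartan decomposition $H=KA_+K$ with the torus normalized so that $K$ is bi-invariant under it, which for all three groups has already been established (or is immediate from) Proposition~\ref{pserretree}, Section~\ref{simplycoverOne}, and Lemma~\ref{prevstructure} in the body of the paper. Thus the proof can be kept to a few lines citing these, with the unramified extension case noting only that $p$ remains a uniformizer in $\cO_E$ so that the residue field $\bF_{p^2}$ enters and the tree has degree $p^2+1$.
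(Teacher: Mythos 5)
Your argument is correct in substance, but it is a genuinely different route from the one the paper takes. You reduce everything to the Bruhat--Tits statement that the stabilizer of a special vertex of the building is a good maximal compact subgroup, and then only check that $\on{SL}_2(\bZ_p)$, $\on{SL}_2(\bZ_p(\sqrt{\eta}))$ and the product group stabilize special vertices in the $(p+1)$-regular tree, the $(p^2+1)$-regular tree (unramified base change), and the product of two trees respectively; your reduction for the product (a maximal compact subgroup of a product is the product of its projections, each maximal) is fine. The paper, by contrast, deliberately avoids importing Bruhat--Tits theory (this is announced in the introduction) and instead verifies the defining conditions of a good maximal compact subgroup from \cite[Chapter 2.1]{Oh} by hand for $\on{SL}_2(k)$ with $k=\bQ_p$ or $\bQ_p(\sqrt{\eta})$: (1) $N_G(A)\subset KA$ via the Weyl element $\omega$; (2) \emph{both} the Cartan decomposition $G=K(Z_+/Z_0)K$ \emph{and} the Iwasawa decomposition $G=K(Z/Z_0)R_u(B)$, obtained by explicit row and column operations with unipotent and Weyl elements; and (3) the same conditions for the Levi subgroups $M=Z_G(\{a\in A:\alpha(a)=1 \text{ for all }\alpha\in\Delta\})$ attached to each subset $\Delta$ of the simple roots, with the product case then handled factorwise. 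Your approach buys brevity and conceptual transparency; the paper's buys self-containedness, which was one of its stated goals.

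One caveat: your closing claim that ``essentially one only needs the Cartan decomposition $H=KA_+K$'' misstates the property being verified. Goodness in the sense required by Theorem~\ref{propertytau} also demands the Iwasawa decomposition, the normalizer condition $N_G(A)\subset KA$, and the analogous decompositions for the Levi subgroups. These do all hold for stabilizers of special vertices, so your building-theoretic shortcut covers them as a package; but if one tried to execute your plan using only the Cartan-type statements you cite (Proposition~\ref{pserretree} and Lemma~\ref{prevstructure}, which in any case record consequences of the decomposition rather than the decomposition itself), the verification would be incomplete. As written, your proof is sound only because it leans on the classical fact ``special $\Rightarrow$ good'' as a black box.
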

\begin{proof}
The following will introduce the notation of \cite[Chapter 2.1]{Oh} immediately applied to the simple case $\SL_2$.
We let $k$ denote either $\bQ_p$ or $\bQ_p(\sqrt{\eta})$, $\mathcal{O}$ the ring of integers $\bZ_p$ resp.\ $\bZ_p(\sqrt{\eta})$ and $|\cdot|_p$ the (extended) $p$-adic absolute value. Let $A$ be the diagonal group of $G=\SL_2(k)$ (a maximal $k$-split torus of $G$), $B$ the group of upper diagonal matrices (a minimal parabolic subgroup of $G$ containing $A$) and $K=\SL_2(\mathcal{O})$.
The upper nilpotent matrix $z=\smallmat{0&1\\0&0}$ gives rise to a character $\chi$ on $A$ by $\Ad[a]z=\chi(a)z$ where $\chi(\diag{\lambda,\lambda^{-1}})=\lambda^2$. We call $\chi$ a positive root, and together with $\chi^{-1}$ defines the simple root system $\Psi=\Psi(\on{SL}_2,A)$. Denote by $\Psi^+$ the one-point set containing $\chi$ (the set of \emph{positive} roots with respect to the choice of $B$).

Let $X(A)$ denote the set of characters on $A$
and $X(A)^+$ the set of positive characters with respect to the ordering above.
Let $k^0=\{p^n:n\in\bZ\}$ and $\widehat{k}=\{p^{-n}:n\in\bN\}$. Set 
\[
A^0=\{a\in A: \alpha(a)\in k^0\mbox{ for each } \alpha\in X(A)\}=\{\diag{p^n,p^{-n}}: n\in \bZ\}
\]
and the positive Weyl chamber (with respect to $B$)
\[
A^+=\{a\in A: \alpha(a)\in \widehat{k}\mbox{ for each } \alpha\in \Psi^+\}=\{\diag{p^{-n},p^{n}}: n\in \bN\}.
\]
The centralizer $Z=Z_G(A)$ of $A$ in $\SL_2(k)$ is $A$ and we define the subsets
\[
Z_0=\{a\in Z: |\alpha(a)|_p=1\mbox{ for each } \alpha\in X(A)^+\}=A\cap K
\]
and
\[
Z_+=\{a\in Z: |\alpha(a)|_p\geq1\mbox{ for each } \alpha\in X(A)^+\}=\{\diag{\lambda,\lambda^{-1}}: |\lambda|_p\geq1\}.
\]
Let $N_G(A)$ denote the normalizer of $A$. The group $K$ is a good maximal compact subgroup of $G$ if the following conditions hold.
\begin{enumerate}
\item $N_G(A)\subset KA$.
\item $G=K(Z_+/Z_0)K$ and$G=K(Z/Z_0)R_u(B)$ where $R_u(B)$ is the maximal unipotent radical of $B$ (the set of upper unipotent matrices).
\item For any subset $\Delta$ of $\Psi$, let $M=Z_G(\{a\in A: \alpha(a)=1\mbox{ for all }\alpha\in\Delta\})$ then $(M,K\cap M, A\cap M)$ satisfies (1) and (2) (just as $(G,K,A)$).
\end{enumerate}
Since $N_G(A)$ is generated by $\omega=\smallmat{0&-1\\1&0}\in K$ and $AZ_0$, we have (1). 
The second property asks for the Iwasawa and Cartan decomposition, which for $\SL_2$ is easily obtained. We first note that $(Z_+/Z_0)\cong A^+$ and $(Z/Z_0)\cong A^+(A^+)^{-1}$. Let $g=\smallmat{a&b\\ c&d}\in G$ be arbitrary. 
For the Cartan decomposition, we may permute rows and columns (with some sign changes) by multiplying $g$ with $\omega$ from the right and from the left as necessary to ensure $|a|_p=\|g\|_p$. Then we may multiply by $\smallmat{1&0\\ -c/a&1}\in K$ from the left to reduce $g$ to $\smallmat{a&b\\ 0&a^{-1}}$ and multiplying by $\smallmat{1& -b/a\\0&1}\in K$ from the right, we obtain $g\in K\diag{a,a^{-1}}K$. Since $\diag{a|a|_p,(a|a|_p)^{-1}}\in K$ we get $g\in KA^+K$. 
For the second decomposition, we may exchange rows by multiplying by $\omega$ from the left to have $-c/a\in \mathcal{O}$. Then multiplying again by the above lower unipotent we are reduced to $\smallmat{a&b\\ 0&a^{-1}}=\smallmat{a|a|_p&0\\0&(a|a|_p)^{-1}}\smallmat{|a|_p^{-1}&\\ 0&|a|_p}\smallmat{1&ba^{-1}\\ 0&1}$ and the Iwasawa decomposition follows.

For the final condition, we have the possibilities $\Delta=\{\chi\}, \Delta=\{\chi^{-1}\}$ and $\Delta=\{\}$. In the first two cases we have $M=G$ (which we already considered above). In the last case we obtain $M=Z_G(A)=Z$, where (1) and (2) both reduce to $Z=Z_0A$.

The first two properties for $\on{SL}_2(\bZ_p)\times \on{SL}_2(\bZ_p)$ 
from from the statement for a single factor. 
In the third part more subsets $\Delta$ are possible, but here again all
cases follow from the case $\on{SL}_2$ that we already considered. 
\end{proof}


\section{Regular Trees} \label{Buildings}
\subsection{Proof of Proposition \ref{diagonalQF}}\label{proofoffirstinB}
Denote by $e_{i}$ the $i$th standard basis vector in $\bQ_{p}^{n}$. We have
\[
2a_{ij} = Q\left(e_{i} + e_{j}\right) - Q\left(e_{i}\right) - Q\left(e_{j}\right)
\]
and since $p \neq 2$ also
\[
\max_{i,j}\left|a_{ij}\right|_{p} = \max_{i,j}\left|Q\left(e_{i} + e_{j}\right) - Q\left(e_{i}\right) - Q\left(e_{j}\right)\right|_{p}.
\]

Notice that by the strong triangle inequality for the $p$-adic norm, there exists a $v \in \mathbb{Z}_{p}^{n}$ of the form $e_{i}$ or $e_{i} + e_{j}$ for some $i,j \in \left\{1,2,\ldots, n\right\}$, such that $\max_{i,j}\left|a_{ij}\right|_{p} = |Q(v)|_{p}$. Since $v \in \bZ_{p}^{n}$ is primitive, we can extend it to a $\mathbb{Z}_{p}$-basis $v_{1} = v,v_{2},v_{3},\ldots,v_{n}$ of $\bZ_{p}^{n}$. Using this, we see that $Q$ is $\bZ_{p}$-equivalent to a quadratic form
\[
\tilde{Q}\left(x\right) = \sum_{i,j}b_{ij}x_{i}x_{j}
\]
with $\left|b_{11}\right|_{p} \geq \left|b_{1j}\right|_{p}$ for all $j \in \left\{1,2,\ldots n\right\}$.  If $b_{11}=0$ then $Q=0$ and the conclusion of the proposition already holds, so we assume that $b_{11}\neq 0$.

Hence we can write
\[
\tilde{Q}\left(x\right) = b_{11}\left(x_{1} + \frac{b_{12}}{b_{11}}x_{2} + \ldots + \frac{b_{1n}}{b_{11}}x_{n}\right)^{2} + F\left(x_{2},\ldots,x_{n}\right),
\]
where $F$ is a quadratic form in $n-1$ variables with coefficients in $\bQ_{p}$. Moreover, $\frac{b_{1j}}{b_{11}} \in \bZ_{p}$, since $\left|b_{11}\right|_{p} \geq \left|b_{1j}\right|_{p}$. Therefore, $Q$ is $\bZ_{p}$-equivalent to
\[
b_{11}x_{1}^{2} + F\left(x_{2},\ldots,x_{n}\right).
\]
Now the first claim follows by induction. For the second notice
$\operatorname{diag}(c_{1},\dots ,c_{n}) = gAg^{T}$
for some $g\in \on{GL}_n(\bZ_p)$ and the bi-invariance of the norm under $\on{GL}_{n}(\mathbb{Z}_{p})$.
\qed

\subsection{Cartan Decomposition of the Model Groups}\label{secB2-cartan}
\begin{proposition}
\label{pRankOneKAK}
Let $g \in H = \operatorname{SO}_{\eta}(3,1)(\bQ_{p})$  with $e_{1}g = (w_{1},w_{2},w_{3},w_{4})$. Then we have that
\[
\max(\left|w_{1}\right|_{p},\left|w_{2}\right|_{p}) \geq \max(\left|w_{3}\right|_{p},\left|w_{4}\right|_{p}).
\]
Moreover, if we set $K = \operatorname{SO}_{\eta}(3,1)(\bZ_{p})$ and
\[
A_{+} = \left\{\operatorname{diag}(p^{-m},p^{m},1,1) : m \in \bZ_{\geq 0}\right\},
\]
then every element $g \in H$ can be written as $g = k_{1}ak_{2}$ with $k_{1},k_{2} \in K$ and some uniquely determined $a \in A_+$. 
\end{proposition}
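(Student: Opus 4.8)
The statement has two parts: a norm estimate on the first row of $g$, and the Cartan decomposition $H=KA_+K$ with uniqueness of the $A_+$-part. I will treat the norm estimate first, since it feeds directly into the decomposition. Recall that $\on{SO}_\eta(3,1)$ is defined by the quadratic form $-2xw+y^2+\eta z^2$ (equivalently the Gram matrix $J_\eta$ from Section~\ref{simplycoverOne}), so for $g\in H$ the first row $e_1g=(w_1,w_2,w_3,w_4)$ satisfies $Q(w_1,w_2,w_3,w_4)=Q(e_1)=0$, i.e.\ $-2w_1w_4+w_2^2+\eta w_3^2=0$. Since $\eta\in\bZ_p^\times$ and $p\neq 2$, the strong triangle inequality gives $|w_2^2+\eta w_3^2|_p=\max(|w_2|_p^2,|w_3|_p^2)$ unless there is cancellation; but cancellation would force $|w_2|_p=|w_3|_p$ and $-1/\eta\in(\bZ_p^\times)^2$ which is consistent with cancellation only when $\eta$ is a square — and even then I only need an inequality, not equality. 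From $2w_1w_4=w_2^2+\eta w_3^2$ we get $|w_1|_p|w_4|_p=|w_1w_4|_p\geq\tfrac12(\text{something})$; more precisely $|w_1w_4|_p = |w_2^2+\eta w_3^2|_p \le \max(|w_2|_p,|w_3|_p)^2$, but what I actually want is $\max(|w_1|_p,|w_2|_p)\ge\max(|w_3|_p,|w_4|_p)$. I would argue: if $\max(|w_3|_p,|w_4|_p)=|w_4|_p$, then from $|w_1|_p|w_4|_p=|w_2|_p^2$ or $|w_3|_p^2$ (whichever dominates, using $p\ne 2$ and $\eta$ a unit) one reads off that either $|w_1|_p\ge|w_4|_p$, or $|w_2|_p\ge|w_4|_p$ or $|w_3|_p\ge|w_4|_p$; combined with the case $\max=|w_3|_p$ and using $w_2^2+\eta w_3^2=2w_1w_4$ again to bound $|w_3|_p$ in terms of $|w_1|_p,|w_2|_p,|w_4|_p$, one closes the estimate by a short case analysis. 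This is the kind of $p$-adic bookkeeping that is routine but needs care about the square/nonsquare dichotomy of $\eta$.

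\textbf{The decomposition.} For the Cartan decomposition I would mimic the $\on{SL}_2$ argument in Appendix~\ref{Good_Subgroups}, using row and column operations by elements of $K$. Given $g\in H$, I first use $K$-translations on the left and right to arrange that the first row $e_1g$ has maximal entry in position $1$ or $2$; by the norm estimate just proved we may bring the ``dominant'' coordinate into the isotropic pair $(w_1,w_2)$. Then, using the unipotent subgroups $u_3(t),v_3(t),u_4(t),v_4(t)$ exhibited in Lemma~\ref{indexsplit} (which lie in $K$ when $t\in\bZ_p$) together with the Weyl element $\psi\bigl(\smallmat{0&-1\\1&0}\bigr)\in K$, I can clear the remaining entries of the first row and first column, reducing $g$ to block-diagonal form $\diag{A,D,X,Z,Y,W}$ as in the proof of Lemma~\ref{indexsplit} — actually to the form with a $2\times 2$ diagonal block $\diag{A,A^{-1}}$ on the isotropic plane and a block in $\on{SO}_{y^2+\eta z^2}(\bZ_p)$ on the anisotropic plane (the latter is automatically compact, hence in $K$). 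After scaling $A$ by $|A|_p$ so that $A|A|_p\in\bZ_p^\times$, the diagonal part lies in $A_+$ up to a factor in $K$, and we obtain $g\in KA_+K$. The explicit unipotents and the block-reduction steps are exactly those already carried out in Section~\ref{simplycoverOne}, so I would cite that computation rather than redo it.

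\textbf{Uniqueness.} For uniqueness of the $A_+$-part, note that $\diag{p^{-m},p^m,1,1}$ acting on $\Mat_{d,d}$-entries has norm $\|{\cdot}\|_p = p^m$, and since $\|\cdot\|_p$ is bi-$K$-invariant (being bi-$\GL_d(\bZ_p)$-invariant, as recorded before Proposition~\ref{diagonalQF}), we get $\|g\|_p = \|k_1 a k_2\|_p = \|a\|_p = p^m$ whenever $g=k_1ak_2$ with $a=\diag{p^{-m},p^m,1,1}\in A_+$. Thus $m=\log_p\|g\|_p$ is determined by $g$, which gives uniqueness of $a\in A_+$. The same argument handles the $\on{SO}(2,1)$ case verbatim (drop the last coordinate; $\eta$ plays no role, and the anisotropic block is the compact group $\on{SO}_{y^2}(\bZ_p)=\{\pm 1\}$).

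\textbf{Main obstacle.} The only genuinely delicate point is the first-row norm inequality $\max(|w_1|_p,|w_2|_p)\ge\max(|w_3|_p,|w_4|_p)$: one must handle possible cancellation in $w_2^2+\eta w_3^2$, which occurs precisely when $\eta$ is a square in $\bZ_p^\times$, and argue that even then the relation $2w_1w_4=w_2^2+\eta w_3^2$ prevents the anisotropic/second-isotropic coordinates from dominating. Everything else is a direct transcription of the $\on{SL}_2$ Iwasawa/Cartan argument of Appendix~\ref{Good_Subgroups} combined with the explicit matrix formulas of Section~\ref{simplycoverOne}.
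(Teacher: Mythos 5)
The crucial first part of the proposition is where your proposal breaks down. The model group in this statement is the one from Proposition~\ref{SOclassification}, defined by the form $2xy+z^{2}+\eta w^{2}$; hence the isotropy relation satisfied by the first row is $2w_{1}w_{2}+w_{3}^{2}+\eta w_{4}^{2}=0$, which couples $w_{1}$ with $w_{2}$. You instead work with the Gram matrix of $-2xw+y^{2}+\eta z^{2}$ (the coordinates used only later, in Section~\ref{liealgebras}) and obtain $2w_{1}w_{4}=w_{2}^{2}+\eta w_{3}^{2}$, which couples $w_{1}$ with $w_{4}$; for a group written in those coordinates the asserted inequality $\max(|w_{1}|_{p},|w_{2}|_{p})\geq\max(|w_{3}|_{p},|w_{4}|_{p})$ is simply false (the special orthogonal element exchanging the first and fourth coordinates and negating the second has first row $(0,0,0,1)$), so no case analysis starting from your relation can close the estimate --- and indeed you never carry it out, deferring to ``a short case analysis''. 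You also misdiagnose the one real point: since $\eta$ is by definition a nonsquare unit and $-1$ is a square ($p\equiv 1\pmod 4$), the binary form $z^{2}+\bar{\eta}w^{2}$ is anisotropic over $\bF_{p}$, so there is \emph{never} cancellation: $|w_{3}^{2}+\eta w_{4}^{2}|_{p}=\max(|w_{3}|_{p},|w_{4}|_{p})^{2}$. With the correct relation the inequality is then immediate, $\max(|w_{1}|_{p},|w_{2}|_{p})^{2}\geq|w_{1}w_{2}|_{p}=\max(|w_{3}|_{p},|w_{4}|_{p})^{2}$; the paper packages the same fact as a reduction mod $p$: if $w_{1},w_{2}$ both reduced to $0$, the reduction of $(w_{3},w_{4})$ would be a nonzero isotropic vector of the anisotropic form over $\bF_{p}$.

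For the decomposition, citing the block reduction in the proof of Lemma~\ref{indexsplit} is not enough: there one multiplies by unipotents $u_{i}(t),v_{i}(t)$ with arbitrary $t\in\bQ_{p}$ (a reduction modulo $\operatorname{SO}_{\eta}(3,1)(\bQ_{p})^{+}$), whereas here every clearing step must have $t\in\bZ_{p}$ so that the factors lie in $K$. That integrality is exactly what the first part supplies: after using the Weyl element to make $|w_{1}|_{p}$ dominate the relevant entries of the first row and column, the inequality gives $|w_{3}|_{p},|w_{4}|_{p}\leq|w_{1}|_{p}$, so the parameter $t$ solving $tw_{1}+w_{3}=0$ (and its analogues) is integral; and the same mod-$p$ anisotropy argument is needed once more to see that the residual $2\times 2$ block lies in $\operatorname{SO}_{z^{2}+\eta w^{2}}(\bZ_{p})$ --- your ``automatically compact, hence in $K$'' is the right statement but needs exactly this proof. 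Your uniqueness argument via bi-$\operatorname{GL}_{4}(\bZ_{p})$-invariance of $\|\cdot\|_{p}$ is correct. Still, as it stands the heart of the proposition --- the norm inequality and the integrality bookkeeping it feeds --- is either missing or derived from the wrong quadratic form, so the proposal has a genuine gap.
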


For the proof of Proposition \ref{pRankOneKAK} it will be useful at times to work over the finite field $\bF_{p}$. We denote the reduction map modulo $p$ from $\bZ_{p}$ to $\bF_{p}$ by $\eta\mapsto \overline{\eta}$ and extend its definition to integral matrices.

\begin{proof}[Proof of Proposition \ref{pRankOneKAK}]
Let $g \in H$ be arbitrary with $e_{1}g = (w_{1},w_{2},w_{3},w_{4})$ and assume that $\max{(|w_{1}|_{p},|w_{2}|_{p})}<\max{(|w_{3}|_{p},|w_{4}|_{p})}$. Then we can multiply $e_{1}g$ with $p^k$, such that afterwards $p^k(w_{1},w_{2},w_{3},w_{4})$ is integral and both $p^kw_{1}$ and $p^kw_{2}$ vanish after reduction modulo $p$. Since $e_{1}$ is isotropic, we obtain that $(\overline{p^kw_{3}},\overline{p^kw_{4}})$ defines a non-zero isotropic vector for the anisotropic quadratic form $z^{2}+\bar{\eta}w^{2}$ over $\bF_p$, which is a contradiction. For this recall that
for $\eta\in\bZ_p^\times$ the conditions $\eta\in\bZ_p^2$ and $\bar{\eta}\in\bF_p^2$ are equivalent
by Hensel's Lemma.

For the Cartan decomposition, we define the following unipotent elements
\begin{align*}
u_{3}\left(t\right) &= \begin{bmatrix}1&&&\\-t^{2}/2&1&t&\\-t&&1&\\&&&1\end{bmatrix} ~,~~v_{3}\left(t\right) = \begin{bmatrix}1&-t^{2}/2&t&\\&1&&\\&-t&1&\\&&&1\end{bmatrix}\\
u_{4}\left(t\right) &= \begin{bmatrix}1&&&\\-t^{2}\eta/2&1&&\eta t\\&&1&\\-t&&&1\end{bmatrix} ~,~~v_{4}\left(t\right) = \begin{bmatrix}1&-t^{2}\eta/2&&\eta t\\&1&&\\&&1&\\&-t&&1\end{bmatrix}.
\end{align*}
A direct calculation shows that these elements belong to $H$ for all $t\in\mathbb{Q}_{p}$ and that they belong to $K$ for all $t \in \mathbb{Z}_{p}$. Let $g \in H$ and $w_1,w_2,w_3,w_4$
be as above and let $e_{2}g = (x_{1},x_{2},\ast,\ast)$. Multiplying $g$ on the left and right with $\omega = \left[\begin{smallmatrix}&-1&&\\1&&&\\&&1&\\&&&1\end{smallmatrix}\right]\in K$ if necessary, we may assume that $\left|w_{1}\right|_{p} \geq \max(\left|w_{2}\right|_{p},\left|x_{1}\right|_{p},\left|x_{2}\right|_{p})$. 
Below we will keep multiplying~$g$ on the left and the right by elements of~$K$ with the goal to obtain an element of~$A_+$.
To simplify the notation we will keep writing $g$ also for the matrix after multiplication.

We now multiply with $v_{3}(t)$ from the right to obtain
\[
e_{1}gv_{3}(t) = \left(w_{1},-\tfrac{t^{2}}{2}w_{1} + w_{2} - tw_{3} , tw_{1} + w_{3} , w_{4}\right)
\]
and choose $t \in \mathbb{Q}_{p}$ such that $tw_{1} + w_{3} = 0$. Note that $\left|t\right|_{p} \leq 1$, since $\left|w_{3}\right|_{p} \leq \left|w_{1}\right|_{p}$ by the first part of the proposition.
 Multiplying from the right with $v_{4}(t)$ for some 
$t \in \mathbb{Z}_{p}$, we may assume that $e_{1}g = (w_{1},w_{2},0,0)$. Note that the entry $w_{2}$ may have changed, but since $0 = Q(e_{1}) = Q(ge_{1}) = 2w_{1}w_{2}$, we now have that $w_{2} = 0$. 
 
 We would like to use a similar argument to simplify the first column of $g$. To do so, let $A_{Q}$ be the symmetric matrix corresponding to $Q$ and note  (by taking the inverse
 of the equation below) that
\begin{equation}\label{dualityeq}
	 g\in \on{SO}_{Q}\mbox{ if and only if }g A_{Q}g^T=A_{Q}\mbox{ if and only if } g^{T}\in\SO[\tilde{Q}],
\end{equation} 
where $\tilde{Q}$ corresponds to the quadratic form defined by $A_{Q}^{-1}$. Therefore, multiplying $g$ with $u_{3}(t), u_{4}(t)$ on the left corresponds to multiplying $g^{T}$ with $v_{3}(t), v_{4}(t)$ on the right and applying the above argument, we may assume that $g$
is of block form
\[
g = \left[\begin{matrix}w_{1} &0&0&0\\ 0&\\ 0&&\ast\\0\end{matrix}\right].
\]

Note that $e_{1}g = w_{1}e_{1}$ also implies that $e_{1}^{\perp}g = e_{1}^{\perp}$. Here, $e_{1}^{\perp}$ denotes the orthogonal complement of $e_{1}$ with respect to the inner product defined by $Q$, which is three dimensional and spanned by $e_{1}$, $e_{3}$ and $e_{4}$. But this implies that $e_{3}g$ and $e_{4}g$ are in $e_{1}^{\perp}$ as well and therefore, $ge_{2}^{T} = (0,x_{2},0,0)^{T}$. Using $g^{T}$ again, we see that also $e_{2}g = (0,x_{2},0,0)$, so we may assume that $g$ is of the form
\[
g = \left[\begin{matrix}w_{1} &0&0&0\\ 0&x_{2}&0&0\\0&0&y_{3}&y_{4}\\0&0&z_{3}&z_{4}\end{matrix}\right].
\]
We claim that $g_{3,4} = \left[\begin{smallmatrix}y_{3}&y_{4}\\z_{3}&z_{4}\end{smallmatrix}\right] \in \on{SO}_{z^{2}+\eta w^{2}}(\mathbb{Q}_{p}) = \on{SO}_{z^{2} + \eta w^{2}}(\mathbb{Z}_{p})$.
For otherwise we would multiply $e_3g$ or $e_4g$ with a positive power of $p$ and taking
the so obtained integral vector modulo $p$ we would 
again find an isotropic vector for $z^{2}+\bar{\eta} w^{2}$ over $\bF_p$. 
Hence we can multiply with one more element of $K$ from the left to obtain $\on{diag}(p^{-m},p^{m},1,1)\in A_+$ for some 
integer $m \geq 0$. 
\end{proof}

We note that since $\on{SO}(2,1)<\on{SO}_\eta(3,1)$ the first statement also applies to $g\in\on{SO}(2,1)(\bQ_p)$ (and $w_4=0$). Moreover, an analoguous statement of the Cartan decomposition of $\on{SO}(2,1)$ is well known to be true and follows in the same way.

We also state the Cartan decomposition of our model group in the split case.

\begin{proposition}\label{KAKrank2}
Let $H = \operatorname{SO}(2,2)(\bQ_{p})$, $K = \operatorname{SO}(2,2)(\bZ_{p})$ and
\[
A_+ = \left\{\operatorname{diag}(p^{-m},p^{m},p^{-n},p^{n}) : m \geq n \in \bZ_{\geq 0}\right\}.
\]
Then, every element $g \in H$ can be written 
as $g = k_{1}ak_{2}$ with $k_{1},k_{2} \in K$ 
and some uniquely determined $a \in A_+$.
\end{proposition}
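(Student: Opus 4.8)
The final statement to prove is the Cartan decomposition of $\operatorname{SO}(2,2)(\bQ_p)$ (Proposition \ref{KAKrank2}): every $g \in H = \operatorname{SO}(2,2)(\bQ_p)$ can be written $g = k_1 a k_2$ with $k_1, k_2 \in K = \operatorname{SO}(2,2)(\bZ_p)$ and uniquely determined $a \in A_+ = \{\operatorname{diag}(p^{-m}, p^m, p^{-n}, p^n) : m \geq n \geq 0\}$.

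The paper gives a realization via the isogeny $\psi: \operatorname{SL}_2(\bQ_p) \times \operatorname{SL}_2(\bQ_p) \to \operatorname{SO}(2,2)(\bQ_p)^+$, $\psi_{g,h}(v) = gvh^{-1}$ on $V = \operatorname{Mat}_{2,2}(\bQ_p)$ with the quadratic form $Q = 2\det$. Since the quadratic form here is $\bZ_p$-equivalent to the standard one (Proposition \ref{SOclassification}), it suffices to prove the Cartan decomposition in this realization. Let me sketch the plan.

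Step 1: Reduce to the subgroup $\operatorname{SO}(2,2)(\bQ_p)^+$. Since this has index 4 in $H$ (Lemma \ref{lemmagivingsl2sl2}), and the coset representatives can be chosen inside $K$ (the four classes correspond to square/nonsquare issues that are visible on $\bZ_p$-points, as in the proof of Lemma \ref{lemmagivingsl2sl2} — the representatives are built from the unipotents $v_1(1), v_2(1)$ which lie in $K$), it suffices to prove the decomposition for $\operatorname{SO}(2,2)(\bQ_p)^+$.

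Step 2: Pull back to $\operatorname{SL}_2 \times \operatorname{SL}_2$. Given $g \in \operatorname{SO}(2,2)(\bQ_p)^+$, write $g = \psi_{g_1, g_2}$. Apply the Cartan decomposition in $\operatorname{SL}_2(\bQ_p)$ (Proposition \ref{pserretree}, or rather its underlying $KAK$ statement for $\operatorname{SL}_2$) to each factor: $g_i = \kappa_i^{(1)} a_p^{m_i} \kappa_i^{(2)}$ with $\kappa_i^{(j)} \in \operatorname{SL}_2(\bZ_p)$ and $m_i \geq 0$. Then by Lemma \ref{prevstructure}, $\psi$ maps $\operatorname{SL}_2(\bZ_p) \times \operatorname{SL}_2(\bZ_p)$ into $K$ and $(a_p^{m_1}, a_p^{m_2})$ to an element of $K a_p^{m_1+m_2} b_p^{|m_1-m_2|} K \cap \operatorname{SO}(2,2)(\bQ_p)^+$, where $a_p = \operatorname{diag}(p^{-1},p,1,1)$ and $b_p = \operatorname{diag}(1,1,p^{-1},p)$. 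Setting $m = m_1 + m_2$ and $n = |m_1 - m_2|$ gives $m \geq n \geq 0$ and $a_p^m b_p^n = \operatorname{diag}(p^{-m}, p^m, p^{-n}, p^n) \in A_+$. This yields existence.

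Step 3: Uniqueness of $a \in A_+$. This follows from a standard argument: the $K$-bi-invariant norm $\|\cdot\|_p$ and the invariant factors (elementary divisors) of the matrix determine $a$. Concretely, for $a = \operatorname{diag}(p^{-m}, p^m, p^{-n}, p^n)$ with $m \geq n \geq 0$, the quantities $\|g\|_p = p^m$ and (say) the gcd of the $2\times 2$ minors of $p^m g$ recover $m$ and then $n$; since these quantities are $K$-bi-invariant, $a$ is uniquely determined by $g$. Alternatively one invokes the general uniqueness in the Cartan decomposition for reductive $p$-adic groups, but the elementary-divisor argument is self-contained here.

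The main obstacle is Step 2's bookkeeping: one must be careful that the $KAK$ decomposition of $\operatorname{SL}_2(\bQ_p)$ lands in $\operatorname{SL}_2(\bZ_p)$ (not just $\operatorname{GL}_2(\bZ_p)$) — this is handled exactly as in the proof in Appendix \ref{secB2-cartan} of the lemma establishing good subgroups, where one reduces $\left(\begin{smallmatrix} a & b \\ c & d\end{smallmatrix}\right)$ by row/column operations with unipotents in $\operatorname{SL}_2(\bZ_p)$ and uses $\operatorname{diag}(a|a|_p, (a|a|_p)^{-1}) \in \operatorname{SL}_2(\bZ_p)$ — and that the reduction from $H$ to $H^+$ via $K$-representatives is legitimate, which requires tracing through Lemma \ref{lemmagivingsl2sl2} to confirm the four coset representatives (square/nonsquare diagonal elements) can be taken in $K$. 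Also one should note $\psi$ being an isogeny with kernel inside $\operatorname{SL}_2(\bZ_p) \times \operatorname{SL}_2(\bZ_p)$ means no collapsing happens at the level of $\bZ_p$-points that would spoil the argument.

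\begin{proof}[Proof of Proposition \ref{KAKrank2}]
By Proposition \ref{SOclassification} the quadratic form defining $\operatorname{SO}(2,2)$ is $\bZ_p$-equivalent to $Q = 2\det$ on $V = \operatorname{Mat}_{2,2}(\bQ_p)$, and a $\bZ_p$-change of coordinates sends $K$, $A_+$ of the standard model to those of the $Q$-model; hence it suffices to prove the statement for $H = \operatorname{SO}_{2\det}(\bQ_p)$, with $K = \operatorname{SO}_{2\det}(\bZ_p)$.

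\emph{Existence.} First we reduce to $H^+ = \operatorname{SO}(2,2)(\bQ_p)^+$. By Lemma \ref{lemmagivingsl2sl2}, $H^+$ has index $4$ in $H$, and inspecting the proof of that lemma the reduction of an arbitrary $g \in H$ to diagonal form was achieved by multiplying on the left and right with the unipotent elements $u_1(t), u_2(t), v_1(t), v_2(t)$; these lie in $K$ whenever $t \in \bZ_p$, and in particular the representatives $v_1(1), v_2(1), v_2(1)v_1(1)$ used there lie in $K$. Consequently every coset of $H^+$ in $H$ meets $K$, so it suffices to produce the decomposition for $g \in H^+$.

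Let $g \in H^+$ and write $g = \psi_{g_1,g_2}$ with $g_1, g_2 \in \operatorname{SL}_2(\bQ_p)$. Apply the Cartan decomposition for $\operatorname{SL}_2(\bQ_p)$ (as in the proof of the good-subgroup lemma in Appendix \ref{Good_Subgroups}): for $i = 1,2$ we may write $g_i = \kappa_i a_p^{m_i} \kappa_i'$ with $\kappa_i, \kappa_i' \in \operatorname{SL}_2(\bZ_p)$, $m_i \in \bZ_{\geq 0}$, and $a_p = \operatorname{diag}(p, p^{-1})$ here denoting the $\operatorname{SL}_2$-diagonal element (not to be confused with the $\operatorname{SO}(2,2)$-element; we keep the notation of Lemma \ref{prevstructure} for the latter). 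By Lemma \ref{prevstructure}, $\psi$ maps $\operatorname{SL}_2(\bZ_p) \times \operatorname{SL}_2(\bZ_p)$ into $K$, and maps $(a_p^{m_1}, a_p^{m_2})$ into $K a_p^{m_1 + m_2} b_p^{|m_1 - m_2|} K \cap H^+$, where now $a_p = \operatorname{diag}(p^{-1}, p, 1, 1)$ and $b_p = \operatorname{diag}(1,1,p^{-1},p)$ in the $\operatorname{SO}(2,2)$-coordinates. Since $\psi$ is a homomorphism,
\[
g = \psi_{\kappa_1, \kappa_2}\, \psi_{a_p^{m_1}, a_p^{m_2}}\, \psi_{\kappa_1', \kappa_2'} \in K \bigl( K a_p^{m_1+m_2} b_p^{|m_1-m_2|} K \bigr) K = K a_p^{m} b_p^{n} K,
\]
with $m = m_1 + m_2 \geq |m_1 - m_2| = n \geq 0$. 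As $a_p^m b_p^n = \operatorname{diag}(p^{-m}, p^m, p^{-n}, p^n) \in A_+$, existence follows.

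\emph{Uniqueness.} Suppose $g = k_1 a k_2 = k_1' a' k_2'$ with $k_i, k_i' \in K$ and $a = \operatorname{diag}(p^{-m},p^m,p^{-n},p^n)$, $a' = \operatorname{diag}(p^{-m'},p^{m'},p^{-n'},p^{n'})$, both with $m \geq n \geq 0$, $m' \geq n' \geq 0$. Since $\|\cdot\|_p$ is bi-invariant under $\operatorname{GL}_4(\bZ_p) \supseteq K$ we have $\|g\|_p = \|a\|_p = p^m = \|a'\|_p = p^{m'}$, so $m = m'$. Next, $p^m g$ and $p^m a$ have the same $\operatorname{GL}_4(\bZ_p)$-elementary divisors; the elementary divisors of $p^m a = \operatorname{diag}(1, p^{2m}, p^{m-n}, p^{m+n})$ are $p^0, p^{m-n}, p^{m+n}, p^{2m}$ (reordered), and the second smallest, $p^{m-n}$, is $K$-bi-invariant, so $m - n = m' - n'$; with $m = m'$ this gives $n = n'$, hence $a = a'$.
\end{proof}
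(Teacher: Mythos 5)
Your Step~1 is where the argument breaks down. The claim that every coset of $H^{+}=\operatorname{SO}(2,2)(\bQ_{p})^{+}$ in $H$ meets $K$ is false: $KH^{+}$ is a proper subgroup (of index two) of $H$. The unipotent elements $u_{1}(t),u_{2}(t),v_{1}(t),v_{2}(t)$ you invoke from the proof of Lemma~\ref{lemmagivingsl2sl2} are unipotent, hence already lie in $H^{+}$, so multiplying by them never changes the $H^{+}$-coset; what that proof actually shows is that the four cosets are represented by diagonal matrices $\operatorname{diag}(s,t,t^{-1},s^{-1})$ with $st$ running over $\bQ_{p}^{\times}/(\bQ_{p}^{\times})^{2}$, and the two classes in which $st$ has odd valuation contain no element of $K$ at all. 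Concretely, $a_{p}=\operatorname{diag}(p^{-1},p,1,1)\in A_{+}$ itself is not in $KH^{+}$: if $a_{p}=kh$ with $k\in K$, $h\in H^{+}$, write $h=\psi_{g_{1},g_{2}}$ and $g_{i}\in\operatorname{SL}_{2}(\bZ_{p})\operatorname{diag}(p^{m_{i}},p^{-m_{i}})\operatorname{SL}_{2}(\bZ_{p})$; since $\psi$ maps $\operatorname{SL}_{2}(\bZ_{p})\times\operatorname{SL}_{2}(\bZ_{p})$ into $K\subseteq\operatorname{GL}_{4}(\bZ_{p})$ and $\|\bigwedge^{2}(\cdot)\|_{p}$ is bi-invariant under $\operatorname{GL}_{4}(\bZ_{p})$, one gets $\log_{p}\|\bigwedge^{2}h\|_{p}=2\max(m_{1},m_{2})$, an even number, whereas $\log_{p}\|\bigwedge^{2}(k^{-1}a_{p})\|_{p}=\log_{p}\|\bigwedge^{2}a_{p}\|_{p}=1$. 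The same parity invariant shows that no element of $K$ lies in the two ``odd'' cosets. So your argument proves the decomposition only for the index-two subgroup $KH^{+}$ (the elements with even $\log_{p}\|\bigwedge^{2}g\|_{p}$, equivalently unit spinor norm) and says nothing about elements such as $a_{p}$ or, more generally, anything in $Ka_{p}^{m}b_{p}^{n}K$ with $m+n$ odd.

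This is also where your route diverges most from the paper, which never passes through $H^{+}$: there the decomposition is obtained for all of $H$ at once by a direct row--column reduction with explicit unipotent and monomial elements of $K$, exactly as in Proposition~\ref{pRankOneKAK}. If you want to keep the $\operatorname{SL}_{2}\times\operatorname{SL}_{2}$ picture, the natural repair is the similitude picture: every $g\in H$ equals $\psi_{g_{1},g_{2}}$ with $g_{i}\in\operatorname{GL}_{2}(\bQ_{p})$ of equal determinant, and one then runs the $\operatorname{GL}_{2}(\bZ_{p})$-Cartan decomposition; but this needs genuine extra work (normalizing determinants, checking integral pairs land in $K$), and it brings to the surface a point your Step~2 also glosses over when quoting Lemma~\ref{prevstructure}: passing from $\operatorname{diag}(p^{-m},p^{m},p^{n},p^{-n})$ to $\operatorname{diag}(p^{-m},p^{m},p^{-n},p^{n})$ amounts to swapping only the last two coordinates, a map of determinant $-1$ that is not in $K$, so landing in the stated $A_{+}$ rather than merely in a diagonal double coset requires an argument (or an adjustment of $A_{+}$); this subtlety deserves explicit treatment in any version of the proof. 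Note also that only the containment half of Lemma~\ref{prevstructure} is legitimately available to you: its equality statement is deduced in the paper from Proposition~\ref{KAKrank2} itself, so citing it in full here would be circular. Your uniqueness paragraph, by contrast, is fine and essentially coincides with the paper's argument, with elementary divisors in place of $\|g\|_{p}$ and $\|\bigwedge^{2}g\|_{p}$.
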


\begin{proof}
For any $t \in \bQ_{p}$ we define the following unipotent elements in $H$
\begin{align*}
u_{3}\left(t\right) &= \begin{bmatrix}1&&&\\&1&&-t\\t&&1&\\&&&1\end{bmatrix} ~,~~v_{3}\left(t\right) = \begin{bmatrix}1&&-t&\\&1&&\\&&1&\\&t&&1\end{bmatrix}\\
u_{4}\left(t\right) &= \begin{bmatrix}1&&&\\&1&-t&\\&&1&\\t&&&1\end{bmatrix} ~,~~v_{4}\left(t\right) = \begin{bmatrix}1&&&-t\\&1&&\\&t&1&\\&&&1\end{bmatrix}.
\end{align*}
Also fix some element $g \in H$ and let $e_{1}g = (w_{1},w_{2},w_{3},w_{4})$.
Once more we will multiply $g$ from the left and from the right with elements from $K$ until we end up with an element in $A_{+}$.  First, notice that $\omega_{1}=\left[\begin{smallmatrix}0 & 1 & 0 & 0 \\1 & 0 & 0 & 0 \\0 & 0 & 0 & 1 \\0 & 0 & 1 & 0\end{smallmatrix}\right]$ and $\omega_{2} = \left[\begin{smallmatrix}0&0&1&0\\0&0&0&1\\1&0&0&0\\0&1&0&0\end{smallmatrix}\right]$ are in $K$. We may therefore assume that $\left|w_{1}\right|_{p}$ is maximal under the norms of all entries of $g$ by multiplying $g$ with $\omega_{1}$ and $\omega_{2}$ from the left and right as necessary.

Now we may argue exactly as in the proof of 
Proposition \ref{pRankOneKAK} using the above unipotent elements with $t\in\bZ_p$
to reduce $g$ to the form
\[
g = \begin{bmatrix}w_{1} & 0 & 0 & 0\\0 & y_{1} & 0 & 0\\0 & 0 & z_{1} & z_{2}\\0 & 0 & z_{3} & z_{4}\end{bmatrix},
\]
where $g_{3,4} = \left[\begin{smallmatrix}z_{1}&z_{2}\\z_{3}&z_{4}\end{smallmatrix}\right] \in \operatorname{O}_{2wz}(\bQ_{p})$. However, 
this shows that there are only two possibilities for $g_{3,4} $, namely
\[
g_{3,4} = \begin{bmatrix}\delta^{-1} & 0\\0 & \delta\end{bmatrix} ~~ \text{or} ~~ g_{3,4} = \begin{bmatrix} 0 & \delta \\ \delta^{-1} & 0\end{bmatrix}
\]
for  some $ \delta \in \bQ_{p}$.
Applying another element of $K$ (from the left or from the right) gives $g = \operatorname{diag}(p^{-m},p^{m},p^{-n},p^{n})$ for some $m,n \in \mathbb{Z}_{\geq 0}$.
Recall that $\|w_1\|=\|g\|=\|p^{-m}\|$ which also implies $n\leq m$.

For the uniqueness note that $\|g\|_p=\|a\|_p=p^m$ determines $m$ uniquely.
Similarly $\|\bigwedge^2g\|_p=\|\bigwedge^2 a\|_p=p^{{m+n}}$ uniquely
determines $m+n$ and so also $n$.
\end{proof}

\subsection{Coset Calculations}

We now prove the facts about counting left cosets of $K$ in the level sets $Ka_{p}^{\ell}K$ that were used to determine $m(B_{\ell})$ in Section \ref{quasisplitgeometry}.

\begin{lemma}
\label{KaKindex}
For any $a\in H$
the number of left cosets of $K$ in $KaK$ equals the number of left cosets of $aKa^{-1}\cap K$ in $K$.
\end{lemma}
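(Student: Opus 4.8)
The statement to prove is a standard bijection: for $a \in H$, the left cosets of $K$ in $KaK$ are in natural correspondence with the left cosets of $aKa^{-1} \cap K$ in $K$. The plan is to set this up as a counting argument via the orbit of $K$ acting on $KaK/K$. First I would observe that $K$ acts transitively on the set of left cosets $\{kaK : k \in K\}$ of $K$ contained in $KaK$, simply by left multiplication: any coset in $KaK/K$ has the form $kaK$ for some $k \in K$, and $k'(kaK) = (k'k)aK$. So the set of these cosets is a transitive $K$-set, hence by orbit--stabilizer it has cardinality $[K : \mathrm{Stab}_K(aK)]$.

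Second, I would identify the stabilizer. We have $k \in \mathrm{Stab}_K(aK)$ if and only if $kaK = aK$, i.e.\ $a^{-1}ka \in K$, i.e.\ $k \in aKa^{-1} \cap K$ (the intersection with $K$ being automatic since $k \in K$ to begin with). Thus $\mathrm{Stab}_K(aK) = aKa^{-1} \cap K$, and the number of left cosets of $K$ in $KaK$ equals $[K : aKa^{-1} \cap K]$, which is exactly the number of left cosets of $aKa^{-1} \cap K$ in $K$. This completes the argument.

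There is essentially no obstacle here; the only point requiring a word of care is finiteness/well-definedness, namely that $KaK$ really is a finite union of left $K$-cosets so that ``number of cosets'' makes sense. For the model groups $H = \mathrm{SO}(2,1)(\bQ_p)$ and $H = \mathrm{SO}_\eta(3,1)(\bQ_p)$ this follows from $K = \mathrm{SO}_Q(\bZ_p)$ being compact and open in $H$: $KaK$ is then compact and open, hence covered by finitely many cosets $kaK$, and these are pairwise equal or disjoint. So the bijection $K/(aKa^{-1}\cap K) \to KaK/K$, $k(aKa^{-1}\cap K) \mapsto kaK$, is the desired explicit correspondence. I would write the proof in two or three sentences along exactly these lines, using the compactness and openness of $K$ to justify finiteness when needed (e.g.\ when $a = a_p^\ell$ in the applications).
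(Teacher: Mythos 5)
Your proof is correct and is essentially the same argument as the paper's: the paper writes down the explicit correspondence $h_i(aKa^{-1}\cap K)\mapsto h_iaK$ with coset representatives, while you package the identical bijection via the transitive $K$-action on $KaK/K$ and the orbit--stabilizer identification $\mathrm{Stab}_K(aK)=aKa^{-1}\cap K$. Your extra remark on finiteness (from $K$ compact open) is fine but not needed, since the bijection gives equality of cardinalities regardless.
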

\begin{proof}
Assume the coset decomposition $K=\bigsqcup_{j=1}^{n} h_{j}(aKa^{-1}\cap K)$. If now $g=h_{i}k\in K$ for some $i \in \left\{1,\dots,n\right\}$ and with $k\in aKa^{-1}\cap K$, then $$gaK=h_{i}kaK=h_{i}a(a^{-1}ka)K=h_{i}aK$$ so that $\left\{h_{i}aK : i = 1,\dots, n\right\}$ contains all possible $K$-cosets within $KaK$.
On the other hand, if $h_{j}a = h_{i}ak$ for some $k \in K$, then $h_{j} = h_{i}aka^{-1}$ which implies $i=j$ and the lemma.
\end{proof}

\begin{lemma}
\label{cosetdecomposition}
For $\on{SO}(2,1)(\bQ_{p})$ the number of left cosets of $(a_{p}Ka_{p}^{-1}\cap K)$ in $K$ is $p+1$ and for $\on{SO}_{\eta}(3,1)(\bQ_{p})$, the number of left cosets of $(a_{p}Ka_{p}^{-1}\cap K)$ in $K$ is $p^{2}+1$.
\end{lemma}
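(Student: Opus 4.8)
We prove the statement by a direct count; the method is the same for both model groups, and only the final enumeration differs. Fix a quadratic form $Q$ over $\bZ_p$ that is $\bZ_p$-equivalent to the form defining $H$, chosen so that $e_1$ is isotropic and $e_1,e_2$ span a hyperbolic plane (thus $Q=2x_1x_2+x_3^2$ in the ternary case and $Q=2x_1x_2+x_3^2+\eta x_4^2$ in the quaternary case), and take $a_p=\diag{p^{-1},p,1}$ resp.\ $a_p=\diag{p^{-1},p,1,1}$, so that $a_p\in\SO[Q](\bQ_p)=H$. We must compute $[K:P]$ with $P=a_pKa_p^{-1}\cap K$. Since the Gram matrix of $Q$ lies in $\GL_d(\bZ_p)$ (as $p\nmid\operatorname{disc}(Q)$), the lattice $\Lambda_0=\bZ_p^d$ is self-dual for $Q$ and $K=\SO[Q](\bZ_p)=\operatorname{Stab}_H(\Lambda_0)$; hence $H/K$ is identified with the $H$-orbit of $\Lambda_0$ among lattices of $\bQ_p^d$, and $[K:P]=|K\cdot\Lambda_1|$ where $\Lambda_1=a_p\Lambda_0$. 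Note that $\Lambda_1$, and every $k\Lambda_1$ with $k\in K$, is again self-dual, and that $\Lambda_{01}:=\Lambda_1\cap\Lambda_0=\bZ_pe_1+p\bZ_pe_2+\bZ_pe_3\,(+\bZ_pe_4)$ is an index-$p$ sublattice of $\Lambda_0$ containing $p\Lambda_0$, whose image in $\overline{\Lambda}_0:=\Lambda_0/p\Lambda_0$ is the hyperplane $\bar e_1^{\perp}$, the orthogonal complement of the isotropic line $\bF_p\bar e_1$ for the reduced form $\bar Q$.

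The plan is to set up a $K$-equivariant bijection between $K\cdot\Lambda_1$ and the set $\cI$ of isotropic lines of $(\overline{\Lambda}_0,\bar Q)$. Reduction mod $p$ lets $K$ act on $\overline{\Lambda}_0$ through $\overline{H}:=\SO[\bar Q](\bF_p)$; since $p$ is odd and $p\nmid\operatorname{disc}(Q)$, the $\bZ_p$-group scheme $\SO[Q]$ is smooth, so $K\twoheadrightarrow\overline{H}$ by Hensel's lemma (with kernel the first congruence subgroup). Define $\Phi(k\Lambda_1)\in\cI$ to be the unique isotropic line $\ell$ with $\ell^{\perp}$ equal to the image of $k\Lambda_1\cap\Lambda_0$ in $\overline{\Lambda}_0$; this is well defined and $K$-equivariant, and $\Phi(\Lambda_1)=\bF_p\bar e_1$. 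Surjectivity of $\Phi$ then follows from $K\twoheadrightarrow\overline{H}$ and Witt's theorem (transitivity of $\overline{H}$ on $\cI$). For injectivity it suffices, by equivariance, to show that the fibre over $\bF_p\bar e_1$ is a single point: if $\Phi(k\Lambda_1)=\bF_p\bar e_1$ then $k\Lambda_1\cap\Lambda_0$ has the same image in $\overline{\Lambda}_0$ as $\Lambda_{01}$ and both contain $p\Lambda_0$, hence $k\Lambda_1\cap\Lambda_0=\Lambda_{01}$; thus $k\Lambda_1$ is a self-dual lattice with $\Lambda_{01}\subseteq k\Lambda_1$ and $[k\Lambda_1:\Lambda_{01}]=p$, and $k\Lambda_1\ne\Lambda_0$ (else $k\Lambda_1\cap\Lambda_0=\Lambda_0\ne\Lambda_{01}$).

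The decisive input is the sublemma that the only self-dual lattices $M$ with $\Lambda_{01}\subseteq M$ and $[M:\Lambda_{01}]=p$ are $\Lambda_0$ and $\Lambda_1$; granting it, the previous paragraph forces $k\Lambda_1=\Lambda_1$, so $\Phi$ is a bijection and $[K:P]=|\cI|$. To prove the sublemma one computes $\Lambda_{01}^{*}=p^{-1}\bZ_pe_1+\bZ_pe_2+\bZ_pe_3\,(+\bZ_pe_4)$, so $\Lambda_{01}^{*}/\Lambda_{01}$ is a two-dimensional $\bF_p$-vector space carrying the nondegenerate symmetric form induced by the bilinear form of $Q$; evaluating it on the classes of $p^{-1}e_1$ and $e_2$ shows this form is a hyperbolic plane, which has exactly two isotropic (that is, Lagrangian) lines, and lifting these back to lattices produces precisely $\Lambda_1$ and $\Lambda_0$. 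Since any self-dual $M$ as above satisfies $\Lambda_{01}\subseteq M\subseteq\Lambda_{01}^{*}$ with $M/\Lambda_{01}$ Lagrangian, the sublemma follows. It remains to count $|\cI|$, the isotropic points of the smooth quadric $\{\bar Q=0\}$: in the ternary case this is a conic in $\bP^2(\bF_p)$ and has $p+1$ points; in the quaternary case the binary part $\bar x_3^2+\bar\eta\bar x_4^2$ is anisotropic over $\bF_p$ (because $-1$ is a square and $\bar\eta$ is not, as $p\equiv1\bmod4$), and a direct enumeration of the solutions of $2\bar x_1\bar x_2+\bar x_3^2+\bar\eta\bar x_4^2=0$ in $\bP^3(\bF_p)$ gives $2+(p^2-1)=p^2+1$. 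The main obstacle is the injectivity of $\Phi$, i.e.\ the self-dual lattice sublemma above; everything else is routine once $H/K$ has been modelled by lattices and reduced modulo $p$, the one further point requiring care being the surjectivity $K\twoheadrightarrow\overline{H}$ (smoothness of $\SO[Q]$ over $\bZ_p$, equivalently an explicit Hensel lift using $K$-conjugates of $\Lambda_1$).
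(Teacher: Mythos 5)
Your argument is correct, and it takes a genuinely different route from the paper. The paper first reduces the count to the index $[K:a_pKa_p^{-1}\cap K]$ (its Lemma B.3) and then produces explicit coset representatives by hand: the $p^2$ unipotent matrices $u_{i,j}$ together with the Weyl element $\omega$, verified by direct matrix manipulations in which isotropy of $e_1,e_2$ modulo $p$ and the anisotropy of $z^2+\bar\eta w^2$ over $\bF_p$ force the required divisibilities. You instead interpret $K/(a_pKa_p^{-1}\cap K)$ as the $K$-orbit of the self-dual lattice $\Lambda_1=a_p\Lambda_0$, match it $K$-equivariantly with the isotropic lines of the reduced form on $\Lambda_0/p\Lambda_0$, and count points on the conic resp.\ the nonsplit quadric over $\bF_p$; the crux is your self-dual-lattice sublemma (the hyperbolic plane on $\Lambda_{01}^*/\Lambda_{01}$ has exactly two Lagrangian lines), which is sound, as are the identifications $k\Lambda_1\cap\Lambda_0=k(\Lambda_1\cap\Lambda_0)$ and the index computations. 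The trade-off: your proof is shorter and more conceptual, essentially the Bruhat--Tits description of the vertices adjacent to a hyperspecial vertex, and it generalizes painlessly to other forms and dimensions; but it imports two standard facts the paper deliberately avoids -- surjectivity of $K\to\on{SO}_{\bar Q}(\bF_p)$ (smoothness of the $\bZ_p$-group scheme plus Hensel) and Witt transitivity of $\on{SO}_{\bar Q}(\bF_p)$ on isotropic lines (where one should note that $\dim\geq 3$ is needed to pass from $\on{O}$ to $\on{SO}$). The paper's hands-on construction also yields the explicit representatives $u_{i,j},\omega$ that are reused later (Lemma B.5 and the word/tree description in Section B.4), which your bijection does not provide; if you adopted your route you would still need some substitute for those explicit neighbours in the subsequent tree arguments.
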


\begin{proof}
Once more we refrain from doing calculations for both groups as the ternary case follows from the same ideas. Hence we set $Q = 2xy + z^{2} + \eta w^{2}$ for a non-square $\eta \in \mathbb{Z}_{p}^{\times}$. We will prove the lemma by explicitely finding $p^{2}+1$ disjoint left cosets of $(a_{p}Ka_{p}^{-1} \cap K)$ in $K$ and showing that their union is all of $K$. Let $k\in K$ with first column vector equal to $(w_{1},w_{2},w_{3},w_{4})^{T}$. 
We define the following unipotent elements:
\[
u_{i,j}= \left[\begin{matrix}1 & 0 & 0 & 0 \\-\frac{i^2+ j^2\eta}{2} & 1 & -i & -j\eta \\i & 0 & 1 & 0 \\j & 0 & 0 & 1\end{matrix}\right] \in K.
\]
Moreover, we note that the elements of $(a_{p}Ka_{p}^{-1}\cap K)$ are precisely
the elements of the shape
\[
\left[\begin{matrix}* & * & * & * \\p^2* & * & p* & p* \\p* & * & * & * \\p* & * & * & *\end{matrix}\right]
\]
where we denote an entry by $p*$ (or $p^{2}*$) if it belongs to $p\bZ_{p}$ (or $p^{2}\bZ_{p}$).
We claim that if $w_{1}$ is invertible in $\mathbb{Z}_{p}$, then $u_{i,j}g$ is of that shape and thus in $(a_{p}Ka_{p}^{-1}\cap K)$ for some $i,j\in\set{0,\dots,p-1}$. 

Multiplying $u_{i,j}$ from the left corresponds to row operations and we pick $i$ and $j$ such that
\[
w_{1}i+w_{3}\in p\mathbb{Z}_{p} \mbox{ and } w_{1}j+w_{4}\in p\mathbb{Z}_{p}.
\]
By this choice the last two rows of $u_{i,j}k$ are of the right form and it remains to check that the second row vector of $u_{i,j}k$, say $v=(v_{1},v_{2},v_{3},v_{4})$, is of the form $(p^{2}*,*,p*,p*)$. Considering its first entry $v_{1}$ we use the following trick: 
The first standard vector $e_{1}$ is isotropic for the quadratic form
$\tilde{Q}(x,y,z,w)=2xy+z^2+\eta^{-1}w^2$ as in \eqref{dualityeq}, i.e. $\tilde{Q}(e_{1})=0$ and thus
\[
0=Q(e_1 (u_{i,j}ke_{1})^T)=2w_{1}v_{1}+(w_{1}i+w_{3})^{2}+\eta^{-1}(w_{1}j+w_{4})^{2}.
\]
But the last two summands are in $p^{2}\bZ_{p}$ and $w_{1}$ is invertible in $\mathbb{Z}_{p}$, so $v_{1}$ must be in $p^{2}\bZ_{p}$ as well.

To obtain that $v_{3},v_{4} \in p\mathbb{Z}_{p}$, we recall
that $ Q(e_{2})=0$ and so also
\[
 {Q}(e_{2}(u_{i,j}g ))=0=2v_{1}v_{2}+v_{3}^{2}+\eta v_{4}^{2}.
\]
The first summand has norm at most $p^{-2}$, and thus this is also a bound for the norm of $v_{3}^{2}+\eta^{-1}v_{4}^{2}$ . This implies, after reducing to $\bZ_{p}/p\bZ_{p}=\bF_{p}$ that the square $\overline{v}_{3}^{2}=v_{3}^{2}\;(p)$ equals the non-square $\overline{\eta}\overline{v}_{4}^{2}$, unless $\overline{v}_{3}=\overline{v}_{4}=0$ which proves our claim.

Now if $w_{1}$ is not invertible, we apply Proposition \ref{pRankOneKAK} to conclude that $w_{2}$ must be in $\bZ_{p}^{\times}$. Apply $\omega=\left[\begin{smallmatrix}0 & 1 & 0 & 0 \\1 & 0 & 0 & 0 \\0 & 0 & -1 & 0 \\0 & 0 & 0 & 1\end{smallmatrix}\right]\in K$ to $k$ from the left to essentially interchange the first and second rows. We again see that
\[
\tilde{Q}(e_{1}(\omega k)^T)=0=2w_{2}w_{2}+w_{3}^{2}+\eta^{-1} w_{4}^{2}
\]
with $w_{1}w_{2}\in p\bZ_{p}$ forces $w_{3},w_{4}\in p\bZ_{p}$ by reducing modulo $p$ and using that $\eta$ is a non-square. Using the same argument as above we now obtain
$\omega g\in (a_p Ka_p^{-1})\cap K$ as claimed. 

Finally, the $p^{2}+1$ elements $u_{i,j}$ and $\omega$ are all inequivalent to each other with respect to $a_{p}Ka_{p}^{-1}\cap K$, so they give a representative system for the coset decomposition of $(a_{p}Ka_{p}^{-1}\cap K)$ in $K$. In other words,
\[
K = \omega (a_{p}Ka_{p}^{-1}\cap K) \sqcup \bigsqcup_{i,j}u_{i,j}(a_{p}Ka_{p}^{-1}\cap K)
\]
and the lemma follows.
\end{proof}

\begin{lemma}
\label{higherlevelcosets}
For $\on{SO}(2,1)(\bQ_{p})$ it holds that
\[
[a_{p}^{\ell}Ka_{p}^{-\ell}\cap K:a_{p}^{\ell+1}Ka_{p}^{-(\ell+1)}\cap K]=p
\]
and for $\on{SO}_{\eta}(3,1)(\bQ_{p})$ one has
\[
[a_{p}^{\ell}Ka_{p}^{-\ell}\cap K:a_{p}^{\ell+1}Ka_{p}^{-(\ell+1)}\cap K]=p^{2}.
\]
\end{lemma}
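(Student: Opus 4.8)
The plan is to read every index in the statement as the number of vertices in a metric sphere of the $(q+1)$-regular tree $H/K$ from Section~\ref{quasisplitgeometry}, where $q=p$ if $H=\on{SO}(2,1)(\bQ_p)$ and $q=p^2$ if $H=\on{SO}_\eta(3,1)(\bQ_p)$. Write $o=eK$ for the base vertex, $K_\ell=a_p^\ell K a_p^{-\ell}\cap K$, and recall $d(gK,hK)=\log_p\|g^{-1}h\|_p$, under which $H$ acts by distance-preserving graph automorphisms and, since $\|a_p^\ell\|_p=p^\ell$ in both cases, $a_p^\ell o$ lies at distance $\ell$ from $o$. First I would note $\on{Stab}_H(a_p^\ell K)=\{h:a_p^{-\ell}ha_p^\ell\in K\}=a_p^\ell K a_p^{-\ell}$, so $\on{Stab}_K(a_p^\ell o)=K_\ell$. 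Since $\partial B_\ell=Ka_p^\ell K$ (Section~\ref{quasisplitgeometry}) and two elements of $H$ determine the same vertex precisely when they lie in the same left $K$-coset, the sphere $S_\ell$ of radius $\ell$ about $o$ is exactly the set of left $K$-cosets contained in $Ka_p^\ell K$; by Lemma~\ref{KaKindex} this yields
\[
|S_\ell|=[K:K_\ell].
\]

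Next I would use that $H/K$ is a $(q+1)$-regular tree to get $|S_0|=1$ and $|S_\ell|=(q+1)q^{\ell-1}$ for $\ell\ge 1$ (each vertex at distance $\ell\ge 1$ has exactly one neighbour closer to $o$ and $q$ neighbours farther from $o$). The one remaining ingredient is the inclusion $K_{\ell+1}\subseteq K_\ell$: an element $g\in K_{\ell+1}$ fixes $o$ and $a_p^{\ell+1}o$, hence maps the unique geodesic segment joining them to itself preserving its endpoints, hence fixes it pointwise (an isometry of a finite path fixing both ends is the identity on it); by Lemma~\ref{geodesics} this segment is $o,a_po,\dots,a_p^{\ell+1}o$, so $g$ fixes $a_p^\ell o$ and therefore $g\in K_\ell$.

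With the inclusion in hand the tower of indices gives, for $\ell\ge 1$,
\[
[K_\ell:K_{\ell+1}]=\frac{[K:K_{\ell+1}]}{[K:K_\ell]}=\frac{|S_{\ell+1}|}{|S_\ell|}=\frac{(q+1)q^{\ell}}{(q+1)q^{\ell-1}}=q,
\]
which is $p$ for $\on{SO}(2,1)(\bQ_p)$ and $p^2$ for $\on{SO}_\eta(3,1)(\bQ_p)$, as asserted. The Cartan decomposition (Proposition~\ref{pRankOneKAK}, together with its analogue for $\on{SO}(2,1)$) is what guarantees that $Ka_p^\ell K$ really is the full metric sphere $S_\ell$, which legitimises identifying $|S_\ell|$ with the number of left $K$-cosets in $Ka_p^\ell K$ above.

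I do not expect a genuine obstacle: the two points needing care are that $Ka_p^\ell K$ exhausts the distance-$\ell$ sphere (this is exactly the already-proved Cartan decomposition) and that the chain $K_\ell$ is decreasing (the geodesic argument above). If one preferred to avoid the tree language entirely, an alternative is to mimic the proof of Lemma~\ref{cosetdecomposition}: exhibit $q$ explicit unipotent elements of $K_\ell$, check that their pairwise quotients lie outside $K_{\ell+1}$, and use $\partial B_{\ell+1}=Ka_p^{\ell+1}K$ to see that their cosets exhaust $K_\ell/K_{\ell+1}$ — correct but more computational.
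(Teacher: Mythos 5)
Your reduction of the index to sphere counts rests on an input that is not available at this point of the paper and is in fact logically downstream of the very lemma you are proving. The count $|S_\ell|=(q+1)q^{\ell-1}$ (with $q=p$, resp.\ $q=p^2$) is obtained in the paper \emph{from} Lemma~\ref{KaKindex}, Lemma~\ref{cosetdecomposition} and Lemma~\ref{higherlevelcosets}: this is exactly the computation $m(Ka_p^\ell K)=(q+1)q^{\ell-1}$ in Proposition~\ref{rankonevolume}, and the detailed justification of the tree picture in Appendix~\ref{app-tree} quotes this count to show that the explicit word representatives exhaust $Ka_p^\ell K$. The one-line assertion in Section~\ref{quasisplitgeometry} that $H/K$ is a $(p^2+1)$-regular (resp.\ $(p+1)$-regular) tree is therefore not an independent fact you may cite here; using it would be circular, unless you import the Bruhat--Tits tree of $\on{PGL}_2$ over $\bQ_p$, resp.\ over its unramified quadratic extension, as an external black box --- which the paper explicitly sets out to avoid (``we will not assume Bruhat--Tits theory and prove what is needed along the way''). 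Indeed, given Lemma~\ref{KaKindex} and the Cartan decomposition (Proposition~\ref{pRankOneKAK}), the statement $|S_{\ell+1}|=q\,|S_\ell|$ for $\ell\geq1$ \emph{is} the lemma, so your main route assumes what is to be proved. The same caveat applies to deducing $K_{\ell+1}\subseteq K_\ell$ from Lemma~\ref{geodesics} (which also presupposes the tree), though this inclusion at least has a direct one-line proof: conjugation by $a_p^{-m}$ multiplies the $(i,j)$ entry by $p^{mc_{ij}}$ for fixed exponents $c_{ij}$, and if $k_{ij}$ and $k_{ij}p^{(\ell+1)c_{ij}}$ are both in $\bZ_p$ then so is $k_{ij}p^{\ell c_{ij}}$.

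The computational route you mention at the end as an alternative is precisely the paper's proof: for $\ell\geq1$ the $(1,1)$ entry of any $g\in a_p^{\ell}Ka_p^{-\ell}\cap K$ is a unit, since the remaining constrained entries carry factors $p^{\ell}$ resp.\ $p^{2\ell}$ and a row of $g\in K$ cannot vanish modulo $p$; hence one is always in the first (``$w_1$ invertible'') case of the proof of Lemma~\ref{cosetdecomposition}, the $\omega$-coset never occurs, and rerunning that argument with the unipotents $u_{i,j}$ whose parameters are now divisible by $p^{\ell}$ (i.e.\ $i,j\in p^{\ell}\{0,\dots,p-1\}$) produces exactly $p^{2}$, resp.\ $p$ in the ternary case, cosets of $a_p^{\ell+1}Ka_p^{-(\ell+1)}\cap K$ inside $a_p^{\ell}Ka_p^{-\ell}\cap K$. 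If you wish to keep the tree-counting picture instead, you would first have to prove, without the lemma, that every vertex at norm-distance $\ell\geq1$ from $eK$ has exactly one neighbour at distance $\ell-1$ and $q$ at distance $\ell+1$; but that is the same assertion, so nothing is gained over the direct coset computation.
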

Observe that the first row vector of some $g\in a_{p}^{\ell}Ka_{p}^{-\ell}\cap K$ is of the shape $(w_{1},p^{2\ell}*,p^{\ell}*,p^{\ell}*)$ which implies that $w_{1}$ must always be invertible and one always is in a case similar to the first case treated in the previous lemma -  whose proof we can follow essentially line by line.

\subsection{Regular Trees}\label{app-tree}
We define an incidence relation by setting $gK\sim hK$ to be neighbours if $d(gK,hK)=1$. This gives $H/K$ the structure of a $p^{2}+1$-regular tree on which $H$ acts transitively and neighbour preserving. Explicitely, the neighbours of $gK$ are $gu_{i,j}K$ and $g\omega K$, and if $g\notin K$ then $g\omega K$ is the unique neighbour of distance less than $d(eK,gK)$ to $eK$. 
We focus on $\on{SO}_\eta(3,1)(\bQ_{p})$ but this discussion easily implies the structure for $\on{SO}(2,1)(\bQ_p)$ as well.
Note that by combining the proofs of Lemma \ref{KaKindex} and Lemma \ref{cosetdecomposition}, the coset decomposition of $Ka_pK$ is given by $\omega K\bigsqcup_{i,j}u_{i,j}K$ with
\[
\omega=\mat{&p&&\\p^{-1}&&&\\&&1&\\&&&-1}\quad\text{ and }\quad u_{i,j}=\mat{p^{-1}&&&\\-\frac{i^{2} + \eta j^{2}}{2}p^{-1}&p&-i&-\eta j\\ip^{-1}&&1&\\jp^{-1}&&&1}
\]
for $i,j=0,\ldots, p-1$. Set $S=\left\{u_{i,j} : i,j \in \left\{0,\dots , p-1\right\}\right\}$ and denote by $S^{\ell}$ the set of words of length $\ell$, by which we simply mean that the elements $\underline{m}\in S^{\ell}$ are matrix products of the form $m_{1}\hdots m_{\ell}$ for $m_{i}\in S$.
\begin{lemma}
The words of length $\ell$ in $S^{\ell}$ together with words of the form $\omega\underline{m}$ for $\underline{m}\in S^{\ell-1}$ exhaust a representative system of the right-cosets of $K$ in $Ka^{\ell}K$.
\end{lemma}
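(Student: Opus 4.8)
The final statement claims that the right cosets of $K$ inside $Ka_p^\ell K$ are exhausted by the words $\underline{m}\in S^\ell$ together with the words $\omega\underline{m}$ for $\underline{m}\in S^{\ell-1}$. The plan is to proceed by induction on $\ell$, reading off the tree structure of $H/K$ that we have already established. For $\ell=1$ this is exactly Lemma~\ref{cosetdecomposition} together with the explicit coset decomposition $Ka_pK=\omega K\sqcup\bigsqcup_{i,j}u_{i,j}K$ recalled just above. For the inductive step, I would first observe that $Ka_p^{\ell+1}K$ consists precisely of those cosets $gK$ with $d(gK,K)=\ell+1$ in the $(p^2+1)$-regular tree (this follows from the Cartan decomposition in Proposition~\ref{pRankOneKAK} and the fact that $d(gK,K)=\log_p\|g\|_p$, so the norm determines the power of $a_p$).

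The key geometric input is that in a regular tree every vertex $x$ at distance $\ell+1$ from the root $K$ has a unique neighbour $y$ at distance $\ell$, and conversely every vertex $y$ at distance $\ell$ has exactly $p^2$ neighbours at distance $\ell+1$ (the full set of $p^2+1$ neighbours minus the one neighbour at distance $\ell-1$). Concretely, if $y=gK$ with $d(gK,K)=\ell$, then the neighbours of $gK$ are $gu_{i,j}K$ for $i,j\in\{0,\ldots,p-1\}$ and $g\omega K$, and — as noted in Appendix~\ref{app-tree} — when $g\notin K$ the coset $g\omega K$ is the unique one among these with distance $<d(gK,K)$ to $K$. Therefore the vertices at distance $\ell+1$ from $K$ are exactly $\{gu_{i,j}K: d(gK,K)=\ell,\ i,j\in\{0,\ldots,p-1\}\}$. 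Applying the inductive hypothesis to write each such $gK$ as $\underline{m}K$ with $\underline{m}\in S^\ell$ or as $\omega\underline{m}'K$ with $\underline{m}'\in S^{\ell-1}$, right-multiplication by $u_{i,j}\in S$ produces precisely the words in $S^{\ell+1}$ together with the words $\omega\underline{m}$ for $\underline{m}\in S^\ell$, which is the asserted list for level $\ell+1$.

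It then remains to check that these words are pairwise inequivalent modulo $K$ on the right, i.e.\ that they genuinely index \emph{distinct} cosets and hence a full representative system. This again follows from the tree picture: distinct vertices at distance $\ell+1$ correspond to distinct cosets, and the map sending a length-$(\ell+1)$ word (of one of the two allowed shapes) to the corresponding vertex is injective because reading the word off letter by letter traces out the unique geodesic from $K$ to that vertex — two different words would have to branch at some step, producing two different vertices. A clean way to phrase this is to note that the number of cosets in $Ka_p^{\ell+1}K$ equals $|\partial B_{\ell+1}|=(p^2+1)p^{2\ell}$ by Lemma~\ref{higherlevelcosets} (or Proposition~\ref{rankonevolume}), while the proposed list has $|S^{\ell+1}|+|\omega S^\ell|=p^{2(\ell+1)}+p^{2\ell}=(p^2+1)p^{2\ell}$ elements; combined with surjectivity from the previous paragraph, a counting argument forces the list to be a bijective parametrization.

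\textbf{Main obstacle.} The only genuinely delicate point is making the ``geodesic / unique neighbour toward the root'' statement precise enough to be used as the engine of the induction — i.e.\ verifying that for $g$ with $\|g\|_p=p^\ell>1$ the coset $g\omega K$ has norm $p^{\ell-1}$ (equivalently $d(g\omega K,K)=\ell-1$) while $gu_{i,j}K$ all have norm $p^{\ell+1}$. This is a matter-of-fact matrix computation with the explicit $\omega$ and $u_{i,j}$ recorded in Appendix~\ref{app-tree}, using the strong triangle inequality and the first assertion of Proposition~\ref{pRankOneKAK} to control which entries are units; once that is in hand, both the exhaustion and the disjointness are purely formal consequences of $(p^2+1)$-regularity of the tree, and the ternary case $\on{SO}(2,1)(\bQ_p)$ is identical with $p^2$ replaced by $p$ throughout.
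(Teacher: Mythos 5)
Your outline (induction on $\ell$, surjectivity via the tree picture, then counting against Lemmas \ref{cosetdecomposition} and \ref{higherlevelcosets}) could in principle be made to work, but the step you isolate as the engine of the induction -- and dismiss as a matter-of-fact matrix computation -- is false as you state it, and repairing it is essentially the content of the lemma. It is not true that for every $g\in H$ with $\|g\|_p=p^{\ell}>1$ one has $\|gu_{i,j}\|_p=p^{\ell+1}$ for all $i,j$ and $\|g\omega\|_p=p^{\ell-1}$: which of the $p^{2}+1$ neighbours $gu_{i,j}K$, $g\omega K$ of the vertex $gK$ is the one closer to the root depends on the chosen representative $g$, not only on the coset (replacing $g$ by $gk$ with $k\in K$ permutes these labels among the same neighbours). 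Concretely, $g=u_{0,0}^{-1}=\operatorname{diag}(p,p^{-1},1,1)$ has $\|g\|_p=p$, yet $gu_{0,0}=e$ has norm $1$ and $g\omega$ has norm $p^{2}$, so both halves of your claim fail; cancellation can occur, and the real point is to show it does not occur for the specific word representatives. The strong triangle inequality and Proposition \ref{pRankOneKAK} alone do not give this. A related problem is the appeal to ``the tree structure we have already established'': in the paper this structure is only asserted in Section \ref{quasisplitgeometry}, with its verification deferred to exactly this appendix, so unique geodesics cannot be used as a black box. What is legitimately available are the Cartan decomposition and the sphere counts of Lemmas \ref{KaKindex}, \ref{cosetdecomposition}, \ref{higherlevelcosets}; from these, a double count of edges between consecutive spheres does yield the local facts you need (each vertex at distance $\ell\geq1$ has exactly one neighbour at distance $\ell-1$ and exactly $p^{2}$ at distance $\ell+1$), and together with the identities $\omega^{2}=e$ and $u_{i,j}\omega\in K$ (so that for a word $\underline m$ the coset $\underline m\omega K$ is represented by the word with its last letter deleted, hence lies at distance $\ell-1$ by induction) your surjectivity-plus-counting scheme would close. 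None of this is carried out in your proposal, and as written the inductive step rests on the false norm statement.

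For comparison, the paper's proof bypasses the graph geometry entirely: it matches the number of proposed words with the number of cosets in $Ka_p^{\ell}K$ and then shows directly that distinct words give distinct cosets by reducing the integral matrix $p^{\ell}\underline m$ modulo $p$; the image of $\bZ_p^{4}$ is the line spanned by $\bigl(1,-\tfrac{i_1^{2}+\eta j_1^{2}}{2},i_1,j_1\bigr)^{T}$, which recovers the first letter and, inductively, the whole word, and the same computation shows $p^{\ell}\underline m\not\equiv 0 \pmod p$, i.e.\ $\|\underline m\|_p=p^{\ell}$, so the words genuinely lie in $Ka_p^{\ell}K$. That last membership statement is precisely the no-cancellation fact your plan presupposes but does not prove.
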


\begin{proof}
We know by Lemma \ref{cosetdecomposition} that the number of $K$-cosets in $Ka^{\ell}K$ is given by $p^{2(\ell-1)}(p^{2}+1)$ which is also the number of words we have at our disposal. It suffices therefore to show that the corresponding cosets are all inequivalent.
\begin{claim}
Let $\underline{m},\underline{n} \in S^{\ell}$. Then $\left\|\underline{m}\right\|_p = p^{\ell}$ and $\underline{m}K = \underline{n}K$ implies that $\underline{m} = \underline{n}$.
\end{claim}
Let $\underline{m} = u_{i_{1},j_{1}}\dots u_{i_{\ell},j_{\ell}}$ and $\underline{n} = u_{a_{1},b_{1}}\dots u_{a_{\ell},b_{\ell}}$ and notice that $p^{\ell}\underline{m}$ and $p^{\ell}\underline{m}$ are integral. Then, $\underline{m}K = \underline{n}K$ implies that
\[
(pu_{i_{1},j_{1}})\dots (pu_{i_{\ell},j_{\ell}}) \mathbb{Z}_{p}^{4} = (pu_{a_{1},b_{1}})\dots (pu_{a_{\ell},b_{\ell}})\mathbb{Z}_{p}^{4} \mod p
\]
or equivalently, $\mathbb{F}_{p}(1,-\frac{i_{1}^{2} + \eta j_{1}^{2}}{2},i_{1},j_{1})^{T} = \mathbb{F}_{p}(1,-\frac{a_{1}^{2} + \eta b_{1}^{2}}{2},a_{1},b_{1})^{T}$ and therefore, $i_{1} = a_{1}$ and $j_{1} = b_{1}$. Inductively, we conclude that $\underline{m} = \underline{n}$. In particular, this argument also shows that the integral matrix $p^{\ell}\underline{m}$ is not divisible by $p$ and so $\left\|\underline{m}\right\| = p^{\ell}$.

Using the claim for $\ell-1$, we also see that $\omega \underline{m}K = \omega \underline{n}K$ implies $\underline{m} = \underline{n}$, where $\underline{m},\underline{n} \in S^{\ell -1}$.

\begin{claim}
Let $\underline{m} \in S^{\ell}$ and $\underline{n} \in S^{\ell-1}$. Then the cosets $\underline{m}K$ and $\omega \underline{n}K$ are disjoint.
\end{claim}
As before, let $\underline{m} = u_{i_{1},j_{1}}\dots u_{i_{\ell},j_{\ell}}$ and $\underline{n} = u_{a_{1},b_{1}} \dots u_{a_{\ell-1},b_{\ell-1}}$. Once again, $p^{\ell}\underline{m}$ and $p^{\ell}\omega \underline{n}$ are integral and therefore, $\underline{m}K = \omega \underline{n}K$ would imply
\[
(pu_{i_{1},j_{1}}) \dots (pu_{i_{\ell},j_{\ell}})\mathbb{Z}_{p}^{4} = (p\omega)(pu_{a_{1},b_{1}})\dots (pu_{a_{\ell-1},b_{\ell-1}})\mathbb{Z}_{p}^{4} \mod p
\]
or equivalently, $\mathbb{F}_{p}(1,-\frac{i_{1}^{2} + \eta j_{1}^{2}}{2},i_{1},j_{1})^{T} = \mathbb{F}_{p}(0,1,0,0)^{T}$, which is a contradiction.
\end{proof}

Note that the metric on $H/K$ defined in Section \ref{rankonevolume} satisfies $d(gK,K) = \ell$ if $g = \underline{m} \in S^{\ell}$ is a word of length $\ell$ as in the previous lemma. Moreover, we have the following
\begin{corollary}
Let $gK = \underline{m}K$ and $hK = \underline{n}K$, where $\underline{m} = m_{1}m_{2} \dots$ and $\underline{n} = n_{1}n_{2} \dots$ are words and denote by $|\underline{m}| = \log_{p}\left\|g\right\|_p$ and $|\underline{n}| = \log_{p} \left\|h\right\|$ the word lengths of $\underline{m}$ and $\underline{n}$. Set $j_{0} = \max \left\{j : n_{i} = m_{i} \mbox{ for all } i<j\right\}$. Then,
\[
d(gK,hK) = \left| n_{j_{0}}\dots n_{|\underline{n}|}\right| + \left|m_{j_{0}} \dots m_{|\underline{m}|} \right|.
\]
\end{corollary}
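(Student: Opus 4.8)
The statement is the tree-theoretic bookkeeping that the distance between two vertices equals the sum of the two geodesic tails that remain after running both root-geodesics until they first separate; the only thing to be checked is that the canonical words produced in the previous lemma encode exactly these root-geodesics. The plan is to prove this in four short steps.

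First I would observe that for a vertex $v=gK$ at distance $\ell$ from $eK$ with canonical word $\underline{m}=m_{1}\cdots m_{\ell}$ (so $m_{1}\in S\cup\{\omega\}$, $m_{2},\ldots,m_{\ell}\in S$, and $g$ chosen to be the matrix product $m_{1}\cdots m_{\ell}$), the sequence $v_{k}:=m_{1}\cdots m_{k}K$ for $k=0,1,\ldots,\ell$ is the geodesic from $eK$ to $v$: consecutive terms are neighbours because $m_{1}\cdots m_{k}K$ and $m_{1}\cdots m_{k+1}K$ differ by the generator $m_{k+1}$, so this is a path of length $\ell=d(v,eK)$ between two vertices at distance $\ell$, hence the geodesic since the tree is uniquely geodesic. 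Moreover every prefix $m_{1}\cdots m_{k}$ is again one of the admissible words of the previous lemma (either it lies in $S^{k}$, or it has the form $\omega$ times an element of $S^{k-1}$), and the same norm computation used there gives $\|m_{1}\cdots m_{k}\|_{p}=p^{k}$; so $(m_{1},\ldots,m_{k})$ is the canonical word of $v_{k}$ by uniqueness. Thus the canonical word of the $k$-th vertex along the root-geodesic of $v$ is precisely the length-$k$ prefix of the canonical word of $v$.

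Next, given $v=gK$, $v'=hK$ with canonical words $\underline{m},\underline{n}$, set $j_{0}=\max\{j:m_{i}=n_{i}\text{ for }i<j\}$ as in the statement, noting $1\le j_{0}\le\min(|\underline{m}|,|\underline{n}|)+1$. By the previous step the common length-$(j_{0}-1)$ prefix determines a single vertex $w:=m_{1}\cdots m_{j_{0}-1}K=n_{1}\cdots n_{j_{0}-1}K$ lying on both root-geodesics. I would then consider the concatenated path
\[
v=v_{|\underline{m}|},\ v_{|\underline{m}|-1},\ \ldots,\ v_{j_{0}-1}=w=v'_{j_{0}-1},\ v'_{j_{0}},\ \ldots,\ v'_{|\underline{n}|}=v'.
\]
This path is reduced: along the first half distances from $eK$ strictly decrease and along the second half they strictly increase, so the only possible backtrack is at $w$, which would force $v_{j_{0}}=v'_{j_{0}}$; but the definition of $j_{0}$ together with the uniqueness of canonical words gives $v_{j_{0}}\neq v'_{j_{0}}$ whenever $j_{0}\le\min(|\underline{m}|,|\underline{n}|)$, and the remaining case $j_{0}=\min(|\underline{m}|,|\underline{n}|)+1$ (one word a prefix of the other) is immediate because then $w=v$ or $w=v'$. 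A reduced path in a tree is the geodesic, so $d(v,v')=(|\underline{m}|-(j_{0}-1))+(|\underline{n}|-(j_{0}-1))$. Finally I would identify the two summands with the word-lengths in the statement: when $j_{0}\ge 2$ the tail $m_{j_{0}}\cdots m_{|\underline{m}|}$ lies in $S^{|\underline{m}|-j_{0}+1}$, so $\log_{p}\|m_{j_{0}}\cdots m_{|\underline{m}|}\|_{p}=|\underline{m}|-j_{0}+1$ by the norm computation of the previous lemma, and when $j_{0}=1$ the tail is all of $\underline{m}$ and has length $|\underline{m}|$; the same applies to $\underline{n}$, which yields the claimed formula.

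The step that needs the most care — and the only place the arithmetic of $\on{SO}_\eta(3,1)(\bQ_{p})$ and $\on{SO}(2,1)(\bQ_{p})$ actually enters — is the uniqueness of canonical words and the stability of canonicity under taking prefixes, i.e. the inequality $v_{j_{0}}\neq v'_{j_{0}}$. This is exactly the injectivity established in the proof of the previous lemma via reduction modulo $p$ and the anisotropy of $z^{2}+\bar{\eta}w^{2}$ over $\bF_{p}$, so no new input is required beyond organizing that lemma's conclusions; everything else is the standard uniqueness-of-geodesics bookkeeping in a regular tree.
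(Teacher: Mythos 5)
Your proof is correct, and it is exactly the argument the paper has in mind: the paper states this corollary without proof, treating it as immediate from the preceding lemma (uniqueness/disjointness of the canonical words, giving $v_{j_0}\neq v'_{j_0}$) together with the observation that $d(\underline{m}K,eK)=\ell$ for admissible words of length $\ell$, which is precisely the prefix-geodesic and no-backtracking bookkeeping you carry out. Your identification of the tail word-lengths with $\log_p$ of the norms via the lemma's norm computation (or simply via the lemma's statement that admissible words of length $k$ represent cosets in $Ka_p^kK$) closes the remaining detail, so no further input is needed.
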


\bibliographystyle{alpha}
\bibliography{distributionofshapes}

\end{document}